\definecolor{darkblue}{RGB}{0,96,168}
\numberwithin{equation}{section}
\lstdefinelanguage{VB}{
  keywords={Dim, As, Integer, Double, Long, String, Boolean, Private, Sub, Function, If, Then, Else, ElseIf, End, For, To, Step, Next, While, Wend, Do, Loop, Until, Select, Case, Call, Exit, Option, Explicit, Open, Close, Redim, Input, Output, Append, GoTo, Print},
  keywordstyle=\color{darkblue}\bfseries,
  comment=[l]{\'},
  commentstyle=\color{gray},
  alsoother={0,1,2,3,4,5,6,7,8,9},
  basicstyle=\ttfamily\color{black},
  numbers=left,
  numberstyle=\tiny\color{gray}
}
\newtheorem{conjecture}{Conjecture}
\newtheorem{problem}{Problem}
\newtheorem{definition}{Definition}
\newtheorem{example}{Example}[section]
\newtheorem{algorithm}{Algorithm}[section]
\newtheorem{identity}{Identity}
\newtheorem{corollary}{Corollary}[section]
\newtheorem*{notation}{Notation}
\newtheorem*{idealsolu}{Ideal solutions}
\newtheorem*{idealchain}{Ideal chains}
\newtheorem*{primesolu}{Ideal prime solutions}
\newtheorem*{relatedtype}{Related types}
\newtheorem*{mirrortype}{Mirror type}
\newcommand{\refIdentity}[1]{Identity\:\ref{#1}}
\begin{document}
\author{Chen Shuwen}
\title{A survey of The Prouhet-Tarry-Escott Problem \\and Its Generalizations}
\date{}
\maketitle

\begin{abstract}
This paper explores the Prouhet-Tarry-Escott problem (PTE), the Generalized PTE problem  (GPTE), and the Fermat form of Generalized PTE problem (FPTE). The GPTE problem extends the PTE problem by allowing different sets of exponents, while the FPTE problem considers cases where the number of integers in the two sets differs by one. Multigrade chains are also investigated, involving multiple sets of integers satisfying the GPTE system. The study of PTE and GPTE problems is further extended from integers to trigonometric functions.

Three novel generalizations of the Girard-Newton Identities are introduced to solve the PTE and GPTE problems: the first extends the domain of exponents to all integers; the second further generalizes to a broader form; and the third focuses on odd integer exponents. The constant \( C \) in the PTE and GPTE problems is investigated, and a novel approach is proposed by introducing the normalized GPTE problem with six conjectures to determine bounds for each variable. This enhances the efficiency of computer searches for ideal non-negative integer solutions that satisfy \( \sum_{i=1}^{n+1} a_i^k = \sum_{i=1}^{n+1} b_i^k \) for \( k = k_1, k_2, \dots, k_n \).

A general process of computer searches for GPTE solutions is discussed, and reference code for the search program is provided. Several parametric solutions for GPTE are presented, along with parametric methods for ideal prime solutions. Six open problems related to the PTE and GPTE problems are proposed, and three approaches to address these problems are suggested. In the appendix, an overview of the research status is provided for the 296 types of GPTE and FPTE that have ideal solutions.
\end{abstract}

Mathematics Subject Classification 2020: 11D09, 11D25, 11D41. 
\medskip

Keywords: Prouhet-Tarry-Escott problem, Equal sums of like powers, Girard-Newton Identities, Diophantine equation, Multigrade chains

\setcounter{secnumdepth}{3}
\setcounter{tocdepth}{3}
\tableofcontents

\clearpage

\section{The PTE Problem and Its Generalizations}\label{Intro}

\subsection{The Prouhet-Tarry-Escott Problem (PTE)}
The study of the Prouhet-Tarry-Escott problem can be traced back to the period of 1750-1751, when Euler and Goldbach observed the case for the second power \cite{Dickson52}. In 1851, Prouhet demonstrated that integer solutions exist for any power, provided that the number of variables in the arrays is sufficiently large \cite{Wright1959}. In the 1910s, E. B. Escott and G. Tarry found partial solutions for powers up to the seventh \cite{Gloden44}. Consequently, the problem is named after these three individuals \cite{Borwein1994}. For further references on the Prouhet-Tarry-Escott problem, see \cite{Guy04, Hardy38, Hua, Raghavendran2019}, and other relevant works.
\begin{problem} \textnormal{\textbf{(The Prouhet-Tarry-Escott Problem)}}\\[1mm]
The Prouhet-Tarry-Escott problem, often abbreviated as the PTE problem, can be stated as follows:
Given a positive integer \( n \), find two distinct sets of integer solutions \( \{a_1, a_2, \dots, a_m\} \) and \( \{b_1, b_2, \dots, b_m\} \) that satisfy the Diophantine system:
\begin{align}
\label{PTEmk}
\sum_{i=1}^m a_i^k = \sum_{i=1}^m b_i^k, \quad \text{for} \;\; k = 1, 2, \dots, n.
\end{align}
\end{problem}
Here, \( m \) is referred to as the size of the system \eqref{PTEmk}, and \( n \) is the degree.
We define:
\begin{equation}
\begin{aligned}
\label{notationmk}
\left[ a_{1}, a_{2}, \dots, a_{m} \right]^{k} := a_{1}^{k} + a_{2}^{k} + \cdots + a_{m}^{k}, \quad \text{for} \;\; k \in \mathbb{N},
\end{aligned}
\end{equation}
so that the system \eqref{PTEmk} can be expressed as:
\begin{equation}
\begin{aligned}
\label{PTEmk2}
\left[ a_{1}, a_{2}, \dots, a_{m} \right]^{k} = \left[ b_{1}, b_{2}, \dots, b_{m} \right]^{k}, \quad (k = 1, 2, \dots, n).
\end{aligned}
\end{equation}
Given that \eqref{PTEmk2} holds, it is simple to prove that for any integers \( S \) and \( T \), the following also holds:
\begin{equation}
\begin{aligned}
\left[ S a_{1} + T, S a_{2} + T, \dots, S a_{m} + T \right]^{k} = \left[ S b_{1} + T, S b_{2} + T, \dots, S b_{m} + T \right]^{k}, & \\
(k = 1, 2, \dots, n). &
\end{aligned}
\end{equation}
Therefore, without loss of generality, for the PTE problem, we generally consider integer solutions \( \{a_1, a_2, \dots, a_m\} \) and \( \{b_1, b_2, \dots, b_m\} \) satisfying the following conditions simultaneously:
\begin{itemize}[itemsep=-1mm]
    \item All elements are non-negative integers,
    \item The greatest common divisor (gcd) of all elements is 1,
    \item The smallest element is 0.
\end{itemize}
It has been proved that \cite{Dickson52} there are no non-trivial solutions to the system \eqref{PTEmk} when \( m \leq n \). Consequently, we have the following definition:
\begin{definition}
If \( m = n + 1 \), a solution of the system \eqref{PTEmk} is called an \textbf{ideal solution of the PTE problem}. \textup{\cite{Borwein1994}}
\end{definition}
In this case, the system \eqref{PTEmk} takes the following form:
\begin{align}
\label{PTEideal}
\left[ a_{1}, a_{2}, \dots, a_{n+1} \right]^{k} = \left[ b_{1}, b_{2}, \dots, b_{n+1} \right]^{k}, \quad (k = 1, 2, \dots, n).
\end{align}
In this paper, we generally focus on the ideal solutions of the PTE problem. To date, ideal solutions of the PTE problem have been discovered for degrees \( n \leq 9 \) and \( n = 11 \).

Ideal solutions of the PTE problem can be further classified into two types: ideal symmetric solutions and ideal non-symmetric solutions.

\subsubsection{Ideal Symmetric Solution of PTE}
\begin{definition}
An ideal solution of the system \eqref{PTEideal} is called an \textbf{ideal symmetric solution of the PTE problem} if it satisfies:
\begin{equation}
\begin{aligned}
\label{defsym}
\begin{cases}
a_1+a_{n+1}=a_2+a_n=\dots=b_1+b_{n+1}=b_2+b_n=\dots, &\text{if}\:\;n\;\text{is odd},\\
a_1+b_{n+1}=a_2+b_n=\dots=a_{n+1}+b_1, &\text{if}\:\;n\; \text{is even}.
\end{cases}
\end{aligned}
\end{equation}
\end{definition}
As of the present, ideal symmetric solutions have been found for the PTE problem of degrees \( n \leq 9 \) and \( n = 11 \).
\begin{example}
Ideal symmetric solutions for degrees \emph{6,7,8,9,11} are as follows:
\end{example}

The ideal symmetric solution of degree 6 was first discovered by E.B. Escott in 1910:
\begin{align}
[0, 18, 27, 58, 64, 89, 101]^k = [1, 13, 38, 44, 75, 84, 102]^k, \nonumber \\ 
(k = 1, 2, 3, 4, 5, 6)
\tag*{\eqref{k123456s102}}
\end{align}

The ideal symmetric solution of degree 7 was first discovered by G. Tarry in 1913:
\begin{align}
[0, 4, 9, 23, 27, 41, 46, 50]^k = [1, 2, 11, 20, 30, 39, 48, 49]^k, \nonumber \\ 
(k = 1, 2, 3, 4, 5, 6, 7)
\tag*{\eqref{k1234567s50}}
\end{align}

The ideal symmetric solutions of degree 8 and degree 9 were first discovered by A. Letac in 1942:
\begin{align}
& [0, 24, 30, 83, 86, 133, 157, 181, 197]^k \nonumber \\
& \quad = [1, 17, 41, 65, 112, 115, 168, 174, 198]^k, \nonumber \\
& \qquad\qquad\qquad\qquad (k = 1, 2, 3, 4, 5, 6, 7, 8) 
\tag*{\eqref{k12345678s198}} \\
& [0, 3083, 3301, 11893, 23314, 24186, 35607, 44199, 44417, 47500]^k \nonumber \\
& \quad = [12, 2865, 3519, 11869, 23738, 23762, 35631, 43981, 44635, 47488]^k, \nonumber \\ 
& \qquad\qquad\qquad\qquad\qquad\qquad\qquad\qquad\quad (k = 1, 2, 3, 4, 5, 6, 7, 8, 9)
\tag*{\eqref{k123456789s47500}}
\end{align}

The ideal symmetric solution of degree 11 was first discovered by Nuutti Kuosa, Jean-Charles Meyrignac, and Chen Shuwen in 1999:
\begin{align}
& [0, 11, 24, 65, 90, 129, 173, 212, 237, 278, 291, 302]^k \nonumber \\
& \quad = [3, 5, 30, 57, 104, 116, 186, 198, 245, 272, 297, 299]^k, \nonumber \\
& \qquad\qquad\qquad\qquad\qquad (k = 1, 2, 3, 4, 5, 6, 7, 8, 9, 10, 11)
\tag*{\eqref{k1234567891011s302}}
\end{align}
\subsubsection{Ideal Non-Symmetric Solution of PTE}
\begin{definition}
An ideal solution of the system \eqref{PTEideal} is called an \textbf{ideal non-symmetric solution of the PTE problem} if it does not satisfy the condition \eqref{defsym}.
\end{definition}
Finding ideal non-symmetric solutions to the PTE problem is more challenging than finding ideal symmetric solutions. To date, ideal non-symmetric solutions have only been discovered for PTE of degrees $n\leq 7$. 

\begin{example}
Ideal non-symmetric solutions for degrees \emph{5,6,7} are as follows:
\end{example}

The ideal non-symmetric solution of degree 5 was first discovered by A. Gloden in 1944:
\begin{align}
& [0, 19, 25, 57, 62, 86]^{k} = [2, 11, 40, 42, 69, 85]^{k}, \;\; (k = 1, 2, 3, 4, 5)
\tag*{\eqref{k12345s86}}
\end{align}

The ideal non-symmetric solutions of degree 6 and degree 7 were first discovered by Chen Shuwen in 1997:
\begin{align}
& [0, 18, 19, 50, 56, 79, 81]^k = [1, 11, 30, 39, 68, 70, 84]^k, \nonumber \\
& \qquad\qquad\qquad\qquad\qquad\qquad\qquad (k = 1, 2, 3, 4, 5, 6) \tag*{\eqref{k123456s84}} \\
& [0, 7, 23, 50, 53, 81, 82, 96]^k = [1, 5, 26, 42, 63, 72, 88, 95]^k, \nonumber \\ 
& \qquad\qquad\qquad\qquad\qquad\qquad\qquad (k = 1, 2, 3, 4, 5, 6, 7) \tag*{\eqref{k1234567s96}}
\end{align}

We note that each non-symmetric solution has an \textbf{equivalent solution} that can be obtained through a simple transformation: by subtracting each element from the largest element \cite{Chen22}. For example, the following non-symmetric solution is equivalent to \eqref{k123456s84}:
\begin{align}
\label{k123456s3}
& [0, 14, 16, 45, 54, 73, 83]^{k} = [3, 5, 28, 34, 65, 66, 84]^{k}, \quad (k = 1, 2, 3, 4, 5, 6) \nonumber 
\end{align}

\subsection{The Generalized Prouhet-Tarry-Escott Problem (GPTE)}
The PTE problem discussed in the previous section concerns exponent sets consisting of consecutive natural numbers \(\{1, 2, 3, \ldots, n\}\). Beginning in the 1930s, researchers including A.~Moessner, A.~Gloden, A.~Letac, T.~N.~Sinha, Ajai Choudhry et al.\ began investigating cases with non-consecutive positive integer exponents. Since 1995, the author has systematically studied the generalized case where the exponents are arbitrary integer sequences  \cite{Chen23}, including negative integers \cite{Chenkminus23}. In particular, the case where \(k = 0\) is defined as the equal products case \cite{ChenkProducts23}. To formalize this, we first generalize the definition in \eqref{notationmk} as follows:
\begin{definition}
For integers \( a_1, a_2, \dots, a_m \) and \( k \in \mathbb{Z} \), we define
\begin{equation}
\label{EPES}
\left[ a_{1}, a_{2}, \dots, a_{m} \right]^{k} :=
\begin{cases}
a_{1}^{k} + a_{2}^{k} + \cdots + a_{m}^{k}, & \text{if } k \neq 0, \\
a_{1} a_{2} \cdots a_{m}, & \text{if } k = 0.
\end{cases}
\end{equation}
\end{definition}
Based on the definition provided in \eqref{EPES}, we study the following more general problem.

\begin{problem}\textnormal{\textbf{(The Generalized Prouhet-Tarry-Escott Problem)}}\\[1mm]
The generalized Prouhet-Tarry-Escott problem, abbreviated as the GPTE problem, can be stated as:
Given integers \( \{k_1, k_2, \dots, k_n\} \), find two \mbox{distinct} sets of integer solutions \( \{a_1, a_2, \dots, a_m\} \) and \( \{b_1, b_2, \dots, b_m\} \) that satisfy the Diophantine system:
\begin{align}
\label{GPTE}
\left[ a_{1}, a_{2}, \dots, a_{m} \right]^{k} = \left[ b_{1}, b_{2}, \dots, b_{m} \right]^{k}, \quad (k = k_1, k_2, \dots, k_n).
\end{align}
\end{problem}
Here, \( m \) is referred to as the size of the system \eqref{GPTE}.

\subsubsection{Ideal Non-Negative Integer Solution of GPTE}
When \( m = n \), no non-negative integer solutions are found for any exponent set \( (k = k_1, k_2, \dots, k_n) \) of the system \eqref{GPTE}. However, when \( m = n + 1 \), non-negative integer solutions exist for the system \eqref{GPTE}. In this case, system \eqref{GPTE} takes the following form:
\begin{align}
\label{GPTEk}
& \left[ a_{1}, a_{2}, \dots, a_{n+1} \right]^{k} = \left[ b_{1}, b_{2}, \dots, b_{n+1} \right]^{k}, \quad (k = k_1, k_2, \dots, k_n).
\end{align}
\begin{definition}
A non-negative integer solution of the system \eqref{GPTEk} is called an \textbf{ideal non-negative integer solution of the GPTE problem}.
\end{definition}
So far, a total of 164 distinct types of ideal non-negative integer solutions have been discovered for various combinations of \( (k = k_1, k_2, \dots, k_n) \). These solution types can be categorized into five groups with counts of 42, 42, 24, 23, and 33, respectively, summing to the total of 164. For a comprehensive listing of these 164 types, see Appendix A. Notably, beyond these 164 documented types, there could exist infinitely many other solution types awaiting discovery.

\begin{example}
\label{examplek1}
GPTE with all \( k > 0 \)
\end{example}
Ideal non-negative integer solutions of the GPTE problem have been identified for 42 types with all \( k > 0 \), including 10 known types of PTE. For example,
\begin{align}
\label{k14}
& [3, 25, 38]^k = [7, 20, 39]^k, \quad (k = 1, 4) \tag*{\eqref{k14s39}} \\
& [1741, 2435, 3004, 3476]^k = [1937, 2111, 3280, 3328]^k, \;\; (k = 1, 3, 7) \tag*{\eqref{k137s3476}} \\
& [975, 224368, 300495, 366448]^k \nonumber \\
& \quad = [37648, 202575, 337168, 344655]^k, \quad (k = 2, 3, 4) \tag*{\eqref{k234s366448}} \\
& [34, 133, 165, 299, 332, 366]^k = [35, 124, 177, 286, 353, 354]^k, \nonumber \\
& \qquad\qquad\qquad\qquad\qquad\qquad\qquad\qquad\qquad (k = 1, 2, 3, 4, 7) \tag*{\eqref{k12347s366}} \\
& [269, 397, 409, 683, 743, 901, 923]^k \nonumber \\ 
& \quad = [299, 313, 493, 613, 827, 839, 941]^k, \quad (k = 1, 2, 3, 5, 7, 9) \tag*{\eqref{k123579s941}} \\
& [77, 159, 169, 283, 321, 443, 447, 501]^k \nonumber \\ 
& \quad = [79, 137, 213, 237, 363, 399, 481, 491]^k, \;\; (k = 1, 2, 3, 4, 5, 6, 8) \tag*{\eqref{k1234568s501}}
\end{align}

\begin{example}
\label{examplek2}
GPTE with all \( k < 0 \)
\end{example}
Since 2001, we have systematically investigated this set of categories \cite{Chenkminus23}. By applying the following simple arithmetic transformations, which we first \mbox{discovered} in 2001:\\[1mm]
\indent Let \( \{a_1, a_2, \dots, a_m\} \) and \( \{b_1, b_2, \dots, b_m\} \) be two distinct sets of non-zero integers (\( a_i \neq 0, \, b_i \neq 0 \)) satisfying
\[
[a_1, a_2, \dots, a_m]^k = [b_1, b_2, \dots, b_m]^k \quad \text{for} \;\; k = k_1, k_2, \dots, k_n.
\]
Let \( C \) be the least common multiple (LCM) of all \( \{a_i, b_i\} \). Then
\begin{align}
\label{LCM}
\left[\frac{C}{a_1}, \frac{C}{a_2}, \dots, \frac{C}{a_m}\right]^k = \left[\frac{C}{b_1}, \frac{C}{b_2}, \dots, \frac{C}{b_m}\right]^k, \;\; (k = -k_1, -k_2, \dots, -k_n).
\end{align}
Using these transformations on solutions for GPTE with all \( k > 0 \), we have identified 42 distinct types of ideal non-negative integer solutions of GPTE with all \( k < 0 \). For example,
\begin{align}
& [188678700, 224349300, 835077950, 901884186]^k \nonumber \\ 
& \quad = [178945275, 302645700, 322101495, 15031403100]^k, \nonumber \\
& \qquad\qquad\qquad\qquad\qquad\qquad\qquad\qquad (k = -4, -3, -1) \tag*{\eqref{kn134s15031403100}} \\
& [13104, 13650, 20475, 23400, 54600, 65520]^k \nonumber \\
& \quad = [12600, 15600, 16380, 32760, 36400, 81900]^k, \nonumber \\
& \qquad\qquad\qquad\qquad\qquad (k = -5, -4, -3, -2, -1) \tag*{\eqref{kn12345s81900}} \\
& [1081404, 1113210, 1401820, 1992060, 9462285, 12616380]^k \nonumber \\
& \quad = [1051365, 1220940, 1261638, 2226420, 5407020, 37849140]^k, \nonumber \\
& \qquad\qquad\qquad\qquad\qquad\qquad\qquad\qquad (k = -8, -6, -4, -2, -1) \tag*{\eqref{kn12468s37849140}}
\end{align}
We refer to the type \( (k = -k_1, -k_2, \dots, -k_n) \) obtained through \eqref{LCM} as the \textbf{mirror type} of the original type \( (k = k_1, k_2, \dots, k_n) \). For example, the mirror type of \( (k = 1, 3, 4) \) is \( (k = -4, -3, -1) \).
\begin{example}
\label{examplek3}
GPTE with \( k_1 = 0 \) and all others \( k > 0 \)
\end{example}
According to the definition in \eqref{EPES}, \( k_1 = 0 \) implies that two distinct sets of integer solutions \( \{a_1, a_2, \dots, a_m\} \) and \( \{b_1, b_2, \dots, b_m\} \) of \eqref{GPTE} must satisfy an equality of products:
\[
a_1 a_2 \cdots a_m = b_1 b_2 \cdots b_m.
\]
Since 2001, we have systematically investigated this category of solutions \cite{ChenkProducts23}. To date, ideal non-negative integer solutions of the GPTE problem have been identified for 24 distinct types with \( k_1 = 0 \) and all others \( k > 0 \). For example,
\begin{align}
& [2, 2, 11, 21]^k = [1, 6, 7, 22]^k, \quad (k = 0, 1, 2) \tag*{\eqref{k012s22}} \\
& [14, 33, 37, 108, 112, 192, 221]^k = [17, 21, 52, 88, 128, 189, 222]^k, \nonumber \\ 
& \qquad\qquad\qquad\qquad\qquad\qquad\qquad\qquad\qquad (k = 0, 1, 2, 3, 4, 6) \tag*{\eqref{k012346s222}} \\
& [423, 548, 621, 902, 923, 1159, 1218]^k \nonumber \\
& \quad = [437, 497, 732, 754, 1034, 1107, 1233]^k, \quad (k = 0, 1, 2, 4, 6, 8) \tag*{\eqref{k012468s1233}} \\
& [387, 388, 416, 447, 494, 536, 573, 589, 610]^k \nonumber \\
& \quad = [382, 402, 403, 456, 485, 549, 559, 596, 608]^k, \nonumber \\
& \qquad\qquad\qquad\qquad\qquad (k = 0, 1, 2, 3, 4, 5, 6, 7) \tag*{\eqref{k01234567s610}}
\end{align}

\begin{example}
\label{examplek4}
GPTE with \( k_n = 0 \) and all others \( k < 0 \)
\end{example}
By applying simple arithmetic transformations using \eqref{LCM}, based on solutions for GPTE with \( k_1 = 0 \) and all other exponents \( k > 0 \), we have identified 23 distinct types of ideal non-negative integer solutions of GPTE where \( k_n = 0 \) and all other exponents \( k < 0 \). For example,
\begin{align}
& [11712265, 15189721, 28371045, 142587381]^k \nonumber \\
& \quad = [11991885, 13979155, 35936657, 119465103]^k, \;\; (k = -6, -2, 0) \tag*{\eqref{kn026s142587381}} \\
& [32760, 40248, 48762, 97524, 234780, 528255]^k \nonumber \\
& \quad = [34830, 35217, 58968, 78260, 325080, 422604]^k, \nonumber \\
& \qquad\qquad\qquad\qquad\qquad\qquad (k = -5, -3, -2, -1, 0) \tag*{\eqref{k0n1235s528255}} \\
& [324720, 364320, 373428, 466785, 663872, 829840, 905280]^k \nonumber \\
& \quad = [331936, 339480, 414920, 432960, 728640, 746856, 933570]^k, \nonumber \\
& \qquad\qquad\qquad\qquad\qquad\qquad\qquad\quad (k = -6, -4, -3, -2, -1, 0) \tag*{\eqref{k0n12346s933570}}
\end{align}

\begin{example}
\label{examplek5}
GPTE with \( k_1 < 0 \) and \( k_n > 0 \)
\end{example}
To date, 33 distinct types of ideal non-negative integer solutions of GPTE have been identified with \( k_1 < 0 \) and \( k_n > 0 \). For example,
\begin{align}
& [77, 1057, 1661]^k = [91, 143, 1963]^k, \quad (k = -2, 2) \tag*{\eqref{k2n2s1963}} \\
& [85, 286, 702, 858]^k = [81, 374, 585, 891]^k, \quad (k = -1, 1, 5) \tag*{\eqref{k15n1s891}} \\
& [266, 494, 494, 1463, 1547]^k = [287, 374, 611, 1394, 1598]^k, \nonumber \\
& \qquad\qquad\qquad\qquad\qquad\qquad\qquad\qquad\quad (k = -1, 1, 2, 3) \tag*{\eqref{k123n1s1598}} \\
& [9, 17, 21, 51, 99, 143, 143]^k = [11, 11, 33, 39, 117, 119, 153]^k, \nonumber \\
& \qquad\qquad\qquad\qquad\qquad\qquad\qquad\qquad (k = -1, 0, 1, 2, 3, 4) \tag*{\eqref{k01234n1s153}} \\
\label{k0123n123}
& [22, 22, 33, 33, 56, 56, 84, 84]^k = [21, 24, 28, 42, 44, 66, 77, 88]^k, \nonumber \\
& \qquad\qquad\qquad\qquad\qquad\qquad\qquad (k = -3, -2, -1, 0, 1, 2, 3) \tag*{\eqref{k0123n123s88}}
\end{align}

For the above Example\,\ref{examplek1} to Example\,\ref{examplek5}, the types \( (k = 1, 3, 7) \) and \( (k = -2, 2) \) were first solved by Ajai Choudhry in 1999 and 2011, respectively. The type \( (k = 0, 1, 2) \) was first solved by A. Gloden in 1944. The remaining examples were discovered by Chen Shuwen between 1995 and 2023. For more examples, please refer to Appendix A.\\[1mm]
\indent
Although ideal non-negative integer solutions have been identified for 164 distinct types of \( (k = k_1, k_2, \dots, k_n) \) in GPTE, countless other types remain unsolved. However, it has not been proven that any particular type of \( (k = k_1, k_2, \dots, k_n) \) lacks an ideal non-negative integer solution. For instance, it remains unknown whether the following types have non-negative integer solutions:
\begin{align}
\label{typek5}
& a_1^k + a_2^k = b_1^k + b_2^k, \qquad\qquad \text{for} \;\; k \geq 5, \; k \in \mathbb{N}, \\
\label{typek34}
& a_1^k + a_2^k + a_3^k = b_1^k + b_2^k + b_3^k, \qquad \text{for} \;\; k = 3, 4.
\end{align}
\subsubsection{Ideal Integer Solution of GPTE}
When \( m = n \), for easier classification, we use \( h \) instead of \( k \) to denote system \eqref{GPTE}. That is,
\begin{align}
\label{GPTEh}
&\left[ a_{1}, a_{2}, \cdots, a_{n} \right]^{h} = \left[ b_{1}, b_{2}, \cdots, b_{n} \right]^{h}, \quad \left( h = h_1, h_2, \dots, h_n \right).
\end{align}

As mentioned in Section 1.2.1, no non-negative integer solution has been \mbox{discovered} for any type of \( (h = h_1, h_2, \cdots, h_n) \) in system \eqref{GPTEh} so far. \mbox{However}, integer solutions do exist for system \eqref{GPTEh}.

\begin{definition}
We define an integer solution of system \eqref{GPTEh} as an \textbf{ideal integer solution of GPTE}.
\end{definition}

Until now, a total of 65 distinct types of ideal integer solutions of GPTE have been discovered, specifically categorized as \(15 + 15 + 6 + 6 + 23\), which correspond to a wide array of configurations for \( (h = h_1, h_2, \dots, h_n) \). However, as research progresses, it is expected that the number of such solution types will continue to increase. For a detailed overview of these 65 types, please refer to Appendix B.

\begin{example}
\label{exampleh1}
GPTE with all \( h > 0 \)
\end{example}
Ideal integer solutions of GPTE have been found for 15 types with all \( h > 0 \). For example,
\begin{align}
\label{h134}
& [ -3254, 5583, 5658 ]^h = [ -1329, 2578, 6738 ]^h, \quad (h = 1, 3, 4)  \tag*{\eqref{h134s6738}} \\
& [-815, 358, 1224]^h = [ -776, -410, 1233]^h, \quad (h = 2, 3, 4)  \tag*{\eqref{h234s1233}} \\
& [-59, -5, -1, 33, 57]^h = [ -55, -23, 13, 39, 51]^h, \;\; (h = 1, 2, 3, 5, 7) \tag*{\eqref{h12357s57}} \\
& [-285, -187, -173, 30, 93, 226, 296]^h \nonumber \\
& \quad = [-264, -250, -89, -47, 170, 177, 303]^h, \;\; (h = 1, 2, 3, 4, 5, 6, 8)  \tag*{\eqref{h1234568s303}} \\
& [-48, -44, -23, -7, 14, 23, 39, 46]^h \nonumber \\
& \quad = [-49, -42, -26, 1, 4, 32, 33, 47]^h, \quad (h = 1, 2, 3, 4, 5, 6, 7, 9) \tag*{\eqref{h12345679s47}}
\end{align}

\begin{example}
\label{exampleh2}
GPTE with all \( h < 0 \)
\end{example}
By applying simple arithmetic transformations using \eqref{LCM}, based on
solutions for GPTE with all \( h > 0 \), so far, ideal integer solutions of GPTE have been found for 15 types with all \( h < 0 \). For example,
\begin{align}
& [-661172085, 662954220, 5719907340]^h \nonumber \\
&\quad= [-607298804, 801159660, 1756828683]^h, \;\; (h = -6, -2, -1)      \tag*{\eqref{hn126s5719907340}} \\
& [-15935205, -188035419, -940177095, 28490215, 16494335]^h \nonumber \\
&\quad=[-17094129, -40877265, 72321315, 24107105, 18434845]^h, \nonumber \\
&\qquad\qquad\qquad\qquad\qquad\qquad\qquad\qquad (h = -7, -5, -3, -2, -1)  \tag*{\eqref{hn12357s940177095}}
\end{align}

\begin{example}
\label{exampleh3}
GPTE with \( h_1 = 0 \) and all others \( h > 0 \)
\end{example}
According to the definition in \eqref{EPES}, \( h_1 = 0 \) means that two distinct sets of integer solutions \( \{ a_1, a_2, \dots, a_n \} \) and \( \{ b_1, b_2, \dots, b_n \} \) of \eqref{GPTEh} have an equal product. Until now, ideal integer solutions of GPTE have been discovered for 6 types with \( h_1 = 0 \) and all others \( h > 0 \). For example,
\begin{align}
& [-138, 9, 62, 67]^h = [ -93, -23, -18, 134]^h, \quad (h = 0, 1, 2, 4)  \tag*{\eqref{h0124s134}} \\
& [ -1518, 14, 1363, 1581]^h = [ -561, 138, 406, 1457]^h, \quad (h = 0, 1, 3, 5) \tag*{\eqref{h0135s1581}} \\
& [ -11716, -6437, -1460, 1175, 7897, 10541]^h \nonumber \\
& \quad = [-10865, -8383, -596, 3683, 4700, 11461]^h, \nonumber \\
& \qquad\qquad\qquad\qquad\qquad\qquad (h = 0, 1, 2, 3, 4, 6) \tag*{\eqref{h012346s11716}}
\end{align}

\begin{example}
\label{exampleh4}
GPTE with \( h_n = 0 \) and all others \( h < 0 \)
\end{example}
By applying simple arithmetic transformations using \eqref{LCM} to the solutions for GPTE with \( h_1 = 0 \) and all others \( h > 0 \), ideal integer solutions of GPTE have been found for 6 types with \( h_n = 0 \) and all others \( h < 0 \). For example,
\begin{align}
& [ -604656, 406980, 1113840, 1867320 ]^h \nonumber \\
& \quad= [-488376, 393120, 671840, 3968055 ]^h, \;\; (h = -5, -3, -1, 0)  \tag*{\eqref{h0n135s3968055}} \\
& [-3600, -150, 160, 288, 450 ]^h = [ -800, -180, 144, 225, 2400 ]^h, \nonumber \\
& \qquad\qquad\qquad\qquad\qquad\qquad\qquad\qquad (h = -6, -4, -2, -1, 0)  \tag*{\eqref{h0n1246s2400}}
\end{align}

\begin{example}
\label{exampleh5}
GPTE with \( h_1 < 0 \) and \( h_n > 0 \)
\end{example}
Until now, ideal integer solutions of GPTE have been found for 23 types with \( h_1 < 0 \) and \( h_n > 0 \). For example,
\begin{align}
& [-891, 85, 286, 702]^h = [ -858, 81, 374, 585]^h, \quad (h = -1, 0, 1, 5) \tag*{\eqref{h015n1s702}} \\
& [-34775, 2247, 21828, 36594 ]^h \nonumber \\
& \quad= [-15246, 2299, 12100, 26741 ]^h, \quad (h = -2, -1, 1, 3) \tag*{\eqref{h13n12s36594}} \\
& [-156, -130, 13, 35, 42, 196 ]^h = [ -147, -140, 14, 26, 52, 195]^h, \nonumber \\
& \qquad\qquad\qquad\qquad\qquad\qquad\qquad\qquad\quad (h = -1, 0, 1, 2, 3, 5) \tag*{\eqref{h01235n1s196}} \\
\label{h12345n1}
&[-13, 26, 52, 130, 156, 195]^h = [-14, 35, 42, 140, 147, 196]^h, \nonumber \\
& \qquad\qquad\qquad\qquad\qquad\qquad\qquad\quad\; (h = -1, 1, 2, 3, 4, 5) \tag*{\eqref{h12345n1s196}}
\end{align}

For Examples\,\ref{exampleh1} to \ref{exampleh5}, the types \( (h = 1, 3, 4) \), \( (h = 2, 3, 4) \), \( (h = 0, 1, 2, 4) \), \( (h = 0, 1, 2, 3, 4, 6) \), and \( (h = -1, 1, 2, 3, 4, 5) \) were first discovered by Ajai Choudhry between 1991 and 2011. The type \( (h = 1, 2, 3, 5, 7) \) was first discovered by G. Palama in 1953. The remaining examples were first discovered by Chen Shuwen between 1997 and 2023. For more examples, please refer to Appendix B.

As mentioned in Section 1.2.1, it has not been proven that any specific type of \( (k = k_1, k_2, \dots, k_n) \) does not have an ideal non-negative integer solution. However, it has been proven that certain types of \( (h = h_1, h_2, \dots, h_n) \) do not have ideal integer solutions \cite{Hua,Hardy38,Choudhry04}. For example,
\begin{align}
\label{h123n}
& [a_{1}, a_{2}, \dots, a_{n}]^{h} = [b_{1}, b_{2}, \dots, b_{n}]^{h}, \quad (h = 1, 2, \dots, n). \\
\label{h2462n}
& [a_{1}, a_{2}, \dots, a_{n}]^{h} = [b_{1}, b_{2}, \dots, b_{n}]^{h}, \quad (h = 2, 4, \dots, 2n). \\
\label{h1251236}
& [a_{1}, a_{2}, \dots, a_{n}]^{h} = [b_{1}, b_{2}, \dots, b_{n}]^{h}, \quad (h = 1, 2, \dots, n-1, n+2).
\end{align}

\subsubsection{Ideal Prime Solution of GPTE}
As early as 1938, Hua Loo-Keng observed that the integer solutions to the PTE problem could all be prime numbers \cite{Hua38}. In recent years, prime number solutions for PTE and GPTE have attracted increasing attention and research \cite{Rivera65,Chen23,JCM}. In this subsection, we restrict the range of ideal solutions discussed in Subsection 1.2.1 from non-negative integers to prime numbers.

\begin{definition}
If a positive solution of system \eqref{GPTEk} exists where all \( \{a_1, a_2, \dots, a_{n+1}\} \) and \( \{b_1, b_2, \dots, b_{n+1}\} \) are prime numbers, we call it an \textbf{ideal prime solution of GPTE}.
\end{definition}

It can be easily shown that prime number solutions for system \eqref{GPTEk} can only exist when all \( k \) are positive integers. As summarized in Example \ref{examplek1}, ideal non-negative integer solutions have been found for 42 types of GPTE where all \( k > 0 \). Among these 42 types, ideal prime solutions have been identified for 27 types, including 10 known types of PTE.

\begin{example}
\label{examplep1}
Ideal prime solutions of GPTE with all \( k > 0 \)
\end{example}

The following are some examples:
\begin{align}
& [89, 277, 337]^k = [139, 233, 353]^k, \quad (k = 2, 4) \tag*{\eqref{k24s353}} \\
& [1777, 5003, 6089]^k = [2657, 3833, 6379]^k, \quad (k = 1, 5) \tag*{\eqref{k15s6379}} \\
& [8417237, 104616559, 111462317]^k \nonumber \\
& \quad = [47946583, 69380393, 127776401]^k, \quad (k = 2, 3) \tag*{\eqref{k23s127776401}} \\
& [896501990958793919143, 2056330598071774290263, \nonumber \\
& \qquad\qquad 3997663854855273138397, 5094457378727364512429]^k \nonumber \\
& \quad = [994237422911295892921, 1908177853245929320403, \nonumber \\
& \qquad\qquad 4082781516440229111169, 5059757030015751535739]^k, \nonumber \\
& \qquad\qquad\qquad\qquad\qquad\qquad\qquad\qquad\qquad\qquad\qquad (k = 1, 3, 4) \tag*{\eqref{k134s5094457378727364512429}} \\
& [83, 149, 337, 439, 503]^k = [71, 173, 313, 463, 491]^k, \;\; (k = 1, 2, 4, 6) \tag*{\eqref{k1246s503}} \\
& [12251, 34511, 42461, 80621, 102881, 141041, 148991, 171251]^k \nonumber \\
& \quad = [13841, 26561, 59951, 63131, 120371, 123551, 156941, 169661]^k, \nonumber \\
& \qquad\qquad\qquad\qquad\qquad\qquad\qquad\qquad\qquad (k = 1, 2, 3, 4, 5, 6, 7) \tag*{\eqref{k1234567s171251}} \\
& [3522263, 4441103, 5006543, 7904423, 9388703, 11897843, \nonumber \\
& \qquad\qquad 13876883, 15361163, 15643883]^k \nonumber \\
& \quad = [3698963, 3981683, 5465963, 7445003, 9954143, 11438423, \nonumber \\
& \qquad\qquad 14336303, 14901743, 15820583]^k, \;\; (k = 1, 2, 3, 4, 5, 6, 7, 8) \tag*{\eqref{k12345678s15820583}} \\
& [32058169621, 32367046651, 32732083141, 33883352071, \nonumber \\
& \qquad\quad 34585345321, 35680454791, 36915962911, 38011072381, \nonumber \\
& \qquad\quad 38713065631, 39864334561, 40229371051, 40538248081]^k \nonumber \\
& \quad = [32142408811, 32198568271, 32900561521, 33658714231, \nonumber \\
& \qquad\quad 34978461541, 35315418301, 37280999401, 37617956161, \nonumber \\
& \qquad\quad 38937703471, 39695856181, 40397849431, 40454008891]^k, \nonumber \\
& \qquad\quad \qquad\qquad\qquad\qquad\qquad (k = 1, 2, 3, 4, 5, 6, 7, 8, 9, 10, 11) \tag*{\eqref{k1234567891011s40538248081}}
\end{align}

\indent
For Example\,\ref{examplep1} above, type \( (k = 1, 2, 3, 4, 5, 6, 7) \) was first found by T.W.A. Baumann in 1999, types \( (k = 1, 5) \) and \( (k = 1, 2, 3, 4, 5, 6, 7, 8, 9, 10, 11) \) were first found by Jarosław Wróblewski in 2002 and 2023, respectively. The remaining types were first found by Chen Shuwen between 2011 and 2023.\\[1mm]
\indent
The following ideal prime solution of GPTE, discovered by Chen Shuwen in 2023 based on \eqref{k15rst}, is currently the numerically largest known one.
\begin{align}
& [68049074651809716616587682328308420187688753216224749729, \nonumber \\
& \qquad\quad 148818734733829951795084131100190065917882950867486851411, \nonumber \\
& \qquad\quad 159627898794439357258113507241692664029601335467631785279]^k \nonumber \\
& \quad = [68691516188504321631164213329991813284950709035020363731, \nonumber \\
& \qquad\quad 146607580632312953765823649961087271271521143632892464049, \nonumber \\
& \qquad\quad 161196611359261750272797457379112065578701186883430558639]^k, \nonumber \\
& \qquad\qquad\qquad\qquad\qquad\qquad\qquad\qquad\qquad\qquad\qquad (k = 1, 5) \tag*{\eqref{k15p2}}
\end{align}

For more examples of these 27 types of ideal prime solutions, see Appendix A.1. Currently, it remains unproven whether for various types \( (k = k_1, k_2, \dots, k_n) \) of GPTE with all \( k > 0 \), if there exist ideal non-negative integer solutions, there must also exist ideal prime solutions.
\subsection{The Generalized PTE Problem of Fermat Form (FPTE)}
In 2023, we noticed a new series where the number of integers in the two sets differs by one \cite{Chen2125}. For instance, Fermat's last theorem belongs to this series.

\begin{problem}\textnormal{\textbf{(The Generalized PTE Problem of Fermat Form)}}\\[1mm]
The generalized Prouhet-Tarry-Escott problem of Fermat form, abbreviated as the FPTE problem, can be stated as:
Given integers \( \{k_1, k_2, \dots, k_n\} \), find two distinct sets of integer solutions \( \{a_1, a_2, \dots, a_{m-1}\} \) and \( \{b_1, b_2, \dots, b_m\} \) that satisfy the Diophantine system:
\begin{equation}
\begin{aligned}
\label{FPTE}
\left[ a_{1}, a_{2}, \dots, a_{m-1} \right]^{k} = \left[ b_{1}, b_{2}, \dots, b_{m} \right]^{k}, \quad (k = k_1, k_2, \dots, k_n).
\end{aligned}
\end{equation}
\end{problem}

It should be noted that, when \( k \) includes zero or negative integers, \eqref{FPTE} cannot be regarded as a special case of \eqref{GPTE}.

\subsubsection{Ideal Positive Integer Solution of FPTE}
We systematically studied the integer solutions of FPTE and noticed that, when \( m \leq n \), only one unique set of positive integer solutions for \eqref{FPTE} has been discovered so far:
\begin{align}
& [4]^k = [2, 2]^k, \quad (k = 0, 1) \nonumber
\end{align}

When \( m = n + 1 \), there exist numerous positive integer solutions for system \eqref{FPTE}. In this case, for easy classification, we use \( r \) instead of \( k \) to denote system \eqref{FPTE}, that is:
\begin{equation}
\begin{aligned}
\label{FPTEr}
\left[ a_{1}, a_{2}, \dots, a_{n} \right]^{r} = \left[ b_{1}, b_{2}, \dots, b_{n+1} \right]^{r}, \quad (r = r_1, r_2, \dots, r_n).
\end{aligned}
\end{equation}

\begin{definition}
We call a positive integer solution of system \eqref{FPTEr} an \textbf{ideal \\positive integer solution of FPTE}.
\end{definition}
So far, ideal positive integer solutions of FPTE have been found for 54 types of \( (r = r_1, r_2, \dots, r_n) \). These 54 types can be grouped into five categories with counts of 22, 22, 5, 3, and 2, respectively. For a detailed overview of these 54 types, please refer to Appendix C.
\begin{example}
\label{exampler1}
FPTE with all \( r > 0 \)
\end{example}
Ideal positive integer solutions of FPTE have been discovered for 22 types with all \( r > 0 \), including 10 known types of PTE. For example:
\begin{align}
& [5]^r = [3, 4]^r, \quad (r = 2) \tag*{\eqref{r2s5}} \\
& [349, 557]^r = [95, 238, 573]^r, \quad (r = 1, 4) \tag*{\eqref{r14s573}} \\
& [223, 642, 770]^r = [42, 160, 698, 735]^r, \quad (r = 1, 2, 5) \tag*{\eqref{r125s770}} \\
& [32, 51, 106, 115]^r = [7, 16, 66, 95, 120]^r, \quad (r = 1, 2, 3, 5) \tag*{\eqref{r1235s120}} \\
& [19, 44, 95, 102, 136]^r = [4, 11, 52, 84, 110, 135]^r, \;\; (r = 1, 2, 3, 4, 6) \tag*{\eqref{r12346s136}}
\end{align}

\begin{example}
FPTE with all \( r < 0 \)
\end{example}
By applying simple arithmetic transformations using \eqref{LCM}, based on
solutions for FPTE with all \( r > 0 \), so far, ideal positive integer solutions of FPTE have been found for 22 types with all \( r < 0 \). For example:
\begin{align}
& [4521479970, 7216230210]^r \nonumber \\
& \quad = [4395225730, 10581782955, 26510150982]^r, \quad (r = -4, -1) \tag*{\eqref{rn14s26510150982}}
\end{align}

\begin{example}
FPTE with \( r_1 = 0 \) and all others \( r > 0 \)
\end{example}
Ideal positive integer solutions of FPTE have been identified for 5 types with \( r_1 = 0 \) and all others \( r > 0 \). For example:
\begin{align}
& [6]^r = [2, 3]^r, \quad (r = 0) \tag*{\eqref{r0s6}} \\
& [2, 6]^r = [1, 3, 4]^r, \quad (r = 0, 1) \tag*{\eqref{r01s6}} \\
& [432, 650]^r = [18, 20, 780]^r, \quad (r = 0, 2) \tag*{\eqref{r02s780}} \\
& [5, 18, 28]^r = [1, 6, 14, 30]^r, \quad (r = 0, 1, 2) \tag*{\eqref{r012s30}} \\
& [2401, 3216, 3690, 4300]^r = [1, 2460, 3010, 4020, 4116]^r, \nonumber \\
& \qquad\quad \qquad\qquad\qquad\qquad\qquad\qquad\qquad (r = 0, 1, 2, 3) \tag*{\eqref{r0123s4300}}
\end{align}

\begin{example}
FPTE with \( r_n = 0 \) and all others \( r < 0 \)
\end{example}
Ideal positive integer solutions of FPTE have been identified for 3 types with \( r_n = 0 \) and all others \( r < 0 \). For example:
\begin{align}
& [1, 30]^r = [2, 3, 5]^r, \quad (r = -1, 0) \tag*{\eqref{r0n1s30}} \\
& [4, 120]^r = [5, 8, 12]^r, \quad (r = -2, 0) \tag*{\eqref{r0n2s120}} \\
& [20, 27, 83538]^r = [17, 52, 189, 270]^r, \quad (r = -2, -1, 0) \tag*{\eqref{r0n12s83538}}
\end{align}

\begin{example}
\label{exampler5}
FPTE with \( r_1 < 0 \) and \( r_n > 0 \)
\end{example}
Ideal positive integer solutions of FPTE have been identified for 2 types with \( r_1 < 0 \) and \( r_n > 0 \). For example:
\begin{align}
& [5, 45]^r = [8, 18, 24]^r, \quad (r = -1, 1) \tag*{\eqref{r1n1s45}} \\
\label{r01n1}
& [33, 70200, 157872]^r = [54, 143, 208, 227700]^r, \quad (r = -1, 0, 1) \tag*{\eqref{r01n1s227700}}
\end{align}

For Example\,\ref{exampler1} to Example\,\ref{exampler5} mentioned above, the types \( (r = 2) \) and \( (r = 0) \) were already well known, whereas the remaining examples were discovered by Chen Shuwen in 2023 \cite{Chen2125}. For more examples, please refer to Appendix C.\\[1mm]
\indent According to Fermat's Last Theorem, the following two series of FPTE do not have positive integer solutions:
\begin{align}
& a_1^r = b_1^r + b_2^r, \qquad (r \geq 3, \; r \in \mathbb{Z}), \\
& a_1^r = b_1^r + b_2^r, \qquad (r \leq -3, \; r \in \mathbb{Z}).
\end{align}

\subsubsection{Ideal Non-Zero Integer Solution of FPTE}
When \( m = n \), there are integer solutions for system \eqref{FPTE}. In this case, for easy classification, we use \( s \) instead of \( k \) to denote system \eqref{FPTE}, that is
\begin{equation}
\begin{aligned}
\label{FPTEs}
\left[ a_{1}, a_{2}, \dots, a_{n-1} \right]^{s} = \left[ b_{1}, b_{2}, \dots, b_{n} \right]^{s}, \quad (s = s_1, s_2, \dots, s_n).
\end{aligned}
\end{equation}
\begin{definition}
We call a non-zero integer solution of system \eqref{FPTEs} an \textbf{ideal non-zero integer solution of FPTE}.
\end{definition}
So far, ideal non-zero integer solutions of FPTE have been discovered for 13 distinct types of \( (s = s_1, s_2, \dots, s_n) \). These 13 types can be categorized into five groups, containing 4, 4, 1, 2, and 2 types, respectively. For a detailed overview of these 13 types, please refer to Appendix D.

\begin{example}
\label{examples1}
FPTE with all \( s > 0 \)
\end{example}
Ideal non-zero integer solutions of FPTE have been found for 4 types with all \( s > 0 \). For example:
\begin{align}
& [ -7, 7 ]^s = [ -8, 3, 5 ]^s, \quad (s = 1, 2, 4) \tag*{\eqref{s124s7}} \\
& [ -38, -13, 51 ]^s = [ -33, -24, 7, 50 ]^s, \quad (s = 1, 2, 3, 5) \tag*{\eqref{s1235s51}} \\
& [ -47, -46, 37, 56 ]^s = [ -54, -35, -7, 44, 52 ]^s, \quad (s = 1, 2, 3, 4, 6) \tag*{\eqref{s12346s56}} \\
& [ -99, -13, 34, 98 ]^s = [ -82, -58, 16, 69, 75 ]^s, \quad (s = 1, 2, 3, 5, 7) \tag*{\eqref{s12357s98}}
\end{align}

\begin{example}
FPTE with all \( s < 0 \)
\end{example}
By applying simple arithmetic transformations using \eqref{LCM}, based on
solutions for FPTE with all \( s > 0 \), so far, ideal non-zero integer solutions of FPTE have been found for 4 types with all \( s < 0 \). For example:
\begin{align}
& [ -120, 120 ]^s = [ -105, 168, 280 ]^s, \quad (s = -4, -2, -1) \tag*{\eqref{sn124s280}} \\
& [ -3803800, 5105100, 14922600 ]^s \nonumber \\
& \quad = [ -27713400, -3879876, 5878600, 8083075 ]^s, \nonumber \\
& \qquad\qquad\qquad\qquad\qquad\quad (s = -5, -3, -2, -1) \tag*{\eqref{sn1235s14922600}}
\end{align}

\begin{example}
FPTE with \( s_1 = 0 \) and all others \( s > 0 \)
\end{example}
So far, for the types where \( s_1 = 0 \) and all other \( s > 0 \), only one ideal non-zero integer solution has been found, and it corresponds to one particular type. That is,
\begin{align}
& [4]^s = [2, 2]^s, \quad (s = 0, 1) \tag*{\eqref{s01s4}}
\end{align}

It is easy to prove that for the type \( (s = 0, 1) \), there is only one such ideal non-zero integer solution listed above, and this solution only contains positive integers.

\begin{example}
FPTE with \( s_n = 0 \) and all others \( s < 0 \)
\end{example}
Ideal non-zero integer solutions of FPTE have been found for 2 types where \( s_n = 0 \) and all other \( s < 0 \). For example,
\begin{align}
& [ -6 ]^s = [ -2, 3 ]^s, \quad (s = -1, 0) \tag*{\eqref{s0n1s3}} \\
& [ -120, 4 ]^s = [ -12, 5, 8 ]^s, \quad (s = -2, -1, 0) \tag*{\eqref{s0n12s8}}
\end{align}

\begin{example}
\label{examples5}
FPTE with \( s_1 < 0 \) and \( s_n > 0 \)
\end{example}
Ideal non-zero integer solutions of FPTE have been found for 2 types with \( s_1 < 0 \) and \( s_n > 0 \). For example,
\begin{align}
& [ -60, 112 ]^s = [ -10, 14, 48 ]^s, \quad (s = -1, 0, 1) \tag*{\eqref{s01n1s112}} \\
& [ -84, 8, 112 ]^s = [ -16, 14, 14, 24 ]^s, \quad (s = -2, -1, 0, 1) \tag*{\eqref{s01n12s112}}
\end{align}
\indent
For the above Example\,\ref{examples1} to Example\,\ref{examples5}, type \( (s = 0, 1) \) is well known. The remaining examples were first discovered by Chen Shuwen in 2023 \cite{Chen2125}. For more examples, please refer to Appendix D.
\subsection{Multigrade Chains}
During the research of PTE and GPTE, multigrade chains have also attracted attention \cite{Chen17,Gloden44,Hardy38,Hua}.

\begin{problem}\textnormal{\textbf{(The Problem of Multigrade Chains)}}\\[1mm]
The problem of multigrade chains can be stated as: Given \mbox{integers} \( \{k_1, k_2, \dots, k_n\} \), find \( j \) distinct sets of integer solutions \( \{a_{11}, a_{12}, \dots, a_{1m}\} \), \( \{a_{21}, a_{22}, \dots, a_{2m}\} \), \(\dots\), \( \{a_{j1}, a_{j2}, \dots, a_{jm}\} \) for a Diophantine system of the form
\begin{align}
\label{MG}
& \left[ a_{11}, a_{12}, \dots, a_{1m} \right]^{k} = \left[ a_{21}, a_{22}, \dots, a_{2m} \right]^{k} = \cdots 
= \left[ a_{j1}, a_{j2}, \dots, a_{jm} \right]^{k}, \nonumber
\\ 
& \qquad\qquad\qquad\qquad\qquad\qquad\qquad\qquad\qquad \left( k = k_1, k_2, \dots, k_n \right).
\end{align}
\end{problem}
We call \( j \) the length of system \eqref{MG}.

\subsubsection{Ideal Non-Negative Integer Chains of GPTE}
When \( m = n + 1 \), there are non-negative integer chains for system \eqref{MG}. In this case, system \eqref{MG} becomes
\begin{flalign}
\label{MGk}
& \quad [a_{11}, a_{12}, \dots, a_{1(n+1)}]^{k} = [a_{21}, a_{22}, \dots, a_{2(n+1)}]^{k} = \dots = [a_{j1}, a_{j2}, \dots, a_{j(n+1)}]^{k}, \nonumber & \\ 
& \qquad\qquad\qquad\qquad\qquad\qquad\qquad\qquad (k = k_1, k_2, \dots, k_n).&
\end{flalign}
\begin{definition}
We call a non-negative integer chain of system \eqref{MGk} an \textbf{ideal non-negative integer chain of GPTE}.
\end{definition}
So far, ideal non-negative integer chains have been discovered for 30 types of \( (k = k_1, k_2, \dots, k_n) \), and these can be categorized into two groups, with 15 types in each. For a detailed overview of these 30 types, please refer to Appendix A.
\begin{example}
\label{examplec1}
Ideal non-negative integer chains of GPTE for arbitrary \( j \)
\end{example}
Ideal non-negative integer chains of GPTE for arbitrary \( j \) have been solved for 15 types of \( (k = k_1, k_2, \dots, k_n) \). Among these 15 types, 4 types are PTE types (degree 1, 2, 3, and 5). For example:
\begin{align}
\label{k1c}
& [0, 9]^k = [1, 8]^k = [2, 7]^k = [3, 6]^k = [4, 5]^k, \quad (k = 1) \tag*{\eqref{k1s9}} \\
& [13, 91]^k = [23, 89]^k = [35, 85]^k = [47, 79]^k = [65, 65]^k, \quad (k = 2) \tag*{\eqref{k2s91}} \\
& [2421, 19083]^k = [5436, 18948]^k = [10200, 18072]^k = [13322, 16630]^k, \nonumber \\
& \qquad\qquad\qquad\qquad\qquad\qquad\qquad\qquad\qquad\qquad\qquad\qquad (k = 3) \tag*{\eqref{k3s19083}} \\
& [0, 16, 17]^k = [1, 12, 20]^k = [2, 10, 21]^k = [5, 6, 22]^k, \quad (k = 1, 2) \tag*{\eqref{k12s22}} \\
& [23, 25, 48]^k = [15, 32, 47]^k = [8, 37, 45]^k = [3, 40, 43]^k, \;\; (k = 2, 4) \tag*{\eqref{k24s48}} \\
& [0, 28, 29, 57]^k = [1, 21, 36, 56]^k = [2, 18, 39, 55]^k = [6, 11, 46, 51]^k, \nonumber \\
& \qquad\qquad\qquad\qquad\qquad\qquad\qquad\qquad\qquad\qquad\qquad\;\; (k = 1, 2, 3) \tag*{\eqref{k123s57}} \\
& [0, 23, 25, 71, 73, 96]^k = [1, 16, 33, 63, 80, 95]^k \nonumber \\
& \quad = [3, 11, 40, 56, 85, 93]^k = [5, 8, 45, 51, 88, 91]^k, \nonumber \\
& \qquad\qquad\qquad\qquad\qquad\qquad\qquad (k = 1, 2, 3, 4, 5) \tag*{\eqref{k12345s96}} \\
& [1, 24]^k = [2, 12]^k = [3, 8]^k = [4, 6]^k, \quad (k = 0) \tag*{\eqref{k0s24}}
\end{align}
The remaining 7 solvable types for arbitrary \( j \) are \( (k = -1) \), \( (k = -2) \), \( (k = -3) \), \( (k = -2, -1) \), \( (k = -4, -2) \), \( (k = -3, -2, -1) \), and \( (k = -5, -4, -3, -2, -1) \).
\begin{example}
\label{examplec2}
Ideal non-negative integer chains of GPTE for a certain \( j \geq 3 \)
\end{example}
Ideal non-negative integer chains of GPTE for a certain \( j \) have been found for 15 types of \( (k = k_1, k_2, \dots, k_n) \). Among these 15 types, for \( (k = 1, 3) \), \( (k = 0, 1) \), \( (k = -3, -1) \), and \( (k = -1, 0) \), ideal non-negative integer chains have been found where \( j = 65 \). For \( (k = 3) \) and \( (k = -3) \), \( j = 6 \). For the remaining 9 types, \( j = 3 \).
\begin{align}
& [2, 52, 58]^k = [4, 46, 62]^k = [13, 32, 67]^k = [22, 22, 68]^k, \;\; (k = 1, 3) \tag*{\eqref{k13s68}} \\
& [24, 201, 216]^k = [66, 132, 243]^k = [73, 124, 244]^k, \quad (k = 1, 4) \tag*{\eqref{k14s244}} \\
& [14, 37, 39, 64]^k = [16, 29, 46, 63]^k = [19, 24, 49, 62]^k, \;\; (k = 1, 2, 4) \tag*{\eqref{k124s64}} \\
& [21, 169, 183, 273]^k = [43, 113, 229, 261]^k = [53, 99, 241, 253]^k, \nonumber \\
& \qquad\qquad\qquad\qquad\qquad\qquad\qquad\qquad\qquad\qquad (k = 1, 3, 5) \tag*{\eqref{k135s273}} \\
& [14, 50, 54]^k = [15, 40, 63]^k = [18, 30, 70]^k = [21, 25, 72]^k, \nonumber \\
& \qquad\qquad\qquad\qquad\qquad\qquad\qquad\qquad\qquad\quad (k = 0, 1) \tag*{\eqref{k01s72}} \\
& [5, 140, 210]^k = [10, 60, 245]^k = [21, 28, 250]^k, \quad (k = 0, 2) \tag*{\eqref{k02s250}} \\
& [9, 28, 30, 65]^k = [10, 20, 39, 63]^k = [13, 14, 45, 60]^k, \nonumber \;\; (k = 0, 1, 2) \tag*{\eqref{k012s65}} \\
\label{k1n1c}
& [45, 165, 198]^k = [48, 120, 240]^k = [65, 70, 273]^k, \quad (k = -1, 1) \tag*{\eqref{k1n1s273}}
\end{align}

For the above Example \,\ref{examplec1} to \,\ref{examplec2}, types \( (k = 1) \) and \( (k = 0) \) are obviously easy to obtain. Methods for types \( (k = 2) \), \( (k = 1, 2) \), \( (k = 1, 2, 3) \), \( (k = 2, 4) \), and \( (k = 1, 2, 3, 4, 5) \) can be found in A.~Gloden's book \cite{Gloden44} or H.~Hardy's book \cite{Hardy38}. Chains for \( (k = 3) \) with \( j = 4 \) were found by E.~Rosenstiel et al. in 1991. Chains for \( (k = 0, 1) \) with \( j = 4 \) were obtained by J.~G.~Mauldon in 1981. Chains for \( (k = 1, 3) \) with \( j = 65 \) were obtained by Jarosław Wróblewski in 2001. The remaining examples were first found by Chen Shuwen between 1997 and 2023. For more examples, please refer to Appendix A.\\[1mm]
\indent For the type of Example\,\ref{examplec2}, it remains unknown whether the length of the chain is finite or infinite. Additionally, for other types where no chain solution has been found, the existence of such solutions is still unproven.
\subsubsection{Ideal Integer Chains of GPTE}
When \( m = n \), there are integer chains for system \eqref{MG}. For easy classification, in this case, we use \( h \) instead of \( k \) to denote system \eqref{MG}, that is
\begin{align}
\label{MGh}
& [ a_{11}, a_{12}, \dots, a_{1n} ]^{h} = [ a_{21}, a_{22}, \dots, a_{2n} ]^{h} = \dots = [ a_{j1}, a_{j2}, \dots, a_{jn} ]^{h}, \nonumber \\
& \qquad\qquad\qquad\qquad\qquad\qquad\qquad\qquad\quad ( h = h_1, h_2, \dots, h_n ) 
\end{align}
\begin{definition}
We call an integer chain of system \eqref{MGh} an \textbf{ideal integer chain of GPTE}.
\end{definition}
So far, ideal integer chains have been found for 12 types of \( (h = h_1, h_2, \dots, h_n) \), and these can be categorized into two groups: one with 8 types and the other with 4 types. For a detailed overview of these 12 types, please refer to Appendix B.
\begin{example}
\label{examplec3}
Ideal integer chains of GPTE for arbitrary $j$
\end{example}
Ideal integer chains of GPTE for arbitrary $j$ have been solved for 8 types of $(h = h_1, h_2, \dots, h_n)$. For example:
\begin{align}
& [-48, 23, 25]^h = [-47, 15, 32]^h = [-45, 8, 37]^h = [-43, 3, 40]^h, \nonumber \\
& \qquad\qquad\qquad\qquad\qquad\qquad\qquad\qquad\qquad\qquad\;\; (h = 1, 2, 4) \tag*{\eqref{h124s40}} \\
& [-30, 55, 66]^h = [-21, 28, 84]^h = [-26, 39, 78]^h = [-9, 10, 90]^h, \nonumber \\
& \qquad\qquad\qquad\qquad\qquad\qquad\qquad\qquad\qquad\qquad\; (h = -1, 1, 2) \tag*{\eqref{h12n1s90}} \\
& [-7, 9, 56, 72]^h = [-10, 15, 50, 75]^h \nonumber \\
& \quad = [-12, 21, 44, 77]^h = [-13, 26, 39, 78]^h, \quad (h = -1, 1, 2, 3) \tag*{\eqref{h123n1s78}} \\
& [-14, 35, 42, 140, 147, 196]^h = [-13, 26, 52, 130, 156, 195]^h \nonumber \\
& \quad = [-8, 11, 72, 110, 171, 190]^h = [-5, 6, 80, 102, 176, 187]^h, \nonumber \\
& \qquad\qquad\qquad\qquad\qquad\qquad\qquad\qquad\quad (h = -1, 1, 2, 3, 4, 5) \tag*{\eqref{h12345n1s196}}
\end{align}
The remaining 4 solvable types for arbitrary $j$ are $(h = -4, -2, -1)$, $(h = -2, -1, 1)$, $(h = -3, -2, -1, 1)$, and $(h = -5, -4, -3, -2, -1, 1)$.
\begin{example}
\label{examplec4}
Ideal integer chains of GPTE for a certain $j \geq 3$
\end{example}
Ideal integer chains of GPTE for a certain $j \geq 3$ have been found for 4 types of $(h = h_1, h_2, \dots, h_n)$. For example:
\begin{align}
& [-1632, 22, 1610]^h = [-1564, 24, 1540]^h = [-1012, 60, 952]^h \nonumber \\
& \quad = [-840, 92, 748]^h = [-782, 110, 672]^h = [-759, 119, 640]^h \nonumber \\
& \quad = [-660, 184, 476]^h = [-644, 204, 440]^h = [-616, 276, 340]^h, \nonumber \\
& \qquad\qquad\qquad\qquad\qquad\qquad\qquad\qquad\qquad\qquad\quad (h = 0, 1, 3) \tag*{\eqref{h013s1610}} \\
& [-231, 11, 23, 197]^h = [-223, -49, 93, 179]^h \nonumber \\
& \quad = [-217, -69, 137, 149]^h, \quad\; (h = 1, 2, 3, 5) \tag*{\eqref{h1235s197}}
\end{align}
The remaining other 2 solvable types for a certain $j \geq 3$ are $(h = -3, -1, 0)$ and $(h = -5, -3, -2, -1)$.\\[1mm]
\indent
For the above Example \,\ref{examplec3} to \,\ref{examplec4}, the method for type $(h = 1, 2, 4)$ can be found in A. Gloden's book \cite{Gloden44}. The types $(h = -1, 1, 2)$, $(h = -1, 1, 2, 3)$, $(h = -1, 1, 2, 3, 4, 5)$, and $(h = 0, 1, 3)$ were first solved by Ajai Choudhry between 2001 and 2011. The type $(h = 1, 2, 3, 5)$ was first discovered by Chen Shuwen in 1997.\\[1mm]
\indent
We observe that for system \eqref{MGh}, depending on different \( (h = h_1, h_2, \dots, h_n) \), the maximum value of \( j \), denoted as \( j_{\text{max}} \), exhibits the following four possibilities:
\begin{enumerate}
    \item \( j_{\text{max}} = 1 \): Neither system \eqref{GPTEh} nor system \eqref{MGh} has ideal integer \\ solutions. For example, see \eqref{h123n}, \eqref{h2462n}, \eqref{h1251236}, \eqref{equationh012345}, \eqref{equationh01234n1} and \eqref{equationh014}.
    \item \( j_{\text{max}} = 2 \): System \eqref{GPTEh} has ideal integer solutions, but system \eqref{MGh} does not have ideal integer chains. For example, see \eqref{equationh134}, \eqref{equationh126}, \eqref{equationh02n1}.
    \item \( j_{\text{max}} \geq 3 \) but finite: System \eqref{MGh} has ideal integer chains of finite length. For example, see \eqref{equationh234}.
    \item \( j_{\text{max}} \) is unbounded: System \eqref{MGh} has ideal integer chains of arbitrary length. See Example \ref{examplec3}.
\end{enumerate}

\subsubsection{Ideal Prime Chains of GPTE}
\begin{definition}
If a positive solution of system \eqref{MGk} with all \( \{a_{11}, a_{12}, \dots, a_{1(n+1)}\} \), \( \{a_{21}, a_{22}, \dots, a_{2(n+1)}\} \), \(\dots\), \( \{a_{j1}, a_{j2}, \dots, a_{j(n+1)}\} \) are primes, we call it an \textbf{ideal prime chain of GPTE}.
\end{definition}

So far, ideal prime chains have been found for 6 types of \( (k = k_1, k_2, \dots, k_n) \) for a certain \( j \geq 3 \).

\begin{example}
\label{examplec5}
Ideal prime solutions of GPTE for a certain \( j \geq 3 \)
\end{example}

As summarized in Example\,\ref{examplep1}, ideal non-negative prime solutions have been found for 27 types of GPTE. Among these 27 types, ideal prime chains have been found for 6 types, including 4 types of PTE. For example,
\begin{align}
& [7, 53]^k = [13, 47]^k = [17, 43]^k = [19, 41]^k = [23, 37]^k = [29, 31]^k, \nonumber \\
& \qquad\qquad\qquad\qquad\qquad\qquad\qquad\qquad\qquad\qquad\qquad\qquad (k = 1) \tag*{\eqref{k1s53}} \\
& [53, 281]^k = [71, 277]^k = [97, 269]^k = [137, 251]^k \nonumber \\
& \quad = [157, 239]^k = [193, 211]^k, \qquad\qquad (k = 2) \tag*{\eqref{k2s281}} \\
& [11, 107, 113]^k = [17, 83, 131]^k = [23, 71, 137]^k, \quad (k = 1, 2) \tag*{\eqref{k12s137}} \\
& [83, 757, 827]^k = [107, 677, 883]^k \nonumber \\
& \quad = [197, 523, 947]^k = [281, 419, 967]^k, \quad (k = 1, 3) \tag*{\eqref{k13s967}} \\
& [59, 137, 163, 241]^k = [61, 127, 173, 239]^k = [71, 103, 197, 229]^k, \nonumber \\
& \qquad\qquad\qquad\qquad\qquad\qquad\qquad\qquad\qquad\qquad\quad (k = 1, 2, 3) \tag*{\eqref{k123s241}} \\
& [1459, 3943, 4159, 9127, 9343, 11827]^k \nonumber \\
& \quad = [1567, 3187, 5023, 8263, 10099, 11719]^k \nonumber \\ 
& \quad = [1783, 2647, 5779, 7507, 10639, 11503]^k, \quad (k = 1, 2, 3, 4, 5) \tag*{\eqref{k12345s11827}}
\end{align}

\indent
In Example \ref{examplec5}, type \( (k = 1) \) is easy to obtain. Type \( (k = 1, 2, 3) \) was first discovered by Carlos Rivera in 1999, while the remaining four types were first discovered by Chen Shuwen between 2016 and 2023.
\clearpage

\section{The Generalized Girard-Newton Identities}
Over the centuries, various methods have been employed in the study of PTE and GPTE types of Diophantine equations. Since 1993, we have recognized the Girard-Newton Identities as one of the more useful methods. Furthermore, we have developed extensive generalizations of the original identities in several ways \cite{Chen21}. 
\subsection{The Girard-Newton Identities}
\subsubsection{Classical Form of the Girard-Newton Identities}
The classical form of the Girard-Newton Identities can be expressed as follows \cite{NewtonIdentities}:
\begin{identity}
\label{identity1}
Let \( n \) and \( k \) be positive integers, and define \( P_k \) as the sum of the \( k \)-th powers of \( n \) numbers \( (a_1, a_2, \dots, a_n) \):
\begin{equation}
\begin{aligned}
&P_{k}= a_{1}^{k}+a_{2}^{k}+a_{3}^{k}+\cdots+a_{n}^{k}.\\
\end{aligned}
\end{equation}
Additionally, define the elementary symmetric sums \( S_1, S_2, \dots, S_n \) as:
\begin{equation}
\begin{aligned}
&S_1=-(a_1+a_2+a_3+\cdots+a_{n}),\\
&S_2=a_1 a_2+a_1 a_3+a_2 a_3+\cdots+a_{n-1}a_{n},\\
&S_3=-(a_1 a_2 a_3 +a_1 a_2 a_4+\cdots+a_{n-2}a_{n-1}a_{n}),\\
&\qquad\vdots\\
&S_n=(-1)^n (a_1 a_2 a_3\cdots a_n).
\end{aligned}
\end{equation}
Then, the Girard-Newton Identities are given by:
\begin{equation}
\begin{aligned}
&P_1+S_1=0,\\
&P_2+S_1 P_1+2 S_2 =0,\\
&P_3+S_1 P_2+S_2 P_1+3 S_3 =0,\\
&\qquad\vdots\\
&P_n+S_1 P_{n-1}+S_2 P_{n-2}+\cdots +S_{n-1}P_1+n S_n =0,\\
&P_k+S_1 P_{k-1}+S_2 P_{k-2}+\cdots +S_{n-1}P_{k-n+1}+S_n P_{k-n}=0, \quad (k>n).\\
\end{aligned}
\end{equation}
\end{identity}
\refIdentity{identity1} is useful in solving PTE problems, as demonstrated in references \cite{Caley12} and \cite{Caley13}.

\subsubsection{Equivalent Form of the Girard-Newton Identities}
From 1999 to 2021, we have discovered several equivalent forms of \refIdentity{identity1}, which are summarized below as \refIdentity{identity2}. These formulas are significantly more useful for investigating PTE or GPTE problems.
\begin{identity}
\label{identity2}
Let $n$ and $k$ be positive integers. Define $P_k$ as the sum of the $k$-th powers of $n$ numbers $(a_1, a_2, \dots, a_n)$:
\begin{equation}
\begin{aligned}
&P_{k}= a_{1}^{k}+a_{2}^{k}+a_{3}^{k}+\cdots+a_{n}^{k}\\
\end{aligned}
\end{equation}
In the following, we provide six equivalent definitions for $S_k$:\\
1. Recursive Form
\begin{equation}
\label{identityRF} 
\begin{aligned}
\begin{cases} 
S_1=-(P_1)/1\\
S_2=-(P_2+S_1 P_1)/2\\
S_3=-(P_3+S_1 P_2+S_2 P_1)/3\\
S_4=-(P_4+S_1 P_3+S_2 P_2+S_3 P_1)/4\\
\qquad\vdots\\
S_k=-(P_k+S_1 P_{k-1}+S_2 P_{k-2}+\cdots +S_{k-1} P_1)/k
\end{cases} \\ 
\end{aligned}
\end{equation}
2. Determinant Form
\label{identityDF} 
\begin{equation}
\begin{aligned}
S_{k}=\frac{(-1)^k}{k!} \begin{vmatrix}
P_1 & 1 & 0 & \cdots & 0 & 0 & 0 \\ 
P_2 & P_1 & 2 & \cdots & 0 & 0 & 0 \\ 
P_3 & P_2 & P_1 & \cdots & 0 & 0 & 0 \\ 
 \vdots & \vdots & \vdots & & \vdots & \vdots & \vdots\\
P_{k-2} & P_{k-3} & P_{k-4} & \cdots & P_1 & {k-2} & 0 \\ 
P_{k-1} & P_{k-2} & P_{k-3} & \cdots & P_2 & P_1 & {k-1} \\ 
P_{k} & P_{k-1} & P_{k-2} & \cdots & P_3 & P_2 & P_1 \\ 
\end{vmatrix}
\end{aligned}
\end{equation}
3. Polynomial Expansion Form
\begin{equation}
\label{identityPEF} 
\begin{aligned}
S_{k}=\sum_{\substack{0\leq i_1,i_2,i_3,\dots,i_k\leq k\\i_1+2 i_2+\cdots +k\cdot i_{k}=k }}\frac{(-P_1/1)^{i_1} (-P_2/2)^{i_2}(-P_3/3)^{i_3} \cdots (-P_k/k)^{i_k}}{i_1 !\; i_2 ! \cdots i_k !} 
\end{aligned}
\end{equation}
4. Simplified Polynomial Expansion Form
\begin{equation}
\label{identitySPEF} 
\begin{aligned}
& S_{k}=\sum_{\substack{0\leq i_1,i_2,i_3,\dots,i_k\leq k\\i_1+2 i_2+\cdots +k\cdot i_{k}=k }}\frac{ Q_1^{i_1} Q_2^{i_2}Q_3^{i_3} \cdots Q_k^{i_k}}{i_1 !\; i_2 ! \cdots i_k !} 
\end{aligned}
\end{equation}
\quad\: where
\begin{equation}
\begin{aligned}
\qquad Q_k=-\frac{P_k}{k}
\end{aligned}
\end{equation}
5. Factorial Polynomial Form
\begin{equation}
\label{identityFPF} 
\begin{aligned}
& S_{k}=\sum_{\substack{0\leq i_1 \leq 1\\ 0\leq i_2,i_3,\dots,i_k\leq k\\i_1+2 i_2+\cdots +k\cdot i_{k}=k }}\frac{ F_1^{i_1} F_2^{i_2} F_3^{i_3} \cdots F_k^{i_k}}{i_1 !\; i_2 ! \cdots i_k !}
\end{aligned}
\end{equation}
\quad\: where
\begin{equation}
\label{identityFKP}
\qquad F_k = 
\begin{cases} 
-P_1, & k = 1, \\[1mm]
\displaystyle\frac{P_1^k - P_k}{k}, & k > 1.
\end{cases}
\end{equation}

6. Inclusion-Exclusion Factorial Form
\begin{equation}
\label{identitySD} 
\begin{aligned}
& S_{k}=\sum_{\substack{0\leq i_1,i_2,i_3,\dots,i_k\leq 1 \\ i_1+2 i_2+\cdots+k\cdot i_{k}=k \\ }}{ D_1^{i_1} D_2^{i_2}D_3^{i_3} \cdots D_k^{i_k}}
\end{aligned}
\end{equation}
\quad\: where
\begin{equation}
\label{identityDP}
\quad D_k = 
\begin{cases} 
-P_1, & k=1, \\[1mm]
\displaystyle\frac{(P_1^k-P_k)}{k}+\sum_{\substack{f_j \mid k,\: \forall j \\ (\text{non-trivial factors})}} (-1)^{(f_j)} \frac{1}{f_j} D_{k/f_j}^{f_j} , & k>1.
\end{cases}
\end{equation}
Using any of these equivalent definitions of $S_{k}$, we obtain:
\begin{equation}
\label{identitySA} 
\begin{aligned}
\begin{cases}  
S_1=-(a_1+a_2+a_3+\cdots+a_{n})\\
S_2=a_1 a_2+a_1 a_3+a_2 a_3+\cdots+a_{n-1}a_{n}\\
S_3=-(a_1 a_2 a_3 +a_1 a_2 a_4+\cdots+a_{n-2}a_{n-1}a_{n})\\
\qquad\vdots\\
S_n=(-1)^n (a_1 a_2 a_3\cdots a_n)\\
S_k=0, \;(k>n)
\end{cases} \\ 
\end{aligned}
\end{equation}
Additionally,
\begin{equation}
\begin{aligned}
\begin{cases}  
P_1+S_1=0\\
P_2+S_1 P_1+2 S_2 =0\\
P_3+S_1 P_2+S_2 P_1+3 S_3 =0\\
\qquad\vdots\\
P_n+S_1 P_{n-1}+S_2 P_{n-2}+\cdots +S_{n-1}P_1+n S_n =0\\
P_k+S_1 P_{k-1}+S_2 P_{k-2}+\cdots +S_n P_{k-n}=0, \;(k>n)
\end{cases} \\ 
\end{aligned}
\end{equation}
\end{identity}
While the author independently discovered these six definitions \cite{Chen21}, subsequent literature review revealed that the first four forms have been previously established by other researchers \cite{Garrett2010,Kostrikin1996,NewtonIdentities,Xi2016}.

Below is an example of \refIdentity{identity2} for \(n=6\). Since this example is for analyzing types $(k=1,2,3,4,5,6)$ and $(k=1,2,3,4,5,7)$, we only need to consider the definitions of $P_k$ for $1\leq k \leq 7$ and $S_7$.

\begin{example}
Denote 
\begin{equation}
\begin{aligned}
P_k=a_1^k+ a_2^k+ a_3^k+ a_4^k+ a_5^k+ a_6^k,\qquad (k=1,2,3,4,5,6,7)
\end{aligned}
\end{equation}
According to the definition of $S_k$ in \refIdentity{identity2}, we have the following six equivalent expressions for $S_7$:
\begin{equation}
\label{identityS7R} 
\begin{aligned}
& S_7=-(P_7+S_1 P_6+S_2 P_5+S_3 P_4+S_4 P_3+S_5 P_2+S_6 P_1)/7,\\
& \qquad \text{where} \,\,
\begin{cases} 
S_1=-(P_1)/1,\\
S_2=-(P_2+S_1 P_1)/2,\\
S_3=-(P_3+S_1 P_2+S_2 P_1)/3,\\
S_4=-(P_4+S_1 P_3+S_2 P_2+S_3 P_1)/4,\\
S_5=-(P_5+S_1 P_4+S_2 P_3+S_3 P_2+S_4 P_1)/5,\\
S_6=-(P_6+S_1 P_5+S_2 P_4+S_3 P_3+S_4 P_2+S_5 P_1)/6.
\end{cases} 
\end{aligned}
\end{equation}

\begin{equation}
\label{identityS7H} 
\begin{aligned}
S_{7}=-\frac{1}{7!} \begin{vmatrix}
 P_1 & 1 & 0 & 0 & 0 & 0 & 0 \\
 P_2 & P_1 & 2 & 0 & 0 & 0 & 0 \\
 P_3 & P_2 & P_1 & 3 & 0 & 0 & 0 \\
 P_4 & P_3 & P_2 & P_1 & 4 & 0 & 0 \\
 P_5 & P_4 & P_3 & P_2 & P_1 & 5 & 0 \\
 P_6 & P_5 & P_4 & P_3 & P_2 & P_1 & 6 \\
 P_7 & P_6 & P_5 & P_4 & P_3 & P_2 & P_1 \\
\end{vmatrix}
\end{aligned}
\end{equation}
\begin{equation}
\label{identityS7P} 
\begin{aligned}
& S_7=\frac{\left(-P_1\right){}^7}{7!}+\frac{\left(-P_1\right){}^5}{5!}\frac{(-P_2)}{2}+\frac{\left(-P_1\right){}^3}{3!}\frac{\left(\frac{-P_2}{2}\right){}^2}{2!}+\left(-P_1\right)\frac{\left(-\frac{P_2}{2}\right){}^3}{3!}\\
& \qquad +\cdots+\frac{(-P_2)}{2} \frac{(-P_5)}{5}+(-P_1)\frac{(-P_6)}{6}+\frac{(-P_7)}{7}.
\end{aligned}
\end{equation}
\begin{equation}
\label{identityS7Q} 
\begin{aligned}
& S_7=\frac{Q_1^7}{7!}+\frac{Q_1^5}{5!} Q_2+\frac{Q_1^3}{ 3!}\frac{Q_2^2}{2!}+Q_1 \frac{Q_2^3}{3!}+\cdots+Q_2 Q_5+Q_1 Q_6+Q_7,\\
&\qquad \text{where}\quad Q_k=-\frac{P_k}{k},\,\, (k \geq 2).
\end{aligned}
\end{equation}
\begin{equation}
\label{identityS7F} 
\begin{aligned}
& S_7=\frac{F_1 F_2^3}{6} +\frac{F_2^2 F_3}{2} +\frac{F_1 F_3^2}{2} +F_1 F_2 F_4+F_3 F_4 +F_2 F_5+F_1 F_6+F_7,\\
&\qquad \text{where} \quad
\begin{cases}
F_1=-P_1,\\
F_k=(P_1^k-P_k)/k,\,\, (k \geq 2).
\end{cases}
\end{aligned}
\end{equation}
\begin{equation}
\label{identityS7D} 
\begin{aligned}
&S_7=D_1 D_2 D_4+D_3 D_4+D_2 D_5+D_1 D_6+D_7, \\
&\qquad \text{where} \quad
\begin{cases}
D_1=-P_1,\\
D_k=(P_1^k-P_k)/k,\,\,\, (k=2,3,5,7),\\
D_4=(P_1^4-P_4)/4+D_2^2/2,\\
D_6=(P_1^6-P_6)/6-D_2^3/3+D_3^2/2.
\end{cases}
\end{aligned}
\end{equation}
According to \eqref{identitySA}, we have
\begin{equation}
\label{identityS70} 
\begin{aligned}
& S_7=0.
\end{aligned}
\end{equation}
By expanding any one of the formulas labeled \eqref{identityS7R}, \eqref{identityS7H}, \eqref{identityS7P}, \eqref{identityS7Q}, \eqref{identityS7F}, or \eqref{identityS7D} and combining it with \eqref{identityS70}, we can obtain the following same result, which indicates the relationship between $P_1,P_2,\cdots,P_7$:
\begin{equation}
\label{identityS7P0} 
\begin{aligned}
& \frac{-P_1^7}{5040}+\frac{P_1^5 P_2}{240}-\frac{P_1^3 P_2^2}{48}+\frac{P_1 P_2^3}{48} -\frac{P_1^4 P_3}{72}+ \frac{P_1^2 P_2 P_3 }{12}-\frac{P_2^2 P_3}{24}-\frac{P_1 P_3^2}{18} \\
&\qquad +\frac{P_1^3 P_4 }{24}-\frac{P_1 P_2 P_4}{8} +\frac{P_3 P_4}{12}-\frac{P_1^2P_5 }{10} +\frac{P_2 P_5}{10} -\frac{P_1 P_6}{6}-\frac{P_7}{7}=0.
\end{aligned}
\end{equation}
By combining \eqref{identityS7Q} with \eqref{identityS70}, we can obtain:
\begin{equation}
\label{identityS7Q0} 
\begin{aligned}
& \frac{Q_1^7}{5040}+\frac{Q_1^5 Q_2}{120} +\frac{Q_1^3  Q_2^2 }{12}+\frac{ Q_1 Q_2^3}{6}+\frac{ Q_1^4 Q_3}{24}+\frac{Q_1^2 Q_2 Q_3 }{2}+\frac{Q_2^2 Q_3}{2}+\frac{Q_1 Q_3^2 }{2} \\
&\qquad   +\frac{ Q_1^3 Q_4}{6}+ Q_1 Q_2 Q_4 +Q_3 Q_4+\frac{ Q_1^2 Q_5}{2} +Q_2 Q_5+Q_1 Q_6 +Q_7=0.
\end{aligned}
\end{equation}
By combining \eqref{identityS7F} with \eqref{identityS70}, we can obtain:
\begin{equation}
\label{identityS7F0} 
\begin{aligned}
& \frac{F_1 F_2^3}{6} +\frac{F_2^2 F_3}{2} +\frac{F_1 F_3^2}{2}+F_1 F_2 F_4 +F_3 F_4 +F_2 F_5+F_1 F_6+F_7=0.
\end{aligned}
\end{equation}
By combining \eqref{identityS7D} with \eqref{identityS70}, we can obtain:
\begin{equation}
\label{identityS7D0} 
\begin{aligned}
& D_1 D_2 D_4+D_3 D_4+D_2 D_5+D_1 D_6+D_7=0.
\end{aligned}
\end{equation}
\end{example}

Among the four equivalent relationships presented in \eqref{identityS7P0}, \eqref{identityS7Q0}, \eqref{identityS7F0}, and \eqref{identityS7D0}, \eqref{identityS7P0} most directly indicates the relationship between \( P_1, P_2, \dots, P_7 \); \eqref{identityS7Q0} has simpler polynomial coefficients compared to \eqref{identityS7P0}, and \( Q_1, Q_2, \dots, Q_7 \) are completely independent variables; \eqref{identityS7F0} is further simplified compared to \eqref{identityS7Q0}, but it should be noted that \( F_2, F_3, \dots, F_7 \) are all related to \( F_1 \); \eqref{identityS7D0} is the simplest, but it should be noted that \( D_2, D_3, \dots, D_7 \) are not completely independent variables. From \eqref{identityS7P0}, \eqref{identityS7Q0}, or \eqref{identityS7F0}, we can directly derive the following two corollaries:

\begin{corollary}\label{corollaryh123456}
There is no non-trivial real number solution for
\begin{equation}
\begin{aligned}
\label{h123456}
a_1^h+a_2^h+a_3^h+a_4^h+a_5^h+a_6^h =  b_1^h+b_2^h+b_3^h+b_4^h+b_5^h+b_6^h,\\
(h=1,2,3,4,5,6)
\end{aligned}
\end{equation}
\end{corollary}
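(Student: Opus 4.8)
The plan is to turn the six equalities of power sums into an equality between two monic degree-$6$ polynomials, from which the coincidence of the two tuples as multisets is immediate.

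First I would set $P_k^{(a)}=a_1^k+\cdots+a_6^k$ and $P_k^{(b)}=b_1^k+\cdots+b_6^k$, so that the hypothesis \eqref{h123456} reads $P_k^{(a)}=P_k^{(b)}$ for $k=1,\dots,6$. Next I would invoke \refIdentity{identity2} — most conveniently the recursive form \eqref{identityRF} — which shows that each of $S_1,\dots,S_6$ is one fixed polynomial, with rational coefficients whose denominators divide $6!$, in $P_1,\dots,P_k$ alone; hence $S_k^{(a)}=S_k^{(b)}$ for $k=1,\dots,6$. By \eqref{identitySA}, these $S_k$ are, up to sign, exactly the elementary symmetric functions of the respective tuple, so $\prod_{i=1}^{6}(x-a_i)=\sum_{k=0}^{6}S_k^{(a)}x^{6-k}$ and $\prod_{i=1}^{6}(x-b_i)=\sum_{k=0}^{6}S_k^{(b)}x^{6-k}$ are the same element of $\mathbb{R}[x]$; unique factorization then forces $\{a_1,\dots,a_6\}=\{b_1,\dots,b_6\}$ as multisets, i.e.\ only the trivial solution occurs.

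Equivalently — and this is the route suggested by the displayed identities above — one may argue straight from \eqref{identityS7P0} (or \eqref{identityS7Q0}, or \eqref{identityS7F0}): for any six reals one has $S_7=0$, and since $P_7$ enters that relation linearly it expresses $P_7$ as a polynomial in $P_1,\dots,P_6$, so $P_7^{(a)}=P_7^{(b)}$; iterating with the analogous vanishing of $S_8,S_9,\dots$ gives $P_k^{(a)}=P_k^{(b)}$ for every $k\ge1$, and a finitely supported list of real numbers is determined by the collection of all its power sums, so the two multisets coincide once more. Either route is short, and I expect no genuine obstacle: the only points to watch are that the coefficients produced by \refIdentity{identity2} are rational, so that solving for $S_1,\dots,S_6$ over $\mathbb{R}$ is legitimate (the one place characteristic zero intervenes), and that ``non-trivial'' is to be read as $\{a_i\}\neq\{b_i\}$ as multisets. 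Thus the corollary is precisely the structural consequence one expects of the generalized Girard-Newton identities, sharpening to the reals the classical fact recalled in Section~\ref{Intro} that \eqref{PTEmk} has no non-trivial solution when $m\le n$.
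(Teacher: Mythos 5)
Your proposal is correct, and your second route is precisely the paper's own argument: deduce $P_7=P'_7$ from \eqref{identityS7P0}, iterate via the higher vanishing relations (the paper cites \eqref{identityDF}) to get $P_h=P'_h$ for all $h\ge 7$, and conclude that the two multisets coincide. Your first route is a mildly more economical variant of the same idea — since \refIdentity{identity2} expresses $S_1,\dots,S_6$ as fixed rational-coefficient polynomials in $P_1,\dots,P_6$ alone, the hypothesis already forces the two monic sextics $\prod(x-a_i)$ and $\prod(x-b_i)$ to be equal, so one never needs the higher power sums at all; but the underlying mechanism (Girard--Newton in characteristic zero) is the same in both.
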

\begin{proof}[Proof Idea] 
Let
\begin{equation}
\begin{aligned}
\label{P6P6}
& P_h = a_1^h + a_2^h + a_3^h + a_4^h + a_5^h + a_6^h, \\
& P'_h = b_1^h + b_2^h + b_3^h + b_4^h + b_5^h + b_6^h.
\end{aligned}
\end{equation}
If \eqref{h123456} admits a non-trivial real solution, i.e., two distinct sets \(\{a_1, a_2, \dots, a_6\}\) and \(\{b_1, b_2, \dots, b_6\}\) satisfy \(P_h = P'_h\) for \(h = 1, 2, \dots, 6\),
then from \eqref{identityS7P0}, we can deduce \(P_7 = P'_7\). Similarly, with the help of \eqref{identityDF}, we can further derive \(P_h = P'_h\) for all \(h \geq 7\). This implies that \(\{a_1, a_2, \dots, a_6\}\) and \(\{b_1, b_2, \dots, b_6\}\) are identical sets. Therefore, \eqref{h123456} admits only trivial real number solutions.
\end{proof}

\begin{corollary}\label{corollaryh123457}
If 
\begin{equation}
\label{equationh123457}
\begin{aligned}
a_1^h + a_2^h + a_3^h + a_4^h + a_5^h + a_6^h = b_1^h + b_2^h + b_3^h + b_4^h + b_5^h + b_6^h,\\ (h=1,2,3,4,5,7)
\end{aligned}
\end{equation}
has a non-trivial real solution, then
\begin{equation}
\begin{aligned}
a_1 + a_2 + a_3 + a_4 + a_5 + a_6 = b_1 + b_2 + b_3 + b_4 + b_5 + b_6 = 0.
\end{aligned}
\end{equation}
\end{corollary}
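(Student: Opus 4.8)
The plan is to exploit the single polynomial relation \eqref{identityS7P0} satisfied by the power sums $P_1,\dots,P_7$ of any six numbers, together with the observation that $P_6$ enters that relation exactly once, and linearly.

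First I would write $P_h = a_1^h+a_2^h+\dots+a_6^h$ and $P'_h = b_1^h+b_2^h+\dots+b_6^h$. Since $\{a_1,\dots,a_6\}$ and $\{b_1,\dots,b_6\}$ are each sets of six numbers, the relation \eqref{identityS7P0} applies to both, giving $f(P_1,\dots,P_7)=0$ and $f(P'_1,\dots,P'_7)=0$, where $f$ denotes the degree-weighted polynomial on the left-hand side of \eqref{identityS7P0}.

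Next I would subtract the two instances. By the hypothesis \eqref{equationh123457} we have $P_h=P'_h$ for $h=1,2,3,4,5,7$, so in the difference every monomial of $f$ built only from these indices cancels. Scanning \eqref{identityS7P0}, the unique monomial containing a sixth power sum is $-P_1 P_6/6$; in particular the $-P_7/7$ term cancels precisely because $h=7$ is among the hypotheses. Hence the subtraction reduces, using $P_1=P'_1$, to $-\frac{P_1}{6}\,(P_6-P'_6)=0$.

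Finally I would split into cases. If $P_6=P'_6$, then $P_h=P'_h$ for all $h=1,\dots,6$, and Corollary~\ref{corollaryh123456} forces the two sets to be identical, contradicting the assumed non-triviality of the solution. Therefore $P_6\neq P'_6$, which forces $P_1=0$; since $P_1=P'_1$, this is exactly the claimed conclusion $a_1+a_2+\dots+a_6=b_1+b_2+\dots+b_6=0$. I do not expect a serious obstacle here: the only points requiring care are the bookkeeping check that $P_6$ occurs in just one term of \eqref{identityS7P0}, and spelling out that ``non-trivial'' means the two sets are genuinely distinct — precisely the situation excluded by Corollary~\ref{corollaryh123456} once all six power sums agree.
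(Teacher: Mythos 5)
Your proposal is correct and follows essentially the same route as the paper: both apply the relation \eqref{identityS7P0} to each side, use the hypothesis to cancel every term except $-P_1P_6/6$, deduce $P_1(P_6-P'_6)=0$, and then rule out $P_6=P'_6$ via the degree-six non-existence result so that $P_1=P'_1=0$. Your write-up is somewhat more explicit than the paper's proof idea (which simply asserts $P_1P_6=P'_1P'_6$ and that $P_6\neq P'_6$ forces $P_1=0$), but there is no substantive difference.
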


\begin{proof}[Proof Idea]  
If \eqref{equationh123457} admits a non-trivial real solution, then from \eqref{P6P6}, it follows that \( P_h = P'_h \) for \( h = 1, 2, 3, 4, 5, 7 \).  
Furthermore, from \eqref{identityS7P0}, we can deduce that \( P_1 P_6 = P'_1 P'_6 \). To satisfy \( P_6 \neq P'_6 \), it is necessary that \( P_1 = P'_1 = 0 \).
\end{proof}
Here is an example of a numerical solution for system \eqref{equationh123457}:
\begin{align}
& [ -71, -44, -20, 31, 37, 67 ]^h= [ -68, -53, 1, 4, 55, 61 ]^h \nonumber \\
& \qquad \qquad\qquad\qquad\qquad\qquad \qquad (h=1,2,3,4,5,7)  \tag*{\eqref{h123457s71}}
\end{align}
We can verify that:
\begin{align}
-71 -44 -20+ 31+ 37+ 67=-68 -53+ 1+ 4+ 55+ 61=0. \nonumber
\end{align}
Below is an example of \refIdentity{identity2} for \(n=3\).

\begin{example}
Denote 
\begin{equation}
\begin{aligned}
P_k=a_1^k+ a_2^k+ a_3^k,\qquad (k=1,2,3,4,5,6)
\end{aligned}
\end{equation}
According to the definition of $S_k$ in \refIdentity{identity2}, we have the following  expressions for $S_1, S_2, S_3, \dots, S_6$:
\begin{align}
\label{identityS1R} 
& S_1=-(P_1)/1,\\
\label{identityS2R} 
& S_2=-(P_2+S_1 P_1)/2,\\
\label{identityS3R} 
& S_3=-(P_3+S_1 P_2+S_2 P_1)/3,\\
\label{identityS4R} 
& S_4=-(P_4+S_1 P_3+S_2 P_2+S_3 P_1)/4,\\
\label{identityS5R} 
& S_5=-(P_5+S_1 P_4+S_2 P_3+S_3 P_2+S_4 P_1)/5,\\
\label{identityS6R} 
& S_6=-(P_6+S_1 P_5+S_2 P_4+S_3 P_3+S_4 P_2+S_5 P_1)/6.
\end{align}
According to \eqref{identitySA}, we have
\begin{align}
\label{identityS40} 
& S_4=0,\\
\label{identityS50} 
& S_5=0,\\
\label{identityS60} 
& S_6=0.
\end{align}
By combining \eqref{identityS4R} with \eqref{identityS40}, \eqref{identityS5R} with \eqref{identityS50}, and \eqref{identityS6R} with \eqref{identityS60}, \mbox{accordingly}, we can obtain:
\begin{align}
\label{identityP40} 
& \frac{P_1^4}{24} - \frac{P_1^2 P_2}{4} + \frac{P_1 P_3}{3} + \frac{P_2^2}{8} - \frac{P_4}{4} = 0, \\[1mm]
\label{identityP50} 
& -\frac{P_1^5}{120} + \frac{P_1^3 P_2}{12} - \frac{P_1^2 P_3}{6} - \frac{P_1 P_2^2}{8} + \frac{P_1 P_4}{4} + \frac{P_2 P_3}{6} - \frac{P_5}{5} = 0, \\[1mm]
\label{identityP60} 
& \frac{P_1^6}{720} - \frac{P_1^4 P_2}{48} + \frac{P_1^3 P_3}{18} + \frac{P_1^2 P_2^2}{16} - \frac{P_1^2 P_4}{8} - \frac{P_1 P_2 P_3}{6} + \frac{P_1 P_5}{5} - \frac{P_2^3}{48}\nonumber \\
& \quad  + \frac{P_3^2}{18} + \frac{P_2 P_4}{8} - \frac{P_6}{6} = 0.
\end{align}
\end{example}

Using \eqref{identityP40}, \eqref{identityP50}, and \eqref{identityP60} above, we analyze the types $(h=1,2,4)$, $(h=1,2,5)$, $(h=1,2,6)$, $(h=1,3,4)$, and $(h=2,3,4)$ in the following several corollaries.

\begin{corollary}\label{corollaryh124}
If
\begin{equation}
\label{equationh124}
\begin{aligned}
a_{11}^h + a_{12}^h + a_{13}^h = a_{21}^h + a_{22}^h + a_{23}^h = \dots = a_{j1}^h + a_{j2}^h + a_{j3}^h, \\
(h = 1, 2, 4)
\end{aligned}
\end{equation}
has a non-trivial real solution, then
\begin{align}
\label{h124a1a2a3}
a_{11} + a_{12} + a_{13} = a_{21} + a_{22} + a_{23} = \dots = a_{j1} + a_{j2} + a_{j3} = 0.
\end{align}
\end{corollary}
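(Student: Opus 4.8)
The plan is to run a pairwise comparison inside the chain, using only the single cubic identity \eqref{identityP40}. For a set of three real numbers write $P_h$ for its $h$-th power sum. The chain hypothesis says that $P_1$, $P_2$, $P_4$ take one common value across all $j$ sets. Fix two of these sets whose underlying multisets are distinct; such a pair exists precisely because the solution is non-trivial, i.e.\ the $j$ sets are not all equal. Denote their power sums by $P_1,P_2,P_3,P_4$ and $P_1',P_2',P_3',P_4'$, so that $P_1=P_1'$, $P_2=P_2'$, $P_4=P_4'$. Applying \eqref{identityP40} to each set and subtracting, every term except the one containing $P_3$ cancels (the surviving contributions involve only $P_1$, $P_2$, $P_4$), leaving
\begin{equation*}
\frac{P_1 P_3}{3}=\frac{P_1' P_3'}{3},
\end{equation*}
and hence, using $P_1=P_1'$, the relation $P_1\,(P_3-P_3')=0$.

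Next I would dispose of the alternative $P_3=P_3'$. In that case the two sets share the values $P_1$, $P_2$, $P_3$, so by the recursive formulas \eqref{identityS1R}, \eqref{identityS2R}, \eqref{identityS3R} (equivalently \refIdentity{identity2} with $n=3$) they share $S_1$, $S_2$, $S_3$ as well. The three numbers of each set are exactly the roots of $t^3+S_1 t^2+S_2 t+S_3=0$, so the two multisets coincide, contradicting our choice of a distinct pair. Therefore $P_1=0$. Finally, the $h=1$ equations in \eqref{equationh124} force all the degree-one power sums in the chain to be equal, so $P_1=0$ propagates to every set, which is exactly \eqref{h124a1a2a3}.

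I do not expect any computational difficulty here: identity \eqref{identityP40} does all of the work, and the exponent $4$ is exactly what makes the omitted exponent $3$ appear linearly, isolating the product $P_1 P_3$. The only points that need a careful word are (i) pinning down what ``non-trivial solution'' means so that a genuinely distinct pair of sets is available — once that pair is in hand the conclusion is forced — and (ii) the step that equality of $P_1,P_2,P_3$ for three real numbers forces the multisets to agree, which rests on recovering the elementary symmetric functions via the Girard–Newton relations together with unique factorization of the monic cubic. Both are routine, and I would regard (i) as the main (minor) obstacle, since it is the only place where the length-$j$ chain formulation, rather than a single pair, must be handled.
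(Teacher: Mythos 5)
Your proof is correct and follows the same route as the paper: the paper's proof idea for Corollary~\ref{corollaryh124} is precisely the observation that \eqref{identityP40} is linear in $P_3$ with coefficient $P_1/3$, so $P_3$ can differ between two sets sharing $P_1,P_2,P_4$ only if $P_1=0$. You have merely supplied the details the paper leaves implicit — in particular the step that equal $P_1,P_2,P_3$ forces equal multisets via the Girard--Newton recovery of $S_1,S_2,S_3$ — and these are filled in correctly.
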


\begin{proof}[Proof Idea]
Let
\begin{equation}
\begin{aligned}
\label{P1P3P4}
& P_1 = a_{11}^1 + a_{12}^1 + a_{13}^1 = a_{21}^1 + a_{22}^1 + a_{23}^1 = \dots = a_{j1}^1 + a_{j2}^1 + a_{j3}^1, \\
& P_2 = a_{11}^2 + a_{12}^2 + a_{13}^2 = a_{21}^2 + a_{22}^2 + a_{23}^2 = \dots = a_{j1}^2 + a_{j2}^2 + a_{j3}^2, \\
& P_4 = a_{11}^4 + a_{12}^4 + a_{13}^4 = a_{21}^4 + a_{22}^4 + a_{23}^4 = \dots = a_{j1}^4 + a_{j2}^4 + a_{j3}^4.
\end{aligned}
\end{equation}
Based on \eqref{identityP40}, when $P_1$, $P_2$, and $P_4$ take certain values, $P_3$ can take at least two distinct real values only if $P_1 = 0$.
\end{proof}

\indent Here is an example of a numerical solution for system \eqref{equationh124}:\begin{align}
& [-48, 23, 25 ]^h = [ -47, 15, 32 ]^h = [ -45, 8, 37 ] ^h= [ -43, 3, 40 ] ^h,\nonumber \\
& \qquad\qquad\qquad\qquad\qquad\qquad\qquad\qquad\qquad\qquad\: (h=1,2,4).  \tag*{\eqref{h124s40}}
\end{align}
We can verify that: $-48+23+25=-47+15+32=-45+8+37=-43+3+40=0$. 

\begin{corollary}\label{corollaryh134}
If
\begin{equation}
\label{equationh134}
\begin{aligned}
a_{11}^h + a_{12}^h + a_{13}^h = a_{21}^h + a_{22}^h + a_{23}^h = \dots = a_{j1}^h + a_{j2}^h + a_{j3}^h, \\
(h = 1, 3, 4)
\end{aligned}
\end{equation}
has a non-trivial real solution, then $j = 2$.
\end{corollary}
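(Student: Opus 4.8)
The plan is to mimic the proof of Corollary~\ref{corollaryh124}, but now the relevant identity will pin down the missing power sum $P_2$ only up to a quadratic, which is exactly what yields the bound $j\le 2$. First I would let $P_1$, $P_3$, $P_4$ denote the common values of the $h$-th power sums of the $j$ triples for $h=1,3,4$, and for each block $i\in\{1,\dots,j\}$ set $\pi_i:=a_{i1}^2+a_{i2}^2+a_{i3}^2$. Feeding $(P_1,\pi_i,P_3,P_4)$ into the $n=3$ identity \eqref{identityP40} and treating $P_1,P_3,P_4$ as fixed constants, each $\pi_i$ must satisfy
\[
\frac{\pi_i^{\,2}}{8}-\frac{P_1^2}{4}\,\pi_i+\left(\frac{P_1^4}{24}+\frac{P_1P_3}{3}-\frac{P_4}{4}\right)=0 .
\]
The leading coefficient $\tfrac18$ never vanishes, so this is a genuine quadratic in $\pi_i$ with at most two real roots; hence the numbers $\pi_1,\dots,\pi_j$ take at most two distinct values.

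The second step is to show that two blocks sharing the same value of $\pi$ must coincide as multisets. By \refIdentity{identity2} applied with $n=3$, the triple $(P_1,\pi_i,P_3)$ determines the elementary symmetric functions $e_1=P_1$, $e_2=(P_1^2-\pi_i)/2$, $e_3=(P_3-P_1\pi_i+e_2P_1)/3$, and therefore the monic cubic $x^3-e_1x^2+e_2x-e_3$ whose roots, counted with multiplicity, are $a_{i1},a_{i2},a_{i3}$. So distinct blocks are forced to have distinct $\pi_i$; combined with the previous step this gives $j\le 2$. Since a non-trivial solution by definition consists of at least two distinct triples, $j\ge 2$, and hence $j=2$. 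I would round this off by pointing to the two-block solution exhibited above at \eqref{h134} to confirm that $j=2$ is attained, mirroring the numerical illustrations attached to the earlier corollaries.

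I do not anticipate a real obstacle: the whole argument turns on the single observation that \eqref{identityP40}, read as an equation in $P_2$, is quadratic rather than linear (contrast the $h=1,2,4$ case, where the analogous step of Corollary~\ref{corollaryh124} produced a degenerate, information-free constraint and allowed arbitrary $j$). The only points that need a moment of care are checking that the leading coefficient is genuinely nonzero, so that ``at most two roots'' holds unconditionally, and comparing the blocks as multisets rather than sets — a triple may contain a repeated entry — which is handled automatically once one passes through the associated cubic.
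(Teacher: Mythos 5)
Your argument is essentially the paper's own proof: the paper likewise reads \eqref{identityP40} as a quadratic in $P_2$ with fixed $P_1,P_3,P_4$ and nonvanishing leading coefficient $\tfrac18$, concluding that $P_2$ takes at most two values and hence $j=2$. Your additional step — recovering the elementary symmetric functions from $(P_1,P_2,P_3)$ via \refIdentity{identity2} to show distinct triples must have distinct $P_2$ — is a correct filling-in of a detail the paper leaves implicit, not a different route.
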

\begin{proof}[Proof Idea]
Based on \eqref{identityP40}, when $P_1$, $P_3$, and $P_4$ are certain values, $P_2$ can take at most two distinct real values. Therefore, $j = 2$. Consequently, there is no ideal solution chain of length $j \geq 3$ for the type $(h = 1, 3, 4)$.
\end{proof}

\begin{corollary}\label{corollaryh234}
If
\begin{equation}
\label{equationh234}
\begin{aligned}
a_{11}^h + a_{12}^h + a_{13}^h &= a_{21}^h + a_{22}^h + a_{23}^h = \dots = a_{j1}^h + a_{j2}^h + a_{j3}^h, \\
(h&=2,3,4)
\end{aligned}
\end{equation}
has a non-trivial real solution, then $2 \leq j \leq 4$.
\end{corollary}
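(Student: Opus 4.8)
The plan is to run the same quartic-root-counting argument that proved Corollary~\ref{corollaryh124} and Corollary~\ref{corollaryh134}, reading the identity \eqref{identityP40} this time as an equation in the unknown first power sum $P_1$ with $P_2,P_3,P_4$ held fixed. First I would set $P_h:=a_{i1}^h+a_{i2}^h+a_{i3}^h$ for $h=2,3,4$; by the hypothesis \eqref{equationh234} these three real numbers do not depend on the chain index $i$. Writing $x_i:=a_{i1}+a_{i2}+a_{i3}$ for the first power sum of the $i$-th triple and substituting into \eqref{identityP40}, one sees that every $x_i$ is a real root of
\begin{equation*}
\frac{1}{24}\,t^{4}-\frac{P_2}{4}\,t^{2}+\frac{P_3}{3}\,t+\left(\frac{P_2^{2}}{8}-\frac{P_4}{4}\right)=0 ,
\end{equation*}
a polynomial of degree exactly $4$ in $t$ (its leading coefficient $1/24$ never vanishes), which therefore has at most four distinct real roots.

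The second step is to check that the assignment $i\mapsto x_i$ is injective on the $j$ triples of the chain. If $x_i=x_{i'}$ for some pair $i\neq i'$, then the triples $\{a_{i1},a_{i2},a_{i3}\}$ and $\{a_{i'1},a_{i'2},a_{i'3}\}$ agree in $P_1,P_2,P_3$; by the recursive formulas \eqref{identityS1R}--\eqref{identityS3R} of \refIdentity{identity2} their elementary symmetric functions $S_1,S_2,S_3$ coincide, so both triples are the (real) root set of one and the same monic cubic $t^{3}+S_1t^{2}+S_2t+S_3$, i.e.\ they are equal as multisets. This contradicts the non-triviality of the chain, so $x_1,\dots,x_j$ are pairwise distinct. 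Since these $j$ distinct real numbers all lie among the at most four real roots of the displayed quartic, we obtain $j\le4$; and $j\ge2$ holds because a non-trivial multigrade chain contains, by definition, at least two distinct triples. Hence $2\le j\le4$, and in particular the chain length for the type $(h=2,3,4)$ is finite.

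I do not expect a real obstacle here; the argument is the same counting idea already used for Corollary~\ref{corollaryh124}. The only two points that need a moment's care are: confirming that \eqref{identityP40} is genuinely of degree four in $P_1$, so that ``at most four roots'' is legitimate; and the injectivity step, which is exactly where the shape of the exponent set $\{2,3,4\}$ is used --- since $P_2$ and $P_3$ are already forced to be equal across the chain, equality of $P_1$ alone then pins the triple down, whereas for an exponent set such as $\{1,3,4\}$ or $\{2,4,6\}$ this reasoning would collapse. For completeness I would also record that the known solution $[-815,358,1224]^h=[-776,-410,1233]^h$ with $h=2,3,4$ realizes the lower bound $j=2$, while deciding whether $j=3$ or $j=4$ can actually occur is a finer question that this counting argument does not settle.
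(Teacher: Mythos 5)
Your argument is essentially the paper's own: the paper's proof idea likewise reads \eqref{identityP40} as a quartic in $P_1$ with $P_2,P_3,P_4$ fixed and concludes that $P_1$ takes at most four distinct real values, hence $2\le j\le 4$. Your write-up is correct and merely fills in the details the paper leaves implicit, notably the injectivity of $i\mapsto P_1$ via the Newton identities \eqref{identityS1R}--\eqref{identityS3R}.
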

\begin{proof}[Proof Idea]
Based on \eqref{identityP40}, when $P_2$, $P_3$, $P_4$ are certain values, $P_1$ can take at most four distinct real values. Hence, $2 \leq j \leq 4$. Therefore, the maximum length of the ideal solution chain for type $(h=2,3,4)$ is 4.
\end{proof}

\begin{corollary}\label{corollaryh125}
There is no non-trivial real number solution for
\begin{equation}
\label{equationh125}
\begin{aligned}
a_1^h + a_2^h + a_3^h = b_1^h + b_2^h + b_3^h, \quad (h=1,2,5)
\end{aligned}
\end{equation}
\end{corollary}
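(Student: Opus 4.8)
The plan is to follow the same route used for Corollaries~\ref{corollaryh124}--\ref{corollaryh234}: apply the Girard--Newton relations \eqref{identityP40} and \eqref{identityP50} to eliminate the ``missing'' power sum $P_4$, thereby expressing $P_5$ through $P_1,P_2,P_3$ alone, and then exploit the fact that the exponent set $(h=1,2,5)$ pins down $P_1,P_2,P_5$ while leaving $P_3$ as the only remaining degree of freedom.

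Concretely, I would first solve \eqref{identityP40} for $P_4$ and substitute the result into \eqref{identityP50}. The point to verify is that, after collecting terms, the dependence on $P_3$ collapses to degree one: one finds
\[
P_5=\frac{P_1^{5}}{6}-\frac{5P_1^{3}P_2}{6}+\frac{5\,(P_1^{2}+P_2)}{6}\,P_3 ,
\]
so that, with $P_1,P_2$ held fixed, $P_5$ is an affine function of $P_3$ with slope $\tfrac{5}{6}(P_1^{2}+P_2)$. Writing $P_h=a_1^h+a_2^h+a_3^h$ and $P_h'=b_1^h+b_2^h+b_3^h$, a non-trivial real solution of \eqref{equationh125} gives $P_1=P_1'$, $P_2=P_2'$, $P_5=P_5'$, and subtracting the two instances of the displayed relation yields $(P_1^{2}+P_2)(P_3-P_3')=0$.

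It then remains to exclude the possibility $P_1^{2}+P_2=0$, and this is exactly where realness enters: over $\mathbb{R}$ we have $P_2=a_1^{2}+a_2^{2}+a_3^{2}\ge 0$ and $P_1^{2}\ge 0$, so $P_1^{2}+P_2=0$ would force $a_1=a_2=a_3=0$, whence $P_2'=P_2=0$ forces $b_1=b_2=b_3=0$ as well, contradicting non-triviality. Hence $P_1^{2}+P_2>0$ and therefore $P_3=P_3'$. With $P_1=P_1'$, $P_2=P_2'$, $P_3=P_3'$ in hand, the recursions \eqref{identityS1R}--\eqref{identityS3R} give $S_1=S_1'$, $S_2=S_2'$, $S_3=S_3'$, so $\{a_1,a_2,a_3\}$ and $\{b_1,b_2,b_3\}$ are the root multisets of one and the same monic cubic $x^{3}+S_1x^{2}+S_2x+S_3$ and thus coincide, the desired contradiction.

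The only real work is the bookkeeping in the elimination step — checking that the $P_3^{2}$, $P_3^{3}$, and stray $P_1P_2^{2}$ contributions all cancel, leaving $P_5$ genuinely linear in $P_3$; the sole arithmetic ingredient beyond that is the sign constraint $P_2\ge 0$, which is precisely why the statement is restricted to real solutions (over a general field the slope could vanish and the argument would fail). I would also remark that, just as for $(h=1,3,4)$ and $(h=2,3,4)$ above, the very same computation settles the corresponding GPTE ideal-solution and multigrade-chain questions for $(k=1,2,5)$.
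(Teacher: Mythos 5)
Your proposal is correct and follows essentially the same route as the paper: eliminating $P_4$ between \eqref{identityP40} and \eqref{identityP50} yields exactly the paper's relation \eqref{equationP1235}, which is linear in $P_3$ with slope proportional to $P_1^2+P_2$, and the contradiction is reached the same way. Your treatment of the degenerate case is in fact slightly more careful than the paper's — you spell out that $P_2=0$ over $\mathbb{R}$ forces all $a_i=0$ and hence, via $P_2'=P_2$, all $b_i=0$, whereas the paper only remarks that $P_1=P_2=0$ "contradicts non-triviality."
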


\begin{proof}
To prove Corollary\,\ref{corollaryh125}, we first combine \eqref{identityP40} and \eqref{identityP50} to obtain the following equation:
\begin{align}
\label{equationP1235}
P_1^5 - 5 P_1^3 P_2 + 5 P_1^2 P_3 + 5 P_2 P_3 - 6 P_5 = 0
\end{align}
Based on \eqref{equationP1235}, when $P_1$, $P_2$, and $P_5$ take certain values, $P_3$ can take at least two distinct real values only if $P_1^2 + P_2 = 0$. However, this is impossible because $P_1^2 + P_2 = 0$ implies that both $P_1$ and $P_2$ must be zero, which contradicts the non-trivial nature of the solution. Therefore, \eqref{equationh125} has no non-trivial real number solution.
\end{proof}

\begin{corollary}\label{corollaryh126}
If
\begin{equation}
\label{equationh126}
\begin{aligned}
a_{11}^h + a_{12}^h + a_{13}^h = a_{21}^h + a_{22}^h + a_{23}^h = \dots = a_{j1}^h + a_{j2}^h + a_{j3}^h, \\
(h = 1, 2, 6)
\end{aligned}
\end{equation}
has a non-trivial real solution, then $j = 2$.
\end{corollary}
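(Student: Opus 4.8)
The plan is to follow the same strategy as in Corollaries \ref{corollaryh124}--\ref{corollaryh125}: combine the three Newton-type relations \eqref{identityP40}, \eqref{identityP50}, \eqref{identityP60}, which hold for an arbitrary triple of real numbers, so as to eliminate $P_4$ and $P_5$ and express $P_6$ as an explicit polynomial $G(P_1,P_2,P_3)$. A solution chain of \eqref{equationh126} forces all $j$ sets to share the same $P_1$ (from $h=1$), the same $P_2$ (from $h=2$), and the same $P_6$ (from $h=6$); hence the value of $P_3$ attached to every set in the chain must be a root of the \emph{fixed} one-variable polynomial $P_3 \mapsto G(P_1,P_2,P_3)-P_6$. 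Bounding the number of such roots bounds the number of distinct sets, since $P_1,P_2,P_3$ determine $S_1,S_2,S_3$ through the recursions \eqref{identityS1R}--\eqref{identityS3R}, and hence determine the multiset $\{a_{i1},a_{i2},a_{i3}\}$ as the roots of $t^3+S_1t^2+S_2t+S_3$.

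The key steps, in order, are: first, read off from \eqref{identityP40} that $P_4$ is an affine function of $P_3$ whose slope and intercept depend only on $P_1,P_2$; second, substitute this into \eqref{identityP50} and observe that $P_5$ is likewise affine in $P_3$; third, substitute both expressions into \eqref{identityP60} and track the $P_3$-degree of every resulting term. The only terms involving $P_3$ are $P_1^3P_3$, $P_1P_2P_3$, $P_1^2P_4$, $P_2P_4$, $P_1P_5$, and the single $\tfrac{P_3^2}{18}$ term, and all of these are at most quadratic in $P_3$, the lone quadratic contribution being $\tfrac{P_3^2}{18}$. Thus $G(P_1,P_2,P_3)$ is quadratic in $P_3$ with leading coefficient the numerical constant $\tfrac{1}{18}\neq 0$, independently of $P_1,P_2$. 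Consequently $G(P_1,P_2,P_3)=P_6$ has at most two real solutions in $P_3$, so the chain contains at most two distinct sets, i.e.\ $j\le 2$; together with $j\ge 2$, which is forced by the existence of a non-trivial solution (as exhibited by the reference to \eqref{equationh126} in Section~1.4.2), this gives $j=2$, and in particular rules out any ideal chain of length $j\ge 3$ for the type $(h=1,2,6)$.

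The only place that needs genuine care is the degree bookkeeping in the final substitution. If the elimination accidentally produced a $P_3^2$ term in $P_5$ — which would then enter \eqref{identityP60} through the $P_1P_5$ term — $G$ would be cubic in $P_3$ and the argument would only yield $j\le 3$; so the main obstacle is verifying cleanly that $P_5$ contains no $P_3^2$ term, which it does not because \eqref{identityP50} is linear in $P_3$ and in $P_4$, and $P_4$ is itself linear in $P_3$. A secondary point is to confirm that the $P_3^2$ coefficient of $G$ is a nonzero \emph{absolute} constant rather than an expression in $P_1,P_2$ that might vanish for special data; this is immediate from \eqref{identityP60}, since $\tfrac{P_3^2}{18}$ is the unique source of that term. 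Once these two observations are in place, the bound $j\le 2$ follows without further computation.
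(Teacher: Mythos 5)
Your proposal is correct and follows essentially the same route as the paper: the paper combines \eqref{identityP40} and \eqref{identityP50} to eliminate $P_4$ (yielding \eqref{equationP1235}) and then substitutes into \eqref{identityP60} to obtain \eqref{equationP1236}, a quadratic in $P_3$ with unit leading coefficient, whence $P_3$ takes at most two values and $j=2$. Your degree bookkeeping — that $P_4$ and hence $P_5$ are affine in $P_3$, so the only $P_3^2$ contribution is the explicit $\tfrac{P_3^2}{18}$ term — is exactly the observation that makes the paper's computation work.
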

\begin{proof}
To prove Corollary\,\ref{corollaryh126}, we first combine \eqref{equationP1235} and \eqref{identityP60} to obtain the following equation:
\begin{align}
\label{equationP1236}
P_3^2 + P_1 \left(P_1^2 + 3 P_2\right) P_3 + \frac{1}{4} \left(P_1^6 - 3 P_2 P_1^4 - 9 P_2^2 P_1^2 + 3 P_2^3 - 12 P_6\right) = 0.
\end{align}
Based on \eqref{equationP1236}, when $P_1$, $P_2$, and $P_6$ take certain values, $P_3$ can take at most two distinct real values. Therefore, $j = 2$. Consequently, there is no ideal solution chain of length $j \geq 3$ for the type $(h = 1, 2, 6)$.
\end{proof}

\begin{example}
Let $n$ be a positive integer. For for all positive integers $k$, define $P_k$ as the sum of the k-th powers of $n$ numbers $(a_1, a_2, ..., a_n):$
\begin{equation}
\begin{aligned}
&P_{k}= a_{1}^{k}+a_{2}^{k}+a_{3}^{k}+\cdots+a_{n}^{k}
\end{aligned}
\end{equation}
According to the definition \eqref{identityFKP}:
\begin{equation}
F_k = 
\begin{cases} 
-P_1, & k = 1, \\[1mm]
\displaystyle\frac{P_1^k - P_k}{k}, & k > 1.
\end{cases}
\tag*{\eqref{identityFKP}} 
\end{equation}
\noindent Based on the definition \eqref{identityFPF},we have the specific expression of $S_k$ as follows:
\begin{align}
& S_1=F_1\\
& S_2=F_2\\
& S_3=F_1 F_2+F_3\\
& S_4=\frac{F_2^2}{2}+F_1 F_3+F_4\\
& S_5=\frac{1}{2} F_1 F_2^2+F_2 F_3+F_1 F_4+F_5\\
& S_6=\frac{F_2^3}{6}+F_1 F_2 F_3+F_2 F_4+\frac{F_3^2}{2}+F_1 F_5+F_6\\
& S_7=\frac{1}{6} F_1 F_2^3+\frac{1}{2} F_2^2 F_3+F_1 F_2 F_4+F_2 F_5+\frac{1}{2} F_1 F_3^2+F_3 F_4\nonumber\\
&\qquad +F_1 F_6+F_7\\
& S_8=\frac{F_2^4}{24}+\frac{1}{2} F_1 F_2^2 F_3+\frac{1}{2} F_2^2 F_4+\frac{1}{2}  F_2 F_3^2+F_1 F_2 F_5+F_2 F_6\nonumber\\
&\qquad +\frac{F_4^2}{2}+F_1 F_3 F_4+F_3 F_5+F_1 F_7+F_8\\
& \qquad\vdots \nonumber
\end{align}
Then according to \refIdentity{identity2}, we have
\begin{equation}
\begin{aligned}
\begin{cases}  
S_1=-(a_1+a_2+a_3+\cdots+a_{n})\\
S_2=a_1 a_2+a_1 a_3+a_2 a_3+\cdots+a_{n-1}a_{n}\\
S_3=-(a_1 a_2 a_3 +a_1 a_2 a_4+\cdots+a_{n-2}a_{n-1}a_{n})\\
\qquad\vdots\\
S_n=(-1)^n (a_1 a_2 a_3\cdots a_n)\\
S_k=0, \;(k>n)
\end{cases} \\
\end{aligned}
\tag*{\eqref{identitySA}} 
\end{equation}
\end{example}

As a specific example, when $n=3$ and applying \eqref{identitySA}, we obtain
\begin{align}
& F_1=-(a_1+a_2+a_3)\\
& F_2=a_1 a_2+a_1 a_3+a_2 a_3\\
& F_1 F_2+F_3=-a_1 a_2 a_3\\
\label{identityF40}
& \frac{1}{2}F_2^2+F_1 F_3+F_4=0\\
\label{identityF50}
& \frac{1}{2}F_1 F_2^2+F_2 F_3+F_1 F_4+F_5=0\\
\label{identityF60}
& \frac{1}{6}F_2^3+F_1 F_2 F_3+F_2 F_4+\frac{1}{2}F_3^2+F_1 F_5+F_6=0\\
& \qquad\vdots \nonumber
\end{align}
In the example above, after adopting the Factorial Polynomial Form of \eqref{identityFPF} for the definition of \( S_k \), the expression for \( S_k \) becomes significantly more concise. Consequently, \eqref{identityP40}, \eqref{identityP50}, and \eqref{identityP60} are simplified into \eqref{identityF40}, \eqref{identityF50}, and \eqref{identityF60}, respectively. Besides being more intuitive, another important significance of the simplified formulas is that they avoid redundant computations during computer searches, thereby improving computational efficiency.
\begin{example}
Let $n$ be a positive integer. For for all positive integers $k$, define $P_k$ as the sum of the k-th powers of $n$ numbers $(a_1, a_2, ..., a_n):$
\begin{equation}
\begin{aligned}
&P_{k}= a_{1}^{k}+a_{2}^{k}+a_{3}^{k}+\cdots+a_{n}^{k}
\end{aligned}
\end{equation}
According to the definition \eqref{identityDP}, we have the specific expression of $D_i$ as follows:
\begin{align}
& D_1 = -P_1 \\
& D_2 = \frac{1}{2} (P_1^2 - P_2)\\
& D_3 = \frac{1}{3} (P_1^3 - P_3)\\
& D_4 = \frac{1}{4} (P_1^4 - P_4) + \frac{D_2^2}{2} \\
& D_5 = \frac{1}{5} (P_1^5 - P_5)\\
& D_6 = \frac{1}{6}(P_1^6 - P_6) - \frac{D_2^3}{3} + \frac{D_3^2}{2} \\
& D_7 = \frac{1}{7}(P_1^7 - P_7) \\
& D_8 = \frac{1}{8}(P_1^8 - P_8) + \frac{D_2^4}{4}+ \frac{D_4^2}{2} \\
& D_9 = \frac{1}{9}(P_1^9 - P_9) - \frac{D_3^3}{3} \\
& D_{10} = \frac{1}{10}(P_1^{10} - P_{10}) - \frac{D_2^5}{5} + \frac{D_5^2}{2} \\
& \qquad\vdots \nonumber
\end{align}
Based on the definition \eqref{identitySD}, we define $S_i$ as:
\begin{align}
& S_1=D_1\\
& S_2=D_2\\
& S_3=D_1 D_2 + D_3\\
& S_4=D_1 D_3 + D_4\\
& S_5=D_2 D_3 + D_1 D_4+D_5\\
& S_6=D_1 D_2 D_3 + D_2 D_4 + D_1 D_5+D_6\\
& S_7=D_1 D_2 D_4+D_3 D_4+D_2 D_5+D_1 D_6+D_7\\
& S_8=D_1 D_3 D_4+D_1 D_2 D_5+D_3 D_5+D_2 D_6+D_1 D_7+D_8\\
& S_9=D_2 D_3 D_4+D_1 D_3 D_5+D_4 D_5+D_1 D_2 D_6+D_3 D_6+D_2 D_7 \nonumber\\
& \qquad +D_1 D_8+D_9\\
& S_{10}=D_1 D_2 D_3 D_4+D_1 D_4 D_5+D_2 D_3 D_5+D_1 D_3 D_6+D_4 D_6 \nonumber\\
& \qquad +D_1 D_2 D_7+D_3 D_7+D_2 D_8+D_1 D_9+D_{10}\\
& \qquad\vdots \nonumber
\end{align}
Then according to \refIdentity{identity2}, we have
\begin{equation}
\begin{aligned}
\begin{cases}  
S_1=-(a_1+a_2+a_3+\cdots+a_{n})\\
S_2=a_1 a_2+a_1 a_3+a_2 a_3+\cdots+a_{n-1}a_{n}\\
S_3=-(a_1 a_2 a_3 +a_1 a_2 a_4+\cdots+a_{n-2}a_{n-1}a_{n})\\
\qquad\vdots\\
S_n=(-1)^n (a_1 a_2 a_3\cdots a_n)\\
S_k=0, \;(k>n)
\end{cases} \\
\end{aligned}
\tag*{\eqref{identitySA}} 
\end{equation}
\end{example}
As a specific example, when $n=6$ and applying \eqref{identitySA}, we obtain
\begin{equation}
\begin{aligned}
\begin{cases}  
D_1=-(a_1+a_2+a_3+a_4+a_5+a_6)\\
D_2=a_1 a_2+a_1 a_3+a_2 a_3+\cdots+a_5 a_6\\
D_1 D_2 + D_3=-(a_1 a_2 a_3 +a_1 a_2 a_4+\cdots+a_4 a_5 a_6)\\
D_1 D_3 + D_4=a_1 a_2 a_3 a_4 +a_1 a_2 a_3 a_5+\cdots+a_3 a_4 a_5 a_6\\
D_2 D_3 + D_1 D_4+D_5=-(a_1 a_2 a_3 a_4 a_5 +\cdots+a_2 a_3 a_4 a_5 a_6)\\
D_1 D_2 D_3 + D_2 D_4 + D_1 D_5+D_6=a_1 a_2 a_3 a_4 a_5 a_6\\
D_1 D_2 D_4+D_3 D_4+D_2 D_5+D_1 D_6+D_7=0\\
D_1 D_3 D_4+D_1 D_2 D_5+D_3 D_5+D_2 D_6+D_1 D_7+D_8=0\\
\qquad\vdots\\
\end{cases} \\ 
\end{aligned}
\end{equation}
In the example above, after adopting the Inclusion-Exclusion Factorial Form of \eqref{identitySD} for the definition of \( S_k \), all formulas become significantly more concise: all coefficients are reduced to 1, all exponents are reduced to 1, and the number of polynomial terms is minimized. However, it should be noted that \( D_2, D_3, D_4,\dots \) are not completely independent variables.

\subsection{Generalization of the Girard-Newton Identities}
In this subsection, we present three distinct generalizations of the Girard-Newton Identities. All of these results were independently derived by the author between 1993 and 2021 \cite{Chen21}, with the primary objective of finding ideal solutions for PTE and GPTE.
\subsubsection{First Generalization of the Girard-Newton Identities}
In the classical Girard-Newton Identities, all power exponents are restricted to positive integers. For the first generalization of the Girard-Newton Identities (referred to as \refIdentity{identity_GNI1} below), we extend the domain of the power exponents from positive integers to all integers \cite{Chen21}. 
\begin{identity}  
\label{identity_GNI1}   
Let $n$ be a positive integer and $k$ be an integer. We denote by    
\begin{equation} 
\label{identity3P} 
\begin{aligned}  
P_{k}=  
\begin{cases}  
a_{1}^{k}+a_{2}^{k}+\cdots+a_{n}^{k}, & k \neq 0 \\  
a_{1}a_{2}\cdots a_{n}, & k=0  
\end{cases}  
\end{aligned}  
\end{equation} 
the power sums and product, respectively. Furthermore, define  
\begin{equation} 
\label{identity3S}  
\begin{aligned}  
&\begin{cases}  
S_{1} = -P_1, \\  
S_k=-(P_k+S_1 P_{k-1}+S_2 P_{k-2}+\cdots+S_{k-1} P_1)/k, & k > 1 \\  
S_{0} = -P_0, \\  
S_{-1} = P_0P_{-1}, \\  
S_k =(-P_0 P_k+S_{-1} P_{k+1}+S_{-2} P_{k+2}+\cdots+S_{k+1}P_{-1})/k, &k < -1 
\end{cases} \\  
\end{aligned}  
\end{equation}  
and  
\begin{equation}  
\label{identity3T} 
\begin{aligned}  
T_{k}=  
\begin{cases}  
(-1)^n S_{k-n}, & k \leq -1, \\  
1 + (-1)^n S_{-n}, & k = 0, \\  
S_k + (-1)^n S_{k-n}, & 1 \leq k \leq n, \\  
S_k, & k \geq n+1.  
\end{cases}  
\end{aligned}  
\end{equation}    
Then, it follows that    
\begin{equation}  
\label{identity3T0} 
\begin{aligned}  
T_{k} = 0, \quad \text{for all } k.  
\end{aligned}  
\end{equation}    
\end{identity}
In fact, similar to Identity \ref{identity2}, there are also six equivalent forms for the \mbox{definition} of $S_k$ in Identity \ref{identity_GNI1}. For brevity, this paper only presents one of these forms, \mbox{specifically} the Recursive Form, for the definition of $S_k$ in Identity \ref{identity_GNI1}.\\
\indent At present, we are unable to provide a complete mathematical proof for Identity \ref{identity_GNI1}. However, when considering certain specific values of $n$ and $k$, the proof becomes considerably simpler, especially in cases where the absolute values of $n$ and $k$ are small integers. Below, we present some illustrative examples of Identity \ref{identity_GNI1}, each of which can be easily verified using Mathematica.
\begin{example}
$Let$
\begin{equation}
\begin{aligned}  
& P_{k}=  
\begin{cases}  
a_1^k+a_2^k+a_3^k+a_4^k+a_5^k+a_6^k, \quad & (k=-1,1,2,3,4,5) \\ 
a_1\cdot a_2\cdot a_3\cdot a_4\cdot a_5\cdot a_6, & (k=0)  \\
\end{cases}  
\end{aligned}  
\end{equation}
based on the definition \eqref{identity3S},we have the specific expression of $S_k$ as follows:
\begin{equation}
\begin{aligned}
& S_1=-(P_1)/1\\
& S_2=-(P_2+S_1 P_1)/2\\
& S_3=-(P_3+S_1 P_2+S_2 P_1)/3\\
& S_4=-(P_4+S_1 P_3+S_2 P_2+S_3 P_1)/4\\
& S_5=-(P_5+S_1 P_4+S_2 P_3+S_3 P_2+S_4 P_1)/5\\
& S_0=-P_0\\
& S_{-1}=P_0 P_{-1}\\
\end{aligned}
\end{equation}
then according to \eqref{identity3T} and \eqref{identity3T0}, we have
\begin{equation}
\label{identity3T5} 
\begin{aligned}
& T_5=S_5+(-1)^6 S_{-1}=0
\end{aligned}
\end{equation}
by expanding \eqref{identity3T5}, it follows that
\begin{equation}
\begin{aligned}
\label{T012345n1}
& P_{-1} P_0-\frac{P_1^5}{120}+\frac{P_1^3 P_2}{12} -\frac{P_1 P_2^2}{8}  -\frac{P_1^2 P_3}{6}  +\frac{P_2 P_3}{6}+\frac{P_1 P_4}{4}-\frac{P_5}{5}=0
\end{aligned}
\end{equation}
\end{example}
The above \eqref{T012345n1} is the specific form of \refIdentity{identity_GNI1} when $n=6$ and  $-1 \leq k \leq 5$. From \eqref{T012345n1}, we can directly obtain the following four corollaries:

\begin{corollary}\label{corollaryh012345}
There is no non-trivial ideal integer solution for the type \((h = 0, 1, 2, 3, 4, 5)\). That is, the system
\begin{equation}
\label{equationh012345}
\begin{cases}  
a_1^h + a_2^h + \cdots + a_6^h = b_1^h + b_2^h + \cdots + b_6^h, \quad & \text{for }\, h = 1, 2, 3, 4, 5, \\ 
a_1 a_2 a_3 a_4 a_5 a_6 = b_1 b_2 b_3 b_4 b_5 b_6, \\
\end{cases}  
\end{equation}
has no non-trivial integer solution.
\end{corollary}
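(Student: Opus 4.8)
The plan is to argue by contradiction, translating the hypothesis into a statement about elementary symmetric functions. Suppose \eqref{equationh012345} has a non-trivial integer solution, and write $P_k = [a_1,\dots,a_6]^k$ and $P'_k = [b_1,\dots,b_6]^k$ for $k \ge 1$. The hypothesis then reads $P_k = P'_k$ for $k = 1,2,3,4,5$, together with the $h=0$ equation, which by \eqref{EPES} is the equality of products $a_1 a_2 \cdots a_6 = b_1 b_2 \cdots b_6$. Let $e_1,\dots,e_6$ denote the elementary symmetric functions of $a_1,\dots,a_6$ and $e'_1,\dots,e'_6$ those of $b_1,\dots,b_6$. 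I aim to show $e_i = e'_i$ for all $i$: this forces $\prod_{i=1}^{6}(x-a_i) = \prod_{i=1}^{6}(x-b_i)$, hence $\{a_1,\dots,a_6\} = \{b_1,\dots,b_6\}$ as multisets, contradicting that the two sets are distinct.

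The crucial step is \eqref{T012345n1}. First I would note that, for six numbers, the term $P_{-1}P_0$ appearing there is exactly the fifth elementary symmetric function: $P_{-1}P_0 = \bigl(\sum_i a_i^{-1}\bigr)(a_1 a_2\cdots a_6) = \sum_i \prod_{j\ne i} a_j = e_5$. Since this last expression is a polynomial identity in $a_1,\dots,a_6$, the relation \eqref{T012345n1} remains valid even when some $a_i$ vanishes, so no separate treatment of the case $a_1 a_2\cdots a_6 = 0$ is needed. Thus \eqref{T012345n1} expresses $e_5$ as a fixed polynomial in $P_1,\dots,P_5$ alone; applied to the $b$'s it yields $e'_5$ as the same polynomial in $P'_1,\dots,P'_5$; and since $P_k = P'_k$ for $1 \le k \le 5$, we conclude $e_5 = e'_5$.

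To finish, I would recover the remaining symmetric functions classically. The recursive form \eqref{identityRF} of \refIdentity{identity2}, applied to $P_1,\dots,P_4$, expresses $S_1,\dots,S_4$ — equivalently the elementary symmetric functions $e_1,\dots,e_4$ — as polynomials in $P_1,\dots,P_4$; since these power sums coincide for the two sets, $e_i = e'_i$ for $i = 1,2,3,4$. Together with $e_5 = e'_5$ from the previous step and $e_6 = a_1 a_2\cdots a_6 = b_1 b_2\cdots b_6 = e'_6$ from the $h=0$ equation, all six elementary symmetric functions agree, and the contradiction follows as above. (Equivalently, once all $e_i$ match, one further use of \refIdentity{identity2} gives $P_6 = P'_6$, so the configuration would also solve the type $(h=1,2,3,4,5,6)$ and Corollary \ref{corollaryh123456} applies; note also that the same reasoning excludes non-trivial real — indeed complex — solutions, the integer statement being a special case.)

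I do not foresee a genuine obstacle here: the heart of the matter is simply that the exponent set $\{0,1,2,3,4,5\}$ supplies precisely the data $e_1,\dots,e_6$ that determines a monic sextic, so any solution must be trivial. The only point demanding a little care is the degenerate case in which some coordinate vanishes and $P_{-1}$ is literally undefined; this is dispatched by the polynomial reinterpretation $P_{-1}P_0 = e_5$ used above. It is worth emphasizing that the argument is tight — it consumes all five power-sum conditions and the product condition at once — which is consistent with the fact that some neighbouring exponent sets do admit ideal integer solutions.
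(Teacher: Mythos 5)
Your proof is correct, and while it pivots on the same identity \eqref{T012345n1} as the paper's Proof Idea, you complete the argument along a different and in fact tighter line. The paper argues that with $P_0,P_1,\dots,P_5$ fixed, $P_{-1}$ can differ between the two sets only if $P_0=0$, and then dismisses $P_0=0$ as contradicting non-triviality; that last dismissal is the weak link, since a non-trivial solution with a vanishing coordinate is not obviously excluded, and the role of $P_{-1}$ in forcing the sets to coincide is left implicit. You instead observe that $P_{-1}P_0=e_5$ is a polynomial identity, so \eqref{T012345n1} is just the Newton relation expressing $e_5$ through $P_1,\dots,P_5$ (equivalently, you could read $e_5$ off the recursive form \eqref{identityRF} at $k=5$ directly, making \eqref{T012345n1} strictly optional here); combined with $e_1,\dots,e_4$ from \eqref{identityRF} and $e_6$ from the $h=0$ equation, all six elementary symmetric functions agree, the monic sextics coincide, and the multisets are equal. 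This buys you two things the paper's sketch does not deliver: the degenerate case of a zero coordinate needs no special pleading, and the conclusion extends verbatim to real (indeed complex) solutions. Your closing remark that one could instead deduce $P_6=P_6'$ and invoke Corollary \ref{corollaryh123456} is also a valid shortcut.
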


\begin{proof}[Proof Idea]
Based on \eqref{T012345n1}, when \( P_0 \), \( P_1 \), \( P_2 \), \( P_3 \), \( P_4 \), and \( P_5 \) are fixed, \( P_{-1} \) can take at least two distinct real values only if \( P_0 = 0 \). However, this is impossible because \( P_0 = 0 \) implies that at least one \( a_i \) and one \( b_i \) must be zero, which contradicts the non-trivial nature of the solution. Therefore, \eqref{equationh012345} has no non-trivial real number solution, and consequently, no non-trivial integer solution exists.
\end{proof}

\begin{corollary}
There is no non-trivial real number solution for the type \((h = -1, 0, 1, 2, 3, 4)\). That is, the system
\begin{equation}
\label{equationh01234n1}
\begin{cases}  
a_1^h + a_2^h + \cdots + a_6^h = b_1^h + b_2^h + \cdots + b_6^h, \quad & \text{for }\, h = -1, 1, 2, 3, 4, \\ 
a_1 a_2 a_3 a_4 a_5 a_6 = b_1 b_2 b_3 b_4 b_5 b_6, \\
\end{cases}  
\end{equation}
has no non-trivial integer solution.
\end{corollary}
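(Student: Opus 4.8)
The plan is to follow the same route as the proof idea for Corollary~\ref{corollaryh012345}, but with the roles of $P_{-1}$ and $P_5$ interchanged. Assume a non-trivial solution of \eqref{equationh01234n1}: two distinct multisets $\{a_1,\dots,a_6\}$, $\{b_1,\dots,b_6\}$ of reals (a fortiori this will also cover the integer case). Since the exponent $h=-1$ occurs, all $a_i,b_i$ must be nonzero. Set $P_k=a_1^k+\cdots+a_6^k$, $P_k'=b_1^k+\cdots+b_6^k$ for $k\neq0$, and $P_0=a_1\cdots a_6$, $P_0'=b_1\cdots b_6$; the hypothesis then reads $P_h=P_h'$ for $h=-1,0,1,2,3,4$, with $P_0=P_0'\neq0$.

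The first step is to feed this data into the specialized identity \eqref{T012345n1}, i.e.\ the case $n=6$, $-1\le k\le5$ of \refIdentity{identity_GNI1} (which is verifiable directly). Since \eqref{T012345n1} is linear in $P_5$---the only $P_5$ term being $-P_5/5$---solving for $P_5$ expresses it as
\[
\tfrac{1}{5}P_5=P_{-1}P_0-\tfrac{P_1^5}{120}+\tfrac{P_1^3P_2}{12}-\tfrac{P_1P_2^2}{8}-\tfrac{P_1^2P_3}{6}+\tfrac{P_2P_3}{6}+\tfrac{P_1P_4}{4},
\]
a polynomial in $P_{-1},P_0,P_1,P_2,P_3,P_4$ alone. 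The same relation holds for the primed quantities, and the arguments on the right-hand side coincide for the two multisets, so we get $P_5=P_5'$. The second step is then routine: we now have $P_h=P_h'$ for $h=1,2,3,4,5$ and $P_0=P_0'$, so by the recursive Girard--Newton identities in \refIdentity{identity2} the power sums $P_1,\dots,P_5$ determine $S_1,\dots,S_5$, while $S_6=(-1)^6 a_1\cdots a_6=P_0$; hence $S_k=S_k'$ for $1\le k\le6$, the monic polynomials $\prod_{i=1}^6(x-a_i)$ and $\prod_{i=1}^6(x-b_i)$ coincide, and the two multisets are equal---contradicting their distinctness.

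I do not expect any real obstacle in this corollary: all the substance is already packaged into \eqref{T012345n1} and \refIdentity{identity2}, and the remaining argument is just the over-determination of a six-element multiset with prescribed product and first five power sums. The only things to be careful about---exactly the same caveats as in Corollary~\ref{corollaryh012345}---are to record that all entries are nonzero (so that $P_{-1}$ and $P_0$ are meaningful and $P_0\neq0$) and to invoke \eqref{T012345n1} only as the already-checked instance of \refIdentity{identity_GNI1}, whose general proof is the one genuinely open point but is not needed here.
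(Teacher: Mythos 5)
Your proof is correct and is essentially the paper's own argument: the paper's proof consists only of the remark that one should ``follow a similar approach to that of Corollary~\ref{corollaryh012345},'' and that approach is exactly what you carry out --- solve \eqref{T012345n1} for $P_5$, which occurs linearly with the nonzero constant coefficient $-1/5$ (so no analogue of the $P_0=0$ degeneracy arises), conclude $P_5=P_5'$, and then close with the Girard--Newton identities. Your write-up is in fact more complete than the paper's, since it makes explicit the final step that equal $P_0,P_1,\dots,P_5$ force the two six-element multisets to coincide.
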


\begin{proof}[Proof Idea]
The proof idea can be derived by following a similar approach to that of Corollary\,\ref{corollaryh012345}.
\end{proof}

\begin{corollary}\label{corollaryh12345n1}
If ${\{a_1, a_2, \cdots, a_6\}}$ and ${\{b_1, b_2, \cdots, b_6\}}$ are real solutions to the following system
\begin{equation}
\label{equationh12345n1}
\begin{aligned}
[a_1,a_2,a_3,a_4,a_5,a_6]^h=[b_1,b_2,b_3,b_4,b_5,b_6]^h,\\ (h=-1,1,2,3,4,5)
\end{aligned}
\end{equation}
then it follows that
\begin{equation}
\begin{aligned}
\label{fomulah12345n1zero}
\frac{1}{a_1}+\frac{1}{a_2}+\frac{1}{a_3}+\cdots+\frac{1}{a_6}=\frac{1}{b_1}+\frac{1}{b_2}+\frac{1}{b_3}+\cdots+\frac{1}{b_6}=0\\
\end{aligned}
\end{equation}
\end{corollary}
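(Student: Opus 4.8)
The plan is to extract the corollary directly from the identity \eqref{T012345n1}, in the same spirit as Corollary \ref{corollaryh123457}, while keeping in mind that the product equation $h = 0$ is \emph{not} among the hypotheses here. First I would fix notation: put $P_h = a_1^h + \dots + a_6^h$, $P'_h = b_1^h + \dots + b_6^h$ for $h \neq 0$, and $P_0 = a_1 a_2 \cdots a_6$, $P'_0 = b_1 b_2 \cdots b_6$. Because the system contains the $h = -1$ equation, no $a_i$ or $b_i$ vanishes, so all of these quantities are well defined and the hypotheses say exactly that $P_h = P'_h$ for $h \in \{-1, 1, 2, 3, 4, 5\}$. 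Then I would write \eqref{T012345n1} once for $(a_1,\dots,a_6)$ and once for $(b_1,\dots,b_6)$ and subtract: in \eqref{T012345n1} every term except $P_{-1} P_0$ is a polynomial in $P_1,\dots,P_5$ alone, so using $P_h = P'_h$ for $h = 1,\dots,5$ all of those terms cancel, leaving $P_{-1} P_0 = P'_{-1} P'_0$. Finally, the $h = -1$ hypothesis $P_{-1} = P'_{-1}$ turns this into $P_{-1}\,(P_0 - P'_0) = 0$.

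The argument then branches: either $P_{-1} = 0$, which is precisely the claimed conclusion (and, via the $h = -1$ equation, forces $P'_{-1} = 0$ too), or $P_0 = P'_0$. I would rule out the second branch as follows. If $P_0 = P'_0$, then $(a_i)$ and $(b_i)$ share the power sums $P_1,\dots,P_5$ and the product $P_0$; feeding $P_1,\dots,P_5$ into the recursive form of \refIdentity{identity2} makes the first five elementary symmetric functions of the two tuples agree, and \eqref{identitySA} identifies the sixth with the common product $P_0 = P'_0$. Hence all six elementary symmetric functions coincide, so $(a_1,\dots,a_6)$ and $(b_1,\dots,b_6)$ are the roots of one and the same monic degree-$6$ polynomial, i.e.\ $\{a_1,\dots,a_6\} = \{b_1,\dots,b_6\}$ as multisets; this is a trivial solution, which (as with the neighbouring corollaries) is excluded. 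Therefore $P_{-1} = 0$, that is $\frac{1}{a_1} + \dots + \frac{1}{a_6} = \frac{1}{b_1} + \dots + \frac{1}{b_6} = 0$.

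I expect the cancellation inside \eqref{T012345n1} to be purely mechanical; the step that needs the most care --- and the one that distinguishes this corollary from Corollary \ref{corollaryh012345}, whose hypothesis already contains $h = 0$ --- is eliminating the branch $P_0 = P'_0$, since nothing among the hypotheses pins down the product $a_1 \cdots a_6$. A cleaner alternative for that branch is to use $P_{-1} = e_5 / e_6$, where $e_j$ denotes the $j$-th elementary symmetric function of the relevant tuple: equal $P_1,\dots,P_5$ already forces equal $e_1,\dots,e_5$, so equal $P_{-1}$ together with $e_5 \neq 0$ would force equal $e_6$ and hence identical multisets; non-triviality then leaves only $e_5 = 0$, i.e.\ $P_{-1} = 0$.
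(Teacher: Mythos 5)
Your proposal is correct and follows essentially the same route as the paper's (sketched) proof: apply \eqref{T012345n1} to both tuples, cancel the terms depending only on $P_1,\dots,P_5$ to get $P_{-1}P_0 = P'_{-1}P'_0$, and use non-triviality to force $P_{-1}=P'_{-1}=0$. The only difference is that you explicitly justify why the branch $P_0 = P'_0$ yields a trivial solution (via the elementary symmetric functions), a step the paper leaves implicit by analogy with Corollary \ref{corollaryh123457}.
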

\begin{proof}[Proof Idea]
The proof is based on \eqref{T012345n1} and follows a similar approach to the proof of  Corollary \ref{corollaryh123457}.
\end{proof}
Here is an example of a numerical integer solution for system \eqref{equationh12345n1}:
\begin{equation}
\begin{aligned}
& [-7, 14, 28, 70, 84, 105]^h=[-6, 10, 33, 65, 88, 104]^h,\\ 
& \qquad\qquad\qquad\qquad\qquad\qquad\quad (h=-1,1,2,3,4,5) 
\end{aligned}
\tag*{\eqref{h12345n1s105}}
\end{equation}
We can verify that:
\begin{align}
\frac{1}{-7}+\frac{1}{14}+\frac{1}{28}+\frac{1}{70}+\frac{1}{84}+\frac{1}{105}=\frac{1}{-6}+\frac{1}{10}+\frac{1}{33}+\frac{1}{65}+\frac{1}{88}+\frac{1}{104}=0
\end{align}

\begin{corollary}\label{corollaryh12345n1}
If ${\{a_1, a_2, \cdots, a_6\}}$ and ${\{b_1, b_2, \cdots, b_6\}}$ are real solutions to the following system
\begin{equation}
\label{equationh01235n1}
\begin{aligned}
[a_1,a_2,a_3,a_4,a_5,a_6]^h=[b_1,b_2,b_3,b_4,b_5,b_6]^h,\\ (h=-1,0,1,2,3,5)
\end{aligned}
\end{equation}
then it follows that
\begin{equation}
\begin{aligned}
\label{fomulah01235n1zero}
a_1+a_2+a_3+a_4+a_5+a_6=b_1+b_2+b_3+b_4+b_5+b_6=0\\
\end{aligned}
\end{equation}
\end{corollary}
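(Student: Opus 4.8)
The plan is to apply the specialization \eqref{T012345n1} of \refIdentity{identity_GNI1} (the case $n=6$, $-1\le k\le 5$) to each of the two sets and subtract. Write $P_k$ for the power sums and product of $\{a_1,\dots,a_6\}$ and $P'_k$ for those of $\{b_1,\dots,b_6\}$. The system \eqref{equationh01235n1} asserts $P_k=P'_k$ for every $k\in\{-1,0,1,2,3,5\}$, while $P_4$ and $P'_4$ are a priori unrelated. Now in \eqref{T012345n1} the quantity $P_4$ appears in exactly one monomial, $\tfrac14 P_1P_4$; every other monomial is built solely from $P_{-1},P_0,P_1,P_2,P_3,P_5$. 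Hence, writing \eqref{T012345n1} once for $\{a_i\}$ and once for $\{b_i\}$ and subtracting, all terms cancel except that one, leaving
\[
\frac{P_1}{4}\,\bigl(P_4-P'_4\bigr)=0 .
\]

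The remaining step is to exclude $P_4=P'_4$ for a genuine solution (that is, with $\{a_i\}\neq\{b_i\}$). If $P_4=P'_4$ held, the two sets would agree on $P_0,P_1,P_2,P_3,P_4,P_5$. By the recursive form \eqref{identityRF}, $P_1,\dots,P_5$ determine $S_1,\dots,S_5$; and since $n=6$, formula \eqref{identitySA} gives $S_6=(-1)^6 a_1a_2\cdots a_6=P_0$. Thus $S_1,\dots,S_6$ would coincide for the two sets, so the monic polynomials $x^6+S_1x^5+S_2x^4+\dots+S_6$ having $\{a_i\}$ and $\{b_i\}$ as root multisets would be equal, forcing $\{a_1,\dots,a_6\}=\{b_1,\dots,b_6\}$ --- a contradiction. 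Therefore $P_4\neq P'_4$, the displayed identity forces $P_1=0$, and combining with $P_1=P'_1$ yields \eqref{fomulah01235n1zero}.

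This argument parallels the proof of Corollary \ref{corollaryh123457}, with $P_4$ playing the role that $P_6$ played there. The one genuinely delicate point is the claim that a distinct-set solution must have $P_4\neq P'_4$: here it is essential that the exponent list contains $0$, since the five equalities $P_1=P'_1,\dots,P_5=P'_5$ by themselves do not pin down $S_6$, and it is precisely the product condition ($h=0$) that supplies it. Everything else reduces to the routine observation that $P_4$ enters \eqref{T012345n1} only through the single term $\tfrac14 P_1P_4$.
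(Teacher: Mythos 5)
Your proof is correct and follows essentially the same route as the paper's: apply the $n=6$, $-1\le k\le 5$ specialization \eqref{T012345n1} to both sets, observe that $P_4$ enters only through the single term $\tfrac14 P_1P_4$, subtract to get $P_1(P_4-P_4')=0$, and rule out $P_4=P_4'$. The one place you go beyond the paper is welcome: where the paper merely asserts ``to satisfy $P_4\neq P_4'$'' in analogy with Corollary~\ref{corollaryh123457}, you actually justify it by recovering $S_1,\dots,S_5$ from the Newton recursion and $S_6$ from the $h=0$ (product) condition, so that $P_4=P_4'$ would force the two root multisets to coincide.
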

\begin{proof}[Proof Idea]
The proof is also based on \eqref{T012345n1} and follows a similar approach to the proof of Corollary \ref{corollaryh12345n1}.
\end{proof}
Here is an example of a numerical integer solution for system \eqref{equationh01235n1}:
\begin{equation}
\begin{aligned}
& [ -156, -130, 13, 35, 42, 196 ]^h = [ -147, -140, 14, 26, 52, 195 ]^h,\\ 
& \qquad\qquad\qquad\qquad\qquad\qquad\qquad\qquad\quad (h=-1,0,1,2,3,5) 
\end{aligned}
\tag*{\eqref{h01235n1s196}}
\end{equation}
We can verify that:
\begin{align}
-156 -130+ 13+ 35+ 42+ 196=-147 -140+ 14+ 26+ 52+ 195=0 \nonumber
\end{align}
By utilizing \eqref{fomulah12345n1zero} and \eqref{fomulah01235n1zero}, it can be effectively applied to solving \mbox{problems} of the types \( (h = -1, 1, 2, 3, 4, 5) \) and \( (h = -1, 0, 1, 2, 3, 5) \), whether for \mbox{deriving} parametric solutions or conducting computer searches. (See Type\,\ref{Typeh12345n1}, \ref{Typeh01235n1}, and Example\,\ref{Parah12345n1}.)
\\[1mm]
\indent Below is an example of \refIdentity{identity_GNI1} for \(n=3\).
\begin{example}
$Let$
\begin{equation}
\begin{aligned}  
& P_{k}=  
\begin{cases}  
a_1^k+a_2^k+a_3^k, \quad & (-1,1,2,3,4) \\ 
a_1\cdot a_2\cdot a_3, & (k=0)  \\
\end{cases}  
\end{aligned}  
\end{equation}
based on the definition \eqref{identity3S},we have the specific expression of $S_k$ as follows:
\begin{equation}
\begin{aligned}
& S_1=-(P_1)/1\\
& S_2=-(P_2+S_1 P_1)/2\\
& S_3=-(P_3+S_1 P_2+S_2 P_1)/3\\
& S_4=-(P_4+S_1 P_3+S_2 P_2+S_3 P_1)/4\\
& S_0=-P_0\\
& S_{-1}=P_0 P_{-1}
\end{aligned}
\end{equation}
then according to \eqref{identity3T} and \eqref{identity3T0}, we have
\begin{equation}
\label{identity3T2T3T4} 
\begin{aligned}
& T_2=S_2+(-1)^3 S_{-1}=0\\
& T_3=S_3+(-1)^3 S_0=0\\
& T_4=S_4=0
\end{aligned}
\end{equation}
by expanding \eqref{identity3T2T3T4}, it follows that
\begin{align}
\label{m3T012n1}
& P_{-1} P_0-\frac{P_1^2}{2}+\frac{P_2}{2}=0\\[1mm]
\label{m3T0123}
& P_0-\frac{P_1^3}{6}+\frac{P_1 P_2}{2}-\frac{P_3}{3}=0\\[1mm]
\label{m3T1234}
& \frac{P_1^4}{24}-\frac{P_1^2 P_2}{4}  +\frac{P_2^2}{8}+\frac{P_1 P_3}{3}-\frac{P_4}{4}=0
\end{align}
\end{example}
Using \eqref{m3T012n1}, \eqref{m3T0123}, and \eqref{m3T1234} above, we analyze the types $(h=-1,0,2)$, and $(h=0,1,4)$ in the following corollaries.

\begin{corollary}\label{corollaryh02n1}
If the type \((h = -1, 0, 2)\), namely, the system
\begin{equation}
\label{equationh02n1}
\begin{cases}
a_{11}^2 + a_{12}^2 + a_{13}^2 & = a_{21}^2 + a_{22}^2 + a_{23}^2 \quad = \dots = a_{j1}^2 + a_{j2}^2 + a_{j3}^2, \\
a_{11} \cdot a_{12} \cdot a_{13} &  = a_{21} \cdot a_{22} \cdot a_{23} \qquad = \dots = a_{j1} \cdot a_{j2} \cdot a_{j3}, \\[1mm]
\displaystyle \frac{1}{a_{11}} + \frac{1}{a_{12}} + \frac{1}{a_{13}} & =\displaystyle \frac{1}{a_{21}} + \frac{1}{a_{22}} + \frac{1}{a_{23}}\,\, = \dots = \frac{1}{a_{j1}} + \frac{1}{a_{j2}} + \frac{1}{a_{j3}}.
\end{cases}
\end{equation}
has a non-trivial real solution, then \( j = 2 \).
\end{corollary}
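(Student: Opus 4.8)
The plan is to follow the template already used for Corollaries~\ref{corollaryh134} and~\ref{corollaryh126}: convert the chain hypothesis into a single polynomial relation constraining the one power sum that is still free, and then bound the number of its real solutions. Suppose $\{a_{i1},a_{i2},a_{i3}\}$, $i=1,\dots,j$, is a non-trivial real solution of \eqref{equationh02n1}. By assumption the three quantities
\[
P_2 = a_{i1}^2+a_{i2}^2+a_{i3}^2,\qquad P_0 = a_{i1}a_{i2}a_{i3},\qquad P_{-1}=\frac{1}{a_{i1}}+\frac{1}{a_{i2}}+\frac{1}{a_{i3}}
\]
do not depend on $i$; only $P_1^{(i)} = a_{i1}+a_{i2}+a_{i3}$ is allowed to vary with $i$.

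First I would invoke \eqref{m3T012n1}, which is exactly \refIdentity{identity_GNI1} for $n=3$ specialized to the exponent window $-1\le k\le 4$ (it is the relation $T_2=0$). Applied to the $i$-th triple it reads $P_{-1}P_0-\tfrac12(P_1^{(i)})^2+\tfrac12 P_2=0$, hence
\[
\bigl(P_1^{(i)}\bigr)^2 = 2P_{-1}P_0 + P_2 .
\]
Since the right-hand side is one fixed real constant, $P_1^{(i)}$ is determined up to sign, so the set $\{P_1^{(1)},\dots,P_1^{(j)}\}$ has at most two elements.

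Next I would observe that each triple is uniquely recovered from $(P_1^{(i)},P_2,P_0)$: by the classical Girard--Newton Identities (\refIdentity{identity1}), the elementary symmetric functions of $a_{i1},a_{i2},a_{i3}$ are $e_1=P_1^{(i)}$, $e_2=\bigl((P_1^{(i)})^2-P_2\bigr)/2$, $e_3=P_0$, so the three numbers are precisely the roots of the monic cubic $t^3-e_1 t^2+e_2 t-e_3$, whose coefficients are fixed once $P_1^{(i)}$ is chosen. Therefore two distinct sets of the chain must carry distinct values of $P_1^{(i)}$; combined with the previous step this yields at most two distinct sets, i.e.\ $j\le 2$. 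A non-trivial solution contains at least two distinct sets by definition, so $j=2$, and in particular there is no ideal integer chain of length $j\ge 3$ for the type $(h=-1,0,2)$.

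I do not expect a genuine obstacle here; once the correct specialization of \refIdentity{identity_GNI1} is written down the argument is a one-line root count, just as in Corollaries~\ref{corollaryh134} and~\ref{corollaryh126}. The only point that merits a remark is the degenerate behaviour of the constant $2P_{-1}P_0+P_2$: if it is negative there is no real $P_1$ at all, and if it is zero then $P_1^{(i)}=0$ is forced for every $i$, making all triples coincide and contradicting non-triviality; hence whenever a non-trivial real solution exists one necessarily has $2P_{-1}P_0+P_2>0$, and the two sets of the chain realize the two sign choices $P_1^{(i)}=\pm\sqrt{2P_{-1}P_0+P_2}$.
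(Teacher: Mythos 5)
Your proposal is correct and follows essentially the same route as the paper: the paper's own proof idea is precisely that, by \eqref{m3T012n1}, once $P_{-1}$, $P_0$, $P_2$ are fixed the power sum $P_1$ satisfies $P_1^2 = 2P_{-1}P_0 + P_2$ and hence takes at most two real values, giving $j=2$. Your additional observations (that each triple is recovered uniquely from its symmetric functions, and the sign/degeneracy analysis of the constant $2P_{-1}P_0+P_2$) merely fill in details the paper leaves implicit.
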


\begin{proof}[Proof Idea]
Based on \eqref{m3T012n1}, when \( P_{-1} \), \( P_0 \), and \( P_2 \) are fixed, \( P_1 \) can take at most two distinct real values. Therefore, \( j = 2 \). Consequently, there exists no ideal solution chain of length \( j \geq 3 \) for the type \( (h = -1, 0, 2) \).
\end{proof}

\begin{corollary}\label{corollaryh014}
There is no non-trivial real number solution for the type \((h = 0, 1, 4)\), namely, the system
\begin{equation}
\label{equationh014}
\begin{cases}
a_1^h + a_2^h + a_3^h = b_1^h + b_2^h + b_3^h, \quad & \text{for }\, h=1,4,\\
a_1\cdot a_2\cdot a_3 =b_1\cdot b_2\cdot b_3.
\end{cases}
\end{equation}
\end{corollary}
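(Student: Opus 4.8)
The plan is to use the two identities \eqref{m3T0123} and \eqref{m3T1234} to eliminate $P_3$, reduce everything to a single quadratic constraint on $P_2$, and then close the argument with an elementary inequality valid for real triples. Write $P_k$ for the power sums of $(a_1,a_2,a_3)$ and $P_k'$ for those of $(b_1,b_2,b_3)$, with $P_0=a_1a_2a_3$ and $P_0'=b_1b_2b_3$, exactly as in the example preceding \eqref{m3T012n1}. The hypothesis \eqref{equationh014} is precisely $P_0=P_0'$, $P_1=P_1'$ and $P_4=P_4'$.

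First I would solve \eqref{m3T0123} for $P_3$ and substitute into \eqref{m3T1234}. After clearing denominators this yields a relation quadratic in $P_2$ whose coefficients depend only on $P_0,P_1,P_4$:
\begin{equation}
P_2^2 + 2P_1^2 P_2 + 8P_0P_1 - P_1^4 - 2P_4 = 0.
\end{equation}
Since $P_0,P_1,P_4$ are common to both sides of \eqref{equationh014}, both $P_2$ and $P_2'$ are roots of this single quadratic. If $P_2=P_2'$, then the two triples share all three elementary symmetric functions ($e_1=P_1$, $e_2=(P_1^2-P_2)/2$, $e_3=P_0$), so $\{a_1,a_2,a_3\}$ and $\{b_1,b_2,b_3\}$ coincide and the solution is trivial. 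Hence for a non-trivial solution $P_2\neq P_2'$, so $P_2$ and $P_2'$ are the two distinct roots and $P_2+P_2'=-2P_1^2$.

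Next I would pass to the second elementary symmetric functions. With $e_2^{(a)}=(P_1^2-P_2)/2$ and $e_2^{(b)}=(P_1^2-P_2')/2$ the relation above gives $e_2^{(a)}+e_2^{(b)}=2P_1^2$. On the other hand, for any three real numbers $3(a_1^2+a_2^2+a_3^2)\ge(a_1+a_2+a_3)^2$, i.e. $3(P_1^2-2e_2)\ge P_1^2$, so $e_2\le P_1^2/3$ for each real triple. Combining, $2P_1^2=e_2^{(a)}+e_2^{(b)}\le \frac{2P_1^2}{3}$, which forces $P_1=0$. But then $e_2^{(a)}+e_2^{(b)}=0$ with $e_2^{(a)},e_2^{(b)}\le 0$, so $e_2^{(a)}=e_2^{(b)}=0$, and hence $P_2=P_1^2-2e_2^{(a)}=0=P_2'$, contradicting $P_2\neq P_2'$. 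Therefore \eqref{equationh014} has no non-trivial real solution.

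The one point that needs care is the sign produced by the reduction: combining the product identity \eqref{m3T0123} with \eqref{m3T1234} gives the quadratic with linear term $+2P_1^2P_2$ (so $P_2+P_2'=-2P_1^2$ and $e_2^{(a)}+e_2^{(b)}=2P_1^2$), which is exactly what makes the bound $e_2\le P_1^2/3$ bite; by contrast, for a type such as $(h=1,3,4)$ one has only \eqref{m3T1234}, the analogous quadratic has linear term $-2P_1^2P_2$, and $e_2^{(a)}+e_2^{(b)}=0$ yields no contradiction — consistent with the fact that $(h=1,3,4)$ does admit non-trivial solutions. An alternative closing argument replaces the inequality $e_2\le P_1^2/3$ by the requirement that the cubics $x^3-P_1x^2+e_2^{(a)}x-P_0$ and $x^3-P_1x^2+e_2^{(b)}x-P_0$ have nonnegative discriminants (the condition for all roots to be real); adding the two discriminants gives $28P_1^3P_0-6P_1^6-54P_0^2-22P_1^2t^2$ with $t=e_2^{(a)}-P_1^2$, and since the form $6X^2-28XY+54Y^2$ is positive definite this forces $P_1=P_0=0$, whence the two discriminants become $-4t^3$ and $4t^3$, so $t=0$. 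The variance-inequality argument is shorter, so I would present that one.
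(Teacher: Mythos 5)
Your proposal is correct and follows essentially the same route as the paper: both combine \eqref{m3T0123} and \eqref{m3T1234} to obtain the same quadratic \(P_2^2+2P_1^2P_2+8P_0P_1-P_1^4-2P_4=0\) and then rule out a second real value of \(P_2\) by a positivity constraint on sums of three real squares. The paper does the last step by writing the smaller root explicitly as \(-P_1^2-\sqrt{2}\sqrt{P_1^4-4P_0P_1+P_4}\le 0\), whereas you use Vieta plus \(3P_2\ge P_1^2\); these are the same sign observation in different packaging.
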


\begin{proof}
To prove Corollary\,\ref{corollaryh014}, we first combine \eqref{m3T0123} and \eqref{m3T1234} to obtain the following equation:
\begin{align}
\label{equationP0124}
P_0 P_1-\frac{P_1^4}{8}+\frac{1}{4} P_2 P_1^2+\frac{P_2^2}{8}-\frac{P_4}{4} = 0
\end{align}
Based on \eqref{equationP0124}, when $P_{0}$, $P_1$, and $P_4$ take certain values, $P_2$ has two roots. However, one of the roots, \mbox{$-P_1^2 - \sqrt{2} \sqrt{P_1^4 - 4P_0P_1 + P_4}$}, is never a positive real number, which contradicts the definition of $P_2$. Therefore, \eqref{equationh014} has no non-trivial real number solution.
\end{proof}

\subsubsection{Second Generalization of the Girard-Newton Identities}
By further expanding the defined formula \eqref{identity3P}, we have extended the First Generalization of the Girard-Newton Identities (Identity \ref{identity_GNI1}) to a more general form, called the Second Generalization of the Girard-Newton Identities (hereinafter referred to as Identity \ref{identity_GNI2}). \cite{Chen21}
\begin{identity}
\label{identity_GNI2}
Let $n$ and $m$ be positive integers, and let $k$ be an integer. We introduce the following definitions:
\begin{equation}
\begin{aligned}
P_{k}=\begin{cases}  a_{1}^{k}+a_{2}^{k}+\cdots+a_{n}^{k}-(b_{1}^{k}+b_{2}^{k}+\cdots+b_{m}^{k}),\ \ &k\neq 0 \\[2mm]
\displaystyle -\frac{b_{1}b_{2}\cdots b_{m}}{a_{1}a_{2}\cdots a_{n}}, &k=0
\end{cases}\\
\end{aligned}
\end{equation}
We further define $T_k$ based on the following cases:
\begin{equation}
\begin{aligned}
T_k=\begin{cases} (P_k+T_1P_{k-1}+T_2 P_{k-2}+\cdots+T_{k-1}P_1)/k, &k\geq1\\P_0, &k=0\\(P_0 P_k+T_{-1}P_{k+1}+T_{-2} P_{k+2}+\cdots+T_{k+1}P_{-1})/(-k), &k\leq{-1}\end{cases}\\
\end{aligned}
\end{equation}
Furthermore, we define $S_k$ as:
\begin{equation}
\begin{aligned}
&S_k=\gamma_{k}+\delta_{k},\quad \text{where} \\&\qquad\gamma_{k}=\begin{cases}0, & k<0 \\
1, & k=0 \\T_{k}, & k>0\end{cases}, \quad\delta_{k}=\begin{cases}(-1)^{m-n}T_{k-m+n}, 
& k \leq m-n \\0, & k > m-n
\end{cases}
\end{aligned}
\end{equation}
For ${0}\leq{r}<{n}$, we define the determinant $W_{n k+r}$ as:
\begin{equation}
\begin{aligned}
W_{n k+r}=\begin{vmatrix}
 S_{k} & S_{k-1}  & \cdots & S_{k-n+2}& S_{k-n+1}\\
 S_{k+1} & S_{k}  & \cdots & S_{k-n+3}& S_{k-n+2}\\
 \vdots & \vdots  & \vdots & \vdots & \vdots\\
 S_{k+n-r-1} & S_{k+n-r-2} &  \cdots & S_{k-r+1}& S_{k-r}\\
 S_{k+n-r+1} & S_{k+n-r} &  \cdots & S_{k-r+3}& S_{k-r+2}\\
 \vdots & \vdots &  \vdots & \vdots & \vdots\\
 S_{k+n-1} & S_{k+n-2}  & \cdots & S_{k+1}& S_{k}\\
 S_{k+n} & S_{k+n-1} &  \cdots & S_{k+2}& S_{k+1}\\
\end{vmatrix}.
\end{aligned}
\end{equation}
With these definitions, the following identities hold:
\begin{equation}
\begin{aligned}
& W_{nm}=\prod _{i=1}^{n} \prod _{j=1}^{m} (a_i-b_j)\:,\\
& W_{nk+1}=(a_1+a_2+\cdots+a_n) \cdot W_{nk} \:,\\
& W_{nk+2}=(a_1a_2+a_1a_3+\cdots+a_{n-1}a_n) \cdot W_{nk}\:, \\
& W_{nk+3}=(a_1a_2a_3+a_1a_2a_4+\cdots+a_{n-2}a_{n-1}a_n) \cdot W_{nk}\:, \\
& \quad\quad\vdots \\
& W_{nk+n}=(a_1a_2\cdots a_n) \cdot W_{nk}\:.
\end{aligned}
\end{equation}
\end{identity}
These identities provide relationships between the determinants $W_{n k+r}$ and the products or sums of the elements $a_i$ and $b_i$. It is challenging for us to fully prove Identity 4. Nonetheless, for certain values of $n$, $m$ and $k$, the proof becomes easier, particularly when $n$, $m$ and the absolute values of $k$ are smaller integers. Below, we present several useful examples of Identity 4, all of which have been verified using Mathematica.
\begin{example}
Let $k$ be an integer. We define:
\begin{equation}
\begin{aligned}
P_{k}=\begin{cases}  a_1^k+a_2^k+a_3^k-b_1^k-b_2^k-b_3^k-b_4^k-b_5^k, \ &k\ne0\\-(b_1 b_2 b_3 b_4 b_5)/(a_1 a_2 a_3), &k=0\end{cases}\\
\end{aligned}
\end{equation}
Next, define:
\begin{equation}
\begin{aligned}
T_k=\begin{cases} (P_k+T_1P_{k-1}+T_2 P_{k-2}+\cdots+T_{k-1}P_1)/k, &k\geq1\\P_0, &k=0\\(P_0 P_k+T_{-1}P_{k+1}+T_{-2} P_{k+2}+\cdots+T_{k+1}P_{-1})/(-k), &k\leq{-1}\end{cases}\;\;\\
\end{aligned}
\end{equation}
Furthermore, we define $S_k$ as:
\begin{equation}
\begin{aligned}
S_k=\begin{cases} 
T_k, &k\geq3\\
T_k+T_{k-2}, &1\leq k \leq 2\\
1+T_{-2}, &k=0\\
T_{k-2}, &k\leq{-1}
\end{cases}\;\;\\
\end{aligned}
\end{equation}
With these definitions, we introduce the following determinants:
\begin{equation}
\begin{aligned}
&W_{3k}=
\begin{vmatrix}
 S_k & S_{k-1} & S_{k-2} \\
 S_{k+1} & S_k & S_{k-1} \\
 S_{k+2} & S_{k+1} & S_k \\
\end{vmatrix},\quad
&W_{3k+1}=
\begin{vmatrix}
 S_k & S_{k-1} & S_{k-2} \\
 S_{k+1} & S_k & S_{k-1} \\
 S_{k+3} & S_{k+2} & S_{k+1} \\
\end{vmatrix},\\
&W_{3k+2}=
\begin{vmatrix}
 S_k & S_{k-1} & S_{k-2} \\
 S_{k+2} & S_{k+1} & S_{k} \\
 S_{k+3} & S_{k+2} & S_{k+1} \\
\end{vmatrix}. \\
\end{aligned}
\end{equation}
Then it follows that:
\begin{equation}
\begin{aligned}
&W_{15}=\begin{vmatrix}
 S_{5} & S_{4} & S_{3} \\
 S_{6} & S_{5} & S_{4} \\
 S_{7} & S_{6} & S_{5} \\
\end{vmatrix}
=\prod _{i=1}^3 \prod _{j=1}^5 \left(a_i-b_j\right)\;,\\
&W_{3k+1}=(a_1+a_2+a_3)\cdot W_{3k}\;,\\
&W_{3k+2}=(a_1 a_2+a_1 a_3+a_2 a_3)\cdot W_{3k}\;,\\
&W_{3k+3}=(a_1 a_2 a_3)\cdot W_{3k}\;.
\end{aligned}
\end{equation}
\end{example}
Example 2.5 above illustrates the case of Identity 4 when $n=3$, $m=5$. Example 2.6 below presents the specific formulas of Identity 4 when $n=3$, $m=4$ and $-5 \leq k \leq 8$.
\begin{example}
Let $k$ be an integer. We define:
\begin{equation}
\begin{aligned}
P_{k}=\begin{cases}  a_1^k+a_2^k+a_3^k-b_1^k-b_2^k-b_3^k-b_4^k, \ &k\ne0\\-(b_1 b_2 b_3 b_4)/(a_1 a_2 a_3), &k=0\end{cases}\\
\end{aligned}
\end{equation}
Next, define:
\begin{equation}
\begin{aligned}
& T_1 =(P_1)/1\;,\\
& T_2 =(P_2+T_1 P_1)/2\;,\\
& T_3 =(P_3+T_1 P_2+T_2 P_1)/3\;,\\
& T_4 =(P_4+T_1 P_3+T_2 P_2+T_3 P_1)/4\;,\\
& T_5 =(P_5+T_1 P_4+T_2 P_3+T_3 P_2+T_4P_1)/5\;,\\
& T_6=(P_6+T_1 P_5+T_2 P_4+T_3 P_3+T_4 P_2+T_5 P_1)/6\;,\\
& T_7=(P_7+T_1 P_6+T_2 P_5+T_3 P_4+T_4 P_3+T_5 P_2+T_6 P_1)/7\;,\\
& T_8=(P_8+T_1 P_7+T_2 P_6+T_3 P_5+T_4 P_4+T_5 P_3+T_6 P_2+T_7 P_1)/8\;,\\
& T_0 = P_0\;,\\
& T_{-1}= (P_0 P_{-1})/1\;,\\
& T_{-2}= (P_0 P_{-2}+T_{-1} P_{-1})/2\;,\\
& T_{-3}= (P_0 P_{-3}+T_{-1} P_{-2}+T_{-2} P_{-1})/3\;,\\
& T_{-4}= (P_0 P_{-4}+T_{-1} P_{-3}+T_{-2} P_{-2}+T_{-3} P_{-1})/4\;,\\
& T_{-5}= (P_0 P_{-5}+T_{-1} P_{-4}+T_{-2} P_{-3}+T_{-3} P_{-2}+T_{-4} P_{-1})/5\;.
\end{aligned}
\end{equation}
Furthermore, define:
\begin{equation}
\begin{aligned}
S_k=\begin{cases} 
T_k, &k\geq2\\
T_1-T_0, &k=1\\
1-T_{-1}, &k=0\\
T_{k-1}, &k\leq{-1}
\end{cases}\;\;\\
\end{aligned}
\end{equation}
Then it follows that:
\begin{align}
&W_{12}=\begin{vmatrix}
 S_{4} & S_{3} & S_{2} \\
 S_{5} & S_{4} & S_{3} \\
 S_{6} & S_{5} & S_{4} \\
\end{vmatrix}
=\prod _{i=1}^3 \prod _{j=1}^4 \left(a_i-b_j\right)\;,\\
&W_{-3}=
\begin{vmatrix}
 S_{-1} & S_{-2} & S_{-3} \\
 S_{0} & S_{-1} & S_{-2} \\
 S_{1} & S_{0} & S_{-1} \\
\end{vmatrix}
=\frac{W_{12}}{a_1^5 a_2^5 a_3^5}\; ,\\
&W_{-2}=
\begin{vmatrix}
 S_{-1} & S_{-2} & S_{-3} \\
 S_{0} & S_{-1} & S_{-2} \\
 S_{2} & S_{1} & S_{0} \\
\end{vmatrix}
=(a_1+a_2+a_3)\cdot W_{-3}\; ,\\
&W_{-1}=
\begin{vmatrix}
 S_{-1} & S_{-2} & S_{-3} \\
 S_{1} & S_{0} & S_{-1} \\
 S_{2} & S_{1} & S_{0} \\
\end{vmatrix}
=(a_1 a_2+a_1 a_3+a_2 a_3)\cdot W_{-3}\; ,\\
&W_{0}=
\begin{vmatrix}
 S_{0} & S_{-1} & S_{-2} \\
 S_{1} & S_{0} & S_{-1} \\
 S_{2} & S_{1} & S_{0} \\
\end{vmatrix}
=\frac{W_{12}}{a_1^4 a_2^4 a_3^4}\; ,\\
&W_{1}=
\begin{vmatrix}
 S_{0} & S_{-1} & S_{-2} \\
 S_{1} & S_{0} & S_{-1} \\
 S_{3} & S_{2} & S_{1} \\
\end{vmatrix}
=(a_1+a_2+a_3)\cdot W_{0}\; ,\\
&W_{2}=
\begin{vmatrix}
 S_{0} & S_{-1} & S_{-2} \\
 S_{2} & S_{1} & S_{0} \\
 S_{3} & S_{2} & S_{1} \\
\end{vmatrix}
=(a_1 a_2+a_1 a_3+a_2 a_3)\cdot W_{0}\; ,\\
&W_{3}=
\begin{vmatrix}
 S_{1} & S_{0} & S_{-1} \\
 S_{2} & S_{1} & S_{0} \\
 S_{3} & S_{2} & S_{1} \\
\end{vmatrix}
=\frac{W_{12}}{a_1^3 a_2^3 a_3^3}\; ,\\
& \quad\quad\vdots \nonumber\\
&W_{13}=
\begin{vmatrix}
 S_{4} & S_{3} & S_{2} \\
 S_{5} & S_{4} & S_{3} \\
 S_{7} & S_{6} & S_{5} \\
\end{vmatrix}
=(a_1+a_2+a_3)\cdot W_{12}\; ,\\
&W_{14}=
\begin{vmatrix}
 S_{4} & S_{3} & S_{2} \\
 S_{6} & S_{5} & S_{4} \\
 S_{7} & S_{6} & S_{5} \\
\end{vmatrix}
=(a_1 a_2+a_1 a_3+a_2 a_3)\cdot W_{12}\; ,\\
&W_{15}=
\begin{vmatrix}
 S_{5} & S_{4} & S_{3} \\
 S_{6} & S_{5} & S_{4} \\
 S_{7} & S_{6} & S_{5} \\
\end{vmatrix}
=a_1 a_2 a_3\cdot W_{12}\; ,\\
&W_{16}=
\begin{vmatrix}
 S_{5} & S_{4} & S_{3} \\
 S_{6} & S_{5} & S_{4} \\
 S_{8} & S_{7} & S_{6} \\
\end{vmatrix}
=a_1 a_2 a_3(a_1+a_2+a_3)\cdot W_{12}\; ,\\
&W_{17}=
\begin{vmatrix}
 S_{5} & S_{4} & S_{3} \\
 S_{7} & S_{6} & S_{5} \\
 S_{8} & S_{7} & S_{6} \\
\end{vmatrix}
=a_1 a_2 a_3(a_1 a_2+a_1 a_3+a_2 a_3)\cdot  W_{12}\; ,\\
&W_{18}=
\begin{vmatrix}
 S_{6} & S_{5} & S_{4} \\
 S_{7} & S_{6} & S_{5} \\
 S_{8} & S_{7} & S_{6} \\
\end{vmatrix}
=a_1^2 a_2^2 a_3^2\cdot  W_{12}\; .
\end{align}
\end{example}

Example 2.7 below presents the specific formulas of Identity 4 when $n=3$, $m=4$ and $1 \leq k \leq 7$.
\begin{example}
\label{example_k1234567}
Let
\begin{equation}
\begin{aligned}
P_k=a_1^k+a_2^k+a_3^k-b_1^k-b_2^k-b_3^k-b_4^k,\\(k=1,2,3,4,5,6,7)
\end{aligned}
\end{equation}
Define
\begin{equation}
\begin{aligned}
& S_1=(P_1)/1\\
& S_2=(P_2+S_1 P_1)/2\\
& S_3=(P_3+S_1 P_2+S_2 P_1)/3\\
& S_4=(P_4+S_1 P_3+S_2 P_2+S_3 P_1)/4\\
& S_5=(P_5+S_1 P_4+S_2 P_3+S_3 P_2+S_4 P_1)/5\\
& S_6=(P_6+S_1 P_5+S_2 P_4+S_3 P_3+S_4 P_2+S_5 P_1)/6\\
& S_7=(P_7+S_1 P_6+S_2 P_5+S_3 P_4+S_4 P_3+S_5 P_2+S_6 P_1)/7\\
\end{aligned}
\end{equation}
Furthermore, define
\begin{equation}
\begin{aligned}
&W_{12}=
\begin{vmatrix}
 S_4 & S_3 & S_2 \\
 S_5 & S_4 & S_3 \\
 S_6 & S_5 & S_4 \\
\end{vmatrix},\quad
&W_{13}=
\begin{vmatrix}
 S_4 & S_3 & S_2 \\
 S_5 & S_4 & S_3 \\
 S_7 & S_6 & S_5 \\
\end{vmatrix},\\
&W_{14}=
\begin{vmatrix}
 S_4 & S_3 & S_2 \\
 S_6 & S_5 & S_4 \\
 S_7 & S_6 & S_5 \\
\end{vmatrix},\quad
&W_{15}=
\begin{vmatrix}
 S_5 & S_4 & S_3 \\
 S_6 & S_5 & S_4 \\
 S_7 & S_6 & S_5 \\
\end{vmatrix}.\\
\end{aligned}
\end{equation}
Then it follows that
\begin{equation}
\begin{aligned}
\label{n3m4k1to7}
&W_{12}=\prod _{i=1}^3 \prod _{j=1}^4 \left(a_i-b_j\right)\;,\\
&W_{13}=(a_1+a_2+a_3)\cdot W_{12}\;,\\
&W_{14}=(a_1 a_2+a_1 a_3+a_2 a_3)\cdot W_{12}\;,\\
&W_{15}=(a_1 a_2 a_3)\cdot W_{12}\;.\\
\end{aligned}
\end{equation}
\end{example}
The above \eqref{n3m4k1to7} is very efficient for searching the ideal non-symmetric \mbox{solutions} of PTE of degree 7, that is
\begin{equation}
\begin{aligned}
[0,a_1,a_2,a_3,a_4,a_5,a_6,a_7]^k=[b_1,b_2,b_3,b_4,b_5,b_6,b_7,b_8]^k,\\ (k=1,2,3,4,5,6,7)
\end{aligned}
\end{equation}
We notice that
\begin{equation}
\begin{aligned}
& P_k=a_1^k+a_2^k+a_3^k-b_1^k-b_2^k-b_3^k-b_4^k\\ 
&\quad =b_5^k+b_6^k+b_7^k+b_8^k-a_4^k-a_5^k-a_6^k-a_7^k \; ,(k=1,2,3,4,5,6,7)
\end{aligned}
\end{equation}
For instance, when ${\{b_5,b_6,b_7,b_8,a_4,a_5,a_6,a_7\}=\{63, 72, 88, 95, 54, 81, 82, 96\}}$, we find that $W_{13}/W_{12}=815091/11000$, which is not a positive integer. At this point, we stop the current iteration and proceed to the next one. \\
When ${\{b_5,b_6,b_7,b_8,a_4,a_5,a_6,a_7\}=\{63, 72, 88, 95, 53, 81, 82, 96\}}$, we obtain the results $W_{13}/W_{12}=80$, $W_{14}/W_{12}=1661$, and $W_{15}/W_{12}=8050$, all of which are positive integers. We then continue with the subsequent steps and ultimately derive the smallest ideal non-symmetric solution for the PTE of degree 7:
\begin{align}
& [0, 7, 23, 50, 53, 81, 82, 96]^k = [ 1, 5, 26, 42, 63, 72, 88, 95 ] ^k\;,\nonumber\\
& \qquad\qquad\qquad\qquad\qquad\qquad\qquad\quad (k=1,2,3,4,5,6,7) \tag*{\eqref{k1234567s96}}
\end{align}
The above method was first discovered by the author around 1995 and has been successfully applied to finding numerical solutions for a large number of GPTE problems. 
\begin{example}
Let
\begin{equation}
\begin{aligned}
P_k=a_1^k+a_2^k-b_1^k-b_2^k-b_3^k-b_4^k-b_5^k,\\(k=1,2,3,4,5,6,7)
\end{aligned}
\end{equation}
Define
\begin{equation}
\begin{aligned}
& S_1=(P_1)/1\\
& S_2=(P_2+S_1 P_1)/2\\
& S_3=(P_3+S_1 P_2+S_2 P_1)/3\\
& S_4=(P_4+S_1 P_3+S_2 P_2+S_3 P_1)/4\\
& S_5=(P_5+S_1 P_4+S_2 P_3+S_3 P_2+S_4 P_1)/5\\
& S_6=(P_6+S_1 P_5+S_2 P_4+S_3 P_3+S_4 P_2+S_5 P_1)/6\\
& S_7=(P_7+S_1 P_6+S_2 P_5+S_3 P_4+S_4 P_3+S_5 P_2+S_6 P_1)/7\\
\end{aligned}
\end{equation}
Furthermore, define
\begin{equation}
\begin{aligned}
&W_{10}=
\begin{vmatrix}
 S_5 & S_4 \\
 S_6 & S_5 \\
\end{vmatrix},\quad
&W_{11}=
\begin{vmatrix}
 S_5 & S_4 \\
 S_7 & S_6 \\
\end{vmatrix},\quad
&W_{12}=
\begin{vmatrix}
 S_6 & S_5 \\
 S_7 & S_6 \\
\end{vmatrix}.\\
\end{aligned}
\end{equation}
Then it follows that
\begin{equation}
\begin{aligned}
&W_{10}=\prod _{i=1}^2 \prod _{j=1}^5 \left(a_i-b_j\right)\:,\\
&W_{11}=(a_1+a_2)\cdot W_{10}\:,\\
&W_{12}=(a_1 a_2)\cdot W_{12}\:.\\
\end{aligned}
\end{equation}
\end{example}
Example 2.8 above presents the specific formulas of Identity 4 when $n=2$ and $m=5$. Meanwhile, Example 2.9 below presents the specific formulas of Identity 4 when $n=1$ and $m=6$.

\begin{example}
Let
\begin{equation}
\begin{aligned}
P_k=a_1^k-b_1^k-b_2^k-b_3^k-b_4^k-b_5^k-b_6^k,\\(k=1,2,3,4,5,6,7)
\end{aligned}
\end{equation}
Define
\begin{equation}
\begin{aligned}
& S_1=(P_1)/1\\
& S_2=(P_2+S_1 P_1)/2\\
& S_3=(P_3+S_1 P_2+S_2 P_1)/3\\
& S_4=(P_4+S_1 P_3+S_2 P_2+S_3 P_1)/4\\
& S_5=(P_5+S_1 P_4+S_2 P_3+S_3 P_2+S_4 P_1)/5\\
& S_6=(P_6+S_1 P_5+S_2 P_4+S_3 P_3+S_4 P_2+S_5 P_1)/6\\
& S_7=(P_7+S_1 P_6+S_2 P_5+S_3 P_4+S_4 P_3+S_5 P_2+S_6 P_1)/7\\
\end{aligned}
\end{equation}
Furthermore, define
\begin{equation}
\begin{aligned}
&W_6= S_6,\qquad &W_7= S_7 \:.\\
\end{aligned}
\end{equation}
Then it follows that
\begin{equation}
\begin{aligned}
&W_{6}=(a_1-b_1) (a_1-b_2) (a_1-b_3) (a_1-b_4) (a_1-b_5) (a_1-b_6) \:,\\
&W_{7}=a_1\cdot W_{6}\:.\\
\end{aligned}
\end{equation}
\end{example}

\begin{example}
Let
\begin{equation}
\begin{aligned}
&P_k=a_1^k+a_2^k-b_1^k-b_2^k,\quad (k=2,4,6,8)\\
\end{aligned}
\end{equation}
Define
\begin{equation}
\begin{aligned}
&S_2=(P_2)/1\\
&S_4=(P_4+S_2 P_2)/2\\
&S_6=(P_6+S_2 P_4+S_4 P_2)/3\\
&S_8=(P_8+S_2 P_6+S_4 P_4+S_6 P_2)/4\\
\end{aligned}
\end{equation}
Furthermore, define
\begin{equation}
\begin{aligned}
&W_{8}=
\begin{vmatrix}
 S_4 & S_2 \\
 S_6 & S_4 \\
\end{vmatrix}
,\quad
&W_{10}=
\begin{vmatrix}
 S_4 & S_2 \\
 S_8 & S_6 \\
\end{vmatrix}
,\quad
&W_{12}=
\begin{vmatrix}
 S_6 & S_4 \\
 S_8 & S_6 \\
\end{vmatrix}
\end{aligned}
\end{equation}
Then it follows that
\begin{equation}
\begin{aligned}
\label{n2m2k2468}
&W_{8}=(a_1^2-b_1^2)(a_2^2-b_1^2)(a_1^2-b_2^2)(a_2^2-b_2^2) \:,\\
&W_{10}=(a_1^2+a_2^2) \cdot W_{8}\:,\\
&W_{12}=(a_1^2\cdot a_2^2) \cdot W_{8}\:.\\
\end{aligned}
\end{equation}
\end{example}
By applying \eqref{n2m2k2468}, we found the below ideal non-negative solution of GPTE in 2023.
\begin{equation}
\begin{aligned}
\left[ 498, 3773, 3783, 4567, 4787 \right]^{k}&=\left[ 517, 3598, 4017, 4463, 4827 \right]^{k}\\
& (k=2,4,6,8)
\end{aligned}
\tag*{\eqref{k2468s4827}}
\end{equation}

\begin{example}
Let
\begin{equation}
\begin{aligned}
P_k=a_1^k+a_2^k+a_3^k+a_4^k-b_1^k-b_2^k-b_3^k-b_4^k-b_5^k,\\(k=1,2,3,4,5,6,7,8,9)
\end{aligned}
\end{equation}
and
\begin{equation}
\begin{aligned}
S_k=(P_{k}+S_1 P_{k-1}+S_2 P_{k-2}+...+ S_{k-1}P_1)/k,\\(k=1,2,3,4,5,6,7,8,9)
\end{aligned}
\end{equation}
and
\begin{equation}
\begin{aligned}
&W_{20}=
\begin{vmatrix}
 S_5 & S_4 & S_3 & S_2 \\
 S_6 & S_5 & S_4 & S_3 \\
 S_7 & S_6 & S_5 & S_4 \\
 S_8 & S_7 & S_6 & S_5 \\
\end{vmatrix},\quad
&W_{21}=
\begin{vmatrix}
 S_5 & S_4 & S_3 & S_2 \\
 S_6 & S_5 & S_4 & S_3 \\
 S_7 & S_6 & S_5 & S_4 \\
 S_9 & S_8 & S_7 & S_6 \\
\end{vmatrix},\\
&W_{22}=
\begin{vmatrix}
 S_5 & S_4 & S_3 & S_2 \\
 S_6 & S_5 & S_4 & S_3 \\
 S_8 & S_7 & S_6 & S_5 \\
 S_9 & S_8 & S_7 & S_6 \\
\end{vmatrix},\quad
&W_{23}=
\begin{vmatrix}
 S_5 & S_4 & S_3 & S_2 \\
 S_7 & S_6 & S_5 & S_4 \\
 S_8 & S_7 & S_6 & S_5 \\
 S_9 & S_8 & S_7 & S_6 \\
\end{vmatrix},\\
&W_{24}=
\begin{vmatrix}
 S_6 & S_5 & S_4 & S_3 \\
 S_7 & S_6 & S_5 & S_4 \\
 S_8 & S_7 & S_6 & S_5 \\
 S_9 & S_8 & S_7 & S_6 \\
\end{vmatrix}\\
\end{aligned}
\label{solTEP3}
\end{equation}
then
\begin{equation}
\begin{aligned}
&W_{20}=\prod _{i=1}^4 \prod _{j=1}^5 \left(a_i-b_j\right)\;,\\
&W_{21}=(a_1+a_2+a_3+a_4)\cdot W_{20}\;,\\
&W_{22}=(a_1 a_2+a_1 a_3+a_1 a_4+a_2 a_3+a_2 a_4+a_3 a_4)\cdot W_{20}\;,\\
&W_{23}=(a_1 a_2 a_3+a_1 a_2 a_4+a_1 a_3 a_4+a_2 a_3 a_4)\cdot W_{20}\;,\\
&W_{24}=(a_1 a_2 a_3 a_4)\cdot W_{20}\\
\end{aligned}
\label{solTEP3}
\end{equation}
\end{example}

\begin{example}
$Let$
\begin{equation}
\begin{aligned}
P_k=a_1^k+a_2^k+a_3^k+a_4^k-b_1^k-b_2^k-b_3^k-b_4^k-b_5^k-b_6^k,\\(k=1,2,3,4,5,6,7,8,9,10)
\end{aligned}
\end{equation}
$and$
\begin{equation}
\begin{aligned}
& S_1=(P_1)/1\\
& S_2=(P_2+S_1 P_1)/2\\
& S_3=(P_3+S_1 P_2+S_2 P_1)/3\\
& \quad\cdots\\
& S_{10}=(P_{10}+S_1 P_9+S_2 P_8+\cdots+S_9 P_1)/10
\end{aligned}
\end{equation}
$and$
\begin{equation}
\begin{aligned}
&W_{24}=
\begin{vmatrix}
 S_6 & S_5 & S_4 & S_3 \\
 S_7 & S_6 & S_5 & S_4 \\
 S_8 & S_7 & S_6 & S_5 \\
 S_9 & S_8 & S_7 & S_6 \\
\end{vmatrix},\quad
&W_{25}=
\begin{vmatrix}
 S_6 & S_5 & S_4 & S_3 \\
 S_7 & S_6 & S_5 & S_4 \\
 S_8 & S_7 & S_6 & S_5 \\
 S_{10} & S_9 & S_8 & S_7 \\
\end{vmatrix},\\
&W_{26}=
\begin{vmatrix}
 S_6 & S_5 & S_4 & S_3 \\
 S_7 & S_6 & S_5 & S_4 \\
 S_9 & S_8 & S_7 & S_6 \\
 S_{10} & S_9 & S_8 & S_7 \\
\end{vmatrix},\quad
&W_{27}=
\begin{vmatrix}
 S_6 & S_5 & S_4 & S_3 \\
 S_8 & S_7 & S_6 & S_5 \\
 S_9 & S_8 & S_7 & S_6 \\
 S_{10} & S_9 & S_8 & S_7 \\
\end{vmatrix},\\
&W_{28}=
\begin{vmatrix}
 S_7 & S_6 & S_5 & S_4 \\
 S_8 & S_7 & S_6 & S_5 \\
 S_9 & S_8 & S_7 & S_6 \\
 S_{10} & S_9 & S_8 & S_7 \\
\end{vmatrix},\quad\\
\end{aligned}
\end{equation}
$then$
\begin{equation}
\begin{aligned}
\label{n6m4k1to10}
&W_{24}=\prod _{i=1}^4 \prod _{j=1}^6 \left(a_i-b_j\right)\;,\\
&W_{25}=(a_1+a_2+a_3+a_4)\cdot W_{24} \;,\\
&W_{26}=(a_1 a_2+a_1 a_3+a_1 a_4+a_2 a_3+a_2 a_4+a_3 a_4)\cdot W_{24} \;,\\
&W_{27}=(a_1 a_2 a_3+a_1 a_2 a_4+a_1 a_3 a_4+a_2 a_3 a_4)\cdot W_{24} \;,\\
&W_{28}=(a_1 a_2 a_3 a_4)\cdot W_{24}
\end{aligned}
\end{equation}
\end{example}
The formula \eqref{n6m4k1to10} is useful for searching for ideal non-symmetric solutions of PTE of degree 10. Based on \eqref{n6m4k1to10}, we have searched for more than 6 years with several computers, but still have not found such a solution within the range of 1600.

\begin{example}
Let
\begin{equation}
\begin{aligned}
& P_0=-\frac{b_1 b_2 b_3 b_4}{a_1 a_2 a_3 a_4}\;,\\
& P_k=a_1^k+a_2^k+a_3^k+a_4^k-b_1^k-b_2^k-b_3^k-b_4^k,\ (k=1,2,3,4,5,6,7)\;.
\end{aligned}
\end{equation}
Define
\begin{equation}
\begin{aligned}
& S_0=P_0+1\;,\\
& S_1=(P_1)/1\;,\\
& S_2=(P_2+S_1 P_1)/2\;,\\
& S_3=(P_3+S_1 P_2+S_2 P_1)/3\;,\\
& S_4=(P_4+S_1 P_3+S_2 P_2+S_3 P_1)/4\;,\\
& S_5=(P_5+S_1 P_4+S_2 P_3+S_3 P_2+S_4 P_1)/5\;,\\
& S_6=(P_6+S_1 P_5+S_2 P_4+S_3 P_3+S_4 P_2+S_5 P_1)/6\;,\\
& S_7=(P_7+S_1 P_6+S_2 P_5+S_3 P_4+S_4 P_3+S_5 P_2+S_6 P_1)/7\;.
\end{aligned}
\end{equation}
Furthermore, define
\begin{equation}
\begin{aligned}
&W_{12}=
\begin{vmatrix}
 S_3 & S_2 & S_1 & S_0 \\
 S_4 & S_3 & S_2 & S_1 \\
 S_5 & S_4 & S_3 & S_2 \\
 S_6 & S_5 & S_4 & S_3 \\
\end{vmatrix},\quad
&W_{13}=
\begin{vmatrix}
 S_3 & S_2 & S_1 & S_0 \\
 S_4 & S_3 & S_2 & S_1 \\
 S_5 & S_4 & S_3 & S_2 \\
 S_7 & S_6 & S_5 & S_4 \\
\end{vmatrix},\\
&W_{14}=
\begin{vmatrix}
 S_3 & S_2 & S_1 & S_0 \\
 S_4 & S_3 & S_2 & S_1 \\
 S_6 & S_5 & S_4 & S_3 \\
 S_7 & S_6 & S_5 & S_4 \\
\end{vmatrix},\quad
&W_{15}=
\begin{vmatrix}
 S_3 & S_2 & S_1 & S_0 \\
 S_5 & S_4 & S_3 & S_2 \\
 S_6 & S_5 & S_4 & S_3 \\
 S_7 & S_6 & S_5 & S_4 \\
\end{vmatrix},\\
&W_{16}=
\begin{vmatrix}
 S_4 & S_3 & S_2 & S_1 \\
 S_5 & S_4 & S_3 & S_2 \\
 S_6 & S_5 & S_4 & S_3 \\
 S_7 & S_6 & S_5 & S_4 \\
\end{vmatrix}.\quad\\
\end{aligned}
\end{equation}
Then it follows that
\begin{equation}
\begin{aligned}
\label{n4m4k0to7}
&W_{12}&=&\;\frac{1}{a_1 a_2 a_3 a_4} \prod_{i=1}^4 \prod _{j=1}^4 \left(a_i-b_j\right)\;,\\
&W_{13}&=&\;(a_1+a_2+a_3+a_4)\cdot W_{12}\;,\\
&W_{14}&=&\;(a_1 a_2+a_1 a_3+a_1 a_4+a_2 a_3+a_2 a_4+a_3 a_4)\cdot W_{12}\;,\\
&W_{15}&=&\;(a_1 a_2 a_3+a_1 a_2 a_4+a_1 a_3 a_4+a_2 a_3 a_4)\cdot W_{12}\;,\\
&W_{16}&=&\;(a_1 a_2 a_3 a_4)\cdot W_{12} \;.
\end{aligned}
\end{equation}
\end{example}
By applying \eqref{n4m4k0to7}, we found the below ideal non-negative solution of GPTE in 2023.
\begin{align}
&[ 387, 388, 416, 447, 494, 536, 573, 589, 610 ]^k \nonumber \\
& \quad = [ 382, 402, 403, 456, 485, 549, 559, 596, 608 ]^k\;,\nonumber\\
&\qquad\qquad\qquad\qquad\qquad\quad (k=0,1,2,3,4,5,6,7) \tag*{\eqref{k01234567s610}}
\end{align}

\begin{example}
Let
\begin{equation}
\begin{aligned}
&P_0=\;-\frac{b_1 b_2 b_3}{a_1 a_2 a_3}\:,\\
&P_k=\;a_1^k+a_2^k+a_3^k-b_1^k-b_2^k-b_3^k\;,\quad (k=-1,1,2,3,4)\:.
\end{aligned}
\end{equation}
Define
\begin{equation}
\begin{aligned}
& S_{-1}=\; P_0 P_{-1}\:,\\
& S_0 =\;1+P_0\:,\\
& S_1 =\;(P_1)/1\:,\\
& S_2 =\;(P_2+S_1 P_1)/2\:,\\
& S_3 =\;(P_3+S_1 P_2+S_2 P_1)/3\:,\\
& S_4 =\;(P_4+S_1 P_3+S_2 P_2+S_3 P_1)/4\:.
\end{aligned}
\end{equation}
Furthermore, define
\begin{equation}
\begin{aligned}
&W_{3}=
\begin{vmatrix}
 S_1 & S_0 & S_{-1}\\
 S_2 & S_1 & S_0 \\
 S_3 & S_2 & S_1 \\
\end{vmatrix},\quad
&W_{4}=
\begin{vmatrix}
 S_1 & S_0 & S_{-1}\\
 S_2 & S_1 & S_0 \\
 S_4 & S_3 & S_2 \\
\end{vmatrix},\\
&W_{5}=
\begin{vmatrix}
 S_1 & S_0 & S_{-1}\\
 S_3 & S_2 & S_1 \\
 S_4 & S_3 & S_2 \\
\end{vmatrix},\quad
&W_{6}=
\begin{vmatrix}
 S_2 & S_1 & S_0 \\
 S_3 & S_2 & S_1 \\
 S_4 & S_3 & S_2 \\
\end{vmatrix}.\\
\end{aligned}
\end{equation}
Then it follows that
\begin{equation}
\begin{aligned}
\label{n3m3kn1to4}
&W_{3}&=&\;\frac{1}{a_1^2 a_2^2 a_3^2 } \prod_{i=1}^3 \prod _{j=1}^3 \left(a_i-b_j\right)\;,\\
&W_{4}&=&\;(a_1+a_2+a_3)\cdot W_{3}\;,\\
&W_{5}&=&\;(a_1 a_2+a_1 a_3+a_2 a_3)\cdot W_{3}\;,\\
&W_{6}&=&\;(a_1 a_2 a_3)\cdot W_{3}\;.\\
\end{aligned}
\end{equation}
\end{example}
By applying \eqref{n3m3kn1to4}, we found the following ideal non-negative solutions of GPTE in 2017.
\begin{align}
& [9, 17, 21, 51, 99, 143, 143 ]^k = [  11, 11, 33, 39, 117, 119, 153]^k,\nonumber\\
& \qquad\qquad\qquad\qquad\qquad\qquad\qquad\qquad (k=-1,0,1,2,3,4) \tag*{\eqref{k01234n1s153}}\\
& [56, 77, 99, 152, 174, 228, 261]^k = [57, 72, 116, 126, 203, 209, 264]^k,\nonumber\\
& \qquad\qquad\qquad\qquad\qquad\qquad\qquad\qquad\qquad(k=-1,0,1,2,3,4)  \tag*{\eqref{k01234n1s264}}
\end{align}

\begin{example}
Let
\begin{equation}
\begin{aligned}
P_k=a_1^k+a_2^k+a_3^k-b_1^k-b_2^k-b_3^k-b_4^k,\\(k=1,2,3,4,5,6,8)
\end{aligned}
\end{equation}
Define
\begin{equation}
\begin{aligned}
& S_1=(P_1)/1\;,\\
& S_2=(P_2+S_1 P_1)/2\;,\\
& S_3=(P_3+S_1 P_2+S_2 P_1)/3\;,\\
& S_4=(P_4+S_1 P_3+S_2 P_2+S_3 P_1)/4\;,\\
& S_5=(P_5+S_1 P_4+S_2 P_3+S_3 P_2+S_4 P_1)/5\;,\\
& S_6=(P_6+S_1 P_5+S_2 P_4+S_3 P_3+S_4 P_2+S_5 P_1)/6\;,\\
& S_7=(P_7+S_1 P_6+S_2 P_5+S_3 P_4+S_4 P_3+S_5 P_2+S_6 P_1)/7\;,\\
& S_8=(P_8+S_1 P_7+S_2 P_6+S_3 P_5+S_4 P_4+S_5 P_3+S_6 P_2+S_7 P_1)/8\;.\\
\end{aligned}
\end{equation}
Furthermore, define
\begin{equation}
\begin{aligned}
&W_{12}=
\begin{vmatrix}
 S_4 & S_3 & S_2 \\
 S_5 & S_4 & S_3 \\
 S_6 & S_5 & S_4 \\
\end{vmatrix},\quad
&W_{14}=
\begin{vmatrix}
 S_5-S_4 P_1 & S_3 & S_2 \\
 S_6-S_5 P_1 & S_4 & S_3 \\
 S_8-S_7 P_1 & S_6 & S_5 \\
\end{vmatrix}.\\
\end{aligned}
\end{equation}
Then it follows that
\begin{equation}
\begin{aligned}
\label{n3m4k1234568}
&W_{12}=\prod _{i=1}^3 \prod _{j=1}^4 (a_i-b_j)\;,\\
&W_{14}=(a_1+a_2+a_3)(b_1+b_2+b_3+b_4)\cdot W_{12} \;,\\
& P_1^2+\frac{4 W_{14}}{W_{12}}=(a_1+a_2+a_3+b_1+b_2+b_3+b_4)^2 \:.
\end{aligned}
\end{equation}
\end{example}
It should be noted that $P_7$ is unknown, but it will be eliminated in the calculation of $W_{14}$. Therefore, $P_7$ can be set to 0 without affecting the final result. By applying \eqref{n3m4k1234568}, we found the following ideal non-negative solution for GPTE.
\begin{align}
&[ 77, 159, 169, 283, 321, 443, 447, 501 ]^k \nonumber \\
& \quad = [ 79, 137, 213, 237, 363, 399, 481, 491 ]^k\;,\, (k=1,2,3,4,5,6,8) \tag*{\eqref{k1234568s501}}
\end{align}

\begin{example}
Let
\begin{equation}
\begin{aligned}
&P_0=-\frac{b_1 b_2 b_3}{a_1 a_2}\;,\\
&P_k=a_1^k+a_2^k-b_1^k-b_2^k-b_3^k\;,\quad (k=-1,1,2,4)\;.
\end{aligned}
\end{equation}
Define
\begin{equation}
\begin{aligned}
& T_{-1}= P_0 P_{-1}\;,\\
& T_0 = P_0\;,\\
& T_1 =(P_1)/1\;,\\
& T_2 =(P_2+T_1 P_1)/2\;,\\
& T_3 =(P_3+T_1 P_2+T_2 P_1)/3\;,\\
& T_4 =(P_4+T_1 P_3+T_2 P_2+T_3 P_1)/4\;.\\
\end{aligned}
\end{equation}
And let
\begin{equation}
\begin{aligned}
& S_0 = 1-T_{-1}\;,\\
& S_1 = T_1-T_{0}\;,\\
& S_2 = T_2\;,\\
& S_3 = T_3\;,\\
& S_4 = T_4\;.\\
\end{aligned}
\end{equation}
Furthermore, denote
\begin{equation}
\begin{aligned}
&W_{2}=
\begin{vmatrix}
 S_1 & S_0 \\
 S_2 & S_1 \\
\end{vmatrix},\quad
&W_{4}=
\begin{vmatrix}
 S_2 - T_1 S_1 & S_0 \\
 S_4 - T_1 S_3 & S_2 \\
\end{vmatrix}.\\
\end{aligned}
\end{equation}
Then it follows that
\begin{equation}
\begin{aligned}
\label{n2m3k0124n1}
&W_{2}=\frac{1}{a_1^2 a_2^2 } \prod_{i=1}^2 \prod _{j=1}^3 \left(a_i-b_j\right)\;,\\
&W_{4}=(a_1+a_2)(b_1+b_2+b_3) \cdot W_{2}\;,\\
&T_1^2+ \frac{4 W_4}{W_2}= (a_1+a_2+b_1+b_2+b_3)^2 \:.\\
\end{aligned}
\end{equation}
\end{example}
It should be noted that $P_3$ is unknown, but it will be eliminated during the calculation of $W_{4}$. Therefore, $P_3$ can be set to 0 without affecting the result. By applying \eqref{n2m3k0124n1}, we found the following ideal non-negative solutions of GPTE.
\begin{align}
& [10, 14, 24, 65, 117, 132  ]^k = [ 11, 12, 26, 63, 120, 130 ]^k\;, \nonumber\\
& \qquad\qquad\qquad\qquad\qquad\qquad\qquad\qquad (k=-1,0,1,2,4) \tag*{\eqref{k0124n1s132}}\\
& [36, 51, 72, 130, 195, 238 ]^k = [40, 42, 85, 117, 204, 234 ]^k\;, \nonumber\\
& \qquad\qquad\qquad\qquad\qquad\qquad\qquad\qquad (k=-1,0,1,2,4) \tag*{\eqref{k0124n1s238}}
\end{align}

\begin{example}
For positive integer $k$, we denote
\begin{equation}
\begin{aligned}
&P_k=a_1^k+a_2^k+a_3^k-b_1^k-b_2^k-b_3^k
\end{aligned}
\end{equation}
and denote
\begin{align}
\label{SkPositive}
& S_{k}=(P_{k}+S_1 P_{k-1}+S_2 P_{k-2}+\cdots+S_{k-1} P_1)/k
\end{align}
For $k \ge 3$, we define:
\begin{equation}
\label{L3R3W3k}
\begin{aligned}
&W_{3k}=
\begin{vmatrix}
 S_k & S_{k-1} & S_{k-2} \\
 S_{k+1} & S_k & S_{k-1} \\
 S_{k+2} & S_{k+1} & S_k \\
\end{vmatrix},\quad
&W_{3k+1}=
\begin{vmatrix}
 S_k & S_{k-1} & S_{k-2} \\
 S_{k+1} & S_k & S_{k-1} \\
 S_{k+3} & S_{k+2} & S_{k+1} \\
\end{vmatrix},\\
&W_{3k+2}=
\begin{vmatrix}
 S_k & S_{k-1} & S_{k-2} \\
 S_{k+2} & S_{k+1} & S_{k} \\
 S_{k+3} & S_{k+2} & S_{k+1} \\
\end{vmatrix}. \\
\end{aligned}
\end{equation}
Then it follows that:
\begin{equation}
\label{L3R3W9}
\begin{aligned}
&W_{9}=\begin{vmatrix}
 S_{3} & S_{2} & S_{1} \\
 S_{4} & S_{3} & S_{2} \\
 S_{5} & S_{4} & S_{3} \\
\end{vmatrix}
=\prod _{i=1}^3 \prod _{j=1}^3 \left(a_i-b_j\right)\;,\\
&W_{3k+1}=(a_1+a_2+a_3)\cdot W_{3k}\;,\\
&W_{3k+2}=(a_1 a_2+a_1 a_3+a_2 a_3)\cdot W_{3k}\;,\\
&W_{3k+3}=(a_1 a_2 a_3)\cdot W_{3k}\;.
\end{aligned}
\end{equation}
\end{example}
\begin{corollary}\label{corollary6108}
For positive integer $k$, we denote
\begin{equation}
\begin{aligned}
&P_k=a_1^k+a_2^k+a_3^k-b_1^k-b_2^k-b_3^k
\end{aligned}
\end{equation}
If $P_1=P_2=P_4=0$ and $P_3 \neq 0$, then it follows that
\begin{equation}
\label{L3R3-6-10-8}
\begin{aligned}
64 P_6 P_{10}=45 P_8^2\\
\end{aligned}
\end{equation}
\end{corollary}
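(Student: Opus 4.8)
The plan is to express everything in terms of the elementary symmetric functions of the two triples and then simply read off the identity. Write $p_k=a_1^k+a_2^k+a_3^k$ and $p_k'=b_1^k+b_2^k+b_3^k$, so that $P_k=p_k-p_k'$, and let $e_2,e_3$ and $e_2',e_3'$ denote the second and third elementary symmetric functions of $\{a_1,a_2,a_3\}$ and of $\{b_1,b_2,b_3\}$ respectively. The key first step is to show that both triples have vanishing linear sum. Apply \eqref{identityP40} to each triple separately and subtract the two resulting equations. The hypotheses $P_1=P_2=P_4=0$ say exactly that $p_1=p_1'$, $p_2=p_2'$ and $p_4=p_4'$, so in the difference every term cancels except $\tfrac13 p_1(p_3-p_3')$; since $P_3\neq 0$ forces $p_3\neq p_3'$, we conclude $p_1=p_1'=0$. (Equivalently, one may invoke Corollary~\ref{corollaryh124} with $j=2$, the two triples being distinct precisely because $P_3\neq 0$.)

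With $p_1=p_1'=0$ in hand, each power-sum sequence obeys the order-three recurrence $p_k=-e_2\,p_{k-2}+e_3\,p_{k-3}$ for $k\geq 4$ (the instance of \eqref{identitySA} for $n=3$ with vanishing linear term, cf.\ \refIdentity{identity2}), and unwinding it gives
\begin{equation}
p_2=-2e_2,\qquad p_4=2e_2^2,\qquad p_6=-2e_2^3+3e_3^2,\qquad p_8=2e_2^4-8e_2e_3^2,\qquad p_{10}=-2e_2^5+15e_2^2e_3^2,
\end{equation}
together with the identical formulas in the primed variables. Now $P_2=0$ combined with $p_2=-2e_2$ forces $e_2=e_2'$; set $u:=e_2=e_2'$. (As a check, $p_4=2e_2^2$ then renders $P_4=0$ automatic, so the three hypotheses are not logically independent — though all three are used above.)

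The conclusion then follows at once. Writing $\Delta:=e_3^2-(e_3')^2$, the closed forms give $P_6=3\Delta$, $P_8=-8u\Delta$ and $P_{10}=15u^2\Delta$, so
\begin{equation}
64\,P_6P_{10}=64\cdot 3\cdot 15\;u^2\Delta^2=2880\,u^2\Delta^2=45\cdot 64\;u^2\Delta^2=45\,P_8^2,
\end{equation}
which is the asserted identity \eqref{L3R3-6-10-8}. I do not expect any genuine obstacle here: the whole argument is a finite symmetric-function computation, valid over any commutative ring and requiring no non-degeneracy of $u$ or $\Delta$. The only points that demand care are the reduction in the first paragraph — verifying that the hypotheses really do pin the common linear sum to zero, which is exactly where $P_3\neq 0$ enters — and keeping the bookkeeping in the Newton recurrence correct through degree $10$. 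One could instead extract \eqref{L3R3-6-10-8} from the determinant identities \eqref{L3R3W9} of the preceding example, but that route is markedly heavier, so I would proceed directly as above.
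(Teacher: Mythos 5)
Your proof is correct, but it takes a genuinely different route from the one in the paper. Your opening reduction — subtracting the two instances of \eqref{identityP40} to force $a_1+a_2+a_3=b_1+b_2+b_3=0$ — coincides with the paper's Step~1 (which cites \eqref{h124a1a2a3}), and your arithmetic from there on checks out: the closed forms $p_6=-2e_2^3+3e_3^2$, $p_8=2e_2^4-8e_2e_3^2$, $p_{10}=-2e_2^5+15e_2^2e_3^2$ under $e_1=0$ are all correct, $P_2=0$ does force $e_2=e_2'$, and the factorizations $P_6=3\Delta$, $P_8=-8u\Delta$, $P_{10}=15u^2\Delta$ give $64P_6P_{10}=2880\,u^2\Delta^2=45P_8^2$ immediately. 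The paper instead keeps working with the difference quantities $P_k$ throughout and leans on \refIdentity{identity_GNI2}: it successively eliminates $P_7$, $P_5$ and $P_9$ from the relations $W_{13}=W_{16}=W_{19}=0$ and then extracts the identity from the ratio $W_{23}/W_{21}=W_{11}/W_{9}$. Your route buys two things: it is computationally lighter (no $3\times 3$ determinants, no intermediate expressions for $P_5,P_7,P_9$), and it rests only on the classical Girard--Newton identities rather than on \refIdentity{identity_GNI2}, which the paper itself states has not been fully proved in general — so your argument is, if anything, on firmer logical footing. What the paper's determinant method buys in exchange is uniformity: the same $W$-machinery generates the whole family of analogous relations (e.g.\ the corollaries on $Q_k$ that follow), whereas your symmetric-function computation would have to be redone from scratch for each new configuration of vanishing power sums.
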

The proof process of the above corollary is briefly as follows:\\[1mm]
Step 1: Since $P_1=P_2=P_4=0$, according to \eqref{h124a1a2a3}, we can derive that
\begin{align}
& a_1+a_2+a_3=0
\end{align}
Step 2: From \eqref{SkPositive}, we obtain 
\begin{align}
& S_1=S_2=0, S_3=\frac{P_3}{3}, \cdots, S_{10}=\frac{P_5^2}{50}+\frac{P_3 P_7}{21}+\frac{P_{10}}{10}.
\end{align}
Step 3: By \eqref{L3R3W3k} and \eqref{L3R3W9}, from $W_{13}=(a_1+a_2+a_3)\cdot W_{12}=0$, we get
\begin{align}
& P_7=\frac{21 P_5^2}{25 P_3}
\end{align}
Step 4: From $W_{16}=(a_1+a_2+a_3)\cdot W_{15}=0$, we get
\begin{align}
& P_5=\frac{5 P_3 P_8}{8 P_6}
\end{align}
Step 5: From $W_{19}=(a_1+a_2+a_3)\cdot W_{18}=0$, we get
\begin{align}
& P_9=\frac{3456 P_6^5+128 P_3^4 P_6^3+729 P_3^2 P_8^3}{4608 P_3 P_6^3}
\end{align}
Step 6: From $ W_{23}/W_{21}=W_{11}/W_{9}$, we get \eqref{L3R3-6-10-8}.\\

In fact, \eqref{L3R3-6-10-8} is the Ramanujan 6-10-8 formula, and its previous proofs can be found in \cite{Chamberland09}. In the steps from 1 to 6 above, we have presented a novel proof utilizing \refIdentity{identity_GNI2}. By employing the same method and with the assistance of \refIdentity{identity_GNI2}, we can derive numerous similar results. For example,

\begin{corollary}
For non-zero integer $k$, we denote
\begin{equation}
\begin{aligned}
&Q_k=\frac{a_1^k+a_2^k+a_3^k+a_4^k-b_1^k-b_2^k-b_3^k-b_4^k}{k}
\end{aligned}
\end{equation}
If $Q_{-2}=Q_{-1}=Q_1=Q_2=0$ and $Q_4 \neq 0$, then it follows that
\begin{equation}
\begin{aligned}
\frac{Q_{-3} Q_{-5}}{Q_{-4}^2}=\frac{Q_3 Q_5}{Q_4^2}\\
\end{aligned}
\end{equation}
\end{corollary}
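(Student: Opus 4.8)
The plan is to imitate the six‑step derivation used for Corollary~\ref{corollary6108} (the Ramanujan $6$-$10$-$8$ identity), but now inside \refIdentity{identity_GNI2} taken with $n=m=4$ and with the exponent windows allowed to run through negative indices. First I would put $P_k=a_1^k+a_2^k+a_3^k+a_4^k-(b_1^k+b_2^k+b_3^k+b_4^k)$ for $k\neq0$ and $P_0=-\,b_1b_2b_3b_4/(a_1a_2a_3a_4)$, as in \refIdentity{identity_GNI2}; then $Q_k=P_k/k$ for $k\neq 0$, the hypotheses become $P_{-2}=P_{-1}=P_1=P_2=0$ and $P_4\neq0$, and $P_0$ is a well‑defined non‑zero quantity since every $a_i,b_j$ is non‑zero.

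The first substantive step is to pin down two of the elementary symmetric functions. From $P_1=P_2=0$ one gets $e_1(a)=e_1(b)$ and $e_2(a)=e_2(b)$, and from $P_{-1}=P_{-2}=0$ one gets $e_3(a)/e_4(a)=e_3(b)/e_4(b)$ together with $e_2(a)/e_4(a)=e_2(b)/e_4(b)$; if $e_2(a)\neq0$ these force $e_4(a)=e_4(b)$, hence $e_3(a)=e_3(b)$, hence $\{a_i\}=\{b_j\}$ as multisets and $Q_4=0$, a contradiction. So $e_2(a)=e_2(b)=0$, which plays here the role that "$a_1+a_2+a_3=0$" played in Step~1 of the proof of Corollary~\ref{corollary6108}. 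Next I would compute the low‑order entries $S_k$ of \refIdentity{identity_GNI2} (where $S_k=T_k$ for $k\neq0$ and $S_0=1+P_0$): feeding $P_{\pm1}=P_{\pm2}=0$ into the recursions gives $S_{-2}=S_{-1}=S_1=S_2=0$, $S_3=Q_3$, $S_4=Q_4$, $S_5=Q_5$, $S_6=Q_6+\tfrac12 Q_3^2$, $\dots$, and on the negative side $S_{-3}=-P_0Q_{-3}$, $S_{-4}=-P_0Q_{-4}$, $S_{-5}=-P_0Q_{-5}$, $S_{-6}=P_0(\tfrac12 Q_{-3}^2-Q_{-6})$, $\dots$; note that the negative‑index entries carry a factor of $P_0$.

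Because $e_2(a)=0$, \refIdentity{identity_GNI2} now gives $W_{4k+2}=e_2(a)\,W_{4k}=0$ for every integer $k$ (and likewise with $a\leftrightarrow b$). Writing the $4\times4$ determinant $W_{4k+2}=0$ out in terms of the $S_j$ above, for the windows straddling $\{3,4,5\}$ and $\{-5,-4,-3\}$, yields polynomial relations expressing $Q_5,Q_6,Q_7,\dots$ through $Q_3,Q_4,P_0$ and, symmetrically, $Q_{-5},Q_{-6},\dots$ through $Q_{-3},Q_{-4},P_0$ — this is exactly how Steps~3--5 of the proof of Corollary~\ref{corollary6108} produce $P_5,P_7,P_9$ from $P_3,P_6,P_8$. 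Combining these with the fact that each ratio $W_{4k+r}/W_{4k}=e_r(a)$ is independent of $k$ lets one cross‑multiply at well‑chosen indices so as to eliminate $e_2(a),e_3(a),e_4(a)$ and $W_{16}=\prod_{i,j}(a_i-b_j)$; since $3+5-2\cdot4=0=(-3)+(-5)-2\cdot(-4)$, the $P_0$‑weights match in total degree and cancel, leaving exactly $Q_{-4}^{2}Q_3Q_5=Q_4^{2}Q_{-3}Q_{-5}$, the asserted identity. A useful labour‑saver for the negative‑index half is the mirror symmetry $a_i\mapsto 1/a_i,\ b_j\mapsto 1/b_j$, which sends $Q_k\mapsto -Q_{-k}$, preserves the hypothesis set $\{Q_{\pm1}=Q_{\pm2}=0\}$, and interchanges the two sides of the claimed identity, so that only the positive‑index relations need be worked out by hand.

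The main obstacle I expect is purely the computational bookkeeping, not any conceptual point: one has to choose index windows giving genuine (non‑degenerate) determinant relations — several windows collapse to $0=0$ once the four vanishing $S_k$ are inserted — carry the $P_0$‑weighted negative‑index entries through the eliminations, and check that all of $P_0$, $1+P_0$, $e_4(a)$, $e_4(b)$ and $W_{16}$ cancel. As the paper emphasizes for \refIdentity{identity_GNI1} and \refIdentity{identity_GNI2} themselves, no slick closed‑form argument is claimed; the intended proof is the finite, Mathematica‑verifiable computation obtained by running the six‑step template of Corollary~\ref{corollary6108} in this setting.
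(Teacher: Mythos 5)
Your plan follows the route the paper itself gestures at---the paper offers no proof of this corollary, saying only that it ``can be derived from \refIdentity{identity_GNI2}'' and checking it on \eqref{h12n12s55}, so the six-step template of Corollary~\ref{corollary6108} is indeed the intended model, and your reconstruction is essentially faithful to it. Your Step~1 is correct and is the real content: $Q_{\pm1}=Q_{\pm2}=0$ give $e_1(a)=e_1(b)$, $e_2(a)=e_2(b)$, $e_3(a)/e_4(a)=e_3(b)/e_4(b)$ and $e_2(a)/e_4(a)=e_2(b)/e_4(b)$, and your dichotomy ($e_2\neq0$ would force all four elementary symmetric functions to agree, hence $Q_4=0$) is exactly right. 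I would add that once you have $e_2(a)=e_2(b)=0$ you can finish without any determinants. Write $e_1(a)=e_1(b)=p$, $e_3(a)=q$, $e_4(a)=r$, $e_3(b)=\lambda q$, $e_4(b)=\lambda r$; Newton's recursion then gives $Q_3=(1-\lambda)q$, $Q_4=(1-\lambda)(pq-r)$, $Q_5=pQ_4$, and your mirror map $a_i\mapsto 1/a_i$, $b_j\mapsto 1/b_j$ (which sends $Q_k\mapsto -Q_{-k}$ and replaces $(p,q,r,\lambda)$ by $(q/r,\,p/r,\,1/r,\,1/\lambda)$) gives the negative-index values for free. Both sides of the claimed identity then collapse to $pq/(pq-r)=e_1e_3/(e_1e_3-e_4)$, and the hypothesis $Q_4\neq0$ is precisely what keeps $\lambda\neq1$ and $pq\neq r$, so all four denominators are nonzero. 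This is shorter, needs no unproved portion of \refIdentity{identity_GNI2}, and sidesteps the window-selection bookkeeping you were worried about.

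One intermediate claim in your write-up is false as stated: the vanishing relations $W_{4k+2}=e_2(a)\,W_{4k}=0$ do \emph{not} by themselves express $Q_5,Q_6,Q_7,\dots$ through $Q_3,Q_4,P_0$. In the parametrization above, $Q_3$, $Q_4$ and $P_0=-\lambda$ determine $q$, $pq-r$ and $\lambda$ but leave $p=e_1$ entirely free, and $Q_5=pQ_4$ genuinely depends on it; so those relations alone cannot close the system (this is a real asymmetry with Corollary~\ref{corollary6108}, where $e_1=0$ and three power sums do pin everything down). The load-bearing step in your plan is the subsequent one---equating $W_{4k+r}/W_{4k}$ across positive- and negative-index windows to pin down and then eliminate $e_1,e_3,e_4$---and that step does carry the argument; just do not present the $W_{4k+2}=0$ relations as already having done that job.
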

The above fomula was discovered by us in 2019 \cite{Chen19}. It can be derived from \refIdentity{identity_GNI2}, and it can also be verified using the following example:
\begin{align}
& [-15,-9,45,45]^h = [-11,-11,33,55]^h,\quad (h=-2,-1,1,2)  \tag*{\eqref{h12n12s55}}
\end{align}
with
\begin{align}
& \{Q_{-5},Q_{-4},Q_{-3}\}=\{\frac{351232}{300186253125},-\frac{175616}{20012416875},\frac{7168}{121287375}\}\\
& \{Q_3,Q_4,Q_5\}=\{-7168,-526848,-34771968\}
\end{align}

\begin{corollary}
Denote
\begin{equation}
\begin{aligned}
Q_{k}=
\begin{cases}  
\displaystyle\frac{a_1^k+a_2^k+a_3^k+a_4^k-b_1^k-b_2^k-b_3^k-b_4^k}{k}, \ &k\ne0\\[3mm]
\displaystyle\frac{2(a_1 a_2 a_3 a_4-b_1 b_2 b_3 b_4)}{a_1 a_2 a_3 a_4+b_1 b_2 b_3 b_4}, &k=0
\end{cases}\\
\end{aligned}
\end{equation}
We have\\
\indent If \  $Q_{1\phantom{-}}=Q_{2\phantom{-}}=Q_{3\phantom{-}}=Q_5=0$, then $Q_4 Q_9=2 Q_6 Q_7$.\\
\indent If \ $Q_{0\phantom{-}}=Q_{1\phantom{-}}=Q_{2\phantom{-}}=Q_4=0$, then $Q_3 Q_8=2 Q_5 Q_6$.\\
\indent If \ $Q_{-1}=Q_{0\phantom{-}}=Q_{1\phantom{-}}=Q_3=0$, then $Q_2 Q_7=2 Q_4 Q_5$.\\
\indent If \ $Q_{-2}=Q_{-1}=Q_{0\phantom{-}}=Q_2=0$, then $Q_1 Q_6=2 Q_3 Q_4+ Q_1^3 Q_4/12$.\\
\indent If \ $Q_{-3}=Q_{-2}=Q_{-1}=Q_1=0$, then $Q_0 Q_5=2 Q_2 Q_3$.\\
and\\
\indent If \  $Q_{1\phantom{-}}=Q_{2\phantom{-}}=Q_{3\phantom{-}}=Q_5=0$, then $Q_7 Q_{0\phantom{-}}^2=-Q_4^2 Q_{-1}(1-Q_0^2/4)$.\\
\indent If \ $Q_{0\phantom{-}}=Q_{1\phantom{-}}=Q_{2\phantom{-}}=Q_4=0$, then $Q_6 Q_{-1}^2=-Q_3^2 Q_{-2}$.\\
\indent If \ $Q_{-1}=Q_{0\phantom{-}}=Q_{1\phantom{-}}=Q_3=0$, then $Q_5 Q_{-2}^2=-Q_2^2 Q_{-3}$.\\
\indent If \ $Q_{-2}=Q_{-1}=Q_{0\phantom{-}}=Q_2=0$, then $Q_4 Q_{-3}^2=-Q_1^2 Q_{-4}$.\\
\indent If \ $Q_{-3}=Q_{-2}=Q_{-1}=Q_1=0$, then $Q_3 Q_{-4}^2=-Q_0^2 Q_{-5}(1-Q_0^2/4)$.
\end{corollary}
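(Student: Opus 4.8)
The plan is to establish all ten implications by one uniform method: the six-step argument used above for Corollary~\ref{corollary6108}, now applied through \refIdentity{identity_GNI2} with $n=m=4$. Fix one of the ten cases and set $P_k=a_1^k+a_2^k+a_3^k+a_4^k-b_1^k-b_2^k-b_3^k-b_4^k$ for $k\neq0$, together with $P_0=-b_1b_2b_3b_4/(a_1a_2a_3a_4)$ as in \refIdentity{identity_GNI2}; one records the elementary dictionary $P_0=(Q_0-2)/(Q_0+2)$, equivalently $1-Q_0^2/4=-4P_0/(1-P_0)^2$, which is what lets one pass at the end from a relation written in $P_0$ to the stated $Q_0$-form. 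The four vanishing hypotheses $Q_j=0$ become $P_j=0$ for the relevant $j\neq0$, plus $P_0=-1$ whenever $Q_0$ is among them.

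The first step mirrors Step~1 in the proof of Corollary~\ref{corollary6108}: combining the four hypotheses with Newton's identities for the individual $4$-element multisets $\{a_i\}$ and $\{b_i\}$ shows that these two multisets can differ in at most one elementary symmetric function --- of the multiset itself, or, for the types involving negative exponents, of its reciprocal multiset --- and the remaining hypothesis then collapses to a single linear constraint on the roots. In every case in which $Q_1=0$ is among the hypotheses this constraint is $a_1+a_2+a_3+a_4=b_1+b_2+b_3+b_4=0$, exactly as $a_1+a_2+a_3=0$ arose in Corollary~\ref{corollary6108}; in the one exceptional type $(Q_{-2},Q_{-1},Q_0,Q_2)=(0,0,0,0)$ one obtains only $(a_1+a_2+a_3+a_4)+(b_1+b_2+b_3+b_4)=0$, and this is precisely the origin of the extra term $Q_1^3Q_4/12$ in that case. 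In the generic situation the family $W_{4k+1}=(a_1+\cdots+a_4)\,W_{4k}$ of \refIdentity{identity_GNI2} then vanishes identically in $k$; in the exceptional type one uses instead a mixed determinant obeying an identity of the form $W_{\mathrm{mixed}}+\tfrac14P_1^2\,W_{4k}=0$, of the kind constructed in the examples following \refIdentity{identity_GNI2}.

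Steps~2--5 run as in Corollary~\ref{corollary6108}: substitute the vanishing conditions into the forward and backward recursions for the $S_k$ of \refIdentity{identity_GNI2}; most low-index $S_k$ collapse (just as $S_1=S_2=S_3=S_5=0$ did there), the surviving ones become explicit polynomials in the free quantities $Q_k=P_k/k$, and imposing the degeneracy of Step~1 at successive indices $k$ lets one solve for the higher and ``gap'' $P_k$ in terms of finitely many free ones. For Step~6 one writes an identity $W_{4a+r}/W_{4b+r}=W_{4c+r}/W_{4d+r}$ --- both ratios being the same elementary symmetric function of the $a_i$ --- expands the determinants, substitutes the expressions from Steps~2--5, and simplifies; the outcome is exactly the claimed relation among the $Q_k$. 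When the index range of these determinants straddles $0$, the non-standard offset $S_0=1+T_0=1+P_0$ and the value $S_{-1}=P_0P_{-1}$ in \refIdentity{identity_GNI2} enter the computation, and rewriting $P_0$ via the dictionary above is what produces the factors $1-Q_0^2/4$ in the second family of identities.

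I expect the main obstacle to be twofold. First, \refIdentity{identity_GNI2} is only asserted in general, so a self-contained argument must verify the finitely many instances it actually uses here --- $n=m=4$ with $k$ in a small window, say $-5\le k\le 10$ --- by direct expansion, exactly as the concrete examples following \refIdentity{identity_GNI2} were checked with a computer algebra system. Second, the ten cases are genuinely distinct: the $1/k$ normalisation and the special $k=0$ convention keep a uniform ``shift the exponents by $1$'' from being an exact symmetry, so Steps~2--6 have to be redone in each case, and the delicate point throughout is tracking the $S_0=1+P_0$ offset and whether $P_1$ vanishes --- these are what generate the correction terms $Q_1^3Q_4/12$ and $1-Q_0^2/4$, and getting their coefficients exactly right is where the argument is most error-prone.
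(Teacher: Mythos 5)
Your proposal is correct and follows exactly the route the paper indicates: the paper offers no proof of this corollary beyond asserting that the formulas "can be derived from Identity \ref{identity_GNI2}," and your sketch instantiates the six-step template of Corollary \ref{corollary6108} with $n=m=4$, correctly supplying the key technical ingredients — the dictionary $P_0=(Q_0-2)/(Q_0+2)$ with $1-Q_0^2/4=-4P_0/(1-P_0)^2$, the dichotomy between $e_1=0$ in the cases containing $Q_1=0$ and $(a_1+\cdots+a_4)+(b_1+\cdots+b_4)=0$ in the case $(Q_{-2},Q_{-1},Q_0,Q_2)$, and the resulting correction terms $Q_1^3Q_4/12$ and $1-Q_0^2/4$. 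The remaining work (case-by-case expansion of the determinant identities, with the finitely many needed instances of Identity \ref{identity_GNI2} checked by computer algebra) is exactly the level of verification the paper itself relies on, so there is no gap beyond what the paper already leaves open.
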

The author discovered the above formulas in 2019 \cite{Chen19}. These formulas can be derived from \refIdentity{identity_GNI2} and can also be verified by using the ideal integer solutions for the following types:  $(h=1,2,3,5), (h=0,1,2,4), (h=-1,0,1,3), (h=-2,-1,0,2)$, and $(h=-3,-2,-1,1)$. (See \ref{h1235}, \ref{h0124}, \ref{h013n1}, \ref{h02n12}, \ref{h1n123})

\subsubsection{Third Generalization of the Girard-Newton Identities}
For the Third Generalization of the Girard-Newton Identities, we specifically consider cases that are derived from the First Generalization, where all exponential powers are odd integers \cite{Chen21}. Here, exponential power can refer to both positive and negative odd integers. However, when both positive and negative odd integers are involved, the general identity might become quite complex. Therefore, in the Identity 5 presented below, we only address cases where all exponential powers are positive odd integers. We will provide examples later for cases where exponential powers include both positive and negative odd integers.

\begin{identity}\label{identity_GNI3}
Let $n$ be a positive integer, and let $t$ be a non-negative integer. We define:
\begin{align}
\label{defineP2tp1} 
P_{2t+1}= a_{1}^{2t+1}+a_{2}^{2t+1}+\cdots+a_{n}^{2t+1}
\end{align}
and introduce $G_{2t+1}$ as follows:
\begin{align}
\label{defineG} 
G_{2t+1}=\frac{P_1^{2t+1}-P_{2t+1}}{2t+1} -\frac{\sum_{j=1}^{t-3} (2j+1) G_{2j+1} Z_{2t-2j}}{2t+1}
\end{align}
where\\
\begin{align}
Z_{2t}=\sum_{\substack{i_3,i_5,...,i_{2t-3}\geq0\\3 i_3 + 5 i_5+\cdots +(2t-3) i_{2t-3}=2t }}\frac{(i_3+i_5+\cdots+i_{2t-3})!}{i_3 !\; i_5 !\; \cdots\; i_{2t-3} !} G_3^{i_3} G_5^{i_5}\cdots G_{2t-3}^{i_{2t-3}}
\end{align}
When $n$ is even, we define the following determinants:
\begin{equation}\nonumber
\begin{aligned}
U_{n,0}=\begin{vmatrix}
 G_{n-1} & G_{n-3} & \cdots & G_5 & G_3 & V_1\\
 G_{n+1} & G_{n-1} & \cdots & G_7 & G_5 & V_3\\
 G_{n+3} & G_{n+1} & \cdots & G_9 & G_7 & V_5\\
 \vdots & \vdots & \vdots & \vdots & \vdots & \vdots\\
 G_{2n-5} & G_{2n-7}  & \cdots & G_{n+1}& G_{n-1}& V_{n-3}\\
 G_{2n-3} & G_{2n-5}  & \cdots & G_{n+3}& G_{n+1}& V_{n-1}\\
\end{vmatrix}\;,
\end{aligned}
\end{equation}
\begin{equation}
\begin{aligned}
U_{n,2}=\begin{vmatrix}
 G_{n+1} & G_{n-3} & \cdots & G_5 & G_3 & V_1\\
 G_{n+3} & G_{n-1} & \cdots & G_7 & G_5 & V_3\\
 G_{n+5} & G_{n+1} & \cdots & G_9 & G_7 & V_5\\
 \vdots & \vdots & \vdots & \vdots & \vdots & \vdots\\
 G_{2n-3} & G_{2n-7}  & \cdots & G_{n+1}& G_{n-1}& V_{n-3}\\
 G_{2n-1} & G_{2n-5}  & \cdots & G_{n+3}& G_{n+1}& V_{n-1}\\
\end{vmatrix}\;,
\end{aligned}
\end{equation}
\begin{equation}\nonumber
\begin{aligned}
U_{n,4}=\begin{vmatrix}
 G_{n+1} & G_{n-3} & \cdots & G_5 & G_3 & V_1\\
 G_{n+3} & G_{n-1} & \cdots & G_7 & G_5 & V_3\\
 G_{n+5} & G_{n+1} & \cdots & G_9 & G_7 & V_5\\
 \vdots & \vdots & \vdots & \vdots & \vdots & \vdots\\
 G_{2n-3} & G_{2n-7}  & \cdots & G_{n+1}& G_{n-1}& V_{n-3}\\
 G_{2n+1} & G_{2n-3}  & \cdots & G_{n+5}& G_{n+3}& V_{n+1}\\
\end{vmatrix}\;,
\end{aligned}
\end{equation}
\begin{equation}\nonumber
\begin{aligned}
\qquad\quad where \quad 
\begin{cases}
 V_1=G_1\\
 V_3=G_1^3+G_3\\
 V_5=G_1^5+G_3 G_1^2+G_5\\
 \qquad \cdots \\
 V_{n+1}=G_1^{n+1}+ G_3 G_1^{n-2} + \cdots + G_{n-1} G_1^2 + G_{n+1}\\
\end{cases}
\end{aligned}
\end{equation}
When $n$ is odd, we define the following determinants:
\begin{equation}\nonumber
\begin{aligned}
U_{n,0}=\begin{vmatrix}
 G_{n-2} & G_{n-4} & \cdots & G_3 & 0 & 1\\
 G_{n} & G_{n-2} & \cdots & G_5 & G_3 & V_2\\
 G_{n+2} & G_{n} & \cdots & G_7 & G_5 & V_4\\
 \vdots & \vdots & \vdots & \vdots & \vdots & \vdots\\
 G_{2n-5} & G_{2n-7}  & \cdots & G_{n}& G_{n-2}& V_{n-3}\\
 G_{2n-3} & G_{2n-5}  & \cdots & G_{n+2}& G_{n}& V_{n-1}\\
\end{vmatrix}\;,
\end{aligned}
\end{equation}
\begin{equation}
\begin{aligned}
U_{n,2}=\begin{vmatrix}
 G_{n} & G_{n-4} & \cdots & G_3 & 0 & 1\\
 G_{n+2} & G_{n-2} & \cdots & G_5 & G_3 & V_2\\
 G_{n+4} & G_{n} & \cdots & G_7 & G_5 & V_4\\
 \vdots & \vdots & \vdots & \vdots & \vdots & \vdots\\
 G_{2n-3} & G_{2n-7}  & \cdots & G_{n}& G_{n-2}& V_{n-3}\\
 G_{2n-1} & G_{2n-5}  & \cdots & G_{n+2}& G_{n}& V_{n-1}\\
\end{vmatrix}\;,
\end{aligned}
\end{equation}
\begin{equation}\nonumber
\begin{aligned}
U_{n,4}=\begin{vmatrix}
 G_{n} & G_{n-4} & \cdots & G_3 & 0 & 1\\
 G_{n+2} & G_{n-2} & \cdots & G_5 & G_3 & V_2\\
 G_{n+4} & G_{n} & \cdots & G_7 & G_5 & V_4\\
 \vdots & \vdots & \vdots & \vdots & \vdots & \vdots\\
 G_{2n-3} & G_{2n-7}  & \cdots & G_{n}& G_{n-2}& V_{n-3}\\
 G_{2n+1} & G_{2n-3}  & \cdots & G_{n+4}& G_{n+2}& V_{n+1}\\
\end{vmatrix}\;,
\end{aligned}
\end{equation}
\begin{equation}\nonumber
\begin{aligned}
\qquad\quad where \quad 
\begin{cases}
 V_2=G_1^2\\
 V_4=G_1^4+G_3 G_1\\
 V_6=G_1^6+G_3 G_1^3+G_5 G_1 \\
 \qquad \cdots \\
 V_{n+1}=G_1^{n+1}+ G_3 G_1^{n-2} + \cdots + G_{n-2} G_1^3 + G_n G_1\\
\end{cases}
\end{aligned}
\end{equation}
With these definitions, the identity states that:
\begin{equation}
\begin{aligned}
& G_1^2+ \frac {2 U_{n,2}}{U_{n,0}}=a_1^2+a_2^2+\cdots+a_n^2\\ 
& G_1^4+ \frac {4 U_{n,4}}{U_{n,0}}=(a_1^2+a_2^2+\cdots+a_n^2)^2\\
\end{aligned}
\end{equation}
\end{identity}
Since all $k$ are odd integers, for the definition of $P_{2t+1}$, we only need to consider the case given by equation \eqref{defineP2tp1}. We do not need to consider the case where $P_{2t+1}=\sum_{i=1}^{n}a^{2t+1}-\sum_{j=1}^{m}b^{2t+1}$ because it can be regarded as $P_{2t+1}=\sum_{i=1}^{n} a^{2t+1}+\sum_{j=1}^{m}(-b)^{2t+1}$. Identity 5 has not been proved completely, it can be verified by using Mathematica for a certain smaller $n$.  \\
\indent
The following Example 2.20 to Example 2.32 are specific examples originating from Identity 5, and they can be easily proved through algebraic methods.

\begin{example}
According to the definition \eqref{defineG}, for all integers t where $0 \leq t \leq 8$, we can obtain the specific expression of $G_{2t + 1}$ as follows:
\begin{align}
\label{G1}
& G_1=-P_1\\
\label{G3}
& G_3=\frac{1}{3} \left(P_1^3-P_3\right)\\
\label{G5}
& G_5=\frac{1}{5} \left(P_1^5-P_5\right)\\
\label{G7}
& G_7=\frac{1}{7} \left(P_1^7-P_7\right)\\
\label{G9}
& G_9=\frac{1}{9} \left(P_1^9-P_9\right)-\frac{G_3^3}{3}\\
\label{G11}
& G_{11}=\frac{1}{11} \left(P_1^{11}-P_{11}\right)-G_3^2 G_5\\
\label{G13}
& G_{13}=\frac{1}{13} \left(P_1^{13}-P_{13}\right)-\left(G_7 G_3^2+G_5^2 G_3\right)\\
\label{G15}
& G_{15}=\frac{1}{15} (P_1^{15}-P_{15}) -\left(\frac{G_3^5}{5}+G_9 G_3^2+2 G_5 G_7 G_3+\frac{G_5^3}{3}\right)\\
\label{G17}
& G_{17}=\frac{1}{17} \left(P_1^{17}-P_{17}\right) \nonumber\\
& \qquad\qquad -(G_5 G_3^4+G_{11} G_3^2+G_7^2 G_3+2 G_5 G_9 G_3+G_5^2 G_7)
\end{align}
\end{example}

\begin{example}
Let
\begin{equation}
\begin{aligned}
P_k=a_1^k+a_2^k,\quad (k=1,3)
\end{aligned}
\end{equation}
Define
\begin{align}
& G_1=-P_1 \tag*{\eqref{G1}}\\ 
& G_3=(P_1^3-P_3)/3 \tag*{\eqref{G3}}
\end{align}
Then it follows that
\begin{equation}
\begin{aligned}
\label{identity13} 
& G_1^2+ 2 \frac {\begin{vmatrix}
 G_3 \\
\end{vmatrix}}{\begin{vmatrix}
 G_1 \\
\end{vmatrix}}=a_1^2+a_2^2\\
\end{aligned}
\end{equation}
\end{example}
The basic application of Identity \eqref{identity13} is to find ideal non-negative integer solutions for $(k=1,3)$, including ideal prime solutions. For example, the following ideal prime solution chain of length 4 is obtained by using Identity \eqref{identity13}:
\begin{align}
&[83,757,827]^k = [107,677,883]^k= [197,523,947]^k = [281,419,967]^k, \nonumber \\
& \qquad\qquad\qquad\qquad\qquad\qquad\qquad\qquad\qquad\qquad\qquad (k=1,3)  \tag*{\eqref{k13s967}} 
\end{align}
\begin{example}
Let
\begin{equation}
\begin{aligned}
P_k=a_1^k+a_2^k,\quad (k=1,5)
\end{aligned}
\end{equation}
Define
\begin{align}
& G_1=-P_1 \tag*{\eqref{G1}}\\ 
& G_5=(P_1^5-P_5)/5 \tag*{\eqref{G5}} 
\end{align}
Then it follows that
\begin{equation}
\begin{aligned}
\label{identity15} 
& G_1^4+ 4 \frac {\begin{vmatrix}
 G_5 \\
\end{vmatrix}}{\begin{vmatrix}
 G_1 \\
\end{vmatrix}}=(a_1^2+a_2^2)^2\\
\end{aligned}
\end{equation}
\end{example}

\begin{example}
Let
\begin{equation}
\begin{aligned}
P_k=a_1^k+a_2^k+a_3^k,\quad (k=1,3,5)
\end{aligned}
\end{equation}
Define
\begin{align}
& G_1=-P_1 \tag*{\eqref{G1}}\\ 
& G_3=(P_1^3-P_3)/3 \tag*{\eqref{G3}}\\ 
& G_5=(P_1^5-P_5)/5 \tag*{\eqref{G5}} 
\end{align}
Then it follows that
\begin{equation}
\begin{aligned}
\label{identity135} 
& G_1^2+ 2 \frac {\begin{vmatrix}
 G_3 & 1 \\
 G_5 & G_1^2 \\
\end{vmatrix}}{\begin{vmatrix}
 0 & 1 \\
 G_3 & G_1^2 \\
\end{vmatrix}}=a_1^2+a_2^2+a_3^2\\
\end{aligned}
\end{equation}
\end{example}

\begin{example}
Let
\begin{equation}
\begin{aligned}
P_k=a_1^k+a_2^k+a_3^k,\quad (k=1,3,7)
\end{aligned}
\end{equation}
Define
\begin{align}
& G_1=-P_1 \tag*{\eqref{G1}}\\ 
& G_3=(P_1^3-P_3)/3 \tag*{\eqref{G3}}\\ 
& G_7=(P_1^7-P_7)/7 \tag*{\eqref{G7}}
\end{align}
Then it follows that
\begin{equation}
\begin{aligned}
\label{identity137} 
& G_1^4+ 4 \frac {\begin{vmatrix}
 G_3 & 1 \\
 G_7 & G_1^4+G_3 G_1 \\
\end{vmatrix}}{\begin{vmatrix}
 0 & 1 \\
 G_3 & G_1^2 \\
\end{vmatrix}}=(a_1^2+a_2^2+a_3^2)^2\\
\end{aligned}
\end{equation}
\end{example}
By applying \eqref{identity137}, Chen Shuwen found the following solution in 2000.
\begin{align}
& [ 184, 443, 556, 698 ]^k = [ 230, 353, 625, 673 ]^k, \quad (k=1,3,7)  \tag*{\eqref{k137s698}}
\end{align}
\begin{example}
Let
\begin{equation}
\begin{aligned}
P_k=a_1^k+a_2^k+a_3^k+a_4^k,\quad (k=1,3,5,7)
\end{aligned}
\end{equation}
and
\begin{align}
& G_1=-P_1 \tag*{\eqref{G1}}\\ 
& G_3=(P_1^3-P_3)/3 \tag*{\eqref{G3}}\\ 
& G_5=(P_1^5-P_5)/5 \tag*{\eqref{G5}} \\
& G_7=(P_1^7-P_7)/7 \tag*{\eqref{G7}}
\end{align}
Then the following identity holds: 
\begin{align}
\label{identity1357} 
& G_1^2+ 2 \frac {\begin{vmatrix}
 G_5 & G_1 \\
 G_7 & G_1^3+G_3\\
\end{vmatrix}}{\begin{vmatrix}
 G_3 & G_1 \\
 G_5 & G_1^3+G_3\\
\end{vmatrix}}=a_1^2+a_2^2+a_3^2+a_4^2
\end{align}
Furthermore, define:
\begin{align}
 U_{4,0}={\begin{vmatrix}
 G_3 & G_1 \\
 G_5 & G_1^3+G_3\\
\end{vmatrix}}
\end{align}
Then it follows that
\begin{align}
& U_{4,0}=-(a_1+a_2)(a_1+a_3)(a_1+a_4)(a_2+a_3)(a_2+a_4)(a_3+a_4)\\
& {\begin{vmatrix}
 G_5 & G_1 \\
 G_7 & G_1^3+G_3\\
\end{vmatrix}}\cdot \frac{1}{U_{4,0}}=-(a_1 a_2 +a_1 a_3+\cdots+a_3 a_4)\\
& {\begin{vmatrix}
 G_5 & G_1^2 \\
 G_7 & G_1^4+G_3 G_1 \\
\end{vmatrix}}\cdot \frac{1}{U_{4,0}}-G_3=a_1 a_2 a_3+a_1 a_2 a_4+\cdots+a_2 a_3 a_4\\
& {\begin{vmatrix}
 G_3 & 0 & 1 \\
 G_5 & G_3 & G_1^2 \\
 G_7 & G_5 & G_1^4+G_3 G_1 \\
\end{vmatrix}}\cdot \frac{1}{U_{4,0}}=a_1 a_2 a_3 a_4\\
& {\begin{vmatrix}
 G_5 & G_3 \\
 G_7 & G_5 \\
\end{vmatrix}}\cdot \frac{1}{U_{4,0}}=(P_1-a_1)(P_1-a_2)(P_1-a_3)(P_1- a_4)
\end{align}
\end{example}

\begin{example}
Let
\begin{align}
P_k=a_1^k+a_2^k+a_3^k+a_4^k,\quad (k=1,3,5,9)
\end{align}
and
\begin{align}
& G_1=-P_1 \tag*{\eqref{G1}}\\ 
& G_3=(P_1^3-P_3)/3 \tag*{\eqref{G3}}\\ 
& G_5=(P_1^5-P_5)/5 \tag*{\eqref{G5}} \\
& G_9=(P_1^9-P_9)/9-(G_3^3/3) \tag*{\eqref{G9}}
\end{align}
then
\begin{equation}
\begin{aligned}
\label{identity1359} 
& G_1^4+ 4 \frac {\begin{vmatrix}
 G_5 & G_1 \\
 G_9 & G_1^5+G_3 G_1^2+G_5 \\
\end{vmatrix}}{\begin{vmatrix}
 G_3 & G_1 \\
 G_5 & G_1^3+G_3 \\
\end{vmatrix}}=(a_1^2+a_2^2+a_3^2+a_4^2)^2\\
\end{aligned}
\end{equation}
\end{example}

\begin{align}
a_1^k+a_2^k+a_3^k+a_4^k+a_5^k=b_1^k+b_2^k+b_3^k+b_4^k+b_5^k,\,\, (k=1,3,5,9)
\end{align}

\begin{example}
Let
\begin{equation}
\begin{aligned}
P_k=a_1^k+a_2^k+a_3^k+a_4^k+a_5^k,\quad (k=1,3,5,7,9)
\end{aligned}
\end{equation}
and
\begin{align}
& G_1=-P_1 \tag*{\eqref{G1}}\\ 
& G_3=(P_1^3-P_3)/3 \tag*{\eqref{G3}}\\ 
& G_5=(P_1^5-P_5)/5 \tag*{\eqref{G5}} \\
& G_7=(P_1^7-P_7)/7 \tag*{\eqref{G7}}\\
& G_9=(P_1^9-P_9)/9-(G_3^3/3) \tag*{\eqref{G9}}
\end{align}
Then the following identity holds:
\begin{align}
\label{identity13579} 
& G_1^2+ 2 \frac {\begin{vmatrix}
 G_5 & 0 & 1 \\
 G_7 & G_3 & G_1^2 \\
 G_9 & G_5 & G_1^4+G_3 G_1 \\
\end{vmatrix}}{\begin{vmatrix}
 G_3 & 0 & 1 \\
 G_5 & G_3 & G_1^2 \\
 G_7 & G_5 & G_1^4+G_3 G_1 \\
\end{vmatrix}}=a_1^2+a_2^2+a_3^2+a_4^2+a_5^2
\end{align}
Furthermore, define:
\begin{align}
 U_{5,0}={\begin{vmatrix}
 G_3 & 0 & 1 \\
 G_5 & G_3 & G_1^2 \\
 G_7 & G_5 & G_1^4+G_3 G_1 \\
\end{vmatrix}}
\end{align}
Then it follows that
\begin{align}
& U_{5,0}=(a_1+a_2)(a_1+a_3)\cdots (a_4+a_5)
\end{align}
and
\begin{align}
& {\begin{vmatrix}
 G_5 & 0 & 1 \\
 G_7 & G_3 & G_1^2 \\
 G_9 & G_5 & G_1^4+G_3 G_1 \\
\end{vmatrix}}\cdot \frac{1}{U_{5,0}}=-(a_1 a_2+a_1 a_3+\cdots+a_4 a_5)\\[1mm]
& {\begin{vmatrix}
 G_5 & 0 & G_1 \\
 G_7 & G_3 & G_1^3+G_3 \\
 G_9 & G_5 & G_1^5+G_3 G_1^2+G_5 \\
\end{vmatrix}}\cdot \frac{1}{U_{5,0}}-G_3\nonumber\\
& \qquad =a_1 a_2 a_3+a_1 a_2 a_4 +\cdots+a_3 a_4 a_5\\[1mm]
& {\begin{vmatrix}
 G_5 & G_3 & 1 \\
 G_7 & G_5 & G_1^2 \\
 G_9 & G_7 & G_1^4+G_3 G_1 \\
\end{vmatrix}}\cdot \frac{1}{U_{5,0}}+G_1 G_3\nonumber\\
& \qquad =a_1 a_2 a_3 a_4+a_1 a_2 a_3 a_5 +\cdots+ a_2 a_3 a_4 a_5\\[1mm]
& {\begin{vmatrix}
 G_5 & G_3 & G_1 \\
 G_7 & G_5 & G_1^3+G_3 \\
 G_9 & G_7 & G_1^5+G_3 G_1^2+G_5 \\
\end{vmatrix}}\cdot \frac{1}{U_{5,0}}=-a_1 a_2 a_3 a_4 a_5\\[1mm]
& {\begin{vmatrix}
 G_5 & G_3 & 0 \\
 G_7 & G_5 & G_3 \\
 G_9 & G_7 & G_5 \\
\end{vmatrix}}\cdot \frac{1}{U_{5,0}}=(P_1-a_1)(P_1-a_2)\cdots(P_1-a_5)
\end{align}
\end{example}

The most important use of Identity \eqref{identity13579} is in finding the ideal symmetric solutions of PTE of degree 10, specifically, the ideal non-negative integer solutions of type $(k=1,3,5,7,9)$ where one item is 0. However, despite several years of computer searching, we have still not been able to find such a solution.\\
\indent
Nevertheless, by utilizing Identity \eqref{identity13579} and with the assistance of computer searching, we successfully found the first set of positive integer solutions for $(k=1,3,5,7,9)$ in 2000.
\begin{align}
[7, 91, 173, 269, 289, 323 ]^k = [ 29, 59, 193, 247, 311, 313]^k\;,\nonumber\\(k=1,3,5,7,9) 
\tag*{\eqref{k13579s323}}
\end{align}
The following four other known solutions of $(k=1,3,5,7,9)$, first found by Jarosław Wróblewski in 2009,  using his method, may also be found by applying Identity \eqref{identity13579}.
\begin{align}
& [ 23, 163, 181, 341, 347, 407 ]^k = [ 37, 119, 221, 311, 371, 403 ]^k\;,\tag*{\eqref{k13579s407}}\\
& [ 43, 161, 217, 335, 391, 463 ]^k = [ 85, 91, 283, 287, 403, 461 ]^k\;,
\tag*{\eqref{k13579s463}}\\
& [ 57, 399, 679, 995, 1167, 1293 ]^k = [ 115, 299, 767, 925, 1205, 1279 ]^k\;,\tag*{\eqref{k13579s1293}}\\
& [ -13, 365, 689, 1111, 1115, 1325 ]^k= [ 23, 305, 731, 1037, 1177, 1319 ]^k\;,\nonumber\\
& \qquad\qquad\qquad\qquad\qquad\qquad\qquad\qquad\qquad\qquad (k=1,3,5,7,9)
\end{align}
Another attempt to apply Identity \eqref{identity13579} is to find an ideal solution of $(k=1,2,3,5,7,9)$, which will require more computer time. We note that all five known solutions of $(k=1,3,5,7,9)$ and most of the known solutions of $(k=1,2,3,5,7)$ consist only of odd integers. Therefore, we try to find the solution of $(k=1,2,3,5,7,9)$ only in the range of odd integers to speed up the search. After running the algorithm on an i7 laptop for 30 minutes, we found the first ideal solution in 2023.
\begin{align}
& [ 7, 89, 91, 251, 253, 341, 373 ]^k = [29, 31, 151, 193, 311, 313, 377 ]^k\;,\nonumber\\
& \qquad\qquad\qquad\qquad\qquad\qquad\qquad\qquad\qquad\quad (k=1,2,3,5,7,9) 
\tag*{\eqref{k123579s377}} 
\end{align}
By improving the algorithm, we found the second ideal solutions of $(k=1,2,3,5,7,9)$ after the computer searched for approximately 36 hours in 2023.
\begin{align}
& [ 269, 397, 409, 683, 743, 901, 923 ]^k\nonumber\\ 
& \quad= [ 299, 313, 493, 613, 827, 839, 941 ]^k,\quad(k=1,2,3,5,7,9)  \tag*{\eqref{k123579s941}} 
\end{align}
We noticed that \eqref{k13579s323} and \eqref{k123579s377} have six of the same numbers, and then we obtained the chain below, although it is not an ideal chain.
\begin{equation}
\begin{aligned}
& [-91,-7,29,193,311,313]^k= [-247,-59,173,269,289,323]^k\\ & \quad  = [-377,-151,-31,89,251,253,341,373]^k\;,\quad (k=1,3,5,7,9) 
\end{aligned}
\end{equation}

\begin{example}
Let
\begin{equation}
\begin{aligned}
P_k=a_1^k+a_2^k+a_3^k+a_4^k+a_5^k,\quad (k=1,3,5,7,11)
\end{aligned}
\end{equation}
Define
\begin{align}
& G_1=-P_1 \tag*{\eqref{G1}}\\ 
& G_3=(P_1^3-P_3)/3 \tag*{\eqref{G3}}\\ 
& G_5=(P_1^5-P_5)/5 \tag*{\eqref{G5}} \\
& G_7=(P_1^7-P_7)/7 \tag*{\eqref{G7}}\\
& G_{11}=(P_1^{11}-P_{11})/11-G_3^2 G_5 \tag*{\eqref{G11}}
\end{align}
Then it follows that
\begin{align}
\label{identity135711} 
& G_1^4+ 4 \frac {\begin{vmatrix}
 G_5 & 0 & 1 \\
 G_7 & G_3 & G_1^2 \\
 G_{11} & G_7 & G_1^6+G_3 G_1^3+G_5 G_1 \\
\end{vmatrix}}{\begin{vmatrix}
 G_3 & 0 & 1 \\
 G_5 & G_3 & G_1^2 \\
 G_7 & G_5 & G_1^4+G_3 G_1 \\
\end{vmatrix}}=(a_1^2+a_2^2+a_3^2+a_4^2+a_5^2)^2
\end{align}
\end{example}

\begin{example}
Let
\begin{align}
& P_k=a_1^k+a_2^k+a_3^k+a_4^k+a_5^k+a_6^k,\quad (k=1,3,5,7,9,11)
\end{align}
and
\begin{align}
& G_1=-P_1 \tag*{\eqref{G1}}\\ 
& G_3=(P_1^3-P_3)/3 \tag*{\eqref{G3}}\\ 
& G_5=(P_1^5-P_5)/5 \tag*{\eqref{G5}} \\
& G_7=(P_1^7-P_7)/7 \tag*{\eqref{G7}}\\
& G_9=(P_1^9-P_9)/9-(G_3^3/3) \tag*{\eqref{G9}}\\
& G_{11}=(P_1^{11}-P_{11})/11-(G_3^2 G_5) \tag*{\eqref{G11}}
\end{align}
Then the following identity holds:
\begin{align}
\label{identity1357911} 
& G_1^2+ 2 \frac {\begin{vmatrix}
 G_7 & G_3 & G_1 \\
 G_9 & G_5 & G_1^3+G_3 \\
 G_{11} & G_7 & G_1^5+G_3 G_1^2+G_5 \\
\end{vmatrix}}{\begin{vmatrix}
 G_5 & G_3 & G_1 \\
 G_7 & G_5 & G_1^3+G_3 \\
 G_9 & G_7 & G_1^5+G_3 G_1^2+G_5 \\
\end{vmatrix}}=a_1^2+a_2^2+\cdots+a_6^2
\end{align}
Furthermore, define:
\begin{align}
 U_{6,0}={\begin{vmatrix}
 G_5 & G_3 & G_1 \\
 G_7 & G_5 & G_1^3+G_3 \\
 G_9 & G_7 & G_1^5+G_3 G_1^2+G_5  \\
\end{vmatrix}}
\end{align}
Then it follows that
\begin{align}
& U_{6,0}=(a_1+a_2)(a_1+a_3)\cdots (a_5+a_6)
\end{align}
and
\begin{align}
& \begin{vmatrix}
 G_7 & G_3 & G_1 \\
 G_9 & G_5 & G_1^3+G_3 \\
 G_{11} & G_7 & G_1^5+G_3 G_1^2+G_5 \\
\end{vmatrix} \cdot \frac{1}{U_{6,0}} \nonumber\\
& \qquad =-(a_1 a_2+a_1 a_3+\cdots+a_5 a_6)\\
& \begin{vmatrix}
 G_7 & G_3 & G_1^2 \\
 G_9 & G_5 & G_1^4+G_3 G_1 \\
 G_{11} & G_7 & G_1^6+G_3 G_1^3+G_5 G_1 \\
\end{vmatrix} \cdot \frac{1}{U_{6,0}}-G_3  \nonumber\\
& \qquad =a_1 a_2 a_3+a_1 a_2 a_4 +\cdots+ a_4 a_5 a_6\\
& \begin{vmatrix}
 G_7 & G_5 & G_1 \\
 G_9 & G_7 & G_1^3+G_3 \\
 G_{11} & G_9 & G_1^5+G_3 G_1^2+G_5 \\
\end{vmatrix}\cdot \frac{1}{U_{6,0}}+G_1 G_3  \nonumber\\
& \qquad =a_1 a_2 a_3 a_4+a_1 a_2 a_3 a_5 +\cdots+ a_3 a_4 a_5 a_6\\
& \begin{vmatrix}
 G_5 & G_3 & G_1 & 0 \\
 G_7 & G_5 & G_3 & G_1 \\
 G_9 & G_7 & G_5 & G_1^3+G_3 \\
 G_{11} & G_9 & G_7 & G_1^5+G_3 G_1^2+G_5 \\
\end{vmatrix}\cdot \frac{1}{U_{6,0}} \nonumber\\
& \qquad =-(a_1 a_2 a_3 a_4 a_5+a_1 a_2 a_3 a_4 a_6 +\cdots+ a_2 a_3 a_4 a_5 a_6)\\
& \begin{vmatrix}
 G_5 & G_3 & 0 & 1 \\
 G_7 & G_5 & G_3 & G_1^2 \\
 G_9 & G_7 & G_5 & G_1^4+G_3 G_1 \\
 G_{11} & G_9 & G_7 & G_1^6+G_3 G_1^3+G_5 G_1 \\
\end{vmatrix}\cdot \frac{1}{U_{6,0}} =(a_1 a_2 a_3 a_4 a_5 a_6)\\
& \begin{vmatrix}
 G_7 & G_5 & G_3 \\
 G_9 & G_7 & G_5 \\
 G_{11} & G_9 & G_7 \\
\end{vmatrix}\cdot \frac{1}{U_{6,0}} =-(P_1-a_1)(P_1-a_2)\cdots(P_1-a_6)
\end{align}
\end{example}

\begin{example}
Let
\begin{align}
& P_k=a_1^k+a_2^k+a_3^k+a_4^k+a_5^k+a_6^k+a_7^k,\nonumber \\
& \qquad\qquad\qquad\quad (k=1,3,5,7,9,11,13)
\end{align}
and
\begin{align}
& G_1=-P_1 \tag*{\eqref{G1}}\\ 
& G_3=(P_1^3-P_3)/3 \tag*{\eqref{G3}}\\ 
& G_5=(P_1^5-P_5)/5 \tag*{\eqref{G5}} \\
& G_7=(P_1^7-P_7)/7 \tag*{\eqref{G7}}\\
& G_9=(P_1^9-P_9)/9-(G_3^3/3) \tag*{\eqref{G9}}\\
& G_{11}=(P_1^{11}-P_{11})/11-(G_3^2 G_5) \tag*{\eqref{G11}}\\
& G_{13}=(P_1^{13}-P_{13})/13-(G_3 G_5^2+G_3^2 G_7) \tag*{\eqref{G13}}
\end{align}
then
\begin{equation}
\begin{aligned}
\label{identity135791113} 
& G_1^2+ 2 \frac {\begin{vmatrix}
 G_7 & G_3 & 0 & 1 \\
 G_9 & G_5 & G_3 & G_1^2 \\
 G_{11} & G_7 & G_5 & G_1^4+G_3 G_1 \\
 G_{13} & G_9 & G_7 & G_1^6+G_3 G_1^3+G_5 G_1 \\
\end{vmatrix}}{\begin{vmatrix}
 G_5 & G_3 & 0 & 1 \\
 G_7 & G_5 & G_3 & G_1^2 \\
 G_9 & G_7 & G_5 & G_1^4+G_3 G_1 \\
 G_{11} & G_9 & G_7 & G_1^6+G_3 G_1^3+G_5 G_1 \\
\end{vmatrix}}\\
& \qquad =a_1^2+a_2^2+a_3^2+a_4^2+a_5^2+a_6^2+a_7^2\\
\end{aligned}
\end{equation}
\end{example}

\begin{example}
Let
\begin{align}
& P_k=a_1^k+a_2^k+a_3^k+a_4^k+a_5^k+a_6^k+a_7^k+a_8^k,\nonumber \\
& \qquad\qquad\qquad\qquad (k=1,3,5,7,9,11,13,15)
\end{align}
and
\begin{align}
& G_1=-P_1 \tag*{\eqref{G1}}\\ 
& G_3=(P_1^3-P_3)/3 \tag*{\eqref{G3}}\\ 
& G_5=(P_1^5-P_5)/5 \tag*{\eqref{G5}} \\
& G_7=(P_1^7-P_7)/7 \tag*{\eqref{G7}}\\
& G_9=(P_1^9-P_9)/9-(G_3^3/3) \tag*{\eqref{G9}}\\
& G_{11}=(P_1^{11}-P_{11})/11-(G_3^2 G_5) \tag*{\eqref{G11}}\\
& G_{13}=(P_1^{13}-P_{13})/13-(G_3 G_5^2+G_3^2 G_7) \tag*{\eqref{G13}}\\
& G_{15}=(P_1^{15}-P_{15})/15-(G_3^5/3+ G_5^3/5+2 G_3 G_5 G_7+G_3^2 G_9) \tag*{\eqref{G15}}
\end{align}
then
\begin{equation}
\begin{aligned}
\label{identity13579111315} 
& G_1^2+ 2 \frac {\begin{vmatrix}
 G_9 & G_5 & G_3 & G_1 \\
 G_{11} & G_7 & G_5 & G_1^3+G_3 \\
 G_{13} & G_9 & G_7 & G_1^5+G_3 G_1^2+G_5 \\
 G_{15} & G_{11} & G_9 & G_1^7+G_3 G_1^4+G_5 G_1^2+G_7 \\
\end{vmatrix}}{\begin{vmatrix}
 G_7 & G_5 & G_3 & G_1 \\
 G_9 & G_7 & G_5 & G_1^3+G_3 \\
 G_{11} & G_9 & G_7 & G_1^5+G_3 G_1^2+G_5 \\
 G_{13} & G_{11} & G_9 & G_1^7+G_3 G_1^4+G_5 G_1^2+G_7 \\
\end{vmatrix}}\\
& \qquad =a_1^2+a_2^2+a_3^2+a_4^2+a_5^2+a_6^2+a_7^2+a_8^2\\
\end{aligned}
\end{equation}
\end{example}

\begin{example}
Let
\begin{align}
& P_k=a_1^k+a_2^k+a_3^k+a_4^k+a_5^k+a_6^k+a_7^k+a_8^k+a_9^k,\nonumber \\
& \qquad\qquad\qquad\qquad\quad (k=1,3,5,7,9,11,13,15,17)
\end{align}
and
\begin{align}
& G_1=-P_1 \tag*{\eqref{G1}}\\ 
& G_3=(P_1^3-P_3)/3 \tag*{\eqref{G3}}\\ 
& G_5=(P_1^5-P_5)/5 \tag*{\eqref{G5}} \\
& G_7=(P_1^7-P_7)/7 \tag*{\eqref{G7}}\\
& G_9=(P_1^9-P_9)/9-(G_3^3/3) \tag*{\eqref{G9}}\\
& G_{11}=(P_1^{11}-P_{11})/11-(G_3^2 G_5) \tag*{\eqref{G11}}\\
& G_{13}=(P_1^{13}-P_{13})/13-(G_3 G_5^2+G_3^2 G_7) \tag*{\eqref{G13}}\\
& G_{15}=(P_1^{15}-P_{15})/15-(G_3^5/3+ G_5^3/5+2 G_3 G_5 G_7+G_3^2 G_9) \tag*{\eqref{G15}}\\
& G_{17}=(P_1^{17}-P_{17})/17 \nonumber\\
& \qquad\qquad -(G_5 G_3^4+G_{11} G_3^2+G_7^2 G_3+2 G_5 G_9 G_3+G_5^2 G_7) \tag*{\eqref{G17}}
\end{align}
then
\begin{equation}
\begin{aligned}
\label{identity1357911131517} 
& G_1^2+ 2 \frac {\begin{vmatrix}
 G_9 & G_5 & G_3 & 0 & 1 \\
 G_{11} & G_7 & G_5 & G_3 & G_1^2 \\
 G_{13} & G_9 & G_7 & G_5 & G_1^4+G_3 G_1 \\
 G_{15} & G_{11} & G_9 & G_7 & G_1^6+G_3 G_1^3+G_5 G_1 \\
 G_{17} & G_{13} & G_{11} & G_9 & G_1^8+G_3 G_1^5+G_5 G_1^3+G_7 G_1 \\
\end{vmatrix}}{\begin{vmatrix}
 G_7 & G_5 & G_3 & 0 & 1 \\
 G_9 & G_7 & G_5 & G_3 & G_1^2 \\
 G_{11} & G_9 & G_7 & G_5 & G_1^4+G_3 G_1 \\
 G_{13} & G_{11} & G_9 & G_7 & G_1^6+G_3 G_1^3+G_5 G_1 \\
 G_{15} & G_{13} & G_{11} & G_9 & G_1^8+G_3 G_1^5+G_5 G_1^3+G_7 G_1 \\
\end{vmatrix}}\\
& \qquad =a_1^2+a_2^2+a_3^2+a_4^2+a_5^2+a_6^2+a_7^2+a_8^2+a_9^2\\
\end{aligned}
\end{equation}
\end{example}
For cases where the exponential powers are negative odd integers, the \mbox{definitions} of $P_{2t+1}$ and $G_{2t+1}$ are similar to those in \eqref{defineP2tp1} and \eqref{defineG}. Below are examples  of $G_{2t+1}$ for integers $t$ where $-7 \leq t \leq -1$:

\begin{example}
\begin{align}
\label{Gm1}
& G_{-1}=P_{-1}\\
\label{Gm3}
& G_{-3}=-\frac{1}{3} \left(P_{-1}^3-P_{-3}\right)\\
\label{Gm5}
& G_{-5}=-\frac{1}{5} \left(P_{-1}^5-P_{-5}\right)\\
\label{Gm7}
& G_{-7}=-\frac{1}{7} \left(P_{-1}^7-P_{-7}\right)\\
\label{Gm9}
& G_{-9}=-\frac{1}{9} \left(P_{-1}^9-P_{-9}\right)-\frac{G_{-3}^3}{3}\\
\label{Gm11}
& G_{-11}=-\frac{1}{11} \left(P_{-1}^{11}-P_{-11}\right)-G_{-3}^2 G_{-5}\\
\label{Gm13}
& G_{-13}=-\frac{1}{13} \left(P_{-1}^{13}-P_{-13}\right)-\left(G_{-3} G_{-5}^2 + G_{-3}^2 G_{-7} \right)
\end{align}
\end{example}

The following are examples of the Third generalization of the Girard-Newton Identities, where the exponential powers include both positive and negative odd integers.

\begin{example}
Let
\begin{align}
P_k=a_1^k+a_2^k+a_3^k,\quad (k=-1,1,3)
\end{align}
and
\begin{align}
& G_{-1}=P_{-1} \tag*{\eqref{Gm1}}\\ 
& G_1=-P_1 \tag*{\eqref{G1}}\\ 
& G_3=(P_1^3-P_3)/3 \tag*{\eqref{G3}}
\end{align}
then
\begin{equation}
\begin{aligned}
\label{identity13n1} 
& G_1^2+ 2 \frac {\begin{vmatrix}
 0 & -G_{-1} \\
 G_3 & 1 \\
\end{vmatrix}}{\begin{vmatrix}
 1+G_{-1} G_1 & -G_{-1} \\
 0 & 1 \\
\end{vmatrix}}=a_1^2+a_2^2+a_3^2\\
\end{aligned}
\end{equation}
\end{example}

\begin{example}
Let
\begin{align}
P_k=a_1^k+a_2^k+a_3^k,\quad (k=-1,1,5)
\end{align}
and
\begin{align}
& G_{-1}=P_{-1} \tag*{\eqref{Gm1}}\\ 
& G_1=-P_1 \tag*{\eqref{G1}}\\ 
& G_5=(P_1^5-P_5)/5 \tag*{\eqref{G5}}
\end{align}
then
\begin{align}
\label{identity15n1} 
& G_1^4+ 4 \frac {\begin{vmatrix}
 0 & -G_{-1} \\
 G_5 & G_1^2 \\
\end{vmatrix}}{\begin{vmatrix}
 1+G_{-1} G_1 & -G_{-1} \\
 0 & 1 \\
\end{vmatrix}}=(a_1^2+a_2^2+a_3^2)^2
\end{align}
\end{example}

\begin{example}
Let
\begin{align}
P_k=a_1^k+a_2^k+a_3^k+a_4^k,\quad (k=-1,1,3,5)
\end{align}
and
\begin{align}
& G_{-1}=P_{-1} \tag*{\eqref{Gm1}}\\ 
& G_1=-P_1 \tag*{\eqref{G1}}\\ 
& G_3=(P_1^3-P_3)/3 \tag*{\eqref{G3}}\\
& G_5=(P_1^5-P_5)/5 \tag*{\eqref{G5}}
\end{align}
then
\begin{align}
\label{identity135n1} 
 G_1^2+ 2 \frac {\begin{vmatrix}
 G_3 (1+G_{-1} G_1) & G_{-1} \\
 G_5 & G_1 \\
\end{vmatrix}}{\begin{vmatrix}
 G_1 & G_{-1} \\
 G_3 & G_1 \\
\end{vmatrix}}=a_1^2+a_2^2+a_3^2+a_4^2
\end{align}
\end{example}

\begin{example}
Let
\begin{align}
P_k=a_1^k+a_2^k+a_3^k+a_4^k,\quad (k=-1,1,3,7)
\end{align}
and
\begin{align}
& G_{-1}=P_{-1} \tag*{\eqref{Gm1}}\\ 
& G_1=-P_1 \tag*{\eqref{G1}}\\ 
& G_3=(P_1^3-P_3)/3 \tag*{\eqref{G3}}\\
& G_7=(P_1^7-P_7)/7 \tag*{\eqref{G7}}
\end{align}
then
\begin{align}
\label{identity137n1} 
 G_1^4+ 4 \frac {\begin{vmatrix}
 G_3 (1+G_{-1} G_1) & G_{-1} \\
 G_7 & G_1^3+G_3 \\
\end{vmatrix}}{\begin{vmatrix}
 G_1 & G_{-1} \\
 G_3 & G_1 \\
\end{vmatrix}}=(a_1^2+a_2^2+a_3^2+a_4^2)^2
\end{align}
\end{example}

\begin{example}
Let
\begin{align}
P_k=a_1^k+a_2^k+a_3^k+a_4^k+a_5^k,\quad (k=-1,1,3,5,7)
\end{align}
and
\begin{align}
& G_{-1}=P_{-1} \tag*{\eqref{Gm1}}\\ 
& G_1=-P_1 \tag*{\eqref{G1}}\\ 
& G_3=(P_1^3-P_3)/3 \tag*{\eqref{G3}}\\
& G_5=(P_1^5-P_5)/5 \tag*{\eqref{G5}}\\
& G_7=(P_1^7-P_7)/7 \tag*{\eqref{G7}}
\end{align}
then
\begin{align}
\label{identity1357n1} 
 G_1^2+ 2 \frac {\begin{vmatrix}
 G_1 G_3 & 1+G_{-1} G_1 & -G_{-1} \\
 G_5 & 0 & 1 \\
 G_7 & G_3 & G_1^2 \\
\end{vmatrix}}{\begin{vmatrix}
 0 & 1+G_{-1} G_1 & -G_{-1} \\
 G_3 & 0 & 1 \\
 G_5 & G_3 & G_1^2 \\
\end{vmatrix}}=a_1^2+a_2^2+a_3^2+a_4^2+a_5^2
\end{align}
\end{example}

\begin{example}
Let
\begin{align}
P_k=a_1^k+a_2^k+a_3^k+a_4^k+a_5^k,\quad (k=-1,1,3,5,9)
\end{align}
and
\begin{align}
& G_{-1}=P_{-1} \tag*{\eqref{Gm1}}\\ 
& G_1=-P_1 \tag*{\eqref{G1}}\\ 
& G_3=(P_1^3-P_3)/3 \tag*{\eqref{G3}}\\
& G_5=(P_1^5-P_5)/5 \tag*{\eqref{G5}}\\
& G_9=(P_1^9-P_9)/9-G_3^3/3 \tag*{\eqref{G9}}
\end{align}
then
\begin{align}
\label{identity1359n1} 
 G_1^4+ 4 \frac {\begin{vmatrix}
 G_1 G_3 & 1+G_{-1} G_1 & -G_{-1} \\
 G_5 & 0 & 1 \\
 G_9 & G_5 & G_1^4+G_3 G_1 \\
\end{vmatrix}}{\begin{vmatrix}
 0 & 1+G_{-1} G_1 & -G_{-1} \\
 G_3 & 0 & 1 \\
 G_5 & G_3 & G_1^2 \\
\end{vmatrix}}=(a_1^2+a_2^2+a_3^2+a_4^2+a_5^2)^2
\end{align}
\end{example}

\begin{example}
Let
\begin{align}
P_k=a_1^k+a_2^k+a_3^k+a_4^k,\quad (k=-3,-1,1,3)
\end{align}
and
\begin{align}
& G_{-1}=P_{-1} \tag*{\eqref{Gm1}}\\ 
& G_{-3}=-(P_{-1}^3-P_{-3})/3 \tag*{\eqref{Gm3}}\\
& G_1=-P_1 \tag*{\eqref{G1}}\\ 
& G_3=(P_1^3-P_3)/3 \tag*{\eqref{G3}}
\end{align}
then
\begin{align}
\label{identity13n13} 
 G_1^2+ 2 \frac {\begin{vmatrix}
 G_1 & G_{-3} \\
 G_3 & G_{-1} \\
\end{vmatrix}}{\begin{vmatrix}
 G_{-1} & G_{-3} \\
 G_1 & G_{-1} \\
\end{vmatrix}}=a_1^2+a_2^2+a_3^2+a_4^2
\end{align}
\end{example}

\begin{example}
Let
\begin{align}
P_k=a_1^k+a_2^k+a_3^k+a_4^k,\quad (k=-3,-1,1,5)
\end{align}
and
\begin{align}
& G_{-1}=P_{-1} \tag*{\eqref{Gm1}}\\ 
& G_{-3}=-(P_{-1}^3-P_{-3})/3 \tag*{\eqref{Gm3}}\\
& G_1=-P_1 \tag*{\eqref{G1}}\\ 
& G_5=(P_1^5-P_5)/5 \tag*{\eqref{G5}}
\end{align}
then
\begin{align}
\label{identity13n15} 
 G_1^4+ 4 \frac {\begin{vmatrix}
 G_1 (1+G_{-1} G_1) & G_{-3} \\
 G_5 & G_1 \\
\end{vmatrix}}{\begin{vmatrix}
 G_{-1} & G_{-3} \\
 G_1 & G_{-1} \\
\end{vmatrix}}=(a_1^2+a_2^2+a_3^2+a_4^2)^2
\end{align}
\end{example}

\begin{example}
Let
\begin{align}
P_k=a_1^k+a_2^k+a_3^k+a_4^k+a_5^k,\quad (k=-3,-1,1,3,5)
\end{align}
and
\begin{align}
& G_{-1}=P_{-1} \tag*{\eqref{Gm1}}\\ 
& G_{-3}=-(P_{-1}^3-P_{-3})/3 \tag*{\eqref{Gm3}}\\
& G_1=-P_1 \tag*{\eqref{G1}}\\ 
& G_3=(P_1^3-P_3)/3 \tag*{\eqref{G3}}\\
& G_5=(P_1^5-P_5)/5 \tag*{\eqref{G5}}
\end{align}
then
\begin{align}
\label{identity135n13} 
 G_1^2+ 2 \frac {\begin{vmatrix}
 G_{-1} G_3 & G_1 G_{-3} & -G_{-3} \\
 G_1 G_3 &  1+G_{-1} G_1 & -G_{-1} \\
 G_5 & 0 & 1 \\
\end{vmatrix}}{\begin{vmatrix}
 1+G_{-1} G_1 & G_1 G_{-3} & -G_{-3} \\
 0 &  1+G_{-1} G_1 & -G_{-1} \\
 G_3 & 0 & 1 \\
\end{vmatrix}}=a_1^2+a_2^2+a_3^2+a_4^2+a_5^2 \nonumber\\
\end{align}
\end{example}

\begin{example}
Let
\begin{align}
P_k=a_1^k+a_2^k+a_3^k+a_4^k+a_5^k,\quad (k=-3,-1,1,3,7)
\end{align}
and
\begin{align}
& G_{-1}=P_{-1} \tag*{\eqref{Gm1}}\\ 
& G_{-3}=-(P_{-1}^3-P_{-3})/3 \tag*{\eqref{Gm3}}\\
& G_1=-P_1 \tag*{\eqref{G1}}\\ 
& G_3=(P_1^3-P_3)/3 \tag*{\eqref{G3}}\\
& G_7=(P_1^7-P_7)/7 \tag*{\eqref{G7}}
\end{align}
then
\begin{align}
\label{identity137n13} 
 G_1^4+ 4 \frac {\begin{vmatrix}
 G_{-1} G_3 & G_1 G_{-3} & -G_{-3} \\
 G_1 G_3 &  1+G_{-1} G_1 & -G_{-1} \\
 G_7 & G_3 & G_1^2 \\
\end{vmatrix}}{\begin{vmatrix}
 1+G_{-1} G_1 & G_1 G_{-3} & -G_{-3} \\
 0 &  1+G_{-1} G_1 & -G_{-1} \\
 G_3 & 0 & 1 \\
\end{vmatrix}}=(a_1^2+a_2^2+a_3^2+a_4^2+a_5^2)^2 \nonumber\\
\end{align}
\end{example}

\begin{example}
Let
\begin{align}
P_k=a_1^k+a_2^k+a_3^k+a_4^k+a_5^k+a_6^k,\quad (k=-3,-1,1,3,5,7)
\end{align}
and
\begin{align}
& G_{-1}=P_{-1} \tag*{\eqref{Gm1}}\\ 
& G_{-3}=-(P_{-1}^3-P_{-3})/3 \tag*{\eqref{Gm3}}\\
& G_1=-P_1 \tag*{\eqref{G1}}\\ 
& G_3=(P_1^3-P_3)/3 \tag*{\eqref{G3}}\\
& G_5=(P_1^5-P_5)/5 \tag*{\eqref{G5}}\\
& G_7=(P_1^7-P_7)/7 \tag*{\eqref{G7}}
\end{align}
then
\begin{align}
\label{identity1357n13} 
 G_1^2+ 2 \frac {\begin{vmatrix}
 G_3 (1+G_{-1} G_1) & G_{-1} & G_{-3} \\
 G_5 & G_1 & G_{-1} \\
 G_7-G_1^2 G_5 & G_3 & G_1 \\
\end{vmatrix}}{\begin{vmatrix}
 G_1 & G_{-1} & G_{-3} \\
 G_3 & G_1 & G_{-1} \\
 G_5-G_1^2 G_3 & G_3 & G_1 \\
\end{vmatrix}}=a_1^2+a_2^2+a_3^2+a_4^2+a_5^2+a_6^2 \nonumber\\
\end{align}
\end{example}

\begin{example}
Let
\begin{align}
P_k=a_1^k+a_2^k+a_3^k+a_4^k+a_5^k+a_6^k,\quad (k=-3,-1,1,3,5,9)
\end{align}
and
\begin{align}
& G_{-1}=P_{-1} \tag*{\eqref{Gm1}}\\ 
& G_{-3}=-(P_{-1}^3-P_{-3})/3 \tag*{\eqref{Gm3}}\\
& G_1=-P_1 \tag*{\eqref{G1}}\\ 
& G_3=(P_1^3-P_3)/3 \tag*{\eqref{G3}}\\
& G_5=(P_1^5-P_5)/5 \tag*{\eqref{G5}}\\
& G_9=(P_1^9-P_9)/9 -G_3^3/3\tag*{\eqref{G9}}
\end{align}
then
\begin{align}
\label{identity1359n13} 
& G_1^4+ 4 \frac {\begin{vmatrix}
 G_3 (1+G_{-1} G_1) & G_{-1} & G_{-3} \\
 G_5 & G_1 & G_{-1} \\
 G_9-G_1^4 G_5-G_1 G_3 G_5 & G_5 & G_1^3+G_3 \\
\end{vmatrix}}{\begin{vmatrix}
 G_1 & G_{-1} & G_{-3} \\
 G_3 & G_1 & G_{-1} \\
 G_5-G_1^2 G_3 & G_3 & G_1 \\
\end{vmatrix}} \nonumber\\
& \qquad  =(a_1^2+a_2^2+a_3^2+a_4^2+a_5^2+a_6^2)^2 
\end{align}
\end{example}

\begin{example}
Let
\begin{align}
P_k=a_1^k+a_2^k+a_3^k+a_4^k+a_5^k+a_6^k,\quad (k=-5,-3,-1,1,3,5)
\end{align}
and
\begin{align}
& G_{-1}=P_{-1} \tag*{\eqref{Gm1}}\\ 
& G_{-3}=-(P_{-1}^3-P_{-3})/3 \tag*{\eqref{Gm3}}\\
& G_{-5}=-(P_{-1}^5-P_{-5})/5 \tag*{\eqref{Gm5}}\\
& G_1=-P_1 \tag*{\eqref{G1}}\\ 
& G_3=(P_1^3-P_3)/3 \tag*{\eqref{G3}}\\
& G_5=(P_1^5-P_5)/5 \tag*{\eqref{G5}}
\end{align}
then
\begin{align}
\label{identity135n135} 
 G_1^2+ 2 \frac {\begin{vmatrix}
 G_1 & G_{-3} & G_{-5}-G_{-1}^2 G_{-3} \\
 G_3 & G_{-1} & G_{-3} \\
 G_5-G_1^2 G_3 & G_1 & G_{-1} \\
\end{vmatrix}}{\begin{vmatrix}
 G_{-1} & G_{-3} & G_{-5}-G_{-1}^2 G_{-3} \\
 G_1 & G_{-1} & G_{-3} \\
 G_3 & G_1 & G_{-1} \\
\end{vmatrix}} =a_1^2+a_2^2+a_3^2+a_4^2+a_5^2+a_6^2\nonumber\\
\end{align}
\end{example}

\begin{example}
Let
\begin{align}
P_k=a_1^k+a_2^k+a_3^k+a_4^k+a_5^k+a_6^k,\quad (k=-5,-3,-1,1,3,7)
\end{align}
and
\begin{align}
& G_{-1}=P_{-1} \tag*{\eqref{Gm1}}\\ 
& G_{-3}=-(P_{-1}^3-P_{-3})/3 \tag*{\eqref{Gm3}}\\
& G_{-5}=-(P_{-1}^5-P_{-5})/5 \tag*{\eqref{Gm5}}\\
& G_1=-P_1 \tag*{\eqref{G1}}\\ 
& G_3=(P_1^3-P_3)/3 \tag*{\eqref{G3}}\\
& G_7=(P_1^7-P_7)/7 \tag*{\eqref{G7}}
\end{align}
then
\begin{align}
\label{identity137n135} 
&  G_1^4+ 4 \frac {\begin{vmatrix}
 G_1 & G_{-3} & G_{-5}-G_{-1}^2 G_{-3} \\
 G_3 & G_{-1} & G_{-3} \\
 G_7-G_1^4 G_3-G_1 G_3^2 & G_1^3+G_3 & G_1 (1+G_{-1}G_1) \\
\end{vmatrix}}{\begin{vmatrix}
 G_{-1} & G_{-3} & G_{-5}-G_{-1}^2 G_{-3} \\
 G_1 & G_{-1} & G_{-3} \\
 G_3 & G_1 & G_{-1} \\
\end{vmatrix}} \nonumber\\
& \qquad =(a_1^2+a_2^2+a_3^2+a_4^2+a_5^2+a_6^2)^2
\end{align}
\end{example}

In this chapter, we have discussed in detail the classical form of the Girard-Newton Identities (\refIdentity{identity1}), the equivalent form of the Girard-Newton Identities (\refIdentity{identity2}), and the three generalized forms of the Girard-Newton Identities (Identity \ref{identity_GNI1}, Identity \ref{identity_GNI2}, and Identity \ref{identity_GNI3}). From the analysis presented in this chapter, it can be observed that \refIdentity{identity1}, \refIdentity{identity2}, and \refIdentity{identity_GNI1} are primarily applied to the following system, with a focus on analyzing the possibility of the existence of ideal integer solutions of the GPTE problem:
\begin{align}
&\left[ a_{1}, a_{2}, \cdots, a_{n} \right]^{h} = \left[ b_{1}, b_{2}, \cdots, b_{n} \right]^{h}, \quad \left( h = h_1, h_2, \dots, h_n \right).  \tag*{\eqref{GPTEh}}
\end{align}
On the other hand, Identity \ref{identity_GNI2} and Identity \ref{identity_GNI3} are primarily utilized in the following system, with an emphasis on searching for ideal non-negative integer solutions of the GPTE problem:
\begin{align}
\left[ a_{1}, a_{2}, \dots, a_{n+1} \right]^{k} = \left[ b_{1}, b_{2}, \dots, b_{n+1} \right]^{k}, \quad (k = k_1, k_2, \dots, k_n).  \tag*{\eqref{GPTEk}}
\end{align}
In Chapter 5, we will also utilize Identities \ref{identity_GNI2} and \ref{identity_GNI3} to determine the exact upper and lower bounds for each variable of \eqref{GPTEk}. This will enhance the efficiency of computer searches for non-negative integer solutions to the GPTE problem.
\clearpage

\section{The Constant in The Generalized PTE Problem}
In the study of the PTE problem, the constant \( C \) has always been a focus of attention \cite{Borwein2003, ReesSmyth}. In this chapter, we will also explore the constant \( C \) for certain types of GPTE.

\subsection{The Constant in The PTE Problem}
For the Diophantine system studied in the PTE problem:
\begin{align}
\sum_{i=1}^m a_i^k = \sum_{i=1}^m b_i^k, \quad \text{for } k = 1, 2, \dots, n, \tag*{\eqref{PTEmk}}
\end{align}
one equivalent form is \cite{Borwein1994, Borwein2003, Caley12}:
\begin{align}
\deg\left(\prod_{i=1}^{m} (x - a_i) - \prod_{i=1}^{m} (x - b_i)\right) \leq m - n - 1.
\end{align}

When \( m = n + 1 \), \(\{a_i\}\) and \(\{b_i\}\) are ideal solutions to the system \eqref{PTEmk}. In this case, we have:
\begin{identity}\label{identityC_PTE}
Let \( n \) be a positive integer. For the type \((k = 1, 2, \dots, n)\), if there exist two sets of integers \(\{a_1, a_2, \dots, a_{n+1}\}\) and \(\{b_1, b_2, \dots, b_{n+1}\}\) satisfying:
\begin{align}
\left[ a_1, a_2, \dots, a_{n+1} \right]^{k} = \left[ b_1, b_2, \dots, b_{n+1} \right]^{k}, \quad (k = 1, 2, \dots, n), \tag*{\eqref{PTEideal}}
\end{align}
then the following identity holds for any real number \( x \):
\begin{align}
\label{equationC1}
\prod_{i=1}^{n+1} (x - a_i) - \prod_{i=1}^{n+1} (x - b_i) = C,
\end{align}
where \( C \) is a non-zero constant.
\end{identity}
The proof of \refIdentity{identityC_PTE} can be derived using the Girard-Newton identities. When we substitute $x=0$, $x=a_i$, and $x=b_i$ into \eqref{equationC1} accordingly, we obtain:

\begin{corollary}\label{corollaryC1}
In \refIdentity{identityC_PTE}, the following relationships hold:
\begin{align}
\label{identityC1Eq1}
C &=(-1)^{n+1}(a_1 a_2 \dots a_{n+1}-b_1 b_2 \dots b_{n+1}), \\[1mm]
\label{identityC1Eq2}
-C &=\prod _{i=1}^{n+1} (a_1-b_i)=\prod _{i=1}^{n+1} (a_2-b_i)=\dots=\prod _{i=1}^{n+1} (a_{n+1}-b_i),\\[1mm]
\label{identityC1Eq3}
C &=\prod_{i=1}^{n+1} (b_1 - a_i) = \prod_{i=1}^{n+1} (b_2 - a_i) = \dots = \prod_{i=1}^{n+1} (b_{n+1} - a_i).
\end{align}
\end{corollary}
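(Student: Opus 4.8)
\textbf{Proof proposal for Corollary \ref{corollaryC1}.}

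The plan is to derive all three families of relations by specializing the polynomial identity \eqref{equationC1} of \refIdentity{identityC_PTE} at well-chosen values of $x$. Recall that \refIdentity{identityC_PTE} already supplies the identity
\[
\prod_{i=1}^{n+1}(x - a_i) - \prod_{i=1}^{n+1}(x - b_i) = C,
\]
valid for every real $x$, with $C$ a non-zero constant; so the entire content of the corollary is a matter of evaluation.

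First I would set $x = 0$. Then $\prod_{i=1}^{n+1}(0 - a_i) = (-1)^{n+1} a_1 a_2 \cdots a_{n+1}$ and likewise for the $b_i$, so the left-hand side becomes $(-1)^{n+1}\bigl(a_1 a_2 \cdots a_{n+1} - b_1 b_2 \cdots b_{n+1}\bigr)$, which equals $C$; this is \eqref{identityC1Eq1}. Next I would substitute $x = a_j$ for each $j \in \{1, 2, \dots, n+1\}$: the first product contains the factor $(a_j - a_j) = 0$ and hence vanishes, leaving $-\prod_{i=1}^{n+1}(a_j - b_i) = C$, i.e. $\prod_{i=1}^{n+1}(a_j - b_i) = -C$. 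Letting $j$ run over all indices yields the chain of equalities \eqref{identityC1Eq2}. Symmetrically, substituting $x = b_j$ annihilates the second product and gives $\prod_{i=1}^{n+1}(b_j - a_i) = C$ for every $j$, which is \eqref{identityC1Eq3}.

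The only step demanding any care is the sign bookkeeping — tracking the factor $(-1)^{n+1}$ that arises from extracting a minus sign out of each of the $n+1$ linear factors at $x = 0$, and the overall minus sign in front of the second product when $x = a_j$. Beyond that there is no real obstacle: the substance rests entirely in \refIdentity{identityC_PTE}, whose own justification (indicated in the text as an application of the Girard--Newton identities — the system \eqref{PTEideal} forces the difference of the two monic polynomials of degree $n+1$, a priori of degree at most $n$, to have vanishing coefficients in degrees $1$ through $n$, hence to be a constant) is where the work lies. That $C \neq 0$ is likewise inherited from \refIdentity{identityC_PTE}, since $C = 0$ would make the two polynomials coincide and thus the multisets $\{a_i\}$ and $\{b_i\}$ identical, contradicting their distinctness.
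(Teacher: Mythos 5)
Your proposal is correct and matches the paper's own argument exactly: the paper likewise obtains all three identities by substituting $x=0$, $x=a_i$, and $x=b_i$ into \eqref{equationC1}, and your sign bookkeeping for the factor $(-1)^{n+1}$ and for the residual minus sign in \eqref{identityC1Eq2} is accurate. No changes needed.
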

The formulas \eqref{identityC1Eq2} and \eqref{identityC1Eq3} in the above corollary indicate that the constant \( C \) can be divisible by any \( a_i - b_j \). This is very useful for enhancing the computer search for ideal solutions of the PTE problem.\\[1mm]
\indent For various numerical examples, in addition to applying \eqref{identityC1Eq1}, we can also directly use \eqref{equationC1} to compute the values of \( C \). For instance:
\begin{align}
& [0, 18, 19, 50, 56, 79, 81]^k = [1, 11, 30, 39, 68, 70, 84]^k, \nonumber \\
& \qquad\qquad\qquad\qquad\qquad\qquad\qquad (k = 1, 2, 3, 4, 5, 6) \tag*{\eqref{k123456s84}} \\
& \qquad \textup{it follows that} \quad C =5145940800=2^6 \cdot 3^3 \cdot 5^2 \cdot 7^2 \cdot 11 \cdot 13 \cdot 17,  \nonumber\\[1mm]
& [0, 7, 23, 50, 53, 81, 82, 96]^k = [1, 5, 26, 42, 63, 72, 88, 95]^k, \nonumber \\ 
& \qquad\qquad\qquad\qquad\qquad\qquad\qquad (k = 1, 2, 3, 4, 5, 6, 7) \tag*{\eqref{k1234567s96}}\\
& \qquad \textup{it follows that} \quad C =207048441600=2^8 \cdot 3^5 \cdot 5^2 \cdot 7^2 \cdot 11 \cdot 13 \cdot 19.  \nonumber
\end{align}

\subsection{The Constant in The GPTE Problem}
During research on the GPTE problem, we discovered that for certain series of GPTE types, there exists a constant \( C \) analogous to that in the PTE problem. We will specify this by \refIdentity{identityC2}, \refIdentity{identityC3}, and \refIdentity{identityC4} as follows.
\begin{identity}\label{identityC2}
Let \( n \) be a positive integer. For the type \((k = k_1, k_1+1, k_1+2, \dots, k_1+n-1)\), where \(k_1\) can be one of the \(n+1\) numbers: \(-n+1, -n+2, \dots, 0, 1\), if there exist two sets of integers \(\{a_1, a_2, \dots, a_{n+1}\}\) and \(\{b_1, b_2, \dots, b_{n+1}\}\) satisfying:
\begin{equation}
\begin{aligned}
[a_1, a_2, \dots, a_{n+1}]^k &= [b_1, b_2, \dots, b_{n+1}]^k, \\
&\quad (k = k_1, k_1+1, k_1+2, \dots, k_1+n-1),
\end{aligned}
\end{equation}
then the following identity holds for any non-zero real number \( x \):
\begin{align}
\label{equationC2}
\frac{1}{x^m} \prod_{i=1}^{n+1} (x - a_i) - \frac{1}{x^m} \prod_{i=1}^{n+1} (x - b_i) = C,
\end{align}
where \( C \) is a non-zero constant, and \(m\) is determined by:
\begin{equation}
m =
\begin{cases}
0, & \textup{for } (k = 1, 2, 3, \dots, n), \\
1, & \textup{for } (k = 0, 1, 2, \dots, n-1), \\
2, & \textup{for } (k = -1, 0, 1, \dots, n-2), \\
\vdots \\
n, & \textup{for } (k = -n+1, -n+2, -n+3, \dots, 0).
\end{cases}
\end{equation}
\end{identity}
The proof of \refIdentity{identityC2} can be derived using the First Generalization of the Girard-Newton Identities (\refIdentity{identity_GNI1}).\\[1mm]
\indent When we substitute \( x = a_i \neq 0 \) and \( x = b_i \neq 0 \) into \eqref{equationC2} accordingly, we obtain:
\begin{corollary}\label{corollaryC2}
In \refIdentity{identityC2}, when \( a_i \neq 0 \) and \( b_i \neq 0 \), the following relationships hold:
\begin{align}
-C = & \frac{\prod _{i=1}^{n+1} (a_1-b_i)}{a_1^m}=\frac{\prod _{i=1}^{n+1} (a_2-b_i)}{a_2^m}=\dots=\frac{\prod _{i=1}^{n+1} (a_{n+1}-b_i)}{a_{n+1}^m}, \\[1mm]
C = & \frac{\prod_{i=1}^{n+1} (b_1 - a_i)}{b_1^m}  = \frac{\prod_{i=1}^{n+1} (b_2 - a_i)}{b_2^m} = \dots = \frac{\prod_{i=1}^{n+1} (b_{n+1} - a_i)}{b_{n+1}^m}. 
\end{align}
\end{corollary}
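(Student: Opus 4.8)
The plan is to derive Corollary \ref{corollaryC2} directly from \refIdentity{identityC2} by evaluating the polynomial identity \eqref{equationC2} at the distinguished points $x=a_j$ and $x=b_j$. Since \refIdentity{identityC2} is taken as already established, the corollary is essentially a bookkeeping consequence of specializing the free variable $x$.

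First I would fix an index $j\in\{1,2,\dots,n+1\}$ with $a_j\neq 0$ and substitute $x=a_j$ into \eqref{equationC2}; this substitution is admissible precisely because $a_j\neq 0$ keeps the denominator $x^m$ away from zero, which is exactly what the hypothesis of the corollary supplies. In the first product the factor with $i=j$ equals $a_j-a_j=0$, so $\prod_{i=1}^{n+1}(a_j-a_i)=0$ and \eqref{equationC2} collapses to
\begin{equation*}
-\frac{1}{a_j^{\,m}}\prod_{i=1}^{n+1}(a_j-b_i)=C .
\end{equation*}
Letting $j$ run over all indices with $a_j\neq 0$ then yields the first string of equalities, all equal to $-C$. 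The second string is obtained by the symmetric computation: for an index $j$ with $b_j\neq 0$, putting $x=b_j$ in \eqref{equationC2} annihilates the second product (the factor $b_j-b_j=0$ occurs there), leaving $\frac{1}{b_j^{\,m}}\prod_{i=1}^{n+1}(b_j-a_i)=C$; ranging over all such $j$ gives the claim. Because \refIdentity{identityC2} asserts $C\neq 0$, each of these quotients is in particular non-zero. In the degenerate case $m=0$, i.e.\ the type $(k=1,2,\dots,n)$, the non-vanishing hypotheses are vacuous and the two strings recover \eqref{identityC1Eq2} and \eqref{identityC1Eq3} of Corollary \ref{corollaryC1}.

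There is no genuine obstacle here beyond the one point requiring care, namely the legitimacy of setting $x=a_j$ or $x=b_j$ in \eqref{equationC2}: this is why the assumptions $a_i\neq 0$ and $b_i\neq 0$ are needed whenever $m\geq 1$. If a self-contained derivation were preferred over citing \refIdentity{identityC2}, one could instead argue that the difference $\prod_{i}(x-a_i)-\prod_{i}(x-b_i)$, a polynomial of degree at most $n+1$, is divisible by $x^m$ with constant quotient — precisely the content of \refIdentity{identity_GNI1} in this setting — and then evaluate at $x=a_j$ and $x=b_j$; but invoking \refIdentity{identityC2} directly is the cleaner route.
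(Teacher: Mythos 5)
Your proof is correct and follows exactly the paper's route: the paper likewise obtains the corollary by substituting $x=a_i\neq 0$ and $x=b_i\neq 0$ into \eqref{equationC2}, so that one of the two products vanishes and the remaining quotient equals $-C$ or $C$ respectively. Your additional remarks on why the non-vanishing hypotheses are needed and on the $m=0$ degeneration are consistent with, and slightly more explicit than, the paper's one-line justification.
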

Using \refIdentity{identity_GNI1}, we can derive the formula for the constant \( C \) of the GPTE types related to \refIdentity{identityC2}. The following example shows the result for \( n = 4 \).

\begin{example}
\label{examplek1234}
For the type \((k = k_1, k_1+1, k_1+2, k_1+3)\), where \(k_1\) can be one of the numbers: \(1, 0, -1, -2, -3 \), if there exist two sets of integers \(\{a_1, a_2, a_3, a_4, a_5\}\) and \(\{b_1, b_2, b_3, b_4, b_5\}\) satisfying:
\begin{align}
[a_1, a_2, a_3, a_4, a_5]^k = [b_1, b_2, b_3, b_4, b_5]^k, \quad (k=k_1, k_1+1, k_1+2, k_1+3). \nonumber
\end{align}
then the constant \( C \) is given by:
\begin{align}
& C = -a_1 a_2 a_3 a_4 a_5 + b_1 b_2 b_3 b_4 b_5, \qquad\qquad \textup{for}\,\, (k=1,2,3,4)\\
& C = a_1 a_2 a_3 a_4 + a_1 a_2 a_3 a_5 + \dots + a_2 a_3 a_4 a_5 \nonumber \\
& \qquad - b_1 b_2 b_3 b_4 - b_1 b_2 b_3 b_5 - \dots - b_2 b_3 b_4 b_5, \quad  \textup{for}\,\, (k=0,1,2,3)\\
& C = - a_1 a_2 a_3 - a_1 a_2 a_4 - \dots - a_3 a_4 a_5 \nonumber \\
& \qquad + b_1 b_2 b_3 + b_1 b_2 b_4  + \dots +  b_3 b_4 b_5, \quad\:\: \textup{for}\,\, (k=-1,0,1,2)\\
& C = a_1 a_2 + a_1 a_3 + \dots + a_4 a_5 \nonumber \\
& \qquad -b_1 b_2 - b_1 b_3  - \dots - b_4 b_5,  \qquad\qquad \textup{for}\,\, (k=-2,-1,0,1)\\
& C = - a_1 - a_2 - a_3 - a_4- a_5 \nonumber \\
& \qquad  +b_1 + b_2 + b_3 + b_4 + b_5.  \quad\qquad\qquad \textup{for}\,\, (k=-3,-2,-1,0)
\end{align}
\end{example}
For various numerical examples, in addition to applying formulas similar to that in Example\,\ref{examplek1234}, we can also directly use \eqref{equationC2} to compute the values of \( C \). For instance:
\begin{align}
& [ 0, 3, 5, 11, 13, 16 ]^k = [ 1, 1, 8, 8, 15, 15 ]^k, \quad (k=1,2,3,4,5),  \tag*{\eqref{k12345s16}}\\
& \qquad \textup{it follows that} \quad C =14400=2^6 \cdot 3^2 \cdot 5^2,  \nonumber\\[1mm]
& [13, 20, 22, 45, 45, 57]^k = [15, 15, 27, 38, 52, 55]^k, \;\; (k=0,1,2,3,4),  \tag*{\eqref{k01234s57}}\\
& \qquad \textup{it follows that} \quad C =176400=2^4 \cdot 3^2 \cdot 5^2 \cdot 7^2,  \nonumber\\[1mm]
& [11, 18, 35, 84, 90, 132]^k = [12, 15, 44, 63, 110, 126]^k,  \nonumber\\
& \qquad\qquad\qquad\qquad\qquad\qquad (k=-1,0,1,2,3),  \tag*{\eqref{k0123n1s132}}\\
& \qquad \textup{it follows that} \quad C =645840=2^4 \cdot 3^3 \cdot 5 \cdot 13 \cdot 23 ,  \nonumber\\[1mm]
& [20, 24, 24, 35, 35, 42]^k = [21, 21, 28, 30, 40, 40]^k, \nonumber\\
& \qquad\qquad\qquad\qquad\qquad\qquad (k=-2,-1,0,1,2),  \tag*{\eqref{k012n12s42}}\\
& \qquad \textup{it follows that} \quad C =4=2^2,  \nonumber\\[1mm]
& [105, 154, 165, 396, 770, 1260]^k = [110, 126, 220, 315, 924, 1155]^k, \nonumber\\
& \qquad\qquad\qquad\qquad\qquad\qquad (k=-3,-2,-1,0,1),  \tag*{\eqref{k01n123s1260}}\\
& \qquad \textup{it follows that} \quad C =17940=2^2 \cdot 3 \cdot 5 \cdot 13 \cdot 23,  \nonumber\\[1mm]
& [3510, 3672, 5967, 8840, 26520, 39780]^k \nonumber\\
& \quad = [3315, 4420, 4680, 11934, 18360, 47736]^k, \nonumber\\
& \qquad\qquad\qquad\qquad\qquad\qquad (k=-4,-3,-2,-1,0),  \tag*{\eqref{k0n1234s47736}}\\
& \qquad \textup{it follows that} \quad C =2156=2^2 \cdot 7^2 \cdot 11.  \nonumber
\end{align}

\begin{identity}\label{identityC3}
Let \( n \) be a positive integer greater than \(1\). For the type \((k = k_1, k_1+1, k_1+2, \dots, k_1+n-2, k_1+n)\), where \(k_1\) can be one of the \(n\) \mbox{numbers}: \(-n+2, -n+3, \dots, 0, 1\), if there exist two sets of integers \(\{a_1, a_2, \dots, a_{n+1}\}\) and \(\{b_1, b_2, \dots, b_{n+1}\}\) satisfying:
\begin{equation}
\begin{aligned}
[a_1, a_2, \dots, a_{n+1}]^k &= [b_1, b_2, \dots, b_{n+1}]^k, \\
&\quad (k = k_1, k_1+1, k_1+2, \dots, k_1+n-2, k_1+n),
\end{aligned}
\end{equation}
then the following identity holds:
\begin{align}
\label{equationC3}
\frac{1}{x^m (s - x)} \prod_{i=1}^{n+1} (x - a_i) - \frac{1}{x^m (s - x)} \prod_{i=1}^{n+1} (x - b_i) = C,
\end{align}
where 
\begin{align}
s &= \frac{1}{2} \sum_{i=1}^{n+1} (a_i + b_i),
\end{align}
\( C \) is a non-zero constant, and \(m\) is determined by:
\begin{equation}
m =
\begin{cases}
0, & \textup{for } (k = 1, 2, 3, \dots, n-1, n+1), \\
1, & \textup{for } (k = 0, 1, 2, \dots, n-2, n), \\
2, & \textup{for } (k = -1, 0, 1, \dots, n-3, n-1),\\
\vdots \\
n-1, & \textup{for } (k = -n+2, -n+3, -n+4, \dots,0,2).
\end{cases}
\end{equation}
\end{identity}

The proof of \refIdentity{identityC3} can be derived using the first generalization of the Girard-Newton Identities (Identity \ref{identity_GNI1}). \\[1mm]
\indent
When we substitute \( x = a_i \neq 0 \) and \( x = b_i \neq 0 \) into \eqref{equationC3} accordingly, we obtain:
\begin{corollary}\label{corollaryC3}
In \refIdentity{identityC3}, when \( a_i \neq 0 \) and \( b_i \neq 0 \), the following relationships hold:
\begin{align}
\label{GPTEc1x}
-C & =\frac{\prod_{i=1}^{n+1} (a_1 - b_i)}{a_1^m (s - a_1)} = \frac{\prod_{i=1}^{n+1} (a_2 - b_i)}{a_2^m (s - a_2)} = \dots = \frac{\prod_{i=1}^{n+1} (a_{n+1} - b_i)}{a_{n+1}^m (s - a_{n+1})},\\
C &= \frac{\prod_{i=1}^{n+1} (b_1 - a_i)}{b_1^m (s - b_1)} = \frac{\prod_{i=1}^{n+1} (b_2 - a_i)}{b_2^m (s - b_2)} = \dots = \frac{\prod_{i=1}^{n+1} (b_{n+1} - a_i)}{b_{n+1}^m (s - b_{n+1})}.
\end{align}
\end{corollary}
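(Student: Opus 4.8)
The plan is to derive the two chains of equalities in \eqref{GPTEc1x} by specializing the variable $x$ in the identity \eqref{equationC3} of \refIdentity{identityC3}, which we may assume. First I would clear denominators: multiplying \eqref{equationC3} through by $x^{m}(s-x)$ turns it into the polynomial identity
\begin{equation}
\prod_{i=1}^{n+1}(x-a_i) \;-\; \prod_{i=1}^{n+1}(x-b_i) \;=\; C\,x^{m}(s-x).
\end{equation}
This is an equality of polynomials in $x$ that holds for all but finitely many $x$ (every $x$ with $x\neq 0$ and $x\neq s$, by \refIdentity{identityC3}), hence holds for \emph{every} real $x$. A quick consistency check: the left side is a difference of two monic polynomials of degree $n+1$, so it has degree $\le n$, matching the degree $m+1\le n$ of the right side. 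Passing to this polynomial form removes any concern about division by zero at the points where we later substitute.

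Next I would substitute $x=a_j$ for each $j\in\{1,2,\dots,n+1\}$. Since $\prod_{i=1}^{n+1}(a_j-a_i)$ contains the factor $a_j-a_j=0$, the left side collapses and we obtain
\begin{equation}
-\prod_{i=1}^{n+1}(a_j-b_i) \;=\; C\,a_j^{\,m}(s-a_j).
\end{equation}
Under the standing hypothesis $a_j\neq 0$ (and with $s-a_j\neq 0$, addressed below) one divides to get $-C = \dfrac{\prod_{i=1}^{n+1}(a_j-b_i)}{a_j^{\,m}(s-a_j)}$; since the right-hand value is the same constant $-C$ for every $j$, the first chain in \eqref{GPTEc1x} follows. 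The symmetric substitution $x=b_j$ makes $\prod_{i=1}^{n+1}(b_j-b_i)$ vanish, producing $\prod_{i=1}^{n+1}(b_j-a_i) = C\,b_j^{\,m}(s-b_j)$ and hence, using $b_j\neq 0$, the second chain of equalities.

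The only delicate point — and what I expect to be the main obstacle — is the nonvanishing of the denominators $a_j^{\,m}(s-a_j)$ and $b_j^{\,m}(s-b_j)$, which is precisely the condition under which the fractions in the corollary are defined. The factors $a_j^{\,m}$ and $b_j^{\,m}$ are covered by the explicit hypothesis $a_i\neq 0$, $b_i\neq 0$ (and when $m=0$ there is nothing to verify). For the factor $s-a_j$: if $s=a_j$, the polynomial identity above forces $\prod_{i=1}^{n+1}(a_j-b_i)=0$, i.e.\ $a_j$ is shared between the two multisets; one should therefore either record $s\neq a_i$ and $s\neq b_i$ as the implicit nondegeneracy condition on the solution that the statement intends, or read the corollary's formulas only for those indices whose denominator is nonzero. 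Once this bookkeeping is fixed, the remainder of the argument is routine.
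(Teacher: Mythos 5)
Your proposal is correct and follows essentially the same route as the paper, which simply substitutes $x=a_i$ and $x=b_i$ into \eqref{equationC3} to read off the two chains of equalities. Your additional care — passing to the cleared polynomial identity first and flagging the implicit nondegeneracy condition $s\neq a_j$, $s\neq b_j$ — is a sensible refinement of the same argument rather than a different approach.
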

Using \refIdentity{identity_GNI1}, we can derive the formula for the constant \( C \) of the GPTE types related to \refIdentity{identityC3}. The following example shows the result for \( n = 4 \).

\begin{example}
\label{examplek1235}
For the type \((k = k_1, k_1+1, k_1+2, k_1+4)\), where \(k_1\) can be one of the numbers: \(1, 0, -1, -2 \), if there exist two sets of integers \(\{a_1, a_2, a_3, a_4, a_5\}\) and \(\{b_1, b_2, b_3, b_4, b_5\}\) satisfying:
\begin{align}
[a_1, a_2, a_3, a_4, a_5]^k = [b_1, b_2, b_3, b_4, b_5]^k, \quad (k=k_1, k_1+1, k_1+2, k_1+4). \nonumber
\end{align}
then the constant \( C \) is given by:
\begin{align}
& C = -a_1 a_2 a_3 a_4 - a_1 a_2 a_3 a_5 - \dots - a_2 a_3 a_4 a_5 \nonumber \\
& \qquad + b_1 b_2 b_3 b_4 + b_1 b_2 b_3 b_5 + \dots + b_2 b_3 b_4 b_5, \quad  \textup{for}\,\, (k=1,2,3,5)\\
& C = a_1 a_2 a_3 + a_1 a_2 a_4 + \dots + a_3 a_4 a_5 \nonumber \\
& \qquad - b_1 b_2 b_3 - b_1 b_2 b_4  - \dots -  b_3 b_4 b_5, \quad\:\: \textup{for}\,\, (k=0,1,2,4)\\
& C = -a_1 a_2 - a_1 a_3 - \dots - a_4 a_5 \nonumber \\
& \qquad +b_1 b_2 + b_1 b_3  + \dots + b_4 b_5,  \qquad\qquad \textup{for}\,\, (k=-1,0,1,3)\\
& C = a_1 + a_2 + a_3 + a_4 + a_5 \nonumber \\
& \qquad  -b_1 - b_2 - b_3 - b_4 - b_5.  \quad\qquad\qquad \textup{for}\,\, (k=-2,-1,0,2)
\end{align}
\end{example}
For various numerical examples, in addition to applying formulas similar to that in Example\,\ref{examplek1235}, we can also directly use \eqref{equationC3} to compute the values of \( C \). For instance:
\begin{align}
& [ 1, 14, 17, 46, 48, 60 ]^k = [ 4, 6, 25, 38, 56, 57 ]^k, \quad (k=1,2,3,4,6),  \tag*{\eqref{k12346s60}}\\
& \qquad \textup{it follows that} \quad C =221760=2^6 \cdot 3^2 \cdot 5 \cdot 7 \cdot 11,  \nonumber\\[1mm]
& [ 28, 57, 100, 174, 200, 245]^k = [ 30, 49, 125, 140, 228, 232 ]^k, \nonumber\\
& \qquad\qquad\qquad\qquad\qquad\qquad (k=0,1,2,3,5),  \tag*{\eqref{k01235s245}}\\
& \qquad \textup{it follows that} \quad C =856800=2^5 \cdot 3^2 \cdot 5^2 \cdot 7 \cdot 17,  \nonumber\\[1mm]
& [ 10, 14, 24, 65, 117, 132]^k = [ 11, 12, 26, 63, 120, 130 ]^k, \nonumber\\
& \qquad\qquad\qquad\qquad\qquad\qquad (k=-1,0,1,2,4),  \tag*{\eqref{k0124n1s132}}\\
& \qquad \textup{it follows that} \quad C =636=2^2 \cdot 3 \cdot 53,  \nonumber\\[1mm]
& [2, 31, 33, 84, 89, 136, 151]^k = [4, 19, 51, 66, 101, 133, 152]^k, \nonumber\\
& \qquad\qquad\qquad\qquad\qquad\qquad (k=1,2,3,4,5,7),  \tag*{\eqref{k123457s152}}\\
& \qquad \textup{it follows that} \quad C =395841600=2^6 \cdot 3^3 \cdot 5^2 \cdot 7^2 \cdot 11 \cdot 17,  \nonumber\\[1mm]
& [32,40,41,69,72,88,90]^k = [33,36,45,64,80,82,92]^k, \nonumber\\
& \qquad\qquad\qquad\qquad\qquad\qquad (k=0,1,2,3,4,6),  \tag*{\eqref{k012346s92}}\\
& \qquad \textup{it follows that} \quad C =18720=2^5 \cdot 3^2 \cdot 5 \cdot 13.  \nonumber
\end{align}

\begin{identity}\label{identityC4}
Let \( n \) be a positive integer greater than \(1\). For the type \((k = k_1, k_1+2, k_1+4, \dots, k_1+2n-2)\), where \(k_1\) can be one of the \(n+1\) \mbox{numbers}: \(-2n+1, -2n+3, \dots, -1, 1\), if there exist two sets of integers \(\{a_1, a_2, \dots, a_{n+1}\}\) and \(\{b_1, b_2, \dots, b_{n+1}\}\) satisfying:
\begin{equation}
\begin{aligned}
[a_1, a_2, \dots, a_{n+1}]^k &= [b_1, b_2, \dots, b_{n+1}]^k, \\
&\quad (k = k_1, k_1+2, k_1+4, \dots, k_1+2n-2),
\end{aligned}
\end{equation}
then the following identity holds:
\begin{align}
\label{equationC4}
\frac{1}{2 x^m } \prod_{i=1}^{n+1} (x + a_i)(x - b_i) - \frac{1}{2 x^m } \prod_{i=1}^{n+1}(x - a_i)(x + b_i) = C,
\end{align}
where \( C \) is a non-zero constant, and \(m\) is determined by:
\begin{equation}
m =
\begin{cases}
1, & \textup{for } (k = 1, 3, 5, \dots, 2n-1), \\
3, & \textup{for } (k = -1, 1, 3, \dots,2n-3), \\
5, & \textup{for } (k = -3, -1, 1, \dots, 2n-5),\\
\vdots \\
2n+1, & \textup{for } (k = -2n+1, -2n+3, -2n+5, \dots,-1).
\end{cases}
\end{equation}
\end{identity}

The proof of \refIdentity{identityC4} can be derived using the Third Generalization of the Girard-Newton Identities (Identity \ref{identity_GNI3}). \\[1mm]
\indent When we substitute \( x = a_j \neq 0 \) and \( x = b_j \neq 0 \) into \eqref{equationC4} accordingly, we obtain:
\begin{corollary}\label{corollaryC4}
In \refIdentity{identityC4}, when \( a_j \neq 0 \) and \( b_j \neq 0 \), the following relationships hold:
\begin{align}
\label{corollaryC4eq}
& C = \frac{1}{2 a_j^m} \prod _{i=1}^{n+1} (a_j+a_i)(a_j-b_i)=-\frac{1}{2 b_j^m} \prod_{i=1}^{n+1}(b_j - a_i) (b_j + b_i), \nonumber \\
& \qquad\qquad\qquad\qquad\qquad\qquad\qquad\qquad \textup{for}\; j=1,2,\dots,n+1. 
\end{align}
\end{corollary}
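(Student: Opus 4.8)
The plan is to derive both displayed formulas for $C$ by evaluating the rational identity \eqref{equationC4} of \refIdentity{identityC4} at the two special points $x=a_j$ and $x=b_j$; these substitutions are legitimate precisely because the corollary assumes $a_j\neq 0$ and $b_j\neq 0$, so the factor $1/x^m$ (with $m$ odd) is well defined there.

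First I would invoke \refIdentity{identityC4}, which gives, for every non-zero real $x$,
\[
\frac{1}{2 x^m } \prod_{i=1}^{n+1} (x + a_i)(x - b_i) - \frac{1}{2 x^m } \prod_{i=1}^{n+1}(x - a_i)(x + b_i) = C .
\]
Putting $x=a_j$, the second product acquires the factor indexed by $i=j$, namely $(a_j-a_j)(a_j+b_j)=0$, so the entire second term vanishes and we are left with
\[
C=\frac{1}{2 a_j^m}\prod_{i=1}^{n+1}(a_j+a_i)(a_j-b_i),
\]
which is the first equality of \eqref{corollaryC4eq}. Symmetrically, putting $x=b_j$ kills the first product, since its $i=j$ factor is $(b_j-b_j)(b_j+a_j)=0$; what survives is $0-\tfrac{1}{2 b_j^m}\prod_{i=1}^{n+1}(b_j-a_i)(b_j+b_i)=C$, i.e.\ the second equality. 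Letting $j$ range over $1,2,\dots,n+1$ produces the full chain of identities.

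The argument has no genuine obstacle: it is a one-line substitution once \refIdentity{identityC4} is granted. The only points worth flagging are that \refIdentity{identityC4} is itself stated without a complete general proof, so the corollary inherits exactly that status and becomes unconditional for each $n$ for which \refIdentity{identityC4} has been verified (e.g.\ via Mathematica); and that, because $m$ is odd in every case listed in \refIdentity{identityC4}, the expression $1/x^m$ really does require $x\neq 0$, which is precisely why the hypotheses $a_j\neq 0$ and $b_j\neq 0$ are imposed. Were some $a_j$ or $b_j$ equal to zero, one would instead first clear the denominator $x^m$ in \eqref{equationC4} to obtain a genuine polynomial identity in $x$ before substituting.
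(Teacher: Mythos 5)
Your proof is correct and follows exactly the route the paper itself takes: substituting $x=a_j$ and $x=b_j$ into \eqref{equationC4}, where the factor $(a_j-a_j)$ (respectively $(b_j-b_j)$) annihilates one of the two products and leaves the stated expression for $C$. Your remarks about the necessity of $a_j,b_j\neq 0$ and the conditional status inherited from \refIdentity{identityC4} are accurate and consistent with the paper.
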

When $a_i=a_j$ (or $b_i=b_j$), it follows that $a_i+a_j=2a_j$ (or $b_i+b_j=2b_j $), so the above \eqref{corollaryC4eq} can be further simplified. For example:
\begin{example}
If two sets of integers \(\{a_1, a_2, a_3, a_4, a_5\}\) and \(\{b_1, b_2, b_3, b_4, b_5\}\) \mbox{satisfy}:
\begin{align}
[a_1, a_2, a_3, a_4, a_5]^k &= [b_1, b_2, b_3, b_4, b_5]^k, \quad (k = 1, 3, 5, 7),
\end{align}
then the constant \( C \) is equal to:
{\small
\begin{equation}
\begin{aligned}
\label{C1357eq}
C & = (a_1+a_2) (a_1+a_3) (a_1+a_4) (a_1+a_5) (a_1-b_1) (a_1-b_2) (a_1-b_3) (a_1-b_4) (a_1-b_5)\\
& =(a_1+a_2) (a_2+a_3) (a_2+a_4) (a_2+a_5) (a_2-b_1) (a_2-b_2) (a_2-b_3) (a_2-b_4) (a_2-b_5) \\
& =(a_1+a_3) (a_2+a_3) (a_3+a_4) (a_3+a_5) (a_3-b_1) (a_3-b_2) (a_3-b_3) (a_3-b_4) (a_3-b_5)\\
& =(a_1+a_4) (a_2+a_4) (a_3+a_4) (a_4+a_5) (a_4-b_1) (a_4-b_2) (a_4-b_3) (a_4-b_4) (a_4-b_5)\\
&= (a_1+a_5) (a_2+a_5) (a_3+a_5) (a_4+a_5) (a_5-b_1) (a_5-b_2) (a_5-b_3) (a_5-b_4) (a_5-b_5)\\
& =-(b_1-a_1) (b_1-a_2) (b_1-a_3) (b_1-a_4) (b_1-a_5)(b_1+b_2) (b_1+b_3) (b_1+b_4) (b_1+b_5)\\  
& =-(b_2-a_1) (b_2-a_2) (b_2-a_3) (b_2-a_4) (b_2-a_5)(b_1+b_2) (b_2+b_3) (b_2+b_4) (b_2+b_5) \\
& =-(b_3-a_1) (b_3-a_2) (b_3-a_3) (b_3-a_4) (b_3-a_5)(b_1+b_3) (b_2+b_3) (b_3+b_4) (b_3+b_5)\\ 
& =-(b_4-a_1) (b_4-a_2) (b_4-a_3) (b_4-a_4) (b_4-a_5)(b_1+b_4) (b_2+b_4) (b_3+b_4) (b_4+b_5)\\ 
& =-(b_5-a_1) (b_5-a_2) (b_5-a_3) (b_5-a_4) (b_5-a_5)(b_1+b_5) (b_2+b_5) (b_3+b_5) (b_4+b_5) 
\end{aligned}
\end{equation}
}
\end{example}
The above relationship \eqref{C1357eq} for the constant $C$ is highly useful for \mbox{computer} search of numerical solutions for type $(k=1,3,5,7)$, as it can significantly \mbox{accelerate} the search process.\\[1mm]
\indent Using \refIdentity{identity_GNI1}, we can derive the formula for the constant \( C \) of the GPTE types related to \refIdentity{identityC4}. The following example shows the result for \( n = 4 \).
\begin{example}
\label{examplek1357}
For the type \((k = k_1, k_1+2, k_1+4, k_1+6)\), where \(k_1\) can be one of the numbers: \(1, -1, -3, -5, -7 \), if there exist two sets of integers \(\{a_1, a_2, a_3, a_4, a_5\}\) and \(\{b_1, b_2, b_3, b_4, b_5\}\) satisfying:
\begin{align}
[a_1, a_2, a_3, a_4, a_5]^k = [b_1, b_2, b_3, b_4, b_5]^k, \quad (k=k_1, k_1+2, k_1+4, k_1+6). \nonumber
\end{align}
then the constant \( C \) is given by:
\begin{flalign}
&\quad C = U_5 V_4-V_5 U_4 , \qquad\qquad\qquad\qquad\qquad\quad \textup{for}\,\, (k=1,3,5,7) &\\
&\quad C = U_5 V_2-V_5 U_2 +U_3 V_4-V_3 U_4 , \qquad\qquad\,\, \textup{for}\,\, (k=-1,1,3,5) &\\
&\quad C = U_5-V_5+U_3 V_2-V_3 U_2 +U_1 V_4-V_1 U_4, \quad \textup{for}\,\, (k=-3,-1,1,3) &\\
&\quad C = U_3-V_3 +U_1 V_2 -V_1 U_2, \qquad\qquad\qquad\,\, \textup{for}\,\, (k=-5,-3,-1,1) &\\
&\quad C = U_1-V_1, \qquad\qquad\qquad\qquad\qquad\qquad\quad \textup{for}\,\, (k=-7,-5,-3,-1) &
\end{flalign}
where
\begin{align}
& U_1=a_1+a_2+a_3+a_4+a_5,\\
& V_1\,=b_1+b_2\,+b_3+b_4\,+b_5,\\ 
& U_2=a_1 a_2+a_1 a_3+\dots+a_4 a_5,\\
& V_2\,=b_1 b_2\,+b_1 b_3\,+\dots+b_4 b_5,\\
& U_3=a_1 a_2 a_3+a_1 a_2 a_4+\dots+a_3 a_4 a_5,\\
& V_3\,=b_1 b_2 b_3\:+b_1 b_2 b_4\:\,+\dots+b_3 b_4 b_5,\\
& U_4=a_1 a_2 a_3 a_4+a_1 a_2 a_3 a_5+\dots+a_2 a_3 a_4 a_5,\\
& V_4\,=b_1 b_2 b_3 b_4\:\,+b_1 b_2 b_3 b_5\:\,+\dots+b_2 b_3 b_4 b_5,\\
& U_5=a_1 a_2 a_3 a_4 a_5,\\
& V_5=b_1 b_2 b_3 b_4 b_5.
\end{align}
\end{example}

For various numerical examples, in addition to applying formulas similar to that in Example\,\ref{examplek1357}, we can also directly use \eqref{equationC4} to compute the values of \( C \). For instance:
\begin{align}
& [1, 13, 17, 23 ]^k = [ 3, 9, 21, 21 ]^k, \quad (k=1,3,5),  \tag*{\eqref{k135s23}}\\
& \qquad \textup{it follows that} \quad C =38707200=2^{13} \cdot 3^3 \cdot 5 \cdot 2 \cdot 7.  \nonumber\\[1mm]
& [3, 10, 15, 30]^k = [4, 5, 21, 28 ]^k, \quad (k=-1,1,3),  \tag*{\eqref{k13n1s30}}\\
& \qquad \textup{it follows that} \quad C =772200=2^3 \cdot 3^3 \cdot 5^2 \cdot 11 \cdot 13.  \nonumber\\[1mm]
& [14, 28, 42, 140]^k = [15, 20, 84, 105 ]^k, \qquad (k=-3,-1,1),  \tag*{\eqref{k1n13s140}}\\
& \qquad \textup{it follows that} \quad C =360360=2^3 \cdot 3^2 \cdot 5 \cdot 7 \cdot 11 \cdot 13.  \nonumber\\[1mm]
& [12558, 16744, 18837, 50232]^k = [13104, 14352, 23184, 43056]^k, \nonumber\\
& \qquad\qquad\qquad\qquad\qquad\qquad (k=-5,-3,-1),  \tag*{\eqref{kn135s50232}}\\
& \qquad \textup{it follows that} \quad C =4675=5^2 \cdot 11 \cdot 17.  \nonumber \\
& [3, 19, 37, 51, 53]^k = [9, 11, 43, 45, 55]^k, \quad (k=1,3,5,7),  \tag*{\eqref{k1357s55}}\\
& \qquad \textup{it follows that} \quad C =22317642547200=2^{19} \cdot 3^5 \cdot 5^2 \cdot 7^2 \cdot 11 \cdot 13.  \nonumber\\[1mm]
& [245126961, 299599619, 313534485, 1225634805, 1497998095]^k \nonumber\\
&\quad  = [254377035, 264352605, 364377915, 709578045, 4493994285]^k \nonumber\\
& \qquad\qquad\qquad\qquad\qquad\qquad (k=-7,-5,-3,-1), \tag*{\eqref{kn1357s4493994285}}\\
& \qquad \textup{it follows that} \quad C =2504785920=2^{18} \cdot 3 \cdot 5 \cdot 7^2 \cdot 13.  \nonumber\\[1mm]
& [7, 91, 173, 269, 289, 323 ]^k = [ 29, 59, 193, 247, 311, 313]^k,\nonumber\\ 
& \qquad\qquad\qquad\qquad\qquad\qquad (k=1,3,5,7,9),  \tag*{\eqref{k13579s323}}\\
& \qquad \textup{it follows that} \quad C =2^{22} \cdot 3^8 \cdot 5^3 \cdot 7^2 \cdot 11^2 \cdot 13 \cdot 17\cdot 19\cdot 23\cdot 31\cdot 37.  \nonumber
\end{align}

In this chapter, through the analysis of the constant \( C \) for three series of GPTE types in \refIdentity{identityC2} to \refIdentity{identityC4},  we observe the significant links between the ideal non-negative integer solutions of each series of the GPTE problem. \mbox{Meanwhile}, Corollary \ref{corollaryC1} to Corollary \ref{corollaryC4} demonstrate the connections between the constant \( C \) and \( a_i - b_j \), which are also very useful for improving the efficiency of computer search for ideal solutions. (See Chapter 6) \\[1mm]
\indent Furthermore, in \refIdentity{identityC2} and \refIdentity{identityC3}, through the analysis of the constant \( C \) for two series of GPTE types, the rationality of the definition of the GPTE problem proposed in Chapter 1 is further verified. Specifically, when \( k = 0 \), the problem reduces to equal products, as shown in the following definition:
\begin{equation}
\left[ a_{1}, a_{2}, \dots, a_{m} \right]^{k} :=
\begin{cases}
a_{1}^{k} + a_{2}^{k} + \cdots + a_{m}^{k}, & \text{if } k \neq 0, \\
a_{1} a_{2} \cdots a_{m}, & \text{if } k = 0.
\end{cases}
\tag*{\eqref{EPES}}
\end{equation}
In addition, \refIdentity{identity_GNI1} and \refIdentity{identity_GNI2} in Chapter 2 also fully validate the \mbox{rationality} of the definition \eqref{EPES}. For further discussions on why \( k = 0 \) is used for the equal products equation, see the author's webpage \cite{ChenkProducts23}.

\clearpage

\section{The Generalized PTE Problem of Trigonometric Form}
\subsection{Trigonometric Solutions of PTE and GPTE}
Research on the PTE Problem has traditionally been conducted within the scope of integers. In 2021, the author discovered a series of identities \cite{Chen21}, thereby extending the exploration of ideal solutions for the PTE Problem and the GPTE Problem into the realm of trigonometric functions.

\subsubsection{Ideal Trigonometric Solution of PTE}
In 2021, we discovered for the first time a trigonometric function identity pertaining to the PTE problem, which is as follows \cite{Chen21}.

\begin{identity}
\label{identityT1}
If two sets of non-negative real numbers $\{a_{1}, a_{2}, \cdots, a_{n+1}\}$ and \\ $\{b_{1}, b_{2}, \cdots, b_{n+1}\}$ satisfy:
\begin{align}
\label{GPTEreal}
\left[ a_{1}, a_{2}, \cdots, a_{n+1} \right]^{k} = \left[ b_{1}, b_{2}, \cdots, b_{n+1} \right]^{k}, \quad (k = 1, 2, \dots, n),
\end{align}
without loss of generality, let $a_{n+1}$ be the maximum element in $\{a_{1}, a_{2}, \cdots, a_{n+1}\}$, $b_{n+1}$ be the maximum element in $\{b_{1}, b_{2}, \cdots, b_{n+1}\}$, with $a_{n+1} > b_{n+1}$, and denote the minimum value of $b_{n+1}$ as $b_{n+1, \min}$. Then we can conclude that:
\begin{align}
\label{PTE_bmin}
\frac{b_{n+1, \min}}{a_{n+1}} = \sin^{2}\left(\frac{n \pi}{2n + 2}\right).
\end{align}
Alternatively stated,
\begin{align}
\label{bnp1}
\frac{b_{n+1}}{a_{n+1}} \geq \sin^{2}\left(\frac{n\pi}{2n + 2}\right).
\end{align}
\end{identity}

For the above conclusion, we have not yet provided a complete proof, but it has been verified through millions of numerical solutions without finding any counterexamples. For example, in the following case:
\begin{align}
& [0, 664, 701, 1787, 1788]^k = [188, 189, 1275, 1312, 1976]^k, \nonumber \\
& \qquad \qquad \qquad \qquad \qquad (k = 1, 2, 3, 4),
\end{align}
it follows that
\begin{align}
\frac{1788}{1976} = \sin^{2}\left(\frac{4\pi}{10}\right) \times 1.0003867\ldots.
\end{align}
It is evident that, under the assumption that \eqref{PTE_bmin} or \eqref{bnp1} holds, the efficiency of searching for ideal non-negative integer solutions to the PTE problem using computers can be effectively improved. We further obtain more precise identities as follows.

\begin{identity}
\label{identityT2}
For a positive odd integer \( n \), we have
\begin{align}
\label{t1357}
& 2 \sum _{i=1}^{\frac{n-1}{2}} \sin ^{2 k}\left(\frac{2i}{2 n+2} \pi\right) + 1 = 2 \sum _{i=1}^{\frac{n+1}{2}} \sin ^{2 k}\left(\frac{2 i-1}{2 n+2}\pi \right) \nonumber \\
& \qquad\qquad\qquad\qquad\qquad (k = 1, 2, 3, \dots, n).
\end{align}
For a positive even integer \( n \), we have
\begin{align}
\label{t2468}
& 2 \sum _{i=1}^{\frac{n}{2}} \sin ^{2 k}\left(\frac{2i}{2 n+2} \pi\right) = 2 \sum _{i=1}^{\frac{n}{2}} \sin ^{2 k}\left(\frac{2 i-1}{2 n+2}\pi\right) + 1 \nonumber \\
& \qquad\qquad\qquad\qquad\qquad (k = 1, 2, 3, \dots, n).
\end{align}
\end{identity}

The author is currently unable to provide a complete proof of these identities. However, for small values of \( n \), such as \( n \leq 20 \), the proof can be verified using the mathematical software Mathematica. Below are some specific examples. It is worth noting that the solutions obtained are all ideal trigonometric solutions, meaning that in the cases where \( (k = 1, 2, \dots, n) \) or \( (k = k_1, k_2, \dots, k_n) \), the number of elements in each group of arrays is \( n + 1 \). For consistency in notation, we extend the definition of \( \{a_i\} \) in \eqref{EPES} from integers to real numbers.

\begin{example}
Applying identities \eqref{t1357} and \eqref{t2468}, when $n$ takes the values of $1,2,3,4,5,6$, we can obtain:
\begin{align}
& \big[\sin ^2(\frac{0 \pi }{4}),\sin ^2(\frac{2 \pi }{4})\big]^k =\big[\sin ^2(\frac{\pi }{4}),\sin ^2(\frac{\pi }{4})\big]^k, \qquad (k=1)\\[2mm]
& \big[\sin ^2(\frac{0 \pi }{6}),\sin ^2(\frac{2 \pi }{6}),\sin ^2(\frac{2 \pi }{6})\big]^k =\big[\sin ^2(\frac{\pi }{6}),\sin ^2(\frac{\pi }{6}),\sin ^2(\frac{3 \pi }{6})\big]^k, \nonumber\\
& \qquad\qquad\qquad\qquad\qquad\qquad\qquad\qquad\qquad\qquad\quad\:\: (k=1,2)\\[2mm]
\label{t123}
& \big[\sin ^2(\frac{0 \pi }{8}),\sin ^2(\frac{2 \pi }{8}),\sin ^2(\frac{2 \pi }{8}),\sin ^2(\frac{4 \pi }{8})\big]^k \nonumber \\
& \quad =\big[\sin ^2(\frac{\pi }{8}),\sin ^2(\frac{\pi }{8}),\sin ^2(\frac{3 \pi }{8}),\sin ^2(\frac{3 \pi }{8})\big]^k, \qquad (k=1,2,3)\\[2mm]
\label{t1234}
& \big[\sin ^2(\frac{0 \pi }{10}),\sin ^2(\frac{2 \pi }{10}),\sin ^2(\frac{2 \pi }{10}),\sin ^2(\frac{4 \pi }{10}),\sin ^2(\frac{4 \pi }{10})\big]^k \nonumber \\
& \quad =\big[\sin ^2(\frac{\pi }{10}),\sin ^2(\frac{\pi }{10}),\sin ^2(\frac{3 \pi }{10}),\sin ^2(\frac{3 \pi }{10}),\sin ^2(\frac{5 \pi }{10})\big]^k, \nonumber\\
& \qquad\qquad\qquad\qquad\qquad\qquad\qquad (k=1,2,3,4)\\[2mm]
\label{t12345}
& \big[\sin ^2(\frac{0 \pi }{12}),\sin ^2(\frac{2 \pi }{12}),\sin ^2(\frac{2 \pi }{12}),\sin ^2(\frac{4 \pi }{12}),\sin ^2(\frac{4 \pi }{12}),\sin ^2(\frac{6 \pi }{12})\big]^k \nonumber \\
& \quad =\big[\sin ^2(\frac{\pi }{12}),\sin ^2(\frac{\pi }{12}),\sin ^2(\frac{3 \pi }{12}),\sin ^2(\frac{3 \pi }{12}),\sin ^2(\frac{5 \pi }{12}),\sin ^2(\frac{5 \pi }{12})\big]^k, \nonumber\\
& \qquad\qquad\qquad\qquad\qquad\qquad\qquad (k=1,2,3,4,5)\\[2mm]
\label{t123456}
& \big[\sin ^2(\frac{0 \pi }{14}),\sin ^2(\frac{2 \pi }{14}),\sin ^2(\frac{2 \pi }{14}),\sin ^2(\frac{4 \pi }{14}),\sin ^2(\frac{4 \pi }{14}),\sin ^2(\frac{6 \pi }{14}),\sin ^2(\frac{6 \pi }{14})\big]^k \nonumber\\
& \quad =\big[\sin ^2(\frac{\pi }{14}),\sin ^2(\frac{\pi }{14}),\sin ^2(\frac{3 \pi }{14}),\sin ^2(\frac{3 \pi }{14}),\sin ^2(\frac{5 \pi }{14}),\sin ^2(\frac{5 \pi }{14}),\sin ^2(\frac{7 \pi }{14})\big]^k, \nonumber\\
& \qquad\qquad\qquad\qquad\qquad\qquad\qquad (k=1,2,3,4,5,6)
\end{align}
\end{example}
Using the fundamental properties of trigonometric functions, it is easy to prove that \eqref{t123456} can also be written in the following two forms:
\begin{align}
& \big[\cos ^2(\frac{0 \pi }{14}),\cos ^2(\frac{2 \pi }{14}),\cos ^2(\frac{2 \pi }{14}),\cos ^2(\frac{4 \pi }{14}),\cos ^2(\frac{4 \pi }{14}),\cos ^2(\frac{6 \pi }{14}),\cos ^2(\frac{6 \pi }{14})\big]^k \nonumber \\
& \: =\big[\cos ^2(\frac{\pi }{14}),\cos ^2(\frac{\pi }{14}),\cos ^2(\frac{3 \pi }{14}),\cos ^2(\frac{3 \pi }{14}),\cos ^2(\frac{5 \pi }{14}),\cos ^2(\frac{5 \pi }{14}),\cos ^2(\frac{7 \pi }{14})\big]^k, \nonumber\\
& \qquad\qquad\qquad\qquad\qquad\qquad\qquad (k=1,2,3,4,5,6)\\[2mm]
& \big[\cos (\frac{0 \pi }{7}),\cos (\frac{2 \pi }{7}),\cos (\frac{2 \pi }{7}),\cos (\frac{4 \pi }{7}),\cos (\frac{4 \pi }{7}),\cos (\frac{6 \pi }{7}),\cos (\frac{6 \pi }{7})\big]^k \nonumber \\
& \quad =\big[\cos (\frac{\pi }{7}),\cos (\frac{\pi }{7}),\cos (\frac{3 \pi }{7}),\cos (\frac{3 \pi }{7}),\cos (\frac{5 \pi }{7}),\cos (\frac{5 \pi }{7}),\cos (\frac{7 \pi }{7})\big]^k, \nonumber\\
& \qquad\qquad\qquad\qquad\qquad\qquad\qquad (k=1,2,3,4,5,6)
\end{align}

\begin{example}
By applying the identity \eqref{t2468} with $n=10$, and denote
\begin{align}
g(x)=\sin^2\left(\frac{x}{22}\pi\right)
\end{align}
then we can obtain:
\begin{align}
\label{t12345678910}
& \big[\ g(0),\ g(2),\ g(2),\ g(4),\ g(4),\ g(6),\ g(6),\ g(8),\ g(8),\ g(10),\ g(10) \big]^{k} & \nonumber\\
& \quad =\big[\ g(1),\ g(1),\ g(3),\ g(3),\ g(5),\ g(5),\ g(7),\ g(7),\ g(9),\ g(9),\ g(11) \big]^{k} & \nonumber\\
& \qquad\qquad\qquad\qquad\qquad (k=1,2,3,4,5,6,7,8,9,10)
\end{align}
\end{example}
Using the properties of trigonometric functions, the following two formulas can be easily derived from \eqref{t12345678910}:
\begin{align}
& \big[\cos (\frac{0 \pi }{11}),\ \cos (\frac{2 \pi }{11}),\ \cos (\frac{2 \pi }{11}),\ \cos (\frac{4 \pi }{11}),\ \cos (\frac{4 \pi }{11}),\ 0\big]^k \nonumber \\
& \quad =\big[\cos (\frac{1\pi }{11}),\ \cos (\frac{1\pi }{11}),\ \cos (\frac{3 \pi }{11}),\ \cos (\frac{3 \pi }{11}),\ \cos (\frac{5 \pi }{11}),\ \cos (\frac{5 \pi }{11})\big]^k, \nonumber\\
& \qquad\qquad\qquad\qquad\qquad\qquad\qquad (k=1,3,5,7,9) \\[2mm]
& \big[\cos (\frac{0 \pi }{11}),\ \cos (\frac{2 \pi }{11}),\ \cos (\frac{4 \pi }{11}),\ \cos (\frac{6 \pi }{11}),\ \cos (\frac{8 \pi }{11}),\ \cos (\frac{10\pi }{11})\big]^k \nonumber \\
& \quad =\big[\cos (\frac{1 \pi }{11}),\ \cos (\frac{3\pi }{11}),\ \cos (\frac{5 \pi }{11}),\ \cos (\frac{7 \pi }{11}),\ \cos (\frac{9 \pi }{11}),\ 0\big]^k, \nonumber\\
& \qquad\qquad\qquad\qquad\qquad\qquad\qquad (k=1,3,5,7,9)
\end{align}

In the following four examples, we will demonstrate the connection between the trigonometric solutions and integer solutions of PTE.
\begin{example}
Starting from \eqref{t123}, we derive the following values:
\begin{equation}
\begin{aligned}
a_1=\phantom{0000000000000000000000000000000000000}0, \\
a_2=\phantom{0}8127503623352124799931806454985399361,\\
a_3=\phantom{0}8127503623352124799931806454985399362,\\
a_4=16255007246704249599863612909970798723,\\
b_1=\phantom{0}2380490697161601722646526306111946640, \\
b_2=\phantom{0}2380490697161601722646526306111946641, \\
b_3=13874516549542647877217086603858852082, \\
b_4=13874516549542647877217086603858852083.
\end{aligned}
\end{equation}
These values satisfy the relationship:
\begin{equation}
\label{k123tr}
\begin{aligned}
[a_1,a_2,a_3,a_4]^k=[b_1,b_2,b_3,b_4]^k,\quad (k=1,2,3)
\end{aligned}
\end{equation}
Furthermore, the following approximate relationships hold:
\begin{align}
\frac{a_2}{a_4}=\sin ^2(\frac{2\pi }{8}) \times 0.9999999999999999999999999999999999999384...\nonumber\\[1mm]
\frac{a_3}{a_4}=\sin ^2(\frac{2\pi }{8}) \times 1.0000000000000000000000000000000000000615...\nonumber\\[1mm]
\frac{b_1}{a_4}=\sin ^2(\frac{1\pi }{8}) \times 0.9999999999999999999999999999999999997899...\nonumber\\[1mm]
\frac{b_2}{a_4}=\sin ^2(\frac{1\pi }{8}) \times 1.0000000000000000000000000000000000002100...\nonumber\\[1mm]
\frac{b_3}{a_4}=\sin ^2(\frac{3 \pi }{8}) \times 0.9999999999999999999999999999999999999639...\nonumber\\[1mm]
\frac{b_4}{a_4}=\sin ^2(\frac{3 \pi }{8}) \times 1.0000000000000000000000000000000000000360...\nonumber
\end{align}
\end{example}
Since $a_1+a_4=a_2+a_3=b_1+b_4=b_2+b_3$, \eqref{k123tr} is a symmetric solution. Similarly, \eqref{t123} is also a symmetric solution.
\begin{example}
Starting from \eqref{t123}, we derive the following values:
\begin{equation}
\begin{aligned}
c_1=\phantom{000000000000000000000000000000000}0, \\
c_2=c_3=2490042132448610714806132969640263,\\
c_4=4980084264897221429612265939280525,\\
d_1=\phantom{0}729316455153986719204462547867488, \\
d_2=\phantom{0}729316455153986778546280472407413, \\
d_3=d_4=4250767809743234680736894429143075.
\end{aligned}
\end{equation}
These values satisfy the relationship:
\begin{equation}
\label{k123tr2}
\begin{aligned}
[c_1,c_2,c_3,c_4]^k=[d_1,d_2,d_3,d_4]^k,\quad (k=1,2,3)
\end{aligned}
\end{equation}
Furthermore, the following approximate relationships hold:
\begin{equation}
\begin{aligned}
\frac{c_2}{c_4}=\frac{c_3}{c_4}=& \sin ^2(\frac{2\pi }{8}) \times 1.0000000000000000000000000000000002008...\nonumber\\[1mm]
\frac{d_1}{c_4}=& \sin ^2(\frac{1\pi }{8}) \times 0.99999999999999995932...\nonumber\\[1mm]
\frac{d_2}{c_4}=& \sin ^2(\frac{1\pi }{8}) \times 1.00000000000000004068...\nonumber\\[1mm]
\frac{d_3}{c_4}=\frac{d_4}{c_4}=& \sin ^2(\frac{3 \pi }{8}) \times 1.0000000000000000000000000000000000294...\nonumber
\end{aligned}
\end{equation}
\end{example}
Since $ d_1+d_4 \neq d_2+d_3$, \eqref{k123tr2} is a non-symmetric solution. This contrasts with the previous example of a symmetric solution, \eqref{k123tr}, even though both originate from the symmetric trigonometric solution \eqref{t123}.\\[1mm]
\indent Additionally, we let
\begin{equation}
\begin{aligned}
\label{h1235tr2}
& [c_1 + x, c_2 + x, c_3 + x, c_4 + x]^h = [d_1 + x, d_2 + x, d_3 + x, d_4 + x]^h, \\
& \qquad\qquad\qquad\qquad\qquad\qquad\qquad\qquad\qquad\qquad (h = 1, 2, 3, 5)
\end{aligned}
\end{equation}
By solving for \( x \), we obtain \( x = -(9960168529794442859224531878561051/4) \). Substituting this value into \eqref{h1235tr2}, we derive the following values:
\begin{equation}
\begin{aligned}
a_1 &= -9960168529794442859224531878561051, \\
a_2 = a_3 & = \phantom{-000000000000000000000000000000000}1, \\
a_4 &= \phantom{-}9960168529794442859224531878561049, \\
b_1 &= -7042902709178495982406681687091099, \\
b_2 &= -7042902709178495745039409988931399, \\
b_3 = b_4 & = \phantom{-}7042902709178495863723045838011249.
\end{aligned}
\end{equation}
These values satisfy the relationship:
\begin{equation}
\label{h1235tr2b}
\begin{aligned}
[a_1, a_2, a_3, a_4]^h = [b_1, b_2, b_3, b_4]^h, \quad (h = 1, 2, 3, 5)
\end{aligned}
\end{equation}
Also the following approximate relationships hold:
\begin{equation}
\begin{aligned}
\frac{a_1}{a_4} &= -1.000000000000000000000000000000000200799815\ldots \\
\frac{a_2}{a_4} = \frac{a_3}{a_4} & = \phantom{-}0.000000000000000000000000000000000100003999\ldots \\
\frac{b_1}{a_4} &= -\sin\left(\frac{\pi}{4}\right) \times 1.00000000000000001685\ldots \\
\frac{b_2}{a_4} &= -\sin\left(\frac{\pi}{4}\right) \times 0.99999999999999998314\ldots \\
\frac{b_3}{a_4} = \frac{b_4}{a_4} & = \phantom{-} \sin\left(\frac{\pi}{4}\right) \times 1.0000000000000000000000000000000000294\ldots
\end{aligned}
\end{equation}

\begin{example}
Starting from \eqref{t1234}, we derive the following values:
\begin{equation}
\begin{aligned}
a_1=\phantom{0}86701260586398906579842518642185971022150,\\
a_2=\phantom{0}86701260586398906579842518642185971022151,\\
a_3=594259280661563023216102935903929957828588,\\
a_4=594259280661563023216102935903929957828589,\\
a_5=907947388330615906394593939394821238467652,\\
b_1=\phantom{00000000000000000000000000000000000000000}0,\\
b_2=313688107669052883178491003490891280639063,\\
b_3=313688107669052883178491003490891280639064,\\
b_4=821246127744216999814751420752635267445501,\\
b_5=821246127744216999814751420752635267445502.
\end{aligned}
\end{equation}
These values satisfy the relationship:
\begin{equation}
\label{k1234tr} 
\begin{aligned}
[a_1,a_2,a_3,a_4,a_5]^k=[b_1,b_2,b_3,b_4,b_5]^k,\quad (k=1,2,3,4)
\end{aligned}
\end{equation}
Furthermore, the following approximate relationships hold:
\begin{align}
\frac{a_1}{a_5}=\sin ^2(\frac{\pi }{10}) \times  0.999999999999999999999999999999999999999994233...\nonumber\\[1mm]
\frac{a_2}{a_5}=\sin ^2(\frac{\pi }{10}) \times  1.000000000000000000000000000000000000000005766...\nonumber\\[1mm]
\frac{a_3}{a_5}=\sin ^2(\frac{3 \pi }{10})\times 0.999999999999999999999999999999999999999999158...\nonumber\\[1mm]
\frac{a_4}{a_5}=\sin ^2(\frac{3 \pi }{10})\times 1.000000000000000000000000000000000000000000841...\nonumber\\[1mm]
\frac{b_2}{a_5}=\sin ^2(\frac{2\pi }{10}) \times 0.999999999999999999999999999999999999999998406...\nonumber\\[1mm]
\frac{b_3}{a_5}=\sin ^2(\frac{2\pi }{10}) \times 1.000000000000000000000000000000000000000001593...\nonumber\\[1mm]
\frac{b_4}{a_5}=\sin ^2(\frac{4 \pi }{10})\times 0.999999999999999999999999999999999999999999391...\nonumber\\[1mm]
\frac{b_5}{a_5}=\sin ^2(\frac{4 \pi }{10})\times 1.000000000000000000000000000000000000000000608...\nonumber
\end{align}
\end{example}
Since $a_1+b_5=a_2+b_4=a_3+b_3=a_4+b_2=a_5+b_1$, \eqref{k1234tr} is a symmetric solution. Similarly, \eqref{t1234} is also a symmetric solution.

\begin{example}
Starting from \eqref{t12345}, we derive the following values:
\begin{equation}
\label{equationk12345} 
\begin{aligned}
a_1=\phantom{000000000000000000000000000000000000000000000}0, \\
a_2=\phantom{0}602177338768045259223669380026575662658682576,\\
a_3=\phantom{0}602177338768045259223669380026575662658682577,\\
a_4=1806532016304135777671008140079726987976047729,\\
a_5=1806532016304135777671008140079726987976047730,\\
a_6=2408709355072181036894677520106302650634730306,\\
b_1=\phantom{0}161352931623220326694532240637339153544496145, \\
b_2=\phantom{0}161352931623220326694532240637339153544496146, \\
b_3=1204354677536090518447338760053151325317365152, \\
b_4=1204354677536090518447338760053151325317365154, \\
b_5=2247356423448960710200145279468963497090234160, \\
b_6=2247356423448960710200145279468963497090234161.
\end{aligned}
\end{equation}
These values satisfy the relationship:
\begin{equation}
\label{k12345tr} 
\begin{aligned}
[a_1,a_2,a_3,a_4,a_5,a_6]^k=[b_1,b_2,b_3,b_4,b_5,b_6]^k,\quad (k=1,2,3,4,5)
\end{aligned}
\end{equation}
Furthermore, the following approximate relationships hold:
\begin{align}
\frac{a_2}{a_6}=\sin ^2(\frac{2\pi }{12}) \times 0.999999999999999999999999999999999999999999999169...\nonumber\\[1mm]
\frac{a_3}{a_6}=\sin ^2(\frac{2\pi }{12}) \times 1.000000000000000000000000000000000000000000000830...\nonumber\\[1mm]
\frac{a_4}{a_6}=\sin ^2(\frac{4 \pi }{12}) \times 0.999999999999999999999999999999999999999999999723...\nonumber\\[1mm]
\frac{a_5}{a_6}=\sin ^2(\frac{4 \pi }{12}) \times 1.000000000000000000000000000000000000000000000276...\nonumber\\[1mm]
\frac{b_1}{a_6}=\sin ^2(\frac{\pi }{12}) \times 0.999999999999999999999999999999999999999999996901...\nonumber\\[1mm]
\frac{b_2}{a_6}=\sin ^2(\frac{\pi }{12}) \times 1.000000000000000000000000000000000000000000003098...\nonumber\\[1mm]
\frac{b_3}{a_6}=\sin ^2(\frac{3 \pi }{12}) \times 0.999999999999999999999999999999999999999999999169...\nonumber\\[1mm]
\frac{b_4}{a_6}=\sin ^2(\frac{3 \pi }{12}) \times 1.000000000000000000000000000000000000000000000830...\nonumber\\[1mm]
\frac{b_5}{a_6}=\sin ^2(\frac{5 \pi }{12}) \times 0.999999999999999999999999999999999999999999999777...\nonumber\\[1mm]
\frac{b_6}{a_6}=\sin ^2(\frac{5 \pi }{12}) \times 1.000000000000000000000000000000000000000000000222...\nonumber
\end{align}
\end{example}
The solution method for this example can be found in \eqref{Parak12345}. Since $a_1+a_6=a_2+a_5=a_3+a_4=b_1+b_6=b_2+b_5=b_3+b_4$, \eqref{k12345tr} is a symmetric solution. Similarly, \eqref{t12345} is also a symmetric solution. 

\subsubsection{Ideal Trigonometric Solutions of GPTE/PTE}
In 1748, Leonhard Euler presented the following identities in his work \cite{Euler1988}:
\begin{flalign}
& {\small \quad\: \cos (z)-\cos (z+\frac{\pi }{3})+\cos (z+\frac{2 \pi }{3})=0},&\\[1mm]
& {\small \quad\: \cos (z)-\cos (z+\frac{\pi }{5})+\cos (z+\frac{2 \pi }{5})-\cos (z+\frac{3 \pi }{5})+\cos (z+\frac{4 \pi }{5})=0},&\\[1mm]
& {\small \quad\: \cos (z)-\cos (z+\frac{\pi }{7})+\cos (z+\frac{2 \pi }{7})-\cos (z+\frac{3 \pi }{7})+\cos (z+\frac{4 \pi }{7})}&\nonumber\\
& {\small \qquad\qquad -\cos (z+\frac{5 \pi }{7})+\cos (z+\frac{6 \pi }{7})=0}.&
\end{flalign}
where $z$ is any real number. For any odd number $n$ greater than $1$, the more general identity is as follows:
\begin{align}
\label{euler135}
& \sum _{j=0}^{n-1} (-1)^j \cos \left(\frac{j \pi}{n}+z\right)=0.
\end{align}
Inspired by Euler's above identity \eqref{euler135}, we discovered an even more remarkable result in 2021 \cite{Chen21}:
\begin{align}
\label{eulerk135}
& \sum _{j=0}^{n-1} (-1)^j \cos ^k\left(\frac{j \pi}{n}+z\right)=0, \qquad (k=1,3,5,\cdots,n-2).
\end{align}
Furthermore, we have the following identities:
\begin{identity}
\label{identityT3}
Let $z$ be any real number. Then,\\[1mm]
for any odd integer $n$ greater than $1$, the following hold:
\begin{align}
\label{sin135n}
& \sum _{j=0}^{n-1} (-1)^j \sin ^k\left(\frac{j \pi}{n}+z\right)=0, \qquad (k=1,3,5,\cdots,n-2)\\
\label{cos135n}
& \sum _{j=0}^{n-1} (-1)^j \cos ^k\left(\frac{j \pi}{n}+z\right)=0, \qquad (k=1,3,5,\cdots,n-2) \tag*{\eqref{eulerk135}}
\end{align}
for any even integer $n$ greater than $2$, the following hold:
\begin{align}
\label{sin246n}
& \sum _{j=0}^{n-1} (-1)^j \sin ^k\left(\frac{j \pi}{n}+z\right)=0, \qquad (k=2,4,6,\cdots,n-2)\\
\label{cos246n}
& \sum _{j=0}^{n-1} (-1)^j \cos ^k\left(\frac{j \pi}{n}+z\right)=0, \qquad (k=2,4,6,\cdots,n-2)
\end{align}
\end{identity}
The above identities were discovered by us in 2021, and a complete proof has not yet been provided. However, it is easy to prove for smaller integer values of $n$. Below, we present some specific examples.
\begin{example}
Applying identities \eqref{sin135n} and \eqref{sin246n}, when $n$ takes the values of 7, 8, 9, and 10, we can obtain: 
\begin{align}
\label{z135}
& \sin^k(\frac{\pi}{7}+z)+\sin^k(\frac{3\pi}{7}+z)+\sin^k(\frac{5\pi}{7}+z), \nonumber\\
& \quad=\sin^k(z)+\sin^k(\frac{2\pi}{7}+z)+\sin^k(\frac{4\pi}{7}+z)+\sin^k(\frac{6\pi}{7}+z)\nonumber\\
& \qquad\qquad\qquad\qquad\qquad (k=1,3,5)\\[3mm]
\label{z246}
& \sin^k(\frac{\pi}{8}+z)+\sin^k(\frac{3\pi}{8}+z)+\sin^k(\frac{5\pi}{8}+z)+\sin^k(\frac{7\pi}{8}+z),\nonumber\\
& \quad=\sin^k(z)+\sin^k(\frac{2\pi}{8}+z)+\sin^k(\frac{4\pi}{8}+z)+\sin^k(\frac{6\pi}{8}+z)\nonumber\\
& \qquad\qquad\qquad\qquad\qquad (k=2,4,6)\\[3mm]
\label{z1357}
& \sin^k(\frac{\pi}{9}+z)+\sin^k(\frac{3\pi}{9}+z)+\sin^k(\frac{5\pi}{9}+z)+\sin^k(\frac{7\pi}{9}+z), \nonumber\\
& \quad=\sin^k(z)+\sin^k(\frac{2\pi}{9}+z)+\sin^k(\frac{4\pi}{9}+z)+\sin^k(\frac{6\pi}{9}+z)+\sin^k(\frac{8\pi}{9}+z)\nonumber\\
& \qquad\qquad\qquad\qquad\qquad (k=1,3,5,7)\\[3mm]
& \sin^k(\frac{\pi}{10}+z)+\sin^k(\frac{3\pi}{10}+z)+\sin^k(\frac{5\pi}{10}+z)+\sin^k(\frac{7\pi}{10}+z)+\sin^k(\frac{9\pi}{10}+z),\nonumber\\
& \quad=\sin^k(z)+\sin^k(\frac{2\pi}{10}+z)+\sin^k(\frac{4\pi}{10}+z)+\sin^k(\frac{6\pi}{10}+z)+\sin^k(\frac{8\pi}{10}+z)\nonumber\\
& \qquad\qquad\qquad\qquad\qquad (k=2,4,6,8)
\end{align}
\end{example}

\begin{example}
By applying the identity \eqref{z135} with $z=\displaystyle\frac{\pi}{28}$, we can obtain: 
\begin{align}
& \big[0, \sin (\frac{5 \pi }{28}), \sin (\frac{7 \pi }{28}), \sin (\frac{13 \pi }{28})\big] ^k \nonumber\\
& \quad =\big[\sin(\frac{\pi }{28}),\sin (\frac{3 \pi }{28}),\sin (\frac{9 \pi }{28}),\sin (\frac{11 \pi }{28})\big]^k, \quad (k=1,3,5)
\end{align}
By applying the identity \eqref{z246} with $z=\displaystyle\frac{\pi}{32}$, we can obtain: 
\begin{align}
& \big[ \sin (\frac{\pi }{32}), \sin (\frac{7 \pi }{32}), \sin (\frac{9 \pi }{32}), \sin (\frac{15 \pi }{32})\big] ^k \nonumber\\
& \quad =\big[ \sin (\frac{3\pi }{32}), \sin (\frac{5 \pi }{32}), \sin (\frac{11 \pi }{32}), \sin (\frac{13 \pi }{32})\big] ^k,\quad (k=2,4,6)
\end{align}
By applying the identity \eqref{z1357} with $z=\displaystyle\frac{\pi}{27}$ and $z=\displaystyle\frac{\pi}{36}$  respectively, we can obtain: 
\begin{align}
& [0,\sin(\frac{4\pi}{27}),\sin(\frac{5\pi}{27}),\sin(\frac{10\pi}{27}),\sin(\frac{11\pi}{27})]^k \nonumber\\
& \quad=[\sin(\frac{\pi}{27}),\sin(\frac{2\pi}{27}),\sin(\frac{7\pi}{27}),\sin(\frac{8\pi}{27}),\sin(\frac{13\pi}{27})]^k,\nonumber\\
& \qquad\qquad\qquad\qquad\qquad\qquad\qquad\qquad (k=1,3,5,7) \\[2mm]
& \big[ 0, \sin (\frac{5\pi }{36}), \sin (\frac{7 \pi }{36}), \sin (\frac{13 \pi }{36}), \sin (\frac{15 \pi }{36})\big] ^k \nonumber\\
& \quad =\big[ \sin (\frac{\pi }{36}), \sin (\frac{3\pi }{36}),\sin (\frac{9 \pi }{36}), \sin (\frac{11 \pi }{36}), \sin (\frac{17 \pi }{36})\big] ^k ,\nonumber\\
& \qquad\qquad\qquad\qquad\qquad\qquad\qquad\qquad (k=1,3,5,7)
\end{align}
\end{example}
\begin{example}
In identity \eqref{z135}, by setting $z=\displaystyle\frac{121 \pi }{2800}$, we can obtain:
\begin{align}
\label{k135c2800}
& \{c_1,c_2,c_3,c_4\}=\left\{\sin\big(\frac{121 \pi }{2800}\big),\sin\big(\frac{279 \pi }{2800}\big),\sin\big(\frac{921 \pi }{2800}\big),\sin\big(\frac{1079 \pi }{2800}\big)\right\}\\
\label{k135d2800}
& \{d_1,d_2,d_3,d_4\}=\left\{0,\sin \big(\frac{521 \pi }{2800}\big),\sin\big (\frac{679 \pi }{2800}\big),\sin \big(\frac{1321 \pi }{2800}\big)\right\}
\end{align}
These sets satisfy:
\begin{align}
& [c_1,c_2,c_3,c_4]^k=[d_1,d_2,d_3,d_4]^k,\quad (k=1,3,5)
\end{align}
Further, starting from equations \eqref{k135c2800} and \eqref{k135d2800}, and with the help of computer search, we  obtain the following result in 2025:
\begin{align}
& \{a_1,a_2,a_3,a_4\}=\{1075, 2413, 6812, 7412\}\\
& \{b_1,b_2,b_3,b_4\}=\{0 , 4305, 5520 , 7887\}
\end{align}
These sets also satisfy:
\begin{align}
& [a_1,a_2,a_3,a_4]^k=[b_1,b_2,b_3,b_4]^k,\quad (k=1,3,5)
\end{align}
Additionally, the following relationships hold:
\begin{align}
& a_1=c_1 \times R_4 \times 1.00310\ldots\nonumber\\
& a_2=c_2 \times R_4 \times 0.98959\ldots\nonumber\\
& a_3=c_3 \times R_4 \times 1.00149\ldots\nonumber\\
& a_4=c_4 \times R_4 \times 1.00026\ldots\nonumber\\
& b_2=d_2 \times R_4 \times 0.98524\ldots\nonumber\\
& b_3=d_3 \times R_4 \times 1.00997\ldots\nonumber
\end{align}
where
\begin{align}
& R_4=\frac{b_4}{d_4}=\frac{7887}{\displaystyle\sin \big(\frac{1321 \pi }{2800}\big)}
\end{align}

\end{example}

\begin{example}
By applying the identities \eqref{sin246n} and \eqref{cos246n}, with $n=22$ and $z=\displaystyle\frac{\pi}{88}$, and denote
\begin{align}
p(x)=\sin^2\left(\frac{x\pi}{88}\right), \qquad q(x)=\cos^2\left(\frac{x\pi}{88}\right)
\end{align}
then we can obtain: 
\begin{flalign}
&\qquad\: \big[\: p(1),\ p(7),\ p(9),p(15),p(17),p(23),p(25),p(31),p(33),p(39),p(41) \:\big]^{k} & \nonumber\\[1mm]
&\quad =\big[\: p(3),p(5),p(11),p(13),p(19),p(21),p(27),p(29),p(35),p(37),p(43) \:\big]^{k} & \nonumber\\[2mm]
&\qquad\qquad\qquad (k=1,2,3,4,5,6,7,8,9,10) &
\end{flalign}
and
\begin{flalign}
&\qquad\: \big[\: p(1),p(7),p(9),p(15),p(17),p(23),p(25),p(31),p(33),p(39),p(41) \:\big]^{k} & \nonumber\\[1mm]
&\quad =\big[\: q(1),q(7),q(9),q(15),q(17),q(23),q(25),q(31),q(33),q(39),q(41) \:\big]^{k} & \nonumber\\[2mm]
&\qquad\qquad\qquad (k=1,2,3,4,5,6,7,8,9,10) &
\end{flalign}
\end{example}
In the above two solutions, due to the fact that $p(x)+p(44-x)=1$ or $p(x)+q(x)=1$, both are symmetric solutions.
\begin{example}
By applying the identities \eqref{sin246n} and \eqref{cos246n}, with $n=28$ and $z=\displaystyle\frac{\pi}{49}$, and denote
\begin{align}
p(x)=\sin^2\left(\frac{x\pi}{196}\right), \qquad q(x)=\cos^2\left(\frac{x\pi}{196}\right) 
\end{align}
then we can obtain:
\begin{flalign}
&\qquad\qquad\: \big[\: p(3),\ p(11),\ p(17),\ p(25),\ p(31),\ p(39),\ p(45),\ &\nonumber\\
&\qquad\qquad\qquad q(3),\ q(11),\ q(17),\ q(25),\ q(31),\ q(39),\ q(45) \:\big]^{k} & \nonumber\\[2mm]
&\qquad\quad =\big[\: p(4),\ p(10),\ p(18),\ p(24),\ p(32),\ p(38),\ p(46),\ &\nonumber\\
&\qquad\qquad\qquad q(4),\ q(10),\ q(18),\ q(24),\ q(32),\ q(38),\ q(46) \:\big]^{k} & \nonumber\\[2mm]
&\qquad\qquad\qquad\qquad (k=1,2,3,4,5,6,7,8,9,10,11,12,13) &
\end{flalign}
\end{example}
Since $p(x)+q(x)=1$, the above solution is a symmetric solution.

\begin{example}
By applying the identities \eqref{sin246n} and \eqref{cos246n}, with $n=38$ and $z=\displaystyle\frac{\pi}{1729}$, and denote
\begin{align}
p(x)=\sin^2\left(\frac{x\pi}{1729}\right), \qquad q(x)=\cos^2\left(\frac{x\pi}{1729}\right)
\end{align}
then we can obtain:
\begin{flalign}
&\qquad\: \big[\: p(1),p(90),p(92),p(181),p(183),p(272),p(274),p(363),p(365),p(454),&\nonumber\\
&\qquad\qquad p(456),p(545),p(547),p(636),p(638),p(727),p(729),p(818),p(820) \:\big]^{k} & \nonumber\\[2mm]
&\quad =\big[\: q(1),q(90),q(92),q(181),q(183),q(272),q(274),q(363),q(365),q(454),&\nonumber\\
&\qquad\qquad q(456),q(545),q(547),q(636),q(638),q(727),q(729),q(818),q(820) \:\big]^{k},& \nonumber\\[2mm]
&\qquad\qquad\qquad (k=1,2,3,4,5,6,7,8,9,10,11,12,13,14,15,16,17,18) &
\end{flalign}
\end{example}
Since $p(x)+q(x)=1$, the above solution is a symmetric solution.

\subsection{Trigonometric Chains of PTE and GPTE}
\subsubsection{Ideal Trigonometric Chains of PTE}
Based on Identity \ref{identityT3}, we further generalize it to obtain the following new Identity \cite{Chen21}.
\begin{identity}
\label{identityT4}
Denote
\begin{align}
\Theta_{n,k}=\frac{n (2 k-1)!}{2^{2 k-1} k! (k-1)!}
\end{align}
For positive even integer $n$ and any real number $z$, we have
\begin{align}
\label{evensin}
& \sum _{j=1}^{\frac{n}{2}} \left(\sin ^{2 k}\Big(\frac{(2 j-1)\pi}{2 n}-z\Big)+\sin ^{2 k}\Big(\frac{(2 j-1)\pi}{2 n}+z\Big)\right)=\Theta_{n,k}\\
\label{evencos}
&\sum _{j=1}^{\frac{n}{2}} \left(\cos ^{2 k}\Big(\frac{(2 j-1)\pi}{2 n}-z\Big)+\cos ^{2 k}\Big(\frac{(2 j-1)\pi}{2 n}+z\Big)\right)=\Theta_{n,k}\\
& \qquad\qquad\qquad\qquad\qquad (k=1,2,3,\cdots,n-1) \nonumber
\end{align}
For positive odd integer $n$ and any real number $z$, we have
\begin{align}
\label{oddsin}
&\sin^{2 k}(z)+\sum _{j=1}^{\frac{n-1}{2}} \left(\sin^{2 k}\Big(\frac{j\pi}{n}-z\Big)+\sin^{2 k}\Big(\frac{j\pi}{n}+z\Big)\right)=\Theta_{n,k} \\
\label{oddcos}
&\cos^{2 k}(z)+\sum _{j=1}^{\frac{n-1}{2}} \left(\cos^{2 k}\Big(\frac{j\pi}{n}-z\Big)+\cos^{2 k}\Big(\frac{j\pi}{n}+z\Big)\right)=\Theta_{n,k} \\
& \qquad\qquad\qquad\qquad\qquad (k=1,2,3,\cdots,n-1) \nonumber
\end{align}
\end{identity}
In the above four formulas, the left-hand side of each contains an arbitrary real number $z$, while the right-hand side is independent of $z$. This enables us to utilize these formulas to obtain ideal trigonometric chains for the PTE problem.
\begin{example}
Denote
\begin{align}
g(x)=\cos^2\left(\frac{x\pi}{100}\right)
\end{align}
By utilizing the identity \eqref{evencos} with $n=10$ and setting $z$ to $\displaystyle\frac{5\pi}{100}$,$\displaystyle\frac{4\pi}{100}$,$\displaystyle\frac{3\pi}{100}$,$\displaystyle\frac{2\pi}{100}$,$\displaystyle\frac{1\pi}{100}$ and $\displaystyle\frac{0\pi}{100}$ respectively, we derive the following trigonometric chain: 
\begin{align}
&\ \ \;\big[\ g(0),\ g(10),\ g(10),\ g(20),\ g(20),\ g(30),\ g(30),\ g(40),\ g(40),\ g(50)\ \big]^{k} & \nonumber\\
&=\big[ \ g(1),\ g(9),\ g(11),\ g(19),\ g(21),\ g(29),\ g(31),\ g(39),\ g(41),\ g(49)\ \big]^{k} & \nonumber\\
&=\big[ \ g(2),\ g(8),\ g(12),\ g(18),\ g(22),\ g(28),\ g(32),\ g(38),\ g(42),\ g(48)\ \big]^{k} & \nonumber\\
&=\big[ \ g(3),\ g(7),\ g(13),\ g(17),\ g(23),\ g(27),\ g(33),\ g(37),\ g(43),\ g(47)\ \big]^{k} & \nonumber\\
&=\big[ \ g(4),\ g(6),\ g(14),\ g(16),\ g(24),\ g(26),\ g(34),\ g(36),\ g(44),\ g(46)\ \big]^{k} & \nonumber\\
&=\big[ \ g(5),\ g(5),\ g(15),\ g(15),\ g(25),\ g(25),\ g(35),\ g(35),\ g(45),\ g(45)\ \big]^{k},& \nonumber\\[2mm]
&\qquad\qquad\qquad(k=1,2,3,4,5,6,7,8,9)
\end{align}
\end{example}
Since $g(x)+g(50-x)=1$, the above chain is a symmetric chain.
\begin{example}
Denote
\begin{align}
p(x)=\sin^{2}\left(x\pi\right)
\end{align}
By utilizing the identity \eqref{oddsin} with $n=11$ and setting $z$ to $\displaystyle\frac{0\pi}{11}$,$\displaystyle\frac{1\pi}{22}$,$\displaystyle\frac{1\pi}{33}$,$\cdots$, and $\displaystyle\frac{4\pi}{99}$ respectively, we derive the following trigonometric chain: 
\begin{flalign}
&\quad\, \big[\: p(\frac{0}{11}),p(\frac{1}{11}),p(\frac{1}{11}),p(\frac{2}{11}),p(\frac{2}{11}),p(\frac{3}{11}),p(\frac{3}{11}),p(\frac{4}{11}),p(\frac{4}{11}),p(\frac{5}{11}),p(\frac{5}{11})\:\big]^{k} & \nonumber\\[2mm]
&=\big[\: p(\frac{1}{22}),p(\frac{1}{22}),p(\frac{3}{22}),p(\frac{3}{22}),p(\frac{5}{22}),p(\frac{5}{22}),p(\frac{7}{22}),p(\frac{7}{22}),p(\frac{9}{22}),p(\frac{9}{22}),p(\frac{11}{22})\:\big]^{k}& \nonumber\\[2mm]
&=\big[\: p(\frac{1}{33}),p(\frac{2}{33}),p(\frac{4}{33}),p(\frac{5}{33}),p(\frac{7}{33}),p(\frac{8}{33}),p(\frac{10}{33}),p(\frac{11}{33}),p(\frac{13}{33}),p(\frac{14}{33}),p(\frac{16}{33})\:\big]^{k}& \nonumber\\[2mm]
&=\big[\: p(\frac{1}{44}),p(\frac{3}{44}),p(\frac{5}{44}),p(\frac{7}{44}),p(\frac{9}{44}),p(\frac{11}{44}),p(\frac{13}{44}),p(\frac{15}{44}),p(\frac{17}{44}),p(\frac{19}{44}),p(\frac{21}{44})\:\big]^{k}& \nonumber\\[2mm]
&=\big[\: p(\frac{1}{55}),p(\frac{4}{55}),p(\frac{6}{55}),p(\frac{9}{55}),p(\frac{11}{55}),p(\frac{14}{55}),p(\frac{16}{55}),p(\frac{19}{55}),p(\frac{21}{55}),p(\frac{24}{55}),p(\frac{26}{55})\:\big]^{k}& \nonumber\\[2mm]
&=\big[\: p(\frac{2}{55}),p(\frac{3}{55}),p(\frac{7}{55}),p(\frac{8}{55}),p(\frac{12}{55}),p(\frac{13}{55}),p(\frac{17}{55}),p(\frac{18}{55}),p(\frac{22}{55}),p(\frac{23}{55}),p(\frac{27}{55})\:\big]^{k}& \nonumber\\[2mm]
&=\big[\: p(\frac{1}{66}),p(\frac{5}{66}),p(\frac{7}{66}),p(\frac{11}{66}),p(\frac{13}{66}),p(\frac{17}{66}),p(\frac{19}{66}),p(\frac{23}{66}),p(\frac{25}{66}),p(\frac{29}{66}),p(\frac{31}{66})\:\big]^{k}& \nonumber\\[2mm]
&=\big[\: p(\frac{1}{77}),p(\frac{6}{77}),p(\frac{8}{77}),p(\frac{13}{77}),p(\frac{15}{77}),p(\frac{20}{77}),p(\frac{22}{77}),p(\frac{27}{77}),p(\frac{29}{77}),p(\frac{34}{77}),p(\frac{36}{77})\:\big]^{k}& \nonumber\\[2mm]
&=\big[\: p(\frac{2}{77}),p(\frac{5}{77}),p(\frac{9}{77}),p(\frac{12}{77}),p(\frac{16}{77}),p(\frac{19}{77}),p(\frac{23}{77}),p(\frac{26}{77}),p(\frac{30}{77}),p(\frac{33}{77}),p(\frac{37}{77})\:\big]^{k}& \nonumber\\[2mm]
&=\big[\: p(\frac{3}{77}),p(\frac{4}{77}),p(\frac{10}{77}),p(\frac{11}{77}),p(\frac{17}{77}),p(\frac{18}{77}),p(\frac{24}{77}),p(\frac{25}{77}),p(\frac{31}{77}),p(\frac{32}{77}),p(\frac{38}{77})\:\big]^{k}& \nonumber\\[2mm]
&=\big[\: p(\frac{1}{88}),p(\frac{7}{88}),p(\frac{9}{88}),p(\frac{15}{88}),p(\frac{17}{88}),p(\frac{23}{88}),p(\frac{25}{88}),p(\frac{31}{88}),p(\frac{33}{88}),p(\frac{39}{88}),p(\frac{41}{88})\:\big]^{k}& \nonumber\\[2mm]
&=\big[\: p(\frac{3}{88}),p(\frac{5}{88}),p(\frac{11}{88}),p(\frac{13}{88}),p(\frac{19}{88}),p(\frac{21}{88}),p(\frac{27}{88}),p(\frac{29}{88}),p(\frac{35}{88}),p(\frac{37}{88}),p(\frac{43}{88})\:\big]^{k}& \nonumber\\[2mm]
&=\big[\: p(\frac{1}{99}),p(\frac{8}{99}),p(\frac{10}{99}),p(\frac{17}{99}),p(\frac{19}{99}),p(\frac{26}{99}),p(\frac{28}{99}),p(\frac{35}{99}),p(\frac{37}{99}),p(\frac{44}{99}),p(\frac{46}{99})\:\big]^{k}& \nonumber\\[2mm]
&=\big[\: p(\frac{2}{99}),p(\frac{7}{99}),p(\frac{11}{99}),p(\frac{16}{99}),p(\frac{20}{99}),p(\frac{25}{99}),p(\frac{29}{99}),p(\frac{34}{99}),p(\frac{38}{99}),p(\frac{43}{99}),p(\frac{47}{99})\:\big]^{k}& \nonumber\\[2mm]
&=\big[\: p(\frac{4}{99}),p(\frac{5}{99}),p(\frac{13}{99}),p(\frac{14}{99}),p(\frac{22}{99}),p(\frac{23}{99}),p(\frac{31}{99}),p(\frac{32}{99}),p(\frac{40}{99}),p(\frac{41}{99}),p(\frac{49}{99})\:\big]^{k},& \nonumber\\[2mm]
&\qquad\qquad\qquad(k=1,2,3,4,5,6,7,8,9,10)&
\end{flalign}
\end{example}
In the above chain composed of 15 sets of trigonometric function arrays, each pair of sets constitutes an ideal trigonometric solution of PTE. These trigonometric solutions include symmetric solutions, for example: 
\begin{flalign}
&\quad\, \big[\: p(\frac{0}{11}),p(\frac{1}{11}),p(\frac{1}{11}),p(\frac{2}{11}),p(\frac{2}{11}),p(\frac{3}{11}),p(\frac{3}{11}),p(\frac{4}{11}),p(\frac{4}{11}),p(\frac{5}{11}),p(\frac{5}{11})\:\big]^{k} & \nonumber\\[2mm]
&=\big[\: p(\frac{1}{22}),p(\frac{1}{22}),p(\frac{3}{22}),p(\frac{3}{22}),p(\frac{5}{22}),p(\frac{5}{22}),p(\frac{7}{22}),p(\frac{7}{22}),p(\frac{9}{22}),p(\frac{9}{22}),p(\frac{11}{22})\:\big]^{k}& \nonumber\\
&\qquad\qquad\qquad(k=1,2,3,4,5,6,7,8,9,10)&
\end{flalign}
Most of them, however, are non-symmetric solutions, for example: 
\begin{flalign}
&=\big[\: p(\frac{1}{22}),p(\frac{1}{22}),p(\frac{3}{22}),p(\frac{3}{22}),p(\frac{5}{22}),p(\frac{5}{22}),p(\frac{7}{22}),p(\frac{7}{22}),p(\frac{9}{22}),p(\frac{9}{22}),p(\frac{11}{22})\:\big]^{k}& \nonumber\\[2mm]
&=\big[\: p(\frac{1}{33}),p(\frac{2}{33}),p(\frac{4}{33}),p(\frac{5}{33}),p(\frac{7}{33}),p(\frac{8}{33}),p(\frac{10}{33}),p(\frac{11}{33}),p(\frac{13}{33}),p(\frac{14}{33}),p(\frac{16}{33})\:\big]^{k}& \nonumber\\
&\qquad\qquad\qquad(k=1,2,3,4,5,6,7,8,9,10)&
\end{flalign}

\subsubsection{Ideal Trigonometric Chains of GPTE}
Based on Identity \ref{identityT4}, we further derived the following identity \cite{Chen21}.
\begin{identity}
\label{identityT5}
For positive odd integer $n$ and any real number $z$, we have
\begin{equation}
\begin{aligned}
\label{chainGPTE}
&\cos ^k(z)+\sum _{j=1}^{\frac{n-1}{2}} \left(\cos ^k\Big(\frac{2 \pi j}{n}-z\Big)+\cos ^k\Big(\frac{2 \pi j}{n}+z\Big)\right)=2 \sum _{j=1}^{\frac{n-1}{2}} \cos ^k\left(\frac{2 \pi j}{n}\right)+1\\
&\qquad\qquad (k=1,2,3,4,\cdots,n-2,n-1,n+1,n+3,n+5,\cdots,2n-2)
\end{aligned}
\end{equation}
\end{identity}
The right side of the above identity \eqref{chainGPTE} is independent of the variable z and can be equivalently expressed as
\begin{equation}
\begin{aligned}
& 2 \sum _{j=1}^{\frac{n-1}{2}} \cos ^k\left(\frac{2 \pi j}{n}\right)+1=0,\quad (k=1,3,5,\cdots,n-2)\\
& 2 \sum _{j=1}^{\frac{n-1}{2}} \cos ^k\left(\frac{2 \pi j}{n}\right)+1=\frac{n(k-1)!}{2^{k-1}(k/2-1)! (k/2)!}\: ,\\
& \qquad\qquad\qquad\qquad\qquad\qquad\:\: (k=2,4,6,\cdots,2n-2)
\end{aligned}
\end{equation}
Applying the identity \eqref{chainGPTE}, we can obtain ideal trigonometric chains of GPTE. The following are examples:
\begin{example}
By utilizing the identity \eqref{chainGPTE} with $n=5$ and setting $z$ to $\displaystyle\frac{\pi}{10}$,$\displaystyle\frac{\pi}{15}$,$\displaystyle\frac{2\pi}{25}$ and $\displaystyle\frac{4\pi}{25}$ respectively, we derive the following trigonometric chains: 
\begin{align}
\label{h123468Tri}
&\qquad\: \big[ \cos (\frac{\pi}{10}),\cos (\frac{3\pi}{10}),\cos (\frac{5\pi}{10}),\cos (\frac{7\pi}{10}),\cos(\frac{9\pi}{10}) \big]^{h} \nonumber\\[2mm]
&\quad = \big[ \cos (\frac{\pi}{15}),\cos (\frac{5\pi}{15}),\cos (\frac{7\pi}{15}),\cos (\frac{11\pi}{15}),\cos (\frac{13\pi}{15}) \big]^{h} \nonumber\\[2mm]
&\quad = \big[ \cos (\frac{2\pi}{25}),\cos (\frac{8\pi}{25}),\cos (\frac{12\pi}{25}),\cos (\frac{18\pi}{25}),\cos (\frac{22\pi}{25}) \big]^{h} \nonumber\\[2mm]
&\quad = \big[ \cos (\frac{4\pi}{25}),\cos (\frac{6\pi}{25}),\cos (\frac{14\pi}{25}),\cos (\frac{16\pi}{25}),\cos (\frac{24\pi}{25}) \big]^{h} \nonumber\\[2mm]
&\qquad\qquad\qquad(h=1,2,3,4,6,8)
\end{align}
\end{example}
It is particularly noteworthy that in the above four sets of trigonometric function arrays, each set comprises five elements, yet the equality holds true for six distinct powers. In contrast, for instance, in the following example, each set consists of five integers, and the equality only holds for five different powers.
\begin{align}
& [-180, -131, 71, 307, 308]^h =[-313, 99, 100, 188, 301]^h,\nonumber\\
& \qquad \qquad\qquad\qquad\qquad\qquad\qquad (h=1,2,4,6,8) \tag*{\eqref{h12468s313}}
\end{align}
\begin{example}
Denote
\begin{align}
g(x)=\cos\left(x\pi\right)
\end{align}
By utilizing the identity \eqref{chainGPTE} with $n=11$ and setting $z$ to $\displaystyle\frac{0\pi}{11}$,$\displaystyle\frac{1\pi}{11}$,$\displaystyle\frac{1\pi}{22}$,$\cdots$, and $\displaystyle\frac{5\pi}{66}$ respectively, we derive the following trigonometric chain: 
\begin{flalign}
\label{h1to20Tri}
&\quad\, \big[ g(\frac{0}{11}),g(\frac{2}{11}),g(\frac{2}{11}),g(\frac{4}{11}),g(\frac{4}{11}),g(\frac{6}{11}),g(\frac{6}{11}),g(\frac{8}{11}),g(\frac{8}{11}),g(\frac{10}{11}),g(\frac{10}{11})\big]^{h} & \nonumber\\[2mm]
&=\big[ g(\frac{1}{11}),g(\frac{1}{11}),g(\frac{3}{11}),g(\frac{3}{11}),g(\frac{5}{11}),g(\frac{5}{11}),g(\frac{7}{11}),g(\frac{7}{11}),g(\frac{9}{11}),g(\frac{9}{11}),g(\frac{11}{11})\big]^{h} & \nonumber\\[2mm]
&=\big[ g(\frac{1}{22}),g(\frac{3}{22}),g(\frac{5}{22}),g(\frac{7}{22}),g(\frac{9}{22}),g(\frac{11}{22}),g(\frac{13}{22}),g(\frac{15}{22}),g(\frac{17}{22}),g(\frac{19}{22}),g(\frac{21}{22})\big]^{h}& \nonumber\\[2mm]
&=\big[ g(\frac{1}{33}),g(\frac{5}{33}),g(\frac{7}{33}),g(\frac{11}{33}),g(\frac{13}{33}),g(\frac{17}{33}),g(\frac{19}{33}),g(\frac{23}{33}),g(\frac{25}{33}),g(\frac{29}{33}),g(\frac{31}{33})\big]^{h}& \nonumber\\[2mm]
&=\big[ g(\frac{2}{33}),g(\frac{4}{33}),g(\frac{8}{33}),g(\frac{10}{33}),g(\frac{14}{33}),g(\frac{16}{33}),g(\frac{20}{33}),g(\frac{22}{33}),g(\frac{26}{33}),g(\frac{28}{33}),g(\frac{32}{33})\big]^{h}& \nonumber\\[2mm]
&=\big[ g(\frac{1}{44}),g(\frac{7}{44}),g(\frac{9}{44}),g(\frac{15}{44}),g(\frac{17}{44}),g(\frac{23}{44}),g(\frac{25}{44}),g(\frac{31}{44}),g(\frac{33}{44}),g(\frac{39}{44}),g(\frac{41}{44})\big]^{h}& \nonumber\\[2mm]
&=\big[ g(\frac{3}{44}),g(\frac{5}{44}),g(\frac{11}{44}),g(\frac{13}{44}),g(\frac{19}{44}),g(\frac{21}{44}),g(\frac{27}{44}),g(\frac{29}{44}),g(\frac{35}{44}),g(\frac{37}{44}),g(\frac{43}{44})\big]^{h}& \nonumber\\[2mm]
&=\big[ g(\frac{1}{55}),g(\frac{9}{55}),g(\frac{11}{55}),g(\frac{19}{55}),g(\frac{21}{55}),g(\frac{29}{55}),g(\frac{31}{55}),g(\frac{39}{55}),g(\frac{41}{55}),g(\frac{49}{55}),g(\frac{51}{55})\big]^{h}& \nonumber\\[2mm]
&=\big[ g(\frac{2}{55}),g(\frac{8}{55}),g(\frac{12}{55}),g(\frac{18}{55}),g(\frac{22}{55}),g(\frac{28}{55}),g(\frac{32}{55}),g(\frac{38}{55}),g(\frac{42}{55}),g(\frac{48}{55}),g(\frac{52}{55})\big]^{h}& \nonumber\\[2mm]
&=\big[ g(\frac{3}{55}),g(\frac{7}{55}),g(\frac{13}{55}),g(\frac{17}{55}),g(\frac{23}{55}),g(\frac{27}{55}),g(\frac{33}{55}),g(\frac{37}{55}),g(\frac{43}{55}),g(\frac{47}{55}),g(\frac{53}{55})\big]^{h}& \nonumber\\[2mm]
&=\big[ g(\frac{4}{55}),g(\frac{6}{55}),g(\frac{14}{55}),g(\frac{16}{55}),g(\frac{24}{55}),g(\frac{26}{55}),g(\frac{34}{55}),g(\frac{36}{55}),g(\frac{44}{55}),g(\frac{46}{55}),g(\frac{54}{55})\big]^{h}& \nonumber\\[2mm]
&=\big[ g(\frac{1}{66}),g(\frac{11}{66}),g(\frac{13}{66}),g(\frac{23}{66}),g(\frac{25}{66}),g(\frac{35}{66}),g(\frac{37}{55}),g(\frac{47}{66}),g(\frac{49}{66}),g(\frac{59}{66}),g(\frac{61}{66})\big]^{h}& \nonumber\\[2mm]
&=\big[ g(\frac{5}{66}),g(\frac{7}{66}),g(\frac{17}{66}),g(\frac{19}{66}),g(\frac{29}{66}),g(\frac{31}{66}),g(\frac{41}{55}),g(\frac{43}{66}),g(\frac{53}{66}),g(\frac{55}{66}),g(\frac{65}{66})\big]^{h}& \nonumber\\[2mm]
&\qquad\qquad\qquad(h=1,2,3,4,5,6,7,8,9,10,12,14,16,18,20)&
\end{flalign}
\end{example}
In the 13 sets of trigonometric function arrays presented above, each set \\comprises 11 elements, and remarkably, the equality stands valid for 15 distinct powers. While the example provided illustrates chains of length 13 (corresponding to the 13 sets), it is noteworthy that in \eqref{chainGPTE}, the selection of $z$ is unlimited, thus allowing the length of the chains to be potentially infinite.
\subsection{Trigonometric Solutions of Fermat's Last Theorem}
Ramanujan once presented the following two formulas \cite{Ramanujan1988,Ramanujan1994}:
\begin{align} 
& \sqrt[3]{\cos\left(\frac{2\pi}{7}\right)}+\sqrt[3]{\cos\left(\frac{4\pi}{7}\right)}+\sqrt[3]{\cos\left(\frac{6\pi}{7}\right)}=\sqrt[3]{\frac{5-3\sqrt[3]{7}}{2}}\\
& \sqrt[3]{\cos\left(\frac{2\pi}{9}\right)}+\sqrt[3]{\cos\left(\frac{4\pi}{9}\right)}+\sqrt[3]{\cos\left(\frac{8\pi}{9}\right)}=\sqrt[3]{\frac{3\sqrt[3]{9}-6}{2}}
\end{align}
Inspired by these two formulas of Ramanujan, we derived the following results in 2021 \cite{Chen2125}:
\begin{align} 
\label{cos7}
& \sqrt[3]{\cos \left(\frac{2 \pi }{7}\right)+\frac{2}{9}}+\sqrt[3]{\cos \left(\frac{4 \pi }{7}\right)+\frac{2}{9}}+\sqrt[3]{ \cos \left(\frac{6 \pi }{7}\right)+\frac{2}{9}}={0} \\
& \sqrt[3]{\cos \left(\frac{2 \pi }{9}\right)-\frac{1}{6}}+\sqrt[3]{\cos \left(\frac{4 \pi }{9}\right)-\frac{1}{6}}+\sqrt[3]{\cos \left(\frac{8 \pi }{9}\right)-\frac{1}{6}}={0}
\end{align}
Building upon these previous results, we obtained a series of more general new findings in 2021, namely trigonometric function solutions that satisfy the equation:
\begin{align} 
\label{TriFLT}
\sqrt[k]{X} + \sqrt[k]{Y} = \sqrt[k]{Z}
\end{align}
where $k$ is an integer greater than $1$. We refer to these as Trigonometric Solutions of Fermat's Last Theorem.\\
For Equation \eqref{TriFLT}, the trigonometric function solutions we have found so far can be classified into three types based on the denominator $d$ in the corresponding trigonometric function $\cos\left(\frac{c}{d} \pi \right)$: $d$ is a prime number of the form $6n+1$, $d$ is a prime number of the form $4n+1$, and $d$ is a composite number of the form $3\times$prime. Below, we will provide examples to illustrate each type.
\subsubsection{Prime Numbers of the Form 6n+1}
So far, for primes of the form $6n+1$, we have only found trigonometric solutions to \eqref{TriFLT} when $k=3$ \cite{Chen2125}. The smallest three prime numbers of the form $6n+1$ are $7$, $13$, and $19$. Their corresponding trigonometric function solutions include \eqref{cos7} and the following two formulas:
\begin{example}
\begin{flalign} 
\label{cos13}
& \sqrt[3]{\cos \left(\frac{4 \pi }{13}\right)+\cos \left(\frac{6 \pi }{13}\right)-\frac{1}{9}}+\sqrt[3]{\cos \left(\frac{2 \pi }{13}\right)+\cos \left(\frac{10 \pi }{13}\right)-\frac{1}{9}}\nonumber &\\
& \quad +\sqrt[3]{\cos \left(\frac{8 \pi }{13}\right)+\cos \left(\frac{12 \pi }{13}\right)-\frac{1}{9}}=0 &\\[2mm]
& \sqrt[3]{\cos (\frac{4 \pi }{19})+\cos (\frac{6 \pi }{19})+\cos (\frac{10 \pi }{19})+\frac{5}{9}} +\sqrt[3]{\cos (\frac{2 \pi }{19})+\cos (\frac{14 \pi }{19})+\cos(\frac{16 \pi }{19})+\frac{5}{9}}\nonumber &\\[1mm]
& \quad +\sqrt[3]{\cos (\frac{8 \pi }{19})+\cos (\frac{12 \pi }{19})+\cos (\frac{18 \pi }{19})+\frac{5}{9}}=0 &
\end{flalign}
\end{example}
 The above two formulas can be written in the following equivalent forms:
\begin{flalign} 
& {\small\sqrt[3]{\sum _{k=1}^2 \cos \left(\frac{2 \pi}{13} 2^{3 k}\right)-\frac{1}{9}}+\sqrt[3]{\sum _{k=1}^2 \cos \left(\frac{4 \pi}{13} 2^{3 k}\right)-\frac{1}{9}}+\sqrt[3]{\sum _{k=1}^2 \cos \left(\frac{8 \pi}{13} 2^{3 k}\right)-\frac{1}{9}}=0} &\\
& {\small \sqrt[3]{\sum _{k=1}^3 \cos \left(\frac{2 \pi}{19}  2^{3 k}\right)+\frac{5}{9}}+\sqrt[3]{\sum _{k=1}^3 \cos \left(\frac{10 \pi}{19}  2^{3 k}\right)+\frac{5}{9}}+\sqrt[3]{\sum _{k=1}^3 \cos \left(\frac{50 \pi}{19}  2^{3 k}\right)+\frac{5}{9}}=0} &
\end{flalign}
Here are examples corresponding to other primes of the form $6n+1$ within $100$:
\begin{example}
\begin{flalign} 
& {\small \sqrt[3]{\sum _{k=1}^5 \cos \left(\frac{2 \pi}{31}  2^{3 k}\right)+\frac{7}{18}}+\sqrt[3]{\sum _{k=1}^5 \cos \left(\frac{10 \pi}{31}  2^{3 k}\right)+\frac{7}{18}}+\sqrt[3]{\sum _{k=1}^5 \cos \left(\frac{50 \pi}{31}  2^{3 k}\right)+\frac{7}{18}}=0} &\\
& {\small\sqrt[3]{\sum _{k=1}^6 \cos \left(\frac{2 \pi}{37}  2^{3 k}\right)-\frac{4}{9}}+\sqrt[3]{\sum _{k=1}^6 \cos \left(\frac{4 \pi}{37}  2^{3 k}\right)-\frac{4}{9}} +\sqrt[3]{\sum _{k=1}^6 \cos \left(\frac{8 \pi}{37}  2^{3 k}\right)-\frac{4}{9}}=0} & \\
& {\small\sqrt[3]{\sum _{k=1}^7 \cos \left(\frac{2 \pi}{43}  2^{3 k}\right)-\frac{5}{18}}+\sqrt[3]{\sum _{k=1}^7 \cos \left(\frac{10 \pi}{43}  2^{3 k}\right)-\frac{5}{18}}+\sqrt[3]{\sum _{k=1}^7 \cos \left(\frac{50 \pi}{43}  2^{3 k}\right)-\frac{5}{18}}=0} & \\
& {\small \sqrt[3]{\sum _{k=1}^{10} \cos \left(\frac{2 \pi}{61} 2^{3 k}\right)+\frac{2}{9}}+\sqrt[3]{\sum _{k=1}^{10} \cos \left(\frac{4 \pi}{61} 2^{3 k}\right)+\frac{2}{9}}+\sqrt[3]{\sum _{k=1}^{10} \cos \left(\frac{8 \pi}{61} 2^{3 k}\right)+\frac{2}{9}}=0 } & \\
& {\small \sqrt[3]{\sum _{k=1}^{11} \cos \left(\frac{2 \pi}{67}  2^{3 k}\right)-\frac{1}{9}}+\sqrt[3]{\sum _{k=1}^{11} \cos \left(\frac{4 \pi}{67}  2^{3 k}\right)-\frac{1}{9}}+\sqrt[3]{\sum _{k=1}^{11} \cos \left(\frac{8 \pi}{67}  2^{3 k}\right)-\frac{1}{9}}=0 } & \\
& {\small \sqrt[3]{\sum _{k=1}^{12} \cos \left(\frac{2 \pi}{73} 5^{3 k}\right)+\frac{5}{9}}+\sqrt[3]{\sum _{k=1}^{12} \cos \left(\frac{10 \pi}{73} 5^{3 k}\right)+\frac{5}{9}}+\sqrt[3]{\sum _{k=1}^{12} \cos \left(\frac{50 \pi}{73} 5^{3 k}\right)+\frac{5}{9}}=0 } & \\
& {\small \sqrt[3]{\sum _{k=1}^{13} \cos \left(\frac{2 \pi}{79} 2^{3 k}\right)-\frac{7}{9}}+\sqrt[3]{\sum _{k=1}^{13} \cos \left(\frac{4 \pi}{79} 2^{3 k}\right)-\frac{7}{9}}+\sqrt[3]{\sum _{k=1}^{13} \cos \left(\frac{8 \pi}{79} 2^{3 k}\right)-\frac{7}{9}}=0 } & \\
& {\small \sqrt[3]{\sum _{k=1}^{16} \cos \left(\frac{2 \pi}{97}  5^{3 k}\right)+\frac{11}{9}}+\sqrt[3]{\sum _{k=1}^{16} \cos \left(\frac{10 \pi}{97}  5^{3 k}\right)+\frac{11}{9}}+\sqrt[3]{\sum _{k=1}^{16} \cos \left(\frac{50 \pi}{97}  5^{3 k}\right)+\frac{11}{9}}=0 } &
\end{flalign}
\end{example}

\subsubsection{Prime Numbers of the Form 4n+1}
For primes of the form $4n+1$, we can find trigonometric solutions to $\eqref{TriFLT}$ when $k$ is any integer greater than $1$ \cite{Chen2125}. For example, below is an example for the prime number $5$.
\begin{example}
\begin{align}
& \sqrt[2]{2 \cos \left(\frac{2 \pi }{5}\right)+2}-\sqrt[2]{2 \cos \left(\frac{4 \pi }{5}\right)+2}=1\\
& \sqrt[3]{4 \cos \left(\frac{2 \pi }{5}\right)+3}+\sqrt[3]{4 \cos \left(\frac{4 \pi }{5}\right)+3}=1\\
& \sqrt[4]{6 \cos \left(\frac{2 \pi }{5}\right)+5}-\sqrt[4]{6 \cos \left(\frac{4 \pi }{5}\right)+5}=1\\
& \sqrt[5]{10 \cos \left(\frac{2 \pi }{5}\right)+8}+\sqrt[5]{10 \cos \left(\frac{4 \pi }{5}\right)+8}=1\\
& \qquad\qquad \vdots \nonumber \\
& {\small \sqrt[99]{437845991669110338052 \cos \left(\frac{2 \pi }{5}\right)+354224848179261915075} \nonumber }\\
& {\small \: +\sqrt[99]{437845991669110338052 \cos \left(\frac{4 \pi }{5}\right)+354224848179261915075}=1 }\\
& \qquad\qquad \vdots \nonumber
\end{align}
\end{example}
For the prime number $13$, it can be regarded as both of the form $6n+1$, with its corresponding formula being \eqref{cos13}, and of the form $4n+1$, with its corresponding formulas as shown in the following example:
\begin{example}
Denote
\begin{align} 
& A_{13}=\cos \left(\frac{2 \pi }{13}\right)+\cos \left(\frac{6 \pi }{13}\right)+\cos \left(\frac{8 \pi }{13}\right)\\
& B_{13}=\cos \left(\frac{4 \pi }{13}\right)+\cos \left(\frac{10 \pi }{13}\right)+\cos \left(\frac{12 \pi }{13}\right)
\end{align}
then we have
\begin{align}
& \sqrt[2]{2 A_{13}+4} -\sqrt[2]{2 B_{13}+4}=1\\
& \sqrt[3]{8 A_{13}+7} +\sqrt[3]{8 B_{13}+7}=1\\
& \sqrt[4]{14 A_{13}+19} -\sqrt[4]{14 B_{13}+19}=1\\
& \sqrt[5]{38 A_{13}+40} +\sqrt[5]{38 B_{13}+40}=1\\
& \qquad\qquad \vdots \nonumber \\
& \sqrt[17]{798662 A_{13}+919480} +\sqrt[17]{798662 B_{13}+919480}=1\\
& \sqrt[18]{1838960 A_{13}+2117473} -\sqrt[18]{1838960 B_{13}+2117473}=1\\
& \qquad\qquad \vdots \nonumber
\end{align}
\end{example}
The following is an example corresponding to the prime number $17$:
\begin{example}
Denote
\begin{align}
& A_{17}=\cos \left(\frac{2 \pi }{17}\right)+\cos \left(\frac{4 \pi }{17}\right)+\cos \left(\frac{8 \pi }{17}\right)+\cos \left(\frac{16 \pi }{17}\right)\\
& B_{17}=\cos \left(\frac{6 \pi }{17}\right)+\cos \left(\frac{10 \pi }{17}\right)+\cos \left(\frac{12 \pi }{17}\right)+\cos \left(\frac{14 \pi }{17}\right)
\end{align}
then we have
\begin{align}
& \sqrt[2]{2 A_{17}+5}-\sqrt[2]{2 B_{17}+5}=1\\
& \sqrt[3]{10 A_{17}+9}+\sqrt[3]{10 B_{17}+9}=1\\
& \sqrt[4]{18 A_{17}+29}-\sqrt[4]{18 B_{17}+29}=1\\
& \sqrt[5]{58 A_{17}+65}+\sqrt[5]{58 B_{17}+65}=1 \\
& \qquad\qquad \vdots \nonumber \\
& \sqrt[16]{1666098 A_{17}+2135149} -\sqrt[16]{1666098 B_{17}+2135149}=1\\
& \sqrt[17]{4270298 A_{17}+5467345} +\sqrt[17]{4270298 B_{17}+5467345}=1\\
& \qquad\qquad \vdots \nonumber
\end{align}
\end{example}
Below are examples corresponding to several other prime numbers of the form $4n+1$, with only the formulas for their cube roots listed for simplicity:
\begin{example}
\begin{align}
& \sqrt[3]{16 \sum _{k=1}^7 \cos \left(\frac{2 \pi}{29}4^k\right)+15}+\sqrt[3]{16 \sum _{k=1}^7 \cos \left(\frac{4 \pi }{29} 4^k\right)+15}=1\\[1mm]
& \sqrt[3]{20 \sum _{k=1}^9 \cos \left(\frac{2 \pi}{37}4^k\right)+19}+\sqrt[3]{20 \sum _{k=1}^9 \cos \left(\frac{4 \pi }{37}4^k\right)+19}=1\\[1mm]
& \sqrt[3]{22 \sum _{k=1}^{10} \cos \left(\frac{2 \pi}{41} 2^k\right)+21}+\sqrt[3]{22 \sum _{k=1}^{10} \cos \left(\frac{6 \pi}{41}2^k\right)+21}=1\\[1mm]
& \sqrt[3]{28 \sum _{k=1}^{13} \cos \left(\frac{2 \pi}{53} 4^k\right)+27}+\sqrt[3]{28 \sum _{k=1}^{13} \cos \left(\frac{4 \pi}{53} 4^k\right)+27}=1\\[1mm]
& \sqrt[3]{322 \sum _{k=1}^{160} \cos \left(\frac{2 \pi}{641}7^k\right)+321}+\sqrt[3]{322 \sum _{k=1}^{160} \cos \left(\frac{6 \pi }{641}7^k\right)+321}=1
\end{align}
\end{example}

\subsubsection{Composite Numbers of the Form 3p}
For composite numbers of the form $3\times$prime, which are similar to prime numbers of the form $4n+1$, we can find corresponding trigonometric solutions for any integer $k$ greater than $1$ \cite{Chen2125}. Below are examples corresponding to the composite numbers \(15\), \(21\), and \(33\).
\begin{example}
Denote
\begin{align}
& A_{15}=\cos \left(\frac{2 \pi }{15}\right)+\cos \left(\frac{8 \pi }{15}\right)\\
& B_{15}=\cos \left(\frac{4 \pi }{15}\right)+\cos \left(\frac{14 \pi }{15}\right)
\end{align}
then we have
\begin{align}
& \sqrt[2]{2 A_{15}+1} -\sqrt[2]{2 B_{15}+1}=1\\
& \sqrt[3]{4 A_{15}+1} +\sqrt[3]{4 B_{15}+1}=1\\
& \sqrt[4]{6 A_{15}+2} -\sqrt[4]{6 B_{15}+2}=1\\
& \sqrt[5]{10 A_{15}+3} +\sqrt[5]{10 B_{15}+3}=1\\
& \qquad\qquad \vdots \nonumber \\
& \sqrt[99]{437845991669110338052 A_{15}+135301852344706746049}\nonumber \\
& \:\: +\sqrt[99]{437845991669110338052 B_{15}+135301852344706746049}=1 \\
& \qquad\qquad \vdots \nonumber 
\end{align}
\end{example}
\begin{example}
Denote
\begin{align}
& A_{21}=\cos \left(\frac{2 \pi }{21}\right)+\cos \left(\frac{8 \pi }{21}\right)+\cos \left(\frac{10 \pi }{21}\right)\\
& B_{21}=\cos \left(\frac{4 \pi }{21}\right)+\cos \left(\frac{16 \pi }{21}\right)+\cos \left(\frac{20 \pi }{21}\right)
\end{align}
then we have
\begin{align}
& \sqrt[2]{2 A_{21}+5} -\sqrt[2]{2 B_{21}+5}=1\\
& \sqrt[3]{12 A_{21}+5} +\sqrt[3]{12 B_{21}+5}=1\\
& \sqrt[4]{22 A_{21}+30} -\sqrt[4]{22 B_{21}+30}=1\\
& \sqrt[5]{82 A_{21}+55} +\sqrt[5]{82 B_{21}+55}=1\\
& \sqrt[6]{192 A_{21}+205} -\sqrt[6]{192 B_{21}+205}=1\\
& \sqrt[7]{602 A_{21}+480} +\sqrt[7]{602 B_{21}+480}=1\\
& \qquad\qquad \vdots \nonumber \\
& {\small \sqrt[19]{128914922 A_{21}+ 115397280 }+\sqrt[19]{128914922 B_{21}+ 115397280}=1} \\
& {\small \sqrt[20]{359709482 A_{21}+ 322287305 }-\sqrt[20]{359709482 B_{21}+ 322287305}=1} \\
& \qquad\qquad \vdots \nonumber 
\end{align}
\end{example}
\begin{example}
Denote
\begin{align}
& A_{33}=\sum _{k=1}^5 \cos \left(\frac{2 \pi}{33}2^k\right)\\
& B_{33}=\sum _{k=1}^5 \cos \left(\frac{10 \pi}{33}2^k\right)
\end{align}
then we have
\begin{align}
& \sqrt[2]{2 A_{33}+8} -\sqrt[2]{2 B_{33}+8}=1\\
& \sqrt[3]{18 A_{33}+8} +\sqrt[3]{18 B_{33}+8}=1\\
& \sqrt[4]{34 A_{33}+72} -\sqrt[4]{34 B_{33}+72}=1\\
& \sqrt[5]{178 A_{33}+136} +\sqrt[5]{178 B_{33}+136}=1\\
& \qquad\qquad \vdots \nonumber \\
& \sqrt[33]{91888738166330226 A_{33}+108990567176139656}\nonumber \\
& \:\: +\sqrt[33]{91888738166330226 B_{33}+108990567176139656}=1 \\
& \sqrt[34]{309869872518609538 A_{33}+367554952665320904}\nonumber \\
& \:\: -\sqrt[34]{309869872518609538 B_{33}+367554952665320904}=1 \\
& \qquad\qquad \vdots \nonumber 
\end{align}
\end{example}
Below are examples corresponding to several other composite numbers of the form $3\times$prime, with only the formulas for their cube roots listed for simplicity:
\begin{example}
\begin{align} 
& \sqrt[3]{8 \sum _{k=1}^6 \cos \left(\frac{8 \pi }{39} 4^k\right)+3}+\sqrt[3]{8 \sum _{k=1}^6 \cos \left(\frac{10 \pi  }{39}4^k\right)+3}=1\\[1mm]
& \sqrt[3]{10 \sum _{k=1}^8 \cos \left(\frac{2 \pi  }{51}2^k\right)+4}+\sqrt[3]{10 \sum _{k=1}^8 \cos \left(\frac{10 \pi  }{51}2^k\right)+4}=1\\[1mm]
& \sqrt[3]{30 \sum _{k=1}^9 \cos \left(\frac{2 \pi}{57}2^k\right)+14}+\sqrt[3]{30 \sum _{k=1}^9 \cos \left(\frac{10 \pi}{57}2^k\right)+14}=1\\[1mm]
& \sqrt[3]{36 \sum _{k=1}^{11} \cos \left(\frac{2 \pi}{69}4^k\right)+17}+\sqrt[3]{36 \sum _{k=1}^{11} \cos \left(\frac{14\pi}{69}4^k\right)+17}=1\\[1mm]
& \sqrt[3]{16 \sum _{k=1}^{14} \cos \left(\frac{2 \pi}{87}4^k\right)+7}+ \sqrt[3]{16 \sum _{k=1}^{14} \cos \left(\frac{4 \pi}{87}4^k\right)+7}=1\\[1mm]
& \sqrt[3]{48 \sum _{k=1}^{15} \cos \left(\frac{2 \pi }{93}7^k\right)+23}+\sqrt[3]{48 \sum _{k=1}^{15} \cos \left(\frac{4 \pi }{93}7^k\right)+23}=1
\end{align}
\end{example}

\clearpage

\section{The Normalized GPTE Problem}
In this chapter, we investigate the normalized version of the Generalized Prouhet-Tarry-Escott (GPTE) problem. This approach introduces a novel method for studying the GPTE problem, which has not been previously explored in the literature. The normalized version transforms the original GPTE problem, which is defined over the integers, into an equivalent problem over the real numbers within the interval $[0,1]$. The primary objective of this research is to determine the exact upper and lower bounds for each variable, thereby enhancing the efficiency of computer searches for integer solutions to the GPTE problem.

\subsection{The Normalized GPTE System}
\begin{definition} \textnormal{\textbf{(The Normalized GPTE System)}}\\[1mm]
\label{definition_Normalized_GPTE}
Let \(\{a_{1}, a_{2}, \dots, a_{n+1}\}\) and \(\{b_{1}, b_{2}, \dots, b_{n+1}\}\) be two sets of non-negative real numbers, ordered as follows:
\begin{align}
& 0 \leq a_1 \leq a_2 \leq \cdots \leq a_{n+1}, \\
& 0 \leq b_1 \leq b_2 \leq \cdots \leq b_{n+1}.
\end{align}

Without loss of generality, assume that \(a_{n+1} > b_{n+1}\). For given distinct integers \(\{k_1, k_2, \dots, k_n\}\), if the following GPTE system holds:
\begin{align}
\left[ a_{1}, a_{2}, \dots, a_{n+1} \right]^{k} = \left[ b_{1}, b_{2}, \dots, b_{n+1} \right]^{k}, \quad (k = k_1, k_2, \dots, k_n),
\end{align}
then the \textbf{normalized GPTE system} can be defined as:
\begin{align}
\label{equation_normalized_GPTE}
\left[ \alpha_{1}, \alpha_{2}, \dots, \alpha_{n+1} \right]^{k} = \left[ \beta_{1}, \beta_{2}, \dots, \beta_{n+1} \right]^{k}, \quad (k = k_1, k_2, \dots, k_n).
\end{align}
where for \( i = 1, 2, 3, \dots, n+1 \),
\begin{align}
& \alpha_i=\frac{a_i}{a_{n+1}},\\
& \beta_i=\frac{b_i}{a_{n+1}}.
\end{align}
\end{definition}
According to Definition \ref{definition_Normalized_GPTE}, the normalized GPTE system \eqref{equation_normalized_GPTE} exhibits these fundamental properties:
\begin{enumerate}
    \item The distinct integer sequence \(\{k_1, k_2, \dots, k_n\}\) contains \(n\) elements, while each set of non-negative real numbers contains \(n+1\) elements.
    \item The \(n+1\) elements in each set are non-negative real numbers bounded above by 1, ordered as follows:
    \begin{align}
        & 0 \leq \alpha_1 \leq \alpha_2 \leq \cdots \leq \alpha_{n+1} = 1, \\
        & 0 \leq \beta_1 \leq \beta_2 \leq \cdots \leq \beta_{n+1} < 1.
    \end{align}
    \item \(\alpha_{n+1}\) is normalized to unity.
\end{enumerate}
Throughout this chapter, all normalized GPTE systems maintain these three fundamental properties.

\begin{problem} \textnormal{\textbf{(The Normalized GPTE Problem)}}\\[1mm]
\label{problem_Normalized_GPTE}
The \textbf{normalized GPTE problem} can be stated as follows: Given a sequence of distinct integers \(\{k_1, k_2, \dots, k_n\}\), for the normalized GPTE system \eqref{equation_normalized_GPTE}, how can the upper and lower bounds of each \(\alpha_i\) and \(\beta_i\) (for \(i = 1, 2, \dots, n+1\)) be determined?
\end{problem}
The purpose of our research on Problem \ref{problem_Normalized_GPTE} is to develop a general and efficient algorithm to facilitate the search for ideal non-negative integer solutions to the GPTE problem with the assistance of computer search. Building on years of research, we have proposed six conjectures related to Problem \ref{problem_Normalized_GPTE}, which determine the upper and lower bounds for each variable from different perspectives.

To date, we have not been able to provide complete proofs for these six conjectures, nor have we found any counterexamples. In fact, based on the assumption that these conjectures hold, we have already efficiently identified a large number of ideal solutions to the GPTE problem using computational methods.

If these conjectures are proven correct, the derived boundary conditions would enable us to obtain all ideal non-negative integer solutions for the relevant GPTE problems. Conversely, even if the conjectures are only partially valid, the resulting boundary conditions would still yield a subset of ideal solutions, namely those solutions that satisfy the conjectures. Therefore, applying these conjectures in the search for integer solutions to the GPTE problem remains highly practical.

The first conjecture related to the normalized GPTE problem is as follows:
\begin{conjecture}\textnormal{\textbf{(Interlacing Conjecture)}}\\[1mm]
\label{conjecture_Interlacing}
Let \(\{\alpha_{1}, \alpha_{2}, \dots, \alpha_{n+1}\}\) and \(\{\beta_{1}, \beta_{2}, \dots, \beta_{n+1}\}\) be two sets of non-negative real numbers bounded above by 1, ordered as follows:
\begin{align}
& 0 \leq \alpha_1 \leq \alpha_2 \leq \cdots \leq \alpha_{n+1}=1, \\
& 0 \leq \beta_1 \leq \beta_2 \leq \cdots \leq \beta_{n+1}<1.
\end{align}
For given distinct integers \(\{k_1, k_2, \dots, k_n\}\), if the following system of equations holds:
\begin{align}
\left[ \alpha_{1}, \alpha_{2}, \dots, \alpha_{n+1} \right]^{k} = \left[ \beta_{1}, \beta_{2}, \dots, \beta_{n+1} \right]^{k}, \quad (k = k_1, k_2, \dots, k_n).
\end{align}
then the elements of the two sets interlace in the following manner:
\begin{flalign}
& \;\;  \small{0 \leq \alpha_1 < \beta_1 \leq \beta_2 < \alpha_2 \leq \alpha_3 <...< \beta_n \leq \beta_{n+1} < \alpha_{n+1}=1} \; \textup{(\(n\) is odd)}, & \\
& \;\;  \small{0 \leq \beta_1 < \alpha_1 \leq \alpha_2 < \beta_2 \leq \beta_3 <...< \beta_n \leq \beta_{n+1} < \alpha_{n+1} =1} \; \textup{(\(n\) is even)}. &
\end{flalign}
\end{conjecture}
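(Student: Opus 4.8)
The plan is to reduce the interlacing assertion to a sign-variation count of the difference polynomial, exactly as in the $C$-constant framework of Chapter~3, and then propagate that count through the ordered real arrays. First I would introduce the generating difference
\begin{equation}
\Delta(x) = \prod_{i=1}^{n+1}(x-\alpha_i) - \prod_{i=1}^{n+1}(x-\beta_i).
\end{equation}
When $(k_1,\dots,k_n)=(1,2,\dots,n)$, \refIdentity{identityC_PTE} gives $\Delta(x)\equiv C$ a nonzero constant; for the other admissible exponent families covered by \refIdentity{identityC2}, \refIdentity{identityC3}, \refIdentity{identityC4} one has instead $x^{-m}\Delta(x)$, or a mild variant, equal to a nonzero constant, so in every case $\Delta$ has \emph{no zeros} on $(0,\infty)$ beyond those forced by common coordinates, and in particular $\Delta$ does not change sign there. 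The core of the argument is then: evaluate $\Delta$ at the $2(n+1)$ ordered nodes $\alpha_1\le\cdots\le\alpha_{n+1}$ and $\beta_1\le\cdots\le\beta_{n+1}$, note that $\Delta(\alpha_i)=-\prod_j(\alpha_i-\beta_j)$ and $\Delta(\beta_i)=\prod_j(\beta_i-\alpha_j)$, and observe that these products are governed entirely by how many $\beta_j$ (resp.\ $\alpha_j$) lie below the evaluation point. A genuine interlacing pattern is precisely the configuration in which all these evaluations carry the \emph{same} sign; any violation of interlacing would flip at least one sign, contradicting that $\Delta$ (or $x^{-m}\Delta$) is a one-signed nonzero constant.

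The key steps, in order, are: (1) fix the exponent family $(k_1,\dots,k_n)$, and invoke the appropriate one of \refIdentity{identityC_PTE}, \refIdentity{identityC2}--\refIdentity{identityC4} to write $\Delta(x)=x^{m}\cdot(\text{unit})\cdot C$ with $C\ne 0$ on the normalized arrays, using $\alpha_{n+1}=1>\beta_{n+1}$ to pin the sign of $C$; (2) translate ``$\{\alpha_i\}$ and $\{\beta_i\}$ interlace'' into the statement ``for each $i$, the number of $\beta_j<\alpha_i$ equals $i-1$ or $i$ in the prescribed alternating pattern'', and likewise with the roles swapped; (3) show by an elementary counting/parity lemma that this alternating count is equivalent to $\operatorname{sign}\Delta(\alpha_1)=\operatorname{sign}\Delta(\alpha_2)=\cdots=\operatorname{sign}\Delta(\beta_{n+1})$, with the common sign being exactly $\operatorname{sign}C$; (4) conversely, if interlacing fails at some index, exhibit two consecutive nodes at which $\Delta$ takes opposite signs, force an intermediate zero of $\Delta$ in $(0,1)$, and contradict Step~1; (5) handle the boundary/degenerate cases — coincident coordinates $\alpha_i=\alpha_{i+1}$ or $\beta_j=\beta_{j+1}$, and the endpoints $\alpha_1=0$ or $\beta_1=0$ — by a limiting or perturbation argument, checking that strictness of the outer inequalities ($\alpha_1<\beta_1$, $\beta_{n+1}<\alpha_{n+1}$) survives because $a_{n+1}>b_{n+1}$ is a strict hypothesis and $C\ne0$ forbids a shared extreme coordinate.

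The main obstacle, I expect, is Step~1 in the \emph{full} generality stated in the conjecture: the constant-$C$ identities \refIdentity{identityC_PTE}--\refIdentity{identityC4} only cover the exponent families that are arithmetic progressions, progressions with one term skipped, or even/odd progressions, whereas the Interlacing Conjecture is asserted for an \emph{arbitrary} distinct integer sequence $(k_1,\dots,k_n)$. For a general exponent set there is no polynomial $\Delta$ of degree $n+1$ whose difference is a monomial times a constant, so the sign-counting shortcut breaks down; one would instead need a Chebyshev-system / total-positivity argument showing that the map $(x_1,\dots,x_{n+1})\mapsto(\,[\vec x]^{k_1},\dots,[\vec x]^{k_n}\,)$ has the oscillation properties of an extended complete Chebyshev system on $[0,1]$ (or $[0,\infty)$), after which the interlacing of any two preimages sharing the same image follows from a Budan--Fourier-type bound on sign changes. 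Establishing that the relevant Wronskian-type determinants of $\{x^{k_1},\dots,x^{k_n}\}$ do not vanish on $(0,1)$ — equivalently, that these generalized monomials form an ECT-system there — is the genuinely hard analytic input, and is presumably why the statement is posed as a conjecture rather than a theorem; a realistic first target is therefore to prove the interlacing rigorously for all the families already equipped with a constant $C$, and to reduce the general case to the ECT-system property as a clearly isolated open lemma.
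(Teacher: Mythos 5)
First, a point of comparison that matters more than any technical detail: the paper does not prove this statement. It is posed explicitly as Conjecture 1, and the author states both here and at the start of Chapter 5 that no proof is known; the only proved instance is the classical case \((k=1,2,\dots,n)\), the Interlacing Theorem for PTE, which is cited to Borwein--Ingalls and Caley rather than proved in the text. So there is no proof of the paper's to measure your proposal against, and your own closing paragraph correctly diagnoses why: the constant-\(C\) identities (Identities 8--11) cover only arithmetic-progression-type exponent families, while the conjecture is asserted for arbitrary distinct integer exponents, where the needed input is an ECT-system / total-positivity property of \(\{x^{k_1},\dots,x^{k_n}\}\) on \((0,1)\) that nobody has established. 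Your proposal is therefore an honest research plan with an acknowledged open core, not a proof, and you should present it as such.

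Beyond that, there is a concrete flaw in Steps (3)--(4) even for the families where a constant \(C\) does exist. From \(\Delta(\alpha_i)=-\prod_j(\alpha_i-\beta_j)=-C\) you learn only that \(\prod_j(\alpha_i-\beta_j)\) has the same sign for every \(i\), i.e.\ that the parity of \(\#\{j:\beta_j>\alpha_i\}\) is constant in \(i\). Constant parity is necessary for interlacing but not sufficient: a configuration in which two consecutive \(\beta_j\) both lie between \(\alpha_i\) and \(\alpha_{i+1}\) preserves every sign and every parity, so your claim that "any violation of interlacing would flip at least one sign" is false, and Step (4) produces no intermediate zero and no contradiction. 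The standard proof of the PTE case avoids this by differentiating: \(p-q=C\) gives \(p'=q'\), and the \(n\) common real critical points must interlace the roots of both monic degree-\((n+1)\) polynomials \(p\) and \(q\), which pins down the full interlacing pattern rather than just its parity. Your outline needs either that derivative argument (which again only works when \(\Delta\) is constant, i.e.\ for the \((k=1,\dots,n)\) family) or a genuinely different mechanism for the other exponent sets; note also that for Identity 10 the relevant identity gives \(\Delta(x)=Cx^m(s-x)\), which changes sign at \(x=s\), and for Identity 11 the polynomial is not of the form \(\prod(x-\alpha_i)-\prod(x-\beta_i)\) at all, so Step (1) is not uniform across the families you invoke.
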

Based on the assumption that Conjecture \ref{conjecture_Interlacing} is true, we can derive the following corollary:
\begin{corollary}
\label{corollary__Interlacing}
Let \(\{a_{1}, a_{2}, \dots, a_{n+1}\}\) and \(\{b_{1}, b_{2}, \dots, b_{n+1}\}\) be two sets of non-negative integer numbers, ordered as follows:
\begin{align}
& 0 \leq a_1 \leq a_2 \leq \cdots \leq a_{n+1}, \\
& 0 \leq b_1 \leq b_2 \leq \cdots \leq b_{n+1}.
\end{align}
Without loss of generality, assume that \(a_{n+1} > b_{n+1}\). For given distinct integers \(\{k_1, k_2, \dots, k_n\}\), if the following system of equations holds:
\begin{align}
\left[ a_{1}, a_{2}, \dots, a_{n+1} \right]^{k} = \left[ b_{1}, b_{2}, \dots, b_{n+1} \right]^{k}, \quad \text{for } k = k_1, k_2, \dots, k_n,
\end{align}
then the elements of the two sets interlace in the following manner:
\begin{flalign}
& \quad 0 \leq a_1 < b_1 \leq b_2 < a_2 \leq a_3 < \dots < b_n \leq b_{n+1} < a_{n+1} \quad \textup{(if \(n\) is odd)}, & \\
& \quad 0 \leq b_1 < a_1 \leq a_2 < b_2 \leq b_3 < \dots < b_n \leq b_{n+1} < a_{n+1} \quad \textup{(if \(n\) is even)}. &
\end{flalign}
\end{corollary}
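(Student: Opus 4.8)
The statement to prove is Corollary~\ref{corollary__Interlacing}, the integer version of the Interlacing Conjecture.

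\medskip

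The plan is to derive Corollary~\ref{corollary__Interlacing} as an immediate consequence of Conjecture~\ref{conjecture_Interlacing} by passing through the normalization construction of Definition~\ref{definition_Normalized_GPTE}. First I would take the two sets of non-negative integers $\{a_1,\dots,a_{n+1}\}$ and $\{b_1,\dots,b_{n+1}\}$ satisfying the GPTE system for the exponents $k=k_1,\dots,k_n$, with the ordering hypotheses and the assumption $a_{n+1}>b_{n+1}$. I would then divide every element by $a_{n+1}$, setting $\alpha_i=a_i/a_{n+1}$ and $\beta_i=b_i/a_{n+1}$, exactly as in Definition~\ref{definition_Normalized_GPTE}. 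The key point is that this rescaling preserves the GPTE equalities: for each exponent $k=k_i$ one has $\sum_j \alpha_j^{k}=a_{n+1}^{-k}\sum_j a_j^{k}=a_{n+1}^{-k}\sum_j b_j^{k}=\sum_j \beta_j^{k}$ when $k\neq 0$, and for $k=0$ the equal-products condition rescales by the common factor $a_{n+1}^{-(n+1)}$; moreover the ordering is preserved since division by the positive constant $a_{n+1}$ is monotone, and $\alpha_{n+1}=1$, $\beta_{n+1}=b_{n+1}/a_{n+1}<1$. Hence $\{\alpha_i\}$ and $\{\beta_i\}$ form a normalized GPTE system in the sense of Definition~\ref{definition_Normalized_GPTE} and satisfy all hypotheses of Conjecture~\ref{conjecture_Interlacing}.

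\medskip

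Next I would apply Conjecture~\ref{conjecture_Interlacing} to the normalized data, obtaining the strict interlacing chains
\[
0 \le \alpha_1 < \beta_1 \le \beta_2 < \alpha_2 \le \alpha_3 < \cdots < \beta_n \le \beta_{n+1} < \alpha_{n+1}=1
\]
when $n$ is odd, and the corresponding chain with the roles of $\alpha$ and $\beta$ interchanged at the start when $n$ is even. Finally I would multiply the entire chain back through by $a_{n+1}>0$; since multiplication by a positive constant preserves both the weak and strict inequalities, and since $a_{n+1}\alpha_i=a_i$ and $a_{n+1}\beta_i=b_i$, this yields precisely
\[
0 \le a_1 < b_1 \le b_2 < a_2 \le a_3 < \cdots < b_n \le b_{n+1} < a_{n+1}
\]
for $n$ odd, and the analogous statement for $n$ even. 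This completes the deduction.

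\medskip

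The only genuine obstacle is that Corollary~\ref{corollary__Interlacing} is conditional: it rests entirely on Conjecture~\ref{conjecture_Interlacing}, which is not proved in the paper. So the ``proof'' is really a reduction, and the substance of the work lies in verifying that normalization is a faithful correspondence between the integer GPTE system and the normalized GPTE system --- that the equalities, the orderings, and the strict/weak distinctions all transport cleanly in both directions under scaling by $a_{n+1}$. That verification is routine; the main subtlety to flag is handling the exponent $k=0$ (the equal-products case of \eqref{EPES}) consistently with the $k\neq 0$ cases during rescaling, and ensuring that the normalization does not accidentally collapse any strict inequality into an equality, which it cannot since $a_{n+1}\neq 0$.
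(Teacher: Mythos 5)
Your proposal is correct and is essentially the argument the paper intends: the paper derives Corollary \ref{corollary__Interlacing} directly from Conjecture \ref{conjecture_Interlacing} via the normalization $\alpha_i = a_i/a_{n+1}$, $\beta_i = b_i/a_{n+1}$ of Definition \ref{definition_Normalized_GPTE}, exactly as you describe. Your explicit verification that the rescaling preserves the equalities (including the $k=0$ product case), the orderings, and the strict/weak inequalities, and your observation that the result remains conditional on the unproven conjecture, are both accurate.
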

The above Corollary \ref{corollary__Interlacing} demonstrates the interlacing pattern between the elements \(\{a_i\}\) and \(\{b_i\}\) of the ideal non-negative integer solutions to the GPTE problem. As a special case of the GPTE problem, the corresponding results for the ideal solutions of the PTE problem (where \(k = 1, 2, 3, \dots, n\)) have been verified to hold \cite{Borwein1994, Caley12}, and are named as the \textbf{Interlacing Theorem} for PTE.

\subsection{Exact Bounds of the Normalized GPTE System}
Based on extensive numerical investigations, we have discovered that for any given distinct integers \(\{k_1, k_2, \dots, k_n\}\), the upper and lower bounds of each element in the normalized GPTE system \eqref{equation_normalized_GPTE} can be exactly determined in descending order. Specifically, setting \(\alpha_{n+1} = 1\) allows the exact determination of the bounds of \(\beta_{n+1}\). Given a value of \(\beta_{n+1}\) within its bounds, the bounds of \(\beta_{n}\) can then be exactly determined based on \(\beta_{n+1}\). Similarly, with \(\beta_{n+1}\) and \(\beta_{n}\) specified within their respective bounds, the bounds of \(\alpha_{n}\) can be exactly determined based on \(\beta_{n+1}\) and \(\beta_{n}\), and so on.

At present, we are unable to provide strict proofs for these bounds. Thus, the following section proposes these bounds as conjectures.

\subsubsection{Upper and Lower Bounds of \(\beta_{n+1}\)}
\begin{conjecture}
\label{conjecture_beta_n+1}\textnormal{\textbf{(Exact Bounds of \(\boldsymbol{\beta_{n+1}}\))}}\\[1mm]
Let \(\{\alpha_{1}, \alpha_{2}, \dots, \alpha_{n+1}\}\) and \(\{\beta_{1}, \beta_{2}, \dots, \beta_{n+1}\}\) be two sets of non-negative real numbers bounded above by 1, ordered as follows:
\begin{align}
& 0 \leq \alpha_1 \leq \alpha_2 \leq \cdots \leq \alpha_{n+1}=1, \\
& 0 \leq \beta_1 \leq \beta_2 \leq \cdots \leq \beta_{n+1}<1.
\end{align}
For given distinct integers \(\{k_1, k_2, \dots, k_n\}\), if the following system of equations holds:
\begin{align}
\left[ \alpha_{1}, \alpha_{2}, \dots, \alpha_{n+1} \right]^{k} = \left[ \beta_{1}, \beta_{2}, \dots, \beta_{n+1} \right]^{k}, \quad (k = k_1, k_2, \dots, k_n).
\end{align}
we conjecture that:
\begin{enumerate}
\item Let \( \beta_{n+1, \min}\) denote the lower bound of \( \beta_{n+1} \). Then, \( \beta_{n+1} \) achieves its minimum value \( \beta_{n+1, \min}\) under the following conditions:
\begin{equation}
\label{conjecture_beta_n+1_condition}
\begin{cases}
& \beta_{n+1} =  \beta_{n},\, \quad  \beta_{n-1} =  \beta_{n-2},\, \quad \dots \\
& \alpha_{n} = \alpha_{n-1}, \quad \alpha_{n-2} = \alpha_{n-3}, \quad \dots \\
& \alpha_1 = \varepsilon \quad \textup{(if \(n\) is odd)} \quad \textup{or} \quad \beta_1 = \varepsilon \quad \textup{(if \(n\) is even)}.
\end{cases}
\end{equation}
where \(\varepsilon\) is a positive infinitesimal real number.
\item Let \( \beta_{n+1, \max} \) denote the upper bound of \( \beta_{n+1} \). Then, \( \beta_{n+1, \max} = 1 - \delta \), where \(\delta\) is a positive infinitesimal real number.
\end{enumerate}

\end{conjecture}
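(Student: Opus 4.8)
The plan is to read Conjecture~\ref{conjecture_beta_n+1} as two constrained optimization problems: minimize, respectively maximize, the coordinate $\beta_{n+1}$ over the feasible set $\mathcal F\subset[0,1]^{2n+1}$ (free variables $\alpha_1,\dots,\alpha_n,\beta_1,\dots,\beta_{n+1}$, with $\alpha_{n+1}=1$ held fixed) carved out by the $n$ equations of \eqref{equation_normalized_GPTE} and the two monotone chains $0\le\alpha_1\le\cdots\le\alpha_n\le 1$, $0\le\beta_1\le\cdots\le\beta_{n+1}<1$. Since $\mathcal F$ is bounded, $\beta_{n+1}$ has a finite infimum and supremum over its closure; the substance of the conjecture is the exact location of the extremal configurations, so the real work is a KKT/Lagrange analysis refined by the interlacing structure.

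First I would dispatch the upper bound (part~2), which should be the easier half. That $\beta_{n+1}$ cannot equal $1$ is because $\beta_{n+1}=\alpha_{n+1}=1$ would let us cancel the common largest element and reduce \eqref{equation_normalized_GPTE} to a square system $[\alpha_1,\dots,\alpha_n]^k=[\beta_1,\dots,\beta_n]^k$ for $k=k_1,\dots,k_n$; for the exponent families covered by \refIdentity{identity2}, \refIdentity{identity_GNI1} and \refIdentity{identity_GNI2} the argument of Corollary~\ref{corollaryh123456} then forces $P_k=P'_k$ for \emph{all} admissible $k$, hence the two multisets coincide, contradicting $a_{n+1}>b_{n+1}$. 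That $\beta_{n+1}$ can be pushed arbitrarily close to $1$ I would obtain by a one-parameter deformation starting from an interior point of $\mathcal F$ (for instance the degenerate configuration isolated in part~1): an implicit-function/degree-theoretic argument on the $n$ equations continues the deformation while increasing $\beta_{n+1}$, until an ordering constraint would pin $\beta_{n+1}$ at $1$ — giving the supremum without attainment, i.e.\ $\beta_{n+1,\max}=1-\delta$.

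For the lower bound (part~1) I would write the KKT conditions at an extremal (or limiting extremal) point with multipliers $\lambda_1,\dots,\lambda_n$ on the equalities and non-negative multipliers on the active inequalities. Stationarity in any coordinate that is not pinned to the objective or to the fixed value $1$ forces it — or, for a tied pair $\alpha_i=\alpha_{i+1}$ or $\beta_i=\beta_{i+1}$, forces the common value — to be a zero of the single fewnomial $Q(x):=\sum_{j=1}^n\lambda_j k_j x^{k_j-1}$ (the same $Q$ for the $\alpha$'s and, up to sign, for the $\beta$'s), while stationarity in $\beta_{n+1}$ gives $\sum_j\lambda_j k_j\beta_{n+1}^{k_j-1}=-1\ne 0$, so $\beta_{n+1}$ is not such a zero and $\lambda\ne 0$. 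After clearing $x^{1-\min_j k_j}$, $Q$ becomes a polynomial with at most $n$ nonzero terms, hence has at most $n-1$ positive roots by Descartes' rule of signs. Therefore the $2n$ numbers $\{\alpha_1,\dots,\alpha_n,\beta_1,\dots,\beta_n\}$ cannot all be free and distinct; they must collapse, and the only collapse pattern compatible with the alternating order of Conjecture~\ref{conjecture_Interlacing} and with exactly $n-1$ available roots is the pairing $\alpha_n=\alpha_{n-1},\ \alpha_{n-2}=\alpha_{n-3},\dots$ together with $\beta_{n+1}=\beta_n,\ \beta_{n-1}=\beta_{n-2},\dots$, leaving a single unpaired coordinate; a dimension count ($n$ pair-parameters against $n$ equations) then shows this coordinate cannot sit at an interior value and is driven to the boundary, $\alpha_1=\varepsilon$ ($n$ odd) or $\beta_1=\varepsilon$ ($n$ even), exactly as in \eqref{conjecture_beta_n+1_condition}. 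The $(\beta_n,\beta_{n+1})$ tie is precisely the spot where smooth KKT degenerates (it formally returns $1=0$), which is the mechanism explaining why the bound is an infimum realized only in the limit $\varepsilon\downarrow 0$ rather than a genuine minimum; to finish I would compare the critical value of $\beta_{n+1}$ across the finitely many KKT/limit strata and confirm, by a local perturbation or a bordered-Hessian computation, that this stratum gives the smallest $\beta_{n+1}$.

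The main obstacle I anticipate is making the classification of extremal configurations uniform over \emph{all} exponent sets $\{k_1,\dots,k_n\}$ — mixed sign, containing $0$, non-consecutive. Two difficulties stand out. First, the collapse-pattern step rests on the still-unproven Conjecture~\ref{conjecture_Interlacing}, so a self-contained proof must establish enough interlacing in parallel; absent that, one must directly exclude the many competing boundary strata (for example $\beta_1=0$ with a different tying pattern), which seems to need a uniform comparison estimate not yet available. Second, the square-system non-existence required for part~2 is, in full generality, itself open: \refIdentity{identity2}--\refIdentity{identity_GNI2} settle it only for particular exponent families. A reasonable intermediate target is the consecutive-exponent case, where the constant-$C$ identities \refIdentity{identityC2}--\refIdentity{identityC3} collapse everything to the single polynomial relation $\prod(x-\alpha_i)-\prod(x-\beta_i)=Cx^m$ and the analysis should become tractable.
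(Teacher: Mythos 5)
There is nothing in the paper to compare your proposal against: the statement you are proving is Conjecture~\ref{conjecture_beta_n+1}, which the paper explicitly leaves open (``At present, we are unable to provide strict proofs for these bounds''). The paper supports it only by extensive numerical evidence, by consistency with the special case \((k=1,\dots,n)\) via \refIdentity{identityT1} (itself unproven), and by the fact that imposing \eqref{conjecture_beta_n+1_condition} reduces the computation of \(\beta_{n+1,\min}\) to a solvable square system, as in the worked examples. So your text should be read as a proposed attack on an open problem, not as a reconstruction of an existing argument.

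As such an attack, the KKT-plus-Descartes idea is genuinely promising --- the fewnomial \(Q(x)=\sum_j\lambda_jk_jx^{k_j-1}\) having at most \(n-1\) positive roots is exactly the kind of uniform-in-\(\{k_j\}\) counting tool this problem needs --- but the argument as written has concrete gaps beyond the two you flag. First, your claim that tying coordinates reduces the number of required roots of \(Q\) is not right as stated: for an interior tied pair \(\alpha_i=\alpha_{i+1}=c\), adding the two stationarity equations cancels the multiplier on the active ordering constraint and still forces \(Q(c)=0\), so each tied pair consumes a root just as a free coordinate would. What actually relieves the count is only pinning to the boundary (\(\alpha_1=0\) or \(\beta_1=0\)) or to the objective/fixed coordinates (\(\beta_{n+1}\), \(\alpha_{n+1}=1\)); the conjectured pattern then saturates the Descartes bound exactly (\((n-1)/2+(n-1)/2=n-1\) distinct root values for \(n\) odd), but so do many competing strata (e.g.\ several coordinates at \(0\) with a different tying pattern, or ties across the \(\alpha\)/\(\beta\) ordering), and nothing in the proposal compares critical values across them. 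Second, the uniqueness of the collapse pattern leans on Conjecture~\ref{conjecture_Interlacing}, so the argument is circular within the paper's own framework unless you prove enough interlacing independently. Third, for part~2 the reduction to ``square systems have no nontrivial non-negative real solution'' is itself open for general exponent sets (the paper proves it only for families like \eqref{h123456} via \refIdentity{identity2}), and the continuation argument pushing \(\beta_{n+1}\to1\) needs a verified non-degenerate Jacobian, which will fail precisely at the degenerate (tied) configurations you start from. A realistic first target, as you suggest, is the consecutive-exponent case where \refIdentity{identityC2} collapses the system to \(\prod(x-\alpha_i)-\prod(x-\beta_i)=Cx^m\) and the extremal analysis becomes a one-polynomial problem.
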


For different sequences \(\{k_1, k_2, \dots, k_n\}\), the value of \(\beta_{n+1, \min}\) varies. When \(\{k_1, k_2, \dots, k_n\}\) are all positive integers, \(\varepsilon\) in equation \eqref{conjecture_beta_n+1_condition} can be directly set to 0. 

\begin{example} \textbf{Lower Bound of \( \boldsymbol{\beta_{n+1}} \) for \( \boldsymbol{(k=1,2,3,\dots, n)} \)} \\[1mm]
When \( k = 1, 2, 3, \dots, n \), this corresponds to the case of Identity \ref{identityT1}. According to equation \eqref{PTE_bmin}, we have:
\begin{align}
\beta_{n+1, \min} = \sin^2\left( \frac{n \pi}{2n + 2} \right)
\end{align}
\end{example}

\begin{example} \textbf{Lower Bound of \( \boldsymbol{\beta_{n+1}} \) for \( \boldsymbol{(k=1,2,3,4,6)} \)} \\[2mm]
For the given normalized GPTE system:
\begin{flalign}
&\quad \left[ \alpha_{1}, \alpha_{2}, \alpha_{3}, \alpha_{4}, \alpha_{5},\alpha_{6} \right]^{k} = \left[ \beta_{1}, \beta_{2}, \beta_{3}, \beta_{4}, \beta_{5},\beta_{6} \right]^{k}, \quad (k = 1,2,3,4,6).&
\end{flalign}
\noindent
Based on \eqref{conjecture_beta_n+1_condition}, \( \beta_{6} =  \beta_{6, \min}\) under the following conditions:
\begin{equation}
\begin{cases}
& \beta_6 =  \beta_5, \:\quad\beta_4 =  \beta_3, \:\quad \beta_2 =  \beta_1, \\
& \alpha_5 = \alpha_4, \quad \alpha_3 = \alpha_2, \\
& \alpha_1 = 0.
\end{cases}
\end{equation}
This leads to the following system of five equations in five variables:
\begin{align}
\left[ 0, x_{3}, x_{3}, x_{5}, x_{5}, 1 \right]^{k} = \left[ y_{2}, y_{2}, y_{4}, y_{4},y_{6}, y_6 \right]^{k}, \quad (k = 1,2,3,4,6).
\end{align}
Specifically,
\begin{equation}
\begin{cases}
\label{exampleD1n5}
& 2 x_3+2 x_5+1=2 y_2+2 y_4+2 y_6,\\
& 2 x_3^2+2 x_5^2+1=2 y_2^2+2 y_4^2+2 y_6^2,\\
& 2 x_3^3+2 x_5^3+1=2 y_2^3+2 y_4^3+2 y_6^3,\\
& 2 x_3^4+2 x_5^4+1=2 y_2^4+2 y_4^4+2 y_6^4,\\
& 2 x_3^6+2 x_5^6+1=2 y_2^6+2 y_4^6+2 y_6^6.
\end{cases}
\end{equation}
By solving \eqref{exampleD1n5}, we obtain \(\{x_3, x_5, y_2, y_4, y_6\}\) satisfying the conditions \( x_3 < x_5\) and \(0<y_2<y_4 < y_6\). Then, we have $ \beta_{6, \min}= y_6 $.
\end{example}
To solve the system of equations \eqref{exampleD1n5}, there are at least two approaches, both of which can be facilitated with the assistance of mathematical software such as Mathematica. \\[1mm]
\indent
$\bullet$  The first approach utilizes Mathematica's \texttt{NSolve} function to obtain numerical solutions as follows:
\begin{equation}
\begin{cases}
& x_3=0.262071224731443764797745233458\dots,\\ 
& x_5=0.762683653666752442709310793796\dots,\\
& y_2=0.070942067935781368407977852387\dots,\\
& y_4=0.516491266199592107558386699449\dots,\\
& y_6=0.937321544262822731540691475418\dots.
\end{cases}
\end{equation}
This yields the theoretical lower bound:
\begin{align}
\label{k12346b6min}
& \beta_{6, \min}= y_6 = 0.937321544262822731540691475418\dots
\end{align}
Then, by applying Mathematica's \texttt{RootApproximant} and \texttt{MinimalPolynomial} function, we determine that \(\beta_{6, \min}\) is a real root of the following equation:
\begin{align}
\label{k12346y6}
& 512 y_6^6-256 y_6^5-1504 y_6^4+1968 y_6^3-830 y_6^2+120 y_6-5=0
\end{align}

\indent
$\bullet$ The second approach for solving the system \eqref{exampleD1n5} is to derive the polynomial equation \eqref{k12346y6} directly by applying Identity \ref{identity_GNI2}, namely the Second Generalization of the Girard-Newton Identities. By transforming \eqref{exampleD1n5}, we obtain:
\begin{align}
\label{examplek12346e4}
(1-2 y_6^k)/2 =y_2^k+y_4^k-x_3^k-x_5^k,\quad (k = 1,2,3,4,6).
\end{align}
Let
\begin{align}
\label{examplek12346e5}
P_k=y_2^k+y_4^k-x_3^k-x_5^k,\quad (k = 1,2,3,4,5,6).
\end{align}
and define
\begin{equation}
\begin{aligned}
\label{examplek12346e6}
& S_1=(P_1)/1,\\
& S_2=(P_2 + S_1 P_1)/2,\\
& S_3=(P_3 + S_1 P_2 + S_2 P_1)/3,\\
& S_4=(P_4 + S_1 P_3 + S_2 P_2+ S_3 P_1)/4,\\
& S_5=(P_5 + S_1 P_4 + S_2 P_3+ S_3 P_2+ S_4 P_1)/5,\\
& S_6=(P_6 + S_1 P_5 + S_2 P_4+ S_3 P_3+ S_4 P_2+ S_5 P_1)/6.
\end{aligned}
\end{equation}
Then, according to Identity \ref{identity_GNI2}, it follows that:
\begin{equation}
\begin{aligned}
\label{examplek12346e7a}
\begin{vmatrix}
 S_1 & S_2 & S_3 \\
 S_2 & S_3 & S_4 \\
 S_3 & S_4 & S_5 \\
\end{vmatrix}=0
\end{aligned}
\end{equation}
and
\begin{equation}
\begin{aligned}
\label{examplek12346e7b}
\begin{vmatrix}
 S_2 & S_3 & S_4 \\
 S_3 & S_4 & S_5 \\
 S_4 & S_5 & S_6 \\
\end{vmatrix}=0
\end{aligned}
\end{equation}
Based on \eqref{examplek12346e4}, we replace \eqref{examplek12346e5} with the following expression:
\begin{align}
\label{examplek12346e8}
P_k=(1-2 y_6^k)/2,\quad (k = 1,2,3,4,6).
\end{align}
Substituting \eqref{examplek12346e8} into \eqref{examplek12346e6} and \eqref{examplek12346e7a}, we obtain:
\begin{align}
\label{examplek12346p5}
P_5=\frac{-4096 y_6^7+4096 y_6^6-512 y_6^5+160 y_6^3+1808 y_6^2-1958 y_6+251}{512 \left(8 y_6^2-8 y_6+1\right)}
\end{align}
Substituting \eqref{examplek12346p5} into \eqref{examplek12346e7b}, we obtain a result that is consistent with the one derived by the first approach, as shown in \eqref{k12346y6}:
\begin{align}
512 y_6^6 - 256 y_6^5 - 1504 y_6^4 + 1968 y_6^3 - 830 y_6^2 + 120 y_6 - 5 = 0 \tag*{\eqref{k12346y6}}
\end{align}
By adding the condition \(1/2 < y_6^6 < 1\) (obtained from Corollary \ref{corollary_Conservative_Bounds}) to equation \eqref{k12346y6}, and utilizing Mathematica's \texttt{NSolve} function, we arrive at the same result in \eqref{k12346b6min}:
\begin{align}
& \beta_{6, \min}= y_6 = 0.937321544262822731540691475418\dots  \tag*{\eqref{k12346b6min}}
\end{align}

\indent
$\bullet$ As a validation of the result in \eqref{k12346b6min}, the ratio \( b_6 / a_6 = 18364/19534 = 0.940104 \dots \) in the numerical example given below is the closest to \( \beta_{6, \min} \) among tens of thousands of ideal non-negative integer solutions of type \((k=1,2,3,4,6)\) that we have discovered through various methods so far.
\begin{align}
& [139,4144,6439,14584,15394,19534]^k \nonumber\\
& \quad =[1219,1714,10084,10534,18319,18364]^k,\quad (k=1,2,3,4,6).
\end{align}

\begin{example}
\textbf{Lower Bound of \( \boldsymbol{\beta_{n+1}} \) for \( \boldsymbol{(k=k_1, k_2, k_3, k_4)} \), \( \boldsymbol{k \in \mathbb{Z}^+} \)} \\[1mm]
For the distinct positive integer sequence \(\{k_1, k_2, k_3, k_4\}\), let \(\{\alpha_{1}, \alpha_{2},\\ \alpha_{3}, \alpha_{4}, \alpha_{5}\}\) and \(\{\beta_{1}, \beta_{2},  \beta_{3}, \beta_{4}, \beta_{5} \}\) be two sets of non-negative real numbers bounded above by 1, ordered as follows:
\begin{align}
& 0 \leq \alpha_1 \leq \alpha_2 \leq \alpha_3 \leq \alpha_4 \leq \alpha_5=1, \\
& 0 \leq \beta_1 \leq \beta_2 \leq \beta_3 \leq \beta_4 \leq \beta_5<1.
\end{align}
These sets satisfy:
\begin{align}
\left[ \alpha_{1}, \alpha_{2}, \alpha_{3}, \alpha_{4}, \alpha_{5} \right]^{k} = \left[ \beta_{1}, \beta_{2}, \beta_{3}, \beta_{4}, \beta_{5} \right]^{k}, \quad (k = k_1, k_2, k_3, k_4).
\end{align}
Then, based on \eqref{conjecture_beta_n+1_condition}, we obtain the following system of four equations in four variables:
\begin{align}
\left[ x_{2}, x_{2}, x_{4}, x_{4}, 1 \right]^{k} = \left[ 0, y_{3}, y_{3}, y_{5}, y_{5} \right]^{k}, \quad (k = k_1, k_2, k_3, k_4).
\end{align}
namely
\begin{equation}
\begin{cases}
\label{exampleD1n4}
& 2 x_2^{k_1}+2 x_4^{k_1}+1=2 y_3^{k_1}+2 y_5^{k_1},\\
& 2 x_2^{k_2}+2 x_4^{k_2}+1=2 y_3^{k_2}+2 y_5^{k_2},\\
& 2 x_2^{k_3}+2 x_4^{k_3}+1=2 y_3^{k_3}+2 y_5^{k_3},\\
& 2 x_2^{k_4}+2 x_4^{k_4}+1=2 y_3^{k_4}+2 y_5^{k_4}.
\end{cases}
\end{equation}
For a given \(\{k_1, k_2, k_3, k_4\}\), by solving \eqref{exampleD1n4}, we obtain \(\{x_2, x_4, y_3, y_5\}\) satisfying the conditions \(0 < x_2 < x_4\) and \(y_3 < y_5\). Then, we have
\begin{align}
& \qquad \beta_{5, \min}= y_5.
\end{align}
\end{example}
As described in the previous example, the solution of \eqref{exampleD1n4} can be obtained with the assistance of mathematical software such as Mathematica. Below are the calculated results for \(\beta_{5, \min}\) in some specific cases of \(\{k_1, k_2, k_3, k_4\}\). \\[1mm]
\indent
$\bullet$ For type \( (k=1,2,3,4) \), using Mathematica's \texttt{NSolve} function, we obtain
\begin{align}
\label{k1234b5min}
\beta_{5, \min} &= 0.904508497187473712051146708591\dots. 
\end{align}
Furthermore, using Mathematica's \texttt{RootApproximant} function, we derive:
\begin{align}
& \beta_{5, \min} = \left( \frac{5}{8} + \frac{\sqrt{5}}{8} \right) = \sin^2\left(\frac{2 \pi}{5}\right)
\end{align}
Then, by applying Mathematica's \texttt{MinimalPolynomial} function, we determine that \(\beta_{5, \min}\) is a real root of the following equation
\begin{align}
\label{k1234y5}
& 16 y_5^2 - 20 y_5 + 5 = 0
\end{align}
By adding the condition \((1/2)^{1/4} < y_5 < 1\) (obtained from Corollary \ref{corollary_Conservative_Bounds}) to equation \eqref{k1234y5}, we arrive at the result in \eqref{k1234b5min}.\\[1mm]
\indent Equation \eqref{k1234tr} in Chapter~4 presents a numerical example of this type, where \(b_5/a_5\) is extremely close to \(\beta_{5, \min}\).\\[1mm]
\indent
$\bullet$ For type $ (k=1,2,3,5) $, using Mathematica's \texttt{NSolve} and \texttt{RootApproximant} function, we have
\begin{align}
& \beta_{5, \min}= 0.913362095721694748879941086068\dots\nonumber \\
& \qquad \;\;  = \frac{1}{8} \left(1+\sqrt{17}+\sqrt{2 \left(23-5 \sqrt{17}\right)}\right)
\end{align}
\indent
Then, by applying Mathematica's \texttt{MinimalPolynomial} function, we determine that \(\beta_{5, \min}\) is a real root of the following equation
\begin{align}
\label{k1235y5}
32 y_5^4-16 y_5^3-60 y_5^2+58 y_5-13 &= 0
\end{align}
\indent The ratio \( b_5 / a_5 = 36932185/40351003=0.915273 \dots \) in the numerical example given below is the closest to \( \beta_{5, \min} \) among hundreds of thousands of ideal non-negative integer solutions of type \((k=1,2,3,5)\) we have discovered through various methods so far.
\begin{align}
& [552455,14578375,15805643,36932185,36932185]^k \nonumber\\
& \quad =[3795949,5636851,26325083,28691957,40351003]^k.
\end{align}
\indent
$\bullet$ For type \( (k=1,2,3,6) \), we have
\begin{align}
\beta_{5, \min} &= 0.920915720822930625728771732305\dots
\end{align}
which is a real root of the following equation:
\begin{align}
\label{k1236y5}
1024 y_5^6 - 1280 y_5^5 + 192 y_5^4 - 1728 y_5^3 + 3528 y_5^2 - 2120 y_5 + 395 = 0
\end{align}
\indent
$\bullet$ For type $ (k=1,2,3,7) $, we have
\begin{align}
\beta_{5, \min}= 0.927408240581916072725667749532\dots
\end{align}
which is a real root of the following equation:
\begin{align}
\label{k1237y5}
& 4096 y_5^8-8192 y_5^7+4608 y_5^6-512 y_5^5-6688 y_5^4\nonumber \\
& \qquad +19088 y_5^3-19572 y_5^2+8510 y_5-1325 = 0
\end{align}
\indent
$\bullet$ For type $ (k=1,3,5,7) $, we have
\begin{align}
& \beta_{5, \min}= 0.939692620785908384054109277324\dots =\cos \left(\frac{\pi }{9}\right)
\end{align}
which is a real root of the following equation:
\begin{align}
\label{k1357y5}
& 8 y_5^3-6 y_5+1 = 0
\end{align}

\indent
$\bullet$ For type $ (k=1,3,5,9) $, we have
\begin{align}
& \beta_{5, \min}= 0.946756706993111318358209122631\dots
\end{align}
which is a real root of the following equation:
\begin{align}
\label{k1359y5}
& 256 y_5^8-256 y_5^7+128 y_5^6-128 y_5^5-240 y_5^4 \nonumber \\
& \qquad +240 y_5^3+52 y_5^2-38 y_5-7 = 0
\end{align}

\indent
$\bullet$ For type $ (k=1,3,7,9) $, we have
\begin{align}
& \beta_{5, \min}= 0.951855934042876264740475099068\dots
\end{align}
which is a real root of the following equation:
\begin{align}
\label{k1379y5}
& 4096 y_5^{12}+6144 y_5^{11}+2048 y_5^{10}+512 y_5^9+256 y_5^8-1536 y_5^7-2496 y_5^6 \nonumber \\
& \qquad -1344 y_5^5-208 y_5^4+88 y_5^3+344 y_5^2+106 y_5-35 = 0
\end{align}

\indent
$\bullet$ For type $ (k=2,4,6,8) $, we have
\begin{align}
& \beta_{5, \min}= 0.951056516295153572116439333379\dots = \cos \left(\frac{\pi }{10}\right)
\end{align}
which is a real root of the following equation:
\begin{align}
\label{k2468y5}
& 16 y_5^4 - 20 y_5^2 + 5 = 0
\end{align}

\indent
$\bullet$ For type $ (k=1,2,4,5) $, we have
\begin{align}
& \beta_{5, \min}= 0.920185510514071805157487190697\dots
\end{align}
which is a real root of the following equation:
\begin{align}
\label{k1245y5}
& 8192 y_5^6-11264 y_5^5+768 y_5^4+1792 y_5^3+656 y_5^2+216 y_5-243 = 0
\end{align}

\indent
$\bullet$ For type $ (k=1,3,4,5) $, we have
\begin{align}
& \beta_{5, \min}= 0.925684823157598579141673321256\dots
\end{align}
which is a real root of the following equation:
\begin{align}
\label{k1345y5}
& 9728 y_5^8-26112 y_5^7+28288 y_5^6-20096 y_5^5+10000 y_5^4 \nonumber \\
& \qquad +752 y_5^3-3500 y_5^2+924 y_5+29 = 0
\end{align}

\indent
$\bullet$ For type $ (k=2,3,4,5) $, we have
\begin{align}
& \beta_{5, \min}= 0.930495269093988042489388711803\dots
\end{align}
which is a real root of the following equation:
\begin{align}
\label{k2345y5}
& 5120000 y_5^{18}-18432000 y_5^{17}+20352000 y_5^{16}-1208320 y_5^{15}-9255936 y_5^{14} \nonumber \\
& \qquad +1296384 y_5^{13}+2943488 y_5^{12}-2365440 y_5^{11}+3712896 y_5^{10}\nonumber \\
& \qquad -2169600 y_5^9-772560 y_5^8+924480 y_5^7+66000 y_5^6-375840 y_5^5 \nonumber \\
& \qquad +197100 y_5^4-25200 y_5^3-8100 y_5^2+675 = 0
\end{align}

\begin{example}
\textbf{Lower Bound of \( \boldsymbol{\beta_{n+1}} \) for \( \boldsymbol{(k=0,1,2,3,4)} \)} \\[1mm]
Let \(\{\alpha_{1}, \alpha_{2}, \dots,  \alpha_{6}\}\) and \(\{\beta_{1}, \beta_{2}, \dots, \beta_{6}\}\) be two sets of non-negative real numbers bounded above by 1, ordered as follows:
\begin{align}
& 0 \leq \alpha_1 \leq \alpha_2 \leq \dots \leq \alpha_6=1, \\
& 0 \leq \beta_1 \leq \beta_2 \leq \dots \leq \beta_6<1.
\end{align}
These sets satisfy:
\begin{flalign}
& \quad \left[ \alpha_{1}, \alpha_{2}, \alpha_{3}, \alpha_{4}, \alpha_{5}, \alpha_{6}  \right]^{k} = \left[ \beta_{1}, \beta_{2}, \beta_{3}, \beta_{4}, \beta_{5}, \beta_{6} \right]^{k}, \quad (k = 0,1,2,3,4).&
\end{flalign}
Then, according to \eqref{conjecture_beta_n+1_condition}, \( \beta_{6, \min}\) is determined by the following system of five equations in five variables:
\begin{align}
\left[\varepsilon_1, x_{3}, x_{3}, x_{5}, x_{5},1  \right]^{k} = \left[ y_{2}, y_{2}, y_{4}, y_{4}, y_{6}, y_{6} \right]^{k}, \quad (k =0,1,2,3,4).
\end{align}
where \(\varepsilon_1\) is a positive infinitesimal real number. Namely,
\begin{equation}
\begin{cases}
\label{exampleD2n5}
& \qquad\quad\;\; \varepsilon_1 \cdot x_3^2 \cdot x_5^2= y_2^2 \cdot y_4^2 \cdot y_6^2,\\
& \varepsilon_1+2 x_3+2 x_5+1=2 y_2+2 y_4+2 y_6,\\
& \varepsilon_1^2+2 x_3^2+2 x_5^2+1=2 y_2^2+2 y_4^2+2 y_6^2,\\
& \varepsilon_1^3+2 x_3^3+2 x_5^3+1=2 y_2^3+2 y_4^3+2 y_6^3,\\
& \varepsilon_1^4+2 x_3^4+2 x_5^4+1=2 y_2^4+2 y_4^4+2 y_6^4.
\end{cases}
\end{equation}
\end{example}

By solving \eqref{exampleD2n5} with \(\varepsilon_1 = 10^{-50}\), we obtain \(\{x_3, x_5, y_2, y_4, y_6\}\) satisfying the conditions \( x_3 < x_5\) and \(0<y_2<y_4 < y_6\). 
\begin{equation}
\begin{cases}
& x_3=0.095491502812526287948853327588\dots,\\ 
& x_5=0.654508497187473712051146722411\dots,\\
& y_2=2.000000000000000000000000640000\dots \times 10^{-26},\\
& y_4=0.345491502812526287948853317588\dots,\\
& y_6=0.904508497187473712051146712411\dots.
\end{cases}
\end{equation}
Then, we have
\begin{align}
& \beta_{6, \min}=y_6= 0.904508497187473712051146712411\dots
\end{align}

\indent In numerical example given below, \( b_6 / a_6 = 921294/1012966=0.909501 \dots \), which is the closest to \( \beta_{6, \min}\) among the ideal non-negative integer solutions of type \((k=0,1,2,3,4)\) we have found so far.
\begin{align}
& [2937, 102794, 125712, 669196, 690477, 1012966]^k \nonumber  \\
& \qquad  =[11122, 14396, 297597, 440016, 919657, 921294]^k, \nonumber  \\
& \qquad\qquad\qquad\qquad\qquad\qquad\qquad\qquad (k=0,1,2,3,4).
\end{align}

\begin{example} \textbf{Lower Bound of \( \boldsymbol{\beta_{n+1}} \) for \( \boldsymbol{(k=-3,-2,-1,0,1,2,3)} \)} \\[1mm]
Let \(\{\alpha_{1}, \alpha_{2}, \dots,  \alpha_{8}\}\) and \(\{\beta_{1}, \beta_{2}, \dots, \beta_{8}\}\) be two sets of non-negative real numbers bounded above by 1, ordered as follows:
\begin{align}
& 0 \leq \alpha_1 \leq \alpha_2 \leq \dots \leq \alpha_8=1, \\
& 0 \leq \beta_1 \leq \beta_2 \leq \dots \leq \beta_8<1.
\end{align}
These sets satisfy:
\begin{align}
& \left[ \alpha_{1}, \alpha_{2}, \alpha_{3}, \alpha_{4}, \alpha_{5}, \alpha_{6}, \alpha_{7}, \alpha_{8} \right]^{k} = \left[ \beta_{1}, \beta_{2}, \beta_{3}, \beta_{4}, \beta_{5}, \beta_{6},\beta_{7}, \beta_{8}  \right]^{k}, \nonumber \\ 
& \qquad\qquad\qquad\qquad\qquad\qquad\qquad\quad (k = -3,-2,-1,0,1,2,3).
\end{align}
Based on \eqref{conjecture_beta_n+1_condition}, we obtain the following system of seven equations in seven variables:
\begin{align}
& \left[\varepsilon_1, x_{3}, x_{3}, x_{5}, x_{5},x_{7}, x_{7},1  \right]^{k} = \left[ y_{2}, y_{2}, y_{4}, y_{4}, y_{6}, y_{6}, y_{8}, y_{8} \right]^{k}, \nonumber \\ 
& \qquad\qquad\qquad\qquad\qquad\qquad\quad (k = -3,-2,-1,0,1,2,3).
\end{align}
where \(\varepsilon_1\) is a positive infinitesimal real number. Namely,
\begin{equation}
\begin{cases}
\label{examplek0123n123e3}
\qquad\qquad\;\;\; \varepsilon_1 \cdot x_3^2 \cdot x_5^2 \cdot x_7^2= y_2^2 \cdot y_4^2 \cdot y_6^2 \cdot y_8^2,\\
\varepsilon_1^k+2 x_3^k+2 x_5^k+2 x_7^k+1=2 y_2^k+2 y_4^k+2 y_6^k+2 y_8^k, \;\; (k = -3,-2,-1,1,2,3).
\end{cases}
\end{equation}
By transforming equation \eqref{examplek0123n123e3}, we obtain:
\begin{equation}
\begin{cases}
\label{examplek0123n123e4}
\qquad\qquad\quad\;\, \displaystyle\frac{y_8}{\sqrt{\varepsilon _1}}=\frac{x_3 x_5 x_7}{y_2 y_4 y_6},\\
\displaystyle\frac{1}{2} \left(1+\varepsilon_1^k-2 y_8^k\right)=y_2^k+y_4^k+y_6^k-x_3^k-x_5^k-x_7^k, \;\; (k = -3,-2,-1,1,2,3).
\end{cases}
\end{equation}
According to Identity \ref{identity_GNI2}, let
\begin{equation}
\label{examplek0123n123e5}
\begin{cases}
P_0=-\displaystyle\frac{x_3 x_5 x_7}{y_2 y_4 y_6},\\
P_k=y_2^k+y_4^k+y_6^k-x_3^k-x_5^k-x_7^k, \quad (k = -3,-2,-1,1,2,3).
\end{cases}
\end{equation}
and define
\begin{equation}
\begin{aligned}
\label{examplek0123n123e6}
& S_1=(P_1)/1,\\
& S_2=(P_2 + S_1 P_1)/2,\\
& S_3=(P_3 + S_1 P_2 + S_2 P_1)/3,\\
& S_0=1+P_0,\\
& S_{-1}=(P_0 P_{-1})/1,\\
& S_{-2}=(P_0 P_{-2} + S_{-1} P_{-1})/2,\\
& S_{-3}=(P_0 P_{-3} + S_{-1} P_{-2} + S_{-2} P_{-1})/3.
\end{aligned}
\end{equation}
Then, it follows that:
\begin{equation}
\begin{aligned}
\label{examplek0123n123e7}
\begin{vmatrix}
 S_{-3} & S_{-2} & S_{-1} & S_0 \\
 S_{-2} & S_{-1} & S_0 & S_1 \\
 S_{-1} & S_0 & S_1 & S_2 \\
 S_0 & S_1 & S_2 & S_3 \\
\end{vmatrix}=0
\end{aligned}
\end{equation}
Based on \eqref{examplek0123n123e4}, we replace \eqref{examplek0123n123e5} with the following expression:
\begin{equation}
\label{examplek0123n123e8}
\begin{cases}
P_0=-\displaystyle\frac{y_8}{\sqrt{\varepsilon _1}},\\
P_k=\displaystyle\frac{1}{2} \left(1+\varepsilon_1^k-2 y_8^k\right), \quad (k = -3,-2,-1,1,2,3).
\end{cases}
\end{equation}
Substituting \eqref{examplek0123n123e8} into \eqref{examplek0123n123e6} and \eqref{examplek0123n123e7}, then we obtain:
\begin{align}
\label{examplek0123n123e9}
8 y_8^4-y_8 \left(1+\sqrt{\epsilon _1}\right){}^2 \left(8 y_8^2-y_8-y_8 \epsilon _1-10 y_8 \sqrt{\epsilon _1}+8 \epsilon _1\right)+8 \epsilon _1^2=0
\end{align}
By solving \eqref{examplek0123n123e9} with \(\varepsilon_1 = 10^{-50}\), we obtain \(\{y_8, y_6, y_4, y_2\}\) satisfying the conditions \(0<y_2<y_4 < y_6<y_8\):
\begin{flalign}
& \quad y_8=0.8535533905932737622004222103417464009876655287597976\dots \nonumber & \\
& \quad y_6=0.1464466094067262377995779896582535990123344712401323\dots \nonumber & \\ 
& \quad y_4=6.8284271247461900976033694886214215124732071060378626\dots \times 10^{-50},\nonumber & \\
& \quad y_2=1.1715728752538099023966225113785784875267928939745373\dots \times 10^{-50}.\nonumber &
\end{flalign}
By solving \eqref{examplek0123n123e3}, we further obtain:
\begin{flalign}
& \quad x_7=0.5000000000000000000000000999999999999999999999999850\dots, \nonumber & \\
& \quad x_5=1.0000000000000000000000000000000000000000000000000000\dots \times 10^{-25},\nonumber & \\
& \quad x_3=1.9999999999999999999999996000000000000000000000001400\dots \times 10^{-50}.\nonumber & 
\end{flalign}
Then, we have:
\begin{flalign}
& \quad \beta_{8, \min}=y_8=0.8535533905932737622004222103417464009876655287597976\dots \nonumber & \\
\end{flalign}
Alternatively, we can proceed more directly. When \(\varepsilon_1 \to 0\), it follows from \eqref{examplek0123n123e9} that:
\begin{align}
\label{examplek0123n123e10}
8 y_8^2 - 8 y_8 + 1 = 0
\end{align}
From equation \eqref{examplek0123n123e10}, we can also obtain:
\begin{align}
\beta_{8, \min} = y_8 = \frac{1}{4} \left(2 + \sqrt{2}\right) = 0.85355339059327376220\dots
\end{align}
\end{example}
In this example, directly solving \eqref{examplek0123n123e3} using Mathematica is quite challenging. Therefore, we first derive \eqref{examplek0123n123e9} by leveraging the Second Generalization of the Girard-Newton Identities, namely Identity \ref{identity_GNI2}, and finally obtain \(\beta_{8, \min}\).
\begin{example}
\textbf{Lower Bound of \( \boldsymbol{\beta_{n+1}} \) for \( \boldsymbol{(k=k_1, k_2, k_3, k_4)} \), \( \boldsymbol{k_1 \notin \mathbb{Z}^+ }\)} \\ [1mm]
For the distinct integer sequence \(\{k_1, k_2, k_3, k_4\}\), where at least one element is non-positive, let \(\{\alpha_{1}, \alpha_{2}, \alpha_{3}, \alpha_{4}, \alpha_{5}\}\) and \(\{\beta_{1}, \beta_{2},  \beta_{3}, \beta_{4}, \beta_{5} \}\) be two sets of non-negative real numbers bounded above by 1, ordered as follows:
\begin{align}
& 0 \leq \alpha_1 \leq \alpha_2 \leq \alpha_3 \leq \alpha_4 \leq \alpha_5=1, \\
& 0 \leq \beta_1 \leq \beta_2 \leq \beta_3 \leq \beta_4 \leq \beta_5<1.
\end{align}
These sets satisfy:
\begin{align}
\left[ \alpha_{1}, \alpha_{2}, \alpha_{3}, \alpha_{4}, \alpha_{5} \right]^{k} = \left[ \beta_{1}, \beta_{2}, \beta_{3}, \beta_{4}, \beta_{5} \right]^{k}, \quad (k = k_1, k_2, k_3, k_4).
\end{align}
Then, based on \eqref{conjecture_beta_n+1_condition}, we obtain the following system of four equations in four variables:
\begin{align}
\left[ x_{2}, x_{2}, x_{4}, x_{4}, 1 \right]^{k} = \left[ \varepsilon_1, y_{3}, y_{3}, y_{5}, y_{5} \right]^{k}, \quad (k = k_1, k_2, k_3, k_4).
\end{align}
where \(\varepsilon_1\) is a positive infinitesimal real number. Namely, if $k_1, k_2, k_3, k_4$ are non-zero integers,
\begin{equation}
\begin{cases}
\label{exampleD2n4a}
& 2 x_2^{k_1}+2 x_4^{k_1}+1=\varepsilon_1 ^{k_1}+ 2 y_3^{k_1}+2 y_5^{k_1},\\
& 2 x_2^{k_2}+2 x_4^{k_2}+1=\varepsilon_1^{k_2}+2 y_3^{k_2}+2 y_5^{k_2},\\
& 2 x_2^{k_3}+2 x_4^{k_3}+1=\varepsilon_1^{k_3}+2 y_3^{k_3}+2 y_5^{k_3},\\
& 2 x_2^{k_4}+2 x_4^{k_4}+1=\varepsilon_1^{k_4}+2 y_3^{k_4}+2 y_5^{k_4}.
\end{cases}
\end{equation}
or, if $k_1$ is zero,
\begin{equation}
\begin{cases}
\label{exampleD2n4b}
& \qquad\qquad x_2^2 \cdot x_4^2=\varepsilon_1 \cdot y_3^2 \cdot y_5^2,\\
& 2 x_2^{k_2}+2 x_4^{k_2}+1=\varepsilon_1^{k_2}+2 y_3^{k_2}+2 y_5^{k_2},\\
& 2 x_2^{k_3}+2 x_4^{k_3}+1=\varepsilon_1^{k_3}+2 y_3^{k_3}+2 y_5^{k_3},\\
& 2 x_2^{k_4}+2 x_4^{k_4}+1=\varepsilon_1^{k_4}+2 y_3^{k_4}+2 y_5^{k_4}.
\end{cases}
\end{equation}
For a given \(\{k_1, k_2, k_3, k_4\}\), by solving \eqref{exampleD2n4a} or \eqref{exampleD2n4b}, we obtain \(\{x_2, x_4, y_3, y_5\}\) satisfying the conditions \(0 < x_2 < x_4\) and \(y_3 < y_5\). Then, we have
\begin{align}
& \qquad \beta_{5, \min}=y_5.
\end{align}
\end{example}
Similar to the methods described in previous examples of this chapter, the solutions of \eqref{exampleD2n4a} and \eqref{exampleD2n4b} can be obtained through two approaches: directly using Mathematica's \texttt{NSolve} function for numerical computation, or by applying the Second Generalization of the Girard-Newton Identities (Identity \ref{identity_GNI2}). A slight difference in this example is that, since at least one element of the power exponents is a non-positive integer, we need to introduce a positive infinitesimal real number \(\varepsilon_1\) during the process. In our calculations, we typically chose \(\varepsilon_1 = 10^{-50}\), which provides sufficient precision. In the final results, \(\varepsilon_1\) will be eliminated. Below are the calculated results of \(\beta_{5, \min}\) for some specific cases of \(\{k_1, k_2, k_3, k_4\}\).\\[1mm]
\indent
$\bullet$ For type $ (k=0,1,2,3) $, we have
\begin{align}
\label{k0123b5min}
& \beta_{5, \min}= 0.853553390593273762200422181052\dots =\frac{1}{4} \left(2+\sqrt{2}\right)
\end{align}
which is a real root of the following equation:
\begin{align}
\label{k0123y5}
& 8 y_5^2-8 y_5+1 = 0
\end{align}
The ratio \( b_5 / a_5 = 2493680/2906467=0.857976 \dots \) in the numerical example given below is the closest to \( \beta_{5, \min} \) among hundreds of thousands of ideal non-negative integer solutions of type \((k=0,1,2,3)\) that we have discovered through various methods so far.
\begin{align}
& [3424,395910,551551,2486913,2493680]^k \nonumber\\
& \quad =[23725,30492,1451562,1519232,2906467]^k, \quad (k=0,1,2,3).
\end{align}
\indent
$\bullet$ For types \( (k=-1,1,2,3) \) and \( (k=-2,1,2,3) \), we obtain results identical to those for type \( (k=0,1,2,3) \):
\begin{align}
& \beta_{5, \min}= 0.853553390593273762200422181052\dots =\frac{1}{4} \left(2+\sqrt{2}\right)  \tag*{\eqref{k0123b5min}}
\end{align}
which is also a real root of the equation \eqref{k0123y5}.\\[1mm]
\indent
$\bullet$ For types $ (k=0,1,2,4) $, $ (k=-1,1,2,4) $, and $ (k=-2,1,2,4) $, we obtain the same results:
\begin{align}
& \beta_{5, \min}= 0.874419525576564851701646263977\dots
\end{align}
which is a real root of the following equation:
\begin{align}
& 32 y_5^4-16 y_5^3-34 y_5^2+24 y_5-3 = 0
\end{align}

\indent
$\bullet$ For types \( (k=0,1,2,5) \), \( (k=-1,1,2,5) \), and \( (k=-2,1,2,5) \), we obtain the same results:
\begin{align}
\beta_{5, \min} = 0.890598214435422423254169001764\dots
\end{align}
which is a real root of the following equation:
\begin{align}
512 y_5^6 - 512 y_5^5 - 320 y_5^3 + 560 y_5^2 - 240 y_5 + 27 = 0
\end{align}

\indent
$\bullet$ For types \( (k=0,1,2,6) \), \( (k=-1,1,2,6) \), and \( (k=-2,1,2,6) \), we obtain the same results:
\begin{align}
& \beta_{5, \min}= 0.903424979534106647631655995194\dots
\end{align}
which is a real root of the following equation:
\begin{align}
& 2048 y_5^8-3072 y_5^7+896 y_5^6+384 y_5^5-1376 y_5^4 +2528 y_5^3\nonumber\\
& \qquad -1858 y_5^2+560 y_5-55 = 0
\end{align}

\indent
$\bullet$ For types \( (k=0,1,3,5) \), \( (k=-1,1,3,5) \), and \( (k=-2,1,3,5) \), we obtain the same results:
\begin{align}
& \beta_{5, \min}= 0.900968867902419126236102319507\dots 
\end{align}
which is a real root of the following equation:
\begin{align}
& 8 y_5^3-4 y_5^2-4 y_5+1 = 0
\end{align}

\indent
$\bullet$ For types \( (k=0,2,4,6) \) and \( (k=-2,2,4,6) \), we obtain the same results:
\begin{align}
& \beta_{5, \min}= 0.923879532511286756128183189396\dots
\end{align}
which is a real root of the following equation:
\begin{align}
8 y_5^4-8 y_5^2+1 = 0
\end{align}

\indent
$\bullet$ For type $ (k=-3,-1,1,3) $, we have
\begin{align}
& \beta_{5, \min}= 0.809016994374947424102293417182\dots 
\end{align}
which is a real root of the following equation:
\begin{align}
& 4 y_5^2-2 y_5-1 = 0
\end{align}

\subsubsection{Upper and Lower Bounds of \(\beta_{n}\)}
Through extensive numerical computations, we have discovered that given a value of \(\beta_{n+1}\) within its bounds, the bounds of \(\beta_{n}\) can be exactly determined based on \(\beta_{n+1}\). Specifically, we propose the following conjecture:
\begin{conjecture}
\label{conjecture_beta_n}\textnormal{\textbf{(Exact Bounds of \(\boldsymbol{\beta_n}\))}}\\[1mm]
Let \(\{\alpha_{1}, \alpha_{2}, \dots, \alpha_{n+1}\}\) and \(\{\beta_{1}, \beta_{2}, \dots, \beta_{n+1}\}\) be two sets of non-negative real numbers bounded above by 1, ordered as follows:
\begin{align}
& 0 \leq \alpha_1 \leq \alpha_2 \leq \cdots \leq \alpha_{n} \leq \alpha_{n+1} = 1, \\
& 0 \leq \beta_1 \leq \beta_2 \leq \cdots \leq \beta_{n} \leq \beta_{n+1} < 1.
\end{align}
For given distinct integers \(\{k_1, k_2, \dots, k_n\}\), if the following system of equations holds:
\begin{align}
\left[ \alpha_{1}, \alpha_{2}, \dots, \alpha_{n+1} \right]^{k} = \left[ \beta_{1}, \beta_{2}, \dots, \beta_{n+1} \right]^{k}, \quad (k = k_1, k_2, \dots, k_n),
\end{align}
we conjecture that:
\begin{enumerate}
\item Let \( \beta_{n, \min}(\beta_{n+1}) \) denote the lower bound of \( \beta_{n} \) when \( \beta_{n+1} \) takes a specific value. Then, \( \beta_{n} \) achieves its minimum value \( \beta_{n, \min}(\beta_{n+1}) \) under the following conditions:
\begin{equation}
\label{Conjecture1.3}
\begin{cases}
\beta_{n+1,\min} \leq \beta_{n+1} < \alpha_{n+1} = 1, \\
\beta_{n-1} = \beta_{n-2}, \quad \beta_{n-3} = \beta_{n-4}, \quad \dots \\
\alpha_{n} = \alpha_{n-1}, \quad \alpha_{n-2} = \alpha_{n-3}, \quad \dots \\
\alpha_1 = \varepsilon \quad \textup{(if \(n\) is odd)} \quad \textup{or} \quad \beta_1 = \varepsilon \quad \textup{(if \(n\) is even)},
\end{cases}
\end{equation}
where \(\varepsilon\) is a positive infinitesimal real number. By default, \( \beta_{n, \min}(\beta_{n+1}) \) can be abbreviated as \( \beta_{n, \min} \).
\item Let \( \beta_{n, \max}(\beta_{n+1}) \) denote the upper bound of \( \beta_{n} \) when \( \beta_{n+1} \) takes a specific value. Then, \( \beta_{n, \max}(\beta_{n+1}) = \beta_{n+1} \). By default, \( \beta_{n, \max}(\beta_{n+1}) \) can be abbreviated as \( \beta_{n, \max} \).
\end{enumerate}
\end{conjecture}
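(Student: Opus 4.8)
\emph{Setup as a constrained optimisation.} The plan is to treat the determination of $\beta_{n,\min}(\beta_{n+1})$ and $\beta_{n,\max}(\beta_{n+1})$ as a constrained optimisation problem and to exploit the rigidity that the theory of Chebyshev systems forces on its extremisers. Fix $\beta_{n+1}$ inside the range produced by Conjecture \ref{conjecture_beta_n+1}, and let $F(\beta_{n+1})$ be the set of tuples $(\alpha_1,\dots,\alpha_n,\beta_1,\dots,\beta_n)\in[0,1]^{2n}$ obeying the ordering constraints together with $\left[\alpha_1,\dots,\alpha_{n+1}\right]^k=\left[\beta_1,\dots,\beta_{n+1}\right]^k$ for $k=k_1,\dots,k_n$, where $\alpha_{n+1}=1$ and $\beta_{n+1}$ are held fixed. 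This set is closed and bounded, hence compact, so $\beta_n$ attains its extrema on it --- except when the exponent list contains $0$ or a negative integer, in which case one extreme variable must stay strictly positive and the ``minimum'' is an infimum approached as the infinitesimal $\varepsilon$ of \eqref{Conjecture1.3} tends to $0$; I would handle that by optimising over the compact subset $\{\alpha_1\ge\varepsilon\}$ (resp.\ $\{\beta_1\ge\varepsilon\}$) and then letting $\varepsilon\to0$. The upper bound is the easy half: $\beta_n\le\beta_{n+1}$ is immediate from the ordering, so $\beta_{n,\max}\le\beta_{n+1}$; equality follows once one exhibits a feasible configuration with $\beta_n=\beta_{n+1}$, i.e.\ a double negative atom at $\beta_{n+1}$. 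Such configurations form an $(n-1)$-dimensional family (a dimension count gives $2n-1$ free variables against $n$ equations), whose non-emptiness I would obtain from the same recursive reduction that underlies Conjecture \ref{conjecture_beta_n+1}.

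\emph{First-order conditions via Chebyshev systems.} Encode a feasible tuple by the signed measure $\mu=\sum_{i=1}^{n+1}\delta_{\alpha_i}-\sum_{i=1}^{n+1}\delta_{\beta_i}$, so the GPTE constraints read $\int x^{k_j}\,d\mu=0$ for $j=1,\dots,n$. Suppose a minimiser of $\beta_n$ lies in the relative interior of $F(\beta_{n+1})$, so that, by the interlacing of Corollary \ref{corollary__Interlacing}, all the free variables are mutually distinct and the ordering inequalities are strict. The Lagrange condition $\nabla\beta_n=\sum_j\lambda_j\nabla\!\left(\sum_i\alpha_i^{k_j}-\sum_i\beta_i^{k_j}\right)$ is then equivalent to the single generalised polynomial $\phi(x)=\sum_{j=1}^n\lambda_j k_j x^{k_j-1}$ vanishing at $\alpha_1,\dots,\alpha_n$ and at $\beta_1,\dots,\beta_{n-1}$ while $\phi(\beta_n)\ne0$. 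But those are $2n-1$ distinct points of $(0,\infty)$, and $\{x^{k_1-1},\dots,x^{k_n-1}\}$ is a Chebyshev system there, so a non-trivial $\phi$ has at most $n-1$ zeros; since $2n-1>n-1$ for every $n\ge1$ we must have $\phi\equiv0$, forcing $\lambda_j k_j=0$ for all $j$ and hence $\phi(\beta_n)=0$, a contradiction. (The exponent $k=0$, where the constraint is the product $\prod\alpha_i=\prod\beta_i$, has to be differentiated separately, and I would incorporate its gradient $\big(\prod_l\alpha_l\big)/\alpha_i$ into the same argument.) Therefore the minimum is forced onto the boundary of $F(\beta_{n+1})$: some consecutive free variables must coincide.

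\emph{Inductive identification of the active set, and the main obstacle.} From there I would run an inductive reduction: merge coincident variables into atoms with multiplicities, re-derive the Lagrange/Chebyshev condition for the lower-dimensional subproblem --- now $\phi$ is required to vanish to the appropriate order at each merged atom --- and show that the only pattern of coincidences keeping the zero count of a non-trivial generalised polynomial within the Chebyshev bound at every stage is exactly the pairing of \eqref{Conjecture1.3}: all of $\alpha_1,\dots,\alpha_n$ paired off (with the left-over one driven to $\varepsilon$ when $n$ is odd), all of $\beta_1,\dots,\beta_{n-1}$ paired off (with the left-over driven to $\varepsilon$ when $n$ is even), $\beta_{n+1}$ kept as an isolated atom, and $\beta_n$ itself left free as the quantity being minimised; the resulting $n$ equations in $n$ unknowns then pin $\beta_{n,\min}$ down to a root of an explicit polynomial. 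A clean test of this mechanism is available in the consecutive-exponent case, where \refIdentity{identityC2} collapses the feasibility condition to two monic degree-$(n+1)$ polynomials differing by the monomial $Cx^m$, and the extremal double-root structure can be read off directly. I expect the main obstacle to be precisely the exhaustiveness of this combinatorial step together with its degenerate-case bookkeeping: proving that no alternative active-set pattern (a triple atom, $\beta_n$ merging with a neighbour, $\beta_{n+1}$ merging, etc.) can occur; controlling the Chebyshev property near $x=0$ and at the exponent $0$; verifying that the reduced constraint systems are non-degenerate so that the multiplier argument genuinely applies at each level; and --- the most global part --- showing that the selected root of the final polynomial system is the true minimum rather than a spurious local one, for which I would use a continuity/connectedness argument on $F(\beta_{n+1})$ as $\beta_{n+1}$ varies, anchored at the classical PTE Interlacing and extremal facts already cited in the paper.
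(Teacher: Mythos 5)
First, a point of comparison that matters here: the paper does not prove this statement at all. It is explicitly labelled a conjecture, the authors state ``we are unable to provide strict proofs for these bounds,'' and everything that follows in Section~5.2.2 is computation \emph{assuming} the conjecture --- solving the reduced $n\times n$ systems numerically with \texttt{NSolve}, recovering minimal polynomials, and checking that the closest known integer solutions do not violate the claimed bounds. So there is no proof in the paper to match your approach against; your proposal is an attempt to supply an argument the authors do not have.

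On its own terms, your proposal contains one genuinely sound step and one decisive gap. The sound step is the first-order argument: with $\alpha_{n+1}=1$ and $\beta_{n+1}$ fixed and $\beta_n$ as objective, a critical point in the relative interior would force $\phi(x)=\sum_j\lambda_j k_j x^{k_j-1}$ to vanish at the $2n-1$ distinct points $\alpha_1,\dots,\alpha_n,\beta_1,\dots,\beta_{n-1}$ while $\phi(\beta_n)\neq0$; since $\{x^{k_1-1},\dots,x^{k_n-1}\}$ is a Chebyshev system on $(0,\infty)$ for distinct integer exponents, this is impossible, so every extremiser is degenerate. That is a real structural fact and more than the paper establishes. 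The gap is everything after that. Knowing the extremiser lies on the boundary of the feasible set tells you \emph{some} ordering inequalities are active, but the conjecture asserts a very specific pattern --- consecutive $\beta$'s paired from $\beta_{n-1}$ downward, consecutive $\alpha$'s paired from $\alpha_n$ downward, one leftover variable pushed to $\varepsilon$, and crucially $\beta_n$ and $\beta_{n+1}$ \emph{not} merging in the minimisation problem. Your inductive merging argument would have to exclude triple atoms, exclude $\beta_n=\beta_{n-1}$ (which is a boundary face just as available as the others), exclude $\alpha$'s colliding with $\beta$'s across the interlacing, and handle the loss of the Chebyshev property at $x=0$ and for $k_j\le 0$; none of this is carried out, and you say so yourself. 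Without it the argument identifies no value of $\beta_{n,\min}$. The upper bound is also incomplete: $\beta_n\le\beta_{n+1}$ is trivial, but attainment requires exhibiting a feasible tuple with a double atom at $\beta_{n+1}$ for \emph{every} admissible $\beta_{n+1}$, which a dimension count does not deliver (the reduced system could be inconsistent over the reals, or its real solutions could violate the ordering constraints). Finally, even granting the local analysis, the passage from ``the unique critical configuration on the boundary'' to ``the global minimum'' needs the connectedness/continuity argument you defer. So the proposal is a reasonable research program, and its first stage is correct, but it does not prove the conjecture --- which, to be fair, no one has.
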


\begin{example}\textbf{Lower Bound of \( \boldsymbol{\beta_{n}} \) for \( \boldsymbol{(k=1,2,3,4,5)} \)} \\ [2mm]
For the given normalized GPTE system:
\begin{flalign}
&\quad \left[ \alpha_{1}, \alpha_{2}, \alpha_{3}, \alpha_{4}, \alpha_{5},\alpha_{6} \right]^{k} = \left[ \beta_{1}, \beta_{2}, \beta_{3}, \beta_{4}, \beta_{5},\beta_{6} \right]^{k}, \quad (k = 1,2,3,4,5).&
\end{flalign}
\noindent
Based on Conjecture \textup{\ref{conjecture_beta_n}}, when \( \beta_{6} \) takes a specific value, \( \beta_{5} \) achieves its minimum value \( \beta_{5, \min}(\beta_{6}) \) under the following conditions:
\begin{equation}
\begin{cases}
& \beta_{6,min} \leq \beta_{6} < \alpha_{6}=1,\\
& \beta_4 =  \beta_3, \:\quad \beta_2 =  \beta_1, \\
& \alpha_5 = \alpha_4, \quad \alpha_3 = \alpha_2, \\
& \alpha_1 = 0.
\end{cases}
\end{equation}
This leads to the following system of five equations in five variables:
\begin{align}
\left[ 0, x_{3}, x_{3}, x_{5}, x_{5}, 1 \right]^{k} = \left[ y_{2}, y_{2}, y_{4}, y_{4},y_{5}, \beta_{6} \right]^{k}, \quad (k = 1,2,3,4,5).
\end{align}
Specifically,
\begin{align}
\label{k12345b5e1}
2 x_3^k+2 x_5^k+1=2 y_2^k+ 2 y_4^k+y_5^k+ \beta_6^k,\quad (k = 1,2,3,4,5).
\end{align}
\end{example}

To solve the system of equations \eqref{k12345b5e1}, there are two approaches.\\[1mm]
\indent
$\bullet$ The first approach utilizes numerical computation methods to obtain numerical solutions of \eqref{k12345b5e1}. For instance, when \(\beta_6 = 0.9801\), using mathematical software such as Mathematica and its \texttt{NSolve} function, we obtain:
\begin{equation}
\label{k12345b5e2}
\begin{cases}
x_3 &= 0.23946954408154330620\cdots, \\
x_5 &= 0.71132433880795564470\cdots, \\
y_2 &= 0.06421954150228593742\cdots, \\
y_4 &= 0.47786558905385802094\cdots, \\
y_5 &= 0.83731750466670998506\cdots.
\end{cases}
\end{equation}
This yields the lower bound of \(\beta_{5, \min}\) when \(\beta_6 = 0.9801\):
\begin{align}
\label{k12345b5e3}
\beta_{5, \min} (\beta_6) &= \beta_{5, \min} (0.9801) = y_5 = 0.83731750466670998506\cdots.
\end{align}
\indent
$\bullet$ The second approach for solving the system \eqref{k12345b5e1} involves first deriving a polynomial in terms of \(\beta_6\) and \(y_5\) by applying the Second Generalization of the Girard-Newton identities (Identity \ref{identity_GNI2}), and then computing the value of \(y_5\) using numerical methods. The procedure is as follows:\\[1mm]
\indent By transforming \eqref{k12345b5e1}, we obtain:
\begin{align}
\label{k12345b5e4}
(1-y_5^k-\beta _6^k)/2=y_2^k+y_4^k-x_3^k-x_5^k,\quad (k = 1,2,3,4,5).
\end{align}
Let
\begin{align}
\label{k12345b5e5}
P_k=y_2^k+y_4^k-x_3^k-x_5^k,\quad (k = 1,2,3,4,5).
\end{align}
and define
\begin{equation}
\begin{aligned}
\label{k12345b5e6}
& S_1=(P_1)/1,\\
& S_2=(P_2 + S_1 P_1)/2,\\
& S_3=(P_3 + S_1 P_2 + S_2 P_1)/3,\\
& S_4=(P_4 + S_1 P_3 + S_2 P_2+ S_3 P_1)/4,\\
& S_5=(P_5 + S_1 P_4 + S_2 P_3+ S_3 P_2+ S_4 P_1)/5.\\
\end{aligned}
\end{equation}
Then, according to Identity \ref{identity_GNI2}, it follows that:
\begin{equation}
\begin{aligned}
\label{k12345b5e7}
\begin{vmatrix}
 S_1 & S_2 & S_3 \\
 S_2 & S_3 & S_4 \\
 S_3 & S_4 & S_5 \\
\end{vmatrix}=0
\end{aligned}
\end{equation}
Based on \eqref{k12345b5e4}, we replace \eqref{k12345b5e5} with the following expression:
\begin{align}
\label{k12345b5e8}
P_k=(1-y_5^k-\beta _6^k)/2,\quad (k = 1,2,3,4,5).
\end{align}
Substituting \eqref{k12345b5e8} into \eqref{k12345b5e6} and \eqref{k12345b5e7}, then we obtain:
\begin{align}
\label{k12345b5e9}
& y_5^8+8 \left(\beta _6-1\right) y_5^7-4 \left(\beta _6-1\right) \left(17 \beta _6+7\right) y_5^6 \nonumber\\
& \quad +8 \left(\beta _6-1\right) \left(23 \beta _6^2+14 \beta _6+7\right) y_5^5 \nonumber\\
& \quad -2 \left(\beta _6-1\right) \left(125 \beta _6^3+105 \beta _6^2-9 \beta _6+35\right) y_5^4 \nonumber\\
& \quad +8 \left(\beta _6-1\right) \left(23 \beta _6^4+28 \beta _6^3-22 \beta _6^2-4 \beta _6+7\right) y_5^3 \nonumber\\
& \quad -4 \left(\beta _6-1\right)^2 \left(17 \beta _6^4+52 \beta _6^3+30 \beta _6^2-28 \beta _6-7\right) y_5^2 \nonumber\\
& \quad +8 \left(\beta _6-1\right)^3 \left(\beta _6+1\right)^2 \left(\beta _6^2+6 \beta _6+1\right) y_5+\left(\beta _6-1\right)^8=0
\end{align}
By solving \eqref{k12345b5e9}, we obtain \(\beta_{5,\min}(\beta_6) = y_5\). For instance, when \(\beta_6 = 0.9801\),\\ using Mathematica's \texttt{NSolve} function and incorporating the constraints from Corollary \ref{corollary_Conservative_Bounds}, namely \((1 - \beta_6^5)^{1/5}\)< \(y_5 \leq \beta_6\) , we obtain:
\begin{align}
y_5 &= 0.83731750466670998506\cdots.
\end{align}
This result is identical to \eqref{k12345b5e2}, which was obtained previously using direct numerical computation methods. Thus, for the lower bound of \(\beta_{5, \min}\) when \(\beta_6 = 0.9801\), we arrive at the same value as in \eqref{k12345b5e3}:
\begin{align}
\beta_{5, \min} (\beta_6) &= \beta_{5, \min} (0.9801) = y_5 = 0.83731750466670998506\cdots. \tag*{\eqref{k12345b5e3}}
\end{align}
\indent
$\bullet$ 
When utilizing \(\beta_{n, \min}(\beta_{n+1})\) for the computer search of non-negative integer solutions to the GPTE problem, an efficient approach is to first precompute \(\beta_{n, \min}(\beta_{n+1})\) for each \(\beta_{n+1}\) within the range \(\beta_{n+1,\min} \leq \beta_{n+1} < 1\). This results in a one-dimensional lookup table that can be used during the search process, thereby avoiding the complex and repetitive calculations of \(\beta_{n, \min}(\beta_{n+1})\) during the search. Below is an example of the lookup table for \(\beta_{5, \min}(\beta_6)\) corresponding to the type \((k=1,2,3,4,5)\):
\begin{equation}
\begin{aligned}
& \beta_{5,\min}\left(0.93302\right) = 0.9330054032\cdots, \\
& \beta_{5,\min}\left(0.93303\right) = 0.9329954007\cdots, \\
& \beta_{5,\min}\left(0.93304\right) = 0.9329853960\cdots, \\
& \qquad \vdots \\
& \beta_{5,\min}\left(0.99997\right) = 0.3262410418\cdots, \\
& \beta_{5,\min}\left(0.99998\right) = 0.3041776685\cdots, \\
& \beta_{5,\min}\left(0.99999\right) = 0.2693594803\cdots.
\end{aligned}
\end{equation}
In the table, we list only the first three and the last three entries. The precision of the data in this table is sufficient for searching non-negative integer solutions for the type \((k=1,2,3,4,5)\) within the range of 10000.

\begin{example}\textbf{Lower Bound of \( \boldsymbol{\beta_{n}} \) for \( \boldsymbol{(k=1,2,3,4,6)} \)} \\ [2mm]
For the given normalized GPTE system:
\begin{flalign}
&\quad \left[ \alpha_{1}, \alpha_{2}, \alpha_{3}, \alpha_{4}, \alpha_{5},\alpha_{6} \right]^{k} = \left[ \beta_{1}, \beta_{2}, \beta_{3}, \beta_{4}, \beta_{5},\beta_{6} \right]^{k}, \quad (k = 1,2,3,4,6).&
\end{flalign}
\noindent
Based on Conjecture \textup{\ref{conjecture_beta_n}},  when \( \beta_{6} \) takes a specific value, \( \beta_{5} \) achieves its minimum value \( \beta_{5, \min}(\beta_{6}) \) under the following conditions:
\begin{equation}
\begin{cases}
& \beta_{6,min} \leq \beta_{6} < \alpha_{6}=1,\\
& \beta_4 =  \beta_3, \:\quad \beta_2 =  \beta_1, \\
& \alpha_5 = \alpha_4, \quad \alpha_3 = \alpha_2, \\
& \alpha_1 = 0.
\end{cases}
\end{equation}
This leads to the following system of five equations in five variables:
\begin{align}
\label{k12346b5e1}
2 x_3^k+2 x_5^k+1=2 y_2^k+ 2 y_4^k+y_5^k+ \beta_6^k,\quad (k = 1,2,3,4,6).
\end{align}
\end{example}
Similar to the previous example, to solve the system of equations \eqref{k12346b5e1}, there are two approaches: direct numerical computation and applying the Second Generalization of the Girard-Newton Identities. The former is straightforward and can be accomplished using Mathematica's \texttt{NSolve} function. The latter, while more complex in derivation, provides a polynomial that precisely describes the relationship between \(\beta_6\) and \(y_5\). The main derivation process is as follows:

By transforming \eqref{k12346b5e1}, we obtain:
\begin{align}
\label{k12346b5e4}
(1 - y_5^k - \beta_6^k)/2 = y_2^k + y_4^k - x_3^k - x_5^k, \quad (k = 1,2,3,4,6).
\end{align}
Let
\begin{align}
\label{k12346b5e5}
P_k = y_2^k + y_4^k - x_3^k - x_5^k, \quad (k = 1,2,3,4,5,6).
\end{align}
and define
\begin{equation}
\label{k12346b5e6}
\begin{cases}
& S_1 = (P_1)/1, \\
& S_2 = (P_2 + S_1 P_1)/2, \\
& S_3 = (P_3 + S_1 P_2 + S_2 P_1)/3, \\
& S_4 = (P_4 + S_1 P_3 + S_2 P_2 + S_3 P_1)/4, \\
& S_5 = (P_5 + S_1 P_4 + S_2 P_3 + S_3 P_2 + S_4 P_1)/5, \\
& S_6 = (P_6 + S_1 P_5 + S_2 P_4 + S_3 P_3 + S_4 P_2 + S_5 P_1)/6.
\end{cases}
\end{equation}
Then, according to Identity \ref{identity_GNI2}, it follows that:
\begin{equation}
\begin{aligned}
\label{k12346b5e7}
\begin{vmatrix}
S_1 & S_2 & S_3 \\
S_2 & S_3 & S_4 \\
S_3 & S_4 & S_5 \\
\end{vmatrix} = 0
\end{aligned}
\end{equation}
and
\begin{equation}
\begin{aligned}
\label{k12346b5e8}
\begin{vmatrix}
S_2 & S_3 & S_4 \\
S_3 & S_4 & S_5 \\
S_4 & S_5 & S_6 \\
\end{vmatrix} = 0
\end{aligned}
\end{equation}
From \eqref{k12346b5e7}, we have:
\begin{equation}
\label{k12346b5e9}
\begin{aligned}
S_5 &= -\frac{\begin{vmatrix}
S_1 & S_2 & S_3 \\
S_2 & S_3 & S_4 \\
S_3 & S_4 & 0 \\
\end{vmatrix}}{\begin{vmatrix}
S_1 & S_2 \\
S_2 & S_3 \\
\end{vmatrix}} = \frac{S_3^3 - 2 S_2 S_4 S_3 + S_1 S_4^2}{S_1 S_3 - S_2^2}
\end{aligned}
\end{equation}
From \eqref{k12346b5e6}, we have:
\begin{align}
\label{k12346b5e10}
P_5 &= 5 S_5 - (P_4 S_1 + P_3 S_2 + P_2 S_3 + P_1 S_4)
\end{align}
Based on \eqref{k12346b5e4} and \eqref{k12346b5e10}, we replace \eqref{k12346b5e5} with the following expression:
\begin{equation}
\label{k12346b5e11}
\begin{cases}
& P_k= (1 - y_5^k - \beta_6^k)/2, \quad (k = 1,2,3,4,6),\\
& P_5= 5 S_5 - (P_4 S_1 + P_3 S_2 + P_2 S_3 + P_1 S_4).
\end{cases}
\end{equation}
Based on \eqref{k12346b5e9}, we replace \eqref{k12346b5e6} with the following expression:
\begin{equation}
\label{k12346b5e12}
\begin{cases}
& S_1 = (P_1)/1, \\
& S_2 = (P_2 + S_1 P_1)/2, \\
& S_3 = (P_3 + S_1 P_2 + S_2 P_1)/3, \\
& S_4 = (P_4 + S_1 P_3 + S_2 P_2 + S_3 P_1)/4, \\
& S_5 = (S_3^3 - 2 S_2 S_4 S_3 + S_1 S_4^2)/(S_1 S_3 - S_2^2), \\
& S_6 = (P_6 + S_1 P_5 + S_2 P_4 + S_3 P_3 + S_4 P_2 + S_5 P_1)/6.
\end{cases}
\end{equation}
Substituting \eqref{k12346b5e11} and \eqref{k12346b5e12} into \eqref{k12346b5e8}, we obtain:
\begin{align}
\label{k12346b5e13}
& 5 y_5^{14}+60 \left(\beta _6-1\right) y_5^{13}-5 \left(\beta _6-1\right) \left(29 \beta _6+65\right) y_5^{12} \nonumber \\
& \quad -8 \left(\beta _6-1\right) \left(99 \beta _6^2-137 \beta _6-130\right) y_5^{11} \nonumber \\
& \quad+ \left(\beta _6-1\right) \left(3221 \beta _6^3+85 \beta _6^2-2537 \beta _6-2145\right) y_5^{10}\nonumber\\
& \quad -4 \left(\beta _6-1\right){}^2 \left(879 \beta _6^3+2293 \beta _6^2+1185 \beta _6+715\right) y_5^9 \nonumber\\
& \quad -\left(\beta _6-1\right) \left(1033 \beta _6^5-9283 \beta _6^4+5554 \beta _6^3+1186 \beta _6^2-1147 \beta _6+2145\right) y_5^8  \nonumber\\
& \quad +16 \left(\beta _6-1\right){}^2 \beta _6 \left(275 \beta _6^4+132 \beta _6^3+2 \beta _6^2+740 \beta _6+131\right) y_5^7  \nonumber\\
& \quad -\left(\beta _6-1\right){}^2 \left(1033 \beta _6^6+8754 \beta _6^5+2687 \beta _6^4+\cdots+2194 \beta _6+2145\right) y_5^6\nonumber\\
& \quad +4 \left(\beta _6-1\right){}^2 \left(-879 \beta _6^7+821 \beta _6^6+2573 \beta _6^5+\cdots+607 \beta _6+715\right) y_5^5 \nonumber\\
& \quad +\left(\beta _6-1\right){}^5 \left(3221 \beta _6^5+13965 \beta _6^4+22778 \beta _6^3+\cdots+9745 \beta _6+2145\right)y_5^4  \nonumber\\
& \quad -8 \left(\beta _6-1\right){}^5 \left(99 \beta _6^6+887 \beta _6^5+2184 \beta _6^4+\cdots+699 \beta _6+130\right) y_5^3 \nonumber\\
& \quad +\left(\beta _6-1\right){}^5 \left(-145 \beta _6^7+1163 \beta _6^6+4643 \beta _6^5+\cdots+1681 \beta _6+325\right) y_5^2\nonumber\\
& \quad +4 \left(\beta _6-1\right){}^8 \left(\beta _6+1\right) \left(15 \beta _6^4+60 \beta _6^3+106 \beta _6^2+60 \beta _6+15\right) y_5 \nonumber\\
& \quad +5 \left(\beta _6-1\right){}^{13} \left(\beta _6+1\right)=0
\end{align}
By solving \eqref{k12346b5e13}, with the additional constraints \((1 - \beta_6^6)^{1/6}< y_5 \leq \beta_6 \), we obtain the lookup table for \(\beta_{5, \min}(\beta_6)\) corresponding to the type \((k=1,2,3,4,6)\) as follows:
\begin{equation}
\begin{aligned}
& \beta_{5,\min}\left(0.93733\right) = 0.9373130877\cdots, \\
& \beta_{5,\min}\left(0.93734\right) = 0.9373030847\cdots, \\
& \beta_{5,\min}\left(0.93735\right) = 0.9372930795\cdots, \\
& \qquad \vdots \\
& \beta_{5,\min}\left(0.99997\right) = 0.3633027280\cdots, \\
& \beta_{5,\min}\left(0.99998\right) = 0.3413846089\cdots, \\
& \beta_{5,\min}\left(0.99999\right) = 0.3065466858\cdots.
\end{aligned}
\end{equation}

\begin{example}\textbf{Lower Bound of \( \boldsymbol{\beta_{n}} \) for \( \boldsymbol{(k=1,2,3,4,7)} \)} \\ [2mm]
For the given normalized GPTE system:
\begin{flalign}
&\quad \left[ \alpha_{1}, \alpha_{2}, \alpha_{3}, \alpha_{4}, \alpha_{5},\alpha_{6} \right]^{k} = \left[ \beta_{1}, \beta_{2}, \beta_{3}, \beta_{4}, \beta_{5},\beta_{6} \right]^{k}, \quad (k = 1,2,3,4,7).&
\end{flalign}
\noindent
Based on Conjecture \textup{\ref{conjecture_beta_n}}, the minimum value of \(\beta_5\) can be derived from the following system of five equations in five variables:
\begin{align}
\label{k12347b5e1}
(1 - y_5^k - \beta_6^k)/2 = y_2^k + y_4^k - x_3^k - x_5^k, \quad (k = 1,2,3,4,7).
\end{align}
\end{example}
To solve the system of equations \eqref{k12347b5e1} using the Second Generalization of the Girard-Newton Identities, the main derivation process follows a similar approach to that of previous example. Let
\begin{equation}
\label{k12347b5e11}
\begin{aligned}
& P_k = (1 - y_5^k - \beta_6^k)/2, \quad (k = 1,2,3,4,7),\\
& P_5=5 S_5-\left(S_1 P_4 + S_2 P_3+ S_3 P_2+ S_4 P_1\right),\\
& P_6=6 S_6-\left(S_1 P_5 + S_2 P_4+ S_3 P_3+ S_4 P_2+ S_5 P_1\right).
\end{aligned}
\end{equation} 
and 
\begin{equation}
\label{k12347b5e12}
\begin{aligned}
& S_1=(P_1)/1,\\
& S_2=(P_2 + S_1 P_1)/2,\\
& S_3=(P_3 + S_1 P_2 + S_2 P_1)/3,\\
& S_4=(P_4 + S_1 P_3 + S_2 P_2+ S_3 P_1)/4,\\
& S_5=-\frac{\begin{vmatrix}
 S_1 & S_2 & S_3 \\
 S_2 & S_3 & S_4 \\
 S_3 & S_4 & 0 \\
\end{vmatrix}}{\begin{vmatrix}
 S_1 & S_2  \\
 S_2 & S_3  \\
\end{vmatrix} },\qquad
 S_6=-\frac{\begin{vmatrix}
 S_2 & S_3 & S_4 \\
 S_3 & S_4 & S_5 \\
 S_4 & S_5 & 0 \\
\end{vmatrix}}{\begin{vmatrix}
 S_2 & S_3  \\
 S_3 & S_4  \\
\end{vmatrix} },\\
& S_7=(P_7 + S_1 P_6 + S_2 P_5+ S_3 P_4+ S_4 P_3+ S_5 P_2+ S_6 P_1)/7. 
\end{aligned}  
\end{equation} 
Substituting \eqref{k12347b5e11} and \eqref{k12347b5e12} into the following expression:
\begin{equation}
\begin{aligned}
\label{k12347b5e13}
\begin{vmatrix}
S_3 & S_4 & S_5 \\
S_4 & S_5 & S_6 \\
S_5 & S_6 & S_7 \\
\end{vmatrix} = 0
\end{aligned}
\end{equation}
we have:
\begin{align}
\label{k12347b5e14}
& 65 y_5^{19}+995 \left(\beta _6-1\right) y_5^{18}+\left(635 \beta _6^2-7670 \beta _6+7035\right) y_5^{17}\nonumber\\ 
& \qquad + \left(-21879 \beta _6^3+28501 \beta _6^2+23723 \beta _6-30345\right)y_5^{16}\nonumber\\ 
& \qquad +\cdots + 5 \left(\beta _6-1\right){}^{17} \left(13 \beta _6^2+22 \beta _6+13\right)=0
\end{align} 
Then we obtain the lookup table of \(\beta_{5, \min}(\beta_6)\) for type \((k=1,2,3,4,7)\) as\\ follows:
\begin{equation}
\begin{aligned}
& \beta_{5,\min}\left(0.94120\right) = 0.9411912823\cdots, \\
& \beta_{5,\min}\left(0.94121\right) = 0.9411812801\cdots, \\
& \beta_{5,\min}\left(0.94122\right) = 0.9411712755\cdots, \\
& \qquad \vdots \\
& \beta_{5,\min}\left(0.99997\right) = 0.4003318291\cdots, \\
& \beta_{5,\min}\left(0.99998\right) = 0.3788322292\cdots, \\
& \beta_{5,\min}\left(0.99999\right) = 0.3444407532\cdots.
\end{aligned}
\end{equation}

\begin{example}\textbf{Lower Bound of \( \boldsymbol{\beta_{n}} \) for \( \boldsymbol{(k=1,2,4,5)} \)} \\ [2mm]
For the given normalized GPTE system:
\begin{flalign}
&\quad \left[ \alpha_{1}, \alpha_{2}, \alpha_{3}, \alpha_{4}, \alpha_{5} \right]^{k} = \left[ \beta_{1}, \beta_{2}, \beta_{3}, \beta_{4}, \beta_{5} \right]^{k}, \quad (k = 1,2,4,5).&
\end{flalign}
\noindent
Based on Conjecture \textup{\ref{conjecture_beta_n}}, the minimum value of \(\beta_4\) can be derived from the following system of four equations in four variables:
\begin{align}
\label{k1245b4e1}
(1-y_4^k-\beta _5^k)/2=y_3^k-x_2^k-x_4^k, \quad (k = 1,2,4,5).
\end{align}
\end{example}
To solve the system of equations \eqref{k1245b4e1} using the Second Generalization of the Girard-Newton Identities, the main derivation process is as follows: Let
\begin{align}
\label{k1245b4e5}
P_k = y_3^k-x_2^k-x_4^k, \quad (k = 1,2,3,4,5).
\end{align}
and define
\begin{equation}
\label{k1245b4e6}
\begin{aligned}
& S_1 = (P_1)/1, \\
& S_2 = (P_2 + S_1 P_1)/2, \\
& S_3 = (P_3 + S_1 P_2 + S_2 P_1)/3, \\
& S_4 = (P_4 + S_1 P_3 + S_2 P_2 + S_3 P_1)/4, \\
& S_5 = (P_5 + S_1 P_4 + S_2 P_3 + S_3 P_2 + S_4 P_1)/5.
\end{aligned}
\end{equation}
Then, according to Identity \ref{identity_GNI2}, it follows that:
\begin{equation}
\begin{aligned}
\label{k1245b4e7}
\begin{vmatrix}
S_2 & S_3 \\
S_3 & S_4 \\
\end{vmatrix} = 0
\end{aligned}
\end{equation}
and
\begin{equation}
\begin{aligned}
\label{k1245b4e8}
\begin{vmatrix}
S_3 & S_4 \\
S_4 & S_5 \\
\end{vmatrix} = 0
\end{aligned}
\end{equation}
Substituting \eqref{k1245b4e6} into \eqref{k1245b4e7} and \eqref{k1245b4e8}, we obtain:
\begin{align}
\label{k1245b4e9}
& P_1^6+3 P_2 P_1^4-8 P_3 P_1^3+9 P_2^2 P_1^2-18 P_4 P_1^2+24 P_2 P_3 P_1-9 P_2^3 \nonumber \\
& \quad +16 P_3^2-18 P_2 P_4=0\\
\label{k1245b4e10}
& P_1^8+8 P_2 P_1^6-8 P_3 P_1^5+30 P_2^2 P_1^4-60 P_4 P_1^4+80 P_2 P_3 P_1^3-96 P_5 P_1^3 \nonumber \\
& \quad +160 P_3^2 P_1^2-120 P_2^2 P_3 P_1+240 P_3 P_4 P_1-288 P_2 P_5 P_1 \nonumber \\
& \quad +45 P_2^4-160 P_2 P_3^2+180 P_4^2+180 P_2^2 P_4-192 P_3 P_5=0
\end{align}
From \eqref{k1245b4e9} and \eqref{k1245b4e10}, we have:
\begin{align}
\label{k1245b4e11}
& P_3=\frac{15 \left(P_1^4-P_2^2\right){}^2-60 \left(P_1^2 P_2-P_4\right){}^2}{8 \left(3 P_1^5-10 P_2 P_1^3+5 P_2^2 P_1+10 P_4 P_1-8 P_5\right)}-\frac{1}{2} P_1 \left(P_1^2+3 P_2\right)
\end{align}
Substituting \eqref{k1245b4e11} into \eqref{k1245b4e9} or \eqref{k1245b4e10}, we have:
\begin{align}
\label{k1245b4e12}
& P_1^{12}+6 P_2 P_1^{10}-15 P_2^2 P_1^8-30 P_4 P_1^8-32 P_5 P_1^7-20 P_2^3 P_1^6+120 P_2 P_4 P_1^6 \nonumber \\
& \quad +288 P_2 P_5 P_1^5-25 P_2^4 P_1^4-100 P_4^2 P_1^4-300 P_2^2 P_4 P_1^4-160 P_2^2 P_5 P_1^3 \nonumber \\
& \quad -320 P_4 P_5 P_1^3+150 P_2^5 P_1^2+600 P_2 P_4^2 P_1^2+256 P_5^2 P_1^2+200 P_2^3 P_4 P_1^2 \nonumber \\
& \quad -480 P_2^3 P_5 P_1-320 P_2 P_4 P_5 P_1-25 P_2^6-200 P_4^3+100 P_2^2 P_4^2\nonumber \\
& \quad +256 P_2 P_5^2+50 P_2^4 P_4=0
\end{align}
Based on \eqref{k1245b4e1}, we replace \eqref{k1245b4e5} with the following expression:
\begin{align}
\label{k1245b4e13}
P_k=(1-y_4^k-\beta _5^k)/2,\quad (k = 1,2,4,5).
\end{align}
Substituting \eqref{k1245b4e13} into \eqref{k1245b4e12}, we have:
\begin{align}
& 351 y_4^{12}-1620 \left(\beta _5-1\right) y_4^{11} +6 \left(309 \beta _5^2+514 \beta _5-823\right) y_4^{10}  \nonumber \\
& \quad +12 \left(\beta _5-1\right){}^2 \left(213 \beta _5+95\right) y_4^9 + \cdots \nonumber \\
& \quad + 3 \left(\beta _5-1\right){}^5 \left(117 \beta _5^7+1125 \beta _5^6+\cdots+1107 \beta _5+243\right)=0
\end{align}
Then we obtain the lookup table of \(\beta_{4, \min}(\beta_5)\) for type \((k=1,2,4,5)\) as\\ follows:
\begin{equation}
\begin{aligned}
& \beta_{4,\min}\left(0.92019\right) = 0.9201810209\cdots, \\
& \beta_{4,\min}\left(0.92020\right) = 0.9201710192\cdots, \\
& \beta_{4,\min}\left(0.92021\right) = 0.9201610159\cdots, \\
& \qquad \vdots \\
& \beta_{4,\min}\left(0.99997\right) = 0.2725109594\cdots, \\
& \beta_{4,\min}\left(0.99998\right) = 0.2527883379\cdots, \\
& \beta_{4,\min}\left(0.99999\right) = 0.2220625368\cdots.
\end{aligned}
\end{equation}

\begin{example}\textbf{Lower Bound of \( \boldsymbol{\beta_{n}} \) for \( \boldsymbol{(k=2,3,4)} \)} \\ [2mm]
For the given normalized GPTE system:
\begin{flalign}
&\quad \left[ \alpha_{1}, \alpha_{2}, \alpha_{3}, \alpha_{4} \right]^{k} = \left[ \beta_{1}, \beta_{2}, \beta_{3}, \beta_{4} \right]^{k}, \quad (k = 2,3,4).&
\end{flalign}
\noindent
Based on Conjecture \textup{\ref{conjecture_beta_n}}, the minimum value of \(\beta_3\) can be derived from the following system of three equations in three variables:
\begin{align}
\label{k234b3e1}
(1-y_3^k-\beta _4^k)/2=y_2^k-x_3^k, \quad (k = 2,3,4).
\end{align}
\end{example}
To solve the system of equations \eqref{k234b3e1} using the Second Generalization of the Girard-Newton Identities, the main derivation process is as follows: Let
\begin{align}
\label{k234b3e5}
P_k = y_2^k-x_3^k, \quad (k = 1,2,3,4).
\end{align}
and define
\begin{equation}
\label{k234b3e6}
\begin{aligned}
& S_1 = (P_1)/1, \\
& S_2 = (P_2 + S_1 P_1)/2, \\
& S_3 = (P_3 + S_1 P_2 + S_2 P_1)/3, \\
& S_4 = (P_4 + S_1 P_3 + S_2 P_2 + S_3 P_1)/4.
\end{aligned}
\end{equation}
Then, according to Identity \ref{identity_GNI2}, it follows that:
\begin{equation}
\begin{aligned}
\label{k234b3e7}
\begin{vmatrix}
S_1 & S_2 \\
S_2 & S_3 \\
\end{vmatrix} = 0
\end{aligned}
\end{equation}
and
\begin{equation}
\begin{aligned}
\label{k234b3e8}
\begin{vmatrix}
S_2 & S_3 \\
S_3 & S_4 \\
\end{vmatrix} = 0
\end{aligned}
\end{equation}
Substituting \eqref{k234b3e6} into \eqref{k234b3e7} and \eqref{k234b3e8}, we obtain:
\begin{align}
\label{k234b3e9}
& P_1^4-4 P_3 P_1+3 P_2^2=0\\
\label{k234b3e10}
& P_1^6+3 P_2 P_1^4-8 P_3 P_1^3+9 P_2^2 P_1^2-18 P_4 P_1^2+24 P_2 P_3 P_1 \nonumber\\
& \quad -9 P_2^3+16 P_3^2-18 P_2 P_4=0
\end{align}
From \eqref{k234b3e9} and \eqref{k234b3e10}, we have:
\begin{align}
\label{k234b3e11}
& P_1=\frac{23 P_2^8-80 P_4 P_2^6+160 P_3^2 P_2^5+2 P_4^2 P_2^4-16 P_3^4 P_2^2-80 P_4^3 P_2^2-9 P_4^4}{2 P_3 \left(3 P_2^6-11 P_4 P_2^4+20 P_3^2 P_2^3+P_4^2 P_2^2-4 P_3^2 P_4 P_2-9 P_4^3\right)} \nonumber\\
& \qquad -\frac{7 P_2^2+P_4}{2 P_3}
\end{align}
Substituting \eqref{k234b3e11} into \eqref{k234b3e9} or \eqref{k234b3e10}, we have:
\begin{align}
\label{k234b3e12}
& P_2^9+6 P_4^2 P_2^5-24 P_3^2 P_4 P_2^4+16 P_3^4 P_2^3+9 P_4^4 P_2-8 P_3^2 P_4^3=0
\end{align}
Based on \eqref{k234b3e1}, we replace \eqref{k234b3e5} with the following expression:
\begin{align}
\label{k234b3e13}
P_k=(1-y_3^k-\beta _4^k)/2,\quad (k = 2,3,4).
\end{align}
Substituting \eqref{k234b3e13} into \eqref{k234b3e12}, we have:
\begin{align}
& 9 y_3^{18}+ 81 \left(\beta _4^2-1\right) y_3^{16}-192 \left(\beta _4^3-1\right) y_3^{15}+36 \left(\beta _4^4+6 \beta _4^2-7\right) y_3^{14}  \nonumber \\
& \quad +\cdots +3 \left(\beta _4-1\right){}^4 \left(\beta _4+1\right){}^2 \left(27 \beta _4^{10}+54 \beta _4^9+\cdots+54 \beta _4+27\right) y_3^2 \nonumber \\
& \quad +3 \left(\beta _4-1\right){}^7 \left(\beta _4+1\right){}^3 \left(3 \beta _4^8+12 \beta _4^7+\cdots+12 \beta _4+3\right)=0
\end{align}
Then we obtain the lookup table of \(\beta_{3, \min}(\beta_4)\) for type \((k=2,3,4)\) as\\ follows:
\begin{equation}
\begin{aligned}
& \beta_{3,\min}\left(0.89926\right) = 0.8992450495\cdots, \\
& \beta_{3,\min}\left(0.89927\right) = 0.8992350479\cdots, \\
& \beta_{3,\min}\left(0.89928\right) = 0.8992250449\cdots, \\
& \qquad \vdots \\
& \beta_{3,\min}\left(0.99997\right) = 0.1810811930\cdots, \\
& \beta_{3,\min}\left(0.99998\right) = 0.1646084470\cdots, \\
& \beta_{3,\min}\left(0.99999\right) = 0.1396615035\cdots.
\end{aligned}
\end{equation}

A key step in the above process involves deriving equation \eqref{k234b3e11} from equations \eqref{k234b3e9} and \eqref{k234b3e10}. Specifically, we first multiply equation \eqref{k234b3e9} by \(P_1^2\) and then subtract equation \eqref{k234b3e10} to obtain:
\begin{flalign}
\label{k234b3e14}
& \quad 3 P_2 P_1^4 - 4 P_3 P_1^3 + \left(6 P_2^2 - 18 P_4\right) P_1^2 + 24 P_2 P_3 P_1 - 9 P_2^3 + 16 P_3^2 - 18 P_2 P_4 = 0.&
\end{flalign}
Next, multiplying equation \eqref{k234b3e9} by \(3 P_2\) and subtracting equation \eqref{k234b3e14}, we derive:
\begin{align}
& 4 P_3 P_1^3 + \left(18 P_4 - 6 P_2^2\right) P_1^2 - 36 P_2 P_3 P_1 + 18 P_2^3 - 16 P_3^2 + 18 P_2 P_4 = 0.
\end{align}
By iterating analogous operations, we ultimately arrive at \eqref{k234b3e11}.\\[2mm]
\indent As described in previous examples, we can also directly employ numerical computation methods to obtain  \(\beta_{3, \min}(\beta_4)\). For instance, when \(\beta_4 = 0.89926\), we use Mathematica's \texttt{NSolve} function to solve the following system:
\begin{equation}
\begin{cases}
2 x_3^2 + 1 = 2 y_2^2 + y_3^2 + \beta_4^2, \\
2 x_3^3 + 1 = 2 y_2^3 + y_3^3 + \beta_4^3, \\
2 x_3^4 + 1 = 2 y_2^4 + y_3^4 + \beta_4^4, \\
0 < y_2 < x_3 < y_3 \leq \beta_4.
\end{cases}
\end{equation}
The numerical solution obtained is:
\begin{equation}
\begin{cases}
y_3 = 0.8992450495\cdots, \\
x_3 = 0.6353521303\cdots, \\
y_2 = 0.3082486434\cdots.
\end{cases}
\end{equation}
Thus, we have:
\[
\beta_{3,\min}(0.89926) = y_3 = 0.8992450495\cdots.
\]

\begin{example}
\textbf{Lower Bound of \( \boldsymbol{\beta_{n}} \) for \( \boldsymbol{(k=1,2,3,\cdots,10)} \)} \\ [2mm]
For the given normalized GPTE system:
\begin{flalign}
&\quad \left[ \alpha_{1}, \alpha_{2}, \alpha_{3}, \cdots, \alpha_{11} \right]^{k} = \left[ \beta_{1}, \beta_{2}, \beta_{3}, \cdots,\beta_{11} \right]^{k}, \;\; (k = 1,2,3,\cdots,10).&
\end{flalign}
\noindent
Based on Conjecture \textup{\ref{conjecture_beta_n}}, the minimum value of \(\beta_{10}\) can be derived from the following system of ten equations in ten variables:
\begin{align}
\label{k1to10b10e1}
& (1-\beta _{11}^k-y_{10}^k)/2=y_3^k+y_5^k+y_7^k+y_9^k-x_2^k-x_4^k-x_6^k-x_8^k-x_{10}^k, \nonumber\\
& \qquad\qquad\qquad\qquad\qquad\qquad\qquad\qquad (k = 1,2,3,\cdots,10).
\end{align}
\end{example}
To solve the system of equations \eqref{k1to10b10e1} using the Second Generalization of the Girard-Newton Identities, the main derivation process follows a similar approach to that of previous example. Let
\begin{align}
\label{k1to10b10e11}
& P_k = (1-\beta _{11}^k-y_{10}^k)/2, \quad (k = 1,2,3,\cdots,10),
\end{align}
and
\begin{equation}
\label{k1to10b10e12}
\begin{aligned}
& S_1 = (P_1)/1, \\
& S_2 = (P_2 + S_1 P_1)/2, \\
& S_3 = (P_3 + S_1 P_2 + S_2 P_1)/3, \\
& S_4 = (P_4 + S_1 P_3 + S_2 P_2 + S_3 P_1)/4, \\
& \qquad \vdots \\
& S_{10} = (P_{10} + S_1 P_9 + S_2 P_8 + S_3 P_7 + \cdots + S_8 P_2 + S_9 P_1)/10.
\end{aligned}
\end{equation}
Substituting \eqref{k1to10b10e11} and \eqref{k1to10b10e12} into the following expression:
\begin{equation}
\begin{aligned}
\label{k1to10b10e13}
\begin{vmatrix}
S_2 & S_3 & S_4 & S_5 & S_6 \\
S_3 & S_4 & S_5 & S_6 & S_7 \\
S_4 & S_5 & S_6 & S_7 & S_8 \\
S_5 & S_6 & S_7 & S_8 & S_9 \\
S_6 & S_7 & S_8 & S_9 & S_{10}\\
\end{vmatrix} = 0
\end{aligned}
\end{equation}
we have:
\begin{align}
& y_{10}^{30} -30 \left(\beta _{11}-1\right) y_{10}^{29}+5 \left(87 \beta _{11}^2+62 \beta _{11}-149\right) y_{10}^{28} \nonumber\\
& \quad\; + \cdots -2 \left(\beta _{11}-1\right){}^{25} \left(15 \beta _{11}^4+220 \beta _{11}^3+594 \beta _{11}^2+396 \beta _{11}+55\right) y_{10} \nonumber\\
& \quad\; +\left(\beta _{11}-1\right){}^{25} \left(\beta _{11}^5+55 \beta _{11}^4+330 \beta _{11}^3+462 \beta _{11}^2+165 \beta _{11}+11\right)\nonumber\\
& \quad\; =0
\end{align}
Then we obtain the lookup table of \(\beta_{{10}, \min}(\beta_{11})\) for type \((k=1,2,3,\cdots,10)\) as follows:
\begin{equation}
\begin{aligned}
& \beta_{10,\min}\left(0.97975\right)=0.9797429732\dots \:, \\
& \beta_{10,\min}\left(0.97976\right)=0.9797329670\dots \:, \\
& \beta_{10,\min}\left(0.97977\right)=0.9797229535\dots \:, \\
& \qquad \vdots  \\
& \beta_{10,\min}\left(0.99997\right)=0.7498412730\dots \:, \\
& \beta_{10,\min}\left(0.99998\right)=0.7329238933\dots \:, \\
& \beta_{10,\min}\left(0.99999\right)=0.7039453066\dots \:.
\end{aligned}
\end{equation}

\begin{example}\textbf{Lower Bound of \( \boldsymbol{\beta_{n}} \) for \( \boldsymbol{(k=2,4,6,8,10,12)} \)} \\ [2mm]
For the given normalized GPTE system:
\begin{align}
& \left[ \alpha_{1}, \alpha_{2}, \alpha_{3}, \alpha_{4}, \alpha_{5},\alpha_{6},\alpha_{7} \right]^{k} = \left[ \beta_{1}, \beta_{2}, \beta_{3}, \beta_{4}, \beta_{5},\beta_{6},\beta_{7}  \right]^{k}, \nonumber\\
&\qquad\qquad\qquad\qquad\qquad\qquad\qquad (k = 2,4,6,8,10,12).
\end{align}
\noindent
Based on Conjecture \textup{\ref{conjecture_beta_n}}, the minimum value of \(\beta_5\) can be derived from the following system of six equations in six variables:
\begin{align}
\label{k24681012b6e1}
(1 - y_6^k - \beta_7^k)/2 = y_3^k+y_5^k-x_2^k-x_4^k-x_6^k, \quad (k = 2,4,6,8,10,12).
\end{align}
\end{example}
To solve the system of equations \eqref{k24681012b6e1} using the Second Generalization of the Girard-Newton Identities, the main derivation process follows a similar approach to that of previous example. Let
\begin{align}
\label{k24681012b6e11}
& P_k = (1 - y_6^k - \beta_7^k)/2, \quad (k = 2,4,6,8,10,12).
\end{align}
and
\begin{equation}
\label{k24681012b6e12}
\begin{aligned}
& S_2 = (P_2)/1, \\
& S_4 = (P_4 + S_2 P_2)/2, \\
& S_6 = (P_6 + S_2 P_4 + S_4 P_2)/3, \\
& S_8 = (P_8 + S_2 P_6 + S_4 P_4 + S_6 P_2)/4, \\
& S_{10} = (P_{10} + S_2 P_8 + S_4 P_6 + S_6 P_4 + S_8 P_2)/5, \\
& S_{12} = (P_{12} + S_2 P_{10} + S_4 P_8 + S_6 P_6 + S_8 P_4+ S_{10} P_2)/6 .
\end{aligned}
\end{equation}
Substituting \eqref{k24681012b6e11} and \eqref{k24681012b6e12} into the following expression:
\begin{equation}
\begin{aligned}
\begin{vmatrix}
S_4 & S_6 & S_8 \\
S_6 & S_8 & S_{10} \\
S_8 & S_{10} & S_{12} \\
\end{vmatrix} = 0
\end{aligned}
\end{equation}
we have:
\begin{flalign}
& \qquad y_6^{24}-12 \left(\beta _7^2-1\right) y_6^{22}+2 \left(\beta _7^2-1\right) \left(33 \beta _7^2+59\right) y_6^{20} \nonumber &\\
& \qquad\quad -4 \left(\beta _7^2-1\right) \left(55 \beta _7^4+102 \beta _7^2+91\right) y_6^{18} \nonumber &\\
& \qquad\quad  +\left(\beta _7^2-1\right) \left(495 \beta _7^6+235 \beta _7^4+1197 \beta _7^2+441\right) y_6^{16} \nonumber &\\
& \qquad\quad  -8 \left(\beta _7^2-1\right) \left(99 \beta _7^8-52 \beta _7^6+42 \beta _7^4+252 \beta _7^2-21\right) y_6^{14} \nonumber &\\
& \qquad\quad  +4 \left(\beta _7^2-1\right) \left(231 \beta _7^{10}+25 \beta _7^8-826 \beta _7^6+658 \beta _7^4+483 \beta _7^2-315\right) y_6^{12} \nonumber &\\
& \qquad\quad  -8 \left(\beta _7^2-1\right){}^4 \left(99 \beta _7^6+499 \beta _7^4+585 \beta _7^2+225\right) y_6^{10} \nonumber &\\
& \qquad\quad  +\left(\beta _7^2-1\right){}^4 \left(495 \beta _7^8+3188 \beta _7^6+6378 \beta _7^4+4500 \beta _7^2+1311\right) y_6^8 \nonumber &\\
& \qquad\quad  -4 \left(\beta _7^2-1\right){}^4 \left(55 \beta _7^{10}+285 \beta _7^8+998 \beta _7^6+1018 \beta _7^4+595 \beta _7^2+121\right) y_6^6 \nonumber &\\
& \qquad\quad  +2 \left(\beta _7^2-1\right){}^4 \left(33 \beta _7^{12}+38 \beta _7^{10}+435 \beta _7^8+804 \beta _7^6+375 \beta _7^4+342 \beta _7^2+21\right) y_6^4 \nonumber &\\
& \qquad\quad  -4 \left(\beta _7^2-1\right){}^9 \left(3 \beta _7^4+14 \beta _7^2+7\right) y_6^2 \nonumber &\\
& \qquad\quad  +\left(\beta _7-1\right){}^9 \left(\beta _7^6+21 \beta _7^4+35 \beta _7^2+7\right) =0 &
\end{flalign}
Then we obtain the lookup table of \(\beta_{{6}, \min}(\beta_{7})\) for type \((k=2,4,6,8,10,12)\) as follows:
\begin{equation}
\begin{aligned}
& \beta_{6,\min}\left(0.97493\right)=0.9749258242\dots \:, \\
& \beta_{6,\min}\left(0.97494\right)=0.9749158201\dots \:, \\
& \beta_{6,\min}\left(0.97495\right)=0.9749058102\dots \:, \\
& \qquad \vdots  \\
& \beta_{6,\min}\left(0.99997\right)=0.7001898470\dots \:, \\
& \beta_{6,\min}\left(0.99998\right)=0.6821639061\dots \:, \\
& \beta_{6,\min}\left(0.99999\right)=0.6517336031\dots \:.
\end{aligned}
\end{equation}

\begin{example} \textbf{Lower Bound of \( \boldsymbol{\beta_{n}} \) for \( \boldsymbol{(k=1,3,5,7,9)} \)} \\ [2mm]
For the given normalized GPTE system:
\begin{flalign}
&\quad \left[ \alpha_{1}, \alpha_{2}, \alpha_{3}, \alpha_{4}, \alpha_{5},\alpha_{6} \right]^{k} = \left[ \beta_{1}, \beta_{2}, \beta_{3}, \beta_{4}, \beta_{5},\beta_{6} \right]^{k}, \quad (k = 1,3,5,7,9).&
\end{flalign}
\noindent
Based on Conjecture \textup{\ref{conjecture_beta_n}}, the minimum value of \(\beta_5\) can be derived from the following system of five equations in five variables:
\begin{align}
\label{k13579b5e1}
(1 - y_5^k - \beta_6^k)/2 = y_2^k + y_4^k - x_3^k - x_5^k, \quad (k = 1,3,5,7,9).
\end{align}
\end{example}
To solve the system of equations \eqref{k13579b5e1} using the Third Generalization of the Girard-Newton Identities, the main derivation process is as follows:
Let
\begin{align}
\label{k13579b5e5}
P_k=y_2^k+y_4^k-x_3^k-x_5^k,\quad (k = 1,3,5,7,9).
\end{align}
and define
\begin{equation}
\begin{aligned}
\label{k13579b5e6}
& G_1=-P_1,\\
& G_3=\left(P_1^3-P_3\right)/3,\\
& G_5=\left(P_1^5-P_5\right)/5,\\
& G_7=\left(P_1^7-P_7\right)/7,\\
& G_9=\left(P_1^9-P_9\right)/9-G_3^3/3.
\end{aligned}
\end{equation}
Then, according to Identity \ref{identity_GNI3}, it follows that:
\begin{equation}
\begin{aligned}
\label{k13579b5e7}
\begin{vmatrix}
 G_5 & G_3 & G_1 \\
 G_7 & G_5 & G_1^3+G_3 \\
 G_9 & G_7 & G_1^5+G_3 G_1^2+G_5 \\
\end{vmatrix}=0
\end{aligned}
\end{equation}
Based on \eqref{k13579b5e1}, we replace \eqref{k13579b5e5} with the following expression:
\begin{align}
\label{k13579b5e8}
P_k=(1-y_5^k-\beta _6^k)/2,\quad (k = 1,3,5,7,9).
\end{align}
Substituting \eqref{k13579b5e8} and \eqref{k13579b5e6} into \eqref{k13579b5e7}, then we obtain:
\begin{flalign}
\label{k13579b5e9}
&\qquad y_5^{15}+3 \left(\beta _6-1\right) y_5^{14}-3 \left(\beta _6-1\right)^2 y_5^{13}
-\left(\beta _6-1\right) \left(17 \beta _6^2+18 \beta _6+17\right) y_5^{12} &\nonumber \\
&\qquad\quad -\left(\beta _6-1\right)^2 \left(\beta _6+3\right) \left(3 \beta _6+1\right) y_5^{11}&\nonumber \\
&\qquad\quad  +\left(\beta _6-1\right) \left(39 \beta _6^4+60 \beta _6^3+58 \beta _6^2+60 \beta _6+39\right) y_5^{10} &\nonumber \\
&\qquad\quad +\left(\beta _6^2-1\right)^2 \left(25 \beta _6^2-38 \beta _6+25\right) y_5^9 &\nonumber \\
&\qquad\quad -\left(\beta _6-1\right) \left(\beta _6+1\right)^2 \left(45 \beta _6^4-28 \beta _6^3+78 \beta _6^2-28 \beta _6+45\right) y_5^8 &\nonumber \\
&\qquad\quad -\left(\beta _6^2-1\right)^2 \left(45 \beta _6^4-72 \beta _6^3+70 \beta _6^2-72 \beta _6+45\right) y_5^7 &\nonumber \\
&\qquad\quad +\left(\beta _6-1\right) \left(\beta _6+1\right)^2 \left(25 \beta _6^6-42 \beta _6^5+87 \beta _6^4-76 \beta _6^3+87 \beta _6^2-42 \beta _6+25\right) y_5^6 &\nonumber \\
&\qquad\quad +\left(\beta _6-1\right)^4 \left(\beta _6+1\right)^2 \left(39 \beta _6^4+40 \beta _6^3+114 \beta _6^2+40 \beta _6+39\right) y_5^5  &\nonumber \\
&\qquad\quad -\left(\beta _6-1\right)^5 \left(\beta _6+1\right)^2 \left(3 \beta _6^4-12 \beta _6^3-14 \beta _6^2-12 \beta _6+3\right) y_5^4 &\nonumber \\
&\qquad\quad -\left(\beta _6-1\right)^4 \left(\beta _6+1\right)^2 \left(17 \beta _6^6+38 \beta _6^5+95 \beta _6^4+84 \beta _6^3+95 \beta _6^2+38 \beta _6+17\right) y_5^3 &\nonumber \\
&\qquad\quad -\left(\beta _6-1\right)^5 \left(\beta _6+1\right)^6 \left(3 \beta _6^2-2 \beta _6+3\right) y_5^2 &\nonumber \\
&\qquad\quad +\left(\beta _6-1\right)^4 \left(\beta _6+1\right)^6 \left(\beta _6^2+3\right) \left(3 \beta _6^2+1\right) y_5 &\nonumber \\
&\qquad\quad +\left(\beta _6-1\right)^9 \left(\beta _6+1\right)^6=0 &
\end{flalign}
By solving \eqref{k13579b5e9}, we obtain the lookup table of \(\beta_{{5}, \min}(\beta_{6})\) for type \((k=1,3,5,7,9)\) as follows:
\begin{equation}
\begin{aligned}
& \beta_{5,\min}(0.95950) = 0.9594859464\cdots, \\
& \beta_{5,\min}(0.95951) = 0.9594759421\cdots, \\
& \beta_{5,\min}(0.95952) = 0.9594659344\cdots, \\
& \qquad \vdots \\
& \beta_{5,\min}(0.99997) = 0.5460261545\cdots, \\
& \beta_{5,\min}(0.99998) = 0.5247903911\cdots, \\
& \beta_{5,\min}(0.99999) = 0.4898898843\cdots.
\end{aligned}
\end{equation}

\begin{example}
\textbf{Lower Bound of \( \boldsymbol{\beta_{n}} \) for \( \boldsymbol{(k=-1,0,1,2)} \)} \\ [2mm]
For the given normalized GPTE system:
\begin{align}
\left[ \alpha_{1}, \alpha_{2}, \alpha_{3}, \alpha_{4}, \alpha_{5} \right]^{k} = \left[ \beta_{1}, \beta_{2}, \beta_{3}, \beta_{4}, \beta_{5} \right]^{k}, \quad (k = -1,0,1,2).
\end{align}
\noindent
Based on Conjecture \textup{\ref{conjecture_beta_n}}, when \( \beta_{5} \) takes a specific value, \( \beta_{4} \) achieves its minimum value \( \beta_{4, \min}(\beta_{5}) \) under the following conditions:
\begin{equation}
\begin{cases}
& \beta_{5,min} \leq \beta_{5} < \alpha_{5}=1,\\
& \beta_3 =  \beta_2, \:\quad \beta_1 = \varepsilon_1, \\
& \alpha_4 = \alpha_3, \quad \alpha_2 = \alpha_1.
\end{cases}
\end{equation}
This leads to the following system of four equations in four variables:
\begin{align}
\left[ x_{2}, x_{2}, x_{4}, x_{4}, 1 \right]^{k} = \left[ \varepsilon_1, y_{3}, y_{3}, y_{4},y_{5} \right]^{k}, \quad (k = -1,0,1,2).
\end{align}
where \(\varepsilon_1\) is a positive infinitesimal real number.
Specifically,
\begin{equation}
\label{k012n1b4e1}
\begin{cases}
& \qquad\qquad\:\, x_2^2 \cdot x_4^2=\varepsilon_1 \cdot y_3^2 \cdot y_4 \cdot \beta_5 ,\\
& \quad\: 2 x_2^k+2 x_4^k+1=\varepsilon_1^k+2 y_3^k+ y_4^k+ \beta_5^k.\quad(k=-1,1,2)
\end{cases}
\end{equation}
\end{example}
To solve the system of equations \eqref{k012n1b4e1} using the Second Generalization of the Girard-Newton Identities, we first transform \eqref{k012n1b4e1} to obtain:
\begin{equation}
\label{k012n1b4e4}
\begin{cases}
& \qquad\qquad\quad\; \displaystyle\sqrt{\varepsilon_1 y_4 \beta _5}=\frac{x_2 x_4}{y_3}, \\[1mm]
& \displaystyle\frac{1}{2}\left(1-\varepsilon_1^k-y_4^k-\beta_5^k\right)=y_3^k-x_2^k-x_4^k,\quad (k = -1,1,2).
\end{cases}
\end{equation}
Subsequently, we define:
\begin{equation}
\label{k012n1b4e5}
\begin{cases}
& \displaystyle P_0=-\frac{x_2 x_4}{y_3}\\
& P_k=y_3^k-x_2^k-x_4^k,\quad (k = -1,1,2).
\end{cases}
\end{equation}
and
\begin{equation}
\label{k012n1b4e6}
\begin{cases}
& T_{-1}=P_0 P_{-1}, \\
& T_0=P_0,\\
& T_1=P_1,\\
& T_2=(P_2+ T_1 P_1)/2.
\end{cases}
\end{equation}
and
\begin{equation}
\label{k012n1b4e7}
\begin{cases}
& S_0=1-T_{-1},\\
& S_1=T_1-T_0,\\
& S_2=T_2.
\end{cases}
\end{equation}
Then, according to Identity \ref{identity_GNI2}, it follows that:
\begin{equation}
\begin{aligned}
\label{k012n1b4e8}
\begin{vmatrix}
 S_1 & S_0 \\
 S_2 & S_1 \\
\end{vmatrix}=0
\end{aligned}
\end{equation}
Based on \eqref{k012n1b4e4}, we replace \eqref{k012n1b4e5} with the following expression:
\begin{equation}
\label{k012n1b4e9}
\begin{cases}
& \displaystyle P_0=-\sqrt{\varepsilon_1 y_4 \beta _5}\\
& P_k=\displaystyle\frac{1}{2}\left(1-\varepsilon_1^k-y_4^k-\beta_5^k\right),\quad (k = -1,1,2).
\end{cases}
\end{equation}
Substituting \eqref{k012n1b4e9}, \eqref{k012n1b4e6}, and \eqref{k012n1b4e7} into \eqref{k012n1b4e8}, we obtain:
\begin{align}
\label{k012n1b4e10}
& \varepsilon _1^3 \left(-\beta _5+\beta _5 y_4-y_4\right)\nonumber\\
& \quad +\varepsilon _1^2 \left(2 \beta _5^2-2 \beta _5-2 \beta _5 y_4^2-2 \beta _5^2 y_4-11 \beta _5 y_4+2 y_4^2-2 y_4\right) \nonumber\\
& \quad +\varepsilon _1 \left(-\beta _5^3-2 \beta _5^2+3 \beta _5\right) \nonumber\\
& \quad +\varepsilon _1 \left(\beta _5 y_4^3-2 \beta _5^2 y_4^2-11 \beta _5 y_4^2+\beta _5^3 y_4-11 \beta _5^2 y_4+7 \beta _5 y_4-y_4^3-2 y_4^2+3 y_4\right) \nonumber\\
& \quad +2 \sqrt{\beta _5 y_4 \varepsilon _1} \left(2 \beta _5 \varepsilon _1+3 \beta _5^2-2 \beta _5-1+3 \varepsilon _1^2-2 \epsilon _1\right)\nonumber \\
& \quad +2 \sqrt{\beta _5 y_4 \varepsilon _1} \left(8 \beta _5 y_4 \varepsilon _1+2 \beta _5 y_4+2 y_4 \varepsilon _1+3 y_4^2-2 y_4\right) \nonumber\\
& \quad -y_4 \beta _5 \left(y_4^2+2 y_4-2 \beta _5 y_4+\beta _5^2+2 \beta _5-3\right)=0
\end{align}
Setting \(\varepsilon _1 = 0\), we obtain:
\begin{align}
\label{k012n1b4e11}
y_4^2-2 \left(\beta _5-1\right) y_4+\left(\beta _5-1\right) \left(\beta _5+3\right)=0
\end{align}
Then we obtain the lookup table of \(\beta_{{4}, \min}(\beta_{5})\) for type \((k=-1,0,1,2)\) as follows:
\begin{align}
& \beta_{4,\min}\left(0.75000\right)=0.7500000000\cdots, \nonumber\\
& \beta_{4,\min}\left(0.75001\right)=0.7499899998\cdots, \nonumber\\
& \beta_{4,\min}\left(0.75002\right)=0.7499899992\cdots, \nonumber\\
& \qquad \vdots \nonumber\\
& \beta_{4,\min}\left(0.99997\right)=0.0109244511\cdots,  \nonumber\\
& \beta_{4,\min}\left(0.99998\right)=0.0089242719\cdots,  \nonumber\\
& \beta_{4,\min}\left(0.99999\right)=0.0063145553\cdots.
\end{align}

\begin{example}
\textbf{Lower Bound of \( \boldsymbol{\beta_{n}} \) for \( \boldsymbol{(k=k_1, k_2, \cdots, k_n)} \), \( \boldsymbol{k_1 \notin \mathbb{Z}^+ }\)} \\ [1mm]
Based on Conjecture \textup{\ref{conjecture_beta_n}}, and using methods similar to those in the previous examples, we can obtain the following results:
\end{example}
$\bullet$ For type \( (k=0,1,2,3) \), we have \(\beta_{4,\min}(\beta_5) = y_4\), where \(y_4\) is a real root of the following equation:
\begin{align}
& y_4^4+4 \left(\beta _5-1\right) y_4^3-2 \left(\beta _5-1\right) \left(5 \beta _5+3\right) y_4^2 \nonumber\\
& \quad +4 \left(\beta _5-1\right) \left(\beta _5+1\right)^2 y_4+\left(\beta _5-1\right)^4=0
\end{align}

$\bullet$ For type $ (k=0,1,2,3,4) $,  we have \(\beta_{5,\min}(\beta_6) = y_5\), where \(y_5\) is a real root of the following equation:
\begin{align}
& y_5^6-6 \left(\beta _6-1\right) y_5^5+\left(\beta _6-1\right) \left(15 \beta _6+29\right) y_5^4 \nonumber\\
&\quad -4 \left(\beta _6-1\right) \left(5 \beta _6^2+10 \beta _6+9\right) y_5^3 \nonumber\\
&\quad +\left(\beta _6-1\right) \left(15 \beta _6^3-5 \beta _6^2+45 \beta _6+9\right) y_5^2 \nonumber\\
& \quad -2 \left(\beta _6-1\right)^4 \left(3 \beta _6+5\right) y_5+\left(\beta _6-1\right)^4 \left(\beta _6^2+10 \beta _6+5\right)=0
\end{align}

$\bullet$ For type $ (k=-1,0,1,2,3) $, we have \(\beta_{5,\min}(\beta_6) = y_5\), where \(y_5\) is a real root of the following equation:
\begin{align}
& y_5^4+4 \left(\beta _6-1\right) y_5^3-2 \left(\beta _6-1\right) \left(5 \beta _6+3\right) y_5^2 \nonumber\\
& \quad +4 \left(\beta _6-1\right) \left(\beta _6+1\right){}^2 y_5+\left(\beta _6-1\right){}^4=0
\end{align}

$\bullet$ For type $ (k=-2,-1,0,1,2) $, we have \(\beta_{5,\min}(\beta_6) = y_5\), where \(y_5\) is a real root of the following equation:
\begin{align}
& y_5^2-2 \left(\beta _6-1\right) y_5+  \left(\beta _6-1\right) \left(\beta _6+3\right)=0
\end{align}

\begin{example}
\textbf{Lower Bound of \( \boldsymbol{\beta_{n}} \) for \( \boldsymbol{(k=-1,1,3,5,7)} \)} \\ [2mm]
For the given normalized GPTE system:
\begin{flalign}
&\quad \left[ \alpha_{1}, \alpha_{2}, \alpha_{3}, \alpha_{4}, \alpha_{5},\alpha_{6} \right]^{k} = \left[ \beta_{1}, \beta_{2}, \beta_{3}, \beta_{4}, \beta_{5},\beta_{6} \right]^{k}, \quad (k = -1,1,3,5,7).&
\end{flalign}
\noindent
Based on Conjecture \textup{\ref{conjecture_beta_n}}, the minimum value of \(\beta_5\) can be derived from the following system of five equations in five variables:
\begin{align}
\label{k1357n1b5e1}
(1+\varepsilon_1^k-y_5^k-\beta _6^k)/2=y_2^k+y_4^k-x_3^k-x_5^k,\quad (k = -1,1,3,5,7).
\end{align}
\end{example}
To solve the system of equations \eqref{k1357n1b5e1} using the Third Generalization of the Girard-Newton Identities, the main derivation process is as follows: Let
\begin{align}
\label{k1357n1b5e5}
P_k=(1+\varepsilon_1^k-y_5^k-\beta _6^k)/2,\quad (k =-1,1,3,5,7).
\end{align}
and
\begin{equation}
\begin{aligned}
\label{k1357n1b5e6}
& G_1=-P_1,\\
& G_3=\left(P_1^3-P_3\right)/3,\\
& G_5=\left(P_1^5-P_5\right)/5,\\
& G_7=\left(P_1^7-P_7\right)/7,\\
& G_{-1}=P_{-1}.
\end{aligned}
\end{equation}
Substituting \eqref{k1357n1b5e5} and \eqref{k1357n1b5e6} into the following expression:
\begin{equation}
\begin{aligned}
\label{k1357n1b5e7}
\begin{vmatrix}
 G_{-1} G_1 G_3+G_3 & G_1 & G_{-1} \\
 G_5 & G_3 & G_1 \\
 G_7 & G_5 & G_1^3+G_3 \\
\end{vmatrix}=0
\end{aligned}
\end{equation}
we have:
\begin{align}
\label{k1357n1b5e8}
& y_5^{10} \left(\beta _6 \varepsilon _1+\beta _6-\varepsilon _1\right)+y_5^9 \left(-3 \beta _6^2 \varepsilon _1+3 \beta _6 \varepsilon _1^2-3 \beta _6^2+3 \beta _6-3 \varepsilon _1^2-3 \varepsilon _1\right) \nonumber \\
& \quad +y_5^8 \left(-13 \beta _6^2 \varepsilon _1+13 \beta _6 \varepsilon _1^2+13 \beta _6 \varepsilon _1\right)\nonumber \\
& \quad+ \cdots +\beta _6 \varepsilon _1 \left(1+\cdots+3 \beta _6 \varepsilon _1^8-3 \varepsilon _1^8+\varepsilon _1^9\right)=0
\end{align}
Setting \(\varepsilon _1 = 0\), we obtain:
\begin{align}
\label{k1357n1b5e9}
& y_5^9-3 \left(\beta _6-1\right) y_5^8+4 \left(\beta _6-1\right) \left(2 \beta _6^2+3 \beta _6+2\right) y_5^6 \nonumber\\
& \quad -6 \left(\beta _6^2-1\right)^2 y_5^5-2 \left(\beta _6-1\right) \left(\beta _6+1\right)^2 \left(3 \beta _6^2+4 \beta _6+3\right) y_5^4  \nonumber\\
& \quad +8 \left(\beta _6^2-1\right)^2 \left(\beta _6^2+1\right) y_5^3 +4 \left(\beta _6-1\right) \beta _6 \left(\beta _6+1\right)^4 y_5^2  \nonumber\\
& \quad -\left(\beta _6^2-1\right)^2 \left(\beta _6^2+3\right) \left(3 \beta _6^2+1\right) y_5+\left(\beta _6-1\right)^3 \left(\beta _6+1\right)^6=0
\end{align}
By solving \eqref{k1357n1b5e9}, we obtain the lookup table of \(\beta_{{5}, \min}(\beta_{6})\) for type \((k=-1,1,3,5,7)\) as follows:
\begin{align}
& \beta_{5,\min}\left(0.93970\right)=0.9396852409\cdots, \nonumber\\
& \beta_{5,\min}\left(0.93971\right)=0.9396752381\cdots, \nonumber\\
& \beta_{5,\min}\left(0.93972\right)=0.9396652329\cdots, \nonumber\\
& \qquad \vdots \nonumber\\
& \beta_{5,\min}\left(0.99997\right)=0.3972719027\cdots, \nonumber\\
& \beta_{5,\min}\left(0.99998\right)=0.3761123774\cdots, \nonumber\\
& \beta_{5,\min}\left(0.99999\right)=0.3422527379\cdots.
\end{align}

\begin{example}
\textbf{Lower Bound of \( \boldsymbol{\beta_{n}} \) for \( \boldsymbol{(k=-3,-1,1,3,5)} \) and others} \\ [1mm]
Based on Conjecture \textup{\ref{conjecture_beta_n}}, and using methods similar to those in the previous examples, we can obtain the following results:
\end{example}
$\bullet$ For type $ (k=-3,-1,1,3,5) $,  we have \(\beta_{5,\min}(\beta_6) = y_5\), where \(y_5\) is a real root of the following equation:
\begin{align}
& y_5^6+2 \left(\beta _6-1\right) y_5^5-\left(\beta _6-1\right)^2 y_5^4-4 \left(\beta _6^3-1\right) y_5^3 -\left(\beta _6^2-1\right)^2 y_5^2\nonumber\\
& \quad +2 \left(\beta _6+1\right) \left(\beta _6^4-1\right) y_5+\left(\beta _6-1\right)^4 \left(\beta _6+1\right)^2=0
\end{align}

$\bullet$ For type $ (k=-1,1,3,5) $,  we have \(\beta_{4,\min}(\beta_5) = y_4\), where \(y_4\) is a real root of the following equation:
\begin{align}
& y_4^6+ 2 \left(\beta _5-1\right) y_4^5-\left(\beta _5-1\right)^2 y_4^4-4 \left(\beta _5^3-1\right) y_4^3-\left(\beta _5^2-1\right){}^2 y_4^2 \nonumber\\
& \quad +2 \left(\beta _5+1\right) \left(\beta _5^4-1\right) y_4+\left(\beta _5-1\right)^4 \left(\beta _5+1\right)^2=0
\end{align}

$\bullet$ For type $ (k=-3,-1,1,3) $, we have \(\beta_{4,\min}(\beta_5) = y_4\), where \(y_4\) is a real root of the following equation:
\begin{align}
& y_4^3-\left(\beta _5-1\right) y_4^2-\left(\beta _5-1\right)^2 y_4+\left(\beta _5+1\right) \left(\beta _5^2-1\right)=0
\end{align}

\subsubsection{Upper and Lower Bounds of \(\alpha_{n}\) and Others}

\begin{conjecture}
\label{conjecture_alpha_n}\textnormal{\textbf{(Exact Bounds of \(\boldsymbol{\alpha_n}\))}}\\[1mm]
Let \(\{\alpha_{1}, \alpha_{2}, \dots, \alpha_{n+1}\}\) and \(\{\beta_{1}, \beta_{2}, \dots, \beta_{n+1}\}\) be two sets of non-negative real numbers bounded above by 1, ordered as follows:
\begin{align}
& 0 \leq \alpha_1 \leq \alpha_2 \leq \cdots \leq \alpha_{n}\leq \alpha_{n+1}=1, \\
& 0 \leq \beta_1 \leq \beta_2 \leq \cdots \leq \beta_{n} \leq \beta_{n+1}<1.
\end{align}
For given distinct integers \(\{k_1, k_2, \dots, k_n\}\), if the following system of equations holds:
\begin{align}
\left[ \alpha_{1}, \alpha_{2}, \dots, \alpha_{n+1} \right]^{k} = \left[ \beta_{1}, \beta_{2}, \dots, \beta_{n+1} \right]^{k}, \quad (k = k_1, k_2, \dots, k_n).
\end{align}
1. Let \( \alpha_{n, \min} (\beta_{n+1},\beta_n) \) denote the lower bound of \( \alpha_{n}\), abbreviated as \( \alpha_{n, \min} \). Then, \( \alpha_{n} = \alpha_{n, \min}\) under the following conditions:
\begin{equation}
\begin{cases}
\label{conjecture_alpha_n_min}
& \beta_{n+1,min} \leq \beta_{n+1} < \alpha_{n+1}=1,\\
& \beta_{n,min} \leq \beta_{n} \leq \beta_{n+1},\\
& \beta_{n-1} =  \beta_{n-2},\quad \beta_{n-3} =  \beta_{n-4},\quad \cdots ,\\
& \alpha_{n} = \alpha_{n-1}, \quad \alpha_{n-2} \geq \alpha_{n-3}, \quad \alpha_{n-4}= \alpha_{n-5},\quad \cdots, \\
& \alpha_1 \geq 0 \quad \textup{(if \(n\) is odd)} \quad \textup{or} \quad \beta_1 \geq 0 \quad \textup{(if \(n\) is even)}.
\end{cases}
\end{equation}
As a special case, if the smaller of \(\alpha_1\) and \(\beta_1\) is set to 0, then \(\alpha_{n} = \alpha_{n, \min}\) under the following conditions:
\begin{equation}
\begin{cases}
\label{conjecture_alpha_n_min0}
& \beta_{n+1,min} \leq \beta_{n+1} < \alpha_{n+1}=1,\\
& \beta_{n,min} \leq \beta_{n} \leq \beta_{n+1},\\
& \beta_{n-1} =  \beta_{n-2},\quad \beta_{n-3} =  \beta_{n-4},\quad \cdots ,\\
& \alpha_{n} \geq \alpha_{n-1}, \quad \alpha_{n-2} = \alpha_{n-3}, \quad \alpha_{n-4}= \alpha_{n-5},\quad \cdots, \\
& \beta_2 \geq \beta_1 \quad \textup{(if \(n\) is odd)} \quad \textup{or} \quad \alpha_2 \geq \alpha_1 \quad \textup{(if \(n\) is even)},\\
& \alpha_1 = 0 \;\,\quad \textup{(if \(n\) is odd)} \quad \textup{or} \quad \beta_1 = 0 \quad\;\; \textup{(if \(n\) is even)}.
\end{cases}
\end{equation}
2. Let \( \alpha_{n, \max}  (\beta_{n+1},\beta_n) \) denote the upper bound of \( \alpha_{n}\), abbreviated as \( \alpha_{n, \max} \). Then, \( \alpha_{n} = \alpha_{n, \max}\) under the following conditions:
\begin{equation}
\begin{cases}
\label{conjecture_alpha_n_max}
& \beta_{n+1,min} \leq \beta_{n+1} < \alpha_{n+1}=1,\\
& \beta_{n,min} \leq \beta_{n} < \beta_{n+1},\\
& \beta_{n-1} =  \beta_{n-2},\quad \beta_{n-3} =  \beta_{n-4},\quad \cdots , \\
& \alpha_{n} \geq \alpha_{n-1}, \quad \alpha_{n-2} = \alpha_{n-3}, \quad \alpha_{n-4}= \alpha_{n-5},\quad \cdots , \\
& \alpha_1 = \varepsilon \quad \textup{(if \(n\) is odd)} \quad \textup{or} \quad \beta_1 = \varepsilon \quad \textup{(if \(n\) is even)}.
\end{cases}
\end{equation}
where \(\varepsilon\) is a positive infinitesimal real number.
\end{conjecture}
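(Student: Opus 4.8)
The plan is to treat Conjecture \ref{conjecture_alpha_n} as a constrained extremization problem on the normalized GPTE variety. Fix the exponent set $\{k_1,\dots,k_n\}$ and, following Definition \ref{definition_Normalized_GPTE}, regard the normalized GPTE system as a semi-algebraic set $\mathcal{M}$ in the coordinates $(\alpha_1,\dots,\alpha_n,\beta_1,\dots,\beta_{n+1})$, with $\alpha_{n+1}=1$, cut out by the $n$ equality constraints $P_{k_j}(\alpha)=P_{k_j}(\beta)$ together with the ordering inequalities $0\le\alpha_1\le\cdots\le\alpha_n\le 1$ and $0\le\beta_1\le\cdots\le\beta_{n+1}<1$. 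With $\beta_{n+1}$ and $\beta_n$ held at prescribed admissible values (the admissibility being governed by Conjectures \ref{conjecture_beta_n+1} and \ref{conjecture_beta_n}), $\alpha_n$ is a continuous coordinate function on the resulting slice of $\overline{\mathcal{M}}$, which is compact, so its minimum and maximum are attained; the task is to locate the extremizing configurations and show they are exactly \eqref{conjecture_alpha_n_min}, \eqref{conjecture_alpha_n_min0}, \eqref{conjecture_alpha_n_max}.

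First I would write the Karush--Kuhn--Tucker conditions at a candidate extremum. Differentiating $\alpha_n-\sum_j\lambda_j\big(P_{k_j}(\alpha)-P_{k_j}(\beta)\big)$ with respect to each coordinate that is not pinned by an active ordering constraint shows that every such free $\beta_i$, and every such free $\alpha_i$ with $i\ne n$, is a root of one and the same sparse (``Müntz'') polynomial $g(t)=\sum_{j} c_j\, t^{k_j-1}$, while $\alpha_n$ is a root of $g(t)-1$; the exponent $k_j-1$ is the correct one even when $k_j=0$, since $\partial_{\alpha_l}\!\big(\prod_i\alpha_i\big)=\alpha_l^{-1}\prod_i\alpha_i$. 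On $(0,\infty)$ the $n$ functions $t^{k_1-1},\dots,t^{k_n-1}$ form a Chebyshev system, so a nontrivial combination $g$ has at most $n-1$ positive roots; but there are far more free coordinates than that, so the ordering inequalities cannot all be slack and consecutive elements must coalesce. Careful bookkeeping of how the KKT system changes each time an inequality becomes active, together with the observation that $\alpha_n$ (a root of $g-1$, not of $g$) can merge with a neighbour only by forcing $\alpha_{n-1}=\alpha_n$ onto the boundary, should pin down the collapse patterns $\alpha_n=\alpha_{n-1}$, $\beta_{n-1}=\beta_{n-2}$, $\alpha_{n-2}=\alpha_{n-3}$, $\dots$, and leave precisely one ``free gap'' plus the infinitesimal slot — exactly the content of the three displayed cases. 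This is the same collapse mechanism already used implicitly to reduce the systems of Conjectures \ref{conjecture_beta_n+1} and \ref{conjecture_beta_n}, and the interlacing pattern of Conjecture \ref{conjecture_Interlacing} is what rules out the spurious orderings that survive the root count; the genuinely new content is only the combinatorics of which index is pushed to $\varepsilon$ (respectively to $0$) and whether the resulting reduced family realizes the minimum or the maximum.

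For the min/max dichotomy I would either run a second-order test along the reduced one-parameter family, or, more cleanly, establish monotonicity: in each collapsed configuration the generalized Girard--Newton Identities (\refIdentity{identity_GNI1}, \refIdentity{identity_GNI2}, \refIdentity{identity_GNI3}) eliminate the auxiliary unknowns and express $\alpha_n$ implicitly through a single polynomial relation in $\alpha_n,\beta_n,\beta_{n+1}$ — exactly as carried out in the worked reductions preceding the conjecture — and one checks the sign of the relevant derivative/discriminant to see which endpoint of the admissible branch is selected, giving $\alpha_{n,\min}$ versus $\alpha_{n,\max}$. The cases with $k_1\notin\mathbb{Z}^+$ are handled by the same scheme with the infinitesimal $\varepsilon$ of the statement carried through: one applies \refIdentity{identity_GNI1} or \refIdentity{identity_GNI2}, obtains a polynomial relation among $\alpha_n,\beta_n,\beta_{n+1},\varepsilon$, and takes $\varepsilon\to 0^+$, mirroring the limit computations shown for $(k=0,1,2,3)$ and $(k=-3,-2,-1,0,1,2,3)$.

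The hard part will be the two obstructions that already keep Identities \ref{identity_GNI1}--\ref{identity_GNI3} at the level of conjecture. First, making the sparse-polynomial root count fully rigorous and uniform in $n$ for an arbitrary exponent pattern — including mixed-sign exponents and the product term $k_j=0$ — and controlling the iterated KKT analysis as more and more inequalities activate. Second, and more serious, promoting the critical configuration produced by the counting argument to a \emph{global} extremum: one must show that the reduced polynomial in $\alpha_n$ has exactly the claimed root inside the admissible interval, with no competing degenerate configuration doing better, and at present this last verification can only be done case by case with computer algebra rather than by a single uniform argument.
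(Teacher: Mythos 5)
The statement you are proving is presented in the paper explicitly as a \emph{conjecture}: the authors state that they ``are unable to provide strict proofs for these bounds'' and support Conjecture~\ref{conjecture_alpha_n} only by numerical computation, worked reductions via the generalized Girard--Newton identities, and consistency with large computer searches. There is therefore no proof in the paper to compare against, and the relevant question is whether your proposal closes the gap. It does not: it is a sensible variational program, but two of its load-bearing steps fail or remain open, and you candidly acknowledge the second yourself.

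First, the attainment argument is wrong as stated when the exponent set contains non-positive integers. You claim the slice of $\overline{\mathcal{M}}$ with $\beta_{n+1},\beta_n$ fixed is compact so the extremum of $\alpha_n$ is attained. But for $k_j<0$ the power sums $P_{k_j}$ blow up as any coordinate tends to $0$, so the constraint set is not closed in the coordinate cube; its closure contains points that no longer satisfy the equations. This is precisely why the conjecture describes the extremal configurations with a ``positive infinitesimal'' $\varepsilon$ --- the bound is a supremum or infimum approached along a degenerating family, not a value attained on the variety. Your KKT analysis at an interior critical point therefore cannot see these extremizers at all; one would instead have to carry out the limit analysis $\varepsilon\to 0^+$ uniformly, as the paper does only in specific worked examples such as $(k=0,1,2,3)$ and $(k=-3,-2,-1,0,1,2,3)$. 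Second, even granting the Chebyshev-system root count (which is fine for distinct integer exponents on $(0,\infty)$, away from the boundary issue above), the count only forces \emph{some} collapse pattern of consecutive coordinates; it does not select among the combinatorially many admissible patterns. The conjecture itself exhibits two distinct minimizing patterns, \eqref{conjecture_alpha_n_min} and \eqref{conjecture_alpha_n_min0}, differing in which inequality ($\alpha_{n-2}\geq\alpha_{n-3}$ versus $\alpha_{n-2}=\alpha_{n-3}$) is active depending on whether $\min(\alpha_1,\beta_1)$ is pinned to $0$, so the local first-order analysis cannot by itself identify the global extremum. That global comparison --- showing no competing degenerate configuration does better --- is the entire content of the conjecture, and your proposal defers it to case-by-case computer algebra, which is essentially the evidential status the statement already has in the paper.
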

\begin{example}\textbf{Upper Bound of \( \boldsymbol{\alpha_{n}} \) for \( \boldsymbol{(k=1,2,3,4)} \)} \\ [2mm]
For the given normalized GPTE system:
\begin{align}
&\quad \left[ \alpha_{1}, \alpha_{2}, \alpha_{3}, \alpha_{4}, \alpha_{5} \right]^{k} = \left[ \beta_{1}, \beta_{2}, \beta_{3}, \beta_{4}, \beta_{5} \right]^{k}, \quad (k = 1,2,3,4).
\end{align}
\noindent
Based on \eqref{conjecture_alpha_n_max} of Conjecture \textup{\ref{conjecture_alpha_n}}, when \( \beta_{5},\beta_{4}\) take specific values, \( \alpha_{4} \) achieves its maximum value \( \alpha_{4, \max}(\beta_{5},\beta_{4} ) \) under the following conditions:
\begin{equation}
\begin{cases}
& \beta_{5,min} \leq \beta_{5} < \alpha_{5}=1,\\
& \beta_{4,min} \leq \beta_{4} \leq \beta_{5},\\
& \beta_3 =  \beta_2, \:\quad \beta_1 =  0, \\
& \alpha_4 \geq \alpha_3, \quad \alpha_2 = \alpha_1.
\end{cases}
\end{equation}
This leads to the following system of four equations in four variables:
\begin{align}
\left[ x_{2}, x_{2}, x_{3}, x_{4}, 1 \right]^{k} = \left[ 0, y_{3}, y_{3}, \beta_{4}, \beta_{5} \right]^{k}, \quad (k = 1,2,3,4).
\end{align}
Specifically,
\begin{align}
\label{k1234a4s1}
2 x_2^k+ x_3^k+ x_4^k+1=2 y_3^k+ \beta_4^k+ \beta_5^k,\quad (k = 1,2,3,4).
\end{align}
\end{example}
To solve the system of equations \eqref{k1234a4s1} using the Second Generalization of the Girard-Newton Identities, the main derivation process is as follows: Let
\begin{align}
\label{k1234a4s2}
& u_k=1+x_4^k-\beta _4^k-\beta _5^k,\quad (k=1,2,3,4)
\end{align}
then we have
\begin{align}
\label{k1234a4s4}
& (u_k+x_3^k)/2=y_3^k-x_2^k,\quad (k=1,2,3,4)
\end{align}
Let
\begin{align}
\label{k1234a4s5}
P_k=y_3^k-x_2^k,\quad (k = 1,2,3,4).
\end{align}
and define
\begin{equation}
\begin{aligned}
\label{k1234a4s6}
& S_1=(P_1)/1,\\
& S_2=(P_2 + S_1 P_1)/2,\\
& S_3=(P_3 + S_1 P_2 + S_2 P_1)/3,\\
& S_4=(P_4 + S_1 P_3 + S_2 P_2+ S_3 P_1)/4.\\
\end{aligned}
\end{equation}
Then, according to Identity \ref{identity_GNI2}, it follows that:
\begin{equation}
\begin{aligned}
\label{k1234a4s7}
\begin{vmatrix}
 S_1 & S_2 \\
 S_2 & S_3 \\
\end{vmatrix}=0\\[1mm]
\begin{vmatrix}
 S_2 & S_3 \\
 S_3 & S_4 \\
\end{vmatrix}=0
\end{aligned}
\end{equation}
Based on \eqref{k1234a4s4}, we replace \eqref{k1234a4s5} with the following expression:
\begin{align}
\label{k1234a4s8}
P_k=(u_k+x_3^k)/2,\quad (k = 1,2,3,4).
\end{align}
Substituting \eqref{k1234a4s8} into \eqref{k1234a4s6} and \eqref{k1234a4s7}, then we obtain:
\begin{align}
\label{k1234a4s9}
& 3 x_3^4+12 u_1 x_3^3+\left(-6 u_1^2-24 u_2\right) x_3^2+\left(16 u_3-4 u_1^3\right) x_3 \nonumber\\
& \quad -u_1^4+16 u_3 u_1-12 u_2^2=0
\end{align}
and
\begin{align}
\label{k1234a4s10}
& 45 x_3^6+90 u_1 x_3^5+\left(153 u_1^2+234 u_2\right) x_3^4+\left(-12 u_1^3-360 u_2 u_1-672 u_3\right) x_3^3 \nonumber\\
&\quad +\left(-21 u_1^4-108 u_2 u_1^2-96 u_3 u_1+180 u_2^2+432 u_4\right) x_3^2 \nonumber\\
&\quad +\left(-6 u_1^5-24 u_2 u_1^3+96 u_3 u_1^2-72 u_2^2 u_1+288 u_4 u_1-192 u_2 u_3\right) x_3 \nonumber\\
&\quad -u_1^6-6 u_2 u_1^4+32 u_3 u_1^3-36 u_2^2 u_1^2+144 u_4 u_1^2-192 u_2 u_3 u_1\nonumber\\
&\quad+72 u_2^3-256 u_3^2+288 u_2 u_4=0
\end{align}
From \eqref{k1234a4s9} and \eqref{k1234a4s10}, we have:
\begin{align}
\label{k1234a4s11}
& u_1^{18}+9 u_2 u_1^{16}-64 u_3 u_1^{15}+84 u_2^2 u_1^{14}-144 u_4 u_1^{14}-192 u_2 u_3 u_1^{13}+292 u_2^3 u_1^{12}  \nonumber\\
& \quad +1536 u_3^2 u_1^{12}-1008 u_2 u_4 u_1^{12}-1920 u_2^2 u_3 u_1^{11}+5184 u_3 u_4 u_1^{11}+1158 u_2^4 u_1^{10}  \nonumber\\
& \quad-1152 u_2 u_3^2 u_1^{10}+5112 u_4^2 u_1^{10}-6912 u_2^2 u_4 u_1^{10}-16768 u_3^3 u_1^9+2240 u_2^3 u_3 u_1^9  \nonumber\\
&\quad +8064 u_2 u_3 u_4 u_1^9+486 u_2^5 u_1^8+26784 u_2^2 u_3^2 u_1^8+25560 u_2 u_4^2 u_1^8 +29241 u_2^9 \nonumber\\
&\quad -15552 u_2^3 u_4 u_1^8-55008 u_3^2 u_4 u_1^8+34560 u_2 u_3^3 u_1^7-39168 u_3 u_4^2 u_1^7 \nonumber\\
&\quad -9792 u_2^4 u_3 u_1^7+49536 u_2^2 u_3 u_4 u_1^7+2772 u_2^6 u_1^6+77632 u_3^4 u_1^6+42048 u_4^3 u_1^6 \nonumber\\
&\quad -49152 u_2^3 u_3^2 u_1^6+80496 u_2^2 u_4^2 u_1^6-34128 u_2^4 u_4 u_1^6-63360 u_2 u_3^2 u_4 u_1^6 \nonumber\\
&\quad -109056 u_2^2 u_3^3 u_1^5-389376 u_2 u_3 u_4^2 u_1^5+36288 u_2^5 u_3 u_1^5+165888 u_3^3 u_4 u_1^5 \nonumber\\
&\quad +13824 u_2^3 u_3 u_4 u_1^5-6588 u_2^7 u_1^4-98880 u_2 u_3^4 u_1^4+126144 u_2 u_4^3 u_1^4 \nonumber\\
&\quad +78912 u_2^4 u_3^2 u_1^4+275184 u_2^3 u_4^2 u_1^4-66816 u_3^2 u_4^2 u_1^4-24624 u_2^5 u_4 u_1^4 \nonumber\\
&\quad +310464 u_2^2 u_3^2 u_4 u_1^4-92160 u_3^5 u_1^3-100096 u_2^3 u_3^3 u_1^3+16128 u_3 u_4^3 u_1^3 \nonumber\\
&\quad  -764928 u_2^2 u_3 u_4^2 u_1^3-16128 u_2^6 u_3 u_1^3+681984 u_2 u_3^3 u_4 u_1^3-499392 u_2^4 u_3 u_4 u_1^3 \nonumber\\
&\quad +1377 u_2^8 u_1^2-435264 u_2^2 u_3^4 u_1^2+1296 u_4^4 u_1^2+114048 u_2^2 u_4^3 u_1^2-49152 u_3^6 \nonumber\\
&\quad +321408 u_2^5 u_3^2 u_1^2+424440 u_2^4 u_4^2 u_1^2-230400 u_2 u_3^2 u_4^2 u_1^2+116640 u_2^6 u_4 u_1^2 \nonumber\\
&\quad+87552 u_3^4 u_4 u_1^2+203904 u_2^3 u_3^2 u_4 u_1^2-313344 u_2 u_3^5 u_1+513408 u_2^4 u_3^3 u_1 \nonumber\\
&\quad+13824 u_2 u_3 u_4^3 u_1-13824 u_3^3 u_4^2 u_1-380160 u_2^3 u_3 u_4^2 u_1-174528 u_2^7 u_3 u_1 \nonumber\\
&\quad +709632 u_2^2 u_3^3 u_4 u_1-611712 u_2^5 u_3 u_4 u_1 +165184 u_2^3 u_3^4+1296 u_2 u_4^4 \nonumber\\
&\quad +31104 u_2^3 u_4^3-1152 u_3^2 u_4^3-127584 u_2^6 u_3^2+198936 u_2^5 u_4^2-125568 u_2^2 u_3^2 u_4^2 \nonumber\\
&\quad +147744 u_2^7 u_4+142848 u_2 u_3^4 u_4 -367200 u_2^4 u_3^2 u_4 =0
\end{align}
Substituting \eqref{k1234a4s2} into \eqref{k1234a4s11}, we obtain:
\begin{align}
\label{k1234a4s12}
& \left(\beta _4^2-2 \beta _5 \beta _4+2 \beta _4+\beta _5^2+2 \beta _5-3\right){}^2 x_4^{14} \nonumber\\
& \quad -\left(\beta _4^2-2 \beta _5 \beta _4+2 \beta _4+\beta _5^2+2 \beta _5-3\right) \left(7 \beta _4^3-7 \beta _5 \beta _4^2+\cdots-19 \beta _5+5\right) x_4^{13} \nonumber\\
& \quad +\left(21 \beta _4^6-35 \beta _5 \beta _4^5+\cdots-145 \beta _5+40\right)x_4^{12}+\cdots \nonumber\\
& \quad + \left(\beta _4-1\right){}^4 \left(\beta _4-\beta _5\right){}^2 \left(\beta _5-1\right){}^4 \left(-7 \beta _5^2 \beta _4^5-2 \beta _5 \beta _4^5+\cdots -5 \beta _5^2+4 \beta _5\right)x_4 \nonumber\\
& \quad +\left(\beta _4-1\right){}^7 \left(\beta _4-\beta _5\right){}^4 \left(\beta _5-1\right){}^7=0
\end{align}
By solving \eqref{k1234a4s12}, we obtain \(\alpha_{4,\max}(\beta_5, \beta_4) = x_4\). For instance, when \(\beta_5 = 0.98521, \beta_4=0.70944 \), using Mathematica's \texttt{NSolve} function and incorporating the constraints from Corollary \ref{corollary_Conservative_Bounds}, namely \( ((\beta _4^4+\beta _5^4-1)/2)^{1/4} <x_4< (\beta _4^4+\beta _5^4-1 )^{1/4} \) , we obtain:
\begin{align}
 & \alpha_{4,\max}(0.98521, 0.70944)= x_4=0.61966775125591320827\cdots.
\end{align}

\begin{example}\textbf{Lower Bound of \( \boldsymbol{\alpha_{n}} \) for \( \boldsymbol{(k=1,2,3,4)} \)} \\ [2mm]
For the given normalized GPTE system:
\begin{align}
&\quad \left[ \alpha_{1}, \alpha_{2}, \alpha_{3}, \alpha_{4}, \alpha_{5} \right]^{k} = \left[ \beta_{1}, \beta_{2}, \beta_{3}, \beta_{4}, \beta_{5} \right]^{k}, \quad (k = 1,2,3,4).
\end{align}
\noindent
Based on \eqref{conjecture_alpha_n_min} of Conjecture \textup{\ref{conjecture_alpha_n}}, when \( \beta_{5},\beta_{4} \) take specific values, \( \alpha_{4} \) achieves its minimum value \( \alpha_{4, \min}(\beta_{5},\beta_{4} ) \) under the following conditions:
\begin{equation}
\begin{cases}
& \beta_{5,min} \leq \beta_{5} < \alpha_{5}=1,\\
& \beta_{4,min} \leq \beta_{4} \leq \beta_{5},\\
& \beta_3 =  \beta_2, \:\quad \beta_1 \geq  0, \\
& \alpha_4 = \alpha_3, \quad \alpha_2 = \alpha_1.
\end{cases}
\end{equation}
This leads to the following system of four equations in four variables:
\begin{align}
\left[ x_{2}, x_{2}, x_{4}, x_{4}, 1 \right]^{k} = \left[ y_{1}, y_{3}, y_{3}, \beta_{4}, \beta_{5} \right]^{k}, \quad (k = 1,2,3,4).
\end{align}
Specifically,
\begin{align}
\label{k1234a4e1}
2 x_2^k+ 2 x_4^k+1=y_1^k+2 y_3^k+ \beta_4^k+ \beta_5^k,\quad (k = 1,2,3,4).
\end{align}
\end{example}
To solve the system of equations \eqref{k1234a4e1} using the Second Generalization of the Girard-Newton Identities, the main derivation process is as follows: Let
\begin{align}
\label{k1234a4e2}
& u_k=(1+2 x_4^k-\beta _4^k-\beta _5^k)/2,\quad (k=1,2,3,4)
\end{align}
we have
\begin{align}
\label{k1234a4e4}
& (u_k-y_1^k)/2=y_3^k-x_2^k,\quad (k=1,2,3,4)
\end{align}
Let
\begin{align}
\label{k1234a4e5}
P_k=y_3^k-x_2^k,\quad (k = 1,2,3,4).
\end{align}
and define
\begin{equation}
\begin{aligned}
\label{k1234a4e6}
& S_1=(P_1)/1,\\
& S_2=(P_2 + S_1 P_1)/2,\\
& S_3=(P_3 + S_1 P_2 + S_2 P_1)/3,\\
& S_4=(P_4 + S_1 P_3 + S_2 P_2+ S_3 P_1)/4.\\
\end{aligned}
\end{equation}
Then, according to Identity \ref{identity_GNI2}, it follows that:
\begin{equation}
\begin{aligned}
\label{k1234a4e7}
\begin{vmatrix}
 S_1 & S_2 \\
 S_2 & S_3 \\
\end{vmatrix}=0\\[1mm]
\begin{vmatrix}
 S_2 & S_3 \\
 S_3 & S_4 \\
\end{vmatrix}=0
\end{aligned}
\end{equation}
Based on \eqref{k1234a4e4}, we replace \eqref{k1234a4e5} with the following expression:
\begin{align}
\label{k1234a4e8}
P_k=u_k-y_1^k/2,\quad (k = 1,2,3,4).
\end{align}
Substituting \eqref{k1234a4e8} into \eqref{k1234a4e6} and \eqref{k1234a4e7}, then we obtain:
\begin{align}
\label{k1234a4e9}
& -32 u_1^3 y_1+24 u_1^2 y_1^2+24 u_1 y_1^3-48 u_2 y_1^2+32 u_3 y_1+16 u_1^4-64 u_3 u_1 \nonumber\\
& \quad +48 u_2^2-3 y_1^4=0
\end{align}
and
\begin{align}
\label{k1234a4e10}
& -192 u_1^5 y_1+144 u_1^4 y_1^2+288 u_1^3 y_1^3-384 u_2 u_1^3 y_1+252 u_1^2 y_1^4\nonumber\\
& \quad -288 u_2 u_1^2 y_1^2+768 u_3 u_1^2 y_1-108 u_1 y_1^5-288 u_2 u_1 y_1^3-1152 u_3 u_1 y_1^2 \nonumber\\
& \quad-576 u_2^2 u_1 y_1+1152 u_4 u_1 y_1+396 u_2 y_1^4-576 u_3 y_1^3+1008 u_2^2 y_1^2+288 u_4 y_1^2 \nonumber\\
& \quad-768 u_2 u_3 y_1+64 u_1^6+192 u_2 u_1^4-512 u_3 u_1^3+576 u_2^2 u_1^2-1152 u_4 u_1^2\nonumber\\
& \quad +1536 u_2 u_3 u_1-576 u_2^3+1024 u_3^2-1152 u_2 u_4-9 y_1^6=0
\end{align}
From \eqref{k1234a4e9} and \eqref{k1234a4e10}, we have:
\begin{align}
\label{k1234a4e11}
&
512 u_1^{18}-2304 u_2 u_1^{16}-8192 u_3 u_1^{15}+10752 u_2^2 u_1^{14}+9216 u_4 u_1^{14}-42048 u_4^3 u_1^6\nonumber\\
&\quad -18688 u_2^3 u_1^{12}+49152 u_3^2 u_1^{12}-32256 u_2 u_4 u_1^{12}-61440 u_2^2 u_3 u_1^{11} \nonumber\\
&\quad -82944 u_3 u_4 u_1^{11}+37056 u_2^4 u_1^{10}+18432 u_2 u_3^2 u_1^{10}+40896 u_4^2 u_1^{10}\nonumber\\
&\quad +110592 u_2^2 u_4 u_1^{10}-134144 u_3^3 u_1^9-35840 u_2^3 u_3 u_1^9+64512 u_2 u_3 u_4 u_1^9\nonumber\\
&\quad -7776 u_2^5 u_1^8+214272 u_2^2 u_3^2 u_1^8-102240 u_2 u_4^2 u_1^8-124416 u_2^3 u_4 u_1^8\nonumber\\
&\quad +220032 u_3^2 u_4 u_1^8-138240 u_2 u_3^3 u_1^7-78336 u_3 u_4^2 u_1^7-78336 u_2^4 u_3 u_1^7\nonumber\\
&\quad -198144 u_2^2 u_3 u_4 u_1^7+22176 u_2^6 u_1^6+155264 u_3^4 u_1^6+12288 u_2 u_3 u_1^{13}\nonumber\\
&\quad +160992 u_2^2 u_4^2 u_1^6+136512 u_2^4 u_4 u_1^6-126720 u_2 u_3^2 u_4 u_1^6-218112 u_2^2 u_3^3 u_1^5\nonumber\\
&\quad +389376 u_2 u_3 u_4^2 u_1^5-145152 u_2^5 u_3 u_1^5-165888 u_3^3 u_4 u_1^5+27648 u_2^3 u_3 u_4 u_1^5\nonumber\\
&\quad +26352 u_2^7 u_1^4+98880 u_2 u_3^4 u_1^4+63072 u_2 u_4^3 u_1^4+157824 u_2^4 u_3^2 u_1^4\nonumber\\
&\quad -275184 u_2^3 u_4^2 u_1^4-33408 u_3^2 u_4^2 u_1^4-49248 u_2^5 u_4 u_1^4-310464 u_2^2 u_3^2 u_4 u_1^4\nonumber\\
&\quad -46080 u_3^5 u_1^3+100096 u_2^3 u_3^3 u_1^3-4032 u_3 u_4^3 u_1^3-382464 u_2^2 u_3 u_4^2 u_1^3\nonumber\\
&\quad -32256 u_2^6 u_3 u_1^3+340992 u_2 u_3^3 u_4 u_1^3+499392 u_2^4 u_3 u_4 u_1^3+2754 u_2^8 u_1^2\nonumber\\
&\quad -217632 u_2^2 u_3^4 u_1^2+162 u_4^4 u_1^2-28512 u_2^2 u_4^3 u_1^2-321408 u_2^5 u_3^2 u_1^2-81 u_2 u_4^4\nonumber\\
&\quad +212220 u_2^4 u_4^2 u_1^2+57600 u_2 u_3^2 u_4^2 u_1^2-116640 u_2^6 u_4 u_1^2-21888 u_3^4 u_4 u_1^2\nonumber\\
&\quad +101952 u_2^3 u_3^2 u_4 u_1^2+78336 u_2 u_3^5 u_1+256704 u_2^4 u_3^3 u_1+1728 u_2 u_3 u_4^3 u_1\nonumber\\
&\quad -1728 u_3^3 u_4^2 u_1+95040 u_2^3 u_3 u_4^2 u_1+174528 u_2^7 u_3 u_1-177408 u_2^2 u_3^3 u_4 u_1\nonumber\\
&\quad -305856 u_2^5 u_3 u_4 u_1-29241 u_2^9-6144 u_3^6-41296 u_2^3 u_3^4+3888 u_2^3 u_4^3\nonumber\\
&\quad +72 u_3^2 u_4^3-63792 u_2^6 u_3^2-49734 u_2^5 u_4^2-15696 u_2^2 u_3^2 u_4^2+73872 u_2^7 u_4\nonumber\\
&\quad +196608 u_2^3 u_3^2 u_1^6+17856 u_2 u_3^4 u_4+91800 u_2^4 u_3^2 u_4=0
\end{align}
Substituting \eqref{k1234a4e2} into \eqref{k1234a4e11}, we obtain:
\begin{align}
\label{k1234a4e12}
& 16 x_4^{12}-32 \left(5 \beta _4+5 \beta _5-4\right)x_4^{11}\nonumber\\
& \quad+16 \left(45 \beta _4^2+55 \beta _5 \beta _4-35 \beta _4+45 \beta _5^2-35 \beta _5-9\right)x_4^{10}+\cdots \nonumber\\
& \quad +2 \left(12 \beta _5 \beta _4^{10}+4 \beta _4^{10}+\cdots+31 \beta _5^5-7 \beta _5^4\right) x_4 \nonumber\\
& \quad + (9 \beta _5^2 \beta _4^{10}+\cdots+5 \beta _5^6-\beta _5^5)=0
\end{align}
By solving \eqref{k1234a4e12}, we obtain \(\alpha_{4,\min}(\beta_5, \beta_4) = x_4\). For instance, when \(\beta_5 = 0.98521, \beta_4=0.70944 \), using Mathematica's \texttt{NSolve} function and incorporating the constraints from Corollary \ref{corollary_Conservative_Bounds}, namely \( ((\beta _4^4+\beta _5^4-1)/2)^{1/4} <x_4< (\beta _4^4+\beta _5^4-1 )^{1/4} \) , we obtain:
\begin{align}
 & \alpha_{4,\min}(0.98521, 0.70944)= x_4= 0.57385447758791943219\cdots.
\end{align}

\begin{example}\textbf{Lower Bound of \( \boldsymbol{\alpha_{n}} \) for \( \boldsymbol{(k=1,2,3,4), \beta_1=0} \)} \\ [2mm]
For the given normalized GPTE system:
\begin{align}
&\quad \left[ \alpha_{1}, \alpha_{2}, \alpha_{3}, \alpha_{4}, \alpha_{5} \right]^{k} = \left[ 0, \beta_{2}, \beta_{3}, \beta_{4}, \beta_{5} \right]^{k}, \quad (k = 1,2,3,4).
\end{align}
\noindent
Based on \eqref{conjecture_alpha_n_min0} of Conjecture \textup{\ref{conjecture_alpha_n}}, when \( \beta_{5},\beta_{4} \) take specific values, \( \alpha_{4} \) achieves its minimum value \( \alpha_{4, \min}(\beta_{5},\beta_{4} ) \) under the following conditions:
\begin{equation}
\begin{cases}
& \beta_{5,min} \leq \beta_{5} < \alpha_{5}=1,\\
& \beta_{4,min} \leq \beta_{4} \leq \beta_{5},\\
& \beta_3 =  \beta_2, \:\quad \beta_1 =  0, \\
& \alpha_4 = \alpha_3, \quad \alpha_2 \geq \alpha_1.
\end{cases}
\end{equation}
This leads to the following system of four equations in four variables:
\begin{align}
\left[ x_{1}, x_{2}, x_{4}, x_{4}, 1 \right]^{k} = \left[ 0, y_{3}, y_{3}, \beta_{4}, \beta_{5} \right]^{k}, \quad (k = 1,2,3,4).
\end{align}
Specifically,
\begin{align}
\label{k1234b0a4e1}
x_1^k+ x_2^k+ 2 x_4^k+1=2 y_3^k+ \beta_4^k+ \beta_5^k,\quad (k = 1,2,3,4).
\end{align}
\end{example}
To solve the system of equations \eqref{k1234b0a4e1} using the Second Generalization of the Girard-Newton Identities, the main derivation process is as follows: Let
\begin{align}
\label{k1234b0a4e2}
& u_k=(1+2 x_4^k-\beta _4^k-\beta _5^k)/2,\quad (k=1,2,3,4)
\end{align}
we have
\begin{align}
\label{k1234b0a4e4}
& 2 u_k-2 y_3^k=-x_1^k-x_2^k,\quad (k=1,2,3,4)
\end{align}
Let
\begin{align}
\label{k1234b0a4e5}
P_k=-x_1^k-x_2^k,\quad (k = 1,2,3,4).
\end{align}
and define
\begin{equation}
\begin{aligned}
\label{k1234b0a4e6}
& S_1=(P_1)/1,\\
& S_2=(P_2 + S_1 P_1)/2,\\
& S_3=(P_3 + S_1 P_2 + S_2 P_1)/3,\\
& S_4=(P_4 + S_1 P_3 + S_2 P_2+ S_3 P_1)/4.\\
\end{aligned}
\end{equation}
Then, according to Identity \ref{identity_GNI2}, it follows that:
\begin{equation}
\begin{aligned}
\label{k1234b0a4e7}
 & S_3 =0\\
 & S_4 =0
\end{aligned}
\end{equation}
Based on \eqref{k1234b0a4e4}, we replace \eqref{k1234b0a4e5} with the following expression:
\begin{align}
\label{k1234b0a4e8}
P_k=2 u_k-2 y_3^k,\quad (k = 1,2,3,4).
\end{align}
Substituting \eqref{k1234b0a4e8} into \eqref{k1234b0a4e6} and \eqref{k1234b0a4e7}, then we obtain:
\begin{align}
\label{k1234b0a4e9}
& -6 u_1^2 y_3+3 u_1 y_3^2-3 u_2 y_3+2 u_1^3+3 u_2 u_1+u_3=0
\end{align}
and
\begin{align}
\label{k1234b0a4e10}
& -16 u_1^3 y_3+12 u_1^2 y_3^2-24 u_2 u_1 y_3+6 u_2 y_3^2-8 u_3 y_3+4 u_1^4+12 u_2 u_1^2\nonumber\\
&\quad +8 u_3 u_1+3 u_2^2+3 u_4=0
\end{align}
From \eqref{k1234a4e9} and \eqref{k1234a4e10}, we have:
\begin{align}
\label{k1234b0a4e11}
& 16 u_1^{10}-96 u_3 u_1^7+72 u_2^2 u_1^6-72 u_4 u_1^6+144 u_2 u_3 u_1^5-72 u_2^3 u_1^4+144 u_3^2 u_1^4 \nonumber\\
&\quad -216 u_2^2 u_3 u_1^3+72 u_3 u_4 u_1^3+81 u_2^4 u_1^2-144 u_2 u_3^2 u_1^2-27 u_4^2 u_1^2\nonumber\\
&\quad -54 u_2^2 u_4 u_1^2-64 u_3^3 u_1+180 u_2^3 u_3 u_1+108 u_2 u_3 u_4 u_1-54 u_2^5 \nonumber\\
&\quad +36 u_2^2 u_3^2-54 u_2^3 u_4=0
\end{align}
Substituting \eqref{k1234b0a4e2} into \eqref{k1234b0a4e11}, we obtain:
\begin{align}
\label{k1234b0a4e12}
& 4 x_4^7 +\left(-12 \beta _4-12 \beta _5+8\right) x_4^6 \nonumber\\
& \quad +\left(12 \beta _4^2+36 \beta _5 \beta _4-24 \beta _4+12 \beta _5^2-24 \beta _5+12\right) x_4^5 \nonumber\\
& \quad +\left(-4 \beta _4^3-36 \beta _5 \beta _4^2+24 \beta _4^2+\cdots +24 \beta _5^2-11 \beta _5-9\right) x_4^4 \nonumber\\
& \quad +\left(12 \beta _5 \beta _4^3-8 \beta _4^3+36 \beta _5^2 \beta _4^2 +\cdots -8 \beta _5^3-4 \beta _5^2+12 \beta _5\right) x_4^3 \nonumber\\
& \quad +\left(-12 \beta _5^2 \beta _4^3+8 \beta _5 \beta _4^3+4 \beta _4^3+\cdots -14 \beta _5 \beta _4+4 \beta _5^3-4 \beta _5^2\right) x_4^2 \nonumber\\
& \quad +\left(4 \beta _5^3 \beta _4^3-4 \beta _5 \beta _4^3-4 \beta _5^2 \beta _4^2+4 \beta _5 \beta _4^2-4 \beta _5^3 \beta _4+4 \beta _5^2 \beta _4\right) x_4 \nonumber\\
& \quad -\left(\beta _4-1\right) \beta _4^2 \left(\beta _5-1\right) \beta _5^2=0
\end{align}
By solving \eqref{k1234b0a4e12}, we obtain \(\alpha_{4,\min}(\beta_5, \beta_4) = x_4\). For instance, when \(\beta_5 = 0.98521, \beta_4=0.70944 \), using Mathematica's \texttt{NSolve} function and incorporating the constraints from Corollary \ref{corollary_Conservative_Bounds}, namely \( ((\beta _4^4+\beta _5^4-1)/2)^{1/4} <x_4< (\beta _4^4+\beta _5^4-1 )^{1/4} \) , we obtain:
\begin{align}
\label{k1234b0a4e13}
 & \alpha_{4,\min}(0.98521, 0.70944)= x_4= 0.57438547001705262893\cdots.
\end{align}

\indent As described in previous examples for $\beta_{n+1,min}$ or $\beta_{n,min}(\beta_{n+1})$, we can also directly employ numerical computation methods to obtain  \(\alpha_{4,\min}(\beta_5, \beta_4) = x_4\). For instance, when \(\beta_5 = 0.98521, \beta_4=0.70944 \), by using Mathematica's \texttt{NSolve} function to solve \eqref{k1234b0a4e1}, we obtain:
\begin{equation}
\begin{cases}
& y_3= 0.32739732664248750257\cdots,\\
& x_1= 0.03572427752148547551\cdots,\\
& x_2= 0.16494943572938427176\cdots,\\
& x_4= 0.57438547001705262893\cdots.
\end{cases}
\end{equation}
Then we arrive at the same result in \eqref{k1234b0a4e13}.

\begin{example}\textbf{Upper Bound of \( \boldsymbol{\alpha_{n}} \) for \( \boldsymbol{(k=1,2,4)} \)} \\ [2mm]
For the given normalized GPTE system:
\begin{align}
&\quad \left[ \alpha_{1}, \alpha_{2}, \alpha_{3}, \alpha_{4} \right]^{k} = \left[ \beta_{1}, \beta_{2}, \beta_{3}, \beta_{4} \right]^{k}, \quad (k = 1,2,4).
\end{align}
\noindent
Based on \eqref{conjecture_alpha_n_max} of Conjecture \textup{\ref{conjecture_alpha_n}}, when \( \beta_{4},\beta_{3} \) take specific values, \( \alpha_{3} \) achieves its maximum value \( \alpha_{3, \max}(\beta_{4},\beta_{3} ) \) under the following conditions:
\begin{equation}
\begin{cases}
& \beta_{4,min} \leq \beta_{4} < \alpha_{4}=1,\\
& \beta_{3,min} \leq \beta_{3} \leq \beta_{4},\\
& \beta_2 =  \beta_1, \\
& \alpha_3 \geq \alpha_2, \quad \alpha_1 = 0.
\end{cases}
\end{equation}
This leads to the following system of three equations in three variables:
\begin{align}
\left[ 0, x_{2}, x_{3}, 1 \right]^{k} = \left[ y_{2}, y_{2}, \beta_{3}, \beta_{4} \right]^{k}, \quad (k = 1,2,4).
\end{align}
Specifically,
\begin{align}
\label{k124a3s1}
x_2^k+ x_3^k+1=2 y_2^k+ \beta_3^k+ \beta_4^k,\quad (k = 1,2,4).
\end{align}
\end{example}
To solve the system of equations \eqref{k124a3s1} using the Second Generalization of the Girard-Newton Identities, the main derivation process is as follows: Let
\begin{align}
\label{k124a3s2}
& u_k=1+x_3^k-\beta _3^k-\beta _4^k,\quad (k=1,2,4)
\end{align}
we have
\begin{align}
\label{k124a3s4}
& u_k-2 y_2^k=-x_2^k,\quad (k=1,2,4)
\end{align}
Let
\begin{align}
\label{k124a3s5}
P_k=-x_2^k,\quad (k = 1,2,3,4).
\end{align}
and define
\begin{equation}
\begin{aligned}
\label{k124a3s6}
& S_1=(P_1)/1,\\
& S_2=(P_2 + S_1 P_1)/2,\\
& S_3=(P_3 + S_1 P_2 + S_2 P_1)/3,\\
& S_4=(P_4 + S_1 P_3 + S_2 P_2+ S_3 P_1)/4.\\
\end{aligned}
\end{equation}
Then, according to Identity \ref{identity_GNI2}, it follows that:
\begin{align}
\label{k124a3s7}
 S_2=0 \\
\label{k124a3s8}
 S_3=0 \\
\label{k124a3s9}
 S_4=0 
\end{align}
Substituting \eqref{k124a3s6} into \eqref{k124a3s8}, we obtain:
\begin{align}
\label{k124a3s9b}
& P_1^3+3 P_2 P_1+2 P_3=0
\end{align}
From \eqref{k124a3s9b}, we have:
\begin{align}
\label{k124a3s10}
& P_3=(-P_1^3-3 P_2 P_1)/2
\end{align}
Based on \eqref{k124a3s4} and \eqref{k124a3s10}, we replace \eqref{k124a3s5} with the following expression:
\begin{equation}
\label{k124a3s11}
\begin{cases}
P_k=2 u_k-2 y_3^k,\quad (k = 1,2,4),\\
P_3=(-P_1^3-3 P_2 P_1)/2.
\end{cases}
\end{equation}
Substituting \eqref{k124a3s11} into \eqref{k124a3s6}, \eqref{k124a3s7} and \eqref{k124a3s9}, then we obtain:
\begin{align}
\label{k124a3s12}
& -4 u_1 y_2+u_1^2+u_2+2 y_2^2=0
\end{align}
and
\begin{align}
\label{k124a3s13}
& 8 u_1^3 y_2-20 u_1^2 y_2^2+16 u_1 y_2^3+8 u_2 u_1 y_2-12 u_2 y_2^2-u_1^4-2 u_2 u_1^2 \nonumber \\
& \quad +u_2^2+2 u_4=0
\end{align}
Form \eqref{k124a3s12} and \eqref{k124a3s13}, we have
\begin{align}
\label{k124a3s14}
& y_2=\frac{7 u_1^4+2 u_2 u_1^2-7 u_2^2-2 u_4}{24 u_1 \left(u_1^2-u_2\right)}
\end{align}
Substituting \eqref{k124a3s14} into \eqref{k124a3s7} or \eqref{k124a3s9}, we have
\begin{align}
\label{k124a3s15}
& u_1^8-20 u_2 u_1^6+50 u_2^2 u_1^4+68 u_4 u_1^4-76 u_2^3 u_1^2-104 u_2 u_4 u_1^2+49 u_2^4\nonumber \\
& \quad +4 u_4^2+28 u_2^2 u_4=0
\end{align}
Substituting \eqref{k124a3s2} into \eqref{k124a3s15}, we obtain:
\begin{align}
\label{k124a3s16}
& 4 \left(\beta _3+\beta _4-1\right){}^2 x_3^6 \nonumber \\
& \quad -4 \left(\beta _3+\beta _4-1\right) \left(2 \beta _3^2+\beta _4 \beta _3-\beta _3+2 \beta _4^2-\beta _4-1\right) x_3^5 \nonumber \\
& \quad + \left(\beta _3^2-7 \beta _4 \beta _3+7 \beta _3+\beta _4^2+7 \beta _4-8\right) \left(\beta _3^2+\beta _4 \beta _3-\beta _3+\beta _4^2-\beta _4\right) x_3^4\nonumber \\
& \quad +\cdots + \left(-6 \beta _4 \beta _3^6+6 \beta _3^6+\cdots+12 \beta _4^2-8 \beta _4\right) x_3 \nonumber \\
& \quad +(9 \beta _4^2 \beta _3^6-6 \beta _4 \beta _3^6+\cdots-8 \beta _4^3+4 \beta _4^2)=0
\end{align}
By solving \eqref{k124a3s16}, we obtain \(\alpha_{3,\max}(\beta_4, \beta_3) = x_3\). For instance, when \(\beta_5 = 0.98765, \beta_4=0.87654 \), using Mathematica's \texttt{NSolve} function and incorporating the constraints from Corollary \ref{corollary_Conservative_Bounds}, namely \( ((\beta _3^4+\beta _4^4-1)/2)^{1/4} <x_3< (\beta _3^4+\beta _4^4-1 )^{1/4} \) , we obtain:
\begin{align}
 & \alpha_{3,\max}(0.98765, 0.87654)=x_3= 0.85788487898429062056\cdots.
\end{align}

\begin{example}\textbf{Lower Bound of \( \boldsymbol{\alpha_{n}} \) for \( \boldsymbol{(k=1,2,4)} \)} \\ [2mm]
For the given normalized GPTE system:
\begin{align}
&\quad \left[ \alpha_{1}, \alpha_{2}, \alpha_{3}, \alpha_{4} \right]^{k} = \left[ \beta_{1}, \beta_{2}, \beta_{3}, \beta_{4} \right]^{k}, \quad (k = 1,2,4).
\end{align}
\noindent
Based on \eqref{conjecture_alpha_n_min} of Conjecture \textup{\ref{conjecture_alpha_n}}, when \( \beta_{4},\beta_{3}, \) take specific values, \( \alpha_{3} \) achieves its minimum value \( \alpha_{3, \min}(\beta_{4},\beta_{3} ) \) under the following conditions:
\begin{equation}
\begin{cases}
& \beta_{4,min} \leq \beta_{4} < \alpha_{4}=1,\\
& \beta_{3,min} \leq \beta_{3} \leq \beta_{4},\\
& \beta_2 =  \beta_1, \\
& \alpha_3 = \alpha_2, \quad \alpha_1 \geq 0.
\end{cases}
\end{equation}
This leads to the following system of three equations in three variables:
\begin{align}
\label{k124a3e1}
x_1^k+ 2x_3^k+1=2 y_2^k+ \beta_3^k+ \beta_4^k,\quad (k = 1,2,4).
\end{align}
\end{example}
To solve the system of equations \eqref{k124a3e1} using the Second Generalization of the Girard-Newton Identities, the main derivation process is as follows: Let
\begin{align}
\label{k124a3e2}
& u_k=1+2 x_3^k-\beta _3^k-\beta _4^k,\quad (k=1,2,4)
\end{align}
Then, according to Identity \ref{identity_GNI2}, we have
\begin{align}
\label{k124a3e15}
& u_1^8-20 u_2 u_1^6+50 u_2^2 u_1^4+68 u_4 u_1^4-76 u_2^3 u_1^2-104 u_2 u_4 u_1^2+49 u_2^4\nonumber \\
& \quad +4 u_4^2+28 u_2^2 u_4=0
\end{align}
Substituting \eqref{k124a3e2} into \eqref{k124a3e15}, we obtain:
\begin{align}
\label{k124a3e16}
& 24 x_3^6 -60 \left(\beta _3+\beta _4-1\right) x_3^5 \nonumber \\
& \quad +\left(24 \beta _3^2+47 \beta _4 \beta _3-71 \beta _3+24 \beta _4^2-71 \beta _4+23\right) x_3^4 \nonumber \\
& \quad +\cdots + \left(12 \beta _3^5+60 \beta _4 \beta _3^4+108 \beta _4^2 \beta _3^3+\cdots-20 \beta _4^3-8 \beta _4^2+16 \beta _4\right) x_3 \nonumber \\
& \quad +(-9 \beta _4 \beta _3^5-3 \beta _3^5-18 \beta _4^2 \beta _3^4+\cdots +3 \beta _4^4+4 \beta _4^3-4 \beta _4^2)=0
\end{align}
By solving \eqref{k124a3e16}, we obtain \(\alpha_{3,\min}(\beta_4, \beta_3) = x_3\). For instance, when \(\beta_5 = 0.98765, \beta_4=0.87654 \), using Mathematica's \texttt{NSolve} function and incorporating the constraints from Corollary \ref{corollary_Conservative_Bounds}, namely \( ((\beta _3^4+\beta _4^4-1)/2)^{1/4} <x_3< (\beta _3^4+\beta _4^4-1 )^{1/4} \) , we obtain:
\begin{align}
 & \alpha_{3,\min}(0.98765, 0.87654)=x_3= 0.80376098021626785106\cdots.
\end{align}

The preceding examples in this subsection have focused on the upper and lower bounds of \(\alpha_n\). We now present conjectures related to the upper and lower bounds of \(\alpha_{n-1}\). To illustrate these conjectures, we consider two examples with \( (k = k_1, k_2, \dots, k_6)\), one of which has \(\beta_1\) specified as 0.
\begin{example}
\textbf{Upper and Lower Bound for \( \boldsymbol{(k=k_1,k_2,\cdots,k_6), \beta_1=0} \)} \\ [2mm]
Consider the normalized GPTE system defined by
\begin{align}
\left[ \alpha_{1}, \alpha_{2}, \alpha_{3}, \alpha_{4}, \alpha_{5}, \alpha_{6}, 1 \right]^{k} = \left[ 0, \beta_{2}, \beta_{3}, \beta_{4}, \beta_{5}, \beta_{6}, \beta_{7} \right]^{k}, \nonumber & \\
(k = k_1,k_2,k_3,k_4,k_5,k_6) &
\end{align}
where all k>0.
\begin{enumerate}

\item \textbf{Upper bound of $\boldsymbol{\beta_{n+1}}$} \\[1mm]
By Conjecture \ref{conjecture_beta_n+1}, the maximum value of \( \beta_{7} \) is given by
\begin{align}
\beta_{7, \max} = 1 - \delta,
\end{align}
where \(\delta\) is a positive infinitesimal real number.

\item \textbf{Lower bound of $\boldsymbol{\beta_{n+1}}$} \\[1mm]
By Conjecture \ref{conjecture_beta_n+1}, \( \beta_{7} \) attains its minimum value \( \beta_{7, \min} \) under the following constraints:
\begin{equation}
\begin{cases}
& \beta_7 =  \beta_6, \:\quad\beta_5 =  \beta_4, \:\quad \beta_3 =  \beta_2,  \\
& \alpha_6 = \alpha_5, \quad \alpha_4 = \alpha_3, \quad \alpha_2 = \alpha_1.
\end{cases}
\end{equation}
That is, the minimum value \( \beta_{7, \min}=y_7 \) is determined by solving:
\begin{align}
2 x_2^k+2 x_4^k+ 2 x_6^k+1=2 y_3^k+2 y_5^k+ 2 y_7^k,\nonumber & \\
(k = k_1,k_2,k_3,k_4,k_5,k_6) &
\end{align}
For $(k=1,2,3,4,5,6)$, we have
\begin{align}
\beta_{7, \min} = y_7 = 0.95048443395120956311\cdots = \sin^2\left(\frac{6 \pi}{14}\right)
\end{align}
For $(k=1,3,5,7,9,11)$, we have
\begin{align}
\beta_{7, \min} = y_7 = 0.97094181742605202715\cdots = \cos\left(\frac{\pi}{13}\right)
\end{align}

\item \textbf{Upper bound of $\boldsymbol{\beta_n}$} \\[1mm]
By Conjecture \ref{conjecture_beta_n}, the maximum value of \( \beta_{6} \) is given by:
\begin{align}
 \beta_{6, \max}(\beta_7) = \beta_7.
\end{align}

\item \textbf{Lower bound of $\boldsymbol{\beta_n}$} \\[1mm]
By Conjecture \ref{conjecture_beta_n}, when $ \beta_7 $ takes a specific value, $ \beta_6 $ attains its minimum value $ \beta_{6, \min}(\beta_7 ) $ under the following constraints:
\begin{equation}
\begin{cases}
& \beta_{7,\min} \leq \beta_{7} < \alpha_{7}=1,\\
& \beta_{6} \leq \beta_{7}, \:\quad \beta_5 =  \beta_4, \:\quad \beta_3 =  \beta_2,  \\
& \alpha_6 = \alpha_5, \quad \alpha_4 = \alpha_3, \quad \alpha_2 = \alpha_1.
\end{cases}
\end{equation}
That is, the minimum value $ \beta_{6, \min}(\beta_7)=y_6 $ is determined by solving:
\begin{align}
2 x_2^k+2 x_4^k+ 2 x_6^k+1=2 y_3^k+2 y_5^k+ y_6^k+ \beta_7^k,\nonumber & \\
(k = k_1,k_2,k_3,k_4,k_5,k_6) &
\end{align}
For $(k=1,2,3,4,5,6)$, when $ \beta_7 =0.98765$, we have
\begin{align}
\beta_{6, \min}(0.98765) = y_6 = 0.77308260094468773371\cdots
\end{align}
For $(k=1,3,5,7,9,11)$, when $ \beta_7 =0.98765$, we have
\begin{align}
\beta_{6, \min}(0.98765) = y_6  = 0.94191283060752019969\cdots
\end{align}

\item \textbf{Upper bound of $\boldsymbol{\alpha_n}$} \\[1mm]
By \eqref{conjecture_alpha_n_max} of Conjecture \ref{conjecture_alpha_n}, when \( \beta_{7},\beta_{6}, \) take specific values, \( \alpha_{6} \) attains its maximum value \( \alpha_{6, \max}(\beta_{7},\beta_{6} ) \) under the following constraints:
\begin{equation}
\begin{cases}
& \beta_{7,\min} \leq \beta_{7} < \alpha_{7}=1,\\
& \beta_{6,\min} \leq \beta_{6} \leq \beta_{7},\\
& \beta_5 =  \beta_4, \:\quad \beta_3 =  \beta_2, \:\quad \beta_1 =0 \\
& \alpha_6 \geq \alpha_5, \quad \alpha_4 = \alpha_3, \quad \alpha_2 = \alpha_1.
\end{cases}
\end{equation}
That is, the maximum value \( \alpha_{6, \max}(\beta_{7},\beta_{6})=x_6 \) is determined by solving:
\begin{align}
2 x_2^k+2 x_4^k+ x_5^k+ x_6^k+1=2 y_3^k+2 y_5^k+ \beta_6^k+ \beta_7^k,\nonumber & \\
(k = k_1,k_2,k_3,k_4,k_5,k_6) &
\end{align}
For $(k=1,2,3,4,5,6)$, when $ \beta_7 =0.98765,\beta_6 =0.87654 $, we have
\begin{align}
\alpha_{6, \max}(0.98765,0.87654) = x_6 = 0.82171624363559806540\cdots
\end{align}
For $(k=1,3,5,7,9,11)$, when $ \beta_7 =0.98765,\beta_6 =0.96789 $, we have
\begin{align}
\alpha_{6, \max}(0.98765,0.96789) = x_6 = 0.94606721599636792428\cdots
\end{align}

\item \textbf{Lower bound of $\boldsymbol{\alpha_n}$} \\[1mm]
By \eqref{conjecture_alpha_n_min0} of Conjecture \ref{conjecture_alpha_n}, when $ \beta_{7},\beta_{6} $ take specific values, $ \alpha_{6} $ attains its minimum value $ \alpha_{6, \min}(\beta_{7},\beta_{6} )$ under the following constraints:
\begin{equation}
\begin{cases}
& \beta_{7,\min} \leq \beta_{7} < \alpha_{7}=1,\\
& \beta_{6,\min} \leq \beta_{6} \leq \beta_{7},\\
& \beta_5 =  \beta_4, \:\quad \beta_3 =  \beta_2, \:\quad \beta_1 ,  \\
& \alpha_6 = \alpha_5, \quad \alpha_4 = \alpha_3, \quad \alpha_2 \geq \alpha_1.
\end{cases}
\end{equation}
That is, the minimum value $ \alpha_{6, \min}(\beta_{7},\beta_{6})=x_6 $ is determined by solving:
\begin{align}
x_1^k+ x_2^k+2 x_4^k+ x_5^k+\alpha_6^k+1=2 y_3^k+2 y_5^k+ \beta_6^k+ \beta_7^k,\nonumber & \\
(k = k_1,k_2,k_3,k_4,k_5,k_6) &
\end{align}
For $(k=1,2,3,4,5,6)$, when $ \beta_7 =0.98765,\beta_6 =0.87654 $, we have
\begin{align}
\alpha_{6, \min}(0.98765,0.87654) = x_6 = 0.78695456387212794859\cdots
\end{align}
For $(k=1,3,5,7,9,11)$, when $ \beta_7 =0.98765,\beta_6 =0.96789 $, we have
\begin{align}
\alpha_{6, \min}(0.98765,0.96789) = x_6  = 0.91746546965028476202\cdots
\end{align}

\item \textbf{Upper bound of $\boldsymbol{\alpha_{n-1}}$} \\[1mm]
By our conjecture, the maximum value $ \alpha_{5, \max}(\beta_7,\beta_6,\alpha_6) $ is given by:
\begin{align}
 \alpha_{5, \max}(\beta_7,\beta_6,\alpha_6) =\alpha_6.
\end{align}

\item \textbf{Lower bound of $\boldsymbol{\alpha_{n-1}}$} \\[1mm]
By our conjecture, when $ \beta_{7},\beta_{6},\alpha_6 $ take specific values, $ \alpha_{5} $ attains its minimum value $ \alpha_{5, \min}(\beta_{7},\beta_{6},\alpha_6 )$ under the following constraints:
\begin{equation}
\begin{cases}
& \beta_{7,\min} \leq \beta_{7} < \alpha_{7}=1,\\
& \beta_{6,\min} \leq \beta_{6} \leq \beta_{7},\\
& \alpha_{6,\min} \leq \alpha_6 \leq \alpha_{6,\max},\\
& \beta_5 =  \beta_4, \:\quad \beta_3 =  \beta_2,  \\
& \alpha_6 \geq \alpha_5, \quad \alpha_4 = \alpha_3, \quad \alpha_2 \geq \alpha_1.
\end{cases}
\end{equation}
That is, the minimum value $ \alpha_{5, \min}(\beta_{7},\beta_{6},\alpha_6)=x_5 $ is determined by solving:
\begin{align}
x_1^k+ x_2^k+2 x_4^k+ x_5^k+\alpha_6^k+1=2 y_3^k+2 y_5^k+ \beta_6^k+ \beta_7^k,\nonumber & \\
(k = k_1,k_2,k_3,k_4,k_5,k_6) &
\end{align}
For $(k=1,2,3,4,5,6)$, when $ \beta_7 =0.98765,\beta_6 =0.87654,\alpha_6=0.78910 $, we have
\begin{align}
\alpha_{5, \min}(0.98765,0.87654,0.78910) = x_5 = 0.78474788361411426477\cdots
\end{align}
For $(k=1,3,5,7,9,11)$, when $ \beta_7 =0.98765, \beta_6 =0.96789, \alpha_6=0.92345 $, we have
\begin{align}
\alpha_{5, \min}(0.98765,0.96789,0.92345 ) = x_5  = 0.91029290127374434872\cdots
\end{align}

\end{enumerate}
\end{example}

\begin{example}
\textbf{Upper and Lower Bound for \( \boldsymbol{(k=k_1,k_2,\cdots,k_6)} \) }\\ [2mm]
Consider the normalized GPTE system defined by
\begin{align}
\left[ \alpha_{1}, \alpha_{2}, \alpha_{3}, \alpha_{4}, \alpha_{5}, \alpha_{6}, 1 \right]^{k} = \left[ \beta_{1},, \beta_{2}, \beta_{3}, \beta_{4}, \beta_{5}, \beta_{6}, \beta_{7} \right]^{k}, \nonumber & \\
(k = k_1,k_2,k_3,k_4,k_5,k_6) &
\end{align}
where all k>0.
\begin{enumerate}

\item \textbf{Upper bound of $\boldsymbol{\beta_{n+1}}$} \\[1mm]
By Conjecture \ref{conjecture_beta_n+1}, the maximum value of \( \beta_{7} \) is given by
\begin{align}
\beta_{7, \max} = 1 - \delta,
\end{align}
where \(\delta\) is a positive infinitesimal real number.

\item \textbf{Lower bound of $\boldsymbol{\beta_{n+1}}$} \\[1mm]
By Conjecture \ref{conjecture_beta_n+1}, \( \beta_{7} \) attains its minimum value \( \beta_{7, \min} \) under the following constraints:
\begin{equation}
\begin{cases}
& \beta_7 =  \beta_6, \:\quad\beta_5 =  \beta_4, \:\quad \beta_3 =  \beta_2, \:\quad \beta_1=0 \\
& \alpha_6 = \alpha_5, \quad \alpha_4 = \alpha_3, \quad \alpha_2 = \alpha_1.
\end{cases}
\end{equation}
That is, the minimum value \( \beta_{7, \min}=y_7 \) is determined by solving:
\begin{align}
2 x_2^k+2 x_4^k+ 2 x_6^k+1=2 y_3^k+2 y_5^k+ 2 y_7^k,\nonumber & \\
(k = k_1,k_2,k_3,k_4,k_5,k_6) &
\end{align}
For $(k=1,2,3,4,5,7)$, we have
\begin{align}
\beta_{7, \min} = y_7 = 0.95280865800018751744\cdots
\end{align}
For $(k=1,2,3,5,7,9)$, we have
\begin{align}
\beta_{7, \min} = y_7 = 0.96158977040352043607\cdots
\end{align}

\item \textbf{Upper bound of $\boldsymbol{\beta_n}$} \\[1mm]
By Conjecture \ref{conjecture_beta_n}, the maximum value of \( \beta_{6} \) is given by:
\begin{align}
 \beta_{6, \max}(\beta_7) = \beta_7.
\end{align}

\item \textbf{Lower bound of $\boldsymbol{\beta_n}$} \\[1mm]
By Conjecture \ref{conjecture_beta_n}, when $ \beta_7 $ takes a specific value, $ \beta_6 $ attains its minimum value $ \beta_{6, \max}(\beta_7 ) $ under the following constraints:
\begin{equation}
\begin{cases}
& \beta_{7,\min} \leq \beta_{7} < \alpha_{7}=1,\\
& \beta_{6} \leq \beta_{7}, \:\quad \beta_5 = \beta_4, \:\quad \beta_3 = \beta_2, \:\quad \beta_1=0 \\
& \alpha_6 = \alpha_5, \quad \alpha_4 = \alpha_3, \quad \alpha_2 = \alpha_1.
\end{cases}
\end{equation}
That is, the minimum value $ \beta_{6, \min}(\beta_7)=y_6 $ is determined by solving:
\begin{align}
2 x_2^k+2 x_4^k+ 2 x_6^k+1=2 y_3^k+2 y_5^k+ y_6^k+ \beta_7^k,\nonumber & \\
(k = k_1,k_2,k_3,k_4,k_5,k_6) &
\end{align}
For $(k=1,2,3,4,5,7)$, when $ \beta_7 =0.98765$, we have
\begin{align}
\beta_{6, \min}(0.98765) = y_6 = 0.87572746741822757040\cdots
\end{align}
For $(k=1,2,3,5,7,9)$, when $ \beta_7 =0.98765$, we have
\begin{align}
\beta_{6, \min}(0.98765) = y_6 = 0.90928197497038148564\cdots
\end{align}

\item \textbf{Upper bound of $\boldsymbol{\alpha_n}$} \\[1mm]
By \eqref{conjecture_alpha_n_max} of Conjecture \ref{conjecture_alpha_n}, when \( \beta_{7},\beta_{6}, \) take specific values, \( \alpha_{6} \) attains its maximum value \( \alpha_{6, \max}(\beta_{7},\beta_{6} ) \) under the following constraints:
\begin{equation}
\begin{cases}
& \beta_{7,\min} \leq \beta_{7} < \alpha_{7}=1,\\
& \beta_{6,\min} \leq \beta_{6} \leq \beta_{7},\\
& \beta_5 =  \beta_4, \:\quad \beta_3 =  \beta_2,  \:\quad \beta_1=0, \\
& \alpha_6 \geq \alpha_5, \quad \alpha_4 = \alpha_3, \quad \alpha_2 = \alpha_1.
\end{cases}
\end{equation}
That is, the maximum value \( \alpha_{6, \max}(\beta_{7},\beta_{6})=x_6 \) is determined by solving:
\begin{align}
2 x_2^k+2 x_4^k+ x_5^k+ x_6^k+1=2 y_3^k+2 y_5^k+ \beta_6^k+ \beta_7^k,\nonumber & \\
(k = k_1,k_2,k_3,k_4,k_5,k_6) &
\end{align}
For $(k=1,2,3,4,5,7)$, when $ \beta_7 =0.98765,\beta_6 =0.96789 $, we have
\begin{align}
\alpha_{6, \max}(0.98765,0.96789) = x_6 = 0.94990594792997678570\cdots
\end{align}
For $(k=1,2,3,5,7,9)$, when $ \beta_7 =0.98765,\beta_6 =0.96789 $, we have
\begin{align}
\alpha_{6, \max}(0.98765,0.96789) = x_6 = 0.94990594792997678570\cdots
\end{align}

\item \textbf{Lower bound of $\boldsymbol{\alpha_n}$} \\[1mm]
By \eqref{conjecture_alpha_n_min} of  Conjecture \ref{conjecture_alpha_n}, when $ \beta_{7},\beta_{6} $ take specific values, $ \alpha_{6} $ attains its minimum value $ \alpha_{6, \min}(\beta_{7},\beta_{6} )$ under the following constraints:
\begin{equation}
\begin{cases}
& \beta_{7,\min} \leq \beta_{7} < \alpha_{7}=1,\\
& \beta_{6,\min} \leq \beta_{6} \leq \beta_{7},\\
& \beta_5 =  \beta_4, \:\quad \beta_3 =  \beta_2, \:\quad \beta_1 \geq 0,  \\
& \alpha_6 = \alpha_5, \quad \alpha_4 = \alpha_3, \quad \alpha_2 = \alpha_1.
\end{cases}
\end{equation}
That is, the minimum value $ \alpha_{6, \min}(\beta_{7},\beta_{6})=x_6 $ is determined by solving:
\begin{align}
2 x_2^k+2 x_4^k+ 2 x_6^k+1=y_1^k+2 y_3^k+2 y_5^k+ \beta_6^k+ \beta_7^k,\nonumber & \\
(k = k_1,k_2,k_3,k_4,k_5,k_6) &
\end{align}
For $(k=1,2,3,4,5,7)$, when $ \beta_7 =0.98765,\beta_6 =0.96789 $, we have
\begin{align}
\alpha_{6, \min}(0.98765,0.96789) = x_6 = 0.91892128928650043275\cdots
\end{align}
For $(k=1,3,5,7,9,11)$, when $ \beta_7 =0.98765,\beta_6 =0.96789 $, we have
\begin{align}
\alpha_{6, \min}(0.98765,0.96789) = x_6 = 0.91823834150506495776\cdots
\end{align}

\item \textbf{Upper bound of $\boldsymbol{\alpha_{n-1}}$} \\[1mm]
By our conjecture, the maximum value $ \alpha_{5, \max}(\beta_7,\beta_6,\alpha_6) $ is given by:
\begin{align}
 \alpha_{5, \max}(\beta_7,\beta_6,\alpha_6) =\alpha_6.
\end{align}

\item \textbf{Lower bound of $\boldsymbol{\alpha_{n-1}}$} \\[1mm]
By our conjecture, when $ \beta_{7},\beta_{6},\alpha_6 $ take specific values, $ \alpha_{5} $ attains its minimum value $ \alpha_{5, \min}(\beta_{7},\beta_{6},\alpha_6 )$ under the following constraints:
\begin{equation}
\begin{cases}
& \beta_{7,\min} \leq \beta_{7} < \alpha_{7}=1,\\
& \beta_{6,\min} \leq \beta_{6} \leq \beta_{7},\\
& \alpha_{6,\min} \leq \alpha_6 \leq \alpha_{6,\max},\\
& \beta_5 =  \beta_4, \:\quad \beta_3 =  \beta_2,  \:\quad \beta_3 \geq 0,  \\
& \alpha_6 \geq \alpha_5, \quad \alpha_4 = \alpha_3, \quad \alpha_2 = \alpha_1.
\end{cases}
\end{equation}
That is, the minimum value $ \alpha_{5, \min}(\beta_{7},\beta_{6},\alpha_6)=x_5 $ is determined by solving:
\begin{align}
2 x_2^k+2 x_4^k+x_5^k+\alpha_6^k+1=y_1^k+2 y_3^k+2 y_5^k+ \beta_6^k+ \beta_7^k,\nonumber & \\
(k = k_1,k_2,k_3,k_4,k_5,k_6) &
\end{align}
For $(k=1,2,3,4,5,7)$, when $ \beta_7 =0.98765,\beta_6 =0.96789,\alpha_6=0.92345 $, we have
\begin{align}
\alpha_{5, \min}(0.98765,0.96789,0.92345) = x_5 = 0.91373751403572097319\cdots
\end{align}
For $(k=1,2,3,5,7,9)$, when $ \beta_7 =0.98765, \beta_6 =0.96789, \alpha_6=0.92345 $, we have
\begin{align}
\alpha_{5, \min}(0.98765,0.96789,0.92345) = x_5 = 0.91214832092690240425\cdots
\end{align}
\end{enumerate}
\end{example}

In this section, we have presented conjectures for the exact upper and lower bounds of \(\beta_{n+1}\), \(\beta_{n}\), \(\alpha_{n}\), and \(\alpha_{n-1}\). Our further research indicates that similar methods can be applied to derive the exact bounds for other variables such as \(\beta_{n-1}\), \(\beta_{n-2}\), and \(\alpha_{n-2}\). Given the complexity of these calculations, we suggest precomputing these bounds and storing them in lookup tables for practical use in computer searches for ideal non-negative integer solutions to the GPTE problem. Specifically:

\begin{enumerate}
    \item For \(\beta_{n+1}\), the upper bound is straightforward, while the lower bound is a fixed value that can be precomputed.
    \item For \(\beta_{n}\), the upper bound is also straightforward and does not require computation, while the lower bound can be stored in a one-dimensional lookup table.
    \item For \(\alpha_{n}\), both the upper and lower bounds can be stored in two-dimensional lookup tables, despite the potentially large size of these tables.
    \item For \(\alpha_{n-1}\) and other variables, the bounds would require lookup tables of three or more dimensions. In practice, such high-dimensional lookup tables become complex and inevitably reduce search efficiency, making them impractical for use in computer searches.
\end{enumerate}

In the following section, we introduce two types of conservative bounds for the normalized GPTE system. Although these conservative bounds are less restrictive than exact bounds, thereby expanding the search space,  their simpler computation and lack of need for table lookups make them an effective and practical strategy for the computer search process.

\subsection{Conservative Bounds of  the Normalized GPTE System}
For the two types of conservative bounds presented in this section, we are still unable to provide strict proofs, nor have we found any counterexamples. Therefore, we propose these two types of conservative bounds in the form of conjectures.
\begin{conjecture}\textbf{\textup{(Seesaw Conjecture)}}\\[1mm]
\label{conjecture_seesaw}
Let \(\{\alpha_{1}, \alpha_{2}, \dots, \alpha_{n+1}\}\) and \(\{\beta_{1}, \beta_{2}, \dots, \beta_{n+1}\}\) be two sets of non-negative real numbers bounded above by 1, ordered as follows:
\begin{align}
& 0 \leq \alpha_1 \leq \alpha_2 \leq \cdots \leq \alpha_{n+1}=1, \\
& 0 \leq \beta_1 \leq \beta_2 \leq \cdots \leq \beta_{n+1}<1.
\end{align}
For given distinct integers \(\{k_1, k_2, \dots, k_n\}\), if the following system of equations holds:
\begin{align}
\left[ \alpha_{1}, \alpha_{2}, \dots, \alpha_{n+1} \right]^{k} = \left[ \beta_{1}, \beta_{2}, \dots, \beta_{n+1} \right]^{k}, \quad (k = k_1, k_2, \dots, k_n).
\end{align}
then the elements of the two sets interlace in the following manner:
\begin{align}
\operatorname{sgn}(k) \cdot \left[ \alpha_{n+1} \right]^k & > \operatorname{sgn}(k) \cdot \left[\beta_{n+1} \right]^k, \nonumber\\
\operatorname{sgn}(k) \cdot \left[ \alpha_{n}, \alpha_{n+1} \right]^k & < \operatorname{sgn}(k) \cdot \left[ \beta_{n}, \beta_{n+1} \right]^k, \nonumber\\
\operatorname{sgn}(k) \cdot \left[ \alpha_{n-1},\alpha_{n}, \alpha_{n+1} \right]^k & > \operatorname{sgn}(k) \cdot \left[ \beta_{n-1},\beta_{n}, \beta_{n+1} \right]^k, \nonumber\\
\operatorname{sgn}(k) \cdot \left[\alpha_{n-2}, \alpha_{n-1},\alpha_{n}, \alpha_{n+1} \right]^k & < \operatorname{sgn}(k) \cdot \left[\beta_{n-2}, \beta_{n-1},\beta_{n}, \beta_{n+1} \right]^k, \nonumber\\
& \vdots 
\end{align}
where  \(\operatorname{sgn}(k)\) is the sign function defined by
\begin{align}
\operatorname{sgn}(k) = 
\begin{cases}
1, & \text{if } k \geq 0, \\
-1, & \text{if } k < 0.
\end{cases}
\end{align}
\end{conjecture}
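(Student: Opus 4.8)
The plan is to recast the Seesaw inequalities as a statement about one cumulative function and then establish that statement by combining the Interlacing Conjecture (Conjecture \ref{conjecture_Interlacing}) with zero‑counting for exponential sums. Fix $k \in \{k_1,\dots,k_n\}$, write $\delta_i = \alpha_i^k - \beta_i^k$, and set $D_j = \sum_{i=n+2-j}^{n+1}\delta_i$, so that the $j$‑th Seesaw inequality is exactly $\operatorname{sgn}(k)\,(-1)^{j-1}D_j > 0$. Because the GPTE system forces $\sum_{i=1}^{n+1}\delta_i = 0$, one may pass freely between the ``top'' sums $D_j$ and the ``bottom'' sums $F_m = \sum_{i=1}^{m}\delta_i = -D_{n+1-m}$; the two extreme cases $j=1$ (where $D_1 = \alpha_{n+1}^k - \beta_{n+1}^k$ with $\alpha_{n+1}=1>\beta_{n+1}$) and $j=n$ (where $D_n = -\delta_1$) then follow at once from the ordering supplied by Conjecture \ref{conjecture_Interlacing}, so all the content is in the intermediate $j$.

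The next step is to introduce the signed measure $\nu = \sum_i \alpha_i^{k}\,\delta_{\alpha_i} - \sum_i \beta_i^{k}\,\delta_{\beta_i}$ on $(0,1]$ and its cumulative tail $G(t) = \nu\big((t,\infty)\big)$. Reading off the merged ordering of the $\alpha_i$ and $\beta_i$ dictated by Conjecture \ref{conjecture_Interlacing}, one checks that the top $2j$ elements of the merged list are exactly the $j$ largest $\alpha$'s together with the $j$ largest $\beta$'s; hence for each $j$ there is an interval $I_j$ on which exactly these lie above the threshold, and on $I_j$ one has $G \equiv D_j$. Moreover between $I_j$ and $I_{j+1}$ the function $G$ has just two jumps, one up by $\alpha_{n+1-j}^{k}$ and one down by $\beta_{n+1-j}^{k}$. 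Thus the Seesaw becomes: $\operatorname{sgn}(k)\,G$ takes alternating signs at the $n$ ``balanced'' thresholds $t_1 > t_2 > \cdots > t_n$ with $t_j \in I_j$.

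Now I would bring in the Chebyshev‑system machinery. The GPTE equalities, together with the common cardinality $n+1$, say that $\int s^{\lambda}\,d\nu = \sum_i\alpha_i^{\lambda+k} - \sum_i\beta_i^{\lambda+k}$ vanishes for every $\lambda$ in the $(n+1)$‑element set $(\{k_1,\dots,k_n\} - k)\cup\{-k\}$, which contains $0$; that is, $\nu$ annihilates an $(n{+}1)$‑dimensional Markov system of powers containing the constants. Integration by parts transfers this to $G\perp\{T' : T\ \text{in that span}\}$, an $n$‑dimensional Chebyshev system, so $G$ must change sign at least $n$ times on $(0,1]$. Equivalently, one may run the classical zero‑counting theorems for Descartes systems directly on $h_k(t) = \sum_i\alpha_i^{t+k} - \sum_i\beta_i^{t+k}$: its coefficient pattern (positive at each $\alpha_i$, negative at each $\beta_i$) has, by interlacing, exactly $n+1$ sign changes, so $h_k$ has at most $n+1$ real zeros; but $h_k$ vanishes on $(\{k_1,\dots,k_n\}-k)\cup\{-k\}$, which has $n+1$ elements, so these are all of them and all simple. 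This is exactly the circle of ideas behind the Markov--Stieltjes inequalities, and I expect it to be the right engine.

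The hard part will be the final bookkeeping. The orthogonality‑to‑a‑Chebyshev‑system bound only delivers ``at least $n$ sign changes \emph{somewhere}'', whereas one must show they fall so that $\operatorname{sgn}(k)\,G$ is nonzero with the \emph{correct} sign at each individual balanced threshold $t_j$, with none wasted --- i.e.\ exactly one sign change in each gap between consecutive $I_j$, outside the extreme intervals where the sign is already forced. This requires using the interlacing structure quantitatively (the two‑jump‑per‑gap description above) rather than merely as an ordering, and requires absorbing the fact that the $(n+1)$‑point configurations $\{\alpha_i\}$ and $\{\beta_i\}$ match only $n$ genuine moments of $\nu$ (a Radau‑type rather than a Gauss‑type situation), so the sign pattern is rigid rather than free. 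Secondary obstacles are: reducing the case $k<0$ to $k>0$, either via the reciprocal (LCM) transformation \eqref{LCM} at the cost of re‑normalizing, or by running the same argument on $(0,1)$ where $s\mapsto s^{k}$ is decreasing so that only roles and signs get permuted; handling coincidences among the $\alpha_i$ or the $\beta_i$ by a limiting argument; and the degenerate case $0\in\{k_1,\dots,k_n\}$, where the product convention \eqref{EPES} changes the moment interpretation and $\nu$ must be built accordingly. Finally, the whole argument is conditional on Conjecture \ref{conjecture_Interlacing}, the other unproven input, though one could hope to arrange the zero‑counting so as to prove interlacing and the Seesaw simultaneously.
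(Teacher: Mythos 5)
There is no proof in the paper to compare against: the Seesaw Conjecture is stated precisely as a conjecture, and the paper says explicitly that it is ``still unable to provide strict proofs'' for the conservative bounds of this section. So the question is only whether your proposal closes the gap, and it does not. Your reformulation is sound as far as it goes: writing $\delta_i=\alpha_i^k-\beta_i^k$ and $D_j=\sum_{i=n+2-j}^{n+1}\delta_i$, the $j$-th inequality is indeed $\operatorname{sgn}(k)(-1)^{j-1}D_j>0$, the identity $\sum_i\delta_i=0$ lets you trade top sums for bottom sums, and the observation that $h(t)=\sum_i\alpha_i^{\,t}-\sum_i\beta_i^{\,t}$ vanishes at the $n+1$ points $\{0,k_1,\dots,k_n\}$ while the interlacing pattern caps its zero count at $n+1$ (Descartes' rule for exponential sums) is the right structural fact. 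But the argument stops exactly where the theorem starts. The orthogonality-to-a-Chebyshev-system step yields only that the tail function $G$ has \emph{at least} $n$ sign changes somewhere on $(0,1]$; the conjecture demands that $G$ be nonzero with a \emph{prescribed} alternating sign on each of the $n$ specific balanced intervals $I_1,\dots,I_n$, with no sign change wasted inside a gap (each gap admits two jumps and hence up to two sign changes) and no accidental vanishing $D_j=0$. You name this as ``the final bookkeeping,'' but it is the entire content: with $n+1$ atoms per side and only $n+1$ matched power sums, you are far below the $\approx 2(n+1)$ moments needed for the classical Markov--Stieltjes rigidity, which is precisely why both this conjecture and Conjecture~\ref{conjecture_Interlacing} remain open.

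Two further points. First, the whole construction is conditional on Conjecture~\ref{conjecture_Interlacing}, itself unproven, so even a complete version of your argument would only establish an implication between two conjectures --- worth stating, but not what the statement asserts. Second, the Seesaw Conjecture's rows with $k=0$ assert inequalities between \emph{products} $\alpha_{n+1-j}\cdots\alpha_{n+1}$ and $\beta_{n+1-j}\cdots\beta_{n+1}$ (see the paper's example following the conjecture, where the $k=0$ lines alternate $<$ and $>$ on products); these are not of the form $\operatorname{sgn}(k)D_j\lessgtr 0$ for any power-sum $D_j$, so your measure-theoretic reformulation simply does not cover them, and a separate argument (say, via logarithms, which converts products to sums but destroys the power-sum structure of the other constraints) would be needed.
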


\begin{example}
Let \(\{\alpha_{1}, \alpha_{2}, \dots,  \alpha_{8}\}\) and \(\{\beta_{1}, \beta_{2}, \dots, \beta_{8}\}\) be two sets of non-negative real numbers bounded above by 1, ordered as follows:
\begin{align}
& 0 \leq \alpha_1 \leq \alpha_2 \leq \dots \leq \alpha_8=1, \\
& 0 \leq \beta_1 \leq \beta_2 \leq \dots \leq \beta_8<1.
\end{align}
These sets satisfy:
\begin{align}
& \left[ \alpha_{1}, \alpha_{2}, \alpha_{3}, \alpha_{4}, \alpha_{5}, \alpha_{6}, \alpha_{7}, \alpha_{8} \right]^{k} = \left[ \beta_{1}, \beta_{2}, \beta_{3}, \beta_{4}, \beta_{5}, \beta_{6},\beta_{7}, \beta_{8}  \right]^{k}, \nonumber \\ 
& \qquad\qquad\qquad\qquad\qquad\qquad\qquad\quad (k = -3,-2,-1,0,1,2,3).
\end{align}
Then, according to Conjecture \ref{conjecture_seesaw}, the following inequalities hold:
\begin{align}
& \begin{cases}
\,\alpha_8^k \,<\,  \beta_8^k, \quad (k = -3, -2, -1), \\
\, \alpha_8 \,>\,   \beta_8, \\
\, \alpha_8^k \,>\,  \beta_8^k,  \quad (k = 1, 2, 3).
\end{cases}\\[1mm]
& \begin{cases}
\,\alpha_7^k + \alpha_8^k \,>\, \beta_7^k  + \beta_8^k, \quad (k = -3, -2, -1), \\
\;\; \alpha_7 \cdot \alpha_8 \;<\;  \beta_7 \cdot \beta_8, \\
\,\alpha_7^k + \alpha_8^k \,<\, \beta_7^k  + \beta_8^k,  \quad (k = 1, 2, 3).
\end{cases}\\[1mm]
& \begin{cases}
\,\alpha_6^k +\alpha_7^k + \alpha_8^k \,<\, \beta_6^k+ \beta_7^k  + \beta_8^k, \quad (k = -3, -2, -1), \\
\quad\: \alpha_6 \cdot \alpha_7 \cdot \alpha_8 \;>\; \beta_6 \cdot \beta_7 \cdot \beta_8, \\
\,\alpha_6^k +\alpha_7^k + \alpha_8^k \,>\, \beta_6^k+ \beta_7^k  + \beta_8^k,  \quad (k = 1, 2, 3).
\end{cases}\\[1mm]
& \begin{cases}
\,\alpha_5^k+\alpha_6^k +\alpha_7^k + \alpha_8^k \,>\, \beta_5^k +\beta_6^k+ \beta_7^k  + \beta_8^k, \quad (k = -3, -2, -1), \\
\quad\;\:\, \alpha_5 \cdot \alpha_6 \cdot \alpha_7 \cdot \alpha_8 \;<\; \beta_5 \cdot \beta_6 \cdot \beta_7 \cdot \beta_8, \\
\,\alpha_5^k+\alpha_6^k +\alpha_7^k + \alpha_8^k \,<\, \beta_5^k +\beta_6^k+ \beta_7^k  + \beta_8^k,  \quad (k = 1, 2, 3).
\end{cases}\\[1mm]
& \begin{cases}
\,\alpha_4^k+\alpha_5^k + \cdots + \alpha_8^k \,<\, \beta_4^k +\beta_5^k+ \cdots  + \beta_8^k, \quad (k = -3, -2, -1), \\
\qquad\;\:\: \alpha_4 \cdot \alpha_5 \dots \alpha_8 \;>\; \beta_4 \cdot \beta_5 \dots \beta_8, \\
\,\alpha_4^k+\alpha_5^k + \cdots + \alpha_8^k \,>\, \beta_4^k +\beta_5^k+ \cdots  + \beta_8^k,  \quad (k = 1, 2, 3).
\end{cases}\\[1mm]
& \begin{cases}
\,\alpha_3^k+\alpha_4^k + \cdots + \alpha_8^k \,>\, \beta_3^k +\beta_4^k+ \cdots  + \beta_8^k, \quad (k = -3, -2, -1), \\
\qquad\;\:\: \alpha_3 \cdot \alpha_4 \dots \alpha_8 \;<\; \beta_3 \cdot \beta_4 \dots \beta_8, \\
\,\alpha_3^k+\alpha_4^k + \cdots + \alpha_8^k \,<\, \beta_3^k +\beta_4^k+ \cdots  + \beta_8^k,  \quad (k = 1, 2, 3).
\end{cases}\\[1mm]
& \begin{cases}
\,\alpha_2^k+\alpha_3^k + \cdots + \alpha_8^k \,<\, \beta_2^k +\beta_3^k+ \cdots  + \beta_8^k, \quad (k = -3, -2, -1), \\
\qquad\;\:\: \alpha_2 \cdot \alpha_3 \dots \alpha_8 \;>\; \beta_2 \cdot \beta_3 \dots \beta_8, \\
\,\alpha_2^k+\alpha_3^k + \cdots + \alpha_8^k \,>\, \beta_2^k +\beta_3^k+ \cdots  + \beta_8^k,  \quad (k = 1, 2, 3).
\end{cases}
\end{align}
\end{example}
Assuming Conjecture \ref{conjecture_seesaw} holds, we obtain the following corollary:
\begin{corollary}\textbf{\textup{(Conservative Bounds)}}\\[1mm]
\label{corollary_Conservative_Bounds}
Let \(\{\alpha_{1}, \alpha_{2}, \dots, \alpha_{n+1}\}\) and \(\{\beta_{1}, \beta_{2}, \dots, \beta_{n+1}\}\) be two sets of non-negative real numbers bounded above by 1, ordered as follows:
\begin{align}
& 0 \leq \alpha_1 \leq \alpha_2 \leq \cdots \leq \alpha_{n+1}=1, \\
& 0 \leq \beta_1 \leq \beta_2 \leq \cdots \leq \beta_{n+1}<1.
\end{align}
For given distinct integers \(\{k_1, k_2, \dots, k_n\}\), if the following normalized GPTE system holds:
\begin{align}
\left[ \alpha_{1}, \alpha_{2}, \dots, \alpha_{n+1} \right]^{k} = \left[ \beta_{1}, \beta_{2}, \dots, \beta_{n+1} \right]^{k}, \quad (k = k_1, k_2, \dots, k_n).
\end{align}
then for $k>0$ and $k \in (k_1, k_2, \dots, k_n)$, we have
\begin{flalign}
\frac{1}{2} \alpha_{n+1}^k & < \beta_{n+1}^k < \alpha_{n+1}^k, \nonumber &\\
\alpha_{n+1}^k- \beta_{n+1}^k  & < \beta_{n\phantom{+0}}^k \leq \beta_{n+1}^k , \nonumber &\\
\frac{1}{2} \left( \beta_{n+1}^k+ \beta_{n}^k -\alpha_{n+1}^k \right) & < \alpha_{n\phantom{+0}}^k < \beta_{n+1}^k+ \beta_{n}^k -\alpha_{n+1}^k , \nonumber &\\
\beta_{n+1}^k+ \beta_{n}^k -\alpha_{n+1}^k -\alpha_{n}^k & < \alpha_{n-1}^k \leq \alpha_{n}^k , \nonumber &\\
\frac{1}{2} \left( \alpha_{n+1}^k+\alpha_{n}^k+\alpha_{n-1}^k- \beta_{n+1}^k- \beta_{n}^k  \right) & < \beta_{n-1}^k < \alpha_{n+1}^k+\alpha_{n}^k+\alpha_{n-1}^k- \beta_{n+1}^k- \beta_{n}^k , \nonumber &\\
\alpha_{n+1}^k+\alpha_{n}^k+\alpha_{n-1}^k- \beta_{n+1}^k- \beta_{n}^k- \beta_{n-1}^k  & < \beta_{n-2}^k  \leq \beta_{n-1}^k , \nonumber &\\
& \vdots 
\end{flalign}
\end{corollary}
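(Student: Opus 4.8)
The corollary is a purely conditional statement --- it is asserted to follow from the Seesaw Conjecture (Conjecture \ref{conjecture_seesaw}) --- so the plan is to extract each displayed inequality from a pair of consecutive seesaw inequalities by elementary rearrangement, together with the nonnegativity and monotone ordering of the two sequences. Since the bounds are stated only for exponents $k>0$, I fix such a $k$, so that $\operatorname{sgn}(k)=1$ and every seesaw inequality becomes a plain inequality between partial sums of $k$-th powers taken from the top. It is convenient to put, for $j=1,2,\dots,n+1$,
\[
A_j=\sum_{i=n+2-j}^{n+1}\alpha_i^{\,k},\qquad B_j=\sum_{i=n+2-j}^{n+1}\beta_i^{\,k},
\]
so that Conjecture \ref{conjecture_seesaw} asserts $A_1>B_1$, $A_2<B_2$, $A_3>B_3$, $\dots$ with strict alternation of direction. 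The variables appearing in the corollary, namely $\beta_{n+1},\ \beta_n,\ \alpha_n,\ \alpha_{n-1},\ \beta_{n-1},\ \beta_{n-2},\ \alpha_{n-2},\dots$, run through precisely the interlaced ordering of the two sets read downward from the top (cf.\ Conjecture \ref{conjecture_Interlacing}), which is the organising principle of the whole induction.

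First I would do the base cases by hand. The inequality $\beta_{n+1}^k<\alpha_{n+1}^k$ is literally $B_1<A_1$; and $\tfrac12\alpha_{n+1}^k<\beta_{n+1}^k$ comes from $A_2<B_2$, i.e. $\alpha_n^k+\alpha_{n+1}^k<\beta_n^k+\beta_{n+1}^k$, by dropping the nonnegative term $\alpha_n^k$ on the left and using $\beta_n\le\beta_{n+1}$ to bound $B_2\le 2\beta_{n+1}^k$ on the right. Likewise $\beta_n^k\le\beta_{n+1}^k$ is immediate from the ordering, while $\alpha_{n+1}^k-\beta_{n+1}^k<\beta_n^k$ drops off from $A_2<B_2$ after discarding $\alpha_n^k\ge 0$. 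The same two ideas give the $\alpha_n$ bounds: the upper bound $\alpha_n^k<\beta_{n+1}^k+\beta_n^k-\alpha_{n+1}^k$ is $A_2<B_2$ rearranged, and the lower bound $\tfrac12(\beta_{n+1}^k+\beta_n^k-\alpha_{n+1}^k)<\alpha_n^k$ follows from $A_3>B_3$ by collapsing $\alpha_{n-1}^k+\alpha_n^k\le 2\alpha_n^k$ on the left (via $\alpha_{n-1}\le\alpha_n$) and dropping $\beta_{n-1}^k\ge 0$ on the right.

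The general step is then entirely uniform, and this is the heart of the write-up. For the variable that becomes ``newest'' at depth $j$: its one-sided upper bound is obtained either by isolating it in the seesaw inequality between $A_j$ and $B_j$ (when it is the first of its same-type pair in the interlaced list), or trivially from $\alpha_i\le\alpha_{i+1}$ or $\beta_i\le\beta_{i+1}$ (when it is the second); its factor-$\tfrac12$ lower bound is always obtained from the next seesaw inequality, between $A_{j+1}$ and $B_{j+1}$, by (i) deleting the newest term on one side using nonnegativity and (ii) merging two adjacent powers of the other sequence via monotonicity, which collapses that side to exactly twice the target power plus the already-fixed terms. Because only these two ingredients are ever used and nothing beyond rearrangement is needed, the induction descends the interlaced list without complication; one only has to check that the parity of $j$ is matched to the direction ($>$ versus $<$) in both seesaw inequalities used, and to run the argument once for $n$ odd and once for $n$ even, since the interlacing pattern at the bottom differs between the two cases.

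The genuine obstacle lies one level up rather than in this derivation: the Seesaw Conjecture is itself unproven, as the excerpt states, so any unconditional proof of the corollary would have to establish it --- presumably through the generalized Girard-Newton identities of Chapter 2 and a sign analysis of the Vandermonde-type products $W_{nk+r}$ that occur there. Within the conditional framework assumed here, the only real care needed is the bookkeeping of inequality directions and of which element of each adjacent pair is the larger one; this is mechanical, and I would package it as a single induction on the depth $j$, verified against the odd-$j$ and even-$j$ base cases above.
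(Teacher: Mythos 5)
Your derivation is correct and matches the paper's (implicit) approach: the paper presents this corollary as a direct consequence of the Seesaw Conjecture with no written proof, and your argument — pairing each bound with two consecutive seesaw inequalities and closing the gap using only nonnegativity of the dropped term and monotonicity of adjacent terms — is exactly the intended elementary rearrangement. One small remark: the parity case split you flag at the end is unnecessary, since the Seesaw Conjecture's alternation always starts from $\alpha_{n+1}^k>\beta_{n+1}^k$ regardless of the parity of $n$, so a single induction on the depth $j$ suffices.
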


\begin{example}
For the given normalized GPTE system:
\begin{align}
& \left[ \alpha_{1}, \alpha_{2}, \alpha_{3}, \alpha_{4}, \alpha_{5}, \alpha_{6}, \alpha_{7}, \alpha_{8} \right]^{k} = \left[ \beta_{1}, \beta_{2}, \beta_{3}, \beta_{4}, \beta_{5}, \beta_{6},\beta_{7}, \beta_{8}  \right]^{k}, \nonumber \\ 
& \qquad\qquad\qquad\qquad\qquad\qquad\qquad\quad (k = -3,-2,-1,0,1,2,3),
\end{align}
according to Corollary \ref{corollary_Conservative_Bounds}, the subsequent inequalities hold for $k=1,2,3 :$
\begin{flalign}
\frac{1}{2} \alpha_8^k & < \beta_8^k < \alpha_8^k, \nonumber &\\
\alpha_8^k- \beta_8^k  & < \beta_7^k \leq \beta_8^k , \nonumber &\\
\frac{1}{2} \left( \beta_8^k+ \beta_7^k -\alpha_8^k \right) & < \alpha_7^k < \beta_8^k+ \beta_7^k -\alpha_8^k , \nonumber &\\
\beta_8^k+ \beta_7^k -\alpha_8^k -\alpha_7^k & < \alpha_6^k \leq \alpha_7^k , \nonumber &\\
\frac{1}{2} \left( \alpha_8^k+\alpha_7^k+\alpha_6^k- \beta_8^k- \beta_7^k  \right) & < \beta_6^k < \alpha_8^k+\alpha_7^k+\alpha_6^k- \beta_8^k- \beta_7^k , \nonumber &\\
\alpha_8^k+\alpha_7^k+\alpha_6^k- \beta_8^k- \beta_7^k- \beta_6^k  & < \beta_5^k  \leq \beta_6^k , \nonumber &\\
\frac{1}{2} \left( \sum_{i=5}^8 \beta_i^k -\sum_{i=6}^8 \alpha_i^k\right) & < \alpha_5^k <\sum_{i=5}^8 \beta_i^k -\sum_{i=6}^8 \alpha_i^k , \nonumber &\\
\sum_{i=5}^8 \beta_i^k -\sum_{i=5}^8 \alpha_i^k & < \alpha_4^k \leq \alpha_5^k , \nonumber &\\
\frac{1}{2} \left( \sum_{i=4}^8 \alpha_i^k -\sum_{i=5}^8 \beta_i^k\right) & < \beta_4^k <\sum_{i=4}^8 \alpha_i^k -\sum_{i=5}^8 \beta_i^k , \nonumber &\\
\sum_{i=4}^8 \alpha_i^k -\sum_{i=4}^8 \beta_i^k & < \beta_3^k \leq \beta_4^k , \nonumber &\\
\frac{1}{2} \left( \sum_{i=3}^8 \beta_i^k -\sum_{i=4}^8 \alpha_i^k\right) & < \alpha_3^k <\sum_{i=3}^8 \beta_i^k -\sum_{i=4}^8 \alpha_i^k , \nonumber &\\
\sum_{i=3}^8 \beta_i^k -\sum_{i=3}^8 \alpha_i^k & < \alpha_2^k \leq \alpha_3^k , \nonumber &\\
\frac{1}{2} \left( \sum_{i=2}^8 \alpha_i^k -\sum_{i=3}^8 \beta_i^k\right) & < \beta_2^k <\sum_{i=2}^8 \alpha_i^k -\sum_{i=3}^8 \beta_i^k . &
\end{flalign}
\end{example}

\begin{example}
For the given normalized GPTE system:
\begin{align}
& \left[ \alpha_{1}, \alpha_{2}, \alpha_{3}, \alpha_{4}, \alpha_{5}, \alpha_{6} \right]^{k} = \left[ \beta_{1}, \beta_{2}, \beta_{3}, \beta_{4}, \beta_{5}, \beta_{6}  \right]^{k}, \nonumber \\ 
& \qquad\qquad\qquad\qquad\qquad\qquad\qquad\quad (k = k_1, k_2, k_3, k_4, k_5),
\end{align}
according to Corollary \ref{corollary_Conservative_Bounds}, the subsequent inequalities hold for $k>0$ and $k \in \{k_1, k_2, \dots, k_5\}$:
\begin{flalign}
\quad \frac{1}{2} \alpha_6^k & < \beta_6^k < \alpha_6^k, \qquad\qquad\qquad\qquad\qquad\;
\alpha_6^k- \beta_6^k   < \beta_5^k \leq \beta_6^k , \nonumber &\\
\quad \frac{1}{2} \left( \sum_{i=5}^6 \beta_i^k -\sum_{i=6}^6 \alpha_i^k\right) & < \alpha_5^k <\sum_{i=5}^6 \beta_i^k -\sum_{i=6}^6 \alpha_i^k, \quad\;\; 
\sum_{i=5}^6 \beta_i^k -\sum_{i=5}^6 \alpha_i^k < \alpha_4^k \leq \alpha_5^k , \nonumber &\\
\quad \frac{1}{2} \left( \sum_{i=4}^6 \alpha_i^k -\sum_{i=5}^6 \beta_i^k\right) & < \beta_4^k <\sum_{i=4}^6 \alpha_i^k -\sum_{i=5}^6 \beta_i^k , \quad\;\; 
\sum_{i=4}^6 \alpha_i^k -\sum_{i=4}^6 \beta_i^k < \beta_3^k \leq \beta_4^k , \nonumber &\\
\quad \frac{1}{2} \left( \sum_{i=3}^6 \beta_i^k -\sum_{i=4}^6 \alpha_i^k\right) & < \alpha_3^k <\sum_{i=3}^6 \beta_i^k -\sum_{i=4}^6 \alpha_i^k, \quad\;\; 
\sum_{i=3}^6 \beta_i^k -\sum_{i=3}^6 \alpha_i^k < \alpha_2^k \leq \alpha_3^k , \nonumber &\\
\quad \frac{1}{2} \left( \sum_{i=2}^6 \alpha_i^k -\sum_{i=3}^6 \beta_i^k\right) & < \beta_2^k <\sum_{i=2}^6 \alpha_i^k -\sum_{i=3}^6 \beta_i^k . &
\end{flalign}
\end{example}
\vspace{1mm}
\begin{conjecture}\textbf{\textup{(Threshold Conjecture)}}\\[1mm]
\label{conjecture_threshold}
Let \(\{a_{1}, a_{2}, \dots, a_{n+1}\}\) and \(\{b_{1}, b_{2}, \dots, b_{n+1}\}\) be two sets of non-negative integers, ordered as follows:
\begin{align}
& 0 \leq a_1 \leq a_2 \leq \cdots \leq a_{n+1}, \\
& 0 \leq b_1 \leq b_2 \leq \cdots \leq b_{n+1}.
\end{align}
Without loss of generality, assume that \(a_{n+1} > b_{n+1}\). For given distinct integers \(\{k_1, k_2, \dots, k_n\}\) where \(k_n>0 \), we consider the GPTE system defined by
\begin{align}
\left[ a_{1}, a_{2}, \dots, a_{n+1} \right]^{k} = \left[ b_1, b_2, \dots, b_{n+1} \right]^{k}, \quad (k = k_1, k_2, \dots, k_n),
\end{align}
By solving the following normalized GPTE system of \( n \) equations in \( n \) variables:
\begin{align}
& \left[ \alpha_2, \alpha_2, \alpha_4, \alpha_4, \cdots, \alpha_{n},\alpha_{n},1 \right]^{k} = \left[ 0, \beta_3, \beta_3, \beta_5,\beta_5, \cdots, \beta_{n+1},\beta_{n+1} \right]^{k}, \nonumber \\
& \qquad\qquad  (k = k_1, k_2, \dots, k_n), \quad \text{if } n \text{ is even} \label{threshold_even} \\
& \left[ 0,\alpha_3, \alpha_3, \alpha_5, \alpha_5, \cdots, \alpha_{n},\alpha_{n},1 \right]^{k} = \left[ \beta_2, \beta_2, \beta_4,\beta_4, \cdots, \beta_{n+1},\beta_{n+1} \right]^{k}, \nonumber \\
& \qquad\qquad  (k = k_1, k_2, \dots, k_n), \quad \text{if } n \text{ is odd} \label{threshold_odd} 
\end{align}
we obtain the numerical solutions for \(\{\alpha_n, \alpha_{n-2}, \cdots\}\) and \(\{\beta_{n+1}, \beta_{n-1}, \cdots\}\).\\[1mm]
Then, for those \( k_i > 0 \) among \( k_1, k_2, \dots, k_{n-1} \), we conjecture the following:
\begin{flalign}
& 1 < \frac{\left(b_{n+1}^{k_i}+b_n^{k_i}-a_{n+1}^{k_i}\right){}^{k_n}}{\left(b_{n+1}^{k_n}+b_n^{k_n}-a_{n+1}^{k_n}\right){}^{k_i}}\leq \frac{\left(2 \beta _{n+1}^{k_i}-1\right){}^{k_n}}{\left(2 \beta _{n+1}^{k_n}-1\right){}^{k_i}} & \\[1mm]
& 0 < \frac{\left(b_{n+1}^{k_i}+b_n^{k_i}-a_{n+1}^{k_i}-a_n^{k_i}\right){}^{k_n}}{\left(b_{n+1}^{k_n}+b_n^{k_n}-a_{n+1}^{k_n}-a_n^{k_n}\right){}^{k_i}}\leq \frac{\left(2 \beta _{n+1}^{k_i}-1-\alpha _n^{k_i}\right){}^{k_n}}{\left(2 \beta _{n+1}^{k_n}-1-\alpha _n^{k_n}\right){}^{k_i}} & \\[1mm]
& 1 < \frac{\left(a_{n+1}^{k_i}+a_n^{k_i}+a_{n-1}^{k_i}-b_{n+1}^{k_i}-b_n^{k_i}\right){}^{k_n}}{\left(a_{n+1}^{k_n}+a_n^{k_n}+a_{n-1}^{k_n}-b_{n+1}^{k_n}-b_n^{k_n}\right){}^{k_i}}\leq \frac{\left(2 \alpha _n^{k_i}+1-2\beta _{n+1}^{k_i}\right){}^{k_n}}{\left(2 \alpha_n^{k_n}+1-2\beta_{n+1}^{k_n}\right){}^{k_i}} & \\[1mm]
& 0 < \frac{\left(a_{n+1}^{k_i}+a_n^{k_i}+a_{n-1}^{k_i}-b_{n+1}^{k_i}-b_n^{k_i}-b_{n-1}^{k_i}\right)^{k_n}}{\left(a_{n+1}^{k_n}+a_n^{k_n}+a_{n-1}^{k_n}-b_{n+1}^{k_n}-b_n^{k_n}-b_{n-1}^{k_n}\right)^{k_i}}\leq \frac{\left(1+2 \alpha_n^{k_i}-2\beta _{n+1}^{k_i}-\beta _{n-1}^{k_i}\right)^{k_n}}{\left(1+2 \alpha_n^{k_n}-2\beta_{n+1}^{k_n}-\beta_{n-1}^{k_n}\right)^{k_i}} & \\[1mm]
& \qquad\qquad \vdots \nonumber &
\end{flalign}
\end{conjecture}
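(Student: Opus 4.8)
The plan is to prove the two families of inequalities separately: the lower bounds (the ``$1<$'' and ``$0<$'' parts) from the Seesaw Conjecture \ref{conjecture_seesaw} together with the Conservative Bounds of Corollary \ref{corollary_Conservative_Bounds}, and the upper bounds (the ``$\leq$'' parts) by a variational/extremality argument that identifies the doubled configuration \eqref{threshold_even}--\eqref{threshold_odd} as the maximizer of the relevant ratio functional. First I would normalize. Writing $E_1(k)=b_{n+1}^{k}+b_n^{k}-a_{n+1}^{k}$, $E_2(k)=b_{n+1}^{k}+b_n^{k}-a_{n+1}^{k}-a_n^{k}$, $E_3(k)=a_{n+1}^{k}+a_n^{k}+a_{n-1}^{k}-b_{n+1}^{k}-b_n^{k}$, and so on for the signed partial sums appearing in the successive lines of the conjecture, each $E_j(k)$ scales as $\lambda^{k}E_j(k)$ under $a_i\mapsto\lambda a_i,\ b_i\mapsto\lambda b_i$, so each ratio $E_j(k_i)^{k_n}/E_j(k_n)^{k_i}$ is scale-invariant and I may pass to the normalized GPTE system $[\alpha_1,\dots,\alpha_{n+1}]^{k}=[\beta_1,\dots,\beta_{n+1}]^{k}$ with $\alpha_{n+1}=1$. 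Using the full relations $\sum\alpha_i^{k}=\sum\beta_i^{k}$ for $k=k_1,\dots,k_n$ I would re-express each $E_j(k)$ as a telescoped partial sum of the ordered coordinates; the Seesaw Conjecture \ref{conjecture_seesaw} then gives $E_j(k)>0$ for every exponent $k\in\{k_1,\dots,k_n\}$ with $k>0$, and Corollary \ref{corollary_Conservative_Bounds} supplies the elementwise comparisons ($\beta_n^{k}>\alpha_{n+1}^{k}-\beta_{n+1}^{k}$, and the analogues at each level) that the later steps need.

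For the lower bounds I would argue level by level. At an even level the bound is merely $E_j(k_i)^{k_n}/E_j(k_n)^{k_i}>0$, immediate from $E_j>0$. At an odd level the claim that the ratio exceeds $1$ is equivalent to $\tfrac1{k_i}\log E_j(k_i)>\tfrac1{k_n}\log E_j(k_n)$, i.e.\ to $k\mapsto E_j(k)^{1/k}$ being non-increasing along the admissible exponents; I would try to deduce this from a Hölder inequality applied to a moment-type representation $E_j(k)=\sum_\ell c_\ell x_\ell^{k}$ with $c_\ell>0$, $x_\ell\in(0,1]$, the positivity of the $c_\ell$ being exactly what the interlacing of Conjecture \ref{conjecture_Interlacing} and repeated use of Corollary \ref{corollary_Conservative_Bounds} provide. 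The upper bounds are the substantive part. Here I would set up the constrained optimization $\max R_j$ over the real algebraic variety $V=\{(\alpha,\beta):\sum\alpha_i^{k}=\sum\beta_i^{k},\ k=k_1,\dots,k_n\}$ intersected with the ordering chamber $0\le\alpha_1\le\cdots\le\alpha_{n+1}=1$, $0\le\beta_1\le\cdots\le\beta_{n+1}<1$, where $R_j=E_j(k_i)^{k_n}/E_j(k_n)^{k_i}$. At an interior critical point the Lagrange condition forces $\nabla R_j$ into the span of the $n$ gradients $\nabla(\sum\alpha_i^{k}-\sum\beta_i^{k})$; translating this through the Generalized Girard--Newton Identities (\refIdentity{identity_GNI1} and \refIdentity{identity_GNI2}) converts it into the vanishing of a Wronskian/Hankel-type determinant in the power sums, and I expect — as in the derivations of Conjectures \ref{conjecture_beta_n+1} and \ref{conjecture_Interlacing} — that this determinantal identity can hold only when $\prod(x-\alpha_i)$ and $\prod(x-\beta_i)$ carry the maximal pattern of repeated roots, i.e.\ when the coordinates pair up as in \eqref{threshold_even}--\eqref{threshold_odd} with the smallest coordinate pushed to $0$ on the chamber boundary. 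A direct computation then shows the value of $R_j$ there is exactly the conjectured right-hand side, since in that configuration $\beta_n=\beta_{n+1}$ gives $E_1^{\mathrm{ext}}(k)=2\beta_{n+1}^{k}-1$, $E_2^{\mathrm{ext}}(k)=2\beta_{n+1}^{k}-1-\alpha_n^{k}$, $E_3^{\mathrm{ext}}(k)=1+2\alpha_n^{k}-2\beta_{n+1}^{k}$, etc. Exponents $k_i$ that are $0$ or negative are handled, as in the worked examples of Section 5.2, by inserting an infinitesimal $\varepsilon>0$ in place of $\alpha_1$ or $\beta_1$, running the argument, and letting $\varepsilon\to0$, whereupon the $\varepsilon$-terms drop out of the final polynomial relations.

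The hard part will be the extremality step: showing that no partially-doubled interior critical configuration, and no competing boundary stratum, gives a larger ratio than the fully-doubled one. This is the same obstruction that leaves Conjectures \ref{conjecture_beta_n+1} through \ref{conjecture_threshold} open — the variety $V$ is non-convex and $R_j$ is a ratio of \emph{signed} power sums, so Schur-convexity or majorization shortcuts do not apply directly, and one must instead control the sign of the second-order variation along $V$ at each critical stratum. A secondary technical point is justifying the moment representation $E_j(k)=\sum_\ell c_\ell x_\ell^{k}$ with positive weights at every level: it needs the interlacing to be genuinely strict and so may itself rest on the unproven Conjecture \ref{conjecture_Interlacing}; in a first pass I would therefore state the lower bounds conditionally on Conjecture \ref{conjecture_Interlacing} and the upper bounds (modulo the extremality step) on the weaker hypothesis \ref{conjecture_seesaw}. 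If the extremality step resists a uniform proof, a practical fallback is to verify the determinantal reduction case by case for small $n$, which already covers all the types catalogued in the appendices.
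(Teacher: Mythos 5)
You are attempting to prove a statement that the paper itself does not prove: Conjecture \ref{conjecture_threshold} is presented explicitly as an open conjecture ("we are still unable to provide strict proofs, nor have we found any counterexamples"), and the paper's only supporting evidence is the two worked numerical examples for $(k=1,2,3,4,5,7)$ and $(k=-1,0,1,2,3,5)$. There is therefore no paper proof to compare against, and your proposal should be judged as a free-standing attack. As such it is a reasonable strategy outline, but it is not a proof, and the gaps are not merely the one you flag at the end.

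Concretely: (i) Your lower bounds are conditional on Conjectures \ref{conjecture_Interlacing} and \ref{conjecture_seesaw}, which are themselves open, so even the "easy half" is not established. (ii) The step from positivity to the strict inequality $E_j(k_i)^{k_n}/E_j(k_n)^{k_i}>1$ does not follow from the tools you cite. After telescoping via the system, $E_j(k)$ is a \emph{signed} alternating sum of $k$-th powers of interlaced coordinates; interlacing makes the sum positive but does not produce a representation $\sum_\ell c_\ell x_\ell^{k}$ with weights $c_\ell>0$ \emph{independent of} $k$, which is what a H\"older or power-mean argument needs. Moreover, even granting such a representation with $x_\ell\in(0,1]$, the function $k\mapsto\bigl(\sum_\ell c_\ell x_\ell^{k}\bigr)^{1/k}$ is not monotone in general (the derivative of $\tfrac1k\log\sum c_\ell x_\ell^k$ has competing terms whose signs depend on whether the sum is above or below $1$), so the claimed monotonicity would have to be proved by a different mechanism. (iii) The upper bounds, which are the entire substance of the conjecture, rest on an extremality claim — that the fully-doubled boundary configuration \eqref{threshold_even}--\eqref{threshold_odd} maximizes every ratio $R_j$ over the whole variety — for which you give no argument beyond a Lagrange-multiplier heuristic; you correctly identify this as the hard part, but without it nothing is proved. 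In short, the proposal is a plausible research programme consistent in spirit with how the paper derives its exact-bound conjectures from Identity \ref{identity_GNI2}, but it establishes neither half of the statement and should not be read as closing, or even conditionally closing, the conjecture.
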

The equations \eqref{threshold_even} and \eqref{threshold_odd} correspond to the conditions \eqref{conjecture_beta_n+1_condition} in Conjecture \ref{conjecture_beta_n+1} under which \( \beta_{n+1} \) attains its minimum value \( \beta_{n+1, \min} \). In this conjecture, the values of \(\{\alpha_{n}, \alpha_{n-2}, \cdots\}\) and \(\{\beta_{n+1}, \beta_{n-1}, \cdots\}\) constitute the parameters of the threshold.

\begin{example}\textbf{\textup{Threshold Inequality for $\boldsymbol{(k=1,2,3,4,5,7)}$}}\\[1mm]
Let \(\{a_1, a_2, \dots, a_7\}\) and \(\{b_1, b_2, \dots, b_7\}\) be two sets of non-negative integers, ordered as follows:
\begin{align}
& 0 \leq a_1 \leq a_2 \leq \cdots \leq a_7, \\
& 0 \leq b_1 \leq b_2 \leq \cdots \leq b_7.
\end{align}
and satisfying:
\begin{align}
\left[ a_1, a_2, a_3,a_4,a_5,a_6, a_7 \right]^{k} = \left[ b_1, b_2,b_3,b_4,b_5,b_6,b_7 \right]^{k}, & \nonumber\\
\quad (k = 1,2,3,4,5,7). &
\end{align}
Without loss of generality, assume that \(a_7 > b_7\). By solving the following normalized GPTE system of six equations in six variables:
\begin{align}
2 \alpha_2^k+2 \alpha_4^k+ 2 \alpha_6^k+1=2 \beta_3^k+2 \beta_5^k+ 2 \beta_7^k,\quad
(k = 1,2,3,4,5,7) 
\end{align}
we have
\begin{equation}
\begin{cases}
\alpha_2=0.05164990002386205851\cdots,\\
\alpha_4=0.39985292856797344247\cdots,\\
\alpha_6=0.81918780398271942126\cdots,\\
\beta_3 =0.19527192784719509411\cdots,\\
\beta_5 =0.62261004672717231069\cdots,\\
\beta_7 =0.95280865800018751744\cdots.
\end{cases}
\end{equation}
Then, according to Conjecture \ref{conjecture_threshold}, the following inequalities hold:
\begin{equation}
\begin{aligned}
& 1 < (b_7^1+b_6^1-a_7^1)^7/v_3^1 \leq 1.1731936434\cdots,\\
& 1 < (b_7^2+b_6^2-a_7^2)^7/v_3^2 \leq 1.3249039329\cdots,\\
& 1 < (b_7^3+b_6^3-a_7^3)^7/v_3^3 \leq 1.4306923566\cdots,\\
& 1 < (b_7^4+b_6^4-a_7^4)^7/v_3^4 \leq 1.4647138777\cdots,\\
& 1 < (b_7^5+b_6^5-a_7^5)^7/v_3^5 \leq 1.4059455441\cdots,\\
& \qquad\qquad \textup{where}\;\; v_3  =b_7^7+b_6^7-a_7^7.
\end{aligned}
\end{equation}
\vspace{-2mm}
\begin{equation}
\begin{aligned}
& 0 < (b_7^1+b_6^1-a_7^1-a_6^1)^7/v_4^1 \leq 0.0000002020\cdots,\\
& 0 < (b_7^2+b_6^2-a_7^2-a_6^2)^7/v_4^2 \leq 0.0000416312\cdots,\\
& 0 < (b_7^3+b_6^3-a_7^3-a_6^3)^7/v_4^3 \leq 0.0010920418\cdots,\\
& 0 < (b_7^4+b_6^4-a_7^4-a_6^4)^7/v_4^4 \leq 0.0118236289\cdots,\\
& 0 < (b_7^5+b_6^5-a_7^5-a_6^5)^7/v_4^5 \leq 0.0753342167\cdots,\\
& \qquad\qquad \textup{where}\;\; v_4 =b_7^7+b_6^7-a_7^7-a_6^7.
\end{aligned}
\end{equation}
\vspace{-2mm}
\begin{equation}
\begin{aligned}
& 1 < (a_7^1+a_6^1+a_5^1-b_7^1-b_6^1)^7/v_5^1 \leq 2.2914907855\cdots,\\
& 1 < (a_7^2+a_6^2+a_5^2-b_7^2-b_6^2)^7/v_5^2 \leq 2.8238721881\cdots,\\
& 1 < (a_7^3+a_6^3+a_5^3-b_7^3-b_6^3)^7/v_5^3 \leq 2.8248563348\cdots,\\
& 1 < (a_7^4+a_6^4+a_5^4-b_7^4-b_6^4)^7/v_5^4 \leq 2.3344388138\cdots,\\
& 1 < (a_7^5+a_6^5+a_5^5-b_7^5-b_6^5)^7/v_5^5 \leq 1.6370819280\cdots,\\
& \qquad\qquad \textup{where}\;\; v_5 =a_7^7+a_6^7+a_5^7-b_7^7-b_6^7.
\end{aligned}
\end{equation}
\vspace{-2mm}
\begin{equation}
\begin{aligned}
& 0 < (a_7^1+a_6^1+a_5^1-b_7^1-b_6^1-b_5^1)^7/v_6^1 \leq 0.3003634352\cdots,\\
& 0 < (a_7^2+a_6^2+a_5^2-b_7^2-b_6^2-b_5^2)^7/v_6^2 \leq 0.0968622648\cdots,\\
& 0 < (a_7^3+a_6^3+a_5^3-b_7^3-b_6^3-b_5^3)^7/v_6^3 \leq 0.0157285406\cdots,\\
& 0 < (a_7^4+a_6^4+a_5^4-b_7^4-b_6^4-b_5^4)^7/v_6^4 \leq 0.0009104995\cdots,\\
& 0 < (a_7^5+a_6^5+a_5^5-b_7^5-b_6^5-b_5^5)^7/v_6^5 \leq 0.0000059574\cdots,\\
& \qquad\qquad \textup{where}\;\; v_6 =a_7^7+a_6^7+a_5^7-b_7^7-b_6^7-b_5^7.
\end{aligned}
\end{equation}
\vspace{-2mm}
\begin{equation}
\begin{aligned}
& 1 < (b_7^1+b_6^1+b_5^1+b_4^1-a_7^1-a_6^1-a_5^1)^7/v_7^1 \leq 3.3959670306\cdots,\\
& 1 < (b_7^2+b_6^2+b_5^2+b_4^3-a_7^2-a_6^2-a_5^2)^7/v_7^2 \leq 5.4638952823\cdots,\\
& 1 < (b_7^3+b_6^3+b_5^3+b_4^3-a_7^3-a_6^3-a_5^3)^7/v_7^3 \leq 6.9780041836\cdots,\\
& 1 < (b_7^4+b_6^4+b_5^4+b_4^4-a_7^4-a_6^4-a_5^4)^7/v_7^4 \leq 5.6043427385\cdots,\\
& 1 < (b_7^5+b_6^5+b_5^5+b_4^5-a_7^5-a_6^5-a_5^5)^7/v_7^5 \leq 2.8588079379\cdots,\\
& \qquad\qquad \textup{where}\;\; v_7 =b_7^7+b_6^7+b_5^7+b_4^7-a_7^7-a_6^7-a_5^7.
\end{aligned}
\end{equation}
\vspace{-2mm}
\begin{equation}
\begin{aligned}
& 0 < (b_7^1+b_6^1+b_5^1+b_4^1-a_7^1-a_6^1-a_5^1-a_4^1)^7/v_8^1 \leq 0.7168485697\cdots,\\
& 0 < (b_7^2+b_6^2+b_5^2+b_4^3-a_7^2-a_6^2-a_5^2-a_4^2)^7/v_8^2 \leq 0.4549241768\cdots,\\
& 0 < (b_7^3+b_6^3+b_5^3+b_4^3-a_7^3-a_6^3-a_5^3-a_4^3)^7/v_8^3 \leq 0.1691253278\cdots,\\
& 0 < (b_7^4+b_6^4+b_5^4+b_4^4-a_7^4-a_6^4-a_5^4-a_4^4)^7/v_8^4 \leq 0.0169508467\cdots,\\
& 0 < (b_7^5+b_6^5+b_5^5+b_4^5-a_7^5-a_6^5-a_5^5-a_4^5)^7/v_8^5 \leq 0.0001423962\cdots,\\
& \qquad\qquad \textup{where}\;\; v_8 =b_7^7+b_6^7+b_5^7+b_4^7-a_7^7-a_6^7-a_5^7-a_4^7.
\end{aligned}
\end{equation}
\end{example}

\begin{example}\textbf{\textup{Threshold Inequality for $\boldsymbol{(k=-1,0,1,2,3,5)}$}}\\[1mm]
Let \(\{a_1, a_2, \dots, a_7\}\) and \(\{b_1, b_2, \dots, b_7\}\) be two sets of non-negative integers, ordered as follows:
\begin{align}
& 0 \leq a_1 \leq a_2 \leq \cdots \leq a_7, \\
& 0 \leq b_1 \leq b_2 \leq \cdots \leq b_7.
\end{align}
and satisfying:
\begin{align}
\left[ a_1, a_2, a_3,a_4,a_5,a_6, a_7 \right]^{k} = \left[ b_1, b_2,b_3,b_4,b_5,b_6,b_7 \right]^{k}, & \nonumber\\
\quad (k = -1,0,1,2,3,5). &
\end{align}
Without loss of generality, assume that \(a_7 > b_7\). By solving the following normalized GPTE system with $ \varepsilon_1=0.0000000000000001$:
\begin{equation}
\begin{cases}
2 \alpha_2^k+2 \alpha_4^k+ 2 \alpha_6^k+1=\varepsilon_1^k+2 \beta_3^k+2 \beta_5^k+ 2 \beta_7^k,\quad
(k = -1,1,2,3,5) \\
\alpha_2^2 \alpha_4^2 \alpha_6^2=\varepsilon_1 \beta_3^2 \beta_5^2 \beta_7^2
\end{cases}
\end{equation}
we have
\begin{equation}
\begin{cases}
\alpha_2=0.00000000000000019999\cdots,\\
\alpha_4=0.10366735748124304538\cdots,\\
\alpha_6=0.67710905862459701232\cdots,\\
\beta_3 =0.00000000418342096619\cdots,\\
\beta_5 =0.36741431562534021961\cdots,\\
\beta_7 =0.91336209629707902190\cdots.
\end{cases}
\end{equation}
Then, according to Conjecture \ref{conjecture_threshold}, the following inequalities hold:
\begin{equation}
\begin{aligned}
& 1 < (b_7^1+b_6^1-a_7^1)^5/v_3^1 \leq 1.4235348177\cdots,\\
& 1 < (b_7^2+b_6^2-a_7^2)^5/v_3^2 \leq 1.8134639074\cdots,\\
& 1 < (b_7^3+b_6^3-a_7^3)^5/v_3^3 \leq 1.9768451783\cdots,\\
& \qquad\qquad \textup{where}\;\; v_3  =b_7^5+b_6^5-a_7^5.
\end{aligned}
\end{equation}
\vspace{-2mm}
\begin{equation}
\begin{aligned}
& 0 < (b_7^1+b_6^1-a_7^1-a_6^1)^5/v_4^1 \leq 0.0005813210\cdots,\\
& 0 < (b_7^2+b_6^2-a_7^2-a_6^2)^5/v_4^2 \leq 0.0245475142\cdots,\\
& 0 < (b_7^3+b_6^3-a_7^3-a_6^3)^5/v_4^3 \leq 0.2066805236\cdots,\\
& \qquad\qquad \textup{where}\;\; v_4 =b_7^5+b_6^5-a_7^5-a_6^5.
\end{aligned}
\end{equation}
\vspace{-2mm}
\begin{equation}
\begin{aligned}
& 1 < (a_7^1+a_6^1+a_5^1-b_7^1-b_6^1)^5/v_5^1 \leq 3.0553028897\cdots,\\
& 1 < (a_7^2+a_6^2+a_5^2-b_7^2-b_6^2)^5/v_5^2 \leq 5.3027632896\cdots,\\
& 1 < (a_7^3+a_6^3+a_5^3-b_7^3-b_6^3)^5/v_5^3 \leq 3.5897080250\cdots,\\
& \qquad\qquad \textup{where}\;\; v_5 =a_7^5+a_6^5+a_5^5-b_7^5-b_6^5.
\end{aligned}
\end{equation}
\vspace{-2mm}
\begin{equation}
\begin{aligned}
& 0 < (a_7^1+a_6^1+a_5^1-b_7^1-b_6^1-b_5^1)^5/v_6^1 \leq 0.0157563547\cdots,\\
& 0 < (a_7^2+a_6^2+a_5^2-b_7^2-b_6^2-b_5^2)^5/v_6^2 \leq 0.4231748923\cdots,\\
& 0 < (a_7^3+a_6^3+a_5^3-b_7^3-b_6^3-b_5^3)^5/v_6^3 \leq 0.8032589457\cdots,\\
& \qquad\qquad \textup{where}\;\; v_6 =a_7^5+a_6^5+a_5^5-b_7^5-b_6^5-b_5^5.
\end{aligned}
\end{equation}
\vspace{-2mm}
\begin{equation}
\begin{aligned}
& 1 < (b_7^1+b_6^1+b_5^1+b_4^1-a_7^1-a_6^1-a_5^1)^5/v_7^1 \leq 3.9999999999\cdots,\\
& 1 < (b_7^2+b_6^2+b_5^2+b_4^3-a_7^2-a_6^2-a_5^2)^5/v_7^2 \leq 7.9999999999\cdots,\\
& 1 < (b_7^3+b_6^3+b_5^3+b_4^3-a_7^3-a_6^3-a_5^3)^5/v_7^3 \leq 15.999996771\cdots,\\
& \qquad\qquad \textup{where}\;\; v_7 =b_7^5+b_6^5+b_5^5+b_4^5-a_7^5-a_6^5-a_5^5.
\end{aligned}
\end{equation}
\vspace{-2mm}
\begin{equation}
\begin{aligned}
& 0 < (b_7^1+b_6^1+b_5^1+b_4^1-a_7^1-a_6^1-a_5^1-a_4^1)^5/v_8^1 \leq 0.9999995964\cdots,\\
& 0 < (b_7^2+b_6^2+b_5^2+b_4^3-a_7^2-a_6^2-a_5^2-a_4^2)^5/v_8^2 \leq 0.9999999999\cdots,\\
& 0 < (b_7^3+b_6^3+b_5^3+b_4^3-a_7^3-a_6^3-a_5^3-a_4^3)^5/v_8^3 \leq 0.9999999999\cdots,\\
& \qquad\qquad \textup{where}\;\; v_8 =b_7^5+b_6^5+b_5^5+b_4^5-a_7^5-a_6^5-a_5^5-a_4^5.
\end{aligned}
\end{equation}
\end{example}

In this chapter, we present six conjectures that provide upper and lower bounds for the variables in the GPTE system from different perspectives. Conjectures \ref{conjecture_Interlacing} and \ref{conjecture_seesaw} were discovered by us in 1995, while Conjectures \ref{conjecture_beta_n+1}, \ref{conjecture_beta_n}, \ref{conjecture_alpha_n}, and \ref{conjecture_threshold} were discovered by us in 2021. Among the numerical solutions listed in the Appendix of this paper, those obtained prior to 2021 were mostly found using only Conjectures \ref{conjecture_Interlacing} and \ref{conjecture_seesaw}. In contrast, the numerical solutions obtained after 2021 have generally employed all six conjectures except for Conjecture \ref{conjecture_alpha_n}.
\clearpage

\section{A General Process of Computer Search for GPTE}
For the computer search of the PTE problem, general algorithms have been provided in the literature by P. Borwein et al. in 2003 \cite{Borwein2003}, T. Caley's Ph.D. thesis in 2012 \cite{Caley12}, and the Master's theses by Ming Qiu in 2016 \cite{Qiu2016}, Yingqiong Chen in 2018 \cite{YingqiongChen2018}, and Qian Zhang in 2023 \cite{Zhang2023}. Additionally, D. Coppersmith and M. Mossinghoff et al. published a paper in 2024 \cite{Coppersmith2024}.

In this chapter, for the first time in the literature, we present a general algorithm for the computer search of ideal non-negative integer solutions to the GPTE problem, which is also applicable to the PTE problem. Specifically, we seek non-negative integer solutions to the following system:
\begin{align}
& \left[ a_{1}, a_{2}, \dots, a_{n+1} \right]^{k} = \left[ b_{1}, b_{2}, \dots, b_{n+1} \right]^{k}, \quad (k = k_1, k_2, \dots, k_n).  \tag*{\eqref{GPTEk}}
\end{align}
As an example, we consider the case \( (k=1,2,3,4) \):
\begin{align}
\label{PTEk1234}
& [a_1,a_2,a_3,a_4,a_5]^k=[b_1,b_2,b_3,b_4,b_5]^k,\quad (k=1,2,3,4).
\end{align}
Given that \eqref{PTEk1234} holds, it follows that for any integer \( T \), the following also holds:
\begin{align}
& [a_1+T,a_2+T,a_3+T,a_4+T,a_5+T]^k \nonumber\\
& \qquad =[b_1+T,b_2+T,b_3+T,b_4+T,b_5+T]^k,\quad (k=1,2,3,4).
\end{align}
Therefore, without loss of generality, we can only consider the case where \( b_1 = 0 \):
\begin{align}
\label{k1234b0}
& [a_1,a_2,a_3,a_4,a_5]^k=[0,b_2,b_3,b_4,b_5]^k,\quad (k=1,2,3,4).
\end{align}
\subsection{Mathematical Foundations of the Search Algorithm}
This section provides the various algorithmic foundations related to the computer search of \eqref{k1234b0}.
\begin{algorithm}\textnormal{\textbf{Interlacing Relation}}\\[2mm]
According to Conjecture \ref{conjecture_Interlacing}, we have the following ordering:
\begin{align}
& 0<a_1 \leq a_2 <b_2 \leq b_3 < a_3 \leq a_4 <b_4 \leq b_5 < a_5
\end{align}
\end{algorithm}
\noindent Consequently, we can transform \eqref{k1234b0} into the following form:
\begin{align}
\label{k1234L3R2}
& b_3^k + b_2^k - a_2^k - a_1^k = a_5^k + a_4^k + a_3^k - b_5^k + b_4^k, \quad (k=1,2,3,4)
\end{align}

\begin{algorithm}\textbf{\textup{Integrality Test}}\\[1mm]
\label{alg:integrality-test}
Given five positive integers \( a_5, a_4, a_3, b_5, b_4 \), proceed as follows:\\[1mm]
\noindent\text{Step 1. Compute power sums:}
\begin{align}
\label{P1234L3R2}
P_k &= a_5^k + a_4^k + a_3^k - b_5^k - b_4^k, \quad (k=1,2,3,4).
\end{align}

\noindent\text{Step 2. Construct symmetric functions:}
\begin{equation}
\label{k1234S1234}
\begin{aligned}
S_1 &= P_1, \\
S_2 &= (P_2 + S_1 P_1)/2, \\
S_3 &= (P_3 + S_1 P_2 + S_2 P_1)/3, \\
S_4 &= (P_4 + S_1 P_3 + S_2 P_2 + S_3 P_1)/4.
\end{aligned}
\end{equation}

\noindent\text{Step 3. Evaluate determinants:}
\begin{equation}
\label{k1234W456}
\begin{aligned}
W_{4} &=
\begin{vmatrix}
 S_2 & S_1 \\
 S_3 & S_2 \\
\end{vmatrix},\quad
W_{5} =
\begin{vmatrix}
 S_3 & S_1 \\
 S_4 & S_2 \\
\end{vmatrix},\quad
W_{6} =
\begin{vmatrix}
 S_3 & S_2 \\
 S_4 & S_3 \\
\end{vmatrix}.
\end{aligned}
\end{equation}
\noindent\textbf{Criterion:}  
If \( \{a_5, a_4, a_3, b_5, b_4\} \) constitutes part of a non-negative integer solution to system \eqref{k1234b0}, then  the ratios 
\begin{equation}
\frac{W_5}{W_4} \quad \text{and} \quad \frac{W_6}{W_4}
\end{equation}
must be positive integers.
\end{algorithm}

\begin{proof}
To prove the validity of the Integrality Test presented above, we utilize the Second Generalization of the Girard-Newton Identities, referred to as Identity \ref{identity_GNI2}. We define:
\begin{align}
\label{P1234L2R2}
P_k = b_3^k + b_2^k - a_2^k - a_1^k, \quad (k=1,2,3,4).
\end{align}
Then, we construct the symmetric functions \(S_i\) and determinants \(W_i\) as follows:
\begin{equation}
\begin{aligned}
S_1 &= P_1, \\
S_2 &= (P_2 + S_1 P_1)/2, \\
S_3 &= (P_3 + S_1 P_2 + S_2 P_1)/3, \\
S_4 &= (P_4 + S_1 P_3 + S_2 P_2 + S_3 P_1)/4.
\end{aligned}
\tag*{\eqref{k1234S1234}}
\end{equation}
\vspace{-12pt}
\begin{equation}
\begin{aligned}
W_{4} &=
\begin{vmatrix}
 S_2 & S_1 \\
 S_3 & S_2 \\
\end{vmatrix},\quad
W_{5} =
\begin{vmatrix}
 S_3 & S_1 \\
 S_4 & S_2 \\
\end{vmatrix},\quad
W_{6} =
\begin{vmatrix}
 S_3 & S_2 \\
 S_4 & S_3 \\
\end{vmatrix}.
\end{aligned}
\tag*{\eqref{k1234W456}}
\end{equation}
From these definitions, we derive the following results:
\begin{align}
\label{k1234D1D2}
& D_1 = \frac{W_5}{W_4} = b_2 + b_3, \qquad D_2 = \frac{W_6}{W_4} = b_2 b_3.
\end{align}
Based on \eqref{k1234L3R2}, by substituting \eqref{P1234L2R2} with \eqref{P1234L3R2}, we obtain the criterion for the Integrality Test. This criterion ensures that the ratios ${W_5}/{W_4}$ and ${W_6}/{W_4}$ must be positive integers if \(\{a_5, a_4, a_3, b_5, b_4\}\) constitutes part of a non-negative integer solution to system \eqref{k1234b0}. 
\end{proof}
This Integrality Test significantly reduces the number of necessary loops in a full computer search, thereby improving search efficiency. In Example \ref{example_k1234567}, we provide a similar example for type \( (k=1,2,3,4,5,6,7) \).

After passing the Integrality Test, the original Diophantine system of four equations with nine variables \eqref{k1234b0} is reduced to a system of four equations with four variables \eqref{P1234L2R2}. We can then use the following algorithm to calculate the remaining four variables \(\{b_3, b_2, a_2, a_1\}\).

\begin{algorithm}\textnormal{\textbf{Calculating Remaining Variables}}\\[2mm]
From \eqref{P1234L2R2} and \eqref{k1234D1D2}, we derive:
\begin{align}
D_3 &= D_1^2 - 4 D_2 = (b_3 - b_2)^2, \\
C_3 &= 2 D_1 P_1 + D_1^2 - 4 D_2 - P_1^2 - 2 P_2 = (a_2 - a_1)^2.
\end{align}
Then we have:
\begin{align}
D_0 &= \sqrt{D_3} = b_3 - b_2, \\
C_0 &= \sqrt{C_3} = a_2 - a_1.
\end{align}
Finally, we obtain:
\begin{align}
& b_3=(D_1+D_0)/2,\\
& b_2=(D_1-D_0)/2,\\
& a_2=(D_1-P_1+C_0)/2,\\
& a_1=(D_1-P_1-C_0)/2.
\end{align}
\end{algorithm}
We then need to verify whether \(b_3\), \(b_2\), \(a_2\), and \(a_1\) are all positive integers. If they are, we obtain an ideal non-negative integer solution to \eqref{k1234b0}.

\begin{algorithm}
\textbf{\textup{Exact Bounds}}\\[2mm]
According to Conjecture \ref{conjecture_beta_n+1}, we have:
\begin{align}
\label{k1234b5L}
\beta_{5, \min} &=\sin ^2\left(\frac{4 \pi }{10}\right)= 0.90450849718747371205\cdots. 
\end{align}
According to Conjecture \ref{conjecture_beta_n}, we have:
\begin{align}
\label{k1234b4L}
& y_4^6 - 6 (\beta_5 - 1) y_4^5 + (15 \beta_5^2 + 14 \beta_5 - 29) y_4^4 \nonumber \\
& \quad - 4 (5 \beta_5^3 + 5 \beta_5^2 - \beta_5 - 9) y_4^3 + (15 \beta_5^4 - 20 \beta_5^3 + 50 \beta_5^2 - 36 \beta_5 - 9) y_4^2 \nonumber \\
& \quad - 2 (\beta_5 - 1)^4 (3 \beta_5 + 5) y_4 + (\beta_5 - 1)^4 (\beta_5^2 + 10 \beta_5 + 5) = 0
\end{align}
\end{algorithm}

By solving \eqref{k1234b4L}, with the additional constraints \((1 - \beta_5^4)^{1/4}< y_4 \leq \beta_5 \), we obtain the lookup table for \(\beta_{4, \min}(\beta_5)\) corresponding to the type \((k=1,2,3,4)\) as follows:
\begin{align*}
& \beta_{4,\min}\left(0.9046\right) = 0.9044169355\cdots, \\
& \beta_{4,\min}\left(0.9047\right) = 0.9043167364\cdots, \\
& \beta_{4,\min}\left(0.9048\right) = 0.9042163962\cdots, \\
& \beta_{4,\min}\left(0.9049\right) = 0.9041159147\cdots, \\
& \beta_{4,\min}\left(0.9050\right) = 0.9040152914\cdots, \\
& \beta_{4,\min}\left(0.9051\right) = 0.9039145263\cdots, \\
& \qquad \vdots \\
& \beta_{4,\min}\left(0.9994\right) = 0.3770975986\cdots, \\
& \beta_{4,\min}\left(0.9995\right) = 0.3627999836\cdots, \\
& \beta_{4,\min}\left(0.9996\right) = 0.3458993832\cdots. \\
& \beta_{4,\min}\left(0.9997\right) = 0.3250684022\cdots, \\
& \beta_{4,\min}\left(0.9998\right) = 0.2974966651\cdots, \\
& \beta_{4,\min}\left(0.9999\right) = 0.2549905796\cdots. 
\end{align*}
\noindent When conducting a computer search, the above lookup table can be organized into the following text file format:

\begin{lstlisting}[language=VB]
    9046    904416
    9047    904316
    9048    904216
    9049    904115
    9050    904015
    9051    903914
\end{lstlisting}
$  \qquad \qquad \vdots \qquad\qquad\quad \vdots$
\begin{lstlisting}[language=VB,, firstnumber=949]
    9994    377097
    9995    362799
    9996    345899
    9997    325068
    9998    297496
    9999    254990
\end{lstlisting}
For \(\alpha_4\), although we can also derive its exact bounds based on Conjecture \ref{conjecture_alpha_n}, and subsequently obtain the lookup tables for \(\alpha_{4, \max}(\beta_5, \beta_4)\) and \(\alpha_{4, \min}(\beta_5, \beta_4)\), such two-dimensional tables tend to become complex as the precision requirements increase, thereby potentially reducing search efficiency. Moreover, the exact bounds lookup table for \(\alpha_3\) even becomes three-dimensional. Therefore, for \(\alpha_4\) and \(\alpha_3\), we choose not to consider exact bounds but instead focus on the following Conservative Bounds and Threshold Inequalities as simplified search limits, which can significantly enhance search efficiency.

\begin{algorithm}\textbf{\textup{Conservative Bounds}}\\[2mm]
According to Conjecture \ref{conjecture_seesaw}, we have:
\begin{align}
\frac{1}{2}a_5^k & <b_5^k <a_5^k, \\
a_5^k-b_5^k  & <b_4^k \leq b_5^k , \\
\label{Seesaw_a4}
\frac{1}{2} \left(b_5^k+b_4^k -a_5^k \right) & <a_4^k <b_5^k+b_4^k -a_5^k , \\
\label{Seesaw_a3}
b_5^k+b_4^k -a_5^k -a_4^k & <a_3^k \leq a_4^k 
\end{align}
\end{algorithm}
The inequalities \eqref{Seesaw_a4} and \eqref{Seesaw_a3} can be used as the Conservative Bounds for \(a_4\) and \(a_3\), respectively.

\begin{algorithm}\textbf{\textup{Threshold Inequality}}\\[2mm]
According to Conjecture \ref{conjecture_threshold}, we first solve the following normalized GPTE system:
\begin{align}
2 \alpha_2^k + 2 \alpha_4^k + 1 = 2 \beta_3^k + 2 \beta_5^k, \quad (k = 1,2,3,4)
\end{align}
The numerical solutions are as follows:
\begin{equation}
\begin{cases}
\alpha_2=0.09549150281252628794\cdots,\\
\alpha_4=0.65450849718747371205\cdots,\\
\beta_3 =0.34549150281252628794\cdots,\\
\beta_5 =0.90450849718747371205\cdots.\\
\end{cases}
\end{equation}
Subsequently, we derive the following series of Threshold inequalities:
\begin{equation}
\begin{aligned}
& 0 < (b_5^1+b_4^1-a_5^1-a_4^1)^4/v_4^1 \leq 0.4250992787\cdots,\\
& 0 < (b_5^2+b_4^2-a_5^2-a_4^2)^4/v_4^2 \leq 0.0775630998\cdots,\\
& 0 < (b_5^3+b_4^3-a_5^3-a_4^3)^4/v_4^3 \leq 0.0036725726\cdots,\\
& \qquad\qquad \textup{where}\;\; v_4 =b_5^3+b_4^3-a_5^3-a_4^3.
\end{aligned}
\end{equation}
and
\begin{equation}
\begin{aligned}
& 1 < (a_5^1+a_4^1+a_3^1-b_5^1-b_4^1)^4/v_5^1 \leq 1.8688879619\cdots,\\
& 1 < (a_5^2+a_4^2+a_3^2-b_5^2-b_4^2)^4/v_5^2 \leq 2.9450477444\cdots,\\
& 1 < (a_5^3+a_4^3+a_3^3-b_5^3-b_4^3)^4/v_5^3 \leq 2.2061877674\cdots,\\
& \qquad\qquad \textup{where}\;\; v_5 =a_5^4+a_4^4+a_3^4-b_5^4-b_4^4.
\end{aligned}
\end{equation}
\end{algorithm}

\begin{algorithm}\textbf{\textup{Constant C}}\\[2mm]
According to Identity \ref{identityC_PTE}, we have:
\begin{equation}
\begin{aligned}
C &= \prod_{j=1}^5 (a_i - b_j) = -\prod_{j=1}^5 (b_i - a_j) \quad \text{for } i=1,...,5
\end{aligned}
\end{equation}
When \( b_1 = 0 \), the above fomula for Constant C simplifies to:
\begin{equation}
\begin{aligned}
C & = a_1 \left(b_2-a_1\right) \left(b_3-a_1\right) \left(b_4-a_1\right) \left(b_5-a_1\right)\\
& = a_2 \left(b_2-a_2\right) \left(b_3-a_2\right) \left(b_4-a_2\right) \left(b_5-a_2\right)\\
& = a_3 \left(a_3-b_2\right) \left(a_3-b_3\right) \left(b_4-a_3\right) \left(b_5-a_3\right)\\
& = a_4 \left(a_4-b_2\right) \left(a_4-b_3\right) \left(b_4-a_4\right) \left(b_5-a_4\right)\\
& = a_5 \left(a_5-b_2\right) \left(a_5-b_3\right) \left(a_5-b_4\right) \left(a_5-b_5\right)
\end{aligned}
\end{equation}
\end{algorithm}
When conducting a computer search, we precompute a table of prime factors to enhance the efficiency of the search process. For example, the following table lists the largest prime factor for each integer from 1 to 10000, consisting of 10000 entries.
\begin{lstlisting}[language=VB]
    1 
    2
    3 
    2 
    5 
    3 
    7
    2
    3
    5
\end{lstlisting}
$  \qquad \quad \vdots $
\begin{lstlisting}[language=VB, firstnumber=9991]
    103
    1249
    3331
    263
    1999   
    17
    769    
    4999  
    101
    5 
\end{lstlisting}

\subsection{Reference Code for the Search Program}
In this section, we present a reference code for the computer search of non-negative integer solutions for the case \( (k=1,2,3,4) \). The mathematical algorithm underlying this code is entirely based on the previous section. Here, we use Visual Basic 6.0 (VB6) as the programming language. One reason for choosing VB6 is its high code readability, which makes it easy to understand. Additionally, VB6 is relatively efficient for searching numerical solutions to GPTE problems. The majority of the numerical solutions presented in this paper, which were first found by us, were obtained using VB6. Only in a few cases where high computational speed is required, do we resort to using Visual C++ 6.0 (VC6), which can be 2 to 4 times faster than VB6.
\begin{lstlisting}[language=VB, numbers=left, firstnumber=1, basicstyle=\small\ttfamily, numberstyle=\tiny\color{gray}]
Option Explicit

Dim Power1(20000) As Double
Dim Power2(20000) As Double
Dim Power3(20000) As Double
Dim Power4(20000) As Double

Dim Factor(20000) As Integer

Dim a5 As Double, a4 As Double, a3 As Double, a2 As Double
Dim a1 As Double
Dim b5 As Double, b4 As Double, b3 As Double, b2 As Double

Dim a5_Start As Double
Dim a5_End As Double

Dim mini As Double

Dim i As Double

' Mathematical variables
Dim P1 As Double, P2 As Double, P3 As Double, P4 As Double
Dim S1 As Double, S2 As Double, S3 As Double, S4 As Double
Dim W4 As Double, W5 As Double, W6 As Double
Dim D3 As Double, C3 As Double, D0 As Double, C0 As Double
Dim D1 As Double, D2 As Double

' Sum arrays
Dim SumB4(4) As Double, SumB5(4) As Double
Dim SumA3(4) As Double, SumA4(4) As Double

' Search bounds
Dim b4min As Double, b5min As Double, b5max As Double
Dim a4min As Double, a4max As Double, a3min As Double

Dim Fmax As Integer

Dim q3 As Double
Dim q2 As Double
Dim q1 As Double

Dim TableIndex As Integer

Private Sub Form_load()

mini = 0.000001

Factor(0) = 10000
Open "Factor.txt" For Input As #10
For i = 1 To 10000
  Input #10, Factor(i)                          'Constant C
Next i
Close #10

For i = 1 To 15000
    Power1(i) = i
    Power2(i) = i * i
    Power3(i) = i * i * i
    Power4(i) = i * i * i * i
Next i


ReDim b4table(10000) As Long

For i = 1 To 10000
  b4table(i) = 10000
Next i

Open "Table b4min.txt" For Input As #1
For i = 1 To 954       '=10000-9046
  Input #1, TableIndex, b4table(TableIndex)     'Exact Bounds
Next i
Close #1

Open "Progress.ini" For Input As #3
    Input #3, a5_Start, a5_End
Close #3

For a5 = a5_Start To a5_End

    Fmax = Int(a5 / 2)
   
    If Factor(a5) > Fmax Then GoTo NEXTA5       'Constant C
    
    b5min = Int(0.904508 * a5) + 1
    b5max = a5 - 1

For b5 = b5min To b5max                         'Exact Bounds

    If Factor(a5 - b5) > Fmax Then GoTo NEXTB5  'Constant C
    
    SumB5(1) = Power1(a5) - Power1(b5)          '=a5-b5
    If SumB5(1) < 0 Then GoTo NEXTB5            'Seesaw Conj.
    
    SumB5(2) = Power2(a5) - Power2(b5)
    If SumB5(2) < 0 Then GoTo NEXTB5            'Seesaw Conj.
        
    SumB5(3) = Power3(a5) - Power3(b5)
    If SumB5(3) < 0 Then GoTo NEXTB5            'Seesaw Conj.
    
    SumB5(4) = Power4(a5) - Power4(b5)
    If SumB5(4) < 0 Then GoTo NEXTB5            'Seesaw Conj.
    
    b4min = Int(a5 * b4table(Int(10000 * b5 / a5)) / 1000000)
            
For b4 = b4min To b5                            'Exact Bounds

    If Factor(a5 - b4) > Fmax Then GoTo NEXTB4  'Constant C
    
    SumB4(1) = Power1(b4) - SumB5(1)            '=b4+b5-a5
    If SumB4(1) < 0 Then GoTo NEXTB4            'Seesaw Conj.
    
    SumB4(2) = Power2(b4) - SumB5(2)
    If SumB4(2) < 0 Then GoTo NEXTB4            'Seesaw Conj.
      
    SumB4(3) = Power3(b4) - SumB5(3)
    If SumB4(3) < 0 Then GoTo NEXTB4            'Seesaw Conj.
    
    SumB4(4) = Power4(b4) - SumB5(4)
    If SumB4(4) < 0 Then GoTo NEXTB4            'Seesaw Conj.
    
    q1 = SumB4(1) ^ 4 / SumB4(4) ^ 1
    If q1 < 1 Then GoTo NEXTB4                  'Threshold Ineq.
    If q1 > 1.2648118 Then GoTo NEXTB4          'Threshold Ineq.
    
    q2 = SumB4(2) ^ 4 / SumB4(4) ^ 2
    If q2 < 1 Then GoTo NEXTB4                  'Threshold Ineq.
    If q2 > 1.428761837 Then GoTo NEXTB4        'Threshold Ineq.
    
    q3 = SumB4(3) ^ 4 / SumB4(4) ^ 3
    If q3 < 1 Then GoTo NEXTB4                  'Threshold Ineq.
    If q3 > 1.366557215 Then GoTo NEXTB4        'Threshold Ineq.

    a4min = Int(Sqr(Sqr(SumB4(4) / 2)) - mini) + 1
    a4max = Int(Sqr(Sqr(SumB4(4))) + mini)

For a4 = a4min To a4max                     'Conservative Bounds

    If Factor(a4) > Fmax Then GoTo NEXTA4       'Constant C
    If Factor(b5 - a4) > Fmax Then GoTo NEXTA4  'Constant C
    If Factor(b4 - a4) > Fmax Then GoTo NEXTA4  'Constant C
    
    SumA4(1) = SumB4(1) - Power1(a4)            'b4+b5-a4-a5
    If SumA4(1) < 0 Then GoTo NEXTA4            'Seesaw Conj.
   
    SumA4(2) = SumB4(2) - Power2(a4)
    If SumA4(2) < 0 Then GoTo NEXTA4            'Seesaw Conj.
    
    SumA4(3) = SumB4(3) - Power3(a4)
    If SumA4(3) < 0 Then GoTo NEXTA4            'Seesaw Conj.
    
    SumA4(4) = SumB4(4) - Power4(a4)
    If SumA4(4) < 0 Then GoTo NEXTA4            'Seesaw Conj.
    
    q1 = SumA4(1) ^ 4 / SumA4(4) ^ 1
    If q1 < 0 Then GoTo NEXTA4              'Threshold Ineq.
    If q1 > 0.00367257266 Then GoTo NEXTA4  'Threshold Ineq.
    
    q2 = SumA4(2) ^ 4 / SumA4(4) ^ 2
    If q2 < 0 Then GoTo NEXTA4              'Threshold Ineq.
    If q2 > 0.0775630998 Then GoTo NEXTA4   'Threshold Ineq.
    
    q3 = SumA4(3) ^ 4 / SumA4(4) ^ 3
    If q3 < 0 Then GoTo NEXTA4              'Threshold Ineq.
    If q3 > 0.425099279 Then GoTo NEXTA4    'Threshold Ineq.

    a3min = Int(Sqr(Sqr(SumA4(4))) - mini) + 1

For a3 = a3min To a4                    'Conservative Bounds
    
    If Factor(a3) > Fmax Then GoTo NEXTA3       'Constant C
    If Factor(b5 - a3) > Fmax Then GoTo NEXTA3  'Constant C
    If Factor(b4 - a3) > Fmax Then GoTo NEXTA3  'Constant C
    
    P1 = Power1(a3) - SumA4(1)   '=a3+a4+a5-b4-b5=b2+b3-a2-a1
    If P1 <= 0 Then GoTo NEXTA3                 'Seesaw Conj.
    
    P2 = Power2(a3) - SumA4(2)
    If P2 < 0 Then GoTo NEXTA3                  'Seesaw Conj.
    
    P3 = Power3(a3) - SumA4(3)
    If P3 < 0 Then GoTo NEXTA3                  'Seesaw Conj.
    
    P4 = Power4(a3) - SumA4(4)
    If P4 <= 0 Then GoTo NEXTA3                 'Seesaw Conj.
    
    S1 = P1
    S2 = (P2 + S1 * P1) / 2
    S3 = (P3 + S1 * P2 + S2 * P1) / 3
    S4 = (P4 + S1 * P3 + S2 * P2 + S3 * P1) / 4

    W4 = S2 ^ 2 - S3 * S1      '=(a1-b2)(a1-b3)(a2-b2)(a2-b3)
    If W4 = 0 Then GoTo NEXTA3              'Integrality Test
       
    W5 = S3 * S2 - S4 * S1
    D1 = W5 / W4                    'Integrality Test, =b2+b3
    If Abs(Int(D1 + mini) - D1) > mini Then GoTo NEXTA3
    
    W6 = S3 ^ 2 - S4 * S2
    D2 = W6 / W4                    'Integrality Test, =b2*b3
    If Abs(Int(D2 + mini) - D2) > mini Then GoTo NEXTA3
    
    D3 = D1 ^ 2 - 4 * D2                    '=(b3-b2)^2
    If D3 < 0 Then GoTo NEXTA3              'Var. Calculation
    
    C3 = 2 * (D1 ^ 2 - 2 * D2 - P2) - (D1 - P1) ^ 2  '=(a2-a1)^2
    If C3 < 0 Then GoTo NEXTA3              'Var. Calculation
    
    D0 = Sqr(D3)                            '=(b3-b2)
    If Abs(Int(D0 + mini) - D0) > mini Then GoTo NEXTA3
    b3 = (D1 + D0) / 2
    b2 = (D1 - D0) / 2
    If b3 >= a3 Then GoTo NEXTA3
    If b3 <= 0 Then GoTo NEXTA3
    If b2 <= 0 Then GoTo NEXTA3
    
    C0 = Sqr(C3)                            '=(a2-a1)
    If Abs(Int(C0 + mini) - C0) > mini Then GoTo NEXTA3
    a2 = (D1 - P1 + C0) / 2
    a1 = (D1 - P1 - C0) / 2
    If a2 > a3 Then GoTo NEXTA3
    If a2 <= 0 Then GoTo NEXTA3
    If a1 <= 0 Then GoTo NEXTA3
            
    Open "k1234.dat" For Append As #4
        Print #4, a1; a2; a3; a4; a5, 0; b2; b3; b4; b5
    Close #4

NEXTA3:
Next a3

NEXTA4:
Next a4

NEXTB4:
Next b4

NEXTB5:
Next b5

    Open "Progress.ini" For Output As #3
        Print #3, a5, a5_End
    Close #3
    
    Open "nvtime.ini" For Append As #5
        Print #5, a5, Now()
    Close #5

NEXTA5:
Next a5

End

End Sub

Private Sub Form_Unload(Cancel As Integer)
Unload Me
End Sub
\end{lstlisting}

In above reference code, we employ a variable \texttt{mini}, which enables the entire search process to be conducted using double-precision floating-point numbers, thereby eliminating the need for additional handling of large integers and ensuring search efficiency. It is important to note that the value of \texttt{mini} is related to the search range. In the reference code, \texttt{mini} is set to \(0.000001\), which is appropriate for \(a_5\) values up to 3000.\\

Another important variable in the code is \texttt{Fmax}, which represents the maximum factor of the Constant \( C \). It can be proven that for the case \( (k=1,2,3,4) \), the maximum factor of \( C \) will not exceed \( a_5 / 2 \). Therefore, in line 81 of the code, \texttt{Fmax} is set to \(\text{Int}(a_5 / 2)\). Under this setting, the code can obtain all non-negative integer solutions for \( (k=1,2,3,4) \) within a certain range of \( a_5 \). For example, when \( a_5 \leq 500 \), the program yields 7213 non-negative integer solutions. Among these 7213 solutions, we observe that the vast majority of the solutions have a maximum factor within 100. Only 55 solutions have a maximum factor exceeding 100, among which only 6 solutions (all symmetric) have a maximum factor exceeding 150, including the unique case where the maximum factor exceeds 200. The following are the six solutions with a maximum factor exceeding 150:
\begin{align}
& [11, 13, 182, 334, 360]^k = [0, 26, 178, 347, 349]^k, \quad (k=1,2,3,4),\nonumber \\
& \qquad \text{where} \quad C = 2^5 \cdot 3^2 \cdot 5 \cdot 7 \cdot 11 \cdot 13^2 \cdot 167 \nonumber \\[1mm]
& [21, 39, 208, 302, 380]^k = [0, 78, 172, 341, 359]^k, \quad (k=1,2,3,4),\nonumber \\
& \qquad \text{where} \quad C = 2^7 \cdot 3^2 \cdot 5 \cdot 7 \cdot 13^2 \cdot 19 \cdot 151 \nonumber \\[1mm]
& [9, 10, 210, 398, 418]^k = [0, 20, 208, 408, 409]^k, \quad\phantom{0} (k=1,2,3,4),\nonumber \\
& \qquad \text{where} \quad C = 2^4 \cdot 3^3 \cdot 5^2 \cdot 7 \cdot 11 \cdot 19 \cdot 199 \nonumber \\[1mm]
& [21, 70, 270, 302, 442]^k = [0, 140, 172, 372, 421]^k, \quad (k=1,2,3,4),\nonumber \\
& \qquad \text{where} \quad C = 2^4 \cdot 3^4 \cdot 5^2 \cdot 7^2 \cdot 13 \cdot 17 \cdot 151 \nonumber \\[1mm]
& [21, 30, 230, 382, 442]^k = [0, 60, 212, 412, 421]^k, \quad (k=1,2,3,4),\nonumber \\
& \qquad \text{where} \quad C = 2^4 \cdot 3^2 \cdot 5^2 \cdot 7 \cdot 13 \cdot 17 \cdot 23 \cdot 191, \nonumber \\[1mm]
& [13, 15, 240, 446, 476]^k = [0, 30, 236, 461, 463]^k, \quad (k=1,2,3,4),\nonumber \\
& \qquad \text{where} \quad C = 2^7 \cdot 3^2 \cdot 5^2 \cdot 7 \cdot 13 \cdot 17 \cdot 223 \nonumber
\end{align}
Thus, we can derive an efficient Selective Search strategy: by setting \texttt{Fmax} to a relatively small value, we may potentially miss a few solutions during the search process, but the speed of finding the vast majority of solutions will be significantly increased. The following table compares the search times and the number of solutions obtained for different \texttt{Fmax} settings within the range \( a_5 \leq 500 \). The setting \texttt{Fmax} as \texttt{None} indicates that all lines of code involving \texttt{Fmax}, including lines 81, 83, 90, 108, etc., are removed. The \texttt{Seconds Occupied} were measured on the same i7 laptop.\\
\begin{table}[H]
\centering
\begin{tabular}{|c|c|c|c|c|c|c|c|}
\hline
\text{Fmax Setting} & 100 & 150 & 200 & \(a_5/2\) & 250 & None & 500 \\
\hline
\text{Seconds Occupied} & 172 & 242 & 299 & 305 & 327 & 449 & 480 \\
\hline
\text{Solutions Found} & 7158 & 7207 & 7212 & 7213 & 7213 & 7213 & 7213 \\
\hline
\end{tabular}
\caption{Search Time Occupied for \( a_5 \leq 500 \) }
\label{tab:comparison}
\end{table}
Among the 7213 solutions within the range \( a_5 \leq 500 \), there are 4790 coprime sets of solutions. Of these 4790 coprime sets, 4354 sets are non-symmetric, while the remaining 436 sets are symmetric. In fact, for higher-degree PTE problems, including types \( (k=1,2,3,4,5,6) \) and \( (k=1,2,3,4,5,6,7) \), non-symmetric solutions are the majority.\\

In summary, this chapter presents a comprehensive set of novel computer search algorithms exemplified by the case \( (k=1,2,3,4) \). These algorithms have been iteratively refined over the past four decades of research. As a general algorithm for PTE-type problems, the methods discussed herein are significantly more efficient than those reported in the existing literature, including \cite{Borwein2003}, \cite{Caley12}, \cite{Coppersmith2024}, \cite{Qiu2016}, \cite{YingqiongChen2018} and \cite{Zhang2023}. More importantly, the algorithms presented in this chapter are readily applicable to the broader class of GPTE problems. The following are examples of some classic numerical solutions obtained using the algorithms introduced in this chapter.
\begin{align}
& [ 498, 3773, 3783, 4567, 4787 ]^{k}=[ 517, 3598, 4017, 4463, 4827 ]^{k}, \nonumber \\
& \qquad\qquad\qquad\qquad\qquad\qquad\qquad\qquad\qquad (k=2,4,6,8)
\tag*{\eqref{k2468s4827}}\\
\label{k14}
& [34, 133, 165, 299, 332, 366]^k = [35, 124, 177, 286, 353, 354]^k, \nonumber \\
& \qquad\qquad\qquad\qquad\qquad\qquad\qquad\qquad\qquad (k = 1, 2, 3, 4, 7) \tag*{\eqref{k12347s366}} \\
& [269, 397, 409, 683, 743, 901, 923]^k \nonumber \\ 
& \quad = [299, 313, 493, 613, 827, 839, 941]^k, \quad (k = 1, 2, 3, 5, 7, 9) \tag*{\eqref{k123579s941}} \\
& [0,50,111,233,236,307,438,469]^k \nonumber \\
& \quad =[1,46,119,203,282,285,440,468]^k, \;\; (k = 1, 2, 3, 4, 5, 6, 7)
\tag*{\eqref{k1234567s469}}\\
& [77, 159, 169, 283, 321, 443, 447, 501]^k \nonumber \\ 
& \quad = [79, 137, 213, 237, 363, 399, 481, 491]^k, \;\; (k = 1, 2, 3, 4, 5, 6, 8) \tag*{\eqref{k1234568s501}}\\
& [387, 388, 416, 447, 494, 536, 573, 589, 610]^k \nonumber \\
& \quad = [382, 402, 403, 456, 485, 549, 559, 596, 608]^k, \nonumber \\
& \qquad\qquad\qquad\qquad\qquad (k = 0, 1, 2, 3, 4, 5, 6, 7) \tag*{\eqref{k01234567s610}}\\
& [85, 286, 702, 858]^k = [81, 374, 585, 891]^k, \quad (k = -1, 1, 5) \tag*{\eqref{k15n1s891}} \\
& [266, 494, 494, 1463, 1547]^k = [287, 374, 611, 1394, 1598]^k, \nonumber \\
& \qquad\qquad\qquad\qquad\qquad\qquad\qquad\qquad\quad (k = -1, 1, 2, 3) \tag*{\eqref{k123n1s1598}} \\
& [56, 77, 99, 152, 174, 228, 261 ] ^k = [ 57, 72, 116, 126, 203, 209, 264] ^k, \nonumber \\
& \qquad\qquad\qquad\qquad\qquad\qquad\qquad\qquad\qquad (k = -1, 0, 1, 2, 3, 4)
\tag*{\eqref{k01234n1s264}} \\
& [7980, 8060, 8151, 8360, 8463, 8680, 8778, 8866] ^k  \nonumber \\
&\quad =[8008, 8008, 8246, 8246, 8580, 8580, 8835, 8835] ^k, \nonumber \\
& \qquad\qquad\qquad\qquad\qquad\qquad\qquad (k = -3, -2, -1, 0, 1, 2, 3)
\tag*{\eqref{k0123n123s8866}}
\end{align}

\clearpage

\section{Some New Parametric Solutions for GPTE}
For the various types of the GPTE problem, constructing parametric solutions remains an effective approach, in addition to relying on computer searches. Over the past century, notable contributions have been made by Albert Gloden \cite{Gloden44}, Alfred Moessner \cite{Moessner39}, Ajai Choudhry \cite{Choudhry11, Choudhry13}, Jaros{\l}aw Wr\'oblewski \cite{JW955,JW09}, and others. For details, see Appendices A and B, as well as the references. This chapter lists some of the new parametric solutions we have found, including methods that yield prime solutions for the GPTE problem.
\subsection{Parametric Method for Ideal Integer Solutions}
\begin{example}
For arbitrary $m,n,p$, and $q$, we define the set $\{ a_1, a_2, a_3, a_4,\\ b_1, b_2, b_3, b_4\}$ as follows:
\begin{align} 
& a_1 = m(m-n)(p^2+q^2), \nonumber\\    
& a_2 = n(m+n)(p^2+q^2), \nonumber\\    
& a_3 = (n-m)n(p^2+q^2), \nonumber\\   
& a_4 = m(m+n)(p^2+q^2), \nonumber\\    
& b_1 = p(p-q)(m^2+n^2), \nonumber\\    
& b_2 = q(p+q)(m^2+n^2), \nonumber\\    
& b_3 = (q-p)q(m^2+n^2), \nonumber\\    
& b_4 = p(p+q)(m^2+n^2), \nonumber 
\end{align}
Then, we have the following relationships:
\begin{align}
\label{parah123n1}
& [a_1,a_2,a_3,a_4]^h=[b_1,b_2,b_3,b_4]^h , & (h=-1,1,2,3)\\ 
\label{parah013n1a}
& [a_1,a_2,-b_3,-b_4]^h=[b_1,b_2,-a_3,-a_4]^h , & (h=-1,0,1,3)\\ 
\label{parah013n1b}
& [a_1,a_2,-b_1,-b_2]^h=[b_3,b_4,-a_3,-a_4]^h , & (h=-1,0,1,3)
\end{align}
\end{example}
In the parameter solutions listed above, $\{ a_1, a_2, a_3, a_4, b_1, b_2, b_3, b_4\}$ were \\reorganized by the author in 2017 based on the parameter solution \eqref{parah123n1} for $(h=-1,1,2,3)$ obtained by Ajai Choudhry in 2011 \cite{Choudhry11}. The parameter solutions \eqref{parah013n1a} and \eqref{parah013n1b} for $(h=-1,0,1,3)$ were derived by the author in 2017 \cite{Chenhminus23}. When $\{m,n,p,q\}=\{1,2,3,4\}$, we have
\begin{align}
& [-5, 10, 15, 30]^h = [-3, 4, 21, 28]^h, &(h=-1,1,2,3)  \tag*{\eqref{h123n1s30}} \\
& [-30, 4, 5, 21]^h = [-28, 3, 10, 15]^h, &(h=-1,0,1,3)  \tag*{\eqref{h013n1s30}} \\
& [-28,-5, 3, 30]^h = [ -15, -10, 4, 21]^h,&(h=-1,0,1,3)  \tag{\ref{h013n1s28}}
\end{align}

\begin{example}\label{Parah12345n1}
For arbitrary $m,n,p$, and $q$, we define the set $\{ a_1, a_2, \dots, a_6,\\ b_1, b_2, \dots, b_6\}$ as follows:
\begin{align} 
& a_1 = m(m-n)(p^2+pq+q^2), \nonumber\\ 
& a_2 = n(m+2n)(p^2+pq+q^2), \nonumber\\
& a_3 = (m+n)(2m+n)(p^2+pq+q^2), \nonumber\\ 
& a_4 = n(n-m)(p^2+pq+q^2), \nonumber\\ 
& a_5 = m(2m+n)(p^2+pq+q^2), \nonumber\\ 
& a_6 = (m+n)(m+2n)(p^2+pq+q^2), \nonumber\\ 
& b_1 = p(p-q)(m^2+mn+n^2), \nonumber\\ 
& b_2 = q(p+2q)(m^2+mn+n^2), \nonumber\\ 
& b_3 = (p+q)(2p+q)(m^2+mn+n^2), \nonumber\\ 
& b_4 = q(q-p)(m^2+mn+n^2), \nonumber\\ 
& b_5 = p(2p+q)(m^2+mn+n^2), \nonumber\\ 
& b_6 = (p+q)(p+2q)(m^2+mn+n^2), \nonumber 
\end{align}
Then, we have the following relationships:
\begin{align}
\label{parah12345n1}
& [a_1,a_2,a_3,a_4,a_5,a_6]^h=[b_1,b_2,b_3,b_4,b_5,b_6]^h,\nonumber\\ 
& \qquad\qquad\qquad\qquad\qquad\qquad (h=-1,1,2,3,4,5) \\ 
\label{parah01235n1a}
& [a_1,a_2,a_3,-b_4,-b_5,-b_6]^h=[b_1,b_2,b_3,-a_4,-a_5,-a_6]^h,\nonumber\\
& \qquad\qquad\qquad\qquad\qquad\qquad  (h=-1,0,1,2,3,5) \\
\label{parah01235n1b}
& [a_1,a_2,a_3,-b_1,-b_2,-b_3]^h=[b_4,b_5,b_6,-a_4,-a_5,-a_6]^h,\nonumber\\ 
& \qquad\qquad\qquad\qquad\qquad\qquad   (h=-1,0,1,2,3,5) 
\end{align}
\end{example}
In the parameter solutions listed above, $\{ a_1, a_2, \dots, a_6, b_1, b_2, \dots, b_6\}$ were \\reorganized by the author in 2017 based on the parameter solution \eqref{parah12345n1} for $(h=-1,1,2,3,4,5)$ obtained by Ajai Choudhry in 2011 \cite{Choudhry11}. The parameter solutions \eqref{parah01235n1a} and \eqref{parah01235n1b} for $(h=-1,0,1,2,3,5)$ were derived by the author in 2017 \cite{Chenhminus23}. When $\{m,n,p,q\}=\{1,2,3,5\}$, we have
\begin{align}
& [-7, 14, 28, 70, 84, 105 ]^h = [-6, 10, 33, 65, 88, 104]^h,\nonumber\\
& \qquad\qquad\qquad\qquad\qquad\qquad (h=-1,1,2,3,4,5)  \tag*{\eqref{h12345n1s105}}\\
& [ -88, -65, 6, 14, 28, 105 ]^h = [-84, -70, 7, 10, 33, 104 ]^h,\nonumber\\ 
& \qquad\qquad\qquad\qquad\qquad\qquad (h=-1,0,1,2,3,5)  \tag*{\eqref{h01235n1s104}}\\
& [-104, -33, -10, 14, 28, 105 ]^h = [-84, -70, -6, 7, 65, 88 ]^h,\nonumber\\ 
& \qquad\qquad\qquad\qquad\qquad\qquad (h=-1,0,1,2,3,5)  \tag*{\eqref{h01235n1s88}}
\end{align}

\begin{example}
For arbitrary $p$, and $q$, we define
\begin{align}
& a_1=(57 p - 53 q) (21 p - 17 q), \nonumber\\   
& a_2= (43 p - 39 q) (27 p - 23 q), \nonumber\\   
& a_3=2 (29 p - 25 q) (11 p - 9 q), \nonumber\\   
& a_4= (37 p - 33 q) (15 p - 11 q), \nonumber\\   
& a_5= 2 (17 p -   13 q) (11 p - 9 q), \nonumber\\  
& b_1= (37 p - 33 q) (29 p - 25 q), \nonumber\\   
& b_2= 2 (57 p - 53 q) (11 p - 9 q), \nonumber\\   
& b_3= (43 p - 39 q) (17 p - 13 q), \nonumber\\   
& b_4=2 (21 p - 17 q) (11 p - 9 q), \nonumber\\  
& b_5=(27 p -  23 q) (15 p - 11 q). \nonumber
\end{align}
then we have
\begin{align}
\label{parak0123}
[a_1,a_2,a_3,a_4,a_5]^k=[b_1,b_2,b_3,b_4,b_5]^k,\quad (k=0,1,2,3) 
\end{align}
\end{example}
The above parameter solution \eqref{parak0123} were derived by the author in 2023 \cite{Chen2125}. When  $p=1,q=2$, we have
\begin{align}
[18, 29, 42, 91, 95]^k=[19, 26, 45, 87, 98]^k , \quad (k=0,1,2,3) \tag*{\eqref{k0123s98}}
\end{align}

\begin{example}
Let the sequences of rational numbers $a_n, b_n, c_n, d_n, e_n, p_n, q_n, r_n,\\s_n, t_n$ defined by
\begin{align}
\frac{287419 x^2-559208 x+269325}{2 (x-1) (261 x-275) (275 x-261)}=:& \sum _{n=0}^{\infty} a_n x^n,\\
\frac{286143 x^2-547144 x+261225}{2 (x-1) (261 x-275) (275 x-261)}=:& \sum _{n=0}^{\infty} b_n x^n,\\
\frac{1}{x-1}=:& \sum _{n=0}^{\infty} c_n x^n,\\
\frac{115797 x^2-242800 x+124875}{2 (x-1) (261 x-275) (275 x-261)}=:& \sum _{n=0}^{\infty} d_n x^n,\\
\frac{143 x-153}{(x-1) (275 x-261)}=:& \sum _{n=0}^{\infty} e_n x^n,\\
\frac{957 x-925}{2 (x-1) (261 x-275)}=:& \sum _{n=0}^{\infty} p_n x^n,\\
\frac{583 x-513}{(x-1) (275 x-261)}=:& \sum _{n=0}^{\infty} q_n x^n,\\
\frac{161733 x^2-328784 x+164475}{2 (x-1) (261 x-275) (275 x-261)}=:& \sum _{n=0}^{\infty} r_n x^n,\\
\frac{187 x-189}{(x-1) (275 x-261)}=:& \sum _{n=0}^{\infty} s_n x^n,\\
\frac{80707 x^2-172280 x+91125}{2 (x-1) (261 x-275) (275 x-261)}=:& \sum _{n=0}^{\infty} t_n x^n.
\end{align}
Then for each $n\geq 0$,
\begin{align}
\label{parak0123Ramanujan}
& [\ a_n, b_n, c_n, d_n, e_n \ ]^k=[\ p_n, q_n, r_n, s_n, t_n\ ]^k, \quad (k=0,1,2,3).
\end{align}
\end{example}
\noindent
The above result \eqref{parak0123Ramanujan} was derived by the author in 2023 \cite{Chen2125}, inspired by \mbox{Ramanujan's} remarkable identity \eqref{parak3Ramanujan} for \(a^3 + b^3 = c^3 + 1\), and by referring to the specific methods of M. D. Hirschhorn \cite{Hirschhorn1995} \cite{Hirschhorn1996} \cite{HanHirschhorn2006}, J. M. Laughlin \cite{McLaughlin2010}, and Kwang-Wu Chen \cite{ChenKW2020}, starting from the parametric solution \eqref{parak0123}. 
In \eqref{parak0123Ramanujan}, when \( n = 0, 1\), we obtain the following numerical results for type $(k=0,1,2,3)$:
\begin{align}
& [374, 555, 638, 1161, 1197]^k=[405, 462, 731, 1073, 1254]^k,\\
& [6007650, 8432191, 9158490, 15136189, 15918257]^k \nonumber \\
& \quad =[6465737, 7057930, 10532751, 14085909, 16510450]^k.
\end{align}
The identity given by Ramanujan \cite{Ramanujan1988} is as follows: If the sequences \(\{a_n\}\), \(\{b_n\}\), and \(\{c_n\}\) are defined by
\begin{equation}
\begin{aligned}  
\frac{1 + 53x + 9x^2}{1 - 82x - 82x^2 + x^3} &= \sum_{n=0}^\infty a_n x^n, \\
\frac{2 - 26x - 12x^2}{1 - 82x - 82x^2 + x^3} &= \sum_{n=0}^\infty b_n x^n, \\  
\frac{2 + 8x - 10x^2}{1 - 82x - 82x^2 + x^3} &= \sum_{n=0}^\infty c_n x^n,   
\end{aligned}
\end{equation}
then 
\begin{align}
\label{parak3Ramanujan}
a_n^3 + b_n^3 = c_n^3 + (-1)^n, \quad \text{for all } n \geq 0.
\end{align}
For instance, when \( 0 \leq n \leq 6 \), the specific values are:
\begin{align}
& a_{n} = 1, 135, 11161, 926271, 76869289, 6379224759, 529398785665, \nonumber \\
& b_{n} = 2, 138, 11468, 951690, 78978818, 6554290188, 543927106802, \nonumber \\
& c_{n} = 2, 172, 14258, 1183258, 98196140, 8149096378, 676276803218. \nonumber 
\end{align}
It follows that
\begin{align}
& 135^3 + 138^3 = 172^3 + (-1)^1, \nonumber \\
& 11161^3 + 11468^3 = 14258^3 + (-1)^2, \nonumber \\
& 926271^3 + 951690^3 = 1183258^3 + (-1)^3, \quad \text{etc.} \nonumber 
\end{align}
\begin{example}\label{exampleh134para}
For arbitrary $p$ and $q$, we denote $\{ u, v, s_1, s_2, s_3\}$ as follows:
\begin{align}
& u = p^2 q^2 (p+q)^2, \\
& v = \left(p^2 - pq - q^2\right) \left(p^2 + pq - q^2\right) \left(p^2 + pq + q^2\right) \left(p^2 + 3pq + q^2\right), \\
& s_1 = v + 3p(p+q)u, \\
& s_2 = v + 3q(p+q)u, \\
& s_3 = v - 3pqu.
\end{align}
We also denote $\{a_1, b_1, c_1, d_1, e_1, f_1\}$ as follows:
\begin{align}
& a_1 = p(p+q)\left(s_1 - 3pq^5(p+q)(2p+q)\right), \\
& b_1 = q(p+q)\left(s_2 + 3p^5q(p+q)(p+2q)\right), \\
& c_1 = -pq\left(s_3 + 3pq(p-q)(p+q)^5\right), \\
& d_1 = p(p+q)\left(s_1 + 3pq^5(p+q)(2p+q)\right), \\
& e_1 = q(p+q)\left(s_2 - 3p^5q(p+q)(p+2q)\right), \\
& f_1 = -pq\left(s_3 - 3pq(p-q)(p+q)^5\right).
\end{align}
Then we have
\begin{align}
\label{h134para1}
[a_1, b_1, c_1]^h = [d_1, e_1, f_1]^h, \quad (h=1,3,4).
\end{align}
Furthermore, we denote $\{r_1, r_2, r_3, w, t_1, t_2, t_3\}$ as follows:
\begin{align}
& r_1 = 3(q-p)(p+2q)u, \\
& r_2 = 3(p-q)(2p+q)u, \\
& r_3 = 3(2p+q)(p+2q)u, \\
& w = 54u^4 + 12\left(p^2 + pq + q^2\right)^2u^2v + 6\left(p^2 + pq + q^2\right)uv^2 + v^3, \\
& t_1 = ws_1 - r_1\left(w - 2s_1u(2p+q)^2\left(s_1 + r_1\right)\right), \\
& t_2 = ws_2 - r_2\left(w - 2s_2u(p+2q)^2\left(s_2 + r_2\right)\right), \\
& t_3 = ws_3 - r_3\left(w - 2s_3u(p-q)^2\left(s_3 + r_3\right)\right).
\end{align}
We also denote $\{a_2, b_2, c_2, d_2, e_2, f_2\}$ as follows:
\begin{align}
& a_2 = -p(p+q)\left(t_1 + 6pq^5(p+q)(2p+q)w\right), \\
& b_2 = -q(p+q)\left(t_2 - 6p^5q(p+q)(p+2q)w\right), \\
& c_2 = pq\left(t_3 - 6pq(p-q)(p+q)^5w\right), \\
& d_2 = -p(p+q)\left(t_1 - 6pq^5(p+q)(2p+q)w\right), \\
& e_2 = -q(p+q)\left(t_2 + 6p^5q(p+q)(p+2q)w\right), \\
& f_2 = pq\left(t_3 + 6pq(p-q)(p+q)^5w\right).
\end{align}
Then we have
\begin{align}
\label{h134para2}
[a_2, b_2, c_2]^h = [d_2, e_2, f_2]^h, \quad (h=1,3,4).
\end{align}
\end{example}
In the parameter solutions listed above, $\{a_1, b_1, c_1, d_1, e_1, f_1\}$ were \mbox{reorganized} by the author in 2023 based on the parameter solutions obtained by Ajai Choudhry in 1991 \cite{Choudhry91}, while $\{a_2, b_2, c_2, d_2, e_2, f_2\}$ were derived by the author in 2023 using a new method \cite{Chen2125}. When $p=1$ and $q=2$, by \eqref{h134para1}, we have
\begin{align}
& [-3254, 5583, 5658]^h = [-1329, 2578, 6738]^h, \quad (h=1,3,4). \tag*{\eqref{h134s6738}}
\end{align}
By \eqref{h134para2}, we have
\begin{align} 
 & [-2608425958605, 3669662071230, 6406539709126]^h  \nonumber \\
 & \quad = [-1353176207690, 2232677642190, 6588274387251]^h, \nonumber \\
 & \qquad\qquad\qquad\qquad\qquad\qquad (h=1,3,4) \tag*{\eqref{h134s6588274387251}}
\end{align}

\begin{example}
We take the initial value \(u_0\) as follows:
\begin{align} 
& u_0 = 67.
\end{align}
Next, we use the following recurrence formulas for \(u_i\) and \(v_i\):
\begin{align} 
& u_i=7 u_{i-1}+4 \sqrt{3 u_{i-1}^2-11},\\
& v_i=\frac{1}{2} \sqrt{3 u_i^2-11}.
\end{align}
and define the sets \(\{A_{0,i}, A_{1,i}, \dots, A_{5,i}\}\) and \(\{B_{0,i}, B_{1,i}, \dots, B_{5,i}\}\) as
\begin{align} 
& \left\{A_{0,i},A_{1,i},A_{2,i},A_{3,i},A_{4,i},A_{5,i}\right\}  \nonumber\\
& \quad=\left\{0,\frac{u_i-1}{2},\frac{u_i+1}{2},\frac{3 u_i-1}{2} ,\frac{3 u_i+1}{2},2 u_i\right\},\\
& \left\{B_{0,i},B_{1,i},B_{2,i},B_{3,i},B_{4,i},B_{5,i}\right\}  \nonumber\\
& \quad=\left\{u_i-v_i-1,u_i-v_i+1,u_i-2,u_i+2,u_i+v_i-1,u_i+v_i+1\right\}
\end{align}
Then, we have
\begin{align}
& [A_{0,i},A_{1,i},A_{2,i},A_{3,i},A_{4,i},A_{5,i}]^k=[B_{0,i},B_{1,i},B_{2,i},B_{3,i},B_{4,i},B_{5,i}]^k,\nonumber\\
& \qquad\qquad\qquad\qquad\qquad\qquad\qquad\qquad (k=1,2,3,4,5)
\end{align}
Furthermore,
\begin{align}
& \lim_{i \to \infty} \frac{A_{t,i}}{A_{5,i}}=\lim_{i \to \infty} \frac{A_{t-1,i}}{A_{5,i}}=\sin ^2\left(\frac{t \pi }{12}\right),\quad (t=2,4)\\
& \lim_{i \to \infty} \frac{B_{t,i}}{A_{5,i}}=\lim_{i \to \infty} \frac{B_{t-1,i}}{A_{5,i}}=\sin ^2\left(\frac{t \pi }{12}\right),\quad (t=1,3,5)
\end{align}
\end{example}
For instance, when we take \( i \) to be 1, 2, and 3, respectively, then we obtain the following results:
\begin{align} 
& [0,466,467,1399,1400,1866]^k=[124,126,931,935,1740,1742]^k,\\
& [0,6497,6498,19492,19493,25990]^k \nonumber\\
& \quad =[1740,1742,12993,12997,24248,24250]^k,\\
& [0,90498,90499,271495,271496,361994]^k \nonumber\\
& \quad= [24248,24250,180995,180999,337744,337746]^k,\\
& \qquad\qquad\qquad\qquad\qquad (k=1,2,3,4,5) \nonumber
\end{align}
When we take \( i=20 \), we obtain the following results:
\begin{align} 
& \{A_{0,20},A_{1,20},A_{2,20},A_{3,20},A_{4,20},A_{5,20}\}\nonumber\\
& \quad = \{0, 2528471117073590555281217, 2528471117073590555281218, \nonumber\\
& \qquad 7585413351220771665843652, 7585413351220771665843653, \nonumber\\
& \qquad 10113884468294362221124870\}, \nonumber\\
& \{B_{0,20},B_{1,20},B_{2,20},B_{3,20},B_{4,20},B_{5,20}\}\nonumber\\
& \quad = \{677501793905287305056880, 677501793905287305056882,\nonumber\\
& \qquad 5056942234147181110562433, 5056942234147181110562437, \nonumber\\
& \qquad 9436382674389074916067988, 9436382674389074916067990\}.
\end{align}
These sets satisfy
\begin{align} 
& [A_{0,20},A_{1,20},A_{2,20},A_{3,20},A_{4,20},A_{5,20}]^k\nonumber\\
& \quad=[B_{0,20},B_{1,20},B_{2,20},B_{3,20},B_{4,20},B_{5,20}]^k,\quad (k=1,2,3,4,5)
\end{align}
and
\begin{align}
\frac{A_{1,20}}{A_{5,20}}=\sin ^2(\frac{2\pi }{12}) \times 0.99999999999999999999999980225\ldots\nonumber\\[1mm]
\frac{A_{2,20}}{A_{5,20}}=\sin ^2(\frac{2\pi }{12}) \times 1.00000000000000000000000019775\ldots\nonumber\\[1mm]
\frac{A_{3,20}}{A_{5,20}}=\sin ^2(\frac{4\pi }{12}) \times 0.99999999999999999999999993408\ldots\nonumber\\[1mm]
\frac{A_{4,20}}{A_{5,20}}=\sin ^2(\frac{4\pi }{12}) \times 1.00000000000000000000000006592\ldots \nonumber\\[1mm]
\frac{B_{0,20}}{A_{5,20}}=\sin ^2(\frac{1\pi }{12}) \times 0.99999999999999999999999852398\ldots \nonumber\\[1mm]
\frac{B_{1,20}}{A_{5,20}}=\sin ^2(\frac{1\pi }{12}) \times 1.00000000000000000000000147601\ldots \nonumber\\[1mm]
\frac{B_{2,20}}{A_{5,20}}=\sin ^2(\frac{3\pi }{12}) \times 0.99999999999999999999999960450\ldots \nonumber\\[1mm]
\frac{B_{3,20}}{A_{5,20}}=\sin ^2(\frac{3\pi }{12}) \times 1.00000000000000000000000039550\ldots \nonumber\\[1mm]
\frac{B_{4,20}}{A_{5,20}}=\sin ^2(\frac{5\pi }{12}) \times 0.99999999999999999999999989402\ldots \nonumber\\[1mm]
\frac{B_{5,20}}{A_{5,20}}=\sin ^2(\frac{5\pi }{12}) \times 1.00000000000000000000000010597\ldots
\end{align}
If we change the initial value $u_0$ to 37, when we take $i$ to be $1, 2$, and $3$, we have
\begin{align} 
& [0,257,258,772,773,1030]^k=[68,70,513,517,960,962]^k,\\
& [0,3586,3587,10759,10760,14346]^k \nonumber\\
& \quad =[960,962,7171,7175,13384,13386]^k,\\
& [0,49953,49954,149860,149861,199814]^k \nonumber\\
& \quad= [13384,13386,99905,99909,186428,186430]^k,\\
& \qquad\qquad\qquad\qquad\qquad (k=1,2,3,4,5) \nonumber
\end{align}

\begin{example}
We take the initial value \(u_0\) as follows:
\begin{align} 
& u_0 = 6.
\end{align}
Next, we use the following recurrence formulas for \(u_i\) and \(v_i\):
\begin{align} 
& u_i=2 u_{i-1}+4 \sqrt{3 u_{i-1}^2-8},\\
& v_i=\frac{1}{2} \sqrt{3 u_i^2-8}.
\end{align}
and define the sets \(\{A_{0,i}, A_{1,i}, \dots, A_{5,i}\}\) and \(\{B_{0,i}, B_{1,i}, \dots, B_{5,i}\}\) as
\begin{align} 
& \left\{A_{0,i},A_{1,i},A_{2,i},A_{3,i},A_{4,i},A_{5,i}\right\}  \nonumber\\
& \quad=\left\{0,\frac{u_i-2}{4},\frac{u_i+2}{4},\frac{3 u_i-2}{4},\frac{3 u_i+2}{4},u_i\right\},\\
& \left\{B_{0,i},B_{1,i},B_{2,i},B_{3,i},B_{4,i},B_{5,i}\right\}  \nonumber\\
& \quad=\left\{\frac{u_i-v_i-1}{2},\frac{u_i-v_i+1}{2},\frac{u_i-2}{2},\frac{u_i+2}{2},\frac{u_i+v_i-1}{2},\frac{u_i+v_i+1}{2}\right\}  
\end{align}
Then, we have
\begin{align} 
\label{Parak12345}
& [A_{0,i},A_{1,i},A_{2,i},A_{3,i},A_{4,i},A_{5,i}]^k=[B_{0,i},B_{1,i},B_{2,i},B_{3,i},B_{4,i},B_{5,i}]^k,\nonumber\\
& \qquad\qquad\qquad\qquad\qquad\qquad\qquad\qquad (k=1,2,3,4,5)
\end{align}
Furthermore,
\begin{align}
& \lim_{i \to \infty} \frac{A_{t,i}}{A_{5,i}}=\lim_{i \to \infty} \frac{A_{t-1,i}}{A_{5,i}}=\sin ^2\left(\frac{t \pi }{12}\right),\quad (t=2,4)\\
& \lim_{i \to \infty} \frac{B_{t,i}}{A_{5,i}}=\lim_{i \to \infty} \frac{B_{t-1,i}}{A_{5,i}}=\sin ^2\left(\frac{t \pi }{12}\right),\quad (t=1,3,5)
\end{align}
\end{example}
For instance, when we take $i=78$, we obtain  \eqref{equationk12345}.

\subsection{Parametric Method for Ideal Prime Solutions}
For Example \ref{exampleparap1} to Example \ref{exampleparap4} below, the parametric solutions and prime solutions listed were all obtained by us in 2023 using a special method \cite{Chen2125}. The greatest challenge in constructing these parametric solution formulas is that, generally, parametric solutions found using traditional methods often fail to ensure that the numerical solutions are positive and odd, regardless of the parameter values.
\begin{example}\label{exampleparap1}
For arbitrary integers $m$ and $n$, we define the set $\{a_1, a_2, a_3, b_1, b_2, b_3\}$ as follows:
\begin{equation}
\begin{aligned} 
\label{para_prime23}
& a_1=668607 m^2 - 606430 m n +   135971 n^2, \\
& a_2=3 (331215 m^2 - 356278 m n + 95579 n^2), \\
& a_3=-140793 m^2 +   157762 m n - 43109 n^2, \\
& b_1=971067 m^2 - 1050038 m n +   282799 n^2, \\ 
& b_2=3 (237495 m^2 - 218822 m n + 50083 n^2), \\
& b_3=-59853 m^2 +   39050 m n - 3817 n^2.  
\end{aligned}
\end{equation}
Then, we have
\begin{align} 
[a_1,a_2,a_3]^k=[b_1,b_2,b_3]^k,\quad (k=2,3) \nonumber
\end{align}
\end{example}
It is easy to prove that when \( 0.50146073 < {m}/{n} < 0.50764144 \), $\{a_1, a_2, a_3\}$ and $\{b_1, b_2, b_3\}$ are all positive integers. For instance, when we take $m = 6256$ and $n = 12413$, we obtain the following ideal prime solution:
\begin{align}
&[2847598979, 7656462769, 10942931963]^k \nonumber\\
& \quad =[4233420431, 6368234377, 11313785591]^k, \quad (k=2,3)   \tag*{\eqref{k23s11313785591}}
\end{align}
In the range \( n < 250000 \), we found a total of 66 prime solutions. The smallest prime solution was obtained when \( m = 39505 \) and \( n = 78668 \):
\begin{align}
&[8417237, 104616559, 111462317]^k \nonumber\\
& \quad =[47946583, 69380393, 127776401]^k, \quad (k=2,3)   \tag*{\eqref{k23s127776401}}
\end{align}
\begin{example}\label{exampleparap2}
For arbitrary integers $p$ and $q$, we define the set $\{a_1, a_2, a_3, a_4, b_1,\\ b_2, b_3, b_4\}$ as follows:
\begin{equation}
\begin{aligned}
\label{para_prime125}
& a_1=-1217519 p^2 + 2953935 p q - 1788558 q^2, \\ 
& a_2= -1667247 p^2 + 3701547 p q - 2046074 q^2, \\ 
& a_3= 2133059 p^2 - 4771675 p q + 2667210 q^2, \\ 
& a_4=2389219 p^2 - 5555551 p q + 3235294 q^2, \\ 
& b_1=147749 p^2 - 464365 p q + 345210 q^2, \\ 
& b_2=-2349881 p^2 + 5410697 p q -  3112958 q^2, \\
& b_3= 2815693 p^2 - 6480825 p q + 3734094 q^2, \\
& b_4=1023951 p^2 - 2137251 p q + 1101526 q^2. 
\end{aligned}
\end{equation}
Then, we have
\begin{align} 
[a_1,a_2,a_3,a_4]^k=[b_1,b_2,b_3,b_4]^k,\quad (k=1,2,5) \nonumber
\end{align}
\end{example}
For instance, when we take $p=66587, q=56582$, we obtain the following ideal prime solution:
\begin{align}
& [8445859, 296931097, 393276643, 601843321]^k \nonumber\\ 
& \quad =[88587133, 136648549,521702047, 553559191]^k \nonumber\\ 
& \qquad\qquad\qquad\qquad\qquad (k=1,2,5)   \tag*{\eqref{k125s601843321}}
\end{align}

\begin{example}\label{exampleparap3}
For arbitrary integers $m$ and $n$, we define the set $\{a_1, a_2, a_3, a_4, b_1,\\ b_2, b_3, b_4\}$ as follows:
\begin{equation}
\begin{aligned}
\label{para_prime134} 
& a_1=-164556921941 m^2 + 51631570270 m n + 116267371171 n^2, \\ 
& a_2=22723347425 m^2 - 329613007462 m n +   323989597577 n^2, \\ 
& a_3=-117747885637 m^2 - 38028685666 m n +   179766670163 n^2,\\ 
& a_4=-25578019343 m^2 - 242877589094 m n +   265122550777 n^2, \\ 
& b_1=-117739209481 m^2 - 48889535050 m n +   163085083631 n^2, \\ 
& b_2=-24094365035 m^2 - 229091902142 m n +   277171885117 n^2, \\ 
& b_3=21242832959 m^2 - 336452079898 m n +   318757388759 n^2, \\ 
& b_4=-164568737939 m^2 + 55545805138 m n +   126131832181 n^2.
\end{aligned}
\end{equation}
Then, we have
\begin{align} 
[a_1,a_2,a_3,a_4]^k=[b_1,b_2,b_3,b_4]^k,\quad (k=1,3,4) \nonumber
\end{align}
\end{example}
For instance, when we take $n=33474,m=183515$, we obtain the following ideal prime solution:
\begin{align}
& [896501990958793919143, 2056330598071774290263,  \nonumber\\
& \qquad\qquad 3997663854855273138397, 5094457378727364512429]^k \nonumber \\
& \quad =[994237422911295892921, 1908177853245929320403, \nonumber\\
& \qquad\qquad 4082781516440229111169, 5059757030015751535739]^k \nonumber\\
& \qquad\qquad\qquad\qquad\qquad (k=1,3,4)   \tag*{\eqref{k134s5094457378727364512429}}
\end{align}

\begin{example}\label{exampleparap4}
For arbitrary integers $r,s$ and $t$, we define the set $\{p,q,m,n,u,v\}$ as follows:
\begin{align} 
& p = (r + s) (r^2 + s^2) (s + t) (s^2 + t^2) (r^2 + r t + t^2),  \nonumber\\
& q = (r^4 - t^4) (r^2 s^2 + r^2 s t + r s^2 t + r^2 t^2 + r s t^2 + s^2 t^2),  \nonumber\\
& m = (p + q )/(p - q),  \nonumber\\ 
& n = (r^4 m - s^4 m + s^4 - t^4 )/(r^4 - t^4),  \nonumber\\
& u = s^2 - t^2 + r^2 m^3 - s^2 m^3 - r^2 n^3 + t^2 n^3,  \nonumber\\
& v = s - t + r m^4 - s m^4 - r n^4 + t n^4,  \nonumber
\end{align}
Then, we have
\begin{align} 
\label{k15rst}
&  [-2 m u + r v, 2 n u - r v, -2 n u + t v]^k=[-2 m u + s v, -2 u + t v, 2 u - s v]^k, \nonumber\\
& \qquad\qquad\qquad\qquad\qquad\qquad\qquad\qquad (k=1,5) 
\end{align}
\end{example}
For instance, when we take $\{r, s, t\} =\{-562, -176, -9\}$, we obtain the \\following ideal prime solution:
\begin{align}
&[68049074651809716616587682328308420187688753216224749729, \nonumber \\
& \qquad\quad 148818734733829951795084131100190065917882950867486851411, \nonumber\\
& \qquad\quad 159627898794439357258113507241692664029601335467631785279]^k \nonumber \\
& \quad =[68691516188504321631164213329991813284950709035020363731, \nonumber \\
& \qquad\quad 146607580632312953765823649961087271271521143632892464049, \nonumber\\
& \qquad\quad 161196611359261750272797457379112065578701186883430558639]^k \nonumber\\
& \qquad\qquad\qquad\qquad\qquad\qquad\qquad\qquad (k=1,5)   \tag*{\eqref{k15p2}}
\end{align}

\clearpage

\section{Open Problems and Discussion}

\subsection{Open Problem}
In this section, we present several open problems arising from previous research, including the PTE problem, which has remained unsolved for over 270 years, and its generalization, the GPTE problem, introduced in this paper.

\begin{enumerate}[label=P\arabic*.]

    \item \textbf{Ideal Solution of PTE:} \textit{How to find ideal solutions of the PTE problem of degrees \( n = 10 \) and \( n \geq 12 \)?}
    \item \textbf{Ideal Non-Negative Integer Solution of GPTE:} \textit{For any given type of \( (k = k_1, k_2, \dots, k_n) \), does there always exist non-negative integer solutions that satisfy the Diophantine system:} 
\begin{align}
& \left[ a_{1}, a_{2}, \dots, a_{n+1} \right]^{k} = \left[ b_{1}, b_{2}, \dots, b_{n+1} \right]^{k}, \quad (k = k_1, k_2, \dots, k_n). \tag*{\eqref{GPTEk}}
\end{align}
While ideal non-negative integer solutions have been identified for 164 distinct types of \( (k = k_1, k_2, \dots, k_n) \) for system \eqref{GPTEk}, numerous other types \mbox{remain} unsolved. However, it has not yet been proven that any specific type of \( (k = k_1, k_2, \dots, k_n) \) lacks an ideal non-negative integer solution. For instance, the existence of ideal non-negative integer solutions for the types \( (k=5) \), \( (k=6) \), and \( (k=3,4) \) remains unknown. See \eqref{typek5} and \eqref{typek34}.

    \item \textbf{Ideal Integer Solution of GPTE:} \textit{When \( m < n \), for certain types of \( (h = h_1, h_2, \dots, h_n) \), does the following system have integer solutions?}
\begin{align}
\left[ a_{1}, a_{2}, \dots, a_{m} \right]^{h} = \left[ b_{1}, b_{2}, \dots, b_{m} \right]^{h}, \quad (h = h_1, h_2, \dots, h_n).\tag*{\eqref{GPTE}}
\end{align}
To date, when \( m = n \), a total of 65 distinct types of ideal integer solutions have been discovered for system \eqref{GPTE}. Meanwhile, it has been proven that certain types, such as $(h = 1, 2, \dots, n)$ and $(h = 1, 2, \dots, n-1, n+2)$, do not admit ideal integer solutions when \( m = n \). See \eqref{h123n} and \eqref{h1251236}.\\[1mm]
When \( m < n \), no integer solutions for system \eqref{GPTE} have been found so far. However, we have discovered that trigonometric solutions exist for \eqref{GPTE} when \( m < n \). For instance, the trigonometric solution \eqref{h123468Tri} provides an example with \( m = 5 \) and \( n = 6 \), while the trigonometric solution chain \eqref{h1to20Tri} provides an example with \( m = 11 \) and \( n = 15 \).
    \item \textbf{Ideal Prime Solution of GPTE:} \textit{ When all \( k > 0 \), if there exist ideal non-negative integer solutions for \eqref{GPTEk}, whether there must also exist ideal prime solutions remains an open question.}\\[1mm]
To date, ideal non-negative integer solutions have been found for 42 types of system \eqref{GPTEk} with all \( k > 0 \). Among these 42 types, ideal prime solutions have been identified for 27 types.

    \item \textbf{Ideal Non-Negative Integer Chains of GPTE:} \textit{If there exist ideal non-negative integer solutions for \eqref{GPTEk}, whether there must also exist ideal non-negative integer chains, and whether the length of these chains can be \mbox{arbitrarily} long, remains an open question.}\\[1mm]
To date, ideal non-negative integer solutions have been found for 164 types of system \eqref{GPTEk}. Among these 164 types, ideal non-negative integer chains have been identified for 30 types, and for 15 of these types, it has been proven that ideal non-negative integer chains of arbitrary length exist. \mbox{Additionally}, by applying \refIdentity{identityT4}, we can obtain ideal trigonometric chains of PTE of any degree and any length.

    \item \textbf{The Generalized Girard-Newton Identities:} \textit{How to provide a complete proof of the Equivalent Form of the Girard-Newton Identities (\refIdentity{identity2}), as well as complete proofs of the three generalizations of the Girard-Newton Identities (\refIdentity{identity_GNI1}, \refIdentity{identity_GNI2}, \refIdentity{identity_GNI3})?}\\[1mm]
At present, the author can only prove the correctness of these identities within a limited scope.

\end{enumerate}

\subsection{Discussion}
To address the aforementioned open problems, particularly P1, P2, and P5, we propose the following three approaches:

\begin{enumerate}[label=D\arabic*.]

\item \textbf{Discovering Universal Novel Identities:} For the open problem P1, namely the ideal solutions of the PTE problem, the most ideal approach would be to discover identities that are applicable to any degree and any length, and capable of generating all possible solutions. Similarly, this approach is also the most effective for completely solving P2 and P5, particularly for the three series of GPTE types in Identity \ref{identityC2} to Identity  \ref{identityC4}, which all have a constant \( C \) analogous to that of the PTE problem. \\[1mm]
Finding such identities is highly challenging, yet not impossible. In fact, we have already made substantial progress in this direction and have achieved encouraging results. For instance, by utilizing Identity \ref{identityT4}, we can construct ideal trigonometric chains of PTE for any degree and any length. Meanwhile, all identities presented in Chapter 2 to 6 (i.e., Identity \ref{identity1} to Identity \ref{identityT5}) are universally applicable to any degree. These identities are derived through constructive methods and exhibit a remarkably concise and elegant form.

\item \textbf{Developing Comprehensive Computer Algorithm:} The second approach for solving P1 and P2 is to develop and refine a comprehensive, computer-based exhaustive search algorithm. Such an algorithm should be capable of finding ideal solutions for the PTE and GPTE problems of various types, imposing the most effective constraints on the search range, and achieving the minimum number of search iterations. \\[1mm]
In fact, we have essentially developed such a computer search algorithm. For example, based on Identity \ref{identity_GNI2}, Identity \ref{identity_GNI3} and Identity \ref{identityT1}, we can achieve the minimum number of search iterations (as referenced in Chapter 2 and Chapter 6). Further, applying Conjecture \ref{conjecture_Interlacing} to Conjecture \ref{conjecture_threshold}, we can impose the most effective constraints on the search range. Moreover, by leveraging the corollaries relative to the constant \( C \) in Identity \ref{identityC2} to Identity \ref{identityC4}, we can effectively accelerate the search speed. Utilizing these algorithms, we have found a large number of new solutions for the PTE and GPTE types. All these numerical solutions were obtained using a normal personal computer. Moving forward, by enhancing computational capability, including utilizing faster GPUs and employing distributed computing, it is anticipated that more new results can be achieved.

\item \textbf{Constructing Specialized Methods:} The third approach for solving P1 is to construct specialized methods for a specific degree. This approach has been successful in finding ideal symmetric solutions for degrees 9 and 11 by employing methods that involve elliptic curves. However, such specialized methods encounter significant challenges when applied to degrees 10, 12, and higher.\\[1mm]
For the various types of the GPTE problem, constructing specialized methods remains an effective approach, as illustrated by the examples listed in Chapter 7.
\end{enumerate}
Regardless of whether approach D1, D2, or D3 is pursued, leveraging artificial intelligence (AI) is expected to play a pivotal role in achieving a breakthrough in solving the PTE and GPTE problems. For instance, AI can be utilized to discover new identities or optimize search algorithms. This provides significant opportunities for tackling a mathematical problem that has persisted for over 270 years.
\clearpage

\section*{Acknowledgments}
\addcontentsline{toc}{section}{Acknowledgments}

I first encountered the Prouhet-Tarry-Escott problem in July 1985, right after I had completed my first year of high school. It was through two articles written by Professor Tan Xiangbai, a renowned Chinese science communicator, that I was introduced to this fascinating mathematical problem. I am deeply grateful for his enlightening guidance.\\[2mm]
As an independent mathematics researcher, I would like to express my profound gratitude to Professor Peter Borwein, Professor T. N. Sinha, Professor Trevor D. Wooley, and Professor Chris J. Smyth for their generous sharing of copies of their publications and for providing me with invaluable guidance through correspondence during the period from 1995 to 2001.\\[2mm]
My sincere thanks also go to Professor Jarosław Wróblewski, Mr. Ajai Choudhry, Mr. Jean-Charles Meyrignac, Mr. Carlos Rivera, Academician Xi Nanhua, and Professor Michael J. Mossinghoff for their encouragement and valuable feedback on my preliminary results through email exchanges since 1999.\\[2mm]
I am particularly grateful to my colleague Mr. Ye Xiaofeng at Seekway Innovations Technology Co., Ltd. for his invaluable assistance in computational optimization.\\[2mm]
To my family and friends, I extend my heartfelt appreciation for their steadfast support throughout my forty-year-long purely amateur research endeavor.\\[2mm]
Finally, I have chosen June 12, 2025 as the arXiv submission date of this paper in honor of Professor Hua Loo-Keng, the pioneering Chinese mathematician who first investigated the PTE problem in China. His classic work \textit{Introduction to Number Theory} served as my only professional reference on this subject during my formative years, and his fundamental contributions have inspired my four decades of persistent exploration in this field.

\clearpage
\phantomsection
\addcontentsline{toc}{section}{Appendix\quad Collection of ideal numerical solutions}
\appendix
\begin{appendices}
\label{appendixA}
\section{Ideal non-negative integer solution of GPTE}
So far, a total of 164 distinct types of ideal non-negative integer solutions have been discovered for various combinations of \( (k = k_1, k_2, \dots, k_n) \). These solution types can be categorized into five groups with counts of 42, 42, 24, 23, and 33, respectively, summing to the total of 164. 
\label{AppendixA1}
\subsection{GPTE with all k>0}
Ideal non-negative integer solutions of the GPTE problem have been identified for 42 types with all \( k > 0 \), including 10 known types of PTE.
\subsubsection{\;\;(k$\;=\;$1)}
\label{k1}
\begin{notation}
\end{notation}
$\bullet$ Type: $(k=1)$ \qquad $\bullet$ Abbreviation: $[$k1$]$
\begin{idealsolu}
\end{idealsolu}
$\bullet$ Smallest solutions:
\begin{align}
\label{k1s2}
& [ 0, 2 ]^k = [ 1, 1 ]^k\\
\label{k1s3}
& [ 1,3 ]^k = [2,2 ]^k
\end{align}
\begin{idealchain}
\end{idealchain}
$\bullet$ It is obvious that there are solution chains of any length. 
\begin{align}
\label{k1s9}
& [ 0, 9 ] ^k= [ 1, 8 ] ^k= [ 2, 7 ]^k = [ 3, 6 ]^k = [4,5 ]^k
\end{align}
\begin{primesolu}
\end{primesolu}
$\bullet$ Prime solutions:
\begin{align}
& [ 3, 7 ]^k = [ 5, 5 ]^k\\
& [ 3, 13 ]^k = [ 5, 11 ]^k
\end{align}
\indent
$\bullet$ Prime solution chains:
\begin{align}
\label{k1s53}
& [ 7, 53 ]^k = [ 13, 47 ]^k = [ 17, 43 ]^k = [ 19, 41 ] ^k= [ 23, 37 ]^k = [ 29, 31 ]^k
\end{align}
\begin{mirrortype}
\end{mirrortype}
$\bullet$ \ref{kn1}\quad $(k=-1)$ 
\begin{relatedtype}
\end{relatedtype}
$\bullet$ \ref{r1}\quad $(r=1)$
\\
\subsubsection{\;\;(k$\;=\;$2)}
\label{k2}
\begin{notation}
\end{notation}
$\bullet$ Type: $(k=2)$ \qquad $\bullet$ Abbreviation: $[$k2$]$
\begin{idealsolu}
\end{idealsolu}
$\bullet$ Smallest solutions:
\begin{align}
\label{k2s5}
& [ 0, 5 ]^k = [ 3, 4 ]^k \\
\label{k2s11}
& [ 2, 11 ]^k = [ 5, 10 ]^k
\end{align}
\begin{idealchain}
\end{idealchain}
$\bullet$ It has been proved that there are solution chains for any length \cite{Hardy38}. A.Gloden gave method on how to obtained solution chains in 1940's \cite{Gloden44}. \mbox{Numerical} examples by A.Gloden's method:
\begin{align}
\label{k2s91}
& [ 13, 91 ]^k = [ 23, 89 ]^k = [ 35, 85 ]^k = [ 47, 79 ]^k = [ 65, 65 ]^k
\end{align}
\begin{primesolu}
\end{primesolu}
$\bullet$ First known prime solution, by Jean-Charles Meyrignac in 2000 \cite{JCM}:
\begin{align}
& [ 7, 17 ]^k = [ 13, 13 ]^k
\end{align}
\indent
$\bullet$ Prime solution chains, by Chen Shuwen in 2016 \cite{Chen23}:
\begin{align}
& [1, 47 ]^k = [ 19, 43 ]^k = [ 23, 41 ]^k = [ 29, 37 ]^k \\
\label{k2s281}
& [ 53, 281 ]^k = [ 71, 277 ]^k = [ 97, 269 ] ^k \nonumber \\
& \quad = [ 137, 251 ]^k = [ 157, 239 ]^k = [ 193, 211]^k
\end{align}

\begin{mirrortype}
\end{mirrortype}
$\bullet$ \ref{kn2}\quad $(k=-2)$ 
\begin{relatedtype}
\end{relatedtype}
$\bullet$ \ref{r2}\quad $(r=2)$
\\
\subsubsection{\;\;(k$\;=\;$3)}
\label{k3}
\begin{notation}
\end{notation}
$\bullet$ Type: $(k=3)$ \qquad $\bullet$ Abbreviation: $[$k3$]$
\begin{idealsolu}
\end{idealsolu}
$\bullet$ Smallest solutions:
\begin{align}
\label{k3s12}
& [ 1, 12 ]^k = [ 9, 10 ]^k\\
& [ 2, 16 ]^k = [ 9, 15 ]^k\\
& [ 10, 27 ]^k = [19, 24 ]^k\\
& [ 2, 34 ]^k = [ 15, 33 ]^k
\end{align}
\indent
$\bullet$ Several parametric solutions have been found by Euler, N.Elkies, Ramanujan, Titus Piezas, and others \cite[p.370-375]{Piezas09}.\\[1mm]
\indent
$\bullet$ Jarosław Wróblewski obtained 4330853 solutions below 1000000 by exhaustive computer search in 2006 \cite{JW955}. 
\begin{idealchain}
\end{idealchain}
$\bullet$ First known solution chain, by J.Leech in 1957 \cite{Leech1957} \cite{Silverman1993}:
\begin{align}
& [167, 436 ]^k = [ 228, 423 ]^k = [ 255, 414 ]^k
\end{align}
\indent
$\bullet$  Solution chains of length 4, by E.Rosenstiel, J.A.Dardis and C.R.Rosenstiel in 1991 \cite{Rosenstiel1991}:
\begin{align}
\label{k3s19083}
& [ 2421, 19083 ]^k = [ 5436, 18948 ]^k \nonumber \\
& \quad = [ 10200, 18072 ]^k = [ 13322, 16630 ]^k  \\
\label{k3s23237}
& [ 4275, 23237 ]^k = [ 7068, 23066 ]^k  \nonumber \\
& \quad = [10362, 22580 ]^k = [ 12939, 21869 ]^k
\end{align}
\indent
$\bullet$  Solution chains of length 12, by Christian Boyer and Jarosław Wróblewski	in 2008 \cite{Boyer08} \cite{Boyer11} :
\begin{align}
& [ 21721970100126962640,18038772102965878680]^k \nonumber \\
& \quad=[ 21987454277272575000,17640345906751691760]^k \nonumber \\
& \quad= [ 22267760149437058620,17187779557568021220]^k \nonumber \\
& \quad= [ 22351892062348779840,17044847163758657880]^k \nonumber \\
& \quad= [ 24297225342129820080,12108179966187254040]^k \nonumber \\
& \quad= [ 24332594138439574260,11963837040706905900]^k \nonumber \\
& \quad= [ 24403184190268788996,11663515767522623004]^k \nonumber \\
& \quad= [ 25000720604144916120,7898709837472488720]^k \nonumber \\
& \quad= [ 25060249943031303000,7248918034130441760]^k \nonumber \\
& \quad= [ 25258892211726742296,1544100565125094704]^k \nonumber \\
& \quad= [ 25260575914339118080,771180546485662040]^k \nonumber \\
& \quad= [ 25260802402509788751,292667080168803249]^k 
\end{align}
\begin{primesolu}
\end{primesolu}
$\bullet$ First known prime solution, smallest prime solution, by Jean-Charles Meyrignac in 2000 \cite{JCM}:
\begin{align}
& [ 61, 1823 ]^k = [ 1049, 1699 ]^k
\end{align}
\indent
$\bullet$ 
Among all 4330853 solutions below 1000000 obtained by Jarosław Wróblewski, Chen Shuwen confirmed in 2023 that there are 816 prime solutions. \mbox{Numerical} examples:
\begin{align}
& [31,1867]^k = [397,1861]^k\\
& [593,2333]^k = [1787,1931]^k\\
& [282019,998839]^k = [686131,886183]^k\\
& [509441,999979]^k = [602513,970267]^k
\end{align}
\indent
$\bullet$ In 2023, Chen Shuwen found a quick parametric method to generate \mbox{thousands} of large prime solutions. Numerical examples:
\begin{align}
& [381390944953747901,2219726926842666805981]^k \nonumber \\
& \quad = [1749088019600545177919,1774328041319104499827]^k \\
& [6795554777350361993,28855935673607956539433]^k \nonumber \\
& \quad = [22738395816294430043267,23065215016959104078311]^k
\end{align}
\begin{mirrortype}
\end{mirrortype}
$\bullet$ \ref{kn3}\quad $(k=-3)$
\begin{relatedtype}
\end{relatedtype}
$\bullet$ \ref{k03}\quad $(k=0,3)$
\\
\subsubsection{\;\;(k$\;=\;$4)}
\label{k4}
\begin{notation}
\end{notation}
$\bullet$ Type: $(k=4)$ \qquad $\bullet$ Abbreviation: $[$k4$]$
\begin{idealsolu}
\end{idealsolu}
$\bullet$ Euler gave a two-parameter solution in 1772 \cite{Dickson52}, Numerical example:
\begin{align}
\label{k4s158}
& [ 59, 158 ]^k = [ 133, 134]^k
\end{align}
\indent
$\bullet$ L.J.Lander, T.R.Parkin, and J.L.Selfridge found 46 solutions in 1966-1967 \cite{Lander66} \cite{Lander67} by computer search. Numerical examples:
\begin{align}
& [ 193, 292 ]^k = [ 256, 257 ]^k\\
& [ 271, 502 ]^k = [ 298, 497 ]^k\\
& [ 103, 542 ]^k = [ 359, 514 ]^k\\
& [ 222, 631 ]^k = [ 503, 558 ]^k
\end{align} 
\indent
$\bullet$ 
In 1982, A.J. Zajta discussed the more important solution methods for this equation and presented a list of 218 numerical solutions \cite{Zajta83}. This list contained all known primitive nontrivial solutions within the range up to $10^6$.\\[1mm]
\indent
$\bullet$
Daniel J. Bernstein found all 516 solutions below $10^{6}$ by a fast computer search method in 2001 \cite{Bernstein01}.\vspace{1ex} \\
\indent
$\bullet$ Jarosław Wróblewski obtained 11089 solutions below $10^{14}$ by exhaustive computer search in 2006 \cite{JW06}. The largest two solutions are:
\begin{align}
& [15345135599667, 99829023021259]^k \nonumber \\
&\quad = [79112592557673, 88084675559689]^k\\
& [4630943537570, 99918878943347]^k\nonumber \\
&\quad =  [51913486805005, 98046841752722]^k
\end{align} 
\begin{primesolu}
\end{primesolu}
$\bullet$ First known prime solution, noticed by Jean-Charles Meyrignac in 2000 \cite{JCM}:
\begin{align}
\label{k4s239}
& [ 7, 239 ]^k = [ 157, 227 ]^k
\end{align}
\indent
$\bullet$ Among all 11089 solutions below $10^{14}$ obtained by Jarosław Wróblewski  \cite{JW06}, Chen Shuwen confirmed in 2023 that there are only two prime solutions. The second known prime solution is:
\begin{align}
& [40351,62047]^k = [46747,59693]^k
\end{align}
\begin{mirrortype}
\end{mirrortype}
$\bullet$ \ref{kn4}\quad $(k=-4)$
\begin{relatedtype}
\end{relatedtype}
$\bullet$ \ref{k04}\quad $(k=0,4)$
\\
\subsubsection{\;\;(k$\;=\;$1, 2)}
\label{k12}
\begin{notation}
\end{notation}
$\bullet$ Type: $(k=1,2)$ \qquad $\bullet$ Abbreviation: $[$k12$]$
\begin{idealsolu}
\end{idealsolu}
$\bullet$ During 1750-1751, Goldbach and Euler had studied this system \cite{Dorwart37,Dickson52}. \vspace{1ex}\\
\indent
$\bullet$ The complete ideal solution has been given by L.E.Dickson in 1910's \cite[pp.52]{Dickson57}. Smallest solutions:
\begin{align}
\label{k12s4}
& [ 0, 3, 3 ]^k = [ 1, 1, 4 ]^k\\
\label{k12s6}
& [ 0, 4, 5 ]^k = [ 1, 2, 6 ]^k\\
& [ 0, 7, 7 ]^k = [ 1, 4, 9 ]^k
\end{align}
\begin{idealchain}
\end{idealchain}
$\bullet$ It has been proved that there are solution chains for any length \cite{Hardy38}. A.Gloden gave method on how to obtained symmetric solution chains in 1940's \cite{Gloden44}.\mbox{Numerical} example:
\begin{align}
\label{k12s22}
& [0, 16, 17 ] ^k = [ 1, 12, 20 ]^k = [ 2, 10, 21 ]^k= [ 5, 6, 22 ]^k\\
& [ 0, 71, 73 ]^k = [ 1, 63, 80 ]^k = [ 3, 56, 85 ]^k = [ 5, 51, 88]^k \nonumber \\
& \quad = [ 8, 45, 91]^k = [ 11, 40, 93 ]^k = [ 16, 33, 95 ]^k = [23, 25, 96 ]^k
\end{align}
\begin{primesolu}
\end{primesolu}
$\bullet$ First known prime solutions, by Albert H. Beiler \cite{Beiler1964,Rivera65}:
\begin{align}
&[43, 61, 67 ]^k = [ 47, 53, 71]^k
\end{align}
\indent
$\bullet$ Non-symmetric prime solution chains, by Chen Shuwen in 2016 \cite{Chen23}:
\begin{align}
\label{k12s137}
&[11, 107, 113 ]^k = [ 17, 83, 131 ]^k = [ 23, 71, 137]^k
\end{align}
\indent
$\bullet$ Prime solution chains of length 6, by Chen Shuwen in 2023:
\begin{align}
&[916879, 1569937, 1588333]^k = [926077, 1496353, 1652719]^k \nonumber \\
& \quad = [944473, 1431967, 1698709]^k = [962869, 1385977, 1726303]^k \nonumber \\
& \quad = [990463, 1330789, 1753897]^k = [1128433, 1146829, 1799887]^k
\end{align}
\begin{mirrortype}
\end{mirrortype}
$\bullet$ \ref{kn12}\quad $(k=-2,-1)$
\begin{relatedtype}
\end{relatedtype}
$\bullet$ \ref{h124}\quad $(h=1,2,4)$
\\
\subsubsection{\;\;(k$\;=\;$1, 3)}
\label{k13}
\begin{notation}
\end{notation}
$\bullet$ Type: $(k=1,3)$ \qquad $\bullet$ Abbreviation: $[$k13$]$
\begin{idealsolu}
\end{idealsolu}
$\bullet$ A.Moessner gave parameter solutions of this type in 1939 \cite{Moessner39}. Numerical exampls are:
\begin{align}
\label{k13s9}
& [0, 7, 8 ]^k = [1, 5, 9]^k\\
\label{k13s10}
& [0, 7, 9 ]^k = [2, 4, 10]^k\\
& [12, 23, 28 ]^k = [13, 21, 29]^k
\end{align}
\indent
$\bullet$ Smallest solution, by computer search:
\begin{align}
\label{k13s6}
& [1, 5, 5 ]^k = [ 2, 3, 6]^k
\end{align}
\indent
$\bullet$ Ideal solution with additional condition $a_1 b_1=a_2 b_2$, which leads to ideal solution of $(k=0,1,3)$, by Chen Shuwen in 2001:
\begin{align}
\label{k13s12}
& [5,10,11]^k = [6,8,12]^k
\end{align}
\begin{idealchain}
\end{idealchain}
$\bullet$ First known solution chains, by Chen Shuwen in 1995 \cite{Chen01,Chen23}:
\begin{align}
\label{k13s68}
& [ 2, 52, 58 ] ^k = [ 4, 46, 62 ] ^k = [ 13, 32, 67 ] ^k = [ 22, 22, 68 ] ^k\\
& [ 5, 58, 70 ] ^k = [ 7, 53, 73 ]  ^k= [ 13, 43, 77 ] ^k = [ 21, 33, 79] ^k
\end{align}
\indent
$\bullet$ Solution chains of length 5 to length 65, by Jarosław Wróblewski in 2001:
\begin{align}
& [ 10, 214, 215 ]^k = [ 19, 179, 241 ]^k = [ 43, 139, 257 ]^k \nonumber \\
& \quad = [ 49, 131, 259 ]^k = [87, 88, 264]^k \\
& [ 89, 361, 367 ] ^k= [ 97, 323, 397 ]^k = [ 115, 285, 417 ] ^k \nonumber \\
& \quad = [ 133, 257, 427 ] ^k= [ 152, 232, 433 ] ^k= [ 187, 193, 437 ]^k \\
& [ 16, 624, 699 ] ^k= [ 19, 611, 709 ]^k = [ 39, 555, 745 ] ^k \nonumber \\
& \quad = [ 79, 481, 779 ] ^k= [ 107, 439, 793 ] ^k= [ 169, 359, 811 ] ^k \nonumber \\
& \quad = [ 187, 338, 814 ]^k = [ 259, 261, 819 ]^k
\end{align}
\begin{primesolu}
\end{primesolu}
$\bullet$ First known prime solution, by Chen Shuwen in 2016:
\begin{align}
&[19, 179, 241 ]^k = [ 43, 139, 257]^k
\end{align}
\indent
$\bullet$ Smallest prime solutions, by Chen Shuwen in 2023:
\begin{align}
&[13,43,47]^k = [19,31,53]^k \\
&[19,47,53]^k = [29,31,59]^k 
\end{align}
\indent
$\bullet$ Prime solution chains of length 4, based on \eqref{identity13}, by Chen Shuwen in 2023:
\begin{align}
\label{k13s967}
&[83,757,827]^k = [107,677,883]^k \nonumber \\
& \quad = [197,523,947]^k = [281,419,967]^k\\
&[37,1451,1783]^k = [47,1423,1801]^k \nonumber \\
& \quad = [151,1213,1907]^k = [233,1087,1951]^k\\
&[229,1531,1709]^k = [239,1489,1741]^k \nonumber \\
& \quad = [499,1021,1949]^k = [619,877,1973]^k\\
&[463,1627,1733]^k = [523,1429,1871]^k \nonumber \\
& \quad = [743,1087,1993]^k = [773,1051,1999]^k
\end{align}
\begin{mirrortype}
\end{mirrortype}
$\bullet$ \ref{kn13}\quad $(k=-3,-1)$
\begin{relatedtype}
\end{relatedtype}
$\bullet$ \ref{r13}\quad $(r=1,3)$
\\
\subsubsection{\;\;(k$\;=\;$1, 4)}
\label{k14}
\begin{notation}
\end{notation}
$\bullet$ Type: $(k=1,4)$ \qquad $\bullet$ Abbreviation: $[$k14$]$
\begin{idealsolu}
\end{idealsolu}
$\bullet$ First known solutions, smallest solutions, based on computer search, by Chen Shuwen in 1995 \cite{Chen01},\cite{Chen23}:
\begin{align}
\label{k14s39}
&[ 3, 25, 38 ]^k = [ 7, 20, 39 ]^k\\
&[ 15, 36, 39 ]^k = [ 22, 25, 43 ]^k\\
&[ 4, 36, 43 ]^k = [ 9, 28, 46 ]^k\\
\label{k14s48}
&[ 4, 41, 42 ]^k = [ 11, 28, 48 ]^k
\end{align}
\indent
$\bullet$ Chen Shuwen obtained a parametric solution in 1997. Numerical examples:
\begin{align}
& [ 31, 199, 244 ]^k = [ 73, 139, 262 ]^k\\
& [ 63, 163, 290 ]^k = [ 75, 149, 292 ]^k\\
& [ 32, 294, 463 ]^k = [ 116, 196, 477 ]^k
\end{align}
\begin{idealchain}
\end{idealchain}
$\bullet$ First known solution chains, based on computer search, by Chen Shuwen in 1997:
\begin{align}
\label{k14s244}
&[24, 201, 216 ] ^k= [ 66, 132, 243 ]^k = [ 73, 124, 244]^k
\end{align}
\begin{primesolu}
\end{primesolu}
$\bullet$ First known prime solutions, based on computer search, by Chen Shuwen in 2016:
\begin{align}
&[89, 811, 997 ]^k = [ 251, 577, 1069]^k\\
&[337, 1609, 1637 ]^k= [ 613, 1097, 1873]^k
\end{align}
\indent
$\bullet$ Large prime solutions based on a quick parametric method, by Chen Shuwen in 2023:
\begin{align}
& [2173207,7556539,11161921]^k=[6086767,11470099,3334801] \\
& [2637394687, 18249058669, 19043961451]^k\nonumber\\
& \quad =[6000536077, 12317678671, 21612200059]^k \\
& [11146737079, 143354028799, 162246373489]^k \nonumber\\
& \quad =[21719669113, 122208164731, 172819305523]
\end{align}
\begin{mirrortype}
\end{mirrortype}
$\bullet$ \ref{kn14}\quad $(k=-4,-1)$
\begin{relatedtype}
\end{relatedtype}
$\bullet$ \ref{r14}\quad $(r=1,4)$
\\
\subsubsection{\;\;(k$\;=\;$1, 5)}
\label{k15}
\begin{notation}
\end{notation}
$\bullet$ Type: $(k=1,5)$ \qquad $\bullet$ Abbreviation: $[$k15$]$
\begin{idealsolu}
\end{idealsolu}
$\bullet$ First known solution, by A.Moessner in 1939 \cite{Moessner39}:
\begin{align}
&[39, 92, 100 ] = [ 49, 75, 107]^k
\end{align}
\indent
$\bullet$ Parameter solutions were obtained by Moessner, by H.Swinnerton-Dyer and by L.J.Lander in 1968 \cite{Lander1968}.\vspace{1ex}\\
\indent
$\bullet$ Smallest solutions, based on computer search of $a_1^5+a_2^5+a_3^5=b_1^5+b_2^5+b_3^5$, by L.J.Lander, T.R.Parkin and J.L.Selfridge in 1967 \cite{Lander67}:
\begin{align}
\label{k15s66}
&[ 13, 51, 64 ]^k = [ 18, 44, 66 ]^k\\
&[ 3, 54, 62 ]^k = [ 24, 28, 67 ]^k\\
&[ 8, 62, 68 ] ^k= [ 21, 43, 74 ]^k\\
&[ 53, 72, 81 ]^k = [ 56, 67, 83 ]^k
\end{align}
\indent
$\bullet$ Seiji Tomita provided a collection of all 1824 solutions less than 10000 in 2010 \cite[No.70]{Tomita21}. \\[1mm]
\indent
$\bullet$ Duncan Moore obtained some new results by computer search in range of 17700. \cite[pp.492-493]{Piezas09}.\\[1mm]
\indent
$\bullet$ Solution satisfies $a_1/a_2=b_1/b_3$, which leads to the ideal solution \eqref{k015s10584} of $(k=0,1,5)$, discovered by Chen Shuwen in 2023:
\begin{align}
\label{k15s756}
&[145,406,751]^k = [270,276,756]^k
\end{align}
\indent
$\bullet$ Chen Shuwen found a three-parameter solution \eqref{k15rst} in 2023. Numerical example:
\begin{align}
&[600387, 6945255, 7079961]^k = [1312605, 5514371, 7798627]^k
\end{align}
\begin{primesolu}
\end{primesolu}
$\bullet$ First known prime solution, based on computer search, by Jarosław Wróblewski in 2002 \cite{JCM}:
\begin{align}
\label{k15s6379}
&[1777,5003,6089]^k=[2657,3833,6379]^k
\end{align}
\indent
$\bullet$ Second known prime solution, based on \eqref{k15rst}, by Chen Shuwen in 2023 \cite{Chen2125}:
\begin{align}
\label{k15p2}
&[68049074651809716616587682328308420187688753216224749729, \nonumber \\
& \qquad\quad 148818734733829951795084131100190065917882950867486851411, \nonumber\\
& \qquad\quad 159627898794439357258113507241692664029601335467631785279]^k \nonumber \\
& \quad =[68691516188504321631164213329991813284950709035020363731, \nonumber \\
& \qquad\quad 146607580632312953765823649961087271271521143632892464049, \nonumber\\
& \qquad\quad 161196611359261750272797457379112065578701186883430558639]^k
\end{align}
\begin{mirrortype}
\end{mirrortype}
$\bullet$ \ref{kn15}\quad $(k=-5,-1)$
\begin{relatedtype}
\end{relatedtype}
$\bullet$ \ref{k015}\quad $(k=0,1,5)$
\\
\subsubsection{\;\;(k$\;=\;$2, 3)}
\label{k23}
\begin{notation}
\end{notation}
$\bullet$ Type: $(k=2,3)$ \qquad $\bullet$ Abbreviation: $[$k23$]$
\begin{idealsolu}
\end{idealsolu}
$\bullet$ Christian Goldbach gave a two-parametric solution 300$\small{+}$ years ago \cite[p.296,395]{Piezas09}. However, numerical solutions by this method always include negative integers, which are not ideal solutions. Numerical example: [-4,-3,11]=[-1,8,9].\vspace{1ex} \\
\indent
$\bullet$ A. Gloden obtained a two-parameter solution in 1943 \cite{Gloden49}. There are three numerical solutions less than 1000 by his method:
\begin{align}
&[1,336,385 ]^k = [193,193,432]^k\\
&[31,540,571]^k = [271,331,660]^k\\
&[73,792,793]^k = [361,505,936]^k
\end{align}
\indent
$\bullet$ A.Moessner found two parametric solutions in 1948 \cite{Gloden49}. The first one in fact is same as Goldbach's solution. The second one is a three-parameter solution, which gives 18 numerical solutions less than 1000. Smallest example found by Moessner's method:
\begin{align}
&[16,270,285]^k = [141,160,330]^k
\end{align}
\indent
$\bullet$ A.Gloden found another two parameter solution in 1949 \cite{Gloden49}. Numerical example:
\begin{align}
&[2251,35478,37243]^k=[19747,19747,43254]^k
\end{align}
\indent
$\bullet$ T.N.Sinha gave a three-parameter solution in 1984 \cite[pp.92-93]{Sinha84}. Numerical example:
\begin{align}
&[ 1, 432, 487 ]^k = [ 163, 325, 540 ]^k
\end{align}
\indent
$\bullet$ Ajai Choudhry obtained a six-parameter solution in 1991 \cite{Choudhry91}. Numerical example:
\begin{align}
&[2812,10154,17499]^k=[6700,7131,17930]^k
\end{align}
\indent
$\bullet$ Smallest solutions, based on computer search, by Chen Shuwen in 1995:
\begin{align}
\label{k23s64}
&[0, 37, 62 ]^k = [ 21, 26, 64]^k\\
&[2, 45, 62 ]^k = [ 26, 29, 66]^k\\
\label{k23s73}
&[1, 54, 69 ]^k = [ 18, 45, 73]^k\\
&[3, 58, 69 ]^k = [ 22, 45, 75]^k
\end{align}
\indent
$\bullet$ Ajai Choudhry and Jarosław Wróblewski found several numerical integer solution chains of length 3 by computer search and obtained a parametric solution chains of length 3 in 2011 \cite{Choudhry14}. However, numerical solution chains found by this method always have negative integers.
\begin{primesolu}
\end{primesolu}
$\bullet$ First known prime solutions, based on new parametric method \eqref{para_prime23}, by Chen Shuwen in 2023 \cite{Chen2125}:
\begin{align}
\label{k23s127776401}
&[8417237, 104616559, 111462317]^k \nonumber\\
& \quad =[47946583, 69380393, 127776401]^k \\
\label{k23s11313785591}
&[2847598979, 7656462769, 10942931963]^k \nonumber\\
& \quad =[4233420431, 6368234377, 11313785591]^k \\
\label{k23s35159223563}
&[4021933631,27430626697,31555873031]^k \nonumber\\
& \quad =[14913724849,17487085619,35159223563]^k
\end{align}
\begin{mirrortype}
\end{mirrortype}
$\bullet$ \ref{kn23}\quad $(k=-3,-2)$ 
\begin{relatedtype}
\end{relatedtype}
$\bullet$ \ref{r23}\quad $(r=2,3)$ 
\\
\subsubsection{\;\;(k$\;=\;$2, 4)}
\label{k24}
\begin{notation}
\end{notation}
$\bullet$ Type: $(k=2,4)$ \qquad $\bullet$ Abbreviation: $[$k24$]$
\begin{idealsolu}
\end{idealsolu}
$\bullet$ A.Gloden gave parameter solutions of this system in 1940's \cite{Gloden44} \cite{Gloden1948b}. \mbox{Numerical} examples are:
\begin{align}
\label{k24s8}
& [0, 7, 7 ] = [ 3, 5, 8]^k\\
\label{k24s11}
& [1, 9, 10 ] = [ 5, 6, 11]^k\\
& [1, 11, 12 ] = [ 4, 9, 13]^k\\
& [2, 11, 13 ] = [ 7, 7, 14]^k
\end{align}
\indent
$\bullet$ Ideal solution with additional condition $a_1 b_1=a_2 b_2$, which leads to ideal solution \eqref{k024s207} of $(k=0,2,4)$, by Chen Shuwen in 2001:
\begin{align}
\label{k24s69}
& [19,57,67] = [23,53,69]^k
\end{align}
\begin{idealchain}
\end{idealchain}
$\bullet$ It has been proved that there are solution chains for any length \cite{Hardy38}. A.Gloden gave method on how to obtained solution chains in 1940's \cite{Gloden44}. Numerical \mbox{examples} by A.Gloden's method:
\begin{align}
\label{k24s48}
& [ 23, 25, 48 ]^k = [ 15, 32, 47 ]^k = [ 8, 37, 45 ]^k = [ 3, 40, 43 ]^k \\
& [ 28, 175, 203 ]^k = [ 77, 140, 217 ]^k = [ 107, 113, 220 ]^k = [ 5, 188, 193 ] \nonumber \\
& \quad  = [ 67, 148, 215 ]^k = [ 55, 157, 212 ] ^k
\end{align}
\begin{primesolu}
\end{primesolu}
$\bullet$ First known prime solution, by Chen Shuwen in 2016 \cite{Chen01,Chen23}:
\begin{align}
\label{k24s353}
& [  89, 277, 337 ]^k = [ 139, 233, 353 ]^k
\end{align}
\indent
$\bullet$ Smallest prime solutions, based on computer search, by Chen Shuwen in 2016:
\begin{align}
& [ 11,79,97 ] ^k= [41,59,103 ]^k\\
& [ 11,107,113 ] ^k= [61,67,127 ]^k\\
& [ 41,151,163 ] ^k= [	79,113,179]^k
\end{align}
\indent
$\bullet$ Prime solutions based on parametric method, by Chen Shuwen in 2023:
\begin{align}
& [2671,9697,12277 ] ^k= [5897,7079,12923]^k\\
& [108079,200513,201827 ] ^k= [145207,146521,223757]^k
\end{align}
\begin{mirrortype}
\end{mirrortype}
$\bullet$ \ref{kn24}\quad $(k=-4,-2)$ 
\begin{relatedtype}
\end{relatedtype}
$\bullet$ \ref{k12345}\quad $(k=1,2,3,4,5)$ \vspace{1ex}\\
\indent
$\bullet$ \ref{k024}\quad $(k=0,2,4)$ \vspace{1ex}\\
\indent
$\bullet$ \ref{h124}\quad\: $(h=1,2,4)$ \vspace{1ex}\\
\indent
$\bullet$ \ref{r24}\quad\: $(r=2,4)$ 
\\
\subsubsection{\;\;(k$\;=\;$2, 6)}
\label{k26}
\begin{notation}
\end{notation}
$\bullet$ Type: $(k=2,6)$ \qquad $\bullet$ Abbreviation: $[$k26$]$
\begin{idealsolu}
\end{idealsolu}
$\bullet$ The first known solution, which is also the smallest solution, was found by K. Subba Rao in 1934 \cite{Subba34} \cite{Hardy38}:
\begin{align}
\label{k26s23}
& [3, 19, 22]^k = [10, 15, 23]^k
\end{align}
\indent
$\bullet$ Several smaller solutions were found by L. J. Lander, T. R. Parkin, and J. L. Selfridge in 1967 \cite{Lander67}:
\begin{align}
& [15, 52, 65]^k = [36, 37, 67]^k \\
& [23, 54, 73]^k = [33, 47, 74]^k \\
& [11, 65, 78]^k = [37, 50, 81]^k \\
& [3, 55, 80]^k = [32, 43, 81]^k
\end{align}
\indent
$\bullet$ Parametric solutions of this system were obtained by Simcha Brundo in 1968–1976 \cite{Brundo1969} \cite{Brundo1970} \cite{Brundo1976}, Simcha Brundo and Irving Kaplansky in 1974 \cite{Brundo1974}, Andrew Bremner in 1979 \cite{Bremner1981}, and J. Delorme in 1990 \cite{Delorme1992}.
\begin{primesolu}
\end{primesolu}
$\bullet$ The first known prime solution, based on the result found by Aloril and Jean–Charles Meyrignac \cite{JCM}, was confirmed by Chen Shuwen in 2016 \cite{Chen23}:
\begin{align}
& [379, 2837, 4253]^k = [1237, 2531, 4283]^k
\end{align}
\begin{mirrortype}
\end{mirrortype}
$\bullet$ \ref{kn26}\quad $(k=-6,-2)$
\begin{relatedtype}
\end{relatedtype}
$\bullet$ \ref{h126}\quad\: $(h=1,2,6)$
\\
\subsubsection{\;\;(k$\;=\;$1, 2, 3)}
\label{k123}
\begin{notation}
\end{notation}
$\bullet$ Type: $(k=1,2,3)$ \qquad $\bullet$ Abbreviation: $[$k123$]$
\begin{idealsolu}
\end{idealsolu}
$\bullet$ The complete ideal solution has been given by L.E.Dickson in 1910's \cite[p.55-58]{Dickson57} \cite{Chernick37} \cite{Dorwart37}. Numerical examples:
\begin{align}
\label{k123s11}
& [ 0, 4, 7, 11 ]^k = [ 1, 2, 9, 10 ]^k\\
\label{k123s22}
& [ 0, 9, 11, 22 ]^k = [ 2, 4, 15, 21 ]^k
\end{align}
\begin{idealchain}
\end{idealchain}
$\bullet$ It has been proved that there are solution chains for any length \cite{Hardy38}. A.Gloden gave method on how to obtained symmetric solution chains in 1940's \cite{Gloden44}.\mbox{Numerical} example:
\begin{align}
\label{k123s57}
& [0, 28, 29, 57]^k = [1, 21, 36, 56]^k = [2, 18, 39, 55]^k = [6, 11, 46, 51]^k
\end{align}
\indent
$\bullet$ Non-symmetric solution chains of length 3, which may lead to ideal solution chains of $(h=1,2,3,5)$, found by Chen Shuwen in 1997 based on computer search:
\begin{align}
\label{k123s214}
& [0, 87, 93, 214 ]^k = [ 9, 52, 123, 210 ]^k = [ 24, 30, 133, 207]^k \\
\label{k123s234}
& [0, 103, 116, 234]^k = [ 4, 80, 138, 231 ]^k = [26, 39, 168, 220]^k 
\end{align}
\indent
$\bullet$ Numerical non-symmetric solution chains of length 4 and length 5 can be obtained by \eqref{h1235s7313} to \eqref{h1235s421257161} of $(h=1,2,3,5)$.
\begin{primesolu}
\end{primesolu}
$\bullet$ First known prime solution, by Albert H. Beiler in 1964 \cite{Rivera65}:
\begin{align}
& [281, 281, 1181, 1181 ]^k = [ 101, 641, 821, 1361]^k
\end{align}
\indent
$\bullet$ Non-symmetric prime solution, by Chen Shuwen in 2016:
\begin{align}
& [47, 101, 113, 179 ]^k = [ 59, 71, 137, 173]^k
\end{align}
\indent
$\bullet$ Prime solution chains of length 3, by Carlos Rivera \cite{Rivera65} in 1999:
\begin{align}
\label{k123s241}
& [59, 137, 163, 241 ]^k = [ 61, 127, 173, 239 ]^k = [ 71, 103, 197, 229]^k
\end{align}
\indent
$\bullet$ Prime solution chains of length 4 and length 5, based on \eqref{k2s281}, by Chen Shuwen in 2023:
\begin{align}
& [199, 1567, 2203, 3571]^k = [223, 1459, 2311, 3547]^k \nonumber \\
& \quad = [271, 1303, 2467, 3499]^k= [379, 1063, 2707, 3391]^k \\
& [2492111, 2990063, 3221567, 3719519]^k \nonumber \\
& \quad =[2500847, 2950751, 3260879, 3710783]^k \nonumber \\
& \quad =[2518319, 2893967, 3317663, 3693311]^k \nonumber \\
& \quad =[2557631, 2806607, 3405023, 3653999]^k \nonumber \\
& \quad =[2583839, 2762927, 3448703, 3627791]^k 
\end{align}
\begin{mirrortype}
\end{mirrortype}
$\bullet$ \ref{kn123}\quad $(k=-3,-2,-1)$
\begin{relatedtype}
\end{relatedtype}
$\bullet$ \ref{k2}\quad\: $(k=2)$ \vspace{1ex}\\
\indent
$\bullet$ \ref{h1235}\quad\: $(h=1,2,3,5)$ \vspace{1ex}\\
\indent
$\bullet$ \ref{r123}\quad\: $(r=1,2,3)$
\\
\subsubsection{\;\;(k$\;=\;$1, 2, 4)}
\label{k124}
\begin{notation}
\end{notation}
$\bullet$ Type: $(k=1,2,4)$ \qquad $\bullet$ Abbreviation: $[$k124$]$
\begin{idealsolu}
\end{idealsolu}
$\bullet$ First known solutions, smallest solutions, by Chen Shuwen in 1995 \cite{Chen01,Chen23}:
\begin{align}
& [ 2, 7, 11, 15 ]^k = [ 3, 5, 13, 14]^k\\
\label{k124s19}
& [ 0, 7, 14, 19 ]^k = [ 1, 5, 16, 18]^k\\
& [ 3, 10, 13, 19 ]^k = [ 5, 6, 17, 17]^k\\
\label{k124s21}
& [ 5, 14, 14, 21 ]^k = [ 6, 10, 19, 19]^k
\end{align}
\indent
$\bullet$ Ajai Choudhry obtained two parametric solutions in 2001 \cite{Choudhry01}.
\begin{idealchain}
\end{idealchain}
$\bullet$ Ideal solution chains, based on computer search, by Chen Shuwen in 2001:
\begin{align}
\label{k124s64}
& [14, 37, 39, 64 ]^k = [ 16, 29, 46, 63 ]^k = [ 19, 24, 49, 62]^k\\
& [19, 68, 83, 129 ]^k = [ 27, 47, 101, 124 ]^k = [ 29, 44, 103 , 123]^k\\
& [3, 89, 97, 166 ]^k = [ 11, 58, 127, 159 ]^k = [ 13, 54, 131, 157]^k
\end{align}
\begin{primesolu}
\end{primesolu}
$\bullet$ First known prime solutions, based on computer search, by Chen Shuwen in 2017:
\begin{align}
& [41, 43, 101, 103 ]^k = [ 29, 67, 79, 113]^k\\
& [37,41,97,109 ]^k = [ 29,53,89,113]^k\\
& [59,83,173,173 ]^k = [ 47,113,137,191]^k
\end{align}
\indent
$\bullet$ Large prime solutions, based on parametric method, by Chen Shuwen in 2023:
\begin{align}
& [3923,24247,14851,32051]^k = [3917,24229,14869,32057]^k\\
& [30181,68543,134363,161761]^k = [30169,68567,134339,161773]^k
\end{align}
\begin{mirrortype}
\end{mirrortype}
$\bullet$ \ref{kn124}\quad $(k=-4,-2,-1)$
\begin{relatedtype}
\end{relatedtype}
$\bullet$ \ref{r124}\quad\: $(r=1,2,4)$
\\
\subsubsection{\;\;(k$\;=\;$1, 2, 5)}
\label{k125}
\begin{notation}
\end{notation}
$\bullet$ Type: $(k=1,2,5)$ \qquad $\bullet$ Abbreviation: $[$k125$]$
\begin{idealsolu}
\end{idealsolu}
$\bullet$ Chen Shuwen found a parametric solution in 1995, with the additional condition of $a_1+a_2=b_1+b_2$ and $a_3+a_4=b_3+b_4$. Numerical solutions \cite{Chen01,Chen23}:
\begin{align}
& [ 26,53,87,104 ]^k= [28,49,91,102]^k\\
& [ 53, 113, 156, 204 ]^k= [ 74, 78, 183, 191]^k\\
& [ 36, 329, 407, 608 ]^k = [ 88, 199, 537, 556]^k
\end{align}
\indent
$\bullet$ Smallest solutions, based on computer search, by Chen Shuwen in 2001:
\begin{align}
&[ 1, 28, 39, 58 ]^k= [ 8, 14, 51, 53 ]^k\\
\label{k125s58}
&[ 21, 37, 46, 58 ]^k = [ 23, 32, 51, 56 ]^k\\
&[ 29, 42, 83, 92 ]^k = [ 32, 38, 87, 89 ]^k\\
&[ 3, 52, 54, 97 ]^k= [ 12, 27, 73, 94  ]^k\\
&[ 16, 57, 71, 98 ]^k= [ 21, 43, 86, 92 ]^k
\end{align}
\indent
$\bullet$ Chen Shuwen found a parametric solution with $a_1=0$ in 2023 (See \ref{r125}). Numerical examples:
\begin{align}
&[0, 223, 642, 770 ]^k= [42, 160, 698, 735]^k\\
&[0, 376, 693, 985]^k= [126, 175, 810, 943]^k
\end{align}
\begin{primesolu}
\end{primesolu}
$\bullet$ First known prime solutions, based on parametric method with \eqref{para_prime125}, by Chen Shuwen in 2023 \cite{Chen2125}:
\begin{align}
\label{k125s601843321}
& [8445859, 296931097, 393276643, 601843321]^k \nonumber\\ 
& \quad =[88587133, 136648549,521702047, 553559191]^k \\
\label{k125s14483445169}
& [683832403, 6126532303, 10130822101, 14483445169]^k \nonumber\\ 
& \quad =[2387077453, 3265080619, 11902196953, 13870276951]^k\\
& [4840567903,9249934063,27287131717,33156848821]^k\nonumber\\ 
& \quad =[24914061433,2558769553,33978296227,13083355291]^k
\end{align}
\begin{mirrortype}
\end{mirrortype}
$\bullet$ \ref{kn125}\quad $(k=-5,-2,-1)$
\begin{relatedtype}
\end{relatedtype}
$\bullet$ \ref{r125}\quad $(r=1,2,5)$
\\
\subsubsection{\;\;(k$\;=\;$1, 2, 6)}
\label{k126}
\begin{notation}
\end{notation}
$\bullet$ Type: $(k=1,2,6)$ \qquad $\bullet$ Abbreviation: $[$k126$]$
\begin{idealsolu}
\end{idealsolu}
$\bullet$ First known solution, by Chen Shuwen in 1995 \cite{Chen01,Chen23}, based on a solution of $a_1^6+a_2^6+a_3^6+a_4^6=b_1^6+b_2^6+b_3^6+b_4^6$ found by Lander, Parkin and Selfridge in 1967 \cite{Lander67}:
\begin{align}
& [ 7, 43, 69, 110 ]^k = [ 18, 25, 77, 109]^k
\end{align}
\indent
$\bullet$ Chen Shuwen found a parametric solution in 1997, with additional condition of $a_1+a_2=b_1+b_2$ and $a_3+a_4=b_3+b_4$. Numerical solutions:
\begin{align}
&[ 31, 62, 107, 126 ]^k = [ 38, 51, 118, 119 ]^k\\
&[ 1, 80, 111, 148 ]^k = [ 5, 67, 124, 144 ]^k\\
&[ 67, 129, 138, 179 ] ^k= [ 69, 118, 149, 177 ]^k
\end{align}
\indent
$\bullet$ Smallest solutions, based on computer search, by Chen Shuwen in 2001:
\begin{align}
&[ 7, 16, 25, 30 ]^k = [ 8, 14, 27, 29 ]^k\\
&[ 15, 23, 27, 34 ]^k = [ 17, 19, 30, 33 ]^k\\
\label{k126s45}
&[ 4, 26, 33, 45 ]^k = [ 6, 20, 39, 43 ]^k
\end{align}
\indent
$\bullet$ By parametric method, Chen Shuwen found one integer solution with $a_1=0$ in 2023: $[0,-1105,1597,1713]=[-1323,545,1168,1815]$. It is not an ideal non-negative integer solution.\vspace{1ex} \\
\indent
$\bullet$ By computer search in 2023, Chen Shuwen confirmed that there is no prime solution in range of 8000.
\begin{mirrortype}
\end{mirrortype}
$\bullet$ \ref{kn126}\quad $(k=-6,-2,-1)$
\\
\subsubsection{\;\;(k$\;=\;$1, 3, 4)}
\label{k134}
\begin{notation}
\end{notation}
$\bullet$ Type: $(k=1,3,4)$ \qquad $\bullet$ Abbreviation: $[$k134$]$
\begin{idealsolu}
\end{idealsolu}
$\bullet$ T.N.Sinha found a two-parametric solution in 1984 \cite[pp.90-92]{Sinha84}. However, numerical solutions by this method always include negative
integers. Numerical example: [-1,0,8,21]=[-10,3,17,18].\vspace{1ex}\\
\indent
$\bullet$ Chen Shuwen found a parametric solution in 1995 \cite{Chen01,Chen23}, with additional condition of $a_1+a_2=b_1+b_2$ and $a_3+a_4=b_3+b_4$. Numerical solutions:
\begin{align}
\label{k134s252}
& [3, 140, 149, 252 ]^k = [ 50, 54, 201, 239 ]^k\\
& [127, 324, 1740, 2023 ]^k = [ 24, 439, 1711, 2040]^k\\
& [1059, 1444, 2476, 2763 ]^k = [ 1179, 1288, 2632, 2643]^k\\
& [2763, 5744, 6800, 9291]^k = [3776, 3915, 8139, 8768]^k
\end{align}
\indent
$\bullet$ Ajai Choudhry found a four-parameter solution in 2001 \cite{Choudhry01}. Numerical solutions:
\begin{align}
& [3876,23666,29083,42943]^k = [8551,14316,36563,40138]^k\\
& [9806,96904,188571,241468]^k = [28048,69998,206356,232347]^k\\
& [5318357,13600563,14484592,20533406]^k \nonumber \\
& \quad = [ 6580709,9969256,18251462,19135491]^k
\end{align}
\indent
$\bullet$ By computer search in 2022, Chen Shuwen confirmed that \eqref{k134s252} is the smallest solution and obtained other 9 solutions in range of 1000:
\begin{align}
& [20,201,364,479]^k =[75,122,413,454]^k\\
& [59,315,442,608]^k=[119,206,540,559]^k\\
& [102,361,459,618]^k=[126,297,523,594]^k\\
& [83,402,462,733]^k=[156,247,569,708]^k\\
& [12,353,572,783]^k=[117,194,678,731]^k\\
& [45,434,586,787]^k= [83,342,685,742]^k\\
& [5,422,438,798]^k=[138,170,581,774]^k\\
& [127,468,568,837]^k=[226,282,699,793]^k\\
& [2,277,858,995]^k=[95,169,912,956]^k 
\end{align}
\indent
$\bullet$ Chen Shuwen found a parametric solution with $a_1=0$ in 2023 (See \ref{r134}). Numerical examples:
\begin{align}
&[0,409,838,1161]^k= [109,261,900,1138]^k \\
&[0,2176,2203,4077]^k= [394, 1206, 2871, 3985]^k
\end{align}
\begin{primesolu}
\end{primesolu}
$\bullet$ First known prime solution, based on parametric method with \eqref{para_prime134} , by Chen Shuwen in 2023 \cite{Chen2125}:
\begin{align}
\label{k134s5094457378727364512429}
& [896501990958793919143, 2056330598071774290263,  \nonumber\\
& \qquad\qquad 3997663854855273138397, 5094457378727364512429]^k \nonumber \\
& \quad =[994237422911295892921, 1908177853245929320403, \nonumber\\
& \qquad\qquad 4082781516440229111169, 5059757030015751535739]^k
\end{align}
\begin{mirrortype}
\end{mirrortype}
$\bullet$ \ref{kn134}\quad $(k=-4,-3,-1)$
\begin{relatedtype}
\end{relatedtype}
$\bullet$ \ref{r134}\quad $(r=1,3,4)$
\\
\subsubsection{\;\;(k$\;=\;$1, 3, 5)}
\label{k135}
\begin{notation}
\end{notation}
$\bullet$ Type: $(k=1,3,5)$ \qquad $\bullet$ Abbreviation: $[$k135$]$
\begin{idealsolu}
\end{idealsolu}
$\bullet$ Parameter solutions were obtained by A.Gloden in 1949, G.Xeroudakes and A.Moessner in 1958 \cite{Xeroudakes58}, Lander in 1968 \cite{Lander1968}, and Ajai Choudhry in 1991\cite{Choudhry91} \cite{Choudhry1991b} . Numerical examples are:
\begin{align}
\label{k135s23}
& [ 1, 13, 17, 23 ]^k = [ 3, 9, 21, 21 ]^k\\
\label{k135s51}
& [ 0, 24, 33, 51 ]^k= [ 7, 13, 38, 50]^k\\
& [ 53, 151, 187, 233]^k = [ 51, 165, 173, 235]^k
\end{align}
\indent
$\bullet$ Smallest new solutions, based on computer search, by Chen Shuwen in 1996 \cite{Chen01,Chen23}: 
\begin{align}
\label{k135s24}
& [6, 16, 18, 24 ]^k = [ 7, 13, 21, 23]^k \\
& [7,21,23,33 ]^k = [ 11,13,29,31]^k \\
& [5,20,22,34 ]^k = [ 9,12,27,33]^k
\end{align}
\begin{idealchain}
\end{idealchain}
$\bullet$ Ideal solution chains of length 3, based on computer search, by Chen Shuwen in 2022:
\begin{align}
\label{k135s273}
& [21,169,183,273 ]^k = [ 43,113,229,261 ]^k = [ 53,99,241,253]^k \\
& [67,167,213,273 ]^k = [ 77,143,235,265 ]^k = [ 83,133,247,257]^k \\
& [47,183,217,303 ]^k = [ 73,127,263,287 ]^k = [ 79,119,271,281]^k \\
\label{k135s313}
& [77,193,227,313 ]^k = [ 89,161,253,307 ]^k = [ 117,123,273,297]^k \\
& [125,253,275,371 ]^k = [ 151,191,329,353 ]^k = [ 158,182,338,346]^k
\end{align}
\begin{primesolu}
\end{primesolu}
$\bullet$ First known prime solutions, by Chen Shuwen in 2016, based on the \mbox{result} of Torbjörn Alm and Jean-Charles Meyrignac \cite{JCM}.
\begin{align}
& [ 13, 59, 67, 101 ]^k = [ 29, 31, 83, 97 ]^k\\
& [ 17, 71, 73, 109 ]^k = [ 29, 43, 97, 101]^k
\end{align}
\indent
$\bullet$ By computer search in 2023, Chen Shuwen found 731 prime solutions in range of 10000. Numerical examples:
\begin{align}
& [23,107,109,157]^k = [31,79,137,149]^k\\
& [23,107,131,181]^k = [41,71,163,167]^k\\
& [569,5647,5693,9907]^k = [1619,3229,7247,9721]^k\\
& [3761,6563,7369,9973]^k = [4099,5653,8081,9833]^k
\end{align}
\begin{mirrortype}
\end{mirrortype}
$\bullet$ \ref{kn135}\quad $(k=-5,-3,-1)$ 
\begin{relatedtype}
\end{relatedtype}
$\bullet$ \ref{h1235}\quad\: $(h=1,2,3,5)$ \vspace{1ex}\\
\indent
$\bullet$ \ref{r135}\quad $(r=1,3,5)$
\\
\subsubsection{\;\;(k$\;=\;$1, 3, 7)}
\label{k137}
\begin{notation}
\end{notation}
$\bullet$ Type: $(k=1,3,7)$ \qquad $\bullet$ Abbreviation: $[$k137$]$
\begin{idealsolu}
\end{idealsolu}
$\bullet$ The first known three solutions, with the additional condition \( a_1 + a_4 = b_2 + b_3 \), were obtained by Ajai Choudhry in 1999 \cite{Choudhry00b}:
\begin{align}
& [ 344, 902, 1112, 1555 ]^k = [ 479, 662, 1237, 1535 ]^k \\
\label{k137s3476}
& [ 1741, 2435, 3004, 3476 ]^k = [ 1937, 2111, 3280, 3328 ]^k \\
& [ 1523, 4175, 4492, 5956 ]^k = [ 1951, 3107, 5528, 5560 ]^k 
\end{align}
\indent
$\bullet$ The smallest solution was obtained by Nuutti Kuosa and Jean-Charles Meyrignac through computer search for the seventh power in 1999 \cite{JCM2000}, and independently by Chen Shuwen at the same time \cite{Chen01, Chen23} based on \eqref{identity137}.
\begin{align}
\label{k137s698}
& [ 184, 443, 556, 698 ]^k = [ 230, 353, 625, 673 ]^k
\end{align}
\indent
$\bullet$
Jaros{\l}aw Wr\'oblewski found 14 new solutions in 2002--2006 \cite{JW955}, all satisfying the additional condition \( a_1 + a_4 = b_2 + b_3 \). The two new solutions within 10,000 and the two largest solutions are as follows:
\begin{align}
& [773,2494,3635,5129]^k=[1025,2078,3824,5104]^k \\
& [2365,5042,5785	,7544]^k=[3167,3583,6742,7244]^k \\
& [281415,1008590	,1201371,1767028]^k \nonumber \\
& \quad =[464521,678890,1369553,1745440]^k \\
& [95498	,786937,928400,1813213]^k  \nonumber \\
& \quad=[224830,543860,1044523,1810835]^k
\end{align}
\begin{mirrortype}
\end{mirrortype}
$\bullet$ \ref{kn137}\quad $(k=-7,-3,-1)$
\\
\subsubsection{\;\;(k$\;=\;$2, 3, 4)}
\label{k234}
\begin{notation}
\end{notation}
$\bullet$ Type: $(k=2,3,4)$ \qquad $\bullet$ Abbreviation: $[$k234$]$
\begin{idealsolu}
\end{idealsolu}
$\bullet$ Chen Shuwen obtained a parametric solution in 1995\cite{Chen01,Chen23}, with additional condition of $a_1-a_2=b_1-b_2$ and $a_3-a_4=b_3-b_4$. Numerical solutions:
\begin{align}
\label{k234s366448}
& [ 975, 224368, 300495, 366448 ]^k = [ 37648, 202575, 337168, 344655 ]^k \\
& [7001616, 10868299, 31439172, 34940503 ]^k \nonumber  \\
&\quad = [ 7527024, 10393591, 31599228, 34831147 ]^k\\
& [2756106, 17971525, 31568076, 35616295 ]^k \nonumber\\
&\quad = [ 3727405, 17323956, 32539375, 34968726 ]^k\\
& [2678193, 39734032, 62727021, 76546444 ]^k \nonumber\\
&\quad = [11112628, 34105617, 70918029, 71161456]^k\\
& [33801840, 3033353281, 4414180500, 5723026141 ] ^k \nonumber\\
&\quad= [ 1004104381, 2384931600, 5074604460, 5384483041]^k
\end{align}
\indent
$\bullet$  Ajai Choudhry found another parametric solution in 2001 \cite{Choudhry01} and obtained three numerical solutions. These three solutions also satisfy $a_1-a_2=b_1-b_2$ and $a_3-a_4=b_3-b_4$.
\begin{align}
\label{k234s1058}
& [ 43, 486, 815, 1058 ]^k = [ 242, 335, 907, 1014 ]^k\\
& [239, 2324, 3855, 4564 ]^k = [ 604, 2079, 4220, 4319 ]^k\\
& [813, 14008, 19605, 22096]^k = [ 1096, 13845, 19888, 21933]^k
\end{align}
\indent
$\bullet$ By computer search in 2022, Chen Shuwen confirmed that \eqref{k234s1058} is the smallest solution and found the second smallest one:
\begin{align}
[15,863,1286,1788]^k=[422,564,1455,1727]^k
\end{align}
\begin{mirrortype}
\end{mirrortype}
$\bullet$ \ref{kn234}\quad $(k=-4,-3,-2)$
\\
\subsubsection{\;\;(k$\;=\;$2, 4, 6)}
\label{k246}
\begin{notation}
\end{notation}
$\bullet$ Type: $(k=2,4,6)$ \qquad $\bullet$ Abbreviation: $[$k246$]$
\begin{idealsolu}
\end{idealsolu}
$\bullet$ Parametric solutions of this system were obtained by Crussol \cite{Dickson52} and J.Chernick \cite{Chernick37} independently in 1913. Numerical examples:
\begin{align}
\label{k246s25}
&[ 2, 16, 21, 25 ]^k= [ 5, 14, 23, 24 ]^k\\
&[ 7, 24, 25, 34 ]^k= [ 14, 15, 31, 32 ]^k\\
&[ 7, 31, 36, 50 ]^k= [ 18, 20, 41, 49 ]^k\\
&[ 9, 47, 49, 67 ]^k= [ 23, 31, 61, 63 ]^k
\end{align}
\indent
$\bullet$ Ajai Choudhry obtained four parameter symmetric solutions in 2022 \cite{Choudhry22}. 
\begin{primesolu}
\end{primesolu}
$\bullet$ First known prime solution, by Chen Shuwen in 2017 \cite{Chen23}:
\begin{align}
&[317, 541, 953, 1049 ]^k=[ 139, 719, 827, 1087]^k
\end{align}
\indent
$\bullet$ Second and third known prime solutions, based on computer search in range of 15000, by Chen Shuwen in 2023:
\begin{align}
&[47,653,743,1429]^k=[313,463,821,1427 ]^k\\
&[211,2113,2539,3301]^k=[1069,1429,2971,3137]^k
\end{align}
\begin{mirrortype}
\end{mirrortype}
$\bullet$ \ref{kn246}\quad $(k=-6,-4,-2)$
\begin{relatedtype}
\end{relatedtype}
$\bullet$ \ref{k1234567}\quad $(k=1,2,3,4,5,6,7)$ \vspace{1ex}\\
\indent
$\bullet$ \ref{h1246}\quad\: $(h=1,2,4,6)$ 
\\
\subsubsection{\;\;(k$\;=\;$1, 2, 3, 4)}
\label{k1234}
\begin{notation}
\end{notation}
$\bullet$ Type: $(k=1,2,3,4)$ \qquad $\bullet$ Abbreviation: $[$k1234$]$
\begin{idealsolu}
\end{idealsolu}
$\bullet$ This type is PTE of degree 4. J.Chernick gave a two-parameter solution of this system in 1937 \cite{Chernick37}. All solutions by his method are symmetric:
\begin{align}
\label{k1234s18}
&[ 0, 4, 8, 16, 17 ]^k = [ 1, 2, 10, 14, 18 ]^k\\
\label{k1234s20}
&[ 0, 6, 8, 17, 19 ]^k = [ 1, 3, 12, 14, 20 ]^k
\end{align}
\indent
$\bullet$ J.L.Burchnall and T.W.Chaundy gave a parameter solution for non-symmetric solution in 1937 \cite{Burchnall1937}:
\begin{align}
&[ 0, 9, 13, 26, 32 ]^k = [ 2, 4, 20, 21, 33 ]^k\\
&[ 0, 31, 49, 87, 113]^k = [ 3, 21, 64, 77, 115 ]^k
\end{align}
\indent
$\bullet$ Ajai Choudhry obtained the complete ideal symmetric solution and a parametric ideal non-symmetric solution in 2000 \cite{Choudhry2000}.
\begin{primesolu}
\end{primesolu}
$\bullet$ First known symmetric prime solution, by Chen Shuwen in 2016 \cite{Chen23}:
\begin{align}
&[ 401, 521, 641, 881, 911 ]^k = [ 431, 461, 701, 821, 941]^k
\end{align}
\indent
$\bullet$ First known non-symmetric prime solution, by Chen Shuwen in 2016:
\begin{align}
&[ 337, 607, 727, 1117, 1297 ] ^k= [ 397, 457, 937, 967, 1327]^k
\end{align}
\begin{mirrortype}
\end{mirrortype}
$\bullet$ \ref{kn1234}\quad$(k=-4,-3,-2,-1)$ 
\begin{relatedtype}
\end{relatedtype}
$\bullet$ \ref{h12346}\quad\: $(h=1,2,3,4,6)$ \vspace{1ex}\\
\indent
$\bullet$ \ref{r13}\quad\: $(r=1,3)$ \vspace{1ex}\\
\indent
$\bullet$ \ref{r1234}\quad $(r=1,2,3,4)$
\\
\subsubsection{\;\;(k$\;=\;$1, 2, 3, 5)}
\label{k1235}
\begin{notation}
\end{notation}
$\bullet$ Type: $(k=1,2,3,5)$ \qquad $\bullet$ Abbreviation: $[$k1235$]$
\begin{idealsolu}
\end{idealsolu}
$\bullet$ First known solutions, based on parametric method, by Chen Shuwen in 1995 \cite{Chen01,Chen23}:
\begin{align}
&[719,879,1039,1249,1409]^k=[739,829,1109,1199,1419]^k\\
&[ 15, 25, 55, 55, 73]^k = [ 13, 31, 43, 67, 69 ]^k
\end{align}
\indent
$\bullet$ Smallest solutions, based on computer search, by Chen Shuwen in 1995-1999:
\begin{align}
\label{k1235s28}
&[ 1, 8, 13, 24, 27 ]^k = [ 3, 4, 17, 21, 28 ]^k\\
&[ 1, 17, 20, 42, 42 ] ^k= [ 2, 12, 26, 37, 45 ]^k\\
&[ 11, 24, 36, 52, 53 ]^k = [ 12, 21, 43, 44, 56 ]^k\\
&[ 5, 17, 35, 55, 55 ]^k = [ 7, 13, 41, 47, 59 ]^k\\
&[ 3, 14, 34, 51, 57 ] ^k= [ 6, 9, 42, 43, 59 ]^k
\end{align}
\begin{primesolu}
\end{primesolu}
$\bullet$ First known prime solution, based on computer search, by Chen Shuwen in 2017:
\begin{align}
& [ 17, 79, 113, 191, 199 ]^k =[23, 59, 149, 157, 211 ]^k 
\end{align}
\indent
$\bullet$ Prime solutions based on computer search, by Chen Shuwen in 2023:
\begin{align}
& [13,79,97,191,227]^k =[31,37,139,167,233 ]^k \\
& [61,127,137,227,251]^k =[67,101,167,211,257 ]^k \\
& [43,109,157,251,263]^k =[53,83,199,211,277 ]^k \\
& [13,53,157,239,271]^k =[29,31,173,223,277 ]^k \\
& [13,101,113,269,277]^k =[29,53,157,241,293 ]^k \\
& [59,83,223,277,281]^k =[67,73,239,251,293 ]^k 
\end{align}
\indent
$\bullet$ Large prime solutions, based on parametric method, by Chen Shuwen in 2023:
\begin{align}
& [23293,23909,23957,24677,24841]^k \nonumber\\
& \quad =[23333,23677,24329,24421,24917]^k \\
& [141773,174767,185531,238649,242393]^k \nonumber\\
& \quad =[150197,154409,210803,217121,250583 ]^k \\
& [8387161381, 18349843201, 24874292041, 38143200421, 39230608561]^k \nonumber\\
& \quad =[9474569521, 15043616641, 31486745161, 31530747301, 41449426981]^k
\end{align}
\begin{mirrortype}
\end{mirrortype}
$\bullet$ \ref{kn1235}\quad $(k=-5,-3,-2,-1)$
\begin{relatedtype}
\end{relatedtype}
$\bullet$ \ref{r1235}\quad $(r=1,2,3,5)$
\\
\subsubsection{\;\;(k$\;=\;$1, 2, 3, 6)}
\label{k1236}
\begin{notation}
\end{notation}
$\bullet$ Type: $(k=1,2,3,6)$ \qquad $\bullet$ Abbreviation: $[$k1236$]$
\begin{idealsolu}
\end{idealsolu}
$\bullet$ First known solutions, based on computer search, by Chen Shuwen in 1999 \cite{Chen01,Chen23}:
\begin{align}
&[ 7, 18, 55, 69, 81 ]^k = [ 9, 15, 61, 63, 82 ]^k\\
\label{k1236s107}
&[7, 27, 53, 90, 106 ]^k = [ 10, 21, 58, 87, 107]^k
\end{align}
\indent 
$\bullet$ By selective search and parametric method, Chen Shuwen found 99 solutions in 2022. Numerical examples:
\begin{align}
&[31,63,114,165,169]^k=[39,49,135,141,178]^k\\
&[33,64,118,172,195]^k=[40,52,129,163,198]^k\\	
&[44,91,113,182,196]^k=[56,64,142,161,203]^k\\
&[45,92,102,193,194]^k=[58,62,125,177,204]^k\\	
&[62,109,111,203,207]^k=[75,77,135,190,215]^k\\
&[29,81,123,203,205]^k=[43,53,161,165,219]^k\\	
&[3,93,115,174,221]^k=[5,78,141,159,223]^k\\
&[23,91,130,225,257]^k=[35,61,162,205,263]^k\\
&[10517,10842,11297,12177,12597]^k \nonumber \\
& \quad =[10617,10677,11397,12122,12617]^k\\
&[4985,11127,17103,21004,22498]^k \nonumber \\
& \quad =[5068,10795,18099,19842,22913]^k\\
&[8386,13861,27961,37561,38011]^k \nonumber \\
& \quad =[8761,13186,28861,35761,39211]^k	
\end{align}
\begin{mirrortype}
\end{mirrortype}
$\bullet$ \ref{kn1236}\quad $(k=-6,-3,-2,-1)$
\\
\subsubsection{\;\;(k$\;=\;$1, 2, 3, 7)}
\label{k1237}
\begin{notation}
\end{notation}
$\bullet$ Type: $(k=1,2,3,7)$ \qquad $\bullet$ Abbreviation: $[$k1237$]$
\begin{idealsolu}
\end{idealsolu}
$\bullet$ First known solutions, based on computer search, by Chen Shuwen in 2022 \cite{Chen23}:
\begin{align}
\label{k1237s503}
&[41, 148, 248, 389, 502 ] ^k = [ 46, 134, 262, 383, 503]^k\\
&[519, 710, 781, 937, 954 ]^k = [ 521, 694, 807, 909, 970]^k\\
&[261, 816, 821,1601,1756 ]^k= [ 271, 711, 926, 1581, 1766]^k
\end{align}
\begin{mirrortype}
\end{mirrortype}
$\bullet$ \ref{kn1237}\quad $(k=-7,-3,-2,-1)$
\\
\subsubsection{\;\;(k$\;=\;$1, 2, 4, 6)}
\label{k1246}
\begin{notation}
\end{notation}
$\bullet$ Type: $(k=1,2,4,6)$ \qquad $\bullet$ Abbreviation: $[$k1246$]$
\begin{idealsolu}
\end{idealsolu}
$\bullet$ First known solution, by G.Palama in 1953 \cite{Palama1953}:
\begin{align}
\label{k1246s17}
&[ 3, 7, 10, 16, 16 ]^k = [ 4, 5, 12, 14, 17 ]^k
\end{align}
\indent
$\bullet$ Ideal solutions, by Chen Shuwen in 1994-1997 \cite{Chen01,Chen23}:
\begin{align}
&[ 1, 19, 22, 37, 42 ]^k = [ 2, 14, 29, 33, 43 ]^k\\
&[ 17, 26, 31, 42, 45 ]^k = [ 18, 23, 35, 39, 46 ]^k\\
&[ 13, 25, 32, 48, 50 ]^k = [ 15, 20, 38, 43, 52 ]^k\\
&[ 7, 44, 48, 79, 90 ]^k = [ 9, 33, 64, 70, 92 ]^k\\
&[ 10, 46, 48, 83, 91 ]^k = [ 13, 32, 69, 70, 94 ]^k\\
&[ 13, 50, 56, 95, 102 ]^k = [ 17, 35, 78, 80, 106 ]^k
\end{align}
\begin{primesolu}
\end{primesolu}
$\bullet$ First known prime solution, by Chen Shuwen in 2017:
\begin{align}
\label{k1246s503}
&[ 83, 149, 337, 439, 503 ]^k = [ 71, 173, 313, 463, 491 ]^k
\end{align}
\begin{mirrortype}
\end{mirrortype}
$\bullet$ \ref{kn1246}\quad $(k=-6,-4,-2,-1)$
\\
\subsubsection{\;\;(k$\;=\;$1, 3, 5, 7)}
\label{k1357}
\begin{notation}
\end{notation}
$\bullet$ Type: $(k=1,3,5,7)$ \qquad $\bullet$ Abbreviation: $[$k1357$]$
\begin{idealsolu}
\end{idealsolu}
$\bullet$ First known solutions, by A.Gloden in 1940's \cite{Gloden44} \cite{Gloden1948} \cite{Gloden1949c}:
\begin{align}
\label{k1357s55}
&[ 3, 19, 37, 51, 53 ]^k = [ 9, 11, 43, 45, 55 ]^k\\
&[13, 31, 47, 65, 67 ]^k = [ 15, 27, 51, 61, 69 ]^k\\
&[ 1, 33, 39, 65, 71 ]^k = [ 11, 15, 53, 57, 73 ]^k
\end{align}
\indent
$\bullet$ Only two solutions with zero term are known so far, which lead to the ideal solution of $(k=1,2,3,4,5,6,7,8)$, found by A.Letac in 1942 \cite{Letac42} \cite{Gloden44}:
\begin{align}
\label{k1357s99}
&[ 0, 34, 58, 82, 98 ]^k = [ 13, 16, 69, 75, 99 ]^k\\
\label{k1357s174}
&[ 0, 63, 119, 161, 169 ]^k = [ 8, 50, 132, 148, 174 ]^k
\end{align}
\indent
$\bullet$ 
A.Moessner gave a two-parameter solution based on Gloden's method in 1947-1952 \cite{Moessner1947} \cite{Moessner1952}.\\[1mm]
\indent
$\bullet$ T.N.Sinha gave a two-parameter solution in 1966 \cite{Sinha66} \cite{Sinha1966} \cite[p.116-117]{Piezas09}.
\begin{primesolu}
\end{primesolu}
\indent
$\bullet$ For the first known prime solution, Laurent Lucas found an equality of sums of 7th powers in 2000 using Jean-Charles Meyrignac's program \cite{JCM}. Chen Shuwen confirmed that it is a multigrade prime solution in 2016:
\begin{align}
\label{k1357s257}
&[19, 101, 157, 239, 251]^k = [31, 79, 173, 227, 257]^k
\end{align}
\indent
$\bullet$ Prime solutions, based on a computer search within the range of 5000, were found by Chen Shuwen in 2016 and 2023 \cite{Chen23}:
\begin{align}
&[29, 179, 311, 433, 503]^k = [83, 103, 359, 401, 509]^k \\
&[227, 349, 353, 521, 571]^k = [251, 281, 409, 503, 577]^k \\
&[101, 359, 389, 601, 691]^k = [139, 251, 509, 541, 701]^k 
\end{align}
\begin{mirrortype}
\end{mirrortype}
$\bullet$ \ref{kn1357}\quad $(k=-7,-5,-3,-1)$
\begin{relatedtype}
\end{relatedtype}
$\bullet$ \ref{k12345678}\quad $(k=1,2,3,4,5,6,7,8)$ \vspace{1ex}\\
\indent
$\bullet$ \ref{h12357}\quad\: $(h=1,2,3,5,7)$ \vspace{1ex}\\
\indent
$\bullet$ \ref{r1357}\quad $(r=1,3,5,7)$
\\
\subsubsection{\;\;(k$\;=\;$2, 4, 6, 8)}
\label{k2468}
\begin{notation}
\end{notation}
$\bullet$ Type: $(k=2,4,6,8)$ \qquad $\bullet$ Abbreviation: $[$k2468$]$
\begin{idealsolu}
\end{idealsolu}
$\bullet$ First known solution, by A.Letac in 1942  \cite{Letac42} \cite{Gloden44}:
\begin{align}
\label{k2468s23750}
& [12, 11881, 20231, 20885, 23738 ]^k \nonumber\\
& \quad = [ 436, 11857, 20449, 20667 , 23750  ]^k
\end{align}
\indent
$\bullet$ Second known solution, independently by G.Palama \cite{Palama1950} in 1950 and by C.J.Smyth in 1990 \cite{Smyth91}, both based on Letac's method:
\begin{align}
\label{k2468s529393533005}
& [ 87647378809, 243086774390, \nonumber\\
& \qquad\qquad 308520455907,441746154196, 527907819623 ] ^k \nonumber\\
& \quad = [ 133225698289, 189880696822, \nonumber \\
& \qquad\qquad 338027122801, 432967471212, 529393533005 ]^k
\end{align}
\indent
$\bullet$
In 1980, T.N. Sinha demonstrated that there exist an infinite number of distinct solutions using Letac's method \cite{Sinha84}. Subsequently, in 1990, C.J. Smyth employed elliptic curve theory to prove that Letac's method indeed produces infinitely many genuinely different solutions \cite{Smyth91}.\\
\indent
$\bullet$ Smallest two solutions, by Peter Borwein, Petr Lisonek and Colin Percival in 2000, based on computer search \cite{Borwein2003}:
\begin{align}
\label{k2468s313}
& [71, 131, 180, 307, 308 ]^k = [ 99, 100, 188, 301 , 313]^k\\
\label{k2468s515}
& [18, 245, 331, 471, 508 ]^k = [103, 189, 366, 452, 515]^k
\end{align}
\indent
$\bullet$ The third smallest solution was found by Chen Shuwen in 2023, by applying \eqref{n2m2k2468} based on a computer search \cite{Chen23}:
\begin{align}
\label{k2468s4827}
& [ 498, 3773, 3783, 4567, 4787 ]^k= [ 517, 3598, 4017, 4463, 4827]^k
\end{align}
\begin{mirrortype}
\end{mirrortype}
$\bullet$ \ref{kn2468}\quad $(k=-8,-6,-4,-2)$
\begin{relatedtype}
\end{relatedtype}
$\bullet$ \ref{k123456789}\quad $(k=1,2,3,4,5,6,7,8,9)$\vspace{1ex}\\
\indent
$\bullet$ \ref{h12468}\quad\: $(h=1,2,4,6,8)$
\\
\subsubsection{\;\;(k$\;=\;$1, 2, 3, 4, 5)}
\label{k12345}
\begin{notation}
\end{notation}
$\bullet$ Type: $(k=1,2,3,4,5)$ \qquad $\bullet$ Abbreviation: $[$k12345$]$
\begin{idealsolu}
\end{idealsolu}
$\bullet$ 
This type is PTE of degree 5. G.Tarry gave a two-parameter solution in 1912 \cite{Tarry1912}. Numerical examples are:
\begin{align}
\label{k12345s16}
& [ 0, 3, 5, 11, 13, 16 ]^k = [ 1, 1, 8, 8, 15, 15 ]^k \\
\label{k12345s22}
& [ 0, 5, 6, 16, 17, 22 ]^k = [ 1, 2, 10, 12, 20, 21 ] ^k 
\end{align}
\indent
$\bullet$ First known non-symmetric solution, by A.Gloden in 1944 \cite{Gloden44}:
\begin{align}
\label{k12345s86}
& [ 0, 19, 25, 57, 62, 86 ]^k  = [ 2, 11, 40, 42, 69, 85 ]^k 
\end{align}
\begin{idealchain}
\end{idealchain}
$\bullet$ It has been proved that there are solution chains for any length \cite{Hardy38}. A.Gloden gave method on how to obtained symmetric solution chains in 1940's \cite{Gloden44}.\mbox{Numerical} examples by A.Gloden's method:
\begin{align}
\label{k12345s96}
& [0, 23, 25, 71, 73, 96]^k = [1, 16, 33, 63, 80, 95]^k \nonumber\\
& \quad = [3, 11, 40, 56, 85, 93]^k = [5, 8, 45, 51, 88, 91]^k\\
& [0, 567, 644, 1778, 1855, 2422 ]^k = [ 2, 535, 678, 1744, 1887, 2420 ]^k \nonumber \\
& \quad= [ 7, 490, 728, 1694, 1932, 2415 ]^k = [ 15, 444, 782, 1640, 1978, 2407 ]^k \nonumber\\
& \quad= [ 28, 392, 847, 1575, 2030, 2394 ]^k= [ 42, 350, 903, 1519, 2072, 2380 ]^k \nonumber\\
& \quad= [ 62, 303, 970, 1452, 2119, 2360 ]^k=[ 70, 287, 994, 1428, 2135, 2352 ]^k \nonumber\\
& \quad= [ 95, 244, 1062, 1360, 2178, 2327 ]^k=[ 103, 232, 1082, 1340, 2190, 2319 ]^k \nonumber\\
& \quad= [ 119, 210, 1120, 1302, 2212, 2303 ]^k=[ 144, 180, 1175, 1247, 2242, 2278 ]^k
\end{align}
\begin{primesolu}
\end{primesolu}
$\bullet$ First known symmetric prime solution, by T.W.A. Baumann in 1999 \cite{Rivera65}:
\begin{align}
& [ 277, 937, 1069, 2389, 2521, 3181]^k \nonumber\\
& \quad= [409, 541, 1597, 1861, 2917, 3049 ]^k
\end{align}
\indent
$\bullet$ Non-symmetric prime solution, by Chen Shuwen in 2016 \cite{Chen23}:
\begin{align}
& [31, 3541, 6661, 13291, 14071, 17971]^k \nonumber\\
& \quad= [421, 2371, 9391, 9781, 16411, 17191]^k
\end{align}
\indent
$\bullet$ First known prime solution chains, by Chen Shuwen in 2023:
\begin{align}
\label{k12345s11827}
& [1459, 3943, 4159, 9127, 9343, 11827 ]^k  \nonumber\\
&\quad =[ 1567, 3187, 5023, 8263, 10099, 11719 ]^k \nonumber\\ 
&\quad = [ 1783, 2647, 5779, 7507, 10639, 11503 ]^k\\
& [ 31338661, 34136197, 34379461, 39974533, 40217797, 43015333]^k \nonumber\\
& \quad=[ 31460293, 33284773, 35352517, 39001477, 41069221, 42893701]^k \nonumber\\
& \quad=[ 31703557, 32676613, 36203941, 38150053, 41677381, 42650437]^k 
\end{align}
\begin{mirrortype}
\end{mirrortype}
$\bullet$ \ref{kn12345}\quad $(k=-5,-4,-3,-2,-1)$
\begin{relatedtype}
\end{relatedtype}
$\bullet$ \ref{k24}\quad $(k=2,4)$\vspace{1ex}\\
\indent
$\bullet$ \ref{h123457}\quad $(h=1,2,3,4,5,7)$
\\
\subsubsection{\;\;(k$\;=\;$1, 2, 3, 4, 6)}
\label{k12346}
\begin{notation}
\end{notation}
$\bullet$ Type: $(k=1,2,3,4,6)$ \qquad $\bullet$ Abbreviation: $[$k12346$]$
\begin{idealsolu}
\end{idealsolu}
$\bullet$ First known solution, based on parametric method, by Chen Shuwen in 1995 \cite{Chen01,Chen23}:
\begin{align}
& [116, 166, 206, 331, 336, 411]^k = [ 131, 136, 236, 291, 366, 406 ]^k \\
& [ 11, 23, 24, 47, 64, 70 ]^k = [ 14, 15, 31, 44, 67, 68 ]^k \\
& [ 42, 48, 59, 74, 76, 85 ]^k = [ 43, 46, 62, 69, 80, 84 ]^k \\
& [ 23, 31, 60, 80, 91, 103 ] ^k= [ 25, 28, 65, 73, 96, 101 ]^k \\
& [ 19 , 29 , 89 , 123 , 127 , 152 ] ^k= [ 23 , 24 , 97 , 107 , 139 , 149 ]^k \\
& [ 73 , 85 , 116 , 146 , 147 , 164 ]^k = [ 74 , 83 , 120 , 136 , 157 , 161 ]
\end{align}
\indent
$\bullet$ Smallest solutions, based on computer search, by Chen Shuwen in 2017:
\begin{align}
\label{k12346s60}
& [1, 14, 17, 46, 48, 60 ] ^k = [ 4, 6, 25, 38, 56, 57 ]^k \\
& [ 10, 21, 25, 44, 62, 68 ] ^k = [ 13, 14, 32, 40, 65, 66 ]^k
\end{align}
\begin{mirrortype}
\end{mirrortype}
$\bullet$ \ref{kn12346}\quad $(k=-6,-4,-3,-2,-1)$
\begin{relatedtype}
\end{relatedtype}
$\bullet$ \ref{r12346}\quad $(r=1,2,3,4,6)$
\\
\subsubsection{\;\;(k$\;=\;$1, 2, 3, 4, 7)}
\label{k12347}
\begin{notation}
\end{notation}
$\bullet$ Type: $(k=1,2,3,4,7)$ \qquad $\bullet$ Abbreviation: $[$k12347$]$
\begin{idealsolu}
\end{idealsolu}
$\bullet$ First known solution, based on computer search, by Chen Shuwen in 2022 \cite{Chen23}:
\begin{align}
\label{k12347s366}
[ 34, 133, 165, 299, 332, 366 ]^k = [ 35, 124, 177, 286, 353, 354 ]^k
\end{align}
\begin{mirrortype}
\end{mirrortype}
$\bullet$ \ref{kn12347}\quad $(k=-7,-4,-3,-2,-1)$
\\
\subsubsection{\;\;(k$\;=\;$1, 2, 3, 5, 7)}
\label{k12357}
\begin{notation}
\end{notation}
$\bullet$ Type: $(k=1,2,3,5,7)$ \qquad $\bullet$ Abbreviation: $[$k12357$]$
\begin{idealsolu}
\end{idealsolu}
$\bullet$ First known solution, based on parametric method, by Chen Shuwen in 1999 \cite{Chen01,Chen23}:
\begin{align}
[ 87, 233, 264, 396, 496, 540 ]^k = [ 90, 206, 309, 366, 522, 523 ]^k 
\end{align}
\indent
$\bullet$ Solutions based on selective search, by Chen Shuwen in 2023:
\begin{align}
\label{k12357s257}
&[ 9, 73, 79, 207, 211, 257 ]^k = [ 19, 39, 113, 177, 241, 247 ]^k \\
\label{k12357s381}
&[ 11, 69, 165, 295, 299, 381 ]^k = [ 29, 39, 201, 241, 335, 375 ]^k \\
&[31, 97, 197, 347, 413, 469 ] ^k= [ 53, 61, 227, 317, 439, 457 ]^k \\
&[ 141, 211, 249, 363, 407, 503 ] ^k= [ 155, 175, 305, 315, 423, 501 ]^k \\
&[ 13, 141, 187, 443, 459, 561 ]^k = [ 43, 67, 261, 369, 531, 533 ]^k 
\end{align}
\begin{mirrortype}
\end{mirrortype}
$\bullet$ \ref{kn12357}\quad $(k=-7,-5,-3,-2,-1)$
\\
\subsubsection{\;\;(k$\;=\;$1, 2, 4, 6, 8)}
\label{k12468}
\begin{notation}
\end{notation}
$\bullet$ Type: $(k=1,2,4,6,8)$ \qquad $\bullet$ Abbreviation: $[$k12468$]$
\begin{idealsolu}
\end{idealsolu}
$\bullet$ First known solution, by Chen Shuwen in 1995 \cite{Chen01,Chen23}:
\begin{align}
\label{k12468s36}
& [1, 7, 17, 30, 31, 36 ]^k = [ 3, 4, 19, 27, 34, 35]^k 
\end{align}
\indent
$\bullet$ Ideal solutions, by Chen Shuwen in 1997:
\begin{align}
& [ 64, 169, 184, 277, 347, 417 ]^k = [ 69, 139, 233, 248, 353 , 416 ]^k \\
& [ 111, 242, 243, 445, 446, 596 ]^k = [ 148, 149, 353, 354, 485 , 594 ]^k
\end{align}
\indent
$\bullet$ Ideal solutions, by Chen Shuwen in 2022: 
\begin{align}
& [28,55,58,117,139,151]^k=[36,37,70,113,143,149]^k\\	
& [51,281,445,769,872,992]^k=[89,192,547,680,941,961]^k\\
& [29,291,1037,1803,1850,2132]^k \nonumber \\
& \quad =[122,169,1095,1681,1972,2103]^k\\
& [169,1137,2075,3491,4004,4488]^k \nonumber \\
& \quad =[312,825,2413,3179,4316,4319]^k\\
& [797,2799,3329,4685,6332,7192]^k \nonumber \\
& \quad=[841,2488,3844,4393,6395,7173]^k	\\	
& [893,2993,3219,5591,6606,7772]^k \nonumber \\
& \quad=[1102,2117,4489,4779,6879,7708]^k
\end{align}
\begin{mirrortype}
\end{mirrortype}
$\bullet$ \ref{kn12468}\quad $(k=-8,-6,-4,-2,-1)$
\\ 
\subsubsection{\;\;(k$\;=\;$1, 3, 5, 7, 9)}
\label{k13579}
\begin{notation}
\end{notation}
$\bullet$ Type: $(k=1,3,5,7,9)$ \qquad $\bullet$ Abbreviation: $[$k13579$]$
\begin{idealsolu}
\end{idealsolu}
$\bullet$ First known solution, based on computer search with Identity \eqref{identity13579}, by Chen Shuwen in 2000 \cite{Chen01,Chen23}:
\begin{align}
\label{k13579s323}
& [7, 91, 173, 269, 289, 323 ]^k = [ 29, 59, 193, 247, 311, 313]^k 
\end{align}
\indent
$\bullet$ Second known solutions, by Jarosław Wróblewski in 2009 \cite[p.21-23]{JW09}:
\begin{align}
\label{k13579s407}
& [ 23, 163, 181, 341, 347, 407 ]^k = [ 37, 119, 221, 311, 371, 403 ]^k \\
\label{k13579s463}
& [ 43, 161, 217, 335, 391, 463 ]^k = [ 85, 91, 283, 287, 403, 461 ]^k \\
\label{k13579s1293}
& [ 57, 399, 679, 995, 1167, 1293 ]^k = [ 115, 299, 767, 925, 1205, 1279 ]^k
\end{align}
\indent
$\bullet$ Only the above four ideal non-negative solutions are known so far. In addition, Jarosław Wróblewski found one integer solution in 2009 \cite[p.24]{JW09}: $[ -13, 365, 689, 1111, 1115, 1325 ] = [ 23, 305, 731, 1037, 1177, 1319 ]$.
\begin{mirrortype}
\end{mirrortype}
$\bullet$ \ref{kn13579}\quad $(k=-9,-7,-5,-3,-1)$
\begin{relatedtype}
\end{relatedtype}
$\bullet$ \ref{h123579}\quad $(h=1,2,3,5,7,9)$\vspace{1ex}
\\
\subsubsection{\;\;(k$\;=\;$2, 4, 6, 8, 10)}
\label{k246810}
\begin{notation}
\end{notation}
$\bullet$ Type: $(k=2,4,6,8,10)$ \qquad $\bullet$ Abbreviation: $[$k246810$]$
\begin{idealsolu}
\end{idealsolu}
$\bullet$ First known solution, by Nuutti Kuosa, Jean-Charles Meyrignac and Chen Shuwen in 1999 \cite{Chen22,Chen23}: 
\begin{align}
\label{k246810s151}
&[ 22, 61, 86, 127, 140, 151 ]^k = [ 35, 47, 94, 121, 146, 148 ]^k
\end{align}
\indent
$\bullet$ Second known solution, by David Broadhurst in 2007 \cite{Broadhurst2007} \cite{Broadhurst2007b}:
\begin{align}
\label{k246810s2058}
& [ 257, 891, 1109, 1618, 1896, 2058 ]^k \nonumber\\
& \quad = [ 472, 639, 1294, 1514, 1947, 2037]^k
\end{align}
\indent
$\bullet$ In 2008, A.Choudhry and Jarosław Wróblewski proved that this system has infinitely many solutions and give a method to produce new numerical solutions \cite{CW2008}. The smallest new solutions found by their method are:
\begin{align}
\label{k246810s1511}
&[ 107, 622, 700, 1075, 1138, 1511 ]^k \nonumber \\
& \quad = [ 293, 413, 886, 953, 1180, 1510 ]^k \\
&[929, 7043, 7115, 11632, 12638, 14770 ]^k \nonumber \\
& \quad = [3455, 4054, 9718, 10112, 13165, 14693]^k \\
&[349, 9611, 12734, 18372, 18687, 23742 ]^k \nonumber \\
& \quad = [3309, 7713, 14714, 16426, 19653, 23708 ]^k 
\end{align}
\begin{mirrortype}
\end{mirrortype}
$\bullet$ \ref{kn246810}\quad $(k=-10,-8,-6,-4,-2)$
\begin{relatedtype}
\end{relatedtype}
$\bullet$ \ref{k1234567891011}\quad $(k=1,2,3,4,5,6,7,8,9,10,11)$\vspace{1ex}\\
\indent
$\bullet$ \ref{h1246810}\quad $(h=1,2,4,6,8,10)$
\\
\subsubsection{\;\;(k$\;=\;$1, 2, 3, 4, 5, 6)}
\label{k123456}
\begin{notation}
\end{notation}
$\bullet$ Type: $(k=1,2,3,4,5,6)$ \qquad $\bullet$ Abbreviation: $[$k123456$]$
\begin{idealsolu}
\end{idealsolu}
$\bullet$ This type is PTE of degree 6. The first known symmetric solution was discovered by E.B.Escott in 1910 \cite [p.616] {Dorwart37} \cite{Dorwart47}:
\begin{align}
\label{k123456s102}
&[ 0, 18, 27, 58, 64, 89, 101 ]^k = [ 1, 13, 38, 44, 75, 84, 102 ]^k
\end{align}
\indent
$\bullet$ J.Chernick gave a two-parameter symmetric solution in 1937 \cite{Chernick37}. Numerical example:
\begin{align}
& [ 0, 59, 68, 142, 181, 221, 267 ]^k = [ 1, 47, 87, 126, 200 , 209, 268 ]^k
\end{align}
\indent
$\bullet$ First known non-symmetric solutions, based on parametric method, by Chen Shuwen in 1997 \cite{Chen01,Chen23}:
\begin{align}
\label{k123456s84}
& [ 0, 18, 19, 50, 56, 79, 81 ]^k = [ 1, 11, 30, 39, 68, 70 , 84 ] ^k\\
\label{k123456s204}
& [0,14,43,141,156,193,199]^k=[3,9,46,133,175,176,204]^k 
\end{align}
\indent
$\bullet$ Non-symmetric solutions, based on computer search, by Chen Shuwen in 2001-2019:
\begin{align}
& [0,19,50,86,93,131,133]^k=[1,16,63,65,110,119,138]^k \\
& [0,24,31,93,98,139,146]^k=[3,13,44,76,119,126,150]^k \\
& [0,58,63,137,144,197,226]^k=[1,44,93,102,175,182,228]^k \\
& [0,14,87,149,158,211,232]^k=[2,11,92,133,175,204,234]^k \\
& [0,41,59,157,167,226,240]^k=[2,31,72,136,195,209,245]^k \\
\label{k123456s264}
& [0,57,64,140,194,247,257]^k=[2,40,92,119,217,225,264]^k \\
& [0,49,93,176,188,265,274]^k=[1,44,104,153,210,253,280]^k \\
& [0,21,69,166,194,235,284]^k=[4,14,75,154,216,221,285]^k \\
& [0,41,43,150,178,237,287]^k=[7,17,66,133,195,230,288]^k \\
& [0,43,70,167,197,268,321]^k=[13,15,112,121,230,252,323]^k \\
\label{k123456s391}
& [0,40,87,174,281,331,389]^k=[9,21,110,155,304,312,391]^k \\
& [0,79,100,201,229,392,393]^k=[1,68,117,184,240,385,399]^k \\
\label{k123456s399}
& [0,54,59,183,256,317,398]^k=[13,18,92,164,275,306,399]^k 
\end{align}
\begin{primesolu}
\end{primesolu}
\indent
$\bullet$ First known prime solution, based on non-symmetric solution \eqref{k123456s84}, by Qiu Min in 2016 \cite{Qiu2016}:
\begin{equation}
\begin{aligned}
& [83, 191, 197, 383, 419, 557, 569 ]^k \\
& \quad = [ 89, 149, 263, 317, 491, 503, 587]^k 
\end{aligned}
\end{equation}
\indent
$\bullet$ Symmetric prime solution, based on \eqref{k123456s102}, by Chen Shuwen in 2016:
\begin{equation}
\begin{aligned}
& [ 18443, 90263, 126173, 249863, 273803, 373553, 421433 ]^k\\
& \quad = [ 22433, 70313, 170063, 194003, 317693, 353603, 425423]^k 
\end{aligned}
\end{equation}
\begin{mirrortype}
\end{mirrortype}
$\bullet$ \ref{kn123456}\quad $(k=-6,-5,-4,-3,-2,-1)$
\begin{relatedtype}
\end{relatedtype}
$\bullet$ \ref{h1234568}\quad $(h=1,2,3,4,5,6,8)$\vspace{1ex}\\
\indent
$\bullet$ \ref{r123456}\quad $(r=1,2,3,4,5,6)$\vspace{1ex}\\
\indent
$\bullet$ \ref{r135}\quad $(r=1,3,5)$
\\
\subsubsection{\;\;(k$\;=\;$1, 2, 3, 4, 5, 7)}
\label{k123457}
\begin{notation}
\end{notation}
$\bullet$ Type: $(k=1,2,3,4,5,7)$ \qquad $\bullet$ Abbreviation: $[$k123457$]$
\begin{idealsolu}
\end{idealsolu}
$\bullet$ The first known solutions were obtained by Chen Shuwen in 1995, based on parametric method \cite{Chen01,Chen23}.
\begin{align}
& [4727, 4972, 5267, 5857, 5972, 6557, 6667 ]^k \nonumber \\
&\quad = [ 4772, 4867 ,5477, 5567, 6172, 6457, 6707]^k \\
& [1091, 4226, 4397, 10553, 11579, 17279, 20414 ]^k \nonumber\\
&\quad = [ 1889, 2003 , 7589, 7646, 12947, 16994, 20471]^k 
\end{align}
\indent
$\bullet$ The second known solutions were obtained by Chen Shuwen in 1999, still utilizing the parametric method.
\begin{align}
& [43, 169, 295, 607, 667, 1105, 1189 ]^k \nonumber\\
&\quad = [ 79, 97, 379, 505, 727, 1093, 1195]^k \\
& [535, 656, 809, 1099, 1168, 1451, 1513 ]^k \nonumber\\
&\quad = [560, 601, 919, 953, 1264, 1403, 1531]^k 
\end{align}
\indent
$\bullet$ The smallest solutions were obtained by Chen Shuwen in 2022, through a computer search.
\begin{align}
\label{k123457s152}
& [2,31,33,84,89,136,151]^k=[4,19,51,66,101,133,152]^k\\
& [33,59,62,93,130,151,155]^k=[34,51,75,85,137,143,158]^k\\
& [79,97,94,135,137,162,173]^k=[82,85,107,123,149,157,174]^k\\
& [67,85,104,152,171,205,219]^k=[72,75,115,139,184,197,221]^k\\
& [55,83,102,145,177,213,218]^k=[57,75,122,125,193,198,223]^k\\
& [21,51,80,139,176,229,235]^k=[25,40,99,119,191,216,241]^k\\
& [17,48,71,156,159,218,249]^k=[26,29,93,123,192,204,251]^k\\
& [43,90,89,177,179,239,262]^k=[47,67,129,134,218,219,265]^k
\end{align}
\begin{mirrortype}
\end{mirrortype}
$\bullet$ \ref{kn123457}\quad $(k=-7,-5,-4,-3,-2,-1)$
\\
\subsubsection{\;\;(k$\;=\;$1, 2, 3, 5, 7, 9)}
\label{k123579}
\begin{notation}
\end{notation}
$\bullet$ Type: $(k=1,2,3,5,7,9)$ \qquad $\bullet$ Abbreviation: $[$k123579$]$
\begin{idealsolu}
\end{idealsolu}
$\bullet$ The first known solutions, based on a selective search with Identity \eqref{identity13579}, were found by Chen Shuwen in 2023 \cite{Chen23}.
\begin{align}
\label{k123579s377}
& [7, 89, 91, 251, 253, 341, 373 ]^k = [ 29, 31, 151, 193, 311, 313, 377]^k \\
\label{k123579s941}
&[269, 397, 409, 683, 743, 901, 923 ] ^k \nonumber \\ 
&\quad = [ 299, 313, 493, 613, 827, 839, 941]^k 
\end{align}
\begin{mirrortype}
\end{mirrortype}
$\bullet$ \ref{kn123579}\quad $(k=-9,-7,-5,-3,-2,-1)$
\\
\subsubsection{\;\;(k$\;=\;$1, 2, 3, 4, 5, 6, 7)}
\label{k1234567}
\begin{notation}
\end{notation}
$\bullet$ Type: $(k=1,2,3,4,5,6,7)$ \qquad $\bullet$ Abbreviation: $[$k1234567$]$
\begin{idealsolu}
\end{idealsolu}
$\bullet$ This type is PTE of degree 7. The first known symmetric solution was found by G.Tarry in 1912 \cite{Tarry1912, Gloden44}:
\begin{align}
\label{k1234567s50}
& [0, 4, 9, 23, 27, 41, 46, 50 ]^k = [ 1, 2, 11, 20, 30, 39 , 48, 49]^k 
\end{align}
\indent
$\bullet$ 
J.Chernick gave a two-parameter symmetric solution of this type in 1937 \cite{Chernick37}. Numerical examples by his method are:
\begin{align}
\label{k1234567s68}
& [ 0, 9, 10, 27, 41, 58, 59, 68 ]^k = [ 2, 3, 19, 20, 48, 49, 65, 66 ]^k\\
\label{k1234567s100}
& [ 0, 14, 19, 43, 57, 81, 86, 100 ]^k = [ 1, 9, 30, 32, 68, 70, 91, 99 ]^k
\end{align}
\indent
$\bullet$ Ajai Choudhry obtained four one-parameter symmetric solutions in 2022 \cite{Choudhry22}. \vspace{+1ex}\\
\indent
$\bullet$ First known non-symmetric solutions, based on parametric method \eqref{n3m4k1to7}, by Chen Shuwen in 1997 \cite{Chen01,Chen23}:
\begin{align}
\label{k1234567s96}
& [0,14,15,43,46,73,89,96] ^k= [1,8,24,33,54,70,91,95] ^k\\
\label{k1234567s321}
& [ 0, 21, 82, 149, 155, 262, 278, 321 ]^k \nonumber \\
&\quad = [ 2, 17, 91, 126 , 174, 253, 285, 320 ] ^k
\end{align}
\indent
$\bullet$ It should be noted that each non-symmetric solution has an equivalent solution which can be obtained through simple transformation (subtracted by the largest item). For example, $[ 0, 7, 23, 50, 53, 81, 82, 96 ] ^k= [ 1, 5, 26, 42, 63, 72, 88, 95 ] ^k$ and \eqref{k1234567s96} are equivalent.\vspace{1ex}\\
\indent
$\bullet$ New non-symmetric solutions, based on computer search, by Chen Shuwen in 2019 and 2023:
\begin{align}
\label{k1234567s249}
&[0,16,63,121,127,202,214,249]^k \nonumber \\
&\quad =[6,7,81,88,154,184,225,247]^k\\
\label{k1234567s277}
&[0,28,44,137,149,211,246,277]^k \nonumber\\
&\quad =[4,16,57,116,182,189,253,275]^k \\
\label{k1234567s303}
&[0,41,46,115,157,224,282,303]^k \nonumber \\
&\quad =[3,24,72,95,170,217,286,301]^k \\	
\label{k1234567s320}
&[0,21,75,149,158,254,258,321]^k \nonumber \\
&\quad =[6,11,93,114,189,230,273,320 ]^k \\
&[0,75,87,212,233,336,411,458]^k \nonumber \\
&\quad =[3,51,138,152,296,297,420,455]^k \\
\label{k1234567s469}
&[0,50,111,233,236,307,438,469]^k \nonumber \\
&\quad =[1,46,119,203,282,285,440,468]^k											
\end{align}
\begin{primesolu}
\end{primesolu}
$\bullet$ First known prime solution, based on symmetric solution \eqref{k1234567s100}, by T.W.A. Baumann in 1999 \cite{Rivera65}:
\begin{align}		
\label{k1234567s171251}														
&[12251, 34511, 42461, 80621, 102881, 141041, 148991, 171251]^k \nonumber \\
&\quad =[13841, 26561, 59951, 63131, 120371, 123551, 156941, 169661]^k 	
\end{align}
\indent
$\bullet$ Non-symmetric prime solution, based on \eqref{k1234567s321}, by Chen Shuwen in 2016:
\begin{align}																
&[10289, 14699, 27509, 41579, 42839, 65309, 68669, 77699 ]^k \nonumber \\
&\quad =[10709, 13859, 29399, 36749, 46829, 63419, 70139, 77489]^k								
\end{align}
\begin{mirrortype}
\end{mirrortype}
$\bullet$ \ref{kn1234567}\quad $(k=-7,-6,-5,-4,-3,-2,-1)$
\begin{relatedtype}
\end{relatedtype}
$\bullet$ \ref{k246}\quad $(k=2,4,6)$ \vspace{1ex}\\
\indent
$\bullet$ \ref{h12345679}\quad $(h=1,2,3,4,5,6,7,9)$ \vspace{1ex}\\
\indent
$\bullet$ \ref{r1234567}\quad $(r=1,2,3,4,5,6,7)$ 
\\
\subsubsection{\;\;(k$\;=\;$1, 2, 3, 4, 5, 6, 8)}
\label{k1234568}
\begin{notation}
\end{notation}
$\bullet$ Type: $(k=1,2,3,4,5,6,8)$ \qquad $\bullet$ Abbreviation: $[$k1234568$]$
\begin{idealsolu}
\end{idealsolu}
$\bullet$ The first known solution, utilizing a parametric method, was discovered by Chen Shuwen in 1999 \cite{Chen01,Chen23}.
\begin{align}
\label{k1234568s501}
& [77, 159, 169, 283, 321, 443, 447, 501 ] ^k \nonumber \\
& \quad = [ 79, 137, 213, 237, 363, 399, 481, 491]^k 
\end{align}
\indent
$\bullet$ Using a computer search with \eqref{n3m4k1234568} in 2023, Chen Shuwen confirmed that \eqref{k1234568s501} is the smallest solution.
\begin{mirrortype}
\end{mirrortype}
$\bullet$  \ref{kn1234568}\quad $(k=-8,-6,-5,-4,-3,-2,-1)$
\\
\subsubsection{\;\;(k$\;=\;$1, 2, 3, 4, 5, 6, 7, 8)}
\label{k12345678}
\begin{notation}
\end{notation}
$\bullet$ Type: $(k=1,2,3,4,5,6,7,8)$ \qquad $\bullet$ Abbreviation: $[$k12345678$]$
\begin{idealsolu}
\end{idealsolu}
$\bullet$ This type is PTE of degree 8. So far only two ideal symmetric solutions, based on \eqref{k1357s99} and \eqref{k1357s174}, were found by A.Latec in 1942 \cite{Letac42} \cite[p.48]{Gloden44} \cite{Dorwart47}:
\begin{align}
\label{k12345678s198}
&[0, 24, 30, 83, 86, 133, 157, 181, 197]^k\nonumber \\
&\quad = [ 1, 17, 41, 65, 112, 115, 168, 174, 198]^k\\
\label{k12345678s348}
& [ 0, 26, 42, 124, 166, 237, 293, 335, 343 ]^k\nonumber \\
&\quad = [ 5, 13, 55, 111, 182, 224, 306, 322, 348 ]^k
\end{align}
\begin{primesolu}
\end{primesolu}
\indent
$\bullet$ First known prime solution, based on \eqref{k12345678s198}, by Chen Shuwen in 2016 \cite{Chen23}:
\begin{align}
\label{k12345678s15820583}	
& [ 3522263, 4441103, 5006543, 7904423, 9388703,\nonumber \\
&\qquad \qquad 11897843, 13876883, 15361163, 15643883 ] ^k\nonumber \\
&\quad = [ 3698963, 3981683, 5465963, 7445003, 9954143,\nonumber \\
&\qquad \qquad 11438423, 14336303, 14901743, 15820583 ]^k
\end{align}
\begin{relatedtype}
\end{relatedtype}
$\bullet$ \ref{kn12345678}\quad $(k=-8,-7,-6,-5,-4,-3,-2,-1)$
\\
\subsubsection{\;\;(k$\;=\;$1, 2, 3, 4, 5, 6, 7, 8, 9)}
\label{k123456789}
\begin{notation}
\end{notation}
$\bullet$ Type: $(k=1,2,3,4,5,6,7,8,9)$ \qquad $\bullet$ Abbreviation: $[$k123456789$]$
\begin{idealsolu}
\end{idealsolu}
$\bullet$ This type is PTE of degree 9. So far, only ideal symmetric solutions have been discovered. \\[1mm]
\indent
$\bullet$ The first known solution was obtained by A. Letac in 1942 \cite{Letac42} \cite{Gloden44}, and it was based on \eqref{k2468s23750}.
\begin{align}
\label{k123456789s47500}
& [ 0, 3083, 3301, 11893, 23314, 24186, 35607, 44199, 44417, 47500 ]^k \nonumber \\
& \quad = [ 12, 2865, 3519, 11869, 23738, 23762, 35631, 43981, 44635, 47488 ]^k 
\end{align}
\indent
$\bullet$ The second known solution, which is based on \eqref{k2468s529393533005}, was independently discovered by G. Palama in 1950 \cite{Palama1950} and C.J. Smyth in 1990 \cite{Smyth91}.
\begin{align}
\label{k123456789s1058787066010}
& [0, 96426061793, 191366410204, 339512836183, \nonumber \\
& \qquad\qquad 396167834716, 662619231294, 719274229827,  \nonumber \\
& \qquad\qquad 867420655806, 962361004217, 1058787066010]^k \nonumber \\
& \quad = [1485713382, 87647378809, 220873077098, 286306758615, \nonumber \\
& \qquad\qquad 441746154196, 617040911814, 772480307395, \nonumber \\
& \qquad\qquad  837913988912, 971139687201, 1057301352628]^k
\end{align}
\indent
$\bullet$ The two smallest solutions, based on \eqref{k2468s313} and \eqref{k2468s515}, were discovered by Peter Borwein, Petr Lisonek and Colin Percival in 2000 \cite{Borwein2003}. 
\begin{align}
\label{k123456789s626}
& [0, 12, 125, 213, 214, 412, 413, 501, 614, 626 ]^k \nonumber \\
& \quad =  [ 5, 6, 133, 182, 242, 384, 444, 493, 620, 621 ]^k \\
\label{k123456789s1030}
& [ 0, 63, 149, 326, 412, 618, 704, 881, 967, 1030 ]^k\nonumber \\
& \quad = [ 7, 44, 184, 270, 497, 533, 760, 846, 986, 1023 ]^k 
\end{align}
\indent
$\bullet$ The third smallest known solution, based on \eqref{k2468s4827}, was discovered by Chen Shuwen in 2023. \cite{Chen23}
\begin{align}
\label{k123456789s9654}
& [ 0, 364, 810, 1229, 4310, 5344, 8425, 8844, 9290, 9654  ]^k \nonumber \\
& \quad = [ 40, 260, 1044, 1054, 4329, 5325, 8600, 8610, 9394, 9614 ]^k 
\end{align}
\begin{primesolu}
\end{primesolu}
$\bullet$ 
The first known prime solution, based on \eqref{k123456789s626}, was obtained by Chen Shuwen in 2016.
\begin{align}
&[ 2589701, 2972741, 6579701, 9388661, 9420581,  \nonumber \\ 
&\qquad\qquad 15740741, 15772661, 18581621, 22188581, 22571621 ]^k  \nonumber \\
&\quad= [ 2749301, 2781221, 6835061, 8399141, 10314341, \nonumber \\
&\qquad\qquad  14846981, 16762181, 18326261, 22380101, 22412021 ]^k
\end{align}
\indent
$\bullet$ The second known prime solution, based on \eqref{k123456789s9654}, was obtained by Chen Shuwen in 2023.
\begin{align}
&[21270551 , 22323239 , 23613071 , 24824819, 33735071,  \nonumber \\
&\qquad\qquad 36725399, 45635651, 46847399, 48137231 , 49189919]^{k} \nonumber \\
&\quad = [ 21386231, 22022471, 24289799, 24318719, 33790019, \nonumber \\
&\qquad\qquad 36670451, 46141751, 46170671, 48437999, 49074239]^{k}
\end{align}
\begin{mirrortype}
\end{mirrortype}
$\bullet$ \ref{kn123456789}\quad $(k=-9,-8,-7,-6,-5,-4,-3,-2,-1)$
\begin{relatedtype}
\end{relatedtype}
$\bullet$ \ref{k2468}\quad $(k=2,4,6,8)$
\\
\subsubsection{\;\;(k$\;=\;$1, 2, 3, 4, 5, 6, 7, 8, 9, 10, 11)}
\label{k1234567891011}
\begin{notation}
\end{notation}
$\bullet$ Type: $(k=1, 2, 3, 4, 5, 6, 7, 8, 9, 10, 11)$ \vspace{1ex}\\
\indent
$\bullet$ Abbreviation: $[$k1234567891011$]$
\begin{idealsolu}
\end{idealsolu}
$\bullet$ This type is PTE of degree 11. At present only ideal symmetric solutions are known. \\[1mm]
\indent
$\bullet$ First known solution, based on \eqref{k246810s151}, by Nuutti Kuosa, Jean-Charles Meyrignac and Chen Shuwen in 1999 \cite{Chen22,Chen23}:
\begin{align}
\label{k1234567891011s302}
& [ 0, 11, 24, 65, 90, 129, 173, 212, 237, 278, 291, 302 ]^k \nonumber \\
& \quad = [ 3, 5, 30, 57, 104, 116, 186, 198, 245, 272, 297, 299]^k
\end{align}
\indent
$\bullet$  Second known solution, based on \eqref{k246810s2058}, by David Broadhurst in 2007 \cite{{Broadhurst2007},{Broadhurst2007b}}:
\begin{align}
\label{k1234567891011s4116}
& [0, 162, 440, 949, 1167, 1801, 2315, 2949, 3167, 3676, 3954, 4116 ]^k \nonumber \\
& \quad = [ 21, 111, 544, 764, 1419, 1586, 2530, 2697, 3352, 3572, 4005, 4095]^k
\end{align}
\indent
$\bullet$ In 2008, A.Choudhry and Jarosław Wróblewski proved that this system has infinitely many solutions \cite{CW2008}. The smallest new solution found by their method is based on \eqref{k246810s1511}:
\begin{align}
\label{k1234567891011s30226}
& [0, 373, 436, 811, 889, 1404, 1618, 2133, 2211, 2586, 2649, 3022]^k\nonumber\\
& \quad = [ 1, 331, 558, 625, 1098, 1218, 1804, 1924, 2397, 2464, 2691, 3021]^k
\end{align}
\begin{primesolu}
\end{primesolu}
$\bullet$ 10 prime solutions were found by Jarosław Wróblewski in 2023, all based on \eqref{k1234567891011s302}. The smallest one is:
\begin{align}
\label{k1234567891011s40538248081}	
&[32058169621, 32367046651, 32732083141, 33883352071, \nonumber \\ 
&\qquad\qquad 34585345321, 35680454791, 36915962911, 38011072381,\nonumber \\ &\qquad\qquad 38713065631, 39864334561, 40229371051, 40538248081]^k\nonumber\\
&\quad=[ 32142408811, 32198568271, 32900561521, 33658714231,\nonumber\\ &\qquad\qquad 34978461541, 35315418301, 37280999401, 37617956161,\nonumber\\ &\qquad\qquad 38937703471, 39695856181, 40397849431, 40454008891]^k
\end{align}
\begin{mirrortype}
\end{mirrortype}
$\bullet$ \ref{kn1234567891011}\quad $(k=-11,-10,-9,-8,-7,-6,-5,-4,-3,-2,-1)$
\begin{relatedtype}
\end{relatedtype}
$\bullet$ \ref{k246810}\quad $(k=2,4,6,8,10)$\vspace{1ex}\\
\indent
$\bullet$ \ref{r1234567891011}\quad $(r=1,2,3,4,5,6,7,8,9,10,11)$
\\
\subsection{GPTE with all k<0}
Ideal non-negative integer solutions of the GPTE problem have been identified for 42 types with all \( k < 0 \).
\subsubsection{\;\;(k$\;=\;$-1)}
\label{kn1}
\begin{notation}
\end{notation}
$\bullet$ Type: $(k=-1)$ \qquad $\bullet$ Abbreviation: $[$kn1$]$
\begin{idealsolu}
\end{idealsolu}
$\bullet$ Smallest known solution, based on \eqref{k1s3}:
\begin{align}
&[ 2, 6 ] = [ 3,3 ]^k
\end{align}
\begin{idealchain}
\end{idealchain}
$\bullet$ Based on the solution chains of $(k=1)$, there are solution chains of $(k=-1)$ for any length. For example:
\begin{align}
& [60,420]^k=[70,210]^k=[84,140]^k=[105,105]^k
\end{align}
\begin{mirrortype}
\end{mirrortype}
$\bullet$ \ref{k1}\quad $(k=1)$
\\
\subsubsection{\;\;(k$\;=\;$-2)}
\label{kn2}
\begin{notation}
\end{notation}
$\bullet$ Type: $(k=-2)$ \qquad $\bullet$ Abbreviation: $[$kn2$]$
\begin{idealsolu}
\end{idealsolu}
$\bullet$ Smallest known solution, based on \eqref{k2s11}:
\begin{align}
&[ 10, 55 ] ^k= [ 11, 22 ] ^k
\end{align}
\begin{idealchain}
\end{idealchain}
$\bullet$ Based on the solution chains of $(k=2)$, there are solution chains of $(k=-2)$ for any length. For example:
\begin{align}
& [646043435, 4522304045]^k=[660561265, 2556084895]^k \nonumber\\
&\quad =[691646501, 1679712931]^k =[744176615, 1250850055]^k \nonumber\\
&\quad =[904460809, 904460809] ^k
\end{align}
\begin{mirrortype}
\end{mirrortype}
$\bullet$ \ref{k2}\quad $(k=2)$ 
\\
\subsubsection{\;\;(k$\;=\;$-3)}
\label{kn3}
\begin{notation}
\end{notation}
$\bullet$ Type: $(k=-3)$ \qquad $\bullet$ Abbreviation: $[$kn3$]$
\begin{idealsolu}
\end{idealsolu}
$\bullet$ Smallest known solution, based on \eqref{k3s12}:
\begin{align}
&[  15, 180 ] ^k = [ 18, 20  ] ^k
\end{align}
\begin{idealchain}
\end{idealchain}
$\bullet$ Ideal solution chain, based on \eqref{k3s23237} :
\begin{align}
& [8627408354744952300, 46894757412680340724]^k \nonumber\\
&\quad =[8691367724755417350, 28363764564121173825]^k \nonumber\\
&\quad =[8878436135483102595, 19347142244664008550]^k \nonumber\\
&\quad =[9167089850437077900, 15493862581282050900]^k
\end{align}
\begin{mirrortype}
\end{mirrortype}
$\bullet$ \ref{k3}\quad $(k=3)$
\\
\subsubsection{\;\;(k$\;=\;$-4)}
\label{kn4}
\begin{notation}
\end{notation}
$\bullet$ Type: $(k=-4)$ \qquad $\bullet$ Abbreviation: $[$kn4$]$
\begin{idealsolu}
\end{idealsolu}
$\bullet$ Smallest known solution, based on \eqref{k4s158}:
\begin{align}
&[525749, 1407938 ]^k = [ 619913, 624574  ] ^k
\end{align}
\begin{mirrortype}
\end{mirrortype}
$\bullet$ \ref{k4}\quad $(k=4)$
\\
\subsubsection{\;\;(k$\;=\;$-2, -1)}
\label{kn12}
\begin{notation}
\end{notation}
$\bullet$ Type: $(k=-2,-1)$ \qquad $\bullet$ Abbreviation: $[$kn12$]$
\begin{idealsolu}
\end{idealsolu}
$\bullet$ Smallest known solution, based on \eqref{k12s4}:
\begin{align}
&[5,10,10] = [6,6,15] ^k
\end{align}
\begin{idealchain}
\end{idealchain}
$\bullet$ Based on the solution chains of $(k=1,2)$, there are solution chains of $(k=-2,-1)$ for any length. For example:
\begin{align}
& [12600, 32760, 36400]^k=[13104, 23400, 54600]^k \nonumber\\
&\quad =[13650, 20475, 65520]^k=[15600, 16380, 81900]^k
\end{align}
\begin{mirrortype}
\end{mirrortype}
$\bullet$ \ref{k12}\quad $(k=1,2)$
\\
\subsubsection{\;\;(k$\;=\;$-3, -1)}
\label{kn13}
\begin{notation}
\end{notation}
$\bullet$ Type: $(k=-3,-1)$ \qquad $\bullet$ Abbreviation: $[$kn13$]$
\begin{idealsolu}
\end{idealsolu}
$\bullet$ Smallest known solution, based on \eqref{k13s6}:
\begin{align}
&[5,10,15] = [6,6,30] ^k
\end{align}
\begin{idealchain}
\end{idealchain}
$\bullet$ Ideal solution chain, based on \eqref{k13s68} :
\begin{align}
& [1584850696, 4898629424, 4898629424]^k \nonumber \\
&\quad =[1608505184, 3367807729, 8289988256]^k \nonumber \\
&\quad =[1738223344, 2342822768, 26942461832]^k \nonumber \\
&\quad =[1858100816, 2072497064, 53884923664]^k
\end{align}
\begin{mirrortype}
\end{mirrortype}
$\bullet$ \ref{k13}\quad $(k=1,3)$
\\
\subsubsection{\;\;(k$\;=\;$-4, -1)}
\label{kn14}
\begin{notation}
\end{notation}
$\bullet$ Type: $(k=-4,-1)$ \qquad $\bullet$ Abbreviation: $[$kn14$]$
\begin{idealsolu}
\end{idealsolu}
$\bullet$ Smallest known solution, based on \eqref{k14s48}:
\begin{align}
&[3157, 5412, 13776 ] = [3608, 3696, 37884] ^k
\end{align}
\begin{idealchain}
\end{idealchain}
$\bullet$ Ideal solution chain, based on \eqref{k14s244} :
\begin{align}
& [810565866, 1594984446, 2709288648]^k \nonumber \\
&\quad =[813901528, 1498318722, 2996637444]^k \nonumber \\
&\quad =[915639219, 983970504, 8240752971]^k
\end{align}
\begin{mirrortype}
\end{mirrortype}
$\bullet$ \ref{k14}\quad $(k=1,4)$
\\
\subsubsection{\;\;(k$\;=\;$-5, -1)}
\label{kn15}
\begin{notation}
\end{notation}
$\bullet$ Type: $(k=-5,-1)$ \qquad $\bullet$ Abbreviation: $[$kn15$]$
\begin{idealsolu}
\end{idealsolu}
$\bullet$ Smallest known solution, based on \eqref{k15s66}:
\begin{align}
&[21216, 31824, 77792 ]^k = [ 21879, 27456, 107712 ] ^k
\end{align}
\begin{mirrortype}
\end{mirrortype}
$\bullet$ \ref{k15}\quad $(k=1,5)$
\\
\subsubsection{\;\;(k$\;=\;$-3, -2)}
\label{kn23}
\begin{notation}
\end{notation}
$\bullet$ Type: $(k=-3,-2)$ \qquad $\bullet$ Abbreviation: $[$kn23$]$
\begin{idealsolu}
\end{idealsolu}
$\bullet$ Smallest known solution, based on \eqref{k23s73}:
\begin{align}
&[6210, 10074, 25185 ]^k = [ 6570, 8395, 453330 ] ^k
\end{align}
\begin{mirrortype}
\end{mirrortype}
$\bullet$ \ref{k23}\quad $(k=2,3)$
\\
\subsubsection{\;\;(k$\;=\;$-4, -2)}
\label{kn24}
\begin{notation}
\end{notation}
$\bullet$ Type: $(k=-4,-2)$ \qquad $\bullet$ Abbreviation: $[$kn24$]$
\begin{idealsolu}
\end{idealsolu}
$\bullet$ Smallest known solution, based on \eqref{k24s11}:
\begin{align}
&[90, 165, 198 ]^k = [ 99, 110, 990 ] ^k
\end{align}
\begin{idealchain}
\end{idealchain}
$\bullet$ Based on the solution chains of $(k=2,4)$, there are solution chains of $(k=-4,-2)$ for any length. For example:
\begin{align}
& [257980650, 495322848, 538394400]^k \nonumber \\
&\quad =[263469600, 386970975, 825538080]^k \nonumber \\
&\quad =[275179360, 334677600, 1547883900]^k \nonumber \\
&\quad =[287978400, 309576780, 4127690400]^k
\end{align}
\begin{mirrortype}
\end{mirrortype}
$\bullet$ \ref{k24}\quad $(k=2,4)$
\\
\subsubsection{\;\;(k$\;=\;$-6, -2)}
\label{kn26}
\begin{notation}
\end{notation}
$\bullet$ Type: $(k=-6,-2)$ \qquad $\bullet$ Abbreviation: $[$kn26$]$
\begin{idealsolu}
\end{idealsolu}
$\bullet$ Smallest known solution, based on \eqref{k26s23}:
\begin{align}
&[6270, 9614, 14421 ]^k = [ 6555, 7590, 48070  ] ^k
\end{align}
\begin{mirrortype}
\end{mirrortype}
$\bullet$ \ref{k26}\quad $(k=2,6)$
\\
\subsubsection{\;\;(k$\;=\;$-3, -2, -1)}
\label{kn123}
\begin{notation}
\end{notation}
$\bullet$ Type: $(k=-3,-2,-1)$ \qquad $\bullet$ Abbreviation: $[$kn123$]$
\begin{idealsolu}
\end{idealsolu}
$\bullet$ Smallest known solution, based on \eqref{k123s11}:
\begin{align}
&[110, 165, 264, 1320]^k=[120, 132, 440, 660] ^k
\end{align}
\begin{idealchain}
\end{idealchain}
$\bullet$ Based on the solution chains of $(k=1,2,3)$, there are solution chains of $(k=-3,-2,-1)$ for any length. For example:
\begin{align}
& [13160160, 22857120, 23474880, 96507840]^k \nonumber\\
&\quad =[13362624, 19301568, 28952352, 86857056]^k \nonumber\\
&\quad =[13571415, 18095220, 32169280, 78960960]^k \nonumber\\
&\quad =[14476176, 15792192, 43428528, 57904704]^k
\end{align}
\begin{mirrortype}
\end{mirrortype}
$\bullet$ \ref{k123}\quad $(k=1,2,3)$
\\
\subsubsection{\;\;(k$\;=\;$-4, -2, -1)}
\label{kn124}
\begin{notation}
\end{notation}
$\bullet$ Type: $(k=-4,-2,-1)$ \qquad $\bullet$ Abbreviation: $[$kn124$]$
\begin{idealsolu}
\end{idealsolu}
$\bullet$ Smallest known solution, based on \eqref{k124s21}:
\begin{align}
&[190, 285, 285, 798]^k=[210, 210, 399, 665] ^k
\end{align}
\begin{idealchain}
\end{idealchain}
$\bullet$ Ideal solution chain, based on \eqref{k124s64}:
\begin{align}
&[83334492423, 136754038848, 144146149056, 380957679648]^k \nonumber\\
&\quad=[84657262144, 115943641632, 183910603968, 333337969692] ^k \nonumber\\
&\quad=[86022701856, 108845051328, 222225313128, 280705658688] ^k
\end{align}
\begin{mirrortype}
\end{mirrortype}
$\bullet$ \ref{k124}\quad $(k=1,2,4)$
\\
\subsubsection{\;\;(k$\;=\;$-5, -2, -1)}
\label{kn125}
\begin{notation}
\end{notation}
$\bullet$ Type: $(k=-5,-2,-1)$ \qquad $\bullet$ Abbreviation: $[$kn125$]$
\begin{mirrortype}
\end{mirrortype}
$\bullet$ Smallest known solution, based on \eqref{k125s58}:
\begin{align}
& [4860912, 6128976, 7619808, 13425376]^k \\
&\quad= [ 5034516, 5528096, 8810403, 12257952]^k
\end{align}
\begin{mirrortype}
\end{mirrortype}
$\bullet$ \ref{k125}\quad $(k=1,2,5)$
\\
\subsubsection{\;\;(k$\;=\;$-6, -2, -1)}
\label{kn126}
\begin{notation}
\end{notation}
$\bullet$ Type: $(k=-6,-2,-1)$ \qquad $\bullet$ Abbreviation: $[$kn126$]$
\begin{idealsolu}
\end{idealsolu}
$\bullet$ Smallest known solution, based on \eqref{k126s45}:
\begin{align}
&[24596, 33540, 42570, 276705]^k=[25740, 28380, 55341, 184470] ^k
\end{align}
\begin{mirrortype}
\end{mirrortype}
$\bullet$ \ref{k126}\quad $(k=1,2,6)$
\\
\subsubsection{\;\;(k$\;=\;$-4, -3, -1)}
\label{kn134}
\begin{notation}
\end{notation}
$\bullet$ Type: $(k=-4,-3,-1)$ \qquad $\bullet$ Abbreviation: $[$kn134$]$
\begin{idealsolu}
\end{idealsolu}
$\bullet$ Smallest known solution, based on \eqref{k134s252}:
\begin{align}
\label{kn134s15031403100}
& [ 178945275, 302645700, 322101495, 15031403100]^k \nonumber \\
&\quad= [ 188678700, 224349300, 835077950, 901884186]^k
\end{align}
\begin{mirrortype}
\end{mirrortype}
$\bullet$ \ref{k134}\quad $(k=1,3,4)$
\\
\subsubsection{\;\;(k$\;=\;$-5, -3, -1)}
\label{kn135}
\begin{notation}
\end{notation}
$\bullet$ Type: $(k=-5,-3,-1)$ \qquad $\bullet$ Abbreviation: $[$kn135$]$
\begin{idealsolu}
\end{idealsolu}
$\bullet$ Smallest known solution, based on \eqref{k135s24}:
\begin{align}
\label{kn135s50232}
&[12558, 16744, 18837, 50232]^k=[13104, 14352, 23184, 43056] ^k
\end{align}
\begin{idealchain}
\end{idealchain}
$\bullet$ Smallest known solution chains, based on \eqref{k135s313}, by Chen Shuwen in 2022:
\begin{align}
& [30508515969264333, 42066808362906327,\nonumber \\
& \qquad\qquad 49477541442382053, 124015136342593977]^k \nonumber \\
& \quad =[31104773610357447, 37743737147745993,\nonumber \\
& \qquad\qquad 59311586946457989, 107293994363817261]^k \nonumber \\
& \quad =[32152072385116957, 34978628199193173,\nonumber \\
& \qquad\qquad 77635491856745823, 81616799131450737]^k 
\end{align}
\begin{mirrortype}
\end{mirrortype}
$\bullet$ \ref{k135}\quad $(k=1,3,5)$
\\
\subsubsection{\;\;(k$\;=\;$-7, -3, -1)}
\label{kn137}
\begin{notation}
\end{notation}
$\bullet$ Type: $(k=-7,-3,-1)$ \qquad $\bullet$ Abbreviation: $[$kn137$]$
\begin{idealsolu}
\end{idealsolu}
$\bullet$ Smallest known solution, based on \eqref{k137s698}:
\begin{align}
& [ 841155212997500, 1055982623511250, \nonumber \\
&\qquad\qquad 1325341622285000, 3190904014523125]^k \nonumber \\
&\quad= [ 872401691935000, 939402141875608,\nonumber \\
&\qquad\qquad 1663247418335000, 2552723211618500]^k
\end{align}
\begin{mirrortype}
\end{mirrortype}
$\bullet$ \ref{k137}\quad $(k=1,3,7)$
\\
\subsubsection{\;\;(k$\;=\;$-4, -3, -2)}
\label{kn234}
\begin{notation}
\end{notation}
$\bullet$ Type: $(k=-4,-3,-2)$ \qquad $\bullet$ Abbreviation: $[$kn234$]$
\begin{idealsolu}
\end{idealsolu}
$\bullet$ Smallest known solution, based on \eqref{k234s1058}:
\begin{align}
& [10582456759752735, 13737716873396802, \nonumber \\
&\qquad\qquad 23037529324729205, 260377657019032410]^k \nonumber\\
&\quad= [11041656066882045, 12344254963416090,\nonumber\\
&\qquad\qquad 33421609706920578, 46265451453795015]^k
\end{align}
\begin{mirrortype}
\end{mirrortype}
$\bullet$ \ref{k234}\quad $(k=2,3,4)$
\\
\subsubsection{\;\;(k$\;=\;$-6, -4, -2)}
\label{kn246}
\begin{notation}
\end{notation}
$\bullet$ Type: $(k=-6,-4,-2)$ \qquad $\bullet$ Abbreviation: $[$kn246$]$
\begin{idealsolu}
\end{idealsolu}
$\bullet$ Smallest known solution, based on \eqref{k246s25}:
\begin{align}
&[ 7728, 9200, 12075, 96600]^k=[8050, 8400, 13800, 38640 ] ^k
\end{align}
\begin{mirrortype}
\end{mirrortype}
$\bullet$ \ref{k246}\quad $(k=2,4,6)$
\\
\subsubsection{\;\;(k$\;=\;$-4, -3, -2, -1)}
\label{kn1234}
\begin{notation}
\end{notation}
$\bullet$ Type: $(k=-4,-3,-2,-1)$ \qquad $\bullet$ Abbreviation: $[$kn1234$]$
\begin{idealsolu}
\end{idealsolu}
$\bullet$ Smallest known solution, based on \eqref{k1234s18}:
\begin{align}
& [684, 855, 1140, 3420, 4560]^k=[720, 760, 1368, 2280, 6840]^k
\end{align}
\begin{mirrortype}
\end{mirrortype}
$\bullet$ \ref{k1234}\quad $(k=1,2,3,4)$
\\
\subsubsection{\;\;(k$\;=\;$-5, -3, -2, -1)}
\label{kn1235}
\begin{notation}
\end{notation}
$\bullet$ Type: $(k=-5,-3,-2,-1)$ \qquad $\bullet$ Abbreviation: $[$kn1235$]$
\begin{idealsolu}
\end{idealsolu}
$\bullet$ Smallest known solution, based on \eqref{k1235s28}:
\begin{align}
& [11934,15912,19656,83538,111384]^k \nonumber \\
&\quad= [12376,13923,25704,41769,334152]^k
\end{align}
\begin{mirrortype}
\end{mirrortype}
$\bullet$ \ref{k1235}\quad $(k=1,2,3,5)$
\\
\subsubsection{\;\;(k$\;=\;$-6, -3, -2, -1)}
\label{kn1236}
\begin{notation}
\end{notation}
$\bullet$ Type: $(k=-6,-3,-2,-1)$ \qquad $\bullet$ Abbreviation: $[$kn1236$]$
\begin{idealsolu}
\end{idealsolu}
$\bullet$ Smallest known solution, based on \eqref{k1236s107}:
\begin{align}
& [2904930, 3572730, 5359095, 14801310, 31082751 ]^k \nonumber\\
&\qquad\qquad= [2932335, 3453639, 5864670, 11512130, 44403930]^k
\end{align}
\begin{mirrortype}
\end{mirrortype}
$\bullet$ \ref{k1236}\quad $(k=1,2,3,6)$
\\
\subsubsection{\;\;(k$\;=\;$-7, -3, -2, -1)}
\label{kn1237}
\begin{notation}
\end{notation}
$\bullet$ Type: $(k=-7, -3, -2, -1)$ \qquad $\bullet$ Abbreviation: $[$kn1237$]$
\begin{idealsolu}
\end{idealsolu}
$\bullet$ Smallest known solution, based on \eqref{k1237s503}:
\begin{align}
& [2840097782598520232, 3729945651819988712, 5452554139874258308,\nonumber\\
&\qquad\qquad 10660964064530266244, 31055851840153384276]^k \nonumber\\
&\quad =[2845755347902501348, 3672414356419166264, 5760359615512321277, \nonumber\\
&\qquad\qquad 9652494490858484302, 34843150845050138456]^k
\end{align}
\begin{mirrortype}
\end{mirrortype}
$\bullet$ \ref{k1237}\quad $(k=1,2,3,7)$
\\
\subsubsection{\;\;(k$\;=\;$-6, -4, -2, -1)}
\label{kn1246}
\begin{notation}
\end{notation}
$\bullet$ Type: $(k=-6,-4,-2,-1)$ \qquad $\bullet$ Abbreviation: $[$kn1246$]$
\begin{idealsolu}
\end{idealsolu}
$\bullet$ Smallest known solution, based on \eqref{k1246s17}:
\begin{align}
& [ 1680, 2040, 2380, 5712, 7140 ]^k = [ 1785, 1785, 2856, 4080, 9520 ]^k
\end{align}
\begin{mirrortype}
\end{mirrortype}
$\bullet$ \ref{k1246}\quad $(k=1,2,4,6)$
\\
\subsubsection{\;\;(k$\;=\;$-7, -5, -3, -1)}
\label{kn1357}
\begin{notation}
\end{notation}
$\bullet$ Type: $(k=-7,-5,-3,-1)$ \qquad $\bullet$ Abbreviation: $[$kn1357$]$
\begin{idealsolu}
\end{idealsolu}
$\bullet$ Smallest known solution, based on \eqref{k1357s55}:
\begin{align}
\label{kn1357s4493994285}
& [245126961, 299599619, 313534485, 1225634805, 1497998095]^k \nonumber\\
&\quad  = [254377035, 264352605, 364377915, 709578045, 4493994285]^k
\end{align}
\begin{mirrortype}
\end{mirrortype}
$\bullet$ \ref{k1357}\quad $(k=1,3,5,7)$
\\
\subsubsection{\;\;(k$\;=\;$-8, -6, -4, -2)}
\label{kn2468}
\begin{notation}
\end{notation}
$\bullet$ Type: $(k=-8, -6, -4, -2)$ \qquad $\bullet$ Abbreviation: $[$kn2468$]$
\begin{idealsolu}
\end{idealsolu}
$\bullet$ Smallest known solution, based on \eqref{k2468s313}:
\begin{align}
& [399914884007100, 415858334532300, 665815737735225, \nonumber \\
&\qquad\qquad 1251733586942223, 1264377360547700 ]^k \nonumber \\
&\quad =[406407008747475, 407730810078900, 695407548301235, \nonumber \\
&\qquad\qquad 955521822093300, 1763005052031300 ]^k
\end{align}
\begin{mirrortype}
\end{mirrortype}
$\bullet$ \ref{k2468}\quad $(k=2,4,6,8)$
\\
\subsubsection{\;\;(k$\;=\;$-5, -4, -3, -2, -1)}
\label{kn12345}
\begin{notation}
\end{notation}
$\bullet$ Type: $(k=-5,-4,-3,-2,-1)$ \qquad $\bullet$ Abbreviation: $[$kn12345$]$
\begin{idealsolu}
\end{idealsolu}
$\bullet$ Smallest known solutions, based on \eqref{k12345s16} and \eqref{k12345s22}:
\begin{align}
\label{kn12345s171360}
& [1008, 1224, 1428, 2856, 4284, 17136]^k \nonumber \\
&\quad  = [1071, 1071, 1904, 1904, 8568, 8568]^k \\
\label{kn12345s81900}
& [12600, 15600, 16380, 32760, 36400, 81900]^k \nonumber \\
&\quad  = [13104, 13650, 20475, 23400, 54600, 65520]^k
\end{align}
\begin{idealchain}
\end{idealchain}
$\bullet$ Based on the solution chains of $(k=1,2,3,4,5)$, there are solution chains of $(k=-5,-4,-3,-2,-1)$ for any length. For example:
\begin{align}
& [2801341015200, 3627377468400, 3722834770200, \nonumber\\
&\qquad\qquad 9431181417840, 10104837233400, 56587088507040  ]^k\nonumber\\
&\quad =[2829354425352, 3328652265120, 4160815331400, \nonumber\\
&\qquad\qquad 7445669540400, 13473116311200, 47155907089200 ]^k\nonumber\\
&\quad =[2887096352400, 3143727139280, 4638285943200, \nonumber\\
&\qquad\qquad 6287454278560, 17683465158450, 35366930316900 ]^k\nonumber\\
&\quad =[2947244193075, 3042316586400, 5052418616700,\nonumber \\
&\qquad\qquad 5658708850704,21764264810400, 28293544253520 ]^k
\end{align}
\begin{mirrortype}
\end{mirrortype}
$\bullet$ \ref{k12345}\quad $(k=1,2,3,4,5)$
\\
\subsubsection{\;\;(k$\;=\;$-6, -4, -3, -2, -1)}
\label{kn12346}
\begin{notation}
\end{notation}
$\bullet$ Type: $(k=-6,-4,-3,-2,-1)$ \qquad $\bullet$ Abbreviation: $[$kn12346$]$
\begin{idealsolu}
\end{idealsolu}
$\bullet$ Smallest known solution, based on \eqref{k12346s60}:
\begin{align}
& [1040060, 1300075, 1356600, 3670800, 4457400, 62403600]^k \nonumber\\
&\quad  = [1094800, 1114350, 1642200, 2496144, 10400600, 15600900]^k
\end{align}
\begin{mirrortype}
\end{mirrortype}
$\bullet$ \ref{k12346}\quad $(k=1,2,3,4,6)$
\\
\subsubsection{\;\;(k$\;=\;$-7, -4, -3, -2, -1)}
\label{kn12347}
\begin{notation}
\end{notation}
$\bullet$ Type: $(k=-7,-4,-3,-2,-1)$ \qquad $\bullet$ Abbreviation: $[$kn12347$]$
\begin{idealsolu}
\end{idealsolu}
$\bullet$ First known solution, based on \eqref{k12347s366}:
\begin{align}
& [3985023979526590, 4393128844899795, 4877989219086060, \nonumber \\
&\qquad\qquad 8839507736404436, 10966306590276180, 42897611073727410]^k \nonumber\\
&\quad =[4120109538154610, 4131781236562980, 5099716001771790, \nonumber \\
&\qquad\qquad 8240219076309220, 11762248197634935, 41671965043049484]^k
\end{align}
\begin{mirrortype}
\end{mirrortype}
$\bullet$ \ref{k12347}\quad $(k=1,2,3,4,7)$
\\
\subsubsection{\;\;(k$\;=\;$-7, -5, -3, -2, -1)}
\label{kn12357}
\begin{notation}
\end{notation}
$\bullet$ Type: $(k=-7,-5,-3,-2,-1)$ \qquad $\bullet$ Abbreviation: $[$kn12357$]$
\begin{idealsolu}
\end{idealsolu}
$\bullet$ Smallest known solution, based on \eqref{k12357s381}:
\begin{align}
& [11358362926625, 14473365468375, 14669614491675, \nonumber \\
&\qquad\qquad 26227492576025,62717917029625, 393412388640375]^k \nonumber \\
&\quad =[11540096733451, 12918018731475, 17956582054125, \nonumber \\
&\qquad\qquad 21530031219125, 110962468590875, 149225388794625 ]^k
\end{align}
\begin{mirrortype}
\end{mirrortype}
$\bullet$ \ref{k12357}\quad $(k=1,2,3,5,7)$
\\
\subsubsection{\;\;(k$\;=\;$-8, -6, -4, -2, -1)}
\label{kn12468}
\begin{notation}
\end{notation}
$\bullet$ Type: $(k=-8,-6,-4,-2,-1)$ \qquad $\bullet$ Abbreviation: $[$kn12468$]$
\begin{idealsolu}
\end{idealsolu}
$\bullet$ Smallest known solution, based on \eqref{k12468s36}:
\begin{align}
\label{kn12468s37849140}
& [1051365, 1220940, 1261638,2226420, 5407020, 37849140]^k \nonumber \\
& \quad  = [1081404, 1113210, 1401820,1992060, 9462285, 12616380]^k
\end{align}
\begin{mirrortype}
\end{mirrortype}
$\bullet$ \ref{k12468}\quad $(k=1,2,4,6,8)$
\\
\subsubsection{\;\;(k$\;=\;$-9, -7, -5, -3, -1)}
\label{kn13579}
\begin{notation}
\end{notation}
$\bullet$ Type: $(k=-9,-7,-5,-3,-1)$ \qquad $\bullet$ Abbreviation: $[$kn13579$]$
\begin{idealsolu}
\end{idealsolu}
$\bullet$ Smallest known solution, based on \eqref{k13579s463}:
\begin{align}
& [84989313999946265, 100639520158504145, 117462842931269017,\nonumber \\
&\qquad\quad 181336646921544335, 244410263242081495, 915117497255235365]^k\nonumber \\
&\quad  = [85358031197342995, 97642809880831565, 137108196452874985, \nonumber \\
&\qquad\quad 139046121491078165, 432418158043682645, 462941792729119067]^k
\end{align}
\begin{mirrortype}
\end{mirrortype}
$\bullet$ \ref{k13579}\quad $(k=1,3,5,7,9)$
\\
\subsubsection{\;\;(k$\;=\;$-10, -8, -6, -4, -2)}
\label{kn246810}
\begin{notation}
\end{notation}
$\bullet$ Type: $(k=-10,-8,-6,-4,-2)$ \qquad $\bullet$ Abbreviation: $[$solution chains$]$
\begin{idealsolu}
\end{idealsolu}
$\bullet$ Smallest known solution, based on \eqref{k246810s151}:
\begin{align}
& [730891803492235, 740904019978430, 893983362949180,\nonumber \\
&\qquad\qquad 1150765818264370, 2301531636528740, 3090628197624308]^k\nonumber \\
&\quad  = [716370774283780, 772657049406077, 851747928479140,\nonumber \\
&\qquad\qquad 1257813801358730, 1773311260931980, 4916908496220490]^k 
\end{align}
\begin{mirrortype}
\end{mirrortype}
$\bullet$ \ref{k246810}\quad $(k=2,4,6,8,10)$
\\
\subsubsection{\;\;(k$\;=\;$-6, -5, -4, -3, -2, -1)}
\label{kn123456}
\begin{notation}
\end{notation}
$\bullet$ Type: $(k=-6,-5,-4,-3,-2,-1)$ \qquad $\bullet$ Abbreviation: $[$kn123456$]$
\begin{idealsolu}
\end{idealsolu}
$\bullet$ Smallest known solution, based on \eqref{k123456s84}:
\begin{align}
& [37182145, 45762640, 46356960, 77597520, \nonumber \\
&\qquad\qquad 89237148, 209969760, 237965728]^k \nonumber \\
&\quad  = [37573536, 41993952, 54083120, 62622560, \nonumber \\
&\qquad\qquad 127481640, 137287920, 297457160]^k
\end{align}
\begin{mirrortype}
\end{mirrortype}
$\bullet$ \ref{k123456}\quad $(k=1,2,3,4,5,6)$
\\
\subsubsection{\;\;(k$\;=\;$-7, -5, -4, -3, -2, -1)}
\label{kn123457}
\begin{notation}
\end{notation}
$\bullet$ Type: $(k=-7,-5,-4,-3,-2,-1)$ \qquad $\bullet$ Abbreviation: $[$kn123457$]$
\begin{idealsolu}
\end{idealsolu}
$\bullet$ Smallest known solution, based on \eqref{k123457s152}:
\begin{align}
& [165238377843, 188843860392, 248675578536, 380548991396,\nonumber \\
&\qquad\qquad 492475165336, 1321907022744, 6279058358034]^k \nonumber \\
&\quad  = [166332671736, 184678187001, 282204870024, 299002778954,\nonumber \\
&\qquad\qquad 761097982792, 810201078456, 12558116716068]^k
\end{align}
\begin{mirrortype}
\end{mirrortype}
$\bullet$ \ref{k123457}\quad $(k=1,2,3,4,5,7)$
\\
\subsubsection{\;\;(k$\;=\;$-9, -7, -5, -3, -2, -1)}
\label{kn123579}
\begin{notation}
\end{notation}
$\bullet$ Type: $(k=-9,-7,-5,-3,-2,-1)$ \qquad $\bullet$ Abbreviation: $[$kn123579$]$
\begin{idealsolu}
\end{idealsolu}
$\bullet$ Smallest known solution, based on \eqref{k123579s377}:
\begin{align}
&       [1297752271862287613603, 1563107368984288914787,\nonumber \\
&\qquad\qquad 1573159506405409743821, 2534987598404572177867,\nonumber \\
&\qquad\qquad 3240083486702532651181, 15782342144905884849301,\nonumber \\
&\qquad\qquad 16870779534209738976839]^k \nonumber \\
&\quad =[1311669186305851019647, 1434758376809625895391,\nonumber \\
&\qquad\qquad 1933804768743408815527, 1949213571681603308081,\nonumber \\
&\qquad\qquad 5376402269143762970641, 5497220297663847531779,\nonumber \\
&\qquad\qquad 69893229498868918618333]^k
\end{align}
\begin{mirrortype}
\end{mirrortype}
$\bullet$ \ref{k123579}\quad $(k=1,2,3,5,7,9)$
\\
\subsubsection{\;\;(k$\;=\;$-7, -6, -5, -4, -3, -2, -1)}
\label{kn1234567}
\begin{notation}
\end{notation}
$\bullet$ Type: $(k=-7,-6,-5,-4,-3,-2,-1)$ \qquad $\bullet$ Abbreviation: $[$kn1234567$]$
\begin{idealsolu}
\end{idealsolu}
$\bullet$ Smallest known solution, based on \eqref{k1234567s68}:
\begin{align}
& [17193995, 19650280, 19967220, 27510392, \nonumber \\
&\qquad\qquad 39934440, 88426260, 95228280,309491910]^k\nonumber \\
&\quad  = [17685252, 17941560, 23357880, 23807070,\nonumber \\
&\qquad\qquad 51581985, 53824680,176852520, 206327940]^k
\end{align}
\begin{mirrortype}
\end{mirrortype}
$\bullet$ \ref{k1234567}\quad $(k=1,2,3,4,5,6,7)$
\\
\subsubsection{\;\;(k$\;=\;$-8, -6, -5, -4, -3, -2, -1)}
\label{kn1234568}
\begin{notation}
\end{notation}
$\bullet$ Type: $(k=-8,-6,-5,-4,-3,-2,-1)$ \qquad $\bullet$ Abbreviation: $[$kn1234568$]$
\begin{idealsolu}
\end{idealsolu}
$\bullet$ First known solution, based on \eqref{k1234568s501}:
\begin{align}
&       [4103980139754883042177329807, 4189301972182219487960642277,\nonumber \\ 
&\qquad 5050261274735958831351049963, 5551113632561012599749501199,\nonumber \\
&\qquad 8502338601770664867970755001, 9460348585068767951685769649,\nonumber \\
&\qquad 14708425172406186669409262301, 25507015805311994603912265003]^k\nonumber \\
&\quad =[4022064368502290566285566737, 4507951339193842446776440571,\nonumber \\
&\qquad 4548655188757669466611893759, 6277427565793294622146632197, \nonumber \\
&\qquad 7120333033991687539608017439, 11923397920826317004195674173,\nonumber \\
&\qquad 12673297161129858954145087643, 26169535696359059398819077081]^k
\end{align}
\begin{mirrortype}
\end{mirrortype}
$\bullet$ \ref{k1234568}\quad $(k=1,2,3,4,5,6,8)$
\\
\subsubsection{\;\;(k$\;=\;$-8, -7, -6, -5, -4, -3, -2, -1)}
\label{kn12345678}
\begin{notation}
\end{notation}
$\bullet$ Type: $(k=-8,-7,-6,-5,-4,-3,-2,-1)$ \vspace{1ex}\\
\indent
$\bullet$ Abbreviation: $[$kn12345678$]$
\begin{idealsolu}
\end{idealsolu}
$\bullet$ Smallest known solution, based on \eqref{k12345678s198}:
\begin{align}
&       [50997530167920, 57759633615600,  59739963910992,\nonumber \\ 
&\qquad\quad  85692571183800, 87852888104400, 145201301172550, \nonumber \\
&\qquad\quad 217801951758825,435603903517650,1306811710552950]^k\nonumber \\
&\quad =[51247518060900, 55609008959700, 63746912709900,\nonumber \\
&\qquad\quad  74674954888740,112413910585200, 116161040938040, \nonumber \\
&\qquad\quad 282553883362800,337241731755600,1493499097774800]^k
\end{align}
\begin{mirrortype}
\end{mirrortype}
$\bullet$ \ref{k12345678}\quad $(k=1,2,3,4,5,6,7,8)$
\\
\subsubsection{\;\;(k$\;=\;$-9, -8, -7, -6, -5, -4, -3, -2, -1)}
\label{kn123456789}
\begin{notation}
\end{notation}
$\bullet$ Type: $(k=-9,-8,-7,-6,-5,-4,-3,-2,-1)$ \vspace{1ex}\\
\indent
$\bullet$ Abbreviation: $[$kn123456789$]$
\begin{idealsolu}
\end{idealsolu}
$\bullet$ Smallest known solution, based on \eqref{k123456789s626}:
\begin{align}
&         [9964438965675492480, 10158236806174853760,  12435777994860882480,\nonumber \\
&\qquad   15066423339927607620, 15102727974481650048,  28883097278386565760,\nonumber \\
&\qquad   29016815321342059120, 48965875854764724765, 417842140627325651328, \nonumber \\
&\qquad 2089210703136628256640]^k\nonumber \\
&\quad = [10044282226618405080, 10060404669999815040,  12636355059294122520,\nonumber \\
&\qquad   14021548343198847360, 16195431807260684160,  25582171875142386816,\nonumber \\
&\qquad   33879092483296674432, 46085530216249152720, 696403567712209418880,\nonumber \\
&\qquad  783454013676235596240]^k
\end{align}
\begin{mirrortype}
\end{mirrortype}
$\bullet$ \ref{k123456789}\quad $(k=1,2,3,4,5,6,7,8,9)$
\\
\subsubsection{\;\;(k$\;=\;$-11, -10, -9, -8, -7, -6, -5, -4, -3, -2, -1)}
\label{kn1234567891011}
\begin{notation}
\end{notation}
$\bullet$ Type: $(k=-11,-10,-9,-8,-7,-6,-5,-4,-3,-2,-1)$ \vspace{1ex}\\
\indent
$\bullet$ Abbreviation: $[$kn1234567891011$]$
\begin{idealsolu}
\end{idealsolu}
$\bullet$ Smallest known solution, based on \eqref{k1234567891011s302}:
\begin{align}
& [2697089735898166980, 2797659183677420664, 2926629287889500340,\nonumber \\
&\qquad 3424520577530452680, 3820877125855736555, 4662765306129034440, \nonumber \\
&\qquad 6205334279585256360, 8779887863668501020, 11961006654852740520,\nonumber \\
&\qquad 29475337828029967710, 55020630612322606392, 206327364796209773970]^k \nonumber \\
&\quad = [ 2723793594669435960, 2741891890979531880, 2990251663713185130,\nonumber \\
&\qquad 3314495820019434120, 4085690392004153940, 4343733995709679452, \nonumber \\
&\qquad 6877578826540325799, 7641754251711473110, 13529663265325231080, \nonumber \\
&\qquad 24273807623083502820, 91701051020537677320, 117901351312119870840 ]^k
\end{align}
\begin{mirrortype}
\end{mirrortype}
$\bullet$ \ref{k1234567891011}\quad $(k=1,2,3,4,5,6,7,8,9,10,11)$
\\
\subsection{GPTE with k$_{1}$=0 and all others k>0}
To date, ideal non-negative integer solutions of the GPTE problem have been identified for 24 distinct types with \( k_1 = 0 \) and all others \( k > 0 \).
\subsubsection{\;\;(k$\;=\;$0)}
\label{k0}
\begin{notation}
\end{notation}
$\bullet$ Type: $(k=0)$ \qquad $\bullet$ Abbreviation: $[$k0$]$
\begin{idealsolu}
\end{idealsolu}
$\bullet$ Smallest solutions:
\begin{align}
& [ 1, 4 ]^k = [ 2, 2 ]^k\\
& [ 1, 6 ]^k = [ 2, 3 ]^k
\end{align}
\begin{idealchain}
\end{idealchain}
$\bullet$ It is obvious that there are solution chains for any length. 
\begin{align}
\label{k0s24}
& [ 1, 24 ]^k = [ 2, 12 ]^k = [ 3, 8 ]^k = [ 4, 6 ]^k
\end{align}
\begin{mirrortype}
\end{mirrortype}
$\bullet$ None.
\\
\subsubsection{\;\;(k$\;=\;$0, 1)}
\label{k01}
\begin{notation}
\end{notation}
$\bullet$ Type: $(k=0,1)$ \qquad $\bullet$ Abbreviation: $[$k01$]$
\begin{idealsolu}
\end{idealsolu}
$\bullet$ Smallest solutions:
\begin{align}
\label{k01s8}
 & [2,6,6]^k=[3,3,8]^k\\
\label{k01s9}
 & [1,6,6]^k=[2,2,9]^k
\end{align}
\indent
$\bullet$ Ideal solution, by A.Moessner in 1939 \cite{Moessner39}:
\begin{align}
& [4, 15, 15 ]^k = [ 5, 9, 20 ]^k
\end{align}
\indent
$\bullet$ Ideal solution, by A.Gloden in 1944 \cite{Gloden44}:
\begin{align}
& [2, 12, 15 ]^k = [ 3, 6, 20]^k
\end{align}
\begin{idealchain}
\end{idealchain}
$\bullet$ Solution chain of length 3, by T.N.Sinha in 1984 \cite{Sinha84}:
\begin{align}
&[ 15, 42, 48 ]^k = [ 16, 35, 54 ]^k = [ 21, 24, 60 ]^k
\end{align}
\indent
$\bullet$ Solution chains of length 4, by J.G.Mauldon in 1981 \cite [p.271]{Guy04}:
\begin{align}
\label{k01s72}
&[14,50,54]^k = [15, 40, 63]^k = [18, 30, 70]^k = [21, 25, 72]^k\\
\label{k01s105}
&[6,56,75]^k = [7,40,90]^k = [9,28,100]^k = [12,20,105]^k
\end{align}
\begin{mirrortype}
\end{mirrortype}
$\bullet$ \ref{kn01}\quad $(k=-1,0)$
\\
\subsubsection{\;\;(k$\;=\;$0, 2)}
\label{k02}
\begin{notation}
\end{notation}
$\bullet$ Type: $(k=0,2)$ \qquad $\bullet$ Abbreviation: $[$k02$]$
\begin{idealsolu}
\end{idealsolu}
$\bullet$ This system was posed as a problem by U.~Bini in 1909. Partial solutions were offered by Dubouis and Mathieu \cite {Kelly1991} \cite [p.214]{Guy04}. \vspace{1ex}\\
\indent
$\bullet$ A.Gloden gave a six-parameter solution in 1944 \cite [p.36-39]{Gloden44}. Numerical example:
\begin{align}
& [3,16,20]^k = [5,8,24]^k
\end{align}
\indent
$\bullet$ Ideal solution, by G.Xeroudakes and A.Moessner in 1958 \cite{Xeroudakes58}:
\begin{align}
& [ 13, 38, 51]^k = [ 17, 26, 57 ]^k
\end{align}
\indent
$\bullet$ John B.~Kelly gave a general solution in 1991 \cite {Kelly1991}.\\[1mm]
\indent
$\bullet$ Smallest solutions, based on computer search, by Chen Shuwen:
\begin{align}
\label{k02s15}
 & [1,10,12]^k=[2,4,15]^k\\
\label{k02s22}
 & [2,11,20]^k=[4,5,22]^k
\end{align}
\begin{idealchain}
\end{idealchain}
$\bullet$ Ajai Choudhry and Jarosław Wróblewski found several parametric solutions for chains of length 3 in 2016 \cite{Choudhry17k02}, and obtained all numerical solution chains of length 3 and length 4 with terms below 30000 by exhaustive search, two numerical solution chains of length 5 by selective search. Numerical examples:
\begin{align}
\label{k02s143}
&[17,104,110 ]^k = [22,65,136]^k = [ 34,40,143]^k\\
\label{k02s196}
&[7,126,156]^k = [9,84,182]^k = [18,39,196]^k\\
\label{k02s250}
&[5,140,210]^k = [10,60,245]^k = [21,28,250]^k\\
\label{k02s4455}
&[567,3116,3630]^k = [594,2772,3895]^k \nonumber \\
& \quad = [855,1694,4428]^k = [902,1596,4455]^k \\
\label{k02s6545}
& [527,4246,5320]^k = [616,3230,5983]^k \nonumber \\
& \quad = [952,1930,6479]^k = [1178,1544,6545]^k \\
\label{k02s250582040}
& [ 40299792,177048625,203908744]^k \nonumber \\
& \quad =[42043040,158526615,218290540]^k \nonumber \\
& \quad =[42949660,152187360,222583465]^k \nonumber \\
& \quad =[54655952,108953000,244317489]^k \nonumber \\
& \quad =[70402865,82469040,250582040]^k
\end{align}
\begin{mirrortype}
\end{mirrortype}
$\bullet$ \ref{kn02}\quad $(k=-2,0)$
\begin{relatedtype}
\end{relatedtype}
$\bullet$ \ref{h1235}\quad $(h=1,2,3,5)$ \vspace{1ex}\\
\indent
$\bullet$ \ref{h02n1}\quad $(h=-1,0,2)$
\\
\subsubsection{\;\;(k$\;=\;$0, 3)}
\label{k03}
\begin{notation}
\end{notation}
$\bullet$ Type: $(k=0,3)$ \qquad $\bullet$ Abbreviation: $[$k03$]$
\begin{idealsolu}
\end{idealsolu}
$\bullet$ Stephane Vandemergel found 62 solutions before 1994. \cite{Guy04} \vspace{1ex}\\
\indent
$\bullet$ Smallest solutions:
\begin{align}
\label{k03s78}
 & [4,60,65]^k=[8,25,78]^k\\
\label{k03s80}
 & [6,48,75]^k=[10,27,80]^k\\
\label{k03s90}
 & [4,63,80]^k=[7,32,90]^k
\end{align}
\indent
$\bullet$ Ideal solutions with $a_1=1$, based on computer search, by Chen Shuwen in 2023:
\begin{align}
 & [1,108,120]^k=[9,10,144]^k\\
 & [1,6592,9682]^k=[64,94,10609]^k\\
 & [1,10950,21600]^k=[73,144,22500]^k
\end{align}
\begin{mirrortype}
\end{mirrortype}
$\bullet$ \ref{kn03}\quad $(k=-3,0)$
\\
\subsubsection{\;\;(k$\;=\;$0, 4)}
\label{k04}
\begin{notation}
\end{notation}
$\bullet$ Type: $(k=0,4)$ \qquad $\bullet$ Abbreviation: $[$k04$]$
\begin{idealsolu}
\end{idealsolu}
$\bullet$ Stephane Vandemergel noted that if $a^4+b^4=c^4+d^4$, then $\{ac,bc,d^2;ad,bd, c^2\}$ is a solution for $(k=0,4)$, which shows that there are infinitely many solutions for $(k=0,4)$. He gave 3 numerical solutions before 1994 \cite [p.214]{Guy04}.  
\begin{align}
\label{k04s124}
& [22,93,116]^k=[29,66,124]^k\\
\label{k04s196}
& [28,122,189]^k=[54,61,196]^k\\
& [9,511,589]^k=[19,217,657]^k
\end{align}
\indent
$\bullet$ Same method as Stephane Vandemergel noted, based on \eqref{k4s158} and \eqref{k4s239}, found independently by Chen Shuwen in 2001:
\begin{align}
& [3481,21014,21172]^k=[7847,7906,24964]^k\\
& [49,37523,54253]^k=[1099,1589,57121]^k
\end{align}
\indent
$\bullet$ Ajai Choudhry obtained a two-parameter solution in 2017 \cite{Choudhry17k04}.
\vspace{1ex}\\
\indent
$\bullet$ By computer search in 2016, Chen Shuwen confirmed that \eqref{k04s124} is the smallest solution, and obtained the remained solutions in the range of 1000:
\begin{align}
\label{k04s171}
 & [7,133,153]^k=[17,49,171]^k\\
\label{k04s316}
 & [79,176,312]^k=[96,143,316]^k\\
\label{k04s352}
 & [77,276,320]^k=[115,168,352]^k\\
 & [67,274,381]^k=[127,137,402]^k\\
 & [19,415,539]^k=[77,95,581]^k\\
 & [7,456,527]^k=[51,56,589]^k\\
 & [133,594,788]^k=[197,378,836]^k\\
 & [31,594,779]^k=[82,209,837]^k\\
 & [408,737,931]^k=[469,616,969]^k
\end{align}
\begin{mirrortype}
\end{mirrortype}
$\bullet$ \ref{kn04}\quad $(k=-4,0)$
\begin{relatedtype}
\end{relatedtype}
$\bullet$ \ref{k4}\quad $(k=4)$
\\
\subsubsection{\;\;(k$\;=\;$0, 1, 2)}
\label{k012}
\begin{notation}
\end{notation}
$\bullet$ Type: $(k=0,1,2)$ \qquad $\bullet$ Abbreviation: $[$k012$]$
\begin{idealsolu}
\end{idealsolu}
$\bullet$ Ideal solution, by A.Gloden in 1944 \cite{Gloden44}:
\begin{align}
\label{k012s22}
 & [2, 2, 11, 21 ]^k = [ 1, 6, 7, 22]^k
\end{align}
\indent
$\bullet$ George Xeroudakes and Alfred Moessner obtained a two-parameter solution in 1958  \cite{Xeroudakes58}. Numerical example:
\begin{align}
 & [ 3, 14, 20, 40 ]^k = [ 4, 8, 30, 35 ]^k
\end{align}
\indent
$\bullet$ Ajai Choudhry obtained two parametric solutions in 2013 \cite[p.771-772]{Choudhry13}, one of which is complete. Numerical examples:
\begin{align}
 & [ 5,10,12,21]^k = [6,7,15,20]^k \\
 & [19,48,50,80]^k = [20,38,64,75]^k 
\end{align}
\indent
$\bullet$ Smallest solutions, based on computer search, by Chen Shuwen in 2023:
\begin{align}
 & [  2,6,7,15]^k = [  3,3,10,14 ]^k\\
 & [  4,7,10,15 ]^k = [ 5,5,12,14 ]^k\\
\label{k012s16}
 & [ 3 , 6, 10 , 16]^k = [ 4, 4 , 12, 15]^k\\
\label{k012s20}
 & [3, 9 , 10, 20]^k = [4, 5 , 15, 18]^k
\end{align}
\begin{idealchain}
\end{idealchain}
$\bullet$ First known solution chains, by Chen Shuwen in 2023 \cite{ChenkProducts23}:
\begin{align}
\label{k012s65}
 & [9,28,30,65]^k = [10,20,39,63]^k= [13,14,45,60]^k\\
\label{k012s66}
 & [9,28,35,66]^k = [10,21,44,63]^k= [11,18,49,60]^k
\end{align}
\begin{mirrortype}
\end{mirrortype}
$\bullet$ \ref{kn012}\quad $(k=-2,-1,0)$
\\
\subsubsection{\;\;(k$\;=\;$0, 1, 3)}
\label{k013}
\begin{notation}
\end{notation}
$\bullet$ Type: $(k=0,1,3)$ \qquad $\bullet$ Abbreviation: $[$k013$]$
\begin{idealsolu}
\end{idealsolu}
$\bullet$ First known solution, based on \eqref{k13s12}, by Chen Shuwen in 2001 \cite{ChenkProducts23}:
\begin{align}
\label{k013s24}
 & [ 5, 11, 16, 24 ]^k = [ 6, 8, 20, 22 ]^k
\end{align}
\indent
$\bullet$ Ajai Choudhry obtained a six-parameter solution in 2013 \cite[p.787-788]{Choudhry13}. Numerical example:
\begin{align}
\label{k013s24}
 & [11,31,39,94]^k = [13,22,47,93]^k
\end{align}
\indent
$\bullet$ Small solutions based on computer search, by Chen Shuwen in 2017:
\begin{align}
 & [ 7, 12, 25, 32 ]^k=[ 8, 10, 28, 30 ]^k\\
\label{k013s35}
 & [3,10,17,35 ]^k= [5,5,21,34 ]^k\\
 & [2,14,21,40 ]^k= [3,7,32,35]^k\\
 & [5,13,30,42 ]^k= [6,10,35,39]^k\\
 & [1,10,24,44]^k= [2,4,33,40]^k 
\end{align}
\indent
$\bullet$ Chen Shuwen obtained a two-parameter solution in 2023. Numerical example:
\begin{align}
 &[140,332,487,988]^k=[166,247,560,974]^k
\end{align}
\begin{mirrortype}
\end{mirrortype}
$\bullet$ \ref{kn013}\quad $(k=-3,-1,0)$
\\
\subsubsection{\;\;(k$\;=\;$0, 1, 4)}
\label{k014}
\begin{notation}
\end{notation}
$\bullet$ Type: $(k=0,1,4)$ \qquad $\bullet$ Abbreviation: $[$k014$]$
\begin{idealsolu}
\end{idealsolu}
$\bullet$ First known solutions, smallest solutions, based on computer search, by Chen Shuwen in 2017 \cite{ChenkProducts23}:
\begin{align}
\label{k014s84}
 & [ 5, 36, 40, 84 ]^k=[ 7, 18, 60, 80 ]^k \\
\label{k014s168}
 & [ 16, 79, 88 , 168 ]^k=[ 21 , 44, 128, 158]^k\\
 & [ 22, 84,95, 184]^k=[ 30 , 46, 133 , 176]^k\\
 &  [21,111,166,248]^k=[24,83,217,222]^k\\
 &  [16,107,147,252]^k=[21,63,214,224]^k\\
 &  [11,86,150,272]^k=[17,44,200,258]^k
\end{align}
\indent
$\bullet$ Chen Shuwen found a parameter solution in 2023. Numerical examples:
\begin{align}
 &  [5849, 25894, 59961, 121476]^k=[10123, 12947, 70188, 119922]^k\\
 &  [64735, 182214, 233960, 419727]^k \nonumber \\
 &  \quad =[90629, 105282, 299805, 404920]^k \\
 &  [458470, 3373646, 3759269, 7895419]^k \nonumber \\
 &  \quad =[641858, 1686823, 5639585, 7518538]^k 
\end{align} 
\begin{mirrortype}
\end{mirrortype}
$\bullet$ \ref{kn014}\quad $(k=-4,-1,0)$
\\
\subsubsection{\;\;(k$\;=\;$0, 1, 5)}
\label{k015}
\begin{notation}
\end{notation}
$\bullet$ Type: $(k=0,1,5)$ \qquad $\bullet$ Abbreviation: $[$k015$]$
\begin{idealsolu}
\end{idealsolu}
$\bullet$ First known solutions, smallest solutions, based on computer search, by Chen Shuwen in 2017 \cite{ChenkProducts23}:
\begin{align}
\label{k015s155}
 & [ 13, 66, 91, 155 ]^k=[ 21, 31, 130, 143 ]^k  \\
\label{k015s301}
 & [ 84, 161, 169, 301]^k=[ 91, 129, 196, 299 ] ^k
\end{align}
\indent
$\bullet$ Third known solution, based on \eqref{k15s756} of $(k=1,5)$, by Chen Shuwen in 2023:
\begin{align}
\label{k015s10584}
 & [725,3755,3864,10584]^k=[1350,1380,5684,10514]^k
\end{align}
\indent
$\bullet$ Chen Shuwen noticed that \eqref{k015s301} also satisfies $a_1 a_4=b_2 b_3 $ and $a_2 a_3=b_1 b_4$, then constructed a parametric method and obtained a new solution in 2023:
\begin{align}
 & [71,189,305,555]^k=[105,111,355,549]^k
\end{align}
\begin{mirrortype}
\end{mirrortype}
$\bullet$ \ref{kn015}\quad $(k=-5,-1,0)$
\begin{relatedtype}
\end{relatedtype}
$\bullet$ \ref{k15}\quad $(k=1,5)$
\\
\subsubsection{\;\;(k$\;=\;$0, 2, 3)}
\label{k023}
\begin{notation}
\end{notation}
$\bullet$ Type: $(k=0,2,3)$ \qquad $\bullet$ Abbreviation: $[$k023$]$
\begin{idealsolu}
\end{idealsolu}
$\bullet$ First known solution, based on $(k=2,3)$, by Chen Shuwen in 2022 \cite{ChenkProducts23}:
\begin{align}
\label{k023s16335}
 & [855, 4338, 10406, 16335 ]^k = [ 1215, 2838, 11495, 15906 ]^k
\end{align}
\indent
$\bullet$ Smallest solution, based on computer search, by Chen Shuwen in 2023:
\begin{align}
\label{k023s2040}
 & [16,306,1617,2040 ]^k = [33,144,1734,1960]^k
\end{align}
\begin{mirrortype}
\end{mirrortype}
$\bullet$ \ref{kn023}\quad $(k=-3,-2,0)$
\begin{relatedtype}
\end{relatedtype}
$\bullet$ \ref{k23}\quad $(k=2,3)$
\\
\subsubsection{\;\;(k$\;=\;$0, 2, 4)}
\label{k024}
\begin{notation}
\end{notation}
$\bullet$ Type: $(k=0,2,4)$ \qquad $\bullet$ Abbreviation: $[$k024$]$
\begin{idealsolu}
\end{idealsolu}
$\bullet$ Alfred Moessner and George Xeroudakes found a parametric method in 1976 \cite{Moessner76}, with additonal condition $b_1=b_2$ and $b_3=b_4$. Numerical example:
\begin{align}
& [3,243,289,529]^k = [23,23,459,459]^k
\end{align}
\indent
$\bullet$ Numerical solution, based on \eqref{k24s69}, by Chen Shuwen in 2001:
\begin{align}
\label{k024s207}
 & [19, 67, 159, 207 ]^k = [ 23, 53, 171, 201 ]^k
\end{align}
\indent
$\bullet$ Numerical solution, based on Ajai Choudhry's solution \eqref{h0124s134} in 2013:
\begin{align}
 & [9,62,67,138 ]^k = [ 18,23,93,134]^k
\end{align}
\indent
$\bullet$ Smallest solutions, based on computer search, by Chen Shuwen in 2023:
\begin{align}
\label{k024s28}
 & [1,9,18,28]^k=[2,4,21,27]^k\\
 & [1,17,18,36]^k=[3,4,27,34]^k\\
\label{k024s36}
 & [2,17,21,36]^k=[3,9,28,34]^k \\
\label{k024s39}
 & [2,18,19,39]^k=[3,9,26,38]^k\\
 & [4,19,27,42]^k=[7,9,36,38]^k 
\end{align}
\indent
$\bullet$ Solutions satisfy $a_1+a_2+a_4=3a_3$ and $(a_2-a_3)(2a_3-a_4) \neq 0$, which lead to ideal solution \eqref{k012468s1233} of $(k=0,1,2,4,6,8)$, based on computer search, by Chen Shuwen in 2023:
\begin{align}
\label{k024s1159}
& [423,828,902,1159]^k=[437,732,1034,1107]^k\\
\label{k024s1233}
& [497,754,828,1233]^k=[548,621,923,1218]^k
\end{align}
\begin{idealchain}
\end{idealchain}
$\bullet$ First known solution chains, based on computer search, by Chen Shuwen in 2023 \cite{ChenkProducts23}:
\begin{align}
\label{k024s1150}
 & [15,328,837,1150]^k = [23,200,930,1107]^k= [62,72,1025,1035]^k \\
\label{k024s2905}
 & [141, 1403 , 1643 , 2905]^k = [155 , 1113, 1909, 2867]^k= [345 , 427 , 2491, 2573]^k
\end{align}
\begin{mirrortype}
\end{mirrortype}
$\bullet$ \ref{kn024}\quad $(k=-4,-2,0)$
\begin{relatedtype}
\end{relatedtype}
$\bullet$ \ref{k012468}\quad $(k=0,1,2,4,6,8)$   \vspace {1ex} \\
\indent
$\bullet$ \ref{h0124}\quad\: $(h=0,1,2,4)$   \vspace {1ex} \\
\indent
$\bullet$ \ref{h024n1}\quad\: $(h=-1,0,2,4)$   
\\
\subsubsection{\;\;(k$\;=\;$0, 2, 6)}
\label{k026}
\begin{notation}
\end{notation}
$\bullet$ Type: $(k=0,2,6)$ \qquad $\bullet$ Abbreviation: $[$k026$]$
\begin{idealsolu}
\end{idealsolu}
$\bullet$ First known solution, smallest solution, based on computer search, by Chen Shuwen in 2017 \cite{ChenkProducts23}:
\begin{align}
\label{k026s1887}
 & [155, 779, 1455, 1887 ]^k = [ 185, 615, 1581, 1843 ]^k
\end{align}
\indent
$\bullet$ Chen Shuwen noticed that \eqref{k026s1887} also satisfies $a_1 a_4=b_1 b_3$ and $a_2 a_3=b_2 b_4$, then constructed a parametric method and obtained a new solution in 2023:
\begin{align}
\label{k026s3597}
 & [850, 2154 , 2825, 3597]^k = [935, 1795 , 3270, 3390]^k
\end{align}
\begin{mirrortype}
\end{mirrortype}
$\bullet$ \ref{kn026}\quad $(k=-6,-2,0)$
\\
\subsubsection{\;\;(k$\;=\;$0, 1, 2, 3)}
\label{k0123}
\begin{notation}
\end{notation}
$\bullet$ Type: $(k=0,1,2,3)$ \qquad $\bullet$ Abbreviation: $[$k0123$]$
\begin{idealsolu}
\end{idealsolu}
$\bullet$ Guo Xianqiang studied this system in 2000, and obtained an non-ideal solution. \vspace{1ex}\\
\indent
$\bullet$ First known ideal solution, by Chen Shuwen in 2000-2001 \cite{ChenkProducts23}:
\begin{align}
\label{k0123s52}
 & [ 4, 13, 17, 40, 50 ] ^k= [ 5, 8, 25, 34, 52 ]^k\\
\label{k0123s81}
 & [ 54, 60, 63, 77, 80 ]^k = [ 56, 56, 66, 75, 81 ]^k
\end{align}
\indent
$\bullet$ Smallest solutions, by Chen Shuwen in 2022:
\begin{align}
 & [4,7,11,20,20]^k=[5,5,14,16,22] ^k\\
 & [1,6,8,22,32]^k = [2,2,16,16,33]^k\\
 & [6,11,13,28,32]^k=[7,8,16,26,33] ^k\\
 & [10,14,21,32,33]^k = [11,12,24,28,35]^k\\
 & [15,20,21,32,34]^k = [16,17,24,30,35]^k
\end{align}
\indent
$\bullet$ Solutions of $(k=0,1,2,3)$, which lead to the smallest solutions of $(k=-3,-2,-1,0)$, by Chen Shuwen in 2022:
\begin{align}
\label{k0123s60}
 & [15,25,30,56,56]^k=[16,21,35,50,60] ^k\\
\label{k0123s48}
 & [8,15,24,42,45]^k = [9,12,30,35,48]^k\\
\label{k0123s88}
 & [7,18,33,72,84]^k=[9,12,42,63,88] ^k
\end{align}
\indent
$\bullet$ Parameter solution \eqref{parak0123} were derived by Chen Shuwen in 2023. Numerical example:
\begin{align}
\label{k0123s98}
[18, 29, 42, 91, 95]^k=[19, 26, 45, 87, 98]^k 
\end{align}
\begin{mirrortype}
\end{mirrortype}
$\bullet$ \ref{kn0123}\quad $(k=-3,-2,-1,0)$
\\
\subsubsection{\;\;(k$\;=\;$0, 1, 2, 4)}
\label{k0124}
\begin{notation}
\end{notation}
$\bullet$ Type: $(k=0,1,2,4)$ \qquad $\bullet$ Abbreviation: $[$k0124$]$
\begin{idealsolu}
\end{idealsolu}
$\bullet$ First known solution, smallest solution, by Chen Shuwen in 2017 \cite{ChenkProducts23}:
\begin{align}
\label{k0124s22}
 &  [ 3, 6, 11, 20, 20 ]^k=[ 4, 4, 15, 15, 22 ]^k \\
\label{k0124s28}
 & [ 7, 12, 14, 24, 27 ]^k=[  8, 9, 18, 21, 28 ]^k 
\end{align}
\begin{mirrortype}
\end{mirrortype}
$\bullet$ \ref{kn0124}\quad $(k=-4,-2,-1,0)$
\\
\subsubsection{\;\;(k$\;=\;$0, 1, 2, 6)}
\label{k0126}
\begin{notation}
\end{notation}
$\bullet$ Type: $(k=0,1,2,6)$ \qquad $\bullet$ Abbreviation: $[$k0126$]$
\begin{idealsolu}
\end{idealsolu}
$\bullet$ First known solution, smallest solution, based on computer search, by Chen Shuwen in 2017 \cite{ChenkProducts23}. This solution satisfies $a_1+a_2=b_1+b_2$.
\begin{align}
\label{k0126s360}
 & [ 205, 248, 255, 344, 351 ]^k = [ 215, 221, 279, 328, 360 ]^k
\end{align}
\indent
$\bullet$ Second known solution, based on computer search, by Chen Shuwen in 2022:
\begin{align}
\label{k0126s410}
 & [20, 41 , 154, 345, 408]^k = [24 , 33 , 161, 340, 410]^k
\end{align}
\begin{mirrortype}
\end{mirrortype}
$\bullet$ \ref{kn0126}\quad $(k=-6,-2,-1,0)$
\\
\subsubsection{\;\;(k$\;=\;$0, 2, 4, 6)}
\label{k0246}
\begin{notation}
\end{notation}
$\bullet$ Type: $(k=0,2,4,6)$ \qquad $\bullet$ Abbreviation: $[$k0246$]$
\begin{idealsolu}
\end{idealsolu}
$\bullet$ First known solutions, smallest solutions, based on computer search, by Chen Shuwen in 2017 \cite{ChenkProducts23}:
\begin{align}
\label{k0246s50}
 & [ 2, 16, 25, 45, 48 ]^k = [ 3, 9, 32, 40, 50 ]^k\\
\label{k0246s88}
 & [ 8, 33, 38, 68, 87 ]^k = [ 12, 17, 57, 58, 88 ]^k\\
\label{k0246s110}
 & [11,42,58,100,105]^k = [14,28,75,87,110]^k\\
 & [6,37,41,87,110]^k = [11,15,58,82,111]^k\\
 & [2,43,52,99,114]^k = [3,22,76,86,117]^k\\
 & [7,56,57,102,116]^k = [8,38,84,87,119]^k\\
 & [14,44,56,93,123]^k = [21,24,77,82,124]^k\\
 & [8,50,51,100,123]^k = [12,24,82,85,125]^k
\end{align}
\begin{mirrortype}
\end{mirrortype}
$\bullet$ \ref{kn0246}\quad $(k=-6,-4,-2,0)$
\begin{relatedtype}
\end{relatedtype}
$\bullet$ \ref{h01246}\quad\: $(h=0,1,2,4,6)$
\\
\subsubsection{\;\;(k$\;=\;$0, 1, 2, 3, 4)}
\label{k01234}
\begin{notation}
\end{notation}
$\bullet$ Type: $(k=0,1,2,3,4)$ \qquad $\bullet$ Abbreviation: $[$k01234$]$
\begin{idealsolu}
\end{idealsolu}
$\bullet$ First known solution, based on parametric method, by Chen Shuwen in 2001 \cite{ChenkProducts23}:
\begin{align}
\label{k01234s228}
 & [169, 175, 192, 215, 216, 228 ]^k = [ 171, 172, 195, 208, 224, 225 ]^k
\end{align}
\indent
$\bullet$ Smallest solutions, based on computer search, by Chen Shuwen in 2022:
\begin{align}
\label{k01234s57}
 & [13,20,22,45,45,57 ]^k = [ 15,15,27,38,52,55]^k \\
 & [23,28,39,54,56,66 ]^k = [ 24,26,44,46,63,63]^k \\
 & [3,11,14,36,58,69 ]^k = [4,6,23,29,63,66]^k \\
\label{k01234s72}
 & [5,13,20,51,54,72 ]^k = [6,9,27,40,65,68]^k \\
 & [26,34,35,60,63,72 ]^k = [ 27,30,39,56,68,70]^k 
\end{align}
\begin{mirrortype}
\end{mirrortype}
$\bullet$ \ref{kn01234}\quad $(k=-4,-3,-2,-1,0)$
\\
\subsubsection{\;\;(k$\;=\;$0, 1, 2, 3, 5)}
\label{k01235}
\begin{notation}
\end{notation}
$\bullet$ Type: $(k=0,1,2,3,5)$ \qquad $\bullet$ Abbreviation: $[$k01235$]$
\begin{idealsolu}
\end{idealsolu}
$\bullet$ First known solution, smallest solution, based on computer search, by Chen Shuwen in 2017 \cite{ChenkProducts23}:
\begin{align}
\label{k01235s245}
 & [ 28, 57, 100, 174, 200, 245 ]^k = [ 30, 49, 125, 140, 228, 232]^k
\end{align}
\indent
$\bullet$ Small solutions, based on computer search, by Chen Shuwen in 2023:
\begin{align}
\label{k01235s252}
 & [9,40,48,126,221,252]^k = [12,21,81,104,238,240]^k \\
\label{k01235s387}
 & [24,54,130,260,315,387]^k = [30,39,162,215,360,364]^k \\
 & [100,208,210,391,407,494]^k = [102,176,259,338,460,475]^k \\
 & [236,301,318,468,483,602]^k = [252,258,371,413,516,598]^k \\
 & [363,430,437,572,574,616]^k = [364,418,451,552,602,605]^k 
\end{align}
\indent
$\bullet$ Largest known solution, based on selective search, by Chen Shuwen in 2023:
\begin{align}
 & [527,610,648,990,1089,1221]^k = [549,561,682,968,1110,1215]^k
\end{align}
\begin{mirrortype}
\end{mirrortype}
$\bullet$ \ref{kn01235}\quad $(k=-5,-3,-2,-1,0)$
\\
\subsubsection{\;\;(k$\;=\;$0, 1, 2, 4, 6)}
\label{k01246}
\begin{notation}
\end{notation}
$\bullet$ Type: $(k=0,1,2,4,6)$ \qquad $\bullet$ Abbreviation: $[$k01246$]$
\begin{idealsolu}
\end{idealsolu}
$\bullet$ First known solution, smallest solution, by Chen Shuwen in 2017 \cite{ChenkProducts23}:
\begin{align}
\label{k01246s46}
 & [5, 12, 15, 32, 36, 46 ]^k = [ 6, 8, 23, 24, 40, 45]^k\\
\label{k01246s66}
 & [8, 16, 24, 45, 55, 66 ]^k = [ 10, 11, 33, 36, 60, 64]^k
\end{align}
\indent $\bullet$ Chen Shuwen obtained 54 solutions in range of 300 by computer search in 2023. Numerical examples:
\begin{align}
 & [16,26,32,55,55,68 ]^k = [17,22,40,44,64,65]^k\\
 & [9,17,30,52,56,70 ]^k = [10,14,39,40,63,68]^k\\
 & [25,37,40,64,66,78 ]^k = [26,32,48,55,74,75]^k\\
 & [41,48,56,72,77,86 ]^k = [43,44,63,64,82,84]^k\\
 & [9,21,28,58,66,88 ]^k = [11,14,42,44,72,87]^k\\
 & [13,21,39,63,85,95 ]^k = [15,17,45,57,91,91]^k\\
 & [66,120,132,235,250,299 ]^k = [69,100,165,200,282,286]^k\\
 & [44,85,128,231,238,300 ]^k = [48,70,168,176,275,289]^k
\end{align}
\begin{mirrortype}
\end{mirrortype}
$\bullet$ \ref{kn01246}\quad $(k=-6,-4,-2,-1,0)$
\\
\subsubsection{\;\;(k$\;=\;$0, 1, 2, 3, 4, 5)}
\label{k012345}
\begin{notation}
\end{notation}
$\bullet$ Type: $(k=0,1,2,3,4,5)$ \qquad $\bullet$ Abbreviation: $[$k012345$]$
\begin{idealsolu}
\end{idealsolu}
$\bullet$ First known solution, based on parametric method, by Chen Shuwen in 2001 \cite{ChenkProducts23}:
\begin{align}
\label{k012345s672}
& [480, 497, 558, 595, 616, 663, 666 ] ^k \nonumber\\
& \quad = [ 481, 495, 568, 578, 630, 651, 672]^k
\end{align}
\indent
$\bullet$ Ajai Choudhry obtained a parametric solution in integers in 2013 \cite [p.778-780]{Choudhry13}, of which numerical solution has negative integers.\vspace{+1ex}\\
\indent
$\bullet$ Smallest known solutions, based on computer search, by Chen Shuwen in 2023:
\begin{align}
& [28, 33, 44, 62, 64, 81, 87] ^k= [29, 31, 48, 54, 72, 77, 88] ^k\\
& [16, 25, 35, 62, 78, 105, 108] ^k= [18, 20, 45, 50, 91, 93, 112] ^k\\
& [55, 64, 65, 87, 92, 108, 114] ^k= [57, 58, 72, 80, 99, 104, 115] ^k\\
& [56, 64, 68, 87, 91, 114, 118] ^k= [58, 59, 76, 78, 96, 112, 119] ^k\\
& [50, 60, 65, 84, 96, 111, 119] ^k= [51, 56, 74, 75, 104, 105, 120] ^k\\
\label{k012345s124}
& [14, 20, 39, 70, 93, 117, 120] ^k= [15, 18 , 42, 65, 104, 105, 124] ^k
\end{align}
\begin{mirrortype}
\end{mirrortype}
$\bullet$ \ref{kn012345}\quad $(k=-5,-4,-3,-2,-1,0)$
\\
\subsubsection{\;\;(k$\;=\;$0, 1, 2, 3, 4, 6)}
\label{k012346}
\begin{notation}
\end{notation}
$\bullet$ Type: $(k=0,1,2,3,4,6)$ \qquad $\bullet$ Abbreviation: $[$k012346$]$
\begin{idealsolu}
\end{idealsolu}
$\bullet$ First known solutions, based on selective search, by Chen Shuwen in 2017 \cite{ChenkProducts23}:
\begin{align}
\label{k012346s92}
 & [32, 40, 41, 69, 72, 88, 90 ]^k = [ 33, 36, 45, 64, 80, 82, 92]^k\\
\label{k012346s222}
 & [14, 33, 37, 108, 112, 192, 221 ] ^k= [ 17, 21, 52, 88, 128, 189, 222]^k
\end{align}
\indent
$\bullet$ Chen Shuwen obtained all 14 solutions in range of 300 by computer search in 2023. Numerical examples:
\begin{align}
 & [31,44,55,80,104,128,142]^k = [32,40,64,71,110,124,143]^k\\
 & [16,29,36,72,94,122,143]^k = [18,22,47,61,104,116,144]^k\\
 & [62,79,83,136,142,176,184]^k = [64,71,92,124,158,166,187]^k\\
 & [51,69,78,124,130,190,192]^k = [52,64,85,114,138,186,195]^k\\
 & [4,20,25,86,104,177,198]^k = [6,9,44,65,118,172,200]^k\\
 & [59,76,93,144,146,187,218]^k = [62,68,109,118,171,176,219]^k
\end{align}
\begin{mirrortype}
\end{mirrortype}
$\bullet$ \ref{kn012346}\quad $(k=-6,-4,-3,-2,-1,0)$
\\
\subsubsection{\;\;(k$\;=\;$0, 1, 2, 4, 6, 8)}
\label{k012468}
\begin{notation}
\end{notation}
$\bullet$ Type: $(k=0,1,2,4,6,8)$ \qquad $\bullet$ Abbreviation: $[$k012468$]$
\begin{idealsolu}
\end{idealsolu}
$\bullet$ First known solution, based on \eqref{k024s1159} or \eqref{k024s1233} of $(k=0,2,4)$, and applying Piezas's Theorem \cite[p.623]{Piezas09}, by Chen Shuwen in 2023 \cite{ChenkProducts23}:
\begin{align}
\label{k012468s1233}
& [437, 497, 732, 754, 1034, 1107, 1233 ] ^k \nonumber \\
& \quad = [423, 548, 621, 902, 923, 1159, 1218]^k
\end{align}
\begin{mirrortype}
\end{mirrortype}
$\bullet$ \ref{kn012468}\quad $(k=-8,-6,-4,-2,-1,0)$
\begin{relatedtype}
\end{relatedtype}
$\bullet$ \ref{k024}\quad $(k=0,2,4)$
\\
\subsubsection{\;\;(k$\;=\;$0, 1, 2, 3, 4, 5, 6)}
\label{k0123456}
\begin{notation}
\end{notation}
$\bullet$ Type: $(k=0,1,2,3,4,5,6)$ \qquad $\bullet$ Abbreviation: $[$k0123456$]$
\begin{idealsolu}
\end{idealsolu}
$\bullet$ First known solution, based on \eqref{k1234568s501}, by Chen Shuwen in 2001 \cite{ChenkProducts23}:
\begin{align}
& [1899, 1953, 1957, 2079, 2117, 2231, 2241, 2323] ^k \nonumber \\
& \quad = [1909, 1919, 2001, 2037, 2163, 2187, 2263, 2321]^k
\end{align}
\indent
$\bullet$ Ajai Choudhry obtained a numerical solution in integers in 2013 \cite [p.780]{Choudhry13}, which has negative integers. \vspace{+1ex}\\
\indent
$\bullet$ Smallest ideal solutions, based on computer search, by Chen Shuwen in 2017:
\begin{align}
\label{k0123456s111}
 & [11, 19, 22, 54, 60, 90, 92, 111 ]^k = [ 12, 15, 27, 46, 74, 76, 99, 110]^k\\
\label{k0123456s110}
 & [7, 15, 21, 50, 53, 88, 96, 110 ]^k = [ 8, 11, 32, 33, 70, 75, 105, 106]^k
\end{align}
\indent
$\bullet$ Solutions based on computer search, by Chen Shuwen in 2023:
\begin{align}
& [43, 61, 68, 99, 132, 170, 174, 205]^k \nonumber \\
& \quad = [44, 55, 82, 86, 145, 153, 183, 204]^k\\
& [48, 66, 68, 111, 142, 187, 205, 225]^k \nonumber \\
& \quad = [51, 55, 82, 100, 153, 176, 213, 222]^k\\
& [11, 22, 36, 84, 129, 185, 210, 235]^k \nonumber \\
& \quad = [14, 15, 45, 74, 141, 172, 220, 231]^k\\
& [230, 236, 267, 268 , 305 , 330 , 361, 363]^k \nonumber \\
& \quad = [253, 228, 242, 285 , 295 , 335 , 356, 366]^k\\
& [30 , 58, 65, 159, 220 , 332, 369, 418]^k \nonumber \\
& \quad = [33, 44 , 82, 145, 234, 318, 380, 415]^k
\end{align}
\indent
$\bullet$ Solution based on parametric method, by Chen Shuwen in 2023:
\begin{align}
& [790, 850, 913, 1111, 1159, 1360, 1369, 1528]^k \nonumber \\
& \quad = [808, 814, 955, 1045, 1258, 1264, 1411, 1525]^k
\end{align}
\begin{mirrortype}
\end{mirrortype}
$\bullet$ \ref{kn0123456}\quad $(k=-6,-5,-4,-3,-2,-1,0)$
\begin{relatedtype}
\end{relatedtype}
$\bullet$ \ref{k1234568}\quad $(k=1,2,3,4,5,6,8)$
\\
\subsubsection{\;\;(k$\;=\;$0, 1, 2, 3, 4, 5, 6, 7)}
\label{k01234567}
\begin{notation}
\end{notation}
$\bullet$ Type: $(k=0,1,2,3,4,5,6,7)$ \qquad $\bullet$ Abbreviation: $[$k01234567$]$
\begin{idealsolu}
\end{idealsolu}
$\bullet$ First known solution, based on computer search with parametric method \eqref{n4m4k0to7}, by Chen Shuwen in 2023 \cite{ChenkProducts23}:
\begin{align}
\label{k01234567s610}
&  [ 382, 402, 403, 456, 485, 549, 559, 596, 608]^k \nonumber \\
& \quad =[387, 388, 416, 447, 494, 536, 573, 589, 610] ^k
\end{align}
\begin{mirrortype}
\end{mirrortype}
$\bullet$ \ref{kn01234567}\quad $(k=-7,-6,-5,-4,-3,-2,-1,0)$
\\
\subsection{GPTE with k$_{n}$=0 and all others k<0}
Ideal non-negative integer solutions of GPTE have been found for 23 types with \( k_n = 0 \) and all others \( k < 0 \).
\subsubsection{\;\;(k$\;=\;$-1, 0)}
\label{kn01}
\begin{notation}
\end{notation}
$\bullet$ Type: $(k=-1,0)$ \qquad $\bullet$ Abbreviation: $[$k0n1$]$
\begin{idealsolu}
\end{idealsolu}
$\bullet$ Smallest known solutions, based on \eqref{k01s8} and \eqref{k01s9}:
\begin{align}
 & [3,8,8]^k = [4,4,12]^k\\
 & [2,9,9]^k = [3,3,18]^k
\end{align}
\begin{idealchain}
\end{idealchain}
$\bullet$ Solution chains of length 4, based on \eqref{k01s105} and \eqref{k01s72}:
\begin{align}
& [120,630,1050]^k = [126,450,1400]^k \nonumber\\
& \quad =[140,315,1800]^k = [168,225,2100]^k\\
& [525, 1512, 1800]^k = [540, 1260, 2100]^k \nonumber \\
& \quad =[600, 945, 2520]^k = [700, 756, 2700]^k
\end{align}
\begin{mirrortype}
\end{mirrortype}
$\bullet$ \ref{k01}\quad $(k=0,1)$
\\
\subsubsection{\;\;(k$\;=\;$-2, 0)}
\label{kn02}
\begin{notation}
\end{notation}
$\bullet$ Type: $(k=-2,0)$ \qquad $\bullet$ Abbreviation: $[$k0n2$]$
\begin{idealsolu}
\end{idealsolu}
$\bullet$ Smallest known solutions, based on \eqref{k02s15} and \eqref{k02s22}:
\begin{align}
 & [4,15,30]^k=[5,6,60]^k\\
 & [10,44,55]^k = [11,20,110]^k
\end{align}
\begin{idealchain}
\end{idealchain}
$\bullet$ Ideal solution chains, based on \eqref{k02s143}, \eqref{k02s196} and \eqref{k02s4455}:
\begin{align}
& [680, 2431, 2860]^k=[ 715, 1496, 4420]^k=[ 884, 935, 5720]^k\\
& [117, 588, 1274]^k=[ 126, 273, 2548]^k=[ 147, 182, 3276]^k \\
& [239932, 669735, 1185030]^k=[241395, 630990, 1250172]^k \nonumber \\
& \quad =[274428, 385605, 1799490]^k =[294462, 343035, 1885180]^k 
\end{align}
\begin{mirrortype}
\end{mirrortype}
$\bullet$ \ref{k02}\quad $(k=0,2)$
\\
\subsubsection{\;\;(k$\;=\;$-3, 0)}
\label{kn03}
\begin{notation}
\end{notation}
$\bullet$ Type: $(k=-3,0)$ \qquad $\bullet$ Abbreviation: $[$k0n3$]$
\begin{idealsolu}
\end{idealsolu}
$\bullet$ Smallest known solutions, based on \eqref{k03s78} and \eqref{k03s80}:
\begin{align}
 & [100, 312, 975]^k=[ 120, 130, 1950]^k\\
 & [135, 400, 1080]^k=[ 144, 225, 1800]^k
\end{align}
\begin{mirrortype}
\end{mirrortype}
$\bullet$ \ref{k03}\quad $(k=0,3)$
\\
\subsubsection{\;\;(k$\;=\;$-4, 0)}
\label{kn04}
\begin{notation}
\end{notation}
$\bullet$ Type: $(k=-4,0)$ \qquad $\bullet$ Abbreviation: $[$k0n4$]$
\begin{idealsolu}
\end{idealsolu}
$\bullet$ Smallest known solutions, based on \eqref{k04s124}, \eqref{k04s196}, \eqref{k04s316}, \eqref{k04s171} and \eqref{k04s352}:
\begin{align}
 & [957, 1798, 4092]^k=[ 1023, 1276, 5394]^k\\
 & [1647,5292,5978]^k=[1708,2646,11529]^k\\
 & [3432,7584,11297]^k=[3476,6162,13728]^k\\
 & [833, 2907, 8379]^k=[ 931,1071, 20349]^k\\
 & [4830,10120,14784]^k=[5313,6160,22080]^k
\end{align}
\begin{mirrortype}
\end{mirrortype}
$\bullet$ \ref{k04}\quad $(k=0,4)$
\\
\subsubsection{\;\;(k$\;=\;$-2, -1, 0)}
\label{kn012}
\begin{notation}
\end{notation}
$\bullet$ Type: $(k=-2,-1,0)$ \qquad $\bullet$ Abbreviation: $[$k0n12$]$
\begin{idealsolu}
\end{idealsolu}
$\bullet$ Smallest known solutions, based on \eqref{k012s16} and \eqref{k012s20}:
\begin{align}
 & [10, 12, 36, 45]^k=[ 9, 18, 20, 60]^k\\
 & [16, 20, 60, 60]^k=[ 15, 24, 40, 80]^k
\end{align}
\begin{idealchain}
\end{idealchain}
$\bullet$ Ideal solution chains, based on \eqref{k012s65} and \eqref{k012s66}:
\begin{align}
& [273, 364, 1170, 1260]^k = [260, 420, 819, 1638]^k  \nonumber\\
& \quad =[252, 546, 585, 1820]^k\\
& [1617, 1980, 5390, 8820]^k = [1540, 2205, 4620, 9702]^k  \nonumber\\
& \quad =[1470, 2772, 3465, 10780]^k
\end{align}
\begin{mirrortype}
\end{mirrortype}
$\bullet$ \ref{k012}\quad $(k=0,1,2)$
\\
\subsubsection{\;\;(k$\;=\;$-3, -1, 0)}
\label{kn013}
\begin{notation}
\end{notation}
$\bullet$ Type: $(k=-3,-1,0)$ \qquad $\bullet$ Abbreviation: $[$k0n13$]$
\begin{idealsolu}
\end{idealsolu}
$\bullet$ Smallest known solutions, based on \eqref{k013s24} and \eqref{k013s35}:
\begin{align}
 & [120, 132, 330, 440]^k = [110, 165, 240, 528]^k\\
 & [105, 170, 714, 714]^k = [102, 210, 357, 1190]^k
\end{align}
\begin{mirrortype}
\end{mirrortype}
$\bullet$ \ref{k013}\quad $(k=0,1,3)$
\\
\subsubsection{\;\;(k$\;=\;$-4, -1, 0)}
\label{kn014}
\begin{notation}
\end{notation}
$\bullet$ Type: $(k=-4,-1,0)$ \qquad $\bullet$ Abbreviation: $[$k0n14$]$
\begin{idealsolu}
\end{idealsolu}
$\bullet$ Smallest known solutions, based on \eqref{k014s84} and \eqref{k014s168}:
\begin{align}
 & [ 60, 126, 140, 1008]^k = [63, 84, 280, 720]^k\\
 & [45885, 60720, 175560, 269192]^k = [43890, 85008, 96140, 367080]^k
\end{align}
\begin{mirrortype}
\end{mirrortype}
$\bullet$ \ref{k014}\quad $(k=0,1,4)$
\\
\subsubsection{\;\;(k$\;=\;$-5, -1, 0)}
\label{kn015}
\begin{notation}
\end{notation}
$\bullet$ Type: $(k=-5,-1,0)$ \qquad $\bullet$ Abbreviation: $[$k0n15$]$
\begin{idealsolu}
\end{idealsolu}
$\bullet$ Smallest known solutions, based on \eqref{k015s155} and \eqref{k015s301}:
\begin{align}
 & [6510, 7161, 30030, 44330^k = [ 6006, 10230, 14105, 71610]^k \\
 & [328692, 501423, 761852, 1079988]^k \nonumber \\
 & \quad = [326508, 581532, 610428, 1169987]^k
\end{align}
\begin{mirrortype}
\end{mirrortype}
$\bullet$ \ref{k015}\quad $(k=0,1,5)$
\\
\subsubsection{\;\;(k$\;=\;$-3, -2, 0)}
\label{kn023}
\begin{notation}
\end{notation}
$\bullet$ Type: $(k=-3,-2,0)$ \qquad $\bullet$ Abbreviation: $[$k0n23$]$
\begin{idealsolu}
\end{idealsolu}
$\bullet$ Smallest known solutions, based on \eqref{k023s16335} and \eqref{k023s2040}:
\begin{align}
 & [54978, 69360, 366520, 7009695]^k \nonumber \\
 & \quad =[57222, 64680, 778855, 3398640]^k \\
 & [3544146, 5563485, 13345695, 67711842]^k \nonumber \\
 & \quad =[3639735, 5036418, 20399445, 47649074]^k
\end{align}
\begin{mirrortype}
\end{mirrortype}
$\bullet$ \ref{k023}\quad $(k=0,2,3)$
\\
\subsubsection{\;\;(k$\;=\;$-4, -2, 0)}
\label{kn024}
\begin{notation}
\end{notation}
$\bullet$ Type: $(k=-4,-2,0)$ \qquad $\bullet$ Abbreviation: $[$k0n24$]$
\begin{idealsolu}
\end{idealsolu}
$\bullet$ Smallest known solutions, based on \eqref{k024s28} and \eqref{k024s36}:
\begin{align}
 & [27, 42, 84, 756]^k=[28, 36, 189, 378]^k\\
 & [119, 204, 252, 2142]^k=[126, 153, 476, 1428]^k
\end{align}
\begin{idealchain}
\end{idealchain}
$\bullet$ Ideal solution chains, based on \eqref{k024s1150}:
\begin{align}
& [137268, 188600, 481275, 10523880]^k  \nonumber\\
& \quad =[142600, 169740, 789291, 6863400]^k  \nonumber\\
& \quad =[152520, 154008, 2192475, 2546100]^k
\end{align}
\begin{mirrortype}
\end{mirrortype}
$\bullet$ \ref{k024}\quad $(k=0,2,4)$
\\
\subsubsection{\;\;(k$\;=\;$-6, -2, 0)}
\label{kn026}
\begin{notation}
\end{notation}
$\bullet$ Type: $(k=-6,-2,0)$ \qquad $\bullet$ Abbreviation: $[$k0n26$]$
\begin{idealsolu}
\end{idealsolu}
$\bullet$ Smallest known solution, based on \eqref{k026s1887}:
\begin{align}
\label{kn026s142587381}
& [11712265, 15189721, 28371045, 142587381]^k  \nonumber\\
& \quad =[11991885, 13979155, 35936657, 119465103]^k
\end{align}
\begin{mirrortype}
\end{mirrortype}
$\bullet$ \ref{k026}\quad $(k=0,2,6)$
\\
\subsubsection{\;\;(k$\;=\;$-3, -2, -1, 0)}
\label{kn0123}
\begin{notation}
\end{notation}
$\bullet$ Type: $(k=-3,-2,-1,0)$ \qquad $\bullet$ Abbreviation: $[$k0n123$]$
\begin{idealsolu}
\end{idealsolu}
$\bullet$ Smallest known solutions, based on \eqref{k0123s60}, \eqref{k0123s48} and \eqref{k0123s88}:
\begin{align}
& [140, 168, 240, 400, 525]^k=[ 150, 150, 280, 336, 560]^k\\
& [105, 144, 168, 420, 560]^k=[112, 120, 210, 336, 630]^k\\
& [63, 88, 132, 462, 616]^k=[66, 77, 168, 308, 792]^k
\end{align}
\begin{mirrortype}
\end{mirrortype}
$\bullet$ \ref{k0123}\quad $(k=0,1,2,3)$
\\
\subsubsection{\;\;(k$\;=\;$-4, -2, -1, 0)}
\label{kn0124}
\begin{notation}
\end{notation}
$\bullet$ Type: $(k=-4,-2,-1,0)$ \qquad $\bullet$ Abbreviation: $[$k0n124$]$
\begin{idealsolu}
\end{idealsolu}
$\bullet$ Smallest known solutions, based on \eqref{k0124s22} and \eqref{k0124s28}:
\begin{align}
& [30, 44, 44, 165, 165]^k=[ 33, 33, 60, 110, 220]^k\\
& [54, 72, 84, 168, 189]^k=[56, 63, 108, 126, 216]^k
\end{align}
\begin{mirrortype}
\end{mirrortype}
$\bullet$ \ref{k0124}\quad $(k=0,1,2,4)$
\\
\subsubsection{\;\;(k$\;=\;$-6, -2, -1, 0)}
\label{kn0126}
\begin{notation}
\end{notation}
$\bullet$ Type: $(k=-6,-2,-1,0)$ \qquad $\bullet$ Abbreviation: $[$k0n126$]$
\begin{idealsolu}
\end{idealsolu}
$\bullet$ Smallest known solutions, based on \eqref{k0126s410} and \eqref{k0126s360}:
\begin{align}
& [361284, 435666, 920040, 4488680, 6171935]^k  \nonumber\\
& \quad =[363055, 429352, 961860, 3612840, 7406322]^k \\
& [36234939, 39770055, 46754760, 59025240, 60672456]^k  \nonumber\\
& \quad =[37164040, 37920285, 51155208, 52599105, 63632088]^k
\end{align}
\begin{mirrortype}
\end{mirrortype}
$\bullet$ \ref{k0126}\quad $(k=0,1,2,6)$
\\
\subsubsection{\;\;(k$\;=\;$-6, -4, -2, 0)}
\label{kn0246}
\begin{notation}
\end{notation}
$\bullet$ Type: $(k=-6,-4,-2,0)$ \qquad $\bullet$ Abbreviation: $[$k0n246$]$
\begin{idealsolu}
\end{idealsolu}
$\bullet$ Smallest known solutions, based on \eqref{k0246s50} and \eqref{k0246s110}:
\begin{align}
& [144, 180, 225, 800, 2400]^k=[150, 160, 288, 450, 36000]^k \\
& [6380, 6699, 11550, 15950, 60900]^k=[6090, 7700, 8932, 23925, 47850]^k
\end{align}
\begin{mirrortype}
\end{mirrortype}
$\bullet$ \ref{k0246}\quad $(k=0,2,4,6)$
\\
\subsubsection{\;\;(k$\;=\;$-4, -3, -2, -1, 0)}
\label{kn01234}
\begin{notation}
\end{notation}
$\bullet$ Type: $(k=-4,-3,-2,-1,0)$ \qquad $\bullet$ Abbreviation: $[$k0n1234$]$
\begin{idealsolu}
\end{idealsolu}
$\bullet$ Smallest known solution, based on \eqref{k01234s72}:
\begin{align}
\label{k0n1234s47736}
& [3510, 3672, 5967, 8840, 26520, 39780]^k \nonumber \\
& \quad =[3315, 4420, 4680, 11934, 18360, 47736]^k
\end{align}
\begin{mirrortype}
\end{mirrortype}
$\bullet$ \ref{k01234}\quad $(k=0,1,2,3,4)$
\\
\subsubsection{\;\;(k$\;=\;$-5, -3, -2, -1, 0)}
\label{kn01235}
\begin{notation}
\end{notation}
$\bullet$ Type: $(k=-5,-3,-2,-1,0)$ \qquad $\bullet$ Abbreviation: $[$k0n1235$]$
\begin{idealsolu}
\end{idealsolu}
$\bullet$ Smallest known solutions, based on \eqref{k01235s387} and \eqref{k01235s252}:
\begin{align}
\label{k0n1235s528255}
& [32760, 40248, 48762, 97524, 234780, 528255]^k  \nonumber\\
& \quad =[34830, 35217, 58968, 78260, 325080, 422604]^k\\
& [39780, 45360, 79560, 208845, 250614, 1113840]^k  \nonumber\\
& \quad =[41769, 42120, 96390, 123760, 477360, 835380]^k
\end{align}
\begin{mirrortype}
\end{mirrortype}
$\bullet$ \ref{k01235}\quad $(k=0,1,2,3,5)$
\\
\subsubsection{\;\;(k$\;=\;$-6, -4, -2, -1, 0)}
\label{kn01246}
\begin{notation}
\end{notation}
$\bullet$ Type: $(k=-6,-4,-2,-1,0)$ \qquad $\bullet$ Abbreviation: $[$k0n1246$]$
\begin{idealsolu}
\end{idealsolu}
$\bullet$ Smallest known solutions, based on \eqref{k01246s46} and \eqref{k01246s66}:
\begin{align}
& [720, 920, 1035, 2208, 2760, 6624]^k  \nonumber\\
& \quad = [736, 828, 1380, 1440, 4140, 5520]^k\\
& [480, 576, 704, 1320, 1980, 3960]^k  \nonumber\\
& \quad = [495, 528, 880, 960, 2880, 3168]^k
\end{align}
\begin{mirrortype}
\end{mirrortype}
$\bullet$ \ref{k01246}\quad $(k=0,1,2,4,6)$
\\
\subsubsection{\;\;(k$\;=\;$-5, -4, -3, -2, -1, 0)}
\label{kn012345}
\begin{notation}
\end{notation}
$\bullet$ Type: $(k=-5,-4,-3,-2,-1,0)$ \qquad $\bullet$ Abbreviation: $[$k0n12345$]$
\begin{idealsolu}
\end{idealsolu}
$\bullet$ Smallest known solution, based on \eqref{k012345s124}:
\begin{align}
& [8190, 9672, 9765, 15624, 24180, 56420, 67704]^k\\
& \quad = [8463, 8680, 10920, 14508, 26040, 50778, 72540]^k
\end{align}
\begin{mirrortype}
\end{mirrortype}
$\bullet$ \ref{k012345}\quad $(k=0,1,2,3,4,5)$
\\
\subsubsection{\;\;(k$\;=\;$-6, -4, -3, -2, -1, 0)}
\label{kn012346}
\begin{notation}
\end{notation}
$\bullet$ Type: $(k=-6,-4,-3,-2,-1,0)$ \qquad $\bullet$ Abbreviation: $[$k0n12346$]$
\begin{idealsolu}
\end{idealsolu}
$\bullet$ Smallest known solutions, based on \eqref{k012346s92} and \eqref{k012346s222}:
\begin{align}
\label{k0n12346s933570}
& [324720, 364320, 373428, 466785, 663872, 829840, 905280]^k  \nonumber\\
& \quad = [331936, 339480, 414920, 432960, 728640, 746856, 933570]^k\\
& [9801792, 11513216, 16999983, 24727248,  \nonumber\\
& \qquad\qquad 41846112, 103618944, 127999872]^k  \nonumber\\
& \quad = [9846144, 11333322, 19428552, 20148128,  \nonumber\\
& \qquad\qquad 58810752, 65939328, 155428416]^k
\end{align}
\begin{mirrortype}
\end{mirrortype}
$\bullet$ \ref{k012346}\quad $(k=0,1,2,3,4,6)$
\\
\subsubsection{\;\;(k$\;=\;$-8, -6, -4, -2, -1, 0)}
\label{kn012468}
\begin{notation}
\end{notation}
$\bullet$ Type: $(k=-8,-6,-4,-2,-1,0)$ \qquad $\bullet$ Abbreviation: $[$k0n12468$]$
\begin{idealsolu}
\end{idealsolu}
$\bullet$ Smallest known solution, based on \eqref{k012468s1233}:
\begin{align}
& [1270470709119612, 1415077131295828, 1514981029346694,  \nonumber\\
& \qquad\qquad 2077573454037774, 2140014186262953,  \nonumber\\
& \qquad\qquad 3151892121417468, 3584646188431308]^k  \nonumber \\
& \quad = [1286116900118622, 1351587907113444, 1697172680763252,  \nonumber \\
& \qquad\qquad 1736685570226698,2522528799266476,  \nonumber\\
& \qquad\qquad 2858559095519127, 3703286960625252]^k
\end{align}
\begin{mirrortype}
\end{mirrortype}
$\bullet$ \ref{k012468}\quad $(k=0,1,2,4,6,8)$
\\
\subsubsection{\;\;(k$\;=\;$-6, -5, -4, -3, -2, -1, 0)}
\label{kn0123456}
\begin{notation}
\end{notation}
$\bullet$ Type: $(k=-6,-5,-4,-3,-2,-1,0)$ \qquad $\bullet$ Abbreviation: $[$k0n123456$]$
\begin{idealsolu}
\end{idealsolu}
$\bullet$ Smallest known solution, based on \eqref{k0123456s111}:
\begin{align}
& [89040, 102025, 111300, 184800,195888, 466400, 652960, 1399200]^k  \nonumber\\
& \quad = [92400, 93280, 130592, 139920,296800,306075, 890400, 1224300]^k
\end{align}
\begin{mirrortype}
\end{mirrortype}
$\bullet$ \ref{k0123456}\quad $(k=0,1,2,3,4,5,6)$
\\
\subsubsection{\;\;(k$\;=\;$-7, -6, -5, -4, -3, -2, -1, 0)}
\label{kn01234567}
\begin{notation}
\end{notation}
$\bullet$ Type: $(k=-7,-6,-5,-4,-3,-2,-1,0)$ \qquad $\bullet$ Abbreviation: $[$k0n1234567$]$
\begin{idealsolu}
\end{idealsolu}
$\bullet$ Smallest known solution, based on \eqref{k01234567s610}:
\begin{align}
& [8769115037417904, 9081765998004960, 9335358067757280, \nonumber\\
& \qquad 9979776441837540, 10828259459159760, 11966801281487520, \nonumber\\
& \qquad 12858558107752215, 13786495290785880, 13822119309625120]^k  \nonumber\\
& \quad = [8797960810567305, 8975100961115640, 9569159522048160,  \nonumber\\
& \qquad 9743461152686560, 11029196232628704, 11730614414089740, \nonumber\\
& \qquad 13273350304776480, 13306368589116720, 14003037101635920]^k
\end{align}
\begin{mirrortype}
\end{mirrortype}
$\bullet$ \ref{k01234567}\quad $(k=0,1,2,3,4,5,6,7)$
\\
\subsection{GPTE with k$_{1}$<0 and k$_{n}$>0}
To date, 33 distinct types of ideal non-negative integer solutions of GPTE have been identified with \( k_1 < 0 \) and \( k_n > 0 \).
\subsubsection{\;\;(k$\;=\;$-1, 1)}
\label{k1n1}
\begin{notation}
\end{notation}
$\bullet$ Type: $(k=-1,1)$ \qquad $\bullet$ Abbreviation: $[$k1n1$]$
\begin{idealsolu}
\end{idealsolu}
$\bullet$ First known solutions, smallest solutions, by Chen Shuwen in 2001 \cite{Chenkminus23}:
\begin{align}
 & [4, 10, 12 ]^k = [ 5, 6, 15]^k\\
 & [6, 14, 14 ]^k = [ 7, 9, 18]^k
\end{align}
\indent
$\bullet$ Ajai Choudhry gave a three-parameter solutions in 2011 \cite{Choudhry11}.
\begin{idealchain}
\end{idealchain}
$\bullet$ First known solution chains, based on computer search, by Chen Shuwen in 2023: 
\begin{align}
\label{k1n1s273}
 & [45,165,198]^k = [48,120,240]^k= [65,70,273]^k\\
\label{k1n1s429}
 & [44,253,276]^k = [45,198,330]^k= [66,78,429]^k
\end{align}
\indent
$\bullet$ New solution chains, based on \eqref{k1n1s429} and \eqref{k1n1s273}, by Chen Shuwen in 2023:
\begin{align}
 & [1380, 7590, 8970]^k = [1794, 2990, 13156]^k= [2145, 2340, 13455]^k\\
 & [2640, 10296, 11088]^k = [3003, 6006, 15015]^k= [3640, 4368, 16016]^k
\end{align}
\begin{mirrortype}
\end{mirrortype}
$\bullet$ None.
\\
\subsubsection{\;\;(k$\;=\;$-1, 2)}
\label{k2n1}
\begin{notation}
\end{notation}
$\bullet$ Type: $(k=-1,2)$ \qquad $\bullet$ Abbreviation: $[$k2n1$]$
\begin{idealsolu}
\end{idealsolu}
$\bullet$ First known solutions, smallest solutions, by Chen Shuwen in 2001 \cite{Chenkminus23}:
\begin{align}
\label{k2n1s91}
 & [35, 65, 84 ]^k = [ 39, 52, 91]^k\\
\label{k2n1s210}
 & [26, 143, 165 ]^k = [ 35, 55, 210]^k
\end{align}
\begin{mirrortype}
\end{mirrortype}
$\bullet$ \ref{k1n2}\quad $(k=-2,1)$
\\
\subsubsection{\;\;(k$\;=\;$-2, 1)}
\label{k1n2}
\begin{notation}
\end{notation}
$\bullet$ Type: $(k=-2,1)$ \qquad $\bullet$ Abbreviation: $[$k1n2$]$
\begin{idealsolu}
\end{idealsolu}
$\bullet$ First known solutions, based on \eqref{k2n1s91} and \eqref{k2n1s210}, by Chen Shuwen in 2001 \cite{Chenkminus23}:
\begin{align}
 & [ 60, 105, 140 ]^k = [ 65, 84, 156]^k\\
 & [143, 546, 858 ]^k = [ 182, 210, 1155]^k
\end{align}
\begin{mirrortype}
\end{mirrortype}
$\bullet$ \ref{k2n1}\quad $(k=-1,2)$
\\
\subsubsection{\;\;(k$\;=\;$-2, 2)}
\label{k2n2}
\begin{notation}
\end{notation}
$\bullet$ Type: $(k=-2,2)$ \qquad $\bullet$ Abbreviation: $[$k2n2$]$
\begin{idealsolu}
\end{idealsolu}
$\bullet$ Ajai Choudhry obtained two parametric solutions in 2001 \cite{Choudhry11}. There are four solutions less than 10000 by this method:
\begin{align}
\label{k2n2s1780}
 & [260,1157,1424]^k=[325,400,1780]^k \\
\label{k2n2s1963}
 & [77,1057,1661]^k = [91,143,1963 ]^k\\
\label{k2n2s2075}
 & [323,1411,1615]^k=[415,475,2075]^k \\
 & [527,1955,3689]^k = [575,1085,4025]^k
\end{align}
\indent
$\bullet$ New solutions less than 5000, based on computer search, by Chen Shuwen in 2017-2022 \cite{Chenkminus23}:
\begin{align}
\label{k2n2s350}
 & [22,220,275]^k=[28,35,350]^k\\
 & [207,736,744]^k=[276,279,992]^k\\
 & [60,850,1575]^k=[68,126,1785]^k \\
 & [1177,2475,2889]^k=[1375,1605,3375]^k \\
 & [3225,3655,3825]^k=[3311,3465,3927]^k \\
 & [2867,4087,4331]^k=[3149,3337,4757]^k 
\end{align}
\indent
$\bullet$ All solutions listed above satisfy $a_1 b_1=a_2 b_2=a_3 b_3$.
\begin{mirrortype}
\end{mirrortype}
$\bullet$ None.
\begin{relatedtype}
\end{relatedtype}
$\bullet$ \ref{k02n2}\quad $(k=-2,0,2)$
\\
\subsubsection{\;\;(k$\;=\;$-1, 0, 1)}
\label{k01n1}
\begin{notation}
\end{notation}
$\bullet$ Type: $(k=-1,0,1)$ \qquad $\bullet$ Abbreviation: $[$k01n1$]$
\begin{idealsolu}
\end{idealsolu}
$\bullet$ Guo Xianqiang studied this system in 2000, and obtained an non-ideal solution. \vspace{1ex}\\
\indent
$\bullet$ First known solutions, by Chen Shuwen in 2001 \cite{Chenkminus23}:
\begin{align}
 & [ 4, 10, 18, 45 ]^k = [ 5, 6, 30, 36]^k \\
 & [ 5, 15, 21, 63 ]^k = [ 7, 7, 45, 45]^k
\end{align}
\begin{idealchain}
\end{idealchain}
$\bullet$ First known ideal chains, based on a parametric solution, by Chen Shuwen in 2023:
\begin{align}
 & [210, 560, 726, 1936]^k = [220, 440, 924, 1848]^k \nonumber \\
 & \quad =[231, 385, 1056, 1760]^k = [264, 308, 1320, 1540]^k \\
 & [456, 1425, 2904, 9075]^k = [495, 1045, 3960, 8360]^k \nonumber \\
 & \quad =[550, 836, 4950, 7524]^k = [660, 660, 6270, 6270]^k
\end{align}
\begin{mirrortype}
\end{mirrortype}
$\bullet$ None.
\\
\subsubsection{\;\;(k$\;=\;$-1, 0, 2)}
\label{k02n1}
\begin{notation}
\end{notation}
$\bullet$ Type: $(k=-1,0,2)$ \qquad $\bullet$ Abbreviation: $[$k02n1$]$
\begin{idealsolu}
\end{idealsolu}
$\bullet$ First known solution, smallest solution, by Chen Shuwen in 2017 \cite{Chenkminus23}:
\begin{align}
\label{k02n1s78}
 & [6, 6, 52, 65 ]^k = [ 4, 15, 26, 78]^k\\
 & [28,39,117,270]^k = [26,45,108,273]^k\\
 & [13,33,220,234]^k = [12,45,143,286]^k\\
\label{k02n1s297}
 & [42,55,189,270]^k = [35,90,126,297]^k\\
 & [33,63,180,286]^k = [30,99,117,308]^k
\end{align}
\begin{relatedtype}
\end{relatedtype}
$\bullet$ \ref{k01n2}\quad $(k=-2,0,1)$
\\
\subsubsection{\;\;(k$\;=\;$-1, 1, 2)}
\label{k12n1}
\begin{notation}
\end{notation}
$\bullet$ Type: $(k=-1,1,2)$ \qquad $\bullet$ Abbreviation: $[$k12n1$]$
\begin{idealsolu}
\end{idealsolu}
$\bullet$ First known solutions, smallest solutions, by Chen Shuwen in 2017 \cite{Chenkminus23}:
\begin{align}
 & [15, 28, 48, 70 ]^k = [ 16, 24, 55, 66]^k\\
 & [34, 42, 63, 72 ]^k = [ 35, 40, 68, 68]^k\\
\label{k12n1s76}
 & [19,28,57,76 ]^k = [21,24,63,72]^k\\
 & [14,35,39,78 ]^k = [15,26,50,75]^k\\
 & [9,36,44,90 ]^k = [10,22,70,77]^k\\
\label{k12n1s91}
 & [36,52,52,91]^k = [39,42,60,90]^k
\end{align}
\begin{mirrortype}
\end{mirrortype}
$\bullet$ \ref{k1n12}\quad $(k=-2,-1,1)$
\\
\subsubsection{\;\;(k$\;=\;$-1, 1, 3)}
\label{k13n1}
\begin{notation}
\end{notation}
$\bullet$ Type: $(k=-1,1,3)$ \qquad $\bullet$ Abbreviation: $[$k13n1$]$
\begin{idealsolu}
\end{idealsolu}
$\bullet$ First known solutions, smallest solutions, based on computer search, by Chen Shuwen in 2017 \cite{Chenkminus23}:
\begin{align}
\label{k13n1s30}
 & [3, 10, 15, 30 ]^k = [4, 5, 21, 28]^k\\
 & [ 7,15,50,75]^k = [9,10,56,72]^k\\
 & [ 7,21,44,77]^k = [9,12,56,72]^k\\
 & [ 10,21,44,77]^k = [12,15,50,75]^k\\
 & [ 7,26,39,78 ]^k = [9,13,56,72]^k\\
\label{k13n1s78}
 & [ 10,26,39,78]^k = [13,15,50,75]^k \\
 & [ 12,26,39,78 ]^k = [13,21,44,77]^k\\
 & [ 11,33,44,84 ]^k = [14,18,63,77]^k
\end{align}
\indent
$\bullet$ 
It is obvious that a solution of $(h=-1,1,2,3)$ can be transformed to a solution of $(k=-1,1,3)$. For example, \eqref{h123n1s30} may lead to \eqref{k13n1s30}.
\begin{mirrortype}
\end{mirrortype}
$\bullet$ \ref{k1n13}\quad $(k=-3,-1,1)$
\begin{relatedtype}
\end{relatedtype}
$\bullet$ \ref{h013n1}\quad\: $(h=-1,0,1,3)$ \vspace{1ex}\\
\indent
$\bullet$ \ref{h123n1}\quad\: $(h=-1,1,2,3)$ 
\\
\subsubsection{\;\;(k$\;=\;$-1, 1, 5)}
\label{k15n1}
\begin{notation}
\end{notation}
$\bullet$ Type: $(k=-1,1,5)$ \qquad $\bullet$ Abbreviation: $[$k15n1$]$
\begin{idealsolu}
\end{idealsolu}
$\bullet$ First known solution, smallest solution, by Chen Shuwen in 2017 \cite{Chenkminus23}:
\begin{align}
\label{k15n1s891}
 &  [ 81, 374, 585, 891]^k=[85, 286, 702, 858 ]^k 
\end{align}
\begin{mirrortype}
\end{mirrortype}
$\bullet$ \ref{k1n15}\quad $(k=-5,-1,1)$ 
\begin{relatedtype}
\end{relatedtype}
$\bullet$ \ref{h015n1}\quad\: $(h=-1,0,1,5)$ 
\\
\subsubsection{\;\;(k$\;=\;$-2, 0, 1)}
\label{k01n2}
\begin{notation}
\end{notation}
$\bullet$ Type: $(k=-2,0,1)$ \qquad $\bullet$ Abbreviation: $[$k01n2$]$
\begin{idealsolu}
\end{idealsolu}
$\bullet$ First known solutions, smallest known solutions, based on \eqref{k02n1s78} and \eqref{k02n1s297}, by Chen Shuwen in 2017 \cite{Chenkminus23}:
\begin{align}
 &  [ 10, 30, 52, 195 ]^k = [12, 15, 130, 130]^k \\
 &  [ 70,165,231,594 ]^k = [77,110,378,495]^k 
\end{align}
\begin{mirrortype}
\end{mirrortype}
$\bullet$ \ref{k02n1}\quad $(k=-1,0,2)$
\\
\subsubsection{\;\;(k$\;=\;$-2, 0, 2)}
\label{k02n2}
\begin{notation}
\end{notation}
$\bullet$ Type: $(k=-2,0,2)$ \qquad $\bullet$ Abbreviation: $[$k02n2$]$
\begin{idealsolu}
\end{idealsolu}
$\bullet$ First known solutions, all based on \eqref{k2n2s2075}, by Chen Shuwen in 2014 \cite{Chenkminus23}:
\begin{align}
\label{k02n2s10375}
 & [323, 1411, 2375, 10375]^k = [415, 475, 7055, 8075]^k \\
 & [6137,30685, 34445, 172225]^k = [7885, 9025, 117113, 134045]^k \\
 & [26809,45125,117113,197125]^k = [30685,34445,153425,172225]^k 
\end{align}
\indent
$\bullet$ Smallest solutions with additional condition $a_1 a_2=a_3 a_4=b_1 b_2=b_3 b_4$, based on computer search, by Chen Shuwen in 2017:
\begin{align}
\label{k02n2s42}
 & [5,14,15,42]^k =[6, 7, 30, 35 ]^k \\
 & [3,11,12,44 ]^k=[ 4,4,33,33]^k\\
 & [21,42,52,104]^k =[24,28,78,91 ]^k \\
 & [5,20,30,120]^k =[6,8,75,100 ]^k \\
\label{k02n2s120}
 & [7,28,30,120]^k =[8,12,70,105 ]^k \\
\label{k02n2s105}
 & [7,14,60,120]^k =[8,10,84,105 ]^k \\
\label{k02n2s132}
 & [10,33,40,132]^k =[12,15,88,110 ]^k \\
 & [35,70,72,144]^k =[40,45,112,126 ]^k
\end{align}
\begin{mirrortype}
\end{mirrortype}
$\bullet$ None.
\begin{relatedtype}
\end{relatedtype}
$\bullet$ \ref{k2n2}\quad $(k=-2,2)$ \vspace{1ex}\\
\indent
$\bullet$ \ref{h012n2}\quad $(h=-2,0,1,2)$
\\
\subsubsection{\;\;(k$\;=\;$-2, -1, 1)}
\label{k1n12}
\begin{notation}
\end{notation}
$\bullet$ Type: $(k=-2,-1,1)$ \qquad $\bullet$ Abbreviation: $[$k1n12$]$
\begin{idealsolu}
\end{idealsolu}
$\bullet$ Smallest known solutions, based on \eqref{k12n1s76} and \eqref{k12n1s91}, by Chen Shuwen in 2017 \cite{Chenkminus23}:
\begin{align}
 &  [126, 168, 342, 504 ]^k  = [ 133, 152, 399, 456]^k \\
 &  [180, 315,315,455 ]^k  = [182,273,390,420]^k
\end{align}
\begin{mirrortype}
\end{mirrortype}
$\bullet$ \ref{k12n1}\quad $(k=-1,1,2)$
\\
\subsubsection{\;\;(k$\;=\;$-3, 0, 3)}
\label{k03n3}
\begin{notation}
\end{notation}
$\bullet$ Type: $(k=-3,0,3)$ \qquad $\bullet$ Abbreviation: $[$k03n3$]$
\begin{idealsolu}
\end{idealsolu}
$\bullet$ First known solution, smallest solution, by Chen Shuwen in 2017 \cite{Chenkminus23}:
\begin{align}
 & [ 15, 24, 90, 144]^k = [16, 20, 108, 135 ]^k
\end{align}
\indent
$\bullet$ Solutions based on parametric method, by Chen Shuwen in 2023:
\begin{align}
 &  [231, 693, 750, 2250 ]^k = [275, 297, 1750, 1890]^k \\
 &  [290,928,1290,4128]^k = [320,435,2752,3741]^k \\ 
 &  [6000, 9625, 11376, 18249]^k=[6160, 8400, 13035, 17775]^k\\
 &  [1643,3069,12720,23760]^k = [1860,2120,18414,20988]^k\\
 &  [2349,6699,10800,30800]^k = [2772,3080,23490,26100]^k \\
 &  [7650,12903,21600,36432]^k = [8800,9200,30294,31671]^k \\
 & [576, 3816, 10648, 70543]^k = [ 583, 1696, 23958, 69696]^k
\end{align}
\begin{mirrortype}
\end{mirrortype}
$\bullet$ None.
\\
\subsubsection{\;\;(k$\;=\;$-3, -1, 1)}
\label{k1n13}
\begin{notation}
\end{notation}
$\bullet$ Type: $(k=-3,-1,1)$ \qquad $\bullet$ Abbreviation: $[$k1n13$]$
\begin{idealsolu}
\end{idealsolu}
$\bullet$ Smallest known solutions, based on \eqref{k13n1s30} and \eqref{k13n1s78}, by Chen Shuwen in 2017 \cite{Chenkminus23}:
\begin{align}
\label{k1n13s140}
 &  [14, 28, 42, 140 ]^k = [ 15, 20, 84, 105]^k \\
 &  [25, 50, 75, 195 ]^k = [ 26, 39, 130, 150]^k
\end{align}
\begin{mirrortype}
\end{mirrortype}
$\bullet$ \ref{k13n1}\quad $(k=-1,1,3)$
\\
\subsubsection{\;\;(k$\;=\;$-5, -1, 1)}
\label{k1n15}
\begin{notation}
\end{notation}
$\bullet$ Type: $(k=-5,-1,1)$ \qquad $\bullet$ Abbreviation: $[$k1n15$]$
\begin{idealsolu}
\end{idealsolu}
$\bullet$ First known solution, based on \eqref{k15n1s891}, by Chen Shuwen in 2017 \cite{Chenkminus23}:
\begin{align}
 &  [2210, 3366, 5265, 24310 ] ^k= [ 2295, 2805, 6885, 23166]^k
\end{align}
\begin{mirrortype}
\end{mirrortype}
$\bullet$ \ref{k15n1}\quad $(k=-1,1,5)$
\\
\subsubsection{\;\;(k$\;=\;$-1, 0, 1, 2)}
\label{k012n1}
\begin{notation}
\end{notation}
$\bullet$ Type: $(k=-1,0,1,2)$ \qquad $\bullet$ Abbreviation: $[$k012n1$]$
\begin{idealsolu}
\end{idealsolu}
$\bullet$ First known solutions, smallest solutions, by Chen Shuwen in 2017 \cite{Chenkminus23}:
\begin{align}
\label{k012n1s35}
 &  [ 7, 10, 14, 32, 32]^k=[ 8, 8, 16, 28, 35 ]^k\\
\label{k012n1s40}
 &[4, 8, 10, 35, 35]^k=[ 5, 5, 14,28, 40 ] ^k\\
 &[8,14,15,35,36]^k=[9,10,21,28,40]^k\\
 &[10,15,18,33,44]^k=[11,12,22,30,45]^k\\
 &[20,28,28,44,55]^k=[22,22,35,40,56]^k\\
 &[20,27,35,54,56]^k=[21,24,42,45,60]^k
\end{align}
\begin{mirrortype}
\end{mirrortype}
$\bullet$ \ref{k01n12}\quad $(k=-2,-1,0,1)$
\\
\subsubsection{\;\;(k$\;=\;$-1, 0, 1, 3)}
\label{k013n1}
\begin{notation}
\end{notation}
$\bullet$ Type: $(k=-1,0,1,3)$ \qquad $\bullet$ Abbreviation: $[$k013n1$]$
\begin{idealsolu}
\end{idealsolu}
$\bullet$ First known solutions, by Chen Shuwen in 2017 \cite{Chenkminus23}:
\begin{align}
\label{k013n1s75}
 &[  5, 10, 21, 60, 70]^k=[ 6, 7, 28, 50, 75 ]^k\\
\label{k013n1s91}
 &[ 13, 21, 42, 78, 84]^k=[14, 18, 52, 63, 91 ] ^k
\end{align}
\begin{mirrortype}
\end{mirrortype}
$\bullet$ \ref{k01n13}\quad $(k=-3,-1,0,1)$
\\
\subsubsection{\;\;(k$\;=\;$-1, 1, 2, 3)}
\label{k123n1}
\begin{notation}
\end{notation}
$\bullet$ Type: $(k=-1,1,2,3)$ \qquad $\bullet$ Abbreviation: $[$k123n1$]$
\begin{idealsolu}
\end{idealsolu}
$\bullet$ First known solution, by Chen Shuwen in 2022 \cite{Chenkminus23}:
\begin{align}
\label{k123n1s1598}
 & [266, 494, 494, 1463, 1547]^k = [ 287, 374, 611, 1394, 1598] ^k
\end{align}
\begin{mirrortype}
\end{mirrortype}
$\bullet$ \ref{k1n123}\quad $(k=-3,-2,-1,1)$
\\
\subsubsection{\;\;(k$\;=\;$-2, 0, 2, 4)}
\label{k024n2}
\begin{notation}
\end{notation}
$\bullet$ Type: $(k=-2,0,2,4)$ \qquad $\bullet$ Abbreviation: $[$k024n2$]$
\begin{idealsolu}
\end{idealsolu}
$\bullet$ First known solution, based on computer search, by Chen Shuwen in 2017 \cite{Chenkminus23}:
\begin{align}
\label{k024n2s1376}
 & [135, 240, 612, 731, 1376 ]^k = [ 129, 340, 387, 864, 1360] ^k
\end{align}
\begin{mirrortype}
\end{mirrortype}
$\bullet$ \ref{k02n24}\quad $(k=-4,-2,0,2)$
\\
\subsubsection{\;\;(k$\;=\;$-2, -1, 0, 1)}
\label{k01n12}
\begin{notation}
\end{notation}
$\bullet$ Type: $(k=-2,-1,0,1)$ \qquad $\bullet$ Abbreviation: $[$k01n12$]$
\begin{idealsolu}
\end{idealsolu}
$\bullet$ First known solutions, based on \eqref{k012n1s35} and \eqref{k012n1s40}, by Chen Shuwen in 2017 \cite{Chenkminus23}:
\begin{align}
 & [ 7, 10, 20, 56, 56 ] = [ 8, 8, 28, 35, 70 ]^k\\
 & [ 32, 40, 70, 140, 140 ] = [ 35, 35, 80, 112, 160]^k
\end{align}
\begin{mirrortype}
\end{mirrortype}
$\bullet$ \ref{k012n1}\quad $(k=-1,0,1,2)$
\\
\subsubsection{\;\;(k$\;=\;$-2, -1, 1, 2)}
\label{k12n12}
\begin{notation}
\end{notation}
$\bullet$ Type: $(k=-2,-1,1,2)$ \qquad $\bullet$ Abbreviation: $[$k12n12$]$
\begin{idealsolu}
\end{idealsolu}
$\bullet$ First known solution, based on selective search with additional condition $a_1 b_5=a_2 b_4=a_3 b_3=a_4 b_2=a_5 b_1$, by Chen Shuwen in 2017 \cite{Chenkminus23}:
\begin{align}
 & [ 24, 42, 56, 120, 168] ^k=[25, 35, 75, 100, 175 ]^k 
\end{align}
\indent
$\bullet$ Second and third known solutions, same method as above, by Chen Shuwen in 2019 and 2022:
\begin{align}
 & [212, 318, 477, 848, 1166 ]^k = [ 216, 297, 528, 792, 1188] ^k \\
 & [ 3300, 4675, 6732, 11968, 15048 ]^k = [ 3400, 4275, 7600, 10944, 15504 ] ^k
\end{align}
\begin{mirrortype}
\end{mirrortype}
$\bullet$ None.
\\
\subsubsection{\;\;(k$\;=\;$-3, -1, 0, 1)}
\label{k01n13}
\begin{notation}
\end{notation}
$\bullet$ Type: $(k=-3,-1,0,1)$ \qquad $\bullet$ Abbreviation: $[$k01n13$]$
\begin{idealsolu}
\end{idealsolu}
$\bullet$ First known solutions, based on \eqref{k013n1s75} and \eqref{k013n1s91}, by Chen Shuwen in 2017 \cite{Chenkminus23}:
\begin{align}
 & [36, 52, 63, 182, 234 ]^k = [ 39, 42, 78, 156, 252 ]^k\\
 & [28, 42, 75, 300, 350 ]^k = [ 30, 35, 100, 210, 420]^k
\end{align}
\begin{mirrortype}
\end{mirrortype}
$\bullet$ \ref{k013n1}\quad $(k=-1,0,1,3)$
\\
\subsubsection{\;\;(k$\;=\;$-3, -2, -1, 1)}
\label{k1n123}
\begin{notation}
\end{notation}
$\bullet$ Type: $(k=-3,-2,-1,1)$ \qquad $\bullet$ Abbreviation: $[$k1n123$]$
\begin{idealsolu}
\end{idealsolu}
$\bullet$ First known solution, based on \eqref{k123n1s1598}, by Chen Shuwen in 2022 \cite{Chenkminus23}:
\begin{align}
 & [ 779779, 893893, 2039422, 3331783, 4341766 ] \nonumber \\
 &\quad = [ 805486, 851734, 2522443, 2522443, 4684537 ]^k
\end{align}
\begin{mirrortype}
\end{mirrortype}
$\bullet$ \ref{k123n1}\quad $(k=-1,1,2,3)$
\\
\subsubsection{\;\;(k$\;=\;$-4, -2, 0, 2)}
\label{k02n24}
\begin{notation}
\end{notation}
$\bullet$ Type: $(k=-4,-2,-0,2)$ \qquad $\bullet$ Abbreviation: $[$k02n24$]$
\begin{idealsolu}
\end{idealsolu}
$\bullet$ First known solution, based on \eqref{k024n2s1376}, by Chen Shuwen in 2017 \cite{Chenkminus23}:
\begin{align}
 & [2295, 4320, 5160, 13158, 23392 ]=[ 2322, 3655, 8160, 9288, 24480]
\end{align}
\begin{mirrortype}
\end{mirrortype}
$\bullet$ \ref{k024n2}\quad $(k=-2,0,2,4)$
\\
\subsubsection{\;\;(k$\;=\;$-1, 0, 1, 2, 3)}
\label{k0123n1}
\begin{notation}
\end{notation}
$\bullet$ Type: $(k=-1,0,1,2,3)$ \qquad $\bullet$ Abbreviation: $[$k0123n1$]$
\begin{idealsolu}
\end{idealsolu}
$\bullet$ First known solutions, based on computer search, by Chen Shuwen in 2017 \cite{Chenkminus23}:
\begin{align}
 & [50, 55, 70, 88, 91, 104 ]^k = [ 52, 52, 77, 77, 100, 100] ^k\\
\label{k0123n1s132}
 & [11,18,35,84,90,132 ]^k = [12,15,44,63,110,126] ^k\\
 & [40,56,63,114,120,152 ]^k = [42,48,76,95,140,144]^k\\
\label{k0123n1s153}
 & [24,40,50,100,102,153 ]^k = [25,34,68,72,120,150]^k \\
 & [10,20,25,78,117,156 ]^k = [12,13,36,65,130,150] ^k\\
 & [55,66,77,140,144,168 ]^k = [56,63,80,132,154,165] ^k\\
 & [48,60,77,120,154,175 ]^k = [50,55,84,112,165,168] ^k\\
 & [51,68,70,105,160,176 ]^k = [55,56,85,96,168,170] ^k\\
 & [7,17,17,99,99,189 ]^k = [9,9,27,77,119,187] ^k\\
 & [77,91,112,156,168,198 ]^k = [78,88,117,147,176,196] ^k
\end{align}
\begin{mirrortype}
\end{mirrortype}
$\bullet$ \ref{k01n123}\quad $(k=-3,-2,-1,0,1)$
\\
\subsubsection{\;\;(k$\;=\;$-1, 0, 1, 2, 4)}
\label{k0124n1}
\begin{notation}
\end{notation}
$\bullet$ Type: $(k=-1,0,1,2,4)$ \qquad $\bullet$ Abbreviation: $[$k0124n1$]$
\begin{idealsolu}
\end{idealsolu}
$\bullet$ First known solution, based on computer search with \eqref{n2m3k0124n1}, by Chen Shuwen in 2017 \cite{Chenkminus23}:
\begin{align}
\label{k0124n1s132}
 & [10, 14, 24, 65, 117, 132 ]^k = [ 11, 12, 26, 63, 120, 130] ^k
\end{align}
\indent
$\bullet$ Second known solution, based on computer search with \eqref{n2m3k0124n1}, by Chen Shuwen in 2022:
\begin{align}
\label{k0124n1s238}
 & [ 36, 51, 72, 130, 195, 238 ]^k = [ 40, 42, 85, 117, 204, 234 ] ^k
\end{align}
\indent
$\bullet$ Chen Shuwen noticed the above two solutions satisfy $a_1+a_6=b_1+b_6$ and $a_2+a_5=b_2+b_5$, thus obtained a parametric method in 2023. No new idea solution is found, except an integer solution: $[-63,60,27,240,374,520]=[-60,85 ,24,182,432,495]$.
\begin{mirrortype}
\end{mirrortype}
$\bullet$ \ref{k01n124}\quad $(k=-4,-2,-1,0,1)$
\\
\subsubsection{\;\;(k$\;=\;$-2, -1, 0, 1, 2)}
\label{k012n12}
\begin{notation}
\end{notation}
$\bullet$ Type: $(k=-2,-1,0,1,2)$ \qquad $\bullet$ Abbreviation: $[$k012n12$]$
\begin{idealsolu}
\end{idealsolu}
$\bullet$ First known solutions, based on selective search with additional condition $a_1 a_6=a_2 a_5=a_3 a_4=b_1 b_6=b_2 b_5=b_3 b_4$, by Chen Shuwen in 2017 \cite{Chenkminus23}:
\begin{align}
\label{k012n12s42}
& [20,24,24,35,35,42] ^k = [21,21,28,30,40,40] ^k\\
& [14,18,18,35,35,45] ^k = [15,15,21,30,42,42] ^k\\
& [6,11,11,42,42,77] ^k = [7,7,21,22,66,66] ^k \\
& [8,11,18,44,72,99] ^k = [9,9,22,36,88,88] ^k \\
& [9,15,18,55,66,110] ^k = [10,11,30,33,90,99] ^k \\
& [28,42,44,105,110,165] ^k = [30,33,60,77,140,154] ^k \\
& [32,48,48,110,110,165] ^k = [33,40,60,88,132,160] ^k \\
& [8,11,16,99,144,198] ^k = [9,9,18,88,176,176] ^k \\
& [84,99,99,168,168,198] ^k = [88,88,108,154,189,189] ^k 
\end{align}
\begin{mirrortype}
\end{mirrortype}
$\bullet$ None.
\\
\subsubsection{\;\;(k$\;=\;$-3, -1, 0, 1, 3)}
\label{k013n13}
\begin{notation}
\end{notation}
$\bullet$ Type: $(k=-3,-1,0,1,3)$ \qquad $\bullet$ Abbreviation: $[$k013n13$]$
\begin{idealsolu}
\end{idealsolu}
$\bullet$ First known solutions, based on selective search with additional condition $a_1 a_6=a_2 a_5=a_3 a_4=b_1 b_6=b_2 b_5=b_3 b_4$, by Chen Shuwen in 2017 \cite{Chenkminus23}:
\begin{align}
& [ 8, 12, 14, 36, 42, 63 ]^k = [ 9, 9, 21, 24, 56, 56] ^k\\
& [7, 11, 14, 44, 56, 88 ]^k = [ 8, 8, 22, 28, 77, 77] ^k\\
& [20,28,35,72,90,126]^k = [21,24,45,56,105,120] ^k\\
& [34,51,51,120,120,180]^k = [36,40,72,85,153,170] ^k\\
& [8,11,24,66,144,198]^k = [9,9,33,48,176,176] ^k\\
& [60,84,90,182,195,273]^k = [63,70,117,140,234,260] ^k\\
& [66,88,99,184,207,276]^k = [72,72,132,138,253,253] ^k\\
& [55,77,88,175,200,280]^k = [56,70,100,154,220,275] ^k\\
& [10,20,22,130,143,286]^k = [11,13,52,55,220,260] ^k
\end{align}
\begin{mirrortype}
\end{mirrortype}
$\bullet$ None.
\\
\subsubsection{\;\;(k$\;=\;$-3, -2, -1, 0, 1)}
\label{k01n123}
\begin{notation}
\end{notation}
$\bullet$ Type: $(k=-3,-2,-1,0,1)$ \qquad $\bullet$ Abbreviation: $[$k01n123$]$
\begin{idealsolu}
\end{idealsolu}
$\bullet$ Smallest known solutions, based on \eqref{k0123n1s132} and \eqref{k0123n1s153}, by Chen Shuwen in 2017 \cite{Chenkminus23}:
\begin{align}
\label{k01n123s1260}
 & [ 105, 154, 165, 396, 770, 1260] ^k=[110, 126, 220, 315, 924, 1155] ^k\\
 & [ 200, 300, 306, 612, 765, 1275]^k=[204, 255, 425, 450, 900, 1224] ^k
\end{align}
\begin{mirrortype}
\end{mirrortype}
$\bullet$ \ref{k0123n1}\quad $(k=-1,0,1,2,3)$
\\
\subsubsection{\;\;(k$\;=\;$-4, -2, -1, 0, 1)}
\label{k01n124}
\begin{notation}
\end{notation}
$\bullet$ Type: $(k=-4,-2,-1,0,1)$ \qquad $\bullet$ Abbreviation: $[$k01n124$]$
\begin{idealsolu}
\end{idealsolu}
$\bullet$ Smallest known solutions, based on \eqref{k0124n1s132} and \eqref{k0124n1s238}, by Chen Shuwen in 2017-2022 \cite{Chenkminus23}:
\begin{align}
 & [2730, 3080, 5544, 15015, 25740, 36036] ^k \nonumber \\
 &\quad =[ 2772, 3003, 5720, 13860, 30030, 32760] ^k \\
 & [2340, 2856, 4284, 7735, 10920,15470] ^k \nonumber \\
 &\quad =[2380, 2730, 4760, 6552, 13260, 13923] ^k
\end{align}
\begin{mirrortype}
\end{mirrortype}
$\bullet$ \ref{k0124n1}\quad $(k=-1,0,1,2,4)$
\\
\subsubsection{\;\;(k$\;=\;$-1, 0, 1, 2, 3, 4)}
\label{k01234n1}
\begin{notation}
\end{notation}
$\bullet$ Type: $(k=-1,0,1,2,3,4)$ \qquad $\bullet$ Abbreviation: $[$k01234n1$]$
\begin{idealsolu}
\end{idealsolu}
$\bullet$ First known solutions, based on computer search with \eqref{n3m3kn1to4}, by Chen Shuwen in 2017 \cite{Chenkminus23}:
\begin{align}
\label{k01234n1s153}
 & [9, 17, 21, 51, 99, 143, 143 ] ^k = [ 11, 11, 33, 39, 117, 119, 153] ^k\\
\label{k01234n1s264}
 & [56, 77, 99, 152, 174, 228, 261 ] ^k = [ 57, 72, 116, 126, 203, 209, 264] ^k
\end{align}
\begin{mirrortype}
\end{mirrortype}
$\bullet$ \ref{k01n1234}\quad $(k=-4,-3,-2,-1,0,1)$
\\
\subsubsection{\;\;(k$\;=\;$-4, -3, -2, -1, 0, 1)}
\label{k01n1234}
\begin{notation}
\end{notation}
$\bullet$ Type: $(k=-4,-3,-2,-1,0,1)$ \qquad $\bullet$ Abbreviation: $[$k01n1234$]$
\begin{idealsolu}
\end{idealsolu}
$\bullet$ First known solutions, based on \eqref{k01234n1s153} and \eqref{k01234n1s264}, by Chen Shuwen in 2017 \cite{Chenkminus23}:
\begin{align}
 & [1001, 1287, 1309, 3927, 4641, 13923, 13923] ^k \nonumber \\
 &\quad =[1071, 1071, 1547, 3003, 7293, 9009, 17017] ^k\\
 & [11571, 14616, 15048, 24244, 26334, 42427, 53592] ^k  \nonumber \\
 &\quad =[11704, 13398, 17556, 20097, 30856, 39672, 54549] ^k
\end{align}
\begin{mirrortype}
\end{mirrortype}
$\bullet$ \ref{k01234n1}\quad $(k=-1,0,1,2,3,4)$
\\
\subsubsection{\;\;(k$\;=\;$-3, -2, -1, 0, 1, 2, 3)}
\label{k0123n123}
\begin{notation}
\end{notation}
$\bullet$ Type: $(k=-3,-2,-1,0,1,2,3)$ \qquad $\bullet$ Abbreviation: $[$k0123n123$]$
\begin{idealsolu}
\end{idealsolu}
$\bullet$ First known solution, based on selective search with additional condition $a_1 a_8=a_2 a_7=a_3 a_6=a_4 a_5=b_1 b_8=b_2 b_7=b_3 b_6=b_4 b_5$, by Chen Shuwen in 2017 \cite{Chenkminus23}:
\begin{align}
\label{k0123n123s88}
 & [ 21, 24, 28, 42, 44, 66, 77, 88] ^k = [22, 22, 33, 33, 56, 56, 84, 84] ^k
\end{align}
\indent
$\bullet$ Second known solution, same method as above, by Chen Shuwen in 2019:
\begin{align}
\label{k0123n123s8866}
 & [7980, 8060, 8151, 8360, 8463, 8680, 8778, 8866] ^k  \nonumber \\
 &\quad =[8008, 8008, 8246, 8246, 8580, 8580, 8835, 8835] ^k
\end{align}
\begin{mirrortype}
\end{mirrortype}
$\bullet$ None.
\\
\clearpage
\section{Ideal integer solution of GPTE}
Until now, a total of 65 distinct types of ideal integer solutions of GPTE have been discovered, specifically categorized as \(15 + 15 + 6 + 6 + 23\), which correspond to a wide array of configurations for \( (h = h_1, h_2, \dots, h_n) \).
\subsection{GPTE with all h>0}
Ideal integer solutions of GPTE have been found for 15 types with all \( h > 0 \).
\subsubsection{\;\;(h$\;=\;$1, 2, 4)}
\label{h124}
\begin{notation}
\end{notation}
$\bullet$ Type: $(h=1,2,4)$ \qquad $\bullet$ Abbreviation: $[$h124$]$
\begin{idealsolu}
\end{idealsolu}
$\bullet$ A.Gloden \cite{Gloden44} gave solutions of this type by starting with the non-symmetric solutions of $(k=1,2)$. Numerical examples are:
\begin{align}
 & [ -7, 0, 7 ]^h = [ -8, 3, 5 ] ^h\\
 \label{h124s11}
 & [ -10, 1, 9 ]^h = [ -11, 5, 6 ] ^h\\
 & [ -12, 1, 11 ]^h = [ -13, 4, 9 ] ^h\\
 & [ -13, 0, 13 ]^h = [ -15, 7, 8] ^h
\end{align}
\begin{idealchain}
\end{idealchain}
$\bullet$ A.Gloden's method, based on the ideal non-symmetric solution chains of $(k=1,2)$. Numerical example is:
\begin{align}
\label{h124s40}
& [-48, 23, 25 ]^h = [ -47, 15, 32 ]^h = [ -45, 8, 37 ] ^h= [ -43, 3, 40 ] ^h
\end{align}
\begin{mirrortype}
\end{mirrortype}
$\bullet$ \ref{hn124}\quad\: $(h=-4,-2,-1)$ 
\begin{relatedtype}
\end{relatedtype}
$\bullet$ \ref{k12}\quad\: $(k=1,2)$ \vspace{1ex}\\
\indent
$\bullet$ \ref{k24}\quad $(k=2,4)$ \vspace{1ex}\\
\indent
$\bullet$ \ref{s124}\quad\: $(s=1,2,4)$ 
\\
\subsubsection{\;\;(h$\;=\;$1, 2, 6)}
\label{h126}
\begin{notation}
\end{notation}
$\bullet$ Type: $(h=1,2,6)$ \qquad $\bullet$ Abbreviation: $[$h126$]$
\begin{idealsolu}
\end{idealsolu}
$\bullet$ First known solution, based on $(k=2,6)$, by Chen Shuwen in 1997 \cite{Chen01,Chen23}:
\begin{align}
\label{h126s405}
 & [ -372, 43, 371 ]^h = [ -405, 140, 307 ] ^h
\end{align}
\indent
$\bullet$ Ajai Choudhry obtained a method of generating infinitely many integer solutions of this type in 1999 \cite{Choudhry00}. Numerical examples are:
\begin{align}
 & [-300, 83, 211 ]^h = [ -124, -185, 303 ] ^h\\
 & [-479, 23, 432 ]^h = [ -393, -127, 496 ] ^h
\end{align}
\indent $\bullet$ Chen Shuwen found a parametric solution in 2023. Numerical examples:
\begin{align}
 & [-311397019, 30135648, 259460683]^h \nonumber \\
 & \quad = [-186060123, -159828085, 324087520] ^h\\
 & [-194087324, 92153383, 109486381]^h  \nonumber \\
 & \quad = [-142855951, -39752836, 190161227] ^h
\end{align}
\begin{idealchain}
\end{idealchain}
\indent $\bullet$ Based on \eqref{equationP1236}, Chen Shuwen proved that there is no ideal solution chain of length $j \geq 3$ for the type $(h = 1, 2, 6)$.
\begin{mirrortype}
\end{mirrortype}
$\bullet$ \ref{hn126}\quad\: $(h=-6,-2,-1)$
\begin{relatedtype}
\end{relatedtype}
$\bullet$ \ref{k26}\quad $(k=2,6)$ 
\\
\subsubsection{\;\;(h$\;=\;$1, 3, 4)}
\label{h134}
\begin{notation}
\end{notation}
$\bullet$ Type: $(h=1,3,4)$ \qquad $\bullet$ Abbreviation: $[$h134$]$
\begin{idealsolu}
\end{idealsolu}
$\bullet$ Ajai Choudhry obtained a parametric solution \eqref{h134para1} in 1991 \cite{Choudhry91}. Numerical examples:
\begin{align}
\label{h134s6738}
 & [-3254,5583,5658]^h = [ -1329, 2578, 6738 ]^h\\
 & [-92163, 119572, 222204]^h = [2932, 18429, 228252]^h\\
 & [-104436, 125245, 334980]^h = [-4436, 22845, 337380]^h\\
 & [-305090, 473574, 683115]^h = [-201426, 337515, 715510]^h
\end{align}
\indent
$\bullet$ By computer search in 2022, Chen Shuwen confirmed that \eqref{h134s6738} is the smallest solution.\vspace{1ex}\\
\indent
$\bullet$ Chen Shuwen found a new parametric solution \eqref{h134para2} in 2023. Numerical examples:
\begin{align}
 & [-404194933, 672857102, 692925014]^h \nonumber \\
 & \quad = [-113061493, 251994062, 822654614]^h\\
 & [-4007182568, 6000329352, 11306755329]^h  \nonumber \\
 & \quad = [-3653034368, 5576043777, 11376892704]^h \\
 \label{h134s6588274387251}
 & [-2608425958605, 3669662071230, 6406539709126]^h  \nonumber \\
 & \quad = [-1353176207690, 2232677642190, 6588274387251]^h 
\end{align}
\begin{mirrortype}
\end{mirrortype}
$\bullet$ \ref{hn134}\quad $(h=-4,-3,-1)$
\\
\subsubsection{\;\;(h$\;=\;$2, 3, 4)}
\label{h234}
\begin{notation}
\end{notation}
$\bullet$ Type: $(h=2,3,4)$ \qquad $\bullet$ Abbreviation: $[$h234$]$
\begin{idealsolu}
\end{idealsolu}
$\bullet$ Ajai Choudhry obtained the first solution in 2001 \cite{Choudhry01}, by cancelling out the common term of a parametric solution for $(k=2,3,4)$. 
\begin{align}
\label{h234s1233}
 & [ -815, 358, 1224 ]^h = [ -776, -410, 1233 ]^h
\end{align}
\indent
$\bullet$ Ajai Choudhry gave a method of generating infinitely many solutions in 2003 \cite{Choudhry03}, with additional condition of $a_1+b_1=-a_2-b_2$. New numerical solutions by this method:
\begin{align}
 & [-931219912, -156845590, 378382959]^h \nonumber\\
 & \quad = [-932263416, 195748463, 357088490]^h \\
 & [-207749597674213298, -147729270939734015, 220931652694759344]^h  \nonumber\\
 & \quad = [-230043367232999423, 156840985477974094, 190461498615919440]^h\\
 & [-43992346521996843142080660865,-29910196385462462465784243024,\nonumber \\
 & \qquad \qquad 122741732517246503591277139538 ]^h \nonumber \\
 & \: =[-48660767747355170765997625774,22490972972129645890059151440,\nonumber \\
 & \qquad \qquad 122563310654814476373862529663 ]^h  
\end{align}
\indent
$\bullet$ By computer search in 2022, Chen Shuwen confirmed that there is no other solution except \eqref{h234s1233} in the range of 10000.
\begin{mirrortype}
\end{mirrortype}
$\bullet$ \ref{hn234}\quad\: $(h=-4,-3,-2)$
\\
\subsubsection{\;\;(h$\;=\;$1, 2, 3, 5)}
\label{h1235}
\begin{notation}
\end{notation}
$\bullet$ Type: $(h=1,2,3,5)$ \qquad $\bullet$ Abbreviation: $[$h1235$]$
\begin{idealsolu}
\end{idealsolu}
$\bullet$ A.Gloden gave solutions of this type by starting with the non-symmetric solutions of $(k=1,2,3)$ \cite{Gloden44} . Numerical examples are:
\begin{align}
 & [ -38, -13, 0, 51 ] ^h= [ -33, -24, 7, 50 ]^h\\
 & [  -33, -7, 1, 39 ]^h = [ -27, -21, 11, 37 ]^h\\
\label{h1235s23}
 & [ -21, -3, 1, 23 ]^h = [ -17, -13, 9, 21 ]^h\\
 & [-29, -11, 7, 33 ]^h = [ -23, -21, 13, 31 ]^h\\
 & [ -63, -35, 29, 69 ] ^h= [ -51, -51, 37, 65 ]^h
\end{align}
\indent
$\bullet$ G.Xeroudakes and A.Moessner gave a parameter solution of this type in 1958 \cite{Xeroudakes58}. 
\begin{idealchain}
\end{idealchain}
$\bullet$ First known solution chains, based on \eqref{k123s214} and \eqref{k123s234} of $(k=1,2,3)$, by Chen Shuwen in 1997  \cite{Chen01,Chen23}:
\begin{align}
\label{h1235s197}
& [ -231,11,23,197 ]^h = [ -223,-49,93,179 ]^h \nonumber \\
& \quad = [ -217,-69,137,149 ]^h \\
& [-453, -41, 11, 483]^h = [-437, -133, 99, 471]^h \nonumber \\
& \quad = [-349, -297, 219, 427]^h 
\end{align}
\indent
$\bullet$  Solution chains of length 4 and length 5, based on \eqref{k02s4455} to \eqref{k02s250582040} of $(k=0,2)$, by Ajai Choudhry and Jarosław Wróblewski in 2016.
\begin{align}
\label{h1235s7313}
& [-6179,-1081,-53,7313]^h = [-6073,-1717,529,7261]^h \nonumber \\
& \quad = [-5267,-3589,1879,6977]^h = [-5149,-3761,1957,6953]^h \\
\label{h1235s10093}
& [-9039,-1601,547,10093]^h = [-8597,-3369,2137,9829]^h \nonumber \\
& \quad = [-7457,-5501,3597,9361]^h = [-6911,-6179,3823,9267]^h \\
\label{h1235s421257161}
& [-340657577,-67159911,-13439673,421257161 ]^h \nonumber \\
& \quad = [-334774115,-101806965,17720885,418860195 ]^h \nonumber \\
& \quad = [-331821165,-113345765,27446445,417720485 ]^h \nonumber \\
& \quad = [-298614537,-190020441,80708537,407926441 ]^h \nonumber \\
& \quad = [-262648215,-238515865,97710135,403453945 ]^h
\end{align}
\begin{mirrortype}
\end{mirrortype}
$\bullet$ \ref{hn1235}\quad\: $(h=-5,-3,-2,-1)$
\begin{relatedtype}
\end{relatedtype}
$\bullet$ \ref{k123}\quad $(k=1,2,3)$ \vspace{1ex}\\
\indent
$\bullet$ \ref{k135}\quad $(k=1,3,5)$ \vspace{1ex}\\
\indent
$\bullet$ \ref{k02}\quad\: $(k=0,2)$ \vspace{1ex}\\
\indent
$\bullet$ \ref{s1235}\quad\: $(s=1,2,3,5)$
\\
\subsubsection{\;\;(h$\;=\;$1, 2, 4, 6)}
\label{h1246}
\begin{notation}
\end{notation}
$\bullet$ Type: $(h=1,2,4,6)$ \qquad $\bullet$ Abbreviation: $[$h1246$]$
\begin{idealsolu}
\end{idealsolu}
$\bullet$ First known solution, by G.Palama in 1953 \cite{Palama1953}:
\begin{align}
\label{h1246s25}
 & [ -5, -14, 23, 24 ]^h = [ -16, -2, 21, 25]^h
\end{align}
\indent
$\bullet$ Ideal solutions based on $(k=2,4,6)$, by Chen Shuwen in 1997  \cite{Chen01,Chen23}:
\begin{align}
 & [-43, -1, 49, 72 ]^h = [ -27, -23, 56, 71]^h\\
 & [-70, -13, 96, 127 ]^h = [ -47, -42, 104, 125]^h\\
 & [-107, -21, 116, 145 ]^h = [ -131, 31, 93, 140]^h\\
 & [-41, 5, 23, 48 ]^h = [ -43, 15, 16, 47]^h
\end{align}
\begin{mirrortype}
\end{mirrortype}
$\bullet$ \ref{hn1246}\quad\: $(h=-6,-4,-2,-1)$
\begin{relatedtype}
\end{relatedtype}
$\bullet$ \ref{k246}\quad $(k=2,4,6)$ 
\\
\subsubsection{\;\;(h$\;=\;$1, 2, 3, 4, 6)}
\label{h12346}
\begin{notation}
\end{notation}
$\bullet$ Type: $(h=1,2,3,4,6)$ \qquad $\bullet$ Abbreviation: $[$h12346$]$
\begin{idealsolu}
\end{idealsolu}
$\bullet$ A.Gloden gave solutions of this type in 1944 \cite{Gloden44}, by starting with the non-symmetric solutions of $(k=1,2,3,4)$. Numerical examples are:
\begin{align}
 & [ -23, -10, -5, 14, 24 ]^h = [ -21, -16, 2, 10, 25 ]^h\\
\label{h12346s17}
 & [ -17, -5, -4, 12, 14 ]^h = [ -16, -10, 3, 7, 16 ]^h\\
\label{h12346s21}
 & [ -19, -7, -6, 12, 20 ]^h = [ -16, -15, 2, 8, 21 ]^h\\
 & [-72, -32, -7, 53, 58 ]^h = [ -67, -47, 18, 28, 68 ]^h
\end{align}
\indent
$\bullet$ Ajai Choudhry obtained a parametric solution in 2000 \cite{Choudhry2000}, based on non-symmetric solution of $(k=1,2,3,4)$.
\begin{mirrortype}
\end{mirrortype}
$\bullet$ \ref{hn12346}\quad\: $(h=-6,-4,-3,-2,-1)$
\begin{relatedtype}
\end{relatedtype}
$\bullet$ \ref{k1234}\quad $(k=1,2,3,4)$ 
\\
\subsubsection{\;\;(h$\;=\;$1, 2, 3, 5, 7)}
\label{h12357}
\begin{notation}
\end{notation}
$\bullet$ Type: $(h=1,2,3,5,7)$ \qquad $\bullet$ Abbreviation: $[$h12357$]$
\begin{idealsolu}
\end{idealsolu}
$\bullet$ First known solution, by G.Palama in 1953 \cite{Palama1953}:
\begin{align}
\label{h12357s57}
 & [  -59, -5, -1, 33, 57 ]^h = [ -55, -23, 13, 39, 51 ]^h
\end{align}
\indent
$\bullet$ Solutions based on $(k=1,3,5,7)$, by Chen Shuwen in 1995  \cite{Chen01,Chen23}:
\begin{align}
 & [ -55, -11, 3, 37, 51 ]^h = [ -53, -19, 9, 43, 45 ]^h\\
 & [ -71, -33, 11, 15, 73 ]^h = [ -57, -53, 1, 39, 65 ]^h\\
 & [ -99, -13, 0, 34, 98 ]^h = [ -82, -58, 16, 69, 75 ]^h\\
 & [ -169, -31, 103, 17, 163 ]^h = [ -157, 47, -71, 121, 143 ]^h
\end{align}
\indent
$\bullet$ Chen Shuwen obtained a parametric method of generating infinitely many solutions in 2023. Numerical examples:
\begin{align}
 & [-937, -83, 139, 235, 941]^h = [-731, -589, 323, 535, 757]^h\\
 & [-2332, -47, 323, 348, 2343]^h = [-1927, -1297, 723, 1168, 1968]^h\\
 & [-2907, -1097, 923, 1593, 2873]^h \nonumber \\
 & \quad = [-2707, -1527, 1153, 1823, 2643]^h\\
 & [-4485, -1793, 1571, 2413, 4449]^h \nonumber \\
 & \quad = [-4209, -2371, 1873, 2715, 4147]^h\\
 & [-12961, -1343, 3, 4877, 12811]^h \nonumber \\
 & \quad = [-10901, -7277, 2717, 8751, 10097]^h\\
 & [-14645, -1403, 2633, 3739, 14741]^h \nonumber \\
 & \quad = [-11809, -8711, 6233, 7105, 12247]^h\\
 & [-535483, -71833, 43707, 267097, 527977]^h \nonumber \\
 & \quad = [-456023, -292403, 184817, 386867, 408207]^h
\end{align}
\begin{mirrortype}
\end{mirrortype}
$\bullet$ \ref{hn12357}\quad\: $(h=-7,-5,-3,-2,-1)$
\begin{relatedtype}
\end{relatedtype}
$\bullet$ \ref{k1357}\quad $(k=1,3,5,7)$
\\
\subsubsection{\;\;(h$\;=\;$1, 2, 4, 6, 8)}
\label{h12468}
\begin{notation}
\end{notation}
$\bullet$ Type: $(h=1,2,4,6,8)$ \qquad $\bullet$ Abbreviation: $[$h12468$]$
\begin{idealsolu}
\end{idealsolu}
$\bullet$ First known solution, by A.Letac in 1942 \cite{Letac42} \cite{Gloden44}:
\begin{align}
 & [-12, -20231, 11881, 20885, 23738 ] ^h\nonumber \\
 & \quad = [ -20449, 436, 11857, 20667, 23750 ]^h
\end{align}
\indent
$\bullet$ Ideal solutions, based on \eqref{k2468s313} and \eqref{k2468s515} by Peter Borwein et al., were derived by Chen Shuwen in 2000  \cite{Chen01,Chen23}.
\begin{align}
\label{h12468s313}
 & [-180, -131, 71, 307, 308]^h =[-313, 99, 100, 188, 301]^h\\
 & [-307, -180, 71, 131, 308]^h =[-301, -188, 99, 100, 313]^h\\
 & [-471, -18, 245, 331, 508]^h =[-515, 103, 189, 366, 452]^h
\end{align}
\indent
$\bullet$ Ideal solution, based on \eqref{k2468s4827}, by Chen Shuwen in 2023:
\begin{align}
 & [-3783,-3773,-498, 4567,4787] ^h \nonumber\\
 &\quad = [-4463,-3598,517,4017,4827]^h
\end{align}
\begin{mirrortype}
\end{mirrortype}
$\bullet$ \ref{hn12468}\quad\: $(h=-8,-6,-4,-2,-1)$
\begin{relatedtype}
\end{relatedtype}
$\bullet$ \ref{k2468}\quad $(k=2,4,6,8)$
\\
\subsubsection{\;\;(h$\;=\;$1, 2, 3, 4, 5, 7)}
\label{h123457}
\begin{notation}
\end{notation}
$\bullet$ Type: $(h=1,2,3,4,5,7)$ \qquad $\bullet$ Abbreviation: $[$h123457$]$
\begin{idealsolu}
\end{idealsolu}
$\bullet$ A.Gloden gave one solution of this type in 1940's \cite{Gloden44} by starting with the non-symmetric solution of $(k=1,2,3,4,5)$:
\begin{align}
 & [ -89, -41, -31, 33, 45, 83 ]^h = [ -87, -55, -1, 3, 61, 79 ]^h
\end{align}
\indent
$\bullet$ Ideal solution, based on $(k=1,2,3,4,5)$, by Chen Shuwen in 1997  \cite{Chen01,Chen23}:
\begin{align}
\label{h123457s71}
 & [ -71, -44, -20, 31, 37, 67 ]^h= [ -68, -53, 1, 4, 55, 61 ]^h\\
 & [-295, -199, -73, 107, 149, 311]^h= [-283, -229, -1, 17, 191, 305]^h\\
 & [-271, -169, -121, 101, 155, 305]^h \nonumber \\
 & \quad = [-241, -235, -55, 41, 191, 299]^h
\end{align}
\begin{mirrortype}
\end{mirrortype}
$\bullet$ \ref{hn123457}\quad $(h=-7,-5,-4,-3,-2,-1)$
\begin{relatedtype}
\end{relatedtype}
$\bullet$ \ref{k12345}\quad $(k=1,2,3,4,5)$
\\
\subsubsection{\;\;(h$\;=\;$1, 2, 3, 4, 6, 8)}
\label{h123468}
\begin{notation}
\end{notation}
$\bullet$ Type: $(h=1,2,3,4,6,8)$ \qquad $\bullet$ Abbreviation: $[$h123468$]$
\begin{idealsolu}
\end{idealsolu}
$\bullet$ First known solution, based on Jarosław Wróblewski's result for $(k=2,4,6,8)$ of size 6 \cite[p.6]{JW09}, by Chen Shuwen in 2019  \cite{Chen23}:
\begin{align}
\label{h123468s62}
 & [  -42, -37, -1, 30, 57, 61 ]^h = [ -47, -27, -9, 35, 54, 62]^h
\end{align}
\begin{mirrortype}
\end{mirrortype}
$\bullet$ \ref{hn123468}\quad $(h=-8,-6,-4,-3,-2,-1)$
\\
\subsubsection{\;\;(h$\;=\;$1, 2, 3, 5, 7, 9)}
\label{h123579}
\begin{notation}
\end{notation}
$\bullet$ Type: $(h=1,2,3,5,7,9)$ \qquad $\bullet$ Abbreviation: $[$h123579$]$
\begin{idealsolu}
\end{idealsolu}
$\bullet$ First known solution, based on \eqref{k13579s323}, by Chen Shuwen in 2000  \cite{Chen01,Chen23}:
\begin{align}
\label{h123579s323}
 & [ -247, -193, -59, 91, 289, 323 ]^h = [ -269, -173, -7, 29, 311, 313 ]^h
\end{align}
\indent
$\bullet$ Second and third known solutions, based on Jarosław Wróblewski's solutions \eqref{k13579s407} and \eqref{k13579s1293}, noticed by Chen Shuwen in 2023:
\begin{align}
 & [-371, -119, -37, 163, 341, 407]^h \nonumber \\
 & \quad = [-347, -181, -23, 221, 311, 403]^h\\
 & [-1279, -925, -115, 57, 679, 1293]^h  \nonumber \\
 & \quad =  [-1167, -995, -399, 299, 767, 1205]^h
\end{align}
\indent
$\bullet$ Fourth known solution, based on Jarosław Wróblewski's result \cite[p.24]{JW09}, noticed by Chen Shuwen in 2023:
\begin{align}
 & [-1111, -689, 13, 23, 1177, 1319]^h \nonumber \\
 & \quad = [-1037, -731, -305, 365, 1115, 1325]^h
\end{align}
\begin{mirrortype}
\end{mirrortype}
$\bullet$ \ref{hn123579}\quad $(h=-9,-7,-5,-3,-2,-1)$
\begin{relatedtype}
\end{relatedtype}
$\bullet$ \ref{k13579}\quad $(k=1,3,5,7,9)$ 
\\
\subsubsection{\;\;(h$\;=\;$1, 2, 4, 6, 8, 10)}
\label{h1246810}
\begin{notation}
\end{notation}
$\bullet$ Type: $(h=1,2,4,6,8,10)$ \qquad $\bullet$ Abbreviation: $[$h1246810$]$
\begin{idealsolu}
\end{idealsolu}
$\bullet$ First known solutions, based on \eqref{k246810s151}, by Chen Shuwen in 1999  \cite{Chen01,Chen23}:
\begin{align}
 & [ -22, 61, -86, 127, -140, 151 ]^h  = [ -35, 47, -94, -121, 146, 148 ]^h \\
 & [ 22, -61, -86, 127, -140, 151 ]^h  = [ 35, -47, -94, 121, 146, -148 ]^h \\
 & [ 22, -61, 86, 127, -140, 151 ]^h  = [ -35, -47, 94, -121, 146, 148 ]^h \\
\label{h1246810s151}
 & [ 22, 61, -86, -127, 140, 151 ]^h  = [ 35, 47, -94, -121, 146, 148 ]^h \\
 & [ 22, 61, -86, 127, -140, 151 ]^h  = [ -35, -47, 94, 121, -146, 148 ]^h \\
 & [ 22, 61, 86, -127, -140, 151 ]^h  = [ 35, 47, 94, -121, 146, -148 ]^h \\
 & [ 22, 61, 86, -127, 140, 151 ]^h  = [ -35, 47, -94, 121, 146, 148 ]^h 
\end{align}
\indent
$\bullet$ Ideal solution, based on \eqref{k246810s1511}, by Chen Shuwen in 2023:
\begin{align}
 & [107,622,700,1075,-1138,1511]^h=[-293,413,-886,953,1180,1510]^h
\end{align}
\begin{mirrortype}
\end{mirrortype}
$\bullet$ \ref{hn1246810}\quad $(h=-10,-8,-6,-4,-2,-1)$
\begin{relatedtype}
\end{relatedtype}
$\bullet$ \ref{k246810}\quad $(k=2,4,6,8,10)$ 
\\
\subsubsection{\;\;(h$\;=\;$1, 2, 3, 4, 5, 6, 8)}
\label{h1234568}
\begin{notation}
\end{notation}
$\bullet$ Type: $(h=1,2,3,4,5,6,8)$ \qquad $\bullet$ Abbreviation: $[$h1234568$]$
\begin{idealsolu}
\end{idealsolu}
$\bullet$ First known solutions, based on \eqref{k123456s84} and \eqref{k123456s204} , by Chen Shuwen in 1997 \cite{Chen01,Chen23}:
\begin{align}
\label{h1234568s303}
 & [-285, -187, -173, 30, 93, 226, 296]^h \nonumber \\
 &\quad = [-264, -250, -89, -47, 170, 177, 303]^h \\
 & [-725, -683, -424, 185, 479, 486, 682]^h \nonumber \\
 &\quad = [-746, -648, -445 , 241, 346, 605, 647]^h
\end{align}
\indent
$\bullet$ Smallest known solutions, based on \eqref{k123456s264}, \eqref{k123456s391} and \eqref{k123456s399}, by Chen Shuwen in 2023:
\begin{align}
& [-127,-88,-80,18,45,97,135]^h  \nonumber \\
&\quad = [-120,-110,-57,-3,73,80,137]^h\\
\label{h1234568s205}
& [-186,-146,-99,-12,95,145,203]^h  \nonumber \\
&\quad =[-177,-165,-76,-31,118,126,205]^h \\
& [-181,-127,-122,2,75,136,217]^h  \nonumber \\
&\quad =[-168,-163,-89,-17,94,125,218]^h 
\end{align}
\begin{mirrortype}
\end{mirrortype}
$\bullet$ \ref{hn1234568}\quad $(h=-8,-6,-5,-4,-3,-2,-1)$
\begin{relatedtype}
\end{relatedtype}
$\bullet$ \ref{k123456}\quad $(k=1,2,3,4,5,6)$
\\
\subsubsection{\;\;(h$\;=\;$1, 2, 3, 4, 5, 6, 7, 9)}
\label{h12345679}
\begin{notation}
\end{notation}
$\bullet$ Type: $(h=1,2,3,4,5,6,7,9)$ \qquad $\bullet$ Abbreviation: $[$h12345679$]$
\begin{idealsolu}
\end{idealsolu}
$\bullet$ First known solutions, based on \eqref{k1234567s96} and \eqref{k1234567s321}, by Chen Shuwen in 1997 \cite{Chen01,Chen23}:
\begin{align}
\label{h12345679s47}
 & [-49, -42, -26, 1, 4, 32, 33, 47]^h \nonumber \\
 &\quad = [-48, -44, -23, -7, 14, 23, 39, 46]^h\\
\label{h12345679s317}
 & [-325, -239 , -207, 7, 19, 153, 275, 317]^h\nonumber \\
 &\quad =[-323, -253, -189, -31, 65, 135, 283, 313]^h 
\end{align}
\indent
$\bullet$ Solutions based on \eqref{k1234567s249} and \eqref{k1234567s277}, by Chen Shuwen in 2019:
\begin{align}
 & [-124,-108,-61,-3,3,78,90,125]^h \nonumber \\
 &\quad = [-118,-117,-43,-36,30,60,101,123]^h\\
 & [-273,-217,-185,1,25,149,219,281]^h\nonumber \\
 &\quad =[-265,-241,-159,-41,91,105,233,277]^h 
\end{align}
\begin{mirrortype}
\end{mirrortype}
$\bullet$ \ref{hn12345679}\quad $(h=-9,-7,-6,-5,-4,-3,-2,-1)$
\begin{relatedtype}
\end{relatedtype}
$\bullet$ \ref{k1234567}\quad $(k=1,2,3,4,5,6,7)$ 
\\
\subsection{GPTE with all h<0}
So far, ideal integer solutions of GPTE have been found for 15 types with all \( h < 0 \). 
\subsubsection{\;\;(h$\;=\;$-4,  -2,  -1)}
\label{hn124}
\begin{notation}
\end{notation}
$\bullet$ Type: $(h=-4,-2,-1)$ \qquad $\bullet$ Abbreviation: $[$hn124$]$
\begin{idealsolu}
\end{idealsolu}
$\bullet$ Smallest known solution, based on \eqref{h124s11}:
\begin{align}
 & [ -99, 110, 990 ]^h = [ -90, 165, 198 ]^h
\end{align}
\begin{idealchain}
\end{idealchain}
$\bullet$ Smallest known solution, based on \eqref{h124s40}:
\begin{align}
 & [-257980650,495322848,538394400]^h \nonumber \\
 &\quad = [-263469600,386970975,825538080]^h \nonumber \\
 &\quad = [-275179360,334677600,1547883900]^h \nonumber \\
 &\quad = [-287978400,309576780,4127690400]^h
\end{align}
\begin{mirrortype}
\end{mirrortype}
$\bullet$ \ref{h124}\quad\: $(h=1,2,4)$
\\
\subsubsection{\;\;(h$\;=\;$-6,  -2,  -1)}
\label{hn126}
\begin{notation}
\end{notation}
$\bullet$ Type: $(h=-6,-2,-1)$ \qquad $\bullet$ Abbreviation: $[$hn126$]$
\begin{idealsolu}
\end{idealsolu}
$\bullet$ Smallest known solution, based on \eqref{h126s405}:
\begin{align}
\label{hn126s5719907340}
 & [-661172085, 662954220, 5719907340]^h \nonumber\\
 &\quad = [-607298804, 801159660, 1756828683]^h
\end{align}
\begin{mirrortype}
\end{mirrortype}
$\bullet$ \ref{h126}\quad\: $(h=1,2,6)$
\\
\subsubsection{\;\;(h$\;=\;$-4,  -3,  -1)}
\label{hn134}
\begin{notation}
\end{notation}
$\bullet$ Type: $(h=-4,-3,-1)$ \qquad $\bullet$ Abbreviation: $[$hn134$]$
\begin{idealsolu}
\end{idealsolu}
$\bullet$ First known solution, based on \eqref{h134s6738}:
\begin{align}
 & [-3376103253237849, 1941647222699887,1967730608281562]^h \nonumber\\
 &\quad = [-8266245286708774, 1630430392703467, 4261380910021707]^h
\end{align}
\begin{mirrortype}
\end{mirrortype}
$\bullet$ \ref{h134}\quad $(h=1,3,4)$
\\
\subsubsection{\;\;(h$\;=\;$-4,  -3,  -2)}
\label{hn234}
\begin{notation}
\end{notation}
$\bullet$ Type: $(h=-4,-3,-2)$ \qquad $\bullet$ Abbreviation: $[$hn234$]$
\begin{idealsolu}
\end{idealsolu}
$\bullet$ First known solution, based on \eqref{h234s1233}:
\begin{align}
 & [-119374236504,79485296365,271759784220]^h \nonumber\\
 &\quad = [ -237292689636,-125373714885,  78905111720]^h
\end{align}
\begin{mirrortype}
\end{mirrortype}
$\bullet$ \ref{h234}\quad $(h=2,3,4)$
\\
\subsubsection{\;\;(h$\;=\;$-5,  -3,  -2,  -1)}
\label{hn1235}
\begin{notation}
\end{notation}
$\bullet$ Type: $(h=-5,-3,-2,-1)$ \qquad $\bullet$ Abbreviation: $[$hn1235$]$
\begin{idealsolu}
\end{idealsolu}
$\bullet$ Smallest known solution, based on \eqref{h1235s23}:
\begin{align}
 & [-106743,-15249,  13923, 320229]^h \nonumber\\
 &\quad = [-24633,-18837, 15249,  35581]^h
\end{align}
\begin{idealchain}
\end{idealchain}
$\bullet$ First known solution chain, based on \eqref{h1235s197}:
\begin{align}
 & [-801158649181667, 939429685081041,\nonumber\\
 &\qquad \qquad 8046419476563699, 16824331632815007]^h \nonumber\\
 &\quad =  [-3776890774713573, -829899766641099, \nonumber\\
 &\qquad \qquad 1033897474642263,1989974709257689]^h \nonumber\\
 &\quad = [-2682139825521233, -852846303967581,\nonumber \\
 &\qquad \qquad  1242064751415873,1350858744240621]^h
\end{align}
\begin{mirrortype}
\end{mirrortype}
$\bullet$ \ref{h1246}\quad $(h=1,2,3,5)$
\\
\subsubsection{\;\;(h$\;=\;$-6,  -4,  -2,  -1)}
\label{hn1246}
\begin{notation}
\end{notation}
$\bullet$ Type: $(h=-6,-4,-2,-1)$ \qquad $\bullet$ Abbreviation: $[$hn1246$]$
\begin{idealsolu}
\end{idealsolu}
$\bullet$ Smallest known solution, based on \eqref{h1246s25}:
\begin{align}
 & [-8400, -8050, 13800, 38640]^h= [-9200, -7728, 12075, 96600]^h 
\end{align}
\begin{mirrortype}
\end{mirrortype}
$\bullet$ \ref{h1246}\quad $(h=1,2,4,6)$
\\
\subsubsection{\;\;(h$\;=\;$-6,  -4,  -3,  -2,  -1)}
\label{hn12346}
\begin{notation}
\end{notation}
$\bullet$ Type: $(h=-6,-4,-3,-2,-1)$ \qquad $\bullet$ Abbreviation: $[$hn12346$]$
\begin{idealsolu}
\end{idealsolu}
$\bullet$ Smallest known solution, based on \eqref{h12346s17}:
\begin{align}
 & [-7140, -5712, -1680, 2040, 2380]^h \nonumber \\
 &\quad = [-2856, -1785, 1785, 4080, 9520]^h
\end{align}
\begin{mirrortype}
\end{mirrortype}
$\bullet$ \ref{h12346}\quad $(h=1,2,3,4,6)$
\\
\subsubsection{\;\;(h$\;=\;$-7,  -5,  -3,  -2,  -1)}
\label{hn12357}
\begin{notation}
\end{notation}
$\bullet$ Type: $(h=-7,-5,-3,-2,-1)$ \qquad $\bullet$ Abbreviation: $[$hn12357$]$
\begin{idealsolu}
\end{idealsolu}
$\bullet$ Smallest known solution, based on \eqref{h12357s57}:
\begin{align}
 \label{hn12357s940177095}
 & [-72321315, -24107105, -18434845, 17094129, 40877265]^h \nonumber \\
 &\quad = [-28490215, -16494335, 15935205, 188035419, 940177095]^h
\end{align}
\begin{mirrortype}
\end{mirrortype}
$\bullet$ \ref{h12357}\quad $(h=1,2,3,5,7)$
\\
\subsubsection{\;\;(h$\;=\;$-8,  -6,  -4,  -2,  -1)}
\label{hn12468}
\begin{notation}
\end{notation}
$\bullet$ Type: $(h=-8,-6,-4,-2,-1)$ \qquad $\bullet$ Abbreviation: $[$hn12468$]$
\begin{idealsolu}
\end{idealsolu}
$\bullet$ Smallest known solution, based on \eqref{h12468s313}:
\begin{align}
 & [-955521822093300, -695407548301235, 406407008747475,\nonumber\\
 &\qquad\qquad 407730810078900,1763005052031300]^h\nonumber\\
 &\quad = [-399914884007100, 415858334532300, 665815737735225,\nonumber\\
 &\qquad\qquad 1251733586942223,1264377360547700]^h
\end{align}
\begin{mirrortype}
\end{mirrortype}
$\bullet$ \ref{h12468}\quad $(h=1,2,4,6,8)$
\\
\subsubsection{\;\;(h$\;=\;$-7,  -5,  -4,  -3,  -2,  -1)}
\label{hn123457}
\begin{notation}
\end{notation}
$\bullet$ Type: $(h=-7,-5,-4,-3,-2,-1)$ \qquad $\bullet$ Abbreviation: $[$hn123457$]$
\begin{idealsolu}
\end{idealsolu}
$\bullet$ Smallest known solution, based on \eqref{h123457s71}:
\begin{align}
 & [-1244795491060, -970208250385, 1081543623380,\nonumber\\
 &\qquad\qquad 1199530200476, 16493540256545, 65974161026180]^h\nonumber\\
 &\quad = [-3298708051309, -1499412750595, -929213535580,\nonumber\\
 &\qquad\qquad 984688970540, 1783085433140, 2128198742780]^h
\end{align}
\begin{mirrortype}
\end{mirrortype}
$\bullet$ \ref{h123457}\quad $(h=1,2,3,4,5,7)$
\\
\subsubsection{\;\;(h$\;=\;$-8,  -6,  -4,  -3,  -2,  -1)}
\label{hn123468}
\begin{notation}
\end{notation}
$\bullet$ Type: $(h=-8,-6,-4,-3,-2,-1)$ \qquad $\bullet$ Abbreviation: $[$hn123468$]$
\begin{idealsolu}
\end{idealsolu}
$\bullet$ First known solution, based on \eqref{h123468s62}:
\begin{align}
 & [-13120911510, -4373637170, -2512514970,\nonumber\\
 &\qquad\qquad 1904648445, 2186818585, 3373948674]^h \nonumber\\
 &\quad = [-118088203590, -3191573070, -2811623895,\nonumber\\
 &\qquad\qquad 1935872190, 2071722870, 3936273453]^h
\end{align}
\begin{mirrortype}
\end{mirrortype}
$\bullet$ \ref{h123468}\quad $(h=1,2,3,4,6,8)$
\\
\subsubsection{\;\;(h$\;=\;$-9,  -7,  -5,  -3,  -2,  -1)}
\label{hn123579}
\begin{notation}
\end{notation}
$\bullet$ Type: $(h=-9,-7,-5,-3,-2,-1)$ \qquad $\bullet$ Abbreviation: $[$hn123579$]$
\begin{idealsolu}
\end{idealsolu}
$\bullet$ First known solution, based on \eqref{h123579s323}:
\begin{align}
 & [-25775391257919598477, -2403493075497325903, \nonumber\\
 &\qquad\qquad -2388135292267311041,2778759652340774557,\nonumber\\
 &\qquad\qquad 4320730326472071421,106783763782809765119]^h\nonumber\\
 &\quad = [-8214135675600751163, -2586457946296430297,\nonumber\\
 &\qquad\qquad  -2314199215107332371,3026260512063434639,\nonumber\\
 &\qquad\qquad  3872986251189991481, 12669260109824887387]^h
\end{align}
\begin{mirrortype}
\end{mirrortype}
$\bullet$ \ref{h123579}\quad $(h=1,2,3,5,7,9)$
\\
\subsubsection{\;\;(h$\;=\;$-10,  -8,  -6,  -4,  -2,  -1)}
\label{hn1246810}
\begin{notation}
\end{notation}
$\bullet$ Type: $(h=-10,-8,-6,-4,-2,-1)$ \qquad $\bullet$ Abbreviation: $[$hn1246810$]$
\begin{idealsolu}
\end{idealsolu}
$\bullet$ Smallest known solution, based on \eqref{h1246810s151}:
\begin{align}
 & [-3090628197624308, -1150765818264370, 730891803492235, \nonumber\\
 &\qquad\qquad 740904019978430, 893983362949180, 2301531636528740]^h\nonumber\\
 &\quad = [-851747928479140, 716370774283780, 772657049406077,\nonumber\\
 &\qquad\qquad  1257813801358730, 1773311260931980, 4916908496220490]^h
\end{align}
\begin{mirrortype}
\end{mirrortype}
$\bullet$ \ref{h1246810}\quad $(h=1,2,4,6,8,10)$
\\
\subsubsection{\;\;(h$\;=\;$-8,  -6,  -5,  -4,  -3,  -2,  -1)}
\label{hn1234568}
\begin{notation}
\end{notation}
$\bullet$ Type: $(h=-8,-6,-5,-4,-3,-2,-1)$ \qquad $\bullet$ Abbreviation: $[$hn1234568$]$
\begin{idealsolu}
\end{idealsolu}
$\bullet$ Smallest known solution, based on \eqref{h1234568s205}:
\begin{align}
 & [-354285390690, -331791080170, -203930127324, 236190260460,  \nonumber\\
 &\qquad\qquad 253367733948, 550074685545, 1348570196820]^h\nonumber\\
 &\quad = [-440059748436, -288315007596, -205939291140, 224761699470, \nonumber\\
 &\qquad\qquad 286340247270, 422279556580, 3483806341785]^h 
\end{align}
\begin{mirrortype}
\end{mirrortype}
$\bullet$ \ref{h1234568}\quad $(h=1,2,3,4,5,6,8)$
\\
\subsubsection{\;\;(h$\;=\;$-9,  -7,  -6,  -5,  -4,  -3,  -3,  -2,  -1)}
\label{hn12345679}
\begin{notation}
\end{notation}
$\bullet$ Type: $(h=-9,-7,-6,-5,-4,-3,-2,-1)$ \vspace{1ex}\\
\indent
$\bullet$ Abbreviation: $[$hn12345679$]$
\begin{idealsolu}
\end{idealsolu}
$\bullet$ Smallest known solution, based on \eqref{h12345679s47}:
\begin{align}
 & [-103879776, -31615584, -16526328, -15149134,\nonumber\\
 &\qquad\qquad 15807792, 18645088, 31615584, 51939888]^h\nonumber\\
 &\quad = [-27967632, -17313296, -14839968, 15471456,\nonumber\\
 &\qquad\qquad 22035104, 22723701, 181789608, 727158432]^h
\end{align}
\begin{mirrortype}
\end{mirrortype}
$\bullet$ \ref{h12345679}\quad $(h=1,2,3,4,5,6,7,9)$
\\
\subsection{GPTE with h$_{1}$=0 and all others h>0}
Until now, ideal integer solutions of GPTE have been discovered for 6 types with \( h_1 = 0 \) and all others \( h > 0 \). 
\subsubsection{\;\;(h$\;=\;$0, 1, 3)}
\label{h013}
\begin{notation}
\end{notation}
$\bullet$ Type: $(h=0,1,3)$ \qquad $\bullet$ Abbreviation: $[$h013$]$
\begin{idealsolu}
\end{idealsolu}
$\bullet$ First known solution, by A.Moessner in 1939:
\begin{align}
\label{h013s15}
 & [ -15, 1, 14]^h = [ -10, 3, 7 ]^h
\end{align}
\indent
$\bullet$ Ajai Choudhry gave a parametric solution in 1991 \cite{Choudhry91}.\\[1mm]
\indent
$\bullet$ Titus Piezas gave a three-parameter solution in 2009 \cite[p.395]{Piezas09}. Numerical examples:
\begin{align}
 & [ -16,1,15]^h = [-12,2,10]^h\\
 & [ -44,5,39]^h = [-33,13,20]^h
\end{align}
\begin{idealchain}
\end{idealchain}
$\bullet$ First known solution chains, by Ajai Choudhty in 2010 \cite{Choudhry10}. Numerical example:
\begin{align}
 & [-517561088, 12130338, 505430750]^h \nonumber \\
 & \quad =[-281011375, 48582831, 232428544]^h \nonumber \\
 & \quad =[-242606760, 80868920, 161737840]^h 
\end{align}
\indent
$\bullet$ Smallest solution chains of length 6 and length 9, based on computer search of $(k=1,3)$, by Chen Shuwen in 2013 \cite{ChenkProducts23}:
\begin{align}
\label{h013s231}
 & [-240,9,231]^h=[-210,12,198]^h=[-165,21,144]^h \nonumber \\
 & \quad = [-162,22,140]^h=[-132,42,90]^h=[-126,60,66]^h \\
\label{h013s1610}
 & [-1632,22,1610]^h=[-1564,24,1540]^h=[-1012,60,952]^h \nonumber \\
 & \quad =[-840,92,748]^h=[-782,110,672]^h=[-759,119,640]^h \nonumber \\
 & \quad = [-660,184,476]^h=[-644,204,440]^h=[-616,276,340]^h
\end{align}
\begin{mirrortype}
\end{mirrortype}
$\bullet$ \ref{h0n13} \quad $(h=-3,-1,0)$
\begin{relatedtype}
\end{relatedtype}
$\bullet$ \ref{k13} \quad $(k=1,3)$ 
\\
\subsubsection{\;\;(h$\;=\;$0, 1, 2, 4)}
\label{h0124}
\begin{notation}
\end{notation}
$\bullet$ Type: $(h=0,1,2,4)$ \qquad $\bullet$ Abbreviation: $[$h0124$]$
\begin{idealsolu}
\end{idealsolu}
$\bullet$ Ajai Choudhry obtained a three-parameter solution in 2013 \cite [p.781-782]{Choudhry13}. Numerical example:
\begin{align}
\label{h0124s134}
 & [ -138, 9, 62, 67 ]^h = [ -93, -23, -18, 134 ]^h
\end{align}
\indent
$\bullet$ Smallest solutions, based on \eqref{k024s28} to \eqref{k024s39}, by Chen Shuwen in 2023 \cite{ChenkProducts23}:
\begin{align}
\label{h0124s28}
 & [-28,1,9,18]^h = [-27,2,4,21]^h \\
 & [-36,1,17,18]^h = [-34,3,4,27]^h \\
 & [-36,-2,17,21]^h = [-34,-3,9,28]^h \\
 & [-39,2,18,19]^h = [-38,3,9,26]^h 
\end{align}
\begin{idealchain}
\end{idealchain}
$\bullet$ First known solution chains, based on \eqref{k024s1150} and \eqref{k024s2905}, by Chen Shuwen in 2023:
\begin{align}
\label{h0124s1150}
 & [-1025, -72, 62, 1035]^h = [-930, -200, 23, 1107]^h \nonumber \\
 & \quad = [-837, -328, 15, 1150]^h\\
\label{h0124s2905}
& [-2491,-427,345,2573]^h = [-1909,-1113,155,2867]^h  \nonumber \\
& \quad =  [-1643,-1403,141,2905]^h
\end{align}
\begin{mirrortype}
\end{mirrortype}
$\bullet$ \ref{h0n124} \quad\: $(h=-4,-2,-1,0)$
\begin{relatedtype}
\end{relatedtype}
$\bullet$ \ref{k024} \quad $(k=0,2,4)$
\\
\subsubsection{\;\;(h$\;=\;$0, 1, 3, 5)}
\label{h0135}
\begin{notation}
\end{notation}
$\bullet$ Type: $(h=0,1,3,5)$ \qquad $\bullet$ Abbreviation: $[$h0135$]$
\begin{idealsolu}
\end{idealsolu}
$\bullet$ First known solutions, based on computer search of $(k=1,3,5)$, by Chen Shuwen in 2017 \cite{ChenkProducts23}:
\begin{align}
\label{h0135s323}
 & [ -260, 32, 189, 323 ]^h = [ -210, 68, 114, 312]^h\\
\label{h0135s703}
 & [ -555, -21, 646, 650 ]^h = [ -255, -78, 350, 703]^h
\end{align}
\indent
$\bullet$ Third known solution, same method as above, by Chen Shuwen in 2023:
\begin{align}
\label{h0135s1581}
 & [-1518, 14, 1363, 1581 ] ^h= [ -561, 138, 406, 1457]^h
\end{align}
\indent
$\bullet$ Chen Shuwen noted that both \eqref{h0135s703} and \eqref{h0135s1581} also satisfy the condition $a_1+a_3=b_1+b_3$ and $a_2+a_4=b_2+b_4$, then constructed a parametric method, and obtained a new numerical solution in 2023.
\begin{align}
 & [-5292, 820, 3320, 6235] ^h= [-5375, 783, 3403, 6272]^h
\end{align}
\begin{mirrortype}
\end{mirrortype}
$\bullet$ \ref{h0n135} \quad\: $(h=-5,-3,-1,0)$
\begin{relatedtype}
\end{relatedtype}
$\bullet$ \ref{k135} \quad $(k=1,3,5)$ 
\\
\subsubsection{\;\;(h$\;=\;$0, 1, 2, 3, 5)}
\label{h01235}
\begin{notation}
\end{notation}
$\bullet$ Type: $(h=0,1,2,3,5)$ \qquad $\bullet$ Abbreviation: $[$h01235$]$
\begin{idealsolu}
\end{idealsolu}
$\bullet$ Ajai Choudhry obtained two parametric solutions in 2013 \cite [p.782-785]{Choudhry13}. Numerical examples:
\begin{align}
\label{h01235s28}
 & [ -21, -20, 2, 13, 26 ]^h = [ -26, -13, 5, 6, 28]^h \\
 & [-1314,-209,116,136,1271]^h = [-1189,-584,11,646,1116]^h\\
 & [-2068, -369, 104, 742, 1591]^h = [-2021, -533, 56, 1166, 1332]^h \\
 & [-8308, -3128, 1157, 2322, 7957]^h = [-7303, -4968, 442, 5332, 6497]^h
\end{align}
\indent
$\bullet$ Chen Shuwen found a new parametric method in 2023 \cite{ChenkProducts23}. Numerical examples:
\begin{align}
\label{h01235s15}
 & [ -14,-7,2,4,15 ]^h = [-12,-10,1,7,14]^h \\
 & [ -69,-17,4,30,52 ]^h = [-68,-20,3,39,46]^h \\
 & [ -77,-40,19,20,78 ]^h = [-65,-57,8,44,70]^h \\
 & [ -72,-36,7,14,87 ]^h = [-58,-56,3,27,84]^h \\
 & [-108,-41,17,18,114]^h = [-82,-81,4,57,102]^h \\
 & [-104,-66,27,38,105]^h = [-95,-78,18,56,99]^h 
\end{align}
\begin{mirrortype}
\end{mirrortype}
$\bullet$ \ref{h0n1235} \quad $(h=-5,-3,-2,-1,0)$
\\
\subsubsection{\;\;(h$\;=\;$0, 1, 2, 4, 6)}
\label{h01246}
\begin{notation}
\end{notation}
$\bullet$ Type: $(h=0,1,2,4,6)$ \qquad $\bullet$ Abbreviation: $[$h01246$]$
\begin{idealsolu}
\end{idealsolu}
$\bullet$ First known solutions, based on \eqref{k0246s50} to \eqref{k0246s110}, by Chen Shuwen in 2017 \cite{ChenkProducts23}:
\begin{align}
\label{h01246s50}
 & [-48, -2, 16, 25, 45 ]^h = [-40, -9, 3, 32, 50]^h\\
 & [-68, -8, 33, 38, 87]^h = [-58, -17, 12, 57, 88]^h\\
\label{h01246s110}
 & [-105, -11, 42, 58, 100]^h = [-87, -28, 14, 75, 110]^h
\end{align}
\begin{mirrortype}
\end{mirrortype}
$\bullet$ \ref{h0n1246} \quad\: $(h=-6,-4,-2,-1,0)$
\begin{relatedtype}
\end{relatedtype}
$\bullet$ \ref{k0246} \quad $(k=0,2,4,6)$ 
\\
\subsubsection{\;\;(h$\;=\;$0, 1, 2, 3, 4, 6)}
\label{h012346}
\begin{notation}
\end{notation}
$\bullet$ Type: $(h=0,1,2,3,4,6)$ \qquad $\bullet$ Abbreviation: $[$h012346$]$
\begin{idealsolu}
\end{idealsolu}
$\bullet$ Ajai Choudhry obtained a method for infinitely many solutions in 2013 \cite [p.785-787]{Choudhry13}. Numerical solution:
\begin{align}
\label{h012346s11716}
 & [-11716, -6437, -1460, 1175, 7897, 10541 ]^h \nonumber \\
 & \quad = [ -10865, -8383, -596, 3683, 4700, 11461]^h
\end{align}
\indent
$\bullet$ Chen Shuwen found a new parametric method in 2023 \cite{ChenkProducts23}. Numerical examples:
\begin{align}
\label{h012346s50}
 & [-45, -25, -16, 2, 36, 48]^h = [-40, -36, -9, 3, 32, 50]^h\\
\label{h012346s65}
 & [-52, -23, -20, 5, 26, 64]^h = [-46, -40, -8, 13, 16, 65]^h\\
 & [-88, -58, 12, 17, 57, 60]^h = [-87, -60, 8, 33, 38, 68]^h\\
 & [-68, -33, -32, 8, 38, 87]^h = [-58, -57, -17, 12, 32, 88]^h\\
 & [-77, -33, -31, 5, 45, 91]^h = [-65, -63, -11, 11, 35, 93]^h\\
 & [-110, -94, 14, 28, 75, 87]^h = [-105, -100, 11, 42, 58, 94]^h\\
 & [-113, -82, 11, 15, 58, 111]^h = [-110, -87, 6, 37, 41, 113]^h\\
 & [-102, -56, -33, 7, 68, 116]^h = [-88, -84, -17, 12, 58, 119]^h
\end{align}
\begin{mirrortype}
\end{mirrortype}
$\bullet$ \ref{h0n12346} \quad $(h=-6,-4,-3,-2,-1,0)$
\\
\subsection{GPTE with h$_{n}$=0 and all others h<0}
Ideal integer solutions of GPTE have been found for 6 types with \( h_n = 0 \) and all others \( h < 0 \).
\subsubsection{\;\;(h$\;=\;$-3, -1, 0)}
\label{h0n13}
\begin{notation}
\end{notation}
$\bullet$ Type: $(h=-3,-1,0)$ \qquad $\bullet$ Abbreviation: $[$h0n13$]$
\begin{idealsolu}
\end{idealsolu}
$\bullet$ Smallest known solution, based on \eqref{h013s15}:
\begin{align}
& [-21,30,70]^h= [-14,15,210]^h
\end{align}
\begin{idealchain}
\end{idealchain}
$\bullet$ Smallest known solution chains of length 6 and length 9, based on \eqref{h013s231} and \eqref{h013s1610}, by Chen Shuwen in 2023:
\begin{align}
 & [-3960,7560,8316]^h=[-3780,5544,11880]^h \nonumber \\
 & \quad =[-3080,3564,22680]^h = [-3024,3465,23760]^h \nonumber \\
 & \quad =[-2376,2520,41580]^h=[-2079,2160,55440]^h \\
 & [-93840,170016,209440]^h=[-89760,131376,283360]^h \nonumber \\
 & \quad =[-87584,121440,314160]^h =[-76160,90321,485760]^h \nonumber \\
 & \quad =[-73920,86020,525504]^h=[-68816,77280,628320]^h \nonumber \\
 & \quad = [-57120,60720,963424]^h=[-36960,37536,2408560]^h \nonumber \\
 & \quad =[-35420,35904,2627520]^h
\end{align}
\begin{mirrortype}
\end{mirrortype}
$\bullet$ \ref{h013} \quad $(h=0,1,3)$
\\
\subsubsection{\;\;(h$\;=\;$-4, -2, -1, 0)}
\label{h0n124}
\begin{notation}
\end{notation}
$\bullet$ Type: $(h=-4,-2,-1,0)$ \qquad $\bullet$ Abbreviation: $[$h0n124$]$
\begin{idealsolu}
\end{idealsolu}
$\bullet$ Smallest known solution, based on \eqref{h0124s28}:
\begin{align}
& [-28, 36, 189, 378]^h= [-27, 42, 84, 756]^h
\end{align}
\begin{idealchain}
\end{idealchain}
$\bullet$ Ideal solution chains of length 3, based on \eqref{h0124s1150}:
\begin{align}
& [-2192475, -154008, 152520, 2546100]^h  \nonumber\\
& \quad =[-789291, -169740, 142600, 6863400]^h  \nonumber\\
& \quad =[-481275, -188600, 137268, 10523880]^h
\end{align}
\begin{mirrortype}
\end{mirrortype}
$\bullet$ \ref{h0124} \quad $(h=0,1,2,4)$
\\
\subsubsection{\;\;(h$\;=\;$-5, -3, -1, 0)}
\label{h0n135}
\begin{notation}
\end{notation}
$\bullet$ Type: $(h=-5,-3,-1,0)$ \qquad $\bullet$ Abbreviation: $[$h0n135$]$
\begin{idealsolu}
\end{idealsolu}
$\bullet$ Smallest known solution, based on \eqref{h0135s323}:
\begin{align}
\label{h0n135s3968055}
 & [-604656,406980,1113840,1867320]^h \nonumber \\
 & \quad = [-488376,393120,671840,3968055]^h
\end{align}
\begin{mirrortype}
\end{mirrortype}
$\bullet$ \ref{h0135} \quad $(h=0,1,3,5)$
\\
\subsubsection{\;\;(h$\;=\;$-5, -3, -2, -1, 0)}
\label{h0n1235}
\begin{notation}
\end{notation}
$\bullet$ Type: $(h=-5,-3,-2,-1,0)$ \qquad $\bullet$ Abbreviation: $[$h0n1235$]$
\begin{idealsolu}
\end{idealsolu}
$\bullet$ Smallest known solutions, based on \eqref{h01235s15} and \eqref{h01235s28}:
\begin{align}
 & [-60, -30, 28, 105, 210]^h=[-42, -35, 30, 60, 420]^h \\
 & [ -420, -210, 195, 910, 1092]^h=[-273, -260, 210, 420, 2730]^h
\end{align}
\begin{mirrortype}
\end{mirrortype}
$\bullet$ \ref{h01235} \quad $(h=0,1,2,3,5)$
\\
\subsubsection{\;\;(h$\;=\;$-6, -4, -2, -1, 0)}
\label{h0n1246}
\begin{notation}
\end{notation}
$\bullet$ Type: $(h=-6,-4,-2,-1,0)$ \qquad $\bullet$ Abbreviation: $[$h0n1246$]$
\begin{idealsolu}
\end{idealsolu}
$\bullet$ Smallest known solutions, based on \eqref{h01246s50} and \eqref{h01246s110}:
\begin{align}
\label{h0n1246s2400}
 & [-3600, -150, 160, 288, 450]^h=[-800, -180, 144, 225, 2400]^h\\
 & [-60900, -6380, 6699, 11550, 15950]^h \nonumber \\
 & \quad =[-23925, -7700, 6090, 8932, 47850]^h
\end{align}
\begin{mirrortype}
\end{mirrortype}
$\bullet$ \ref{h01246} \quad $(h=0,1,2,4,6)$
\\
\subsubsection{\;\;(h$\;=\;$-6, -4, -3, -2, -1, 0)}
\label{h0n12346}
\begin{notation}
\end{notation}
$\bullet$ Type: $(h=-6,-4,-3,-2,-1,0)$ \qquad $\bullet$ Abbreviation: $[$h0n12346$]$
\begin{idealsolu}
\end{idealsolu}
$\bullet$ First known solution, based on \eqref{h012346s11716}:
\begin{align}
 & [-368820196108882435, -83653485524152300, -45960864315377975,\nonumber\\
 & \qquad \quad 51084098882361100, 68187601154738300, 458278711760824132]^h \nonumber\\
 & \quad = [-49560744253931740, -64234460970889700, 46983464472469100,\nonumber\\
 & \qquad \quad 114569677940206033, 146206214042619700, 903485715300282475]^h 
\end{align}
\indent
$\bullet$ Smallest known solutions, based on \eqref{h012346s50} and \eqref{h012346s65}, by Chen Shuwen in 2023:
\begin{align}
 & [-800, -200, -180, 144, 225, 2400]^h \nonumber\\
 & \quad  =[-450, -288, -160, 150, 200, 3600]^h \\
 & [-11960, -2392, -2080, 1472, 5980, 7360]^h \nonumber\\
 & \quad  =[-4784, -4160, -1840, 1495, 3680, 19136]^h 
\end{align}
\begin{mirrortype}
\end{mirrortype}
$\bullet$ \ref{h012346} \quad $(h=0,1,2,3,4,6)$
\\
\subsection{GPTE with h$_{1}$<0 and h$_{n}$>0}
Until now, ideal integer solutions of GPTE have been found for 23 types with \( h_1 < 0 \) and \( h_n > 0 \). 
\subsubsection{\;\;(h$\;=\;$-1, 0, 2)}
\label{h02n1}
\begin{notation}
\end{notation}
$\bullet$ Type: $(h=-1,0,2)$ \qquad $\bullet$ Abbreviation: $[$h02n1$]$
\begin{idealsolu}
\end{idealsolu}
$\bullet$ First known solutions, smallest solutions, based on $(k=0,2)$, by Chen Shuwen in 2017 \cite{Chenhminus23}:
\begin{align}
\label{h02n1s15}
& [-12,-10,1]^h = [2,4,15]^h\\
\label{h02n1s28}
& [-21,-20,2]^h = [5,6,28]^h\\
& [-42,9,20]^h = [-30,7,36]^h\\
& [-45,14,20]^h = [-35,10,36]^h\\
& [-35,-24,2]^h = [3,14,40]^h\\
& [-55,-42,4]^h = [7,20,66]^h\\
& [-70,-44,3]^h = [4,30,77]^h\\
& [-78,-56,5]^h = [8,30,91]^h\\
& [-88,-65,6]^h = [10,33,104]^h
\end{align}
\begin{idealchain}
\end{idealchain}
$\bullet$ Based on \eqref{m3T012n1}, Chen Shuwen proved that there is no solution chains for length $j\ge 3$.
\begin{mirrortype}
\end{mirrortype}
$\bullet$ \ref{h01n2} \quad $(h=-2,0,1)$
\begin{relatedtype}
\end{relatedtype}
$\bullet$ \ref{k02} \quad $(k=0,2)$
\\
\subsubsection{\;\;(h$\;=\;$-1, 1, 2)}
\label{h12n1}
\begin{notation}
\end{notation}
$\bullet$ Type: $(h=-1,1,2)$ \qquad $\bullet$ Abbreviation: $[$h12n1$]$
\begin{idealsolu}
\end{idealsolu}
$\bullet$ Ajai Choudhry found a parametric solution in 2011 \cite{Choudhry11}. Numerical examples:
\begin{align}
\label{h12n1s18}
& [-7,14,14]^h=[-6,9,18]^h\\
& [-15, 24, 40]^h=[-14, 21, 42]^h\\
& [-56, 105, 120]^h =[-39, 52, 156]^h
\end{align}
\begin{idealchain}
\end{idealchain}
$\bullet$ Ajai Choudhry's above parametric solution can provide multigrade chains of arbitrary length. Numerical examples: 
\begin{align}
\label{h12n1s20}
& [-7, 14, 14]^h=[-6, 9, 18]^h =[-4, 5, 20]^h \\
& [-13, 26, 26]^h=[-10, 14, 35]^h =[-9, 12, 36]^h\\
\label{h12n1s90}
& [-30, 55, 66]^h=[-21, 28, 84]^h =[-26, 39, 78]^h =[-9, 10, 90]^h\\
& [-91, 182, 182]^h=[-90, 165, 198]^h =[-88, 152, 209]^h \nonumber \\
& \quad = [-78, 117, 234]^h=[-70, 98, 245]^h=[-63, 84, 252]^h \nonumber \\
& \quad =[-52, 65, 260]^h =[-27, 30, 270]^h=[-16, 17, 272]^h 
\end{align}
\begin{mirrortype}
\end{mirrortype}
$\bullet$ \ref{h1n12} \quad $(h=-2,-1,1)$
\\
\subsubsection{\;\;(h$\;=\;$-2, 0, 1)}
\label{h01n2}
\begin{notation}
\end{notation}
$\bullet$ Type: $(h=-2,0,1)$ \qquad $\bullet$ Abbreviation: $[$h01n2$]$
\begin{idealsolu}
\end{idealsolu}
$\bullet$ Smallest known solutions, based on \eqref{h02n1s15} and \eqref{h02n1s28}, by Chen Shuwen in 2017:
\begin{align}
& [ -6, -5, 60 ] = [ 4, 15, 30]^h\\
& [-21,-20,210]^h = [15,70,84]^h
\end{align}
\begin{mirrortype}
\end{mirrortype}
$\bullet$ \ref{h02n1} \quad $(h=-1,0,2)$
\\
\subsubsection{\;\;(h$\;=\;$-2, -1, 1)}
\label{h1n12}
\begin{notation}
\end{notation}
$\bullet$ Type: $(h=-2,-1,1)$ \qquad $\bullet$ Abbreviation: $[$h1n12$]$
\begin{idealsolu}
\end{idealsolu}
$\bullet$ Smallest solution, based on \eqref{h12n1s18}:
\begin{align}
& [-21, 7, 14]^h=[-18, 9, 9]^h 
\end{align}
\begin{idealchain}
\end{idealchain}
$\bullet$ There are ideal solution chains for arbitrary length. Numerical examples, based on \eqref{h12n1s20} and \eqref{h12n1s90}:
\begin{align}
& [-315, 63, 252]^h=[-210, 70, 140]^h=[-180, 90, 90]^h \\
& [-20020, 2002, 18018]^h =[-8580, 2145, 6435]^h \nonumber \\
&\quad =[-6930, 2310, 4620]^h  =[-6006, 2730, 3276]^h 
\end{align}
\begin{mirrortype}
\end{mirrortype}
$\bullet$ \ref{h12n1} \quad $(h=-1,1,2)$
\\
\subsubsection{\;\;(h$\;=\;$-1, 0, 1, 3)}
\label{h013n1}
\begin{notation}
\end{notation}
$\bullet$ Type: $(h=-1,0,1,3)$ \qquad $\bullet$ Abbreviation: $[$h013n1$]$
\begin{idealsolu}
\end{idealsolu}
$\bullet$ First known solution, by Chen Shuwen in 2017 \cite{Chenhminus23}:
\begin{align}
\label{h013n1s30}
& [-30, 4, 5, 21 ]^h = [ -28, 3, 10, 15]^h\\
\label{h013n1s28}
& [-28, -5, 3, 30 ]^h = [ -15, -10, 4, 21]^h
\end{align}
\indent
$\bullet$ Parametric solution \eqref{parah013n1a} of $(h=-1,0,1,3)$ was obtained by Chen Shuwen in 2017, based on Ajai Choudhry's parametric solution \eqref{parah123n1} for $(h=-1,1,2,3)$.
\begin{mirrortype}
\end{mirrortype}
$\bullet$ \ref{h01n13} \quad $(h=-3,-1,0,1)$
\\
\subsubsection{\;\;(h$\;=\;$-1, 0, 1, 5)}
\label{h015n1}
\begin{notation}
\end{notation}
$\bullet$ Type: $(h=-1,0,1,5)$ \qquad $\bullet$ Abbreviation: $[$h015n1$]$
\begin{idealsolu}
\end{idealsolu}
$\bullet$ First known solution, based on \eqref{k15n1s891}, by Chen Shuwen in 2017 \cite{Chenhminus23}:
\begin{align}
\label{h015n1s702}
& [ -891, 85, 286, 702 ]^h = [ -858, 81, 374, 585 ]^h
\end{align}
\indent
$\bullet$ Only the above solution is known so far.
\begin{mirrortype}
\end{mirrortype}
$\bullet$ \ref{h01n15}\quad $(k=-5,-1,0,1)$
\begin{relatedtype}
\end{relatedtype}
$\bullet$ \ref{k15n1}\quad\: $(k=-1,1,5)$ 
\\
\subsubsection{\;\;(h$\;=\;$-1, 0, 2, 4)}
\label{h024n1}
\begin{notation}
\end{notation}
$\bullet$ Type: $(h=-1,0,2,4)$ \qquad $\bullet$ Abbreviation: $[$h024n1$]$
\begin{idealsolu}
\end{idealsolu}
$\bullet$ First known solution, based on the solutions of $(k=0,2,4)$, by Chen Shuwen in 2019 \cite{Chenhminus23}:
\begin{align}
\label{h024n1s315}
& [ -55, -77, 180, 312 ]^h = [ -40, -132, 143, 315 ]^h
\end{align}
\indent
$\bullet$ Second known solution, smallest solution, based on computer search of $(k=0,2,4)$, by Chen Shuwen in 2023:
\begin{align}
\label{h024n1s99}
& [-91, -90, -22, 21]^h = [-14, 35, 78, 99]^h
\end{align}
\indent
$\bullet$ Only the above two solutions are found so far.
\begin{mirrortype}
\end{mirrortype}
$\bullet$ \ref{h01n24} \quad $(h=-4,-2,0,1)$
\begin{relatedtype}
\end{relatedtype}
$\bullet$ \ref{k024} \quad $(k=0,2,4)$
\\
\subsubsection{\;\;(h$\;=\;$-1, 1, 2, 3)}
\label{h123n1}
\begin{notation}
\end{notation}
$\bullet$ Type: $(h=-1,1,2,3)$ \qquad $\bullet$ Abbreviation: $[$h123n1$]$
\begin{idealsolu}
\end{idealsolu}
$\bullet$ Ajai Choudhry found a parametric solution \eqref{parah123n1} in 2011 \cite{Choudhry11}. Numerical examples:
\begin{align}
\label{h123n1s30}
& [-5, 10, 15, 30]^h = [-3, 4, 21, 28]^h\\
& [-35, 84, 85, 204]^h=[-26, 39, 130, 195]^h
\end{align}
\begin{idealchain}
\end{idealchain}
$\bullet$ Ajai Choudhry's above parametric solution can provide multigrade chains
of arbitrary length. Numerical examples:
\begin{align}
\label{h123n1s78}
& [-7, 9, 56, 72 ]^h = [ -10, 15, 50, 75 ]^h \nonumber \\
& \quad = [ -12, 21, 44, 77 ]^h = [ -13, 26, 39, 78]^h\\
& [-228, 516, 589, 1333]^h = [-225, 475, 630, 1330]^h \nonumber \\
& \quad = [-221, 442, 663, 1326]^h = [-207, 369, 736, 1312]^h \nonumber\\
& \quad = [-204, 357, 748, 1309]^h = [-195, 325, 780, 1300]^h \nonumber\\
& \quad = [-182, 286, 819, 1287]^h = [-170, 255, 850, 1275]^h \nonumber\\
& \quad = [-119, 153, 952, 1224]^h = [-116, 148, 957, 1221]^h \nonumber\\
& \quad = [-78, 91, 1014, 1183]^h = [-50, 55, 1050, 1155]^h \nonumber\\
& \quad = [-23, 24, 1081, 1128]^h
\end{align}
\begin{mirrortype}
\end{mirrortype}
$\bullet$ \ref{h1n123} \quad $(h=-3,-2,-1,1)$
\begin{relatedtype}
\end{relatedtype}
$\bullet$ \ref{k13n1} \quad\: $(k=-1,1,3)$
\\
\subsubsection{\;\;(h$\;=\;$-2, 0, 1, 2)}
\label{h012n2}
\begin{notation}
\end{notation}
$\bullet$ Type: $(h=-2,0,1,2)$ \qquad $\bullet$ Abbreviation: $[$h012n2$]$
\begin{idealsolu}
\end{idealsolu}
$\bullet$ First known solutions, based on \eqref{k02n2s42} and \eqref{k02n2s120} to \eqref{k02n2s132}, by Chen Shuwen in 2017 \cite{Chenhminus23}:
\begin{align}
\label{h012n2s4}
& [-6,7,30,35]^h = [-5,14,15,42]^h \\
& [-30,-7,-6,35]^h = [-42,5,14,15]^h \\
& [-7,28,30,120]^h = [-12,8,70,105]^h \\
& [-120,7,28,30]^h = [-105,-12,-8,70]^h \\
& [-7,14,60,120]^h = [-10,8,84,105]^h \\
& [-120,7,14,60]^h = [-105,-10,-8,84]^h \\
& [-10,33,40,132]^h = [-15,12,88,110]^h \\
\label{h012n2s88}
& [-132,10,33,40]^h = [-110,-15,-12,88]^h 
\end{align}
\begin{mirrortype}
\end{mirrortype}
$\bullet$ \ref{h02n12} \quad $(h=-2,-1,0,2)$
\begin{relatedtype}
\end{relatedtype}
$\bullet$ \ref{k02n2} \quad $(k=-2,0,2)$
\\
\subsubsection{\;\;(h$\;=\;$-2, -1, 0, 2)}
\label{h02n12}
\begin{notation}
\end{notation}
$\bullet$ Type: $(h=-2,-1,0,2)$ \qquad $\bullet$ Abbreviation: $[$h02n12$]$
\begin{idealsolu}
\end{idealsolu}
$\bullet$ First known solutions, based on \eqref{h012n2s4} to \eqref{h012n2s88}, by Chen Shuwen in 2017:
\begin{align}
& [ -42,5, 14, 15 ]^h=[-35,6, 7, 30 ]^h \\
& [-42,-15,-14,5]^h=[-6,7,30,35 ]^h \\
& [-120,-30,-28,7 ]^h=[-12,8,70,105 ]^h \\
& [-120,7,28,30]^h=[-70,8,12,105]^h \\
& [-120,-60,-14,7]^h=[-10,8,84,105]^h \\
& [-120,7,14,60]^h=[-84,8,10,105]^h \\
& [-132,-40,-33,10]^h=[-15,12,88,110]^h \\
& [-132,10,33,40]^h=[-88,12,15,110]^h 
\end{align}
\begin{mirrortype}
\end{mirrortype}
$\bullet$ \ref{h012n2} \quad $(h=-2,0,1,2)$
\begin{relatedtype}
\end{relatedtype}
$\bullet$ \ref{k02n2} \quad $(k=-2,0,2)$
\\
\subsubsection{\;\;(h$\;=\;$-2, -1, 1, 2)}
\label{h12n12}
\begin{notation}
\end{notation}
$\bullet$ Type: $(h=-2,-1,1,2)$ \qquad $\bullet$ Abbreviation: $[$h12n12$]$
\begin{idealsolu}
\end{idealsolu}
$\bullet$ Ajai Choudhry obtained a parametric solution in 2011 \cite{Choudhry11}, with additional condition $a_1 b_1=a_2 b_2=a_3 b_3=a_4 b_4$. Numerical solution:
\begin{align}
& [-55,-22,110,220]^h = [-46,-23,92,230]^h
\end{align}
\indent
$\bullet$ Chen Shuwen found 15 numerical solutions by full computer search in 2022 \cite{Chenhminus23}, all these solutions satisfy $a_1 b_1=a_2 b_2=a_3 b_3=a_4 b_4$. Numerical solutions:
\begin{align}
\label{h12n12s55}
& [-15,-9,45,45]^h = [-11,-11,33,55]^h\\
& [-20,-5,40,60]^h = [-9,-6,18,72]^h\\
& [-54,-18,108,189]^h = [-35,-20,70,210]^h\\
& [-66,-33,176,198]^h = [-45,-40,120,240]^h\\
& [-88,-77,308,308]^h = [-82,-82,287,328]^h\\
& [-272,-85,680,680]^h = [-118,-118,295,944]^h
\end{align}
\begin{mirrortype}
\end{mirrortype}
$\bullet$ None.
\\
\subsubsection{\;\;(h$\;=\;$-2, -1, 1, 3)}
\label{h13n12}
\begin{notation}
\end{notation}
$\bullet$ Type: $(h=-2,-1,1,3)$ \qquad $\bullet$ Abbreviation: $[$h13n12$]$
\begin{idealsolu}
\end{idealsolu}
$\bullet$ Ajai Choudhry found a parametric method for $(h=-2,-1,1,3)$ of size 5 in 2011 \cite{Choudhry11}, based on solutions of $(r=1,3)$. \vspace{+1ex}\\
\indent
$\bullet$ Chen Shuwen improved Ajai Choudhry's method and obtained the first ideal solution for $(h=-2,-1,1,3)$ in 2023 \cite{Chenhminus23}, based on \eqref{r13s221} and \eqref{r13s342} together.
\begin{align}
\label{h13n12s36594}
& [-34775, 2247, 21828, 36594 ]^h = [ -15246, 2299, 12100, 26741]^h
\end{align}
\begin{mirrortype}
\end{mirrortype}
$\bullet$ \ref{h12n13} \quad $(h=-3,-1,1,2)$
\begin{relatedtype}
\end{relatedtype}
$\bullet$ \ref{r13} \quad\: $(r=1,3)$
\\
\subsubsection{\;\;(h$\;=\;$-3, -1, 0, 1)}
\label{h01n13}
\begin{notation}
\end{notation}
$\bullet$ Type: $(h=-3,-1,0,1)$ \qquad $\bullet$ Abbreviation: $[$h01n13$]$
\begin{idealsolu}
\end{idealsolu}
$\bullet$ First known solution, based on \eqref{h013n1s30}, by Chen Shuwen in 2017:
\begin{align}
& [ -15, 28, 42, 140 ]^h = [ -14, 20, 84, 105]^h
\end{align}
\begin{mirrortype}
\end{mirrortype}
$\bullet$ \ref{h013n1} \quad $(h=-1,0,1,3)$
\\
\subsubsection{\;\;(h$\;=\;$-3, -1, 1, 2)}
\label{h12n13}
\begin{notation}
\end{notation}
$\bullet$ Type: $(h=-3,-1,1,2)$ \qquad $\bullet$ Abbreviation: $[$h12n13$]$
\begin{idealsolu}
\end{idealsolu}
$\bullet$ First known solution, based on \eqref{h13n12s36594}, by Chen Shuwen in 2023:
\begin{align}
& [-22464650, 12807900, 28305459, 148976100]^h \nonumber \\
&\quad =[-9848916, 9359350, 15690675, 152423700]^h
\end{align}
\begin{mirrortype}
\end{mirrortype}
$\bullet$ \ref{h13n12} \quad $(h=-2,-1,1,3)$
\\
\subsubsection{\;\;(h$\;=\;$-3, -2, -1, 1)}
\label{h1n123}
\begin{notation}
\end{notation}
$\bullet$ Type: $(h=-3,-2,-1,1)$ \qquad $\bullet$ Abbreviation: $[$h1n123$]$
\begin{idealsolu}
\end{idealsolu}
$\bullet$ Smallest solution, based on \eqref{h123n1s30}:
\begin{align}
& [-140, 15, 20, 105]^h=[-84, 14, 28, 42]^h
\end{align}
\begin{idealchain}
\end{idealchain}
$\bullet$ There are ideal solution chains for arbitrary length. Numerical examples, based on \eqref{h123n1s78}:
\begin{align}
& [-257400, 25025, 32175, 200200]^h=[-180180, 24024, 36036, 120120]^h \nonumber \\
&\quad =[-150150, 23400, 40950, 85800]^h =[-138600, 23100, 46200, 69300]^h
\end{align}
\begin{mirrortype}
\end{mirrortype}
$\bullet$ \ref{h123n1} \quad\: $(h=-1,1,2,3)$
\\
\subsubsection{\;\;(h$\;=\;$-4, -2, 0, 1)}
\label{h01n24}
\begin{notation}
\end{notation}
$\bullet$ Type: $(h=-4,-2,0,1)$ \qquad $\bullet$ Abbreviation: $[$h01n24$]$
\begin{idealsolu}
\end{idealsolu}
$\bullet$ First known solution, based on \eqref{h024n1s315}, by Chen Shuwen in 2019:
\begin{align}
& [ -1155, -2002, 4680, 6552 ]^h = [ -1144, -2520, 2730, 9009 ]^h
\end{align}
\indent
$\bullet$ Smallest known solution, based on \eqref{h024n1s99}, by Chen Shuwen in 2023:
\begin{align}
& [-4290, 990, 1001, 4095]^h = [-2574, -1155, -910, 6435]^h
\end{align}
\begin{mirrortype}
\end{mirrortype}
$\bullet$ \ref{h024n1} \quad $(h=-1,0,2,4)$
\\
\subsubsection{\;\;(h$\;=\;$-5, -1, 0, 1)}
\label{h01n15}
\begin{notation}
\end{notation}
$\bullet$ Type: $(h=-5,-1,0,1)$ \qquad $\bullet$ Abbreviation: $[$h01n15$]$
\begin{idealsolu}
\end{idealsolu}
$\bullet$ First known solution, based on \eqref{h015n1s702}, by Chen Shuwen in 2017:
\begin{align}
& [ -2210, 2805, 6885, 23166 ]^h = [ -2295, 3366, 5265, 24310]^h
\end{align}
\begin{mirrortype}
\end{mirrortype}
$\bullet$ \ref{h015n1} \quad $(h=-1,0,1,5)$
\\
\subsubsection{\;\;(h$\;=\;$-1, 0, 1, 2, 4)}
\label{h0124n1}
\begin{notation}
\end{notation}
$\bullet$ Type: $(h=-1,0,1,2,4)$ \qquad $\bullet$ Abbreviation: $[$h0124n1$]$
\begin{idealsolu}
\end{idealsolu}
$\bullet$ First known solutions, based on computer search, by Chen Shuwen in 2019 \cite{Chenhminus23}:
\begin{align}
\label{h0124n1s55}
& [  -66,-10,5,35,36 ]^h = [-60,-14,9,10,55]^h\\
\label{h0124n1s60}
& [  -70,-6,3,28,45 ]^h = [-63,-10,6,7,60]^h\\
\label{h0124n1s66}
& [  -77,-18,6,33,56 ]^h = [-72,-22,7,21,66]^h\\
\label{h0124n1s70}
& [  -63,-14,2,15,60 ]^h = [-42,-36,3,5,70]^h\\
& [  -84,-9,3,40,50 ]^h = [-75,-15,4,14,72]^h\\
& [  -91,-4,2,45,48 ]^h = [-80,-9,4,7,78]^h\\
\label{h0124n1s84}
& [  -72,-36,7,21,80 ]^h = [-63,-45,8,16,84]^h\\
& [  -120,-9,7,52,70 ]^h = [-117,-10,8,35,84]^h\\
& [  -99,-35,14,42,78 ]^h = [-91,-42,21,22,90]^h\\
& [  -117,-54,20,60,91 ]^h = [-105,-65,26,36,108]^h
\end{align}
\indent
$\bullet$ All numerical solutions given above satisfy $-a_1-a_2=a_3+a_4+a_5=-b_1-b_2=b_3+b_4+b_5$.
\begin{mirrortype}
\end{mirrortype}
$\bullet$ \ref{h01n124} \quad $(h=-4,-2,-1,0,1)$
\\
\subsubsection{\;\;(h$\;=\;$-4, -2, -1, 0, 1)}
\label{h01n124}
\begin{notation}
\end{notation}
$\bullet$ Type: $(h=-4,-2,-1,0,1)$ \qquad $\bullet$ Abbreviation: $[$h01n124$]$
\begin{idealsolu}
\end{idealsolu}
$\bullet$ Smallest known solutions, based on \eqref{h0124n1s60}, \eqref{h0124n1s70}, \eqref{h0124n1s84} and \eqref{h0124n1s66}, by Chen Shuwen in 2019:
\begin{align}
& [-210,-18,28,45,420]^h = [-126,-20,21,180,210]^h\\
& [-90,-20,21,84,630]^h = [-35,-30,18,252,420]^h\\
& [-140,-70,63,240,720]^h = [-112,-80,60,315,630]^h\\
& [-308,-72,99,168,924]^h = [-252,-77,84,264,792]^h
\end{align}
\begin{mirrortype}
\end{mirrortype}
$\bullet$ \ref{h0124n1} \quad $(h=-1,0,1,2,4)$
\\
\subsubsection{\;\;(h$\;=\;$-1, 0, 1, 2, 3, 5)}\label{Typeh01235n1}
\label{h01235n1}
\begin{notation}
\end{notation}
$\bullet$ Type: $(h=-1,0,1,2,3,5)$ \qquad $\bullet$ Abbreviation: $[$h01235n1$]$
\begin{idealsolu}
\end{idealsolu}
$\bullet$ First known solutions, based on \eqref{fomulah01235n1zero}, by Chen Shuwen in 2017 \cite{Chenhminus23}:
\begin{align}
\label{h01235n1s196}
& [ -156, -130, 13, 35, 42, 196 ]^h = [ -147, -140, 14, 26, 52, 195 ]^h\\
\label{h01235n1s104}
& [ -88, -65, 6, 14, 28, 105 ]^h = [-84, -70, 7, 10, 33, 104 ]^h\\
\label{h01235n1s88}
& [-104, -33, -10, 14, 28, 105 ]^h = [-84, -70, -6, 7, 65, 88 ]^h
\end{align}
\indent
$\bullet$ Parametric solution \eqref{parah01235n1a} of $(h=-1,0,1,2,3,5)$ was obtained by Chen Shuwen in 2017, based on Ajai Choudhry's parametric solution \eqref{parah12345n1} for $(h=-1,1,2,3,4,5)$.
\begin{mirrortype}
\end{mirrortype}
$\bullet$ \ref{h01n1235} \quad $(h=-5,-3,-2,-1,0,1)$
\\
\subsubsection{\;\;(h$\;=\;$-1, 1, 2, 3, 4, 5)}\label{Typeh12345n1}
\label{h12345n1}
\begin{notation}
\end{notation}
$\bullet$ Type: $(h=-1,1,2,3,4,5)$ \qquad $\bullet$ Abbreviation: $[$h12345n1$]$
\begin{idealsolu}
\end{idealsolu}
$\bullet$ Ajai Choudhry obtained a parametric solution \eqref{parah12345n1} in 2011 \cite{Choudhry11}. All solutions by this method have only one negative integer on each side. Numerical examples:
\begin{align}
\label{h12345n1s105}
& [-7, 14, 28, 70, 84, 105 ]^h = [-6, 10, 33, 65, 88, 104]^h\\
\label{h12345n1s364}
& [-26, 65, 78, 260, 273, 364]^h = [-7, 8, 154, 184, 330, 345]^h
\end{align}
\indent
$\bullet$ By utilizing \eqref{fomulah12345n1zero}, Chen Shuwen found a new parametric method for all solutions in 2023 \cite{Chenhminus23}. This method is based on the ideal symmetric solutions of $(k=1,2,3,4,5)$. New numerical examples:
\begin{align}
& [-45, -22 , 26, 72, 120 , 143]^h = [-42, -28, 35, 63 , 126, 140]^h\\
& [-78, -52 , 65, 117 , 234, 260]^h = [-70, -63 , 84, 98 , 245 , 252]^h
\end{align}
\begin{idealchain}
\end{idealchain}
$\bullet$ Ajai Choudhry's parametric solution can provide multigrade chains
of arbitrary length. Numerical examples
:
\begin{align}
\label{h12345n1s196}
& [-14, 35, 42, 140, 147, 196]^h = [-13, 26, 52, 130, 156, 195]^h \nonumber \\
& \quad = [-8, 11, 72, 110, 171, 190]^h = [-5, 6, 80, 102, 176, 187]^h \\
\label{h12345n1s286}
& [-20, 45, 68, 198, 221, 286]^h = [-19, 38, 76, 190, 228, 285]^h \nonumber \\
& \quad = [-14, 21, 98, 168, 245, 280]^h = [-10, 13, 110, 156, 253, 276]^h\\
&[-266,665,798,2660,2793,3724]^h=[-247,494,988,2470,2964,3705]^h\nonumber\\
& =[-260,585,884,2574,2873,3718]^h=[-255,544,930,2528,2914,3713]^h\nonumber\\
& =[-232,424,1073,2385,3034,3690]^h=[-231,420,1078,2380,3038,3689]^h\nonumber\\
& =[-196,308,1225,2233,3150,3654]^h=[-182,273,1274,2184,3185,3640]^h\nonumber\\
& =[-152,209,1368,2090,3249,3610]^h=[-130,169,1430,2028,3289,3588]^h\nonumber\\
& =[-111,138,1480,1978,3320,3569]^h=[-95,114,1520,1938,3344,3553]^h\nonumber\\
& =[-46,50,1633,1825,3408,3504]^h
\end{align}
\indent
$\bullet$ New ideal solution chains by Chen Shuwen's method:
\begin{align}
&  [-90, -16 , 17, 165, 198, 272]^h = [-88, -27 , 30, 152 , 209, 270]^h \nonumber \\
& \quad =[-78, -52, 65, 117, 234, 260]^h =[-70, -63, 84, 98, 245, 252]^h \\
& [-130, -33, 36, 230, 299, 396]^h = [-126, -49 , 56, 210 , 315, 392]^h \nonumber \\
& \quad = [-114, -76 , 95, 171, 342, 380]^h = [-108, -85 , 110, 156 , 351, 374]^h 
\end{align}
\begin{mirrortype}
\end{mirrortype}
$\bullet$ \ref{h1n12345} \quad $(h=-5,-4,-3,-2,-1,1)$
\begin{relatedtype}
\end{relatedtype}
$\bullet$ \ref{k12345} \quad $(k=1,2,3,4,5)$
\\
\subsubsection{\;\;(h$\;=\;$-5, -3, -2, -1, 0, 1)}
\label{h01n1235}
\begin{notation}
\end{notation}
$\bullet$ Type: $(h=-5,-3,-2,-1,0,1)$ \qquad $\bullet$ Abbreviation: $[$h01n1235$]$
\begin{idealsolu}
\end{idealsolu}
$\bullet$ First known solution, based on \eqref{h01235n1s196}, by Chen Shuwen in 2017:
\begin{align}
&[ -294, -245, 195, 910, 1092, 2940 ]^h \nonumber \\
& \quad = [ -273, -260, 196, 735, 1470, 2730 ]^h
\end{align}
\begin{mirrortype}
\end{mirrortype}
$\bullet$ \ref{h01235n1} \quad $(h=-1,0,1,2,3,5)$
\\
\subsubsection{\;\;(h$\;=\;$-5, -4, -3, -2, -1, 1)}
\label{h1n12345}
\begin{notation}
\end{notation}
$\bullet$ Type: $(h=-5,-4,-3,-2,-1,1)$ \qquad $\bullet$ Abbreviation: $[$h1n12345$]$
\begin{idealsolu}
\end{idealsolu}
$\bullet$ Smallest known solution, based on \eqref{h12345n1s196}:
\begin{align}
&[-2940, 196, 245, 294, 735, 1470]^h \nonumber \\
& \quad = [-2730, 195, 260, 273, 910, 1092]^h
\end{align}
\begin{idealchain}
\end{idealchain}
$\bullet$ Numerical solution chains, based on \eqref{h12345n1s196}:
\begin{align}
& [-325909584, 8714160, 9258795, 15975960, 20369349, 271591320]^h \nonumber \\
&\quad =[-203693490, 8576568, 9529520, 14814072, 22632610, 148140720]^h \nonumber \\
&\quad =[-125349840, 8356656, 10445820, 12534984, 31337460, 62674920]^h \nonumber \\
&\quad =[-116396280, 8314020, 11085360, 11639628, 38798760, 46558512]^h
\end{align}
\begin{mirrortype}
\end{mirrortype}
$\bullet$ \ref{h12345n1} \quad $(h=-1,1,2,3,4,5)$
\clearpage
\section{Ideal positive integer solution of FPTE}
So far, ideal positive integer solutions of FPTE have been found for 54 types of \( (r = r_1, r_2, \dots, r_n) \). These 54 types can be grouped into five categories with counts of 22, 22, 5, 3, and 2, respectively.
\subsection{FPTE with all r>0}
Ideal positive integer solutions of FPTE have been discovered for 22 types with all \( r > 0 \), including 10 known types of PTE. 
\subsubsection{\;\;(r$\;=\;$1)}
\label{r1}
\begin{notation}
\end{notation}
$\bullet$ Type: $(r=1)$ \qquad $\bullet$ Abbreviation: $[$r1$]$
\begin{idealsolu}
\end{idealsolu}
$\bullet$ Smallest solution, equivalent to \eqref{k1s2}:
\begin{align}
\label{r1s2}
& [2]^r = [1,1]^r 
\end{align}
\indent
$\bullet$ Prime solutions:
\begin{align}
& [5]^r = [2,3]^r \\
& [7]^r = [2,5]^r 
\end{align}
\begin{mirrortype}
\end{mirrortype}
$\bullet$ \ref{rn1} \quad $(r=-1)$
\begin{relatedtype}
\end{relatedtype}
$\bullet$ \ref{k1} \quad $(k=1)$ 
\\
\subsubsection{\;\;(r$\;=\;$2)}
\label{r2}
\begin{notation}
\end{notation}
$\bullet$ Type: $(r=2)$ \qquad $\bullet$ Abbreviation: $[$r2$]$
\begin{idealsolu}
\end{idealsolu}
$\bullet$ Smallest solution, equivalent to \eqref{k2s5}:
\begin{align}
\label{r2s5}
& [5]^r = [3,4]^r
\end{align}
\begin{mirrortype}
\end{mirrortype}
$\bullet$ \ref{rn2} \quad $(r=-2)$
\begin{relatedtype}
\end{relatedtype}
$\bullet$ \ref{k2} \quad $(k=2)$ 
\\
\subsubsection{\;\;(r$\;=\;$1, 2)}
\label{r12}
\begin{notation}
\end{notation}
$\bullet$ Type: $(r=1,2)$ \qquad $\bullet$ Abbreviation: $[$r12$]$
\begin{idealsolu}
\end{idealsolu}
$\bullet$ Smallest solutions, equivalent to \eqref{k12s4} and \eqref{k12s6}:
\begin{align}
\label{r12s4}
& [3,3]^r = [1,1,4]^r\\
\label{r12s6}
& [4,5]^r = [1,2,6]^r
\end{align}
\begin{mirrortype}
\end{mirrortype}
$\bullet$ \ref{rn12} \quad $(r=-2,-1)$
\begin{relatedtype}
\end{relatedtype}
$\bullet$ \ref{k12} \quad $(k=1,2)$ 
\\
\subsubsection{\;\;(r$\;=\;$1, 3)}
\label{r13}
\begin{notation}
\end{notation}
$\bullet$ Type: $(r=1,3)$ \qquad $\bullet$ Abbreviation: $[$r13$]$
\begin{idealsolu}
\end{idealsolu}
$\bullet$ Smallest solutions, equivalent to \eqref{k13s9} and \eqref{k13s10}:
\begin{align}
\label{r13s9}
& [7, 8 ]^r = [1, 5, 9]^r\\
\label{r13s10}
& [7, 9 ]^r = [2, 4, 10]^r
\end{align}
\indent
$\bullet$ Ajai Choudhry obtained complete solution in integers in 2000 \cite{Choudhry2000}. \vspace{+1ex}\\
\indent
$\bullet$ Ideal solutions (chains), based on computer search, by Chen Shuwen in 2023:
\begin{align}
& [217,225]^r = [8,187,247]^r = [17,169,256]^r = [ 39,136,267]^r \\
& [2244 , 2436]^r = [60 , 2028 , 2592]^r = [104, 1926, 2650]^r \nonumber \\
& \quad = [156 , 1824 , 2700]^r = [243, 1677, 2760]^r = [872, 910 , 2898]^r
\end{align}
\indent
$\bullet$ Chen Shuwen obtained all solutions for $(r=1,3)$ within the range up to 20000 through computer search in 2023, and found that the following two solutions together satisfy an additional condition, which led to the first ideal solution \eqref{h13n12s36594} for $(h=-2,-1,1,3)$:
\begin{align}
\label{r13s221}
& [126,214]^r = [19,100,221]^r \\
\label{r13s342}
& [242,325]^r = [21,204,342]^r
\end{align}
\begin{mirrortype}
\end{mirrortype}
$\bullet$ \ref{rn13} \quad\: $(r=-3,-1)$
\begin{relatedtype}
\end{relatedtype}
$\bullet$ \ref{k13} \quad\: $(k=1,3)$ \vspace{1ex} \\
\indent
$\bullet$ \ref{k1234} \quad $(k=1,2,3,4)$ \vspace{1ex} \\
\indent
$\bullet$ \ref{h13n12} \quad $(h=-2,-1,1,3)$
\\
\subsubsection{\;\;(r$\;=\;$1, 4)}
\label{r14}
\begin{notation}
\end{notation}
$\bullet$ Type: $(r=1,4)$ \qquad $\bullet$ Abbreviation: $[$r14$]$
\begin{idealsolu}
\end{idealsolu}
$\bullet$ First known solutions, smallest solutions, based on computer search, by Chen Shuwen in 2023:
\begin{align}
\label{r14s573}
&[349,557]^r = [95,238,573]^r\\
&[698,1114]^r = [190,476,1146]^r\\
&[989,1386]^r = [15,966,1394]^r\\
&[1047,1671]^r = [285,714,1719]^r
\end{align}
\begin{mirrortype}
\end{mirrortype}
$\bullet$ \ref{rn14} \quad $(r=-4,-1)$
\begin{relatedtype}
\end{relatedtype}
$\bullet$ \ref{k14} \quad $(k=1,4)$ 
\\
\subsubsection{\;\;(r$\;=\;$2, 3)}
\label{r23}
\begin{notation}
\end{notation}
$\bullet$ Type: $(r=2,3)$ \qquad $\bullet$ Abbreviation: $[$r23$]$
\begin{idealsolu}
\end{idealsolu}
$\bullet$ A.Gloden gave a two-parametric solution in 1949 \cite{Gloden49}, of which numerical solutions always include negative integers.\\[1mm]
\indent
$\bullet$ First known solution, smallest solution, equivalent to \eqref{k23s64}:
\begin{align}
\label{r23s64}
&[37,62]^r =[21,26,64]^r
\end{align}
\indent 
$\bullet$ Ideal solutions based on computer search, by Chen Shuwen in 2023:
\begin{align}
&[189,431]^r =[89,156,435]^r\\
&[377,630]^r =[54,368,633]^r\\
&[686,751]^r =[218,543,832]^r\\
&[721,906]^r =[145,666,936]^r
\end{align}
\indent 
$\bullet$ Ideal solutions based on parametric method, by Chen Shuwen in 2023:
\begin{align}
&[1743, 2078]^r =[368, 1578, 2175]^r\\
&[6930, 15787]^r =[2850, 5995, 15912]^r\\
&[20601, 41011]^r =[3571, 20016, 41145]^r
\end{align}
\begin{mirrortype}
\end{mirrortype}
$\bullet$ \ref{rn23} \quad $(r=-3,-2)$
\begin{relatedtype}
\end{relatedtype}
$\bullet$ \ref{k23} \quad $(k=2,3)$ 
\\
\subsubsection{\;\;(r$\;=\;$2, 4)}
\label{r24}
\begin{notation}
\end{notation}
$\bullet$ Type: $(r=2,4)$ \qquad $\bullet$ Abbreviation: $[$r24$]$
\begin{idealsolu}
\end{idealsolu}
$\bullet$ Smallest solutions, the first one is equivalent to \eqref{k24s8}:
\begin{align}
\label{r24s8}
&[7,7]^r =[3,5,8]^r\\
\label{r24s15}
&[13,13]^r =[7,8,15]^r
\end{align}
\indent
$\bullet$ Alfred Moessner and George Xeroudakes found a parametric method in 1976 \cite{Moessner76}. Numerical examples:
\begin{align}
\label{r24s266}
&[218,241]^r =[120,143,266]^r\\
&[2620682, 5927569]^r =[865007, 2441880, 5940794]^r
\end{align}
\begin{mirrortype}
\end{mirrortype}
$\bullet$ \ref{rn24}\quad\: $(r=-4,-2)$ 
\begin{relatedtype}
\end{relatedtype}
$\bullet$ \ref{k24}\quad $(k=2,4)$ \vspace{1ex}\\
\indent
$\bullet$ \ref{s124}\quad\: $(s=1,2,4)$ 
\\
\subsubsection{\;\;(r$\;=\;$1, 2, 3)}
\label{r123}
\begin{notation}
\end{notation}
$\bullet$ Type: $(r=1,2,3)$ \qquad $\bullet$ Abbreviation: $[$r123$]$
\begin{idealsolu}
\end{idealsolu}
$\bullet$ Ideal solutions, equivalent to \eqref{k123s11} and \eqref{k123s22}:
\begin{align}
\label{r123s11}
& [ 4, 7, 11 ]^r = [ 1, 2, 9, 10 ]^r\\
\label{r123s22}
& [ 9, 11, 22 ]^r = [ 2, 4, 15, 21 ]^r
\end{align}
\begin{mirrortype}
\end{mirrortype}
$\bullet$ \ref{rn123} \quad\: $(r=-3,-2,-1)$
\begin{relatedtype}
\end{relatedtype}
$\bullet$ \ref{k123} \quad $(k=1,2,3)$ 
\\
\subsubsection{\;\;(r$\;=\;$1, 2, 4)}
\label{r124}
\begin{notation}
\end{notation}
$\bullet$ Type: $(r=1,2,4)$ \qquad $\bullet$ Abbreviation: $[$r124$]$
\begin{idealsolu}
\end{idealsolu}
$\bullet$ Smallest solutions, based on computer search, by Chen Shuwen in 2023:
\begin{align}
\label{r124s19}
& [7,14,19 ]^k = [1,5,16,18]^k\\
& [14,21,31]^k = [1,11,24,30]^k\\
\label{r124s35}
& [14,19,35]^k = [5,5,24,34]^k\\
& [17,23,37]^k = [5,7,32,33]^k\\
& [17,25,38]^k = [3,10,32,35]^k
\end{align}
\begin{mirrortype}
\end{mirrortype}
$\bullet$ \ref{rn124} \quad\: $(r=-4,-2,-1)$
\begin{relatedtype}
\end{relatedtype}
$\bullet$ \ref{k124} \quad $(k=1,2,4)$ 
\\
\subsubsection{\;\;(r$\;=\;$1, 2, 5)}
\label{r125}
\begin{notation}
\end{notation}
$\bullet$ Type: $(r=1,2,5)$ \qquad $\bullet$ Abbreviation: $[$r125$]$
\begin{idealsolu}
\end{idealsolu}
$\bullet$ First known solution, smallest solution, based on computer search, by Chen Shuwen in 2023:
\begin{align}
\label{r125s770}
&[223, 642, 770 ]^r= [42, 160, 698, 735]^r
\end{align}
\indent
$\bullet$ New solutions, based on a parametric solution with additional
condition of $a_1+a_2=b_1+b_2$ and $a_3=b_3+b_4$, by Chen Shuwen in 2023:
\begin{align}
\label{r125s985}
&[376, 693, 985]^r= [126, 175, 810, 943]^r\\
&[637, 1584, 2065]^r= [234, 310, 1755, 1987]^r\\
&[1765, 2079, 3514]^r= [495, 720, 2794, 3349]^r
\end{align}
\begin{mirrortype}
\end{mirrortype}
$\bullet$ \ref{rn125} \quad $(r=-5,-2,-1)$
\begin{relatedtype}
\end{relatedtype}
$\bullet$ \ref{k125} \quad $(k=1,2,5)$ 
\\
\subsubsection{\;\;(r$\;=\;$1, 3, 4)}
\label{r134}
\begin{notation}
\end{notation}
$\bullet$ Type: $(r=1,3,4)$ \qquad $\bullet$ Abbreviation: $[$r134$]$
\begin{idealsolu}
\end{idealsolu}
$\bullet$ Chen Shuwen found a parametric solution in 2023, with additional
condition of $a_1+a_2=b_1+b_2$ and $a_3=b_3+b_4$. Numerical solutions:
\begin{align}
\label{r134s1161}
&[409,838,1161]^r= [109,261,900,1138]^r \\
&[2176,2203,4077]^r= [394, 1206, 2871, 3985]^r
\end{align}
\indent
$\bullet$ By computer search in 2023, Chen Shuwen confirmed that \eqref{r134s1161} is the smallest solution.
\begin{mirrortype}
\end{mirrortype}
$\bullet$ \ref{rn134} \quad $(r=-4,-3,-1)$
\begin{relatedtype}
\end{relatedtype}
$\bullet$ \ref{k134} \quad $(k=1,3,4)$ 
\\
\subsubsection{\;\;(r$\;=\;$1, 3, 5)}
\label{r135}
\begin{notation}
\end{notation}
$\bullet$ Type: $(r=1,3,5)$ \qquad $\bullet$ Abbreviation: $[$r135$]$
\begin{idealsolu}
\end{idealsolu}
$\bullet$ Smallest solution, equivalent to \eqref{k135s51}:
\begin{align}
\label{r135s51}
& [ 24, 33, 51 ]^r= [ 7, 13, 38, 50]^r
\end{align}
\indent
$\bullet$ George Xeroudakes and Alfred Moessner obtained a two-parameter solution in 1958  \cite{Xeroudakes58}. Numerical examples:
\begin{align}
& [66, 75, 134]^r= [8, 47, 87, 133]^r\\
& [221, 231, 339]^r= [23, 152, 300, 316]^r\\
& [310, 323, 545]^r= [105, 123, 422, 528]^r\\
& [ 5893, 6001, 11907]^r= [ 121, 5200, 6586, 11894]^r
\end{align}
\begin{mirrortype}
\end{mirrortype}
$\bullet$ \ref{rn135} \quad $(r=-5,-3,-1)$
\begin{relatedtype}
\end{relatedtype}
$\bullet$ \ref{k135} \quad $(k=1,3,5)$ \vspace{1ex} \\
\indent
$\bullet$ \ref{s1235} \quad\: $(s=1,2,3,5)$
\\
\subsubsection{\;\;(r$\;=\;$1, 2, 3, 4)}
\label{r1234}
\begin{notation}
\end{notation}
$\bullet$ Type: $(r=1,2,3,4)$ \qquad $\bullet$ Abbreviation: $[$r1234$]$
\begin{idealsolu}
\end{idealsolu}
$\bullet$ Ideal solutions, equivalent to \eqref{k1234s18} and \eqref{k1234s20}:
\begin{align}
\label{r1234s18}
&[4, 8, 16, 17 ]^r = [ 1, 2, 10, 14, 18 ]^r\\
\label{r1234s20}
&[6, 8, 17, 19 ]^r = [ 1, 3, 12, 14, 20 ]^r
\end{align}
\begin{mirrortype}
\end{mirrortype}
$\bullet$ \ref{rn1234} \quad $(r=-4,-3,-2,-1)$
\begin{relatedtype}
\end{relatedtype}
$\bullet$ \ref{k1234} \quad $(k=1,2,3,4)$
\\
\subsubsection{\;\;(r$\;=\;$1, 2, 3, 5)}
\label{r1235}
\begin{notation}
\end{notation}
$\bullet$ Type: $(r=1,2,3,5)$ \qquad $\bullet$ Abbreviation: $[$r1235$]$
\begin{idealsolu}
\end{idealsolu}
$\bullet$ First known solutions, based on parametric method, by Chen Shuwen in 2023:
\begin{align}
\label{r1235s120}
&[32,51,106,115]^r = [7,16,66,95,120]^r\\
\label{r1235s125}
&[35,48,93,124]^r = [3,24,60,88,125]^r\\
\label{r1235s132}
&[23,82,120,125]^r = [5,15,98,100,132]^r\\
&[39,56,110,137]^r = [12,14,81,95,140]^r\\
&[15,95,115,146]^r = [3,11,104,106,147]^r\\
&[33,104,150,169]^r = [5,24,120,133,174]^r\\
&[55,75,145,206]^r = [17,19,111,126,208]^r\\
&[45,115,202,208]^r = [10,27,133,180,220]^r
\end{align}
\begin{mirrortype}
\end{mirrortype}
$\bullet$ \ref{rn1235} \quad $(r=-5,-3,-2,-1)$
\begin{relatedtype}
\end{relatedtype}
$\bullet$ \ref{k1235} \quad $(k=1,2,3,5)$ 
\\
\subsubsection{\;\;(r$\;=\;$1, 3, 5, 7)}
\label{r1357}
\begin{notation}
\end{notation}
$\bullet$ Type: $(r=1,3,5,7)$ \qquad $\bullet$ Abbreviation: $[$r1357$]$
\begin{idealsolu}
\end{idealsolu}
$\bullet$ Smallest solutions, equivalent to \eqref{k1357s99} and \eqref{k1357s174}:
\begin{align}
\label{r1357s99}
&[34, 58, 82, 98 ]^r = [ 13, 16, 69, 75, 99 ]^r\\
\label{r1357s174}
&[63, 119, 161, 169 ]^r = [ 8, 50, 132, 148, 174 ]^r
\end{align}
\indent
$\bullet$ So far, only the two solutions presented above are known.
\begin{mirrortype}
\end{mirrortype}
$\bullet$ \ref{rn1357}\quad $(r=-7,-5,-3,-1)$ 
\begin{relatedtype}
\end{relatedtype}
$\bullet$ \ref{k1357}\quad $(k=1,3,5,7)$ \vspace{1ex}\\
\indent
$\bullet$ \ref{s12357}\quad\: $(s=1,2,3,5,7)$ 
\\
\subsubsection{\;\;(r$\;=\;$1, 2, 3, 4, 5)}
\label{r12345}
\begin{notation}
\end{notation}
$\bullet$ Type: $(r=1,2,3,4,5)$ \qquad $\bullet$ Abbreviation: $[$r12345$]$
\begin{idealsolu}
\end{idealsolu}
$\bullet$ Smallest solution, equivalent to \eqref{k12345s16}:
\begin{align}
\label{r12345s16}
& [3, 5, 11, 13, 16 ]^r = [ 1, 1, 8, 8, 15, 15 ]^r 
\end{align}
\begin{mirrortype}
\end{mirrortype}
$\bullet$ \ref{rn12345} \quad $(r=-5,-4,-3,-2,-1)$
\begin{relatedtype}
\end{relatedtype}
$\bullet$ \ref{k12345} \quad $(k=1,2,3,4,5)$ 
\\
\subsubsection{\;\;(r$\;=\;$1, 2, 3, 4, 6)}
\label{r12346}
\begin{notation}
\end{notation}
$\bullet$ Type: $(r=1,2,3,4,6)$ \qquad $\bullet$ Abbreviation: $[$r12346$]$
\begin{idealsolu}
\end{idealsolu}
$\bullet$ First known solution, based on computer search, by Chen Shuwen in 2023:
\begin{align}
\label{r12346s136}
& [19, 44, 95, 102, 136 ] = [ 4, 11, 52, 84, 110, 135]^r 
\end{align}
\indent
$\bullet$ Solutions based on parametric method, by Chen Shuwen in 2023:
\begin{align}
\label{r12346s299}
& [43, 101, 146, 282, 299]^r = [3, 35, 117, 134, 286, 296]^r \\
& [111, 117, 233, 311, 389]^r = [3, 81, 161, 209, 320, 387]^r \\
& [61, 169, 278, 440, 482]^r = [22, 26, 209, 245, 460, 468]^r
\end{align}
\begin{mirrortype}
\end{mirrortype}
$\bullet$ \ref{rn12346} \quad $(r=-6,-4,-3,-2,-1)$
\begin{relatedtype}
\end{relatedtype}
$\bullet$ \ref{k12346} \quad $(k=1,2,3,4,6)$ 
\\
\subsubsection{\;\;(r$\;=\;$1, 2, 3, 4, 5, 6)}
\label{r123456}
\begin{notation}
\end{notation}
$\bullet$ Type: $(r=1,2,3,4,5,6)$ \qquad $\bullet$ Abbreviation: $[$r123456$]$
\begin{idealsolu}
\end{idealsolu}
$\bullet$ Smallest solution, equivalent to \eqref{k123456s84}:
\begin{align}
\label{r123456s84}
& [18,19,50,56,79,81]^r=[1,11,30,39, 68, 70 , 84 ] ^r
\end{align}
\begin{mirrortype}
\end{mirrortype}
$\bullet$ \ref{rn123456} \quad $(r=-6,-5,-4,-3,-2,-1)$
\begin{relatedtype}
\end{relatedtype}
$\bullet$ \ref{k123456} \quad $(k=1,2,3,4,5,6)$ 
\\
\subsubsection{\;\;(r$\;=\;$1, 2, 3, 4, 5, 6, 7)}
\label{r1234567}
\begin{notation}
\end{notation}
$\bullet$ Type: $(r=1,2,3,4,5,6,7)$ \qquad $\bullet$ Abbreviation: $[$r1234567$]$
\begin{idealsolu}
\end{idealsolu}
$\bullet$ Smallest solution, equivalent to \eqref{k1234567s50}:
\begin{align}
\label{r1234567s50}
& [4, 9, 23, 27, 41, 46, 50 ]^r = [ 1, 2, 11, 20, 30, 39 , 48, 49]^r
\end{align}
\begin{mirrortype}
\end{mirrortype}
$\bullet$ \ref{rn1234567} \quad $(r=-7,-6,-5,-4,-3,-2,-1)$
\begin{relatedtype}
\end{relatedtype}
$\bullet$ \ref{k1234567} \quad $(k=1,2,3,4,5,6,7)$ 
\\
\subsubsection{\;\;(r$\;=\;$1, 2, 3, 4, 5, 6, 7, 8)}
\label{r12345678}
\begin{notation}
\end{notation}
$\bullet$ Type: $(r=1,2,3,4,5,6,7,8)$ \qquad $\bullet$ Abbreviation: $[$r12345678$]$
\begin{idealsolu}
\end{idealsolu}
$\bullet$ Smallest solution, equivalent to \eqref{k12345678s198}:
\begin{align}
\label{r12345678s198}
&[24, 30, 83, 86, 133, 157, 181, 197]^r  \nonumber \\
&\quad = [ 1, 17, 41, 65, 112, 115, 168, 174, 198]^r
\end{align}
\begin{mirrortype}
\end{mirrortype}
$\bullet$ \ref{rn12345678} \quad $(r=-8,-7,-6,-5,-4,-3,-2,-1)$
\begin{relatedtype}
\end{relatedtype}
$\bullet$ \ref{k12345678} \quad $(k=1,2,3,4,5,6,7,8)$ 
\\
\subsubsection{\;\;(r$\;=\;$1, 2, 3, 4, 5, 6, 7, 8, 9)}
\label{r123456789}
\begin{notation}
\end{notation}
$\bullet$ Type: $(r=1,2,3,4,5,6,7,8,9)$ \qquad $\bullet$ Abbreviation: $[$r123456789$]$
\begin{idealsolu}
\end{idealsolu}
$\bullet$ Smallest solutions, equivalent to \eqref{k123456789s626} and \eqref{k123456789s1030}:
\begin{align}
\label{r123456789s626}
& [12, 125, 213, 214, 412, 413, 501, 614, 626  ]^r \nonumber \\
& \quad =  [ 5, 6, 133, 182, 242, 384, 444, 493, 620, 621 ]^r \\
\label{r123456789s1030}
& [63, 149, 326, 412, 618, 704, 881, 967, 1030]^r \nonumber \\
& \quad =  [7, 44, 184, 270, 497, 533, 760, 846, 986, 1023]^r
\end{align}
\begin{mirrortype}
\end{mirrortype}
$\bullet$ \ref{rn123456789} \quad $(r=-9,-8,-7,-6,-5,-4,-3,-2,-1)$
\begin{relatedtype}
\end{relatedtype}
$\bullet$ \ref{k123456789} \quad $(k=1,2,3,4,5,6,7,8,9)$ 
\\
\subsubsection{\;\;(r$\;=\;$1, 2, 3, 4, 5, 6, 7, 8, 9, 10, 11)}
\label{r1234567891011}
\begin{notation}
\end{notation}
$\bullet$ Type: $(r=1, 2, 3, 4, 5, 6, 7, 8, 9, 10, 11)$ \vspace{1ex}\\
\indent
$\bullet$ Abbreviation: $[$r1234567891011$]$
\begin{idealsolu}
\end{idealsolu}
$\bullet$ Smallest solution, equivalent to \eqref{k1234567891011s302}:
\begin{align}
\label{r1234567891011s302}
& [11, 24, 65, 90, 129, 173, 212, 237, 278, 291, 302 ]^r \nonumber \\
& \quad = [ 3, 5, 30, 57, 104, 116, 186, 198, 245, 272, 297, 299]^r
\end{align}
\begin{mirrortype}
\end{mirrortype}
$\bullet$ \ref{rn1234567891011}\quad $(r=-11,-10,-9,-8,-7,-6,-5,-4,-3,-2,-1)$
\begin{relatedtype}
\end{relatedtype}
$\bullet$ \ref{k1234567891011}\quad $(k=1,2,3,4,5,6,7,8,9,10,11)$
\\
\subsection{FPTE with all r<0}
So far, ideal positive integer solutions of FPTE have been found for 22 types with all \( r < 0 \).
\subsubsection{\;\;(r$\;=\;$-1)}
\label{rn1}
\begin{notation}
\end{notation}
$\bullet$ Type: $(r=-1)$ \qquad $\bullet$ Abbreviation: $[$rn1$]$
\begin{idealsolu}
\end{idealsolu}
$\bullet$ Smallest solution, based on \eqref{r1s2}:
\begin{align}
&[ 1 ]^r = [ 2,2 ]^r
\end{align}
\begin{mirrortype}
\end{mirrortype}
$\bullet$ \ref{r1}\quad $(r=1)$
\\
\subsubsection{\;\;(r$\;=\;$-2)}
\label{rn2}
\begin{notation}
\end{notation}
$\bullet$ Type: $(r=-2)$ \qquad $\bullet$ Abbreviation: $[$rn2$]$
\begin{idealsolu}
\end{idealsolu}
$\bullet$ Smallest solution, based on \eqref{r2s5}:
\begin{align}
&[ 12 ]^r = [15,20]^r
\end{align}
\begin{mirrortype}
\end{mirrortype}
$\bullet$ \ref{r2}\quad $(r=2)$
\\
\subsubsection{\;\;(r$\;=\;$-2, -1)}
\label{rn12}
\begin{notation}
\end{notation}
$\bullet$ Type: $(r=-2,-1)$ \qquad $\bullet$ Abbreviation: $[$rn12$]$
\begin{idealsolu}
\end{idealsolu}
$\bullet$ Smallest solution, based on \eqref{r12s4}:
\begin{align}
&[4,4]^r = [3,12,12]^r
\end{align}
\begin{mirrortype}
\end{mirrortype}
$\bullet$ \ref{r12}\quad $(r=1,2)$
\\
\subsubsection{\;\;(r$\;=\;$-3, -1)}
\label{rn13}
\begin{notation}
\end{notation}
$\bullet$ Type: $(r=-3,-1)$ \qquad $\bullet$ Abbreviation: $[$rn13$]$
\begin{idealsolu}
\end{idealsolu}
$\bullet$ Smallest solution, based on \eqref{r13s10}:
\begin{align}
&[140,180]^r = [126,315,630]^r
\end{align}
\begin{mirrortype}
\end{mirrortype}
$\bullet$ \ref{r13}\quad $(r=1,3)$
\\
\subsubsection{\;\;(r$\;=\;$-4, -1)}
\label{rn14}
\begin{notation}
\end{notation}
$\bullet$ Type: $(r=-4,-1)$ \qquad $\bullet$ Abbreviation: $[$rn14$]$
\begin{idealsolu}
\end{idealsolu}
$\bullet$ Smallest known solution, based on \eqref{r14s573}:
\begin{align}
\label{rn14s26510150982}
& [4521479970, 7216230210]^r \nonumber \\
& \quad = [4395225730, 10581782955, 26510150982]^r
\end{align}
\begin{mirrortype}
\end{mirrortype}
$\bullet$ \ref{r14}\quad $(r=1,4)$
\\
\subsubsection{\;\;(r$\;=\;$-3, -2)}
\label{rn23}
\begin{notation}
\end{notation}
$\bullet$ Type: $(r=-3,-2)$ \qquad $\bullet$ Abbreviation: $[$rn23$]$
\begin{idealsolu}
\end{idealsolu}
$\bullet$ Smallest known solution, based on \eqref{r23s64}:
\begin{align}
& [323232, 541632]^r= [313131, 770784, 954304]^r
\end{align}
\begin{mirrortype}
\end{mirrortype}
$\bullet$ \ref{r23}\quad $(r=2,3)$
\\
\subsubsection{\;\;(r$\;=\;$-4, -2)}
\label{rn24}
\begin{notation}
\end{notation}
$\bullet$ Type: $(r=-4,-2)$ \qquad $\bullet$ Abbreviation: $[$rn24$]$
\begin{idealsolu}
\end{idealsolu}
$\bullet$ Smallest known solution, based on \eqref{r24s8}:
\begin{align}
& [120, 120]^r= [105, 168, 280]^r
\end{align}
\begin{mirrortype}
\end{mirrortype}
$\bullet$ \ref{r24}\quad $(r=2,4)$
\\
\subsubsection{\;\;(r$\;=\;$-3, -2, -1)}
\label{rn123}
\begin{notation}
\end{notation}
$\bullet$ Type: $(r=-3,-2,-1)$ \qquad $\bullet$ Abbreviation: $[$rn123$]$
\begin{idealsolu}
\end{idealsolu}
$\bullet$ Smallest known solution, based on \eqref{r123s22}:
\begin{align}
& [630, 1260, 1540]^r= [660, 924, 3465, 6930]^r
\end{align}
\begin{mirrortype}
\end{mirrortype}
$\bullet$ \ref{r123}\quad $(r=1,2,3)$
\\
\subsubsection{\;\;(r$\;=\;$-4, -2, -1)}
\label{rn124}
\begin{notation}
\end{notation}
$\bullet$ Type: $(r=-4,-2,-1)$ \qquad $\bullet$ Abbreviation: $[$rn124$]$
\begin{idealsolu}
\end{idealsolu}
$\bullet$ Smallest known solution, based on  \eqref{r124s35}, by Chen Shuwen in 2023:
\begin{align}
& [7752, 14280, 19380]^r=[7980, 11305, 54264, 54264]^r
\end{align}
\indent
$\bullet$ Small solutions in the range of 100000, based on $(r=1,2,4)$:
\begin{align}
& [7280, 11856, 14820]^r = [7980, 8645, 27664, 82992]^r\\
& [5040, 6840, 13680]^r = [5320, 5985, 19152, 95760]^r
\end{align}
\begin{mirrortype}
\end{mirrortype}
$\bullet$ \ref{r124}\quad $(r=1,2,4)$
\\
\subsubsection{\;\;(r$\;=\;$-5, -2, -1)}
\label{rn125}
\begin{notation}
\end{notation}
$\bullet$ Type: $(r=-5,-2,-1)$ \qquad $\bullet$ Abbreviation: $[$rn125$]$
\begin{idealsolu}
\end{idealsolu}
$\bullet$ Smallest known solutions, based on \eqref{r125s770} and \eqref{r125s985}, by Chen Shuwen in 2023:
\begin{align}
& [2798036304, 3355900240, 9661380960]^r \nonumber \\
& \quad = [2931276128, 3086658960, 13465549713, 51297332240]^r\\
& [11057203080, 15716226600, 28966343175]^r \nonumber \\
& \quad = [11549676600, 13446104980, 62236257336, 86439246300]^r
\end{align}
\begin{mirrortype}
\end{mirrortype}
$\bullet$ \ref{r125}\quad $(r=1,2,5)$
\\
\subsubsection{\;\;(r$\;=\;$-4, -3, -1)}
\label{rn134}
\begin{notation}
\end{notation}
$\bullet$ Type: $(r=-4,-3,-1)$ \qquad $\bullet$ Abbreviation: $[$rn134$]$
\begin{idealsolu}
\end{idealsolu}
$\bullet$ Smallest known solution, based on \eqref{r134s1161}:
\begin{align}
& [30822942293900, 42703384252050, 87494953553100]^r \nonumber \\
& \quad = [31445901584550,39761595559131,\nonumber \\
& \qquad \qquad 137108950203900,328306752323100]^r
\end{align}
\begin{mirrortype}
\end{mirrortype}
$\bullet$ \ref{r134}\quad $(r=1,3,4)$
\\
\subsubsection{\;\;(r$\;=\;$-5, -3, -1)}
\label{rn135}
\begin{notation}
\end{notation}
$\bullet$ Type: $(r=-5,-3,-1)$ \qquad $\bullet$ Abbreviation: $[$rn135$]$
\begin{idealsolu}
\end{idealsolu}
$\bullet$ Smallest known solution, based on \eqref{r135s51}:
\begin{align}
& [3803800, 5878600, 8083075]^r \nonumber \\
& \quad = [3879876, 5105100, 14922600, 27713400]^r
\end{align}
\begin{mirrortype}
\end{mirrortype}
$\bullet$ \ref{r135}\quad $(r=1,3,5)$
\\
\subsubsection{\;\;(r$\;=\;$-4, -3, -2, -1)}
\label{rn1234}
\begin{notation}
\end{notation}
$\bullet$ Type: $(r=-4,-3,-2,-1)$ \qquad $\bullet$ Abbreviation: $[$rn1234$]$
\begin{idealsolu}
\end{idealsolu}
$\bullet$ Smallest known solution, based on \eqref{r1234s18}:
\begin{align}
& [5040, 5355, 10710, 21420]^r \nonumber \\
& \quad = [4760, 6120, 8568, 42840, 85680]^r
\end{align}
\begin{mirrortype}
\end{mirrortype}
$\bullet$ \ref{r1234}\quad $(r=1,2,3,4)$
\\
\subsubsection{\;\;(r$\;=\;$-5, -3, -2, -1)}
\label{rn1235}
\begin{notation}
\end{notation}
$\bullet$ Type: $(r=-5,-3,-2,-1)$ \qquad $\bullet$ Abbreviation: $[$rn1235$]$
\begin{idealsolu}
\end{idealsolu}
$\bullet$ Smallest known solutions, based on \eqref{r1235s125} and \eqref{r1235s132}:
\begin{align}
& [115500, 154000, 298375, 409200]^r \nonumber \\
& \quad = [114576, 162750, 238700, 596750, 4774000]^r \\
& [12198648, 12706925, 18595500, 66297000]^r \nonumber \\
& \quad = [11551750, 15248310, 15559500, 101655400, 304966200]^r
\end{align}
\begin{mirrortype}
\end{mirrortype}
$\bullet$ \ref{r1235}\quad $(r=1,2,3,5)$
\\
\subsubsection{\;\;(r$\;=\;$-7, -5, -3, -1)}
\label{rn1357}
\begin{notation}
\end{notation}
$\bullet$ Type: $(r=-7,-5,-3,-1)$ \qquad $\bullet$ Abbreviation: $[$rn1357$]$
\begin{idealsolu}
\end{idealsolu}
$\bullet$ Smallest known solution, based on \eqref{r1357s99}:
\begin{align}
& [119665002600, 143014271400, 202192590600, 344916772200]^r \nonumber\\
& \quad = [118456265200, 156362270064, 169958989200, 732948140925,\nonumber\\
& \qquad \qquad  902090019600]^r
\end{align}
\begin{mirrortype}
\end{mirrortype}
$\bullet$ \ref{r1357}\quad $(r=1,3,5,7)$
\\
\subsubsection{\;\;(r$\;=\;$-5, -4, -3, -2, -1)}
\label{rn12345}
\begin{notation}
\end{notation}
$\bullet$ Type: $(r=-5,-4,-3,-2,-1)$ \qquad $\bullet$ Abbreviation: $[$rn12345$]$
\begin{idealsolu}
\end{idealsolu}
$\bullet$ Smallest known solution, based on \eqref{r12345s16}:
\begin{align}
& [2145, 2640, 3120, 6864, 11440]^r \\
& \quad = [2288, 2288, 4290, 4290, 34320, 34320]^r
\end{align}
\begin{mirrortype}
\end{mirrortype}
$\bullet$ \ref{r12345}\quad $(r=1,2,3,4,5)$
\\
\subsubsection{\;\;(r$\;=\;$-6, -4, -3, -2, -1)}
\label{rn12346}
\begin{notation}
\end{notation}
$\bullet$ Type: $(r=-6,-4,-3,-2,-1)$ \qquad $\bullet$ Abbreviation: $[$rn12346$]$
\begin{idealsolu}
\end{idealsolu}
$\bullet$ First known solution, based on \eqref{r12346s136}:
\begin{align}
& [2567565, 3423420, 3675672, 7936110, 18378360]^r \nonumber \\
& \quad = [2586584, 3174444, 4157010, 6715170, 31744440, 87297210]^r
\end{align}
\begin{mirrortype}
\end{mirrortype}
$\bullet$ \ref{r12346}\quad $(r=1,2,3,4,6)$
\\
\subsubsection{\;\;(r$\;=\;$-6, -5, -4, -3, -2, -1)}
\label{rn123456}
\begin{notation}
\end{notation}
$\bullet$ Type: $(r=-6,-5,-4,-3,-2,-1)$ \qquad $\bullet$ Abbreviation: $[$rn123456$]$
\begin{idealsolu}
\end{idealsolu}
$\bullet$ Smallest known solution, based on \eqref{r123456s84}:
\begin{align}
& [5108503400, 5237832600, 7389085275, 8275775508, \nonumber \\
& \qquad \qquad 21778356600,22988265300]^r \nonumber \\
& \quad = [4926056850, 5911268220, 6085129050, 10609968600, \nonumber \\
& \qquad \qquad 13792959180, 37617161400, 413788775400]^r
\end{align}
\begin{mirrortype}
\end{mirrortype}
$\bullet$ \ref{r123456}\quad $(r=1,2,3,4,5,6)$
\\
\subsubsection{\;\;(r$\;=\;$-7, -6, -5, -4, -3, -2, -1)}
\label{rn1234567}
\begin{notation}
\end{notation}
$\bullet$ Type: $(r=-7,-6,-5,-4,-3,-2,-1)$ \qquad $\bullet$ Abbreviation: $[$rn1234567$]$
\begin{idealsolu}
\end{idealsolu}
$\bullet$ Smallest known solution, based on \eqref{r1234567s50}:
\begin{align}
& [1427241816, 1551349800, 1740538800, 2643040400, \nonumber \\
& \qquad \qquad 3102699600, 7929121200, 17840522700]^r \nonumber \\
& \quad = [1456369200, 1486710225, 1829797200, 2378736360, \nonumber \\
& \qquad \qquad 3568104540, 6487462800, 35681045400, 71362090800]^r
\end{align}
\begin{mirrortype}
\end{mirrortype}
$\bullet$ \ref{r1234567}\quad $(r=1,2,3,4,5,6,7)$
\\
\subsubsection{\;\;(r$\;=\;$-8, -7, -6, -5, -4, -3, -2, -1)}
\label{rn12345678}
\begin{notation}
\end{notation}
$\bullet$ Type: $(r=-8,-7,-6,-5,-4,-3,-2,-1)$ \vspace{1ex}\\
\indent
$\bullet$ Abbreviation: $[$rn12345678$]$
\begin{idealsolu}
\end{idealsolu}
$\bullet$ Smallest known solution, based on \eqref{r12345678s198}:
\begin{align}
& [645659524755046161360, 702734399871514330320,\nonumber \\
& \qquad\qquad 810158766730854100560, 956352829900331532240,\nonumber \\
& \qquad\qquad 1479010771822605741720, 1532468992490892696240,\nonumber \\
& \qquad\qquad 4239830879224803126264, 5299788599031003907830]^r \nonumber \\
& \quad = [642398618064364110040, 731005324004276401080, \nonumber \\
& \qquad\qquad 757112657004429129690, 1106042838058644293808, \nonumber \\
& \qquad\qquad 1135668985506643694535,1956845021180678365968, \nonumber \\
& \qquad\qquad 3102315277481563263120, 7482054492749652575760,\nonumber \\
& \qquad\qquad 127194926376744093787920]^r 
\end{align}
\begin{mirrortype}
\end{mirrortype}
$\bullet$ \ref{r12345678}\quad $(r=1,2,3,4,5,6,7,8)$
\\
\subsubsection{\;\;(r$\;=\;$-9, -8, -7, -6, -5, -4, -3, -2, -1)}
\label{rn123456789}
\begin{notation}
\end{notation}
$\bullet$ Type: $(r=-9,-8,-7,-6,-5,-4,-3,-2,-1)$ \vspace{1ex}\\
\indent
$\bullet$ Abbreviation: $[$rn123456789$]$
\begin{idealsolu}
\end{idealsolu}
$\bullet$ Smallest known solution, based on \eqref{r123456789s1030}:
\begin{align}
& [16351371612556032801640443552, 17416662627644998744249903680,\nonumber \\
& \quad 19116813576541105318603469760, 23923171535415786627400080765,\nonumber \\
& \quad 27252286020926721336067405920, 40878429031390082004101108880,\nonumber \\
& \quad 51662309082615686459170726560, 113032971549883985138856757440,\nonumber \\
& \quad 267331948586233552153804077120]^r \nonumber \\
& = [16463257830823767141436614720, 17081047424881048464188292960,\nonumber \\
& \quad 19907698298974838990176899360, 22160411527543044454854811656,\nonumber \\
& \quad 31598335386365316671087536320, 33887148412339464357524460480,\nonumber \\
& \quad 62377454670121162169220951328, 91532134570286487965704656840,\nonumber \\
& \quad 382770744566652586038401292240, 2405987537276101969384236694080]^r 
\end{align}
\begin{mirrortype}
\end{mirrortype}
$\bullet$ \ref{r123456789}\quad $(r=1,2,3,4,5,6,7,8,9)$
\\
\subsubsection{\;\;(r$\;=\;$-11, -10, -9, -8, -7, -6, -5, -4, -3, -2, -1)}
\label{rn1234567891011}
\begin{notation}
\end{notation}
$\bullet$ Type: $(r=-11,-10,-9,-8,-7,-6,-5,-4,-3,-2,-1)$ \vspace{1ex}\\
\indent
$\bullet$ Abbreviation: $[$rn1234567891011$]$
\begin{idealsolu}
\end{idealsolu}
$\bullet$ Smallest known solution, based on \eqref{r1234567891011s302}:
\begin{align}
& [21225111344567893589266440, 22027435141097951422537680,\nonumber\\
& \qquad\quad 23057495057767999510641960, 27046344413753180860584240,\nonumber\\
& \qquad\quad 30235771821035395584709740, 37051928474332392277216560,\nonumber\\
& \qquad\quad 49689795550848867162468720, 71222040289550042932871832,\nonumber\\
& \qquad\quad 98615132708607751753207152, 267082651085812660998269370,\nonumber\\
& \qquad\quad 582725784187227623996224080]^r \nonumber\\
& \quad= [21438072327958206902871120, 21582436451378800888749040,\nonumber\\
& \qquad\quad 23566116272277587735141415, 26163198473712260669218224,\nonumber\\
& \qquad\quad 32373654677068201333123560, 34462277559459698193325080,\nonumber\\
& \qquad\quad 55258479534995722965159180, 61634457942879844845754470,\nonumber\\
& \qquad\quad 112455853088763225683481840, 213666120868650128798615496,\nonumber\\
& \qquad\quad 1281996725211900772791692976, 2136661208686501287986154960]^r 
\end{align}
\begin{mirrortype}
\end{mirrortype}
$\bullet$ \ref{r1234567891011}\quad $(r=1,2,3,4,5,6,7,8,9,10,11)$
\\
\subsection{FPTE with r$_{1}$=0 and all others r>0}
Ideal positive integer solutions of FPTE have been identified for 5 types with \( r_1 = 0 \) and all others \( r > 0 \).
\subsubsection{\;\;(r$\;=\;$0)}
\label{r0}
\begin{notation}
\end{notation}
$\bullet$ Type: $(r=0)$ \qquad $\bullet$ Abbreviation: $[$r0$]$
\begin{idealsolu}
\end{idealsolu}
$\bullet$ Ideal solutions:
\begin{align}
\label{r0s6}
& [6]^r = [2,3]^r\\
& [8]^r = [2,4]^r\\
& [12]^r = [2,6]^r=[3,4]^r
\end{align}
\begin{mirrortype}
\end{mirrortype}
$\bullet$ None.
\\
\subsubsection{\;\;(r$\;=\;$0, 1)}
\label{r01}
\begin{notation}
\end{notation}
$\bullet$ Type: $(r=0,1)$ \qquad $\bullet$ Abbreviation: $[$r01$]$
\begin{idealsolu}
\end{idealsolu}
$\bullet$ First known solutions, by Chen Shuwen in 2023:
\begin{align}
\label{r01s6}
& [2,6]^r = [1,3,4]^r\\
& [3,8]^r = [1,4,6]^r\\
& [4,9]^r = [1,6,6]^r\\
& [4,10]^r = [1,5,8]^r\\
& [6,12]^r = [1,8,9]^r\\
& [5,12]^r = [1,6,10]^r\\
& [8,9]^r = [2,3,12]^r\\
& [16,27]^r = [1,18,24]^r= [3,4,36]^r
\end{align}
\begin{mirrortype}
\end{mirrortype}
$\bullet$ None.
\\
\subsubsection{\;\;(r$\;=\;$0, 2)}
\label{r02}
\begin{notation}
\end{notation}
$\bullet$ Type: $(r=0,2)$ \qquad $\bullet$ Abbreviation: $[$r02$]$
\begin{idealsolu}
\end{idealsolu}
$\bullet$ First known solutions, by Chen Shuwen in 2023:
\begin{align}
\label{r02s780}
& [432, 650 ]^r = [ 18, 20, 780]^r\\
& [702, 1328 ]^r = [ 4, 156, 1494]^r\\
& [407, 1692 ]^r = [ 6, 66, 1739 ]^r\\
& [572, 1827 ]^r = [ 14, 39, 1914]^r\\
& [1380,2835 ]^r = [9,138,3150]^r\\
& [2385,3380 ]^r = [13,150,4134]^r\\
& [2800,5778]^r = [28,90,6420]^r
\end{align}
\begin{mirrortype}
\end{mirrortype}
$\bullet$ None.
\\
\subsubsection{\;\;(r$\;=\;$0, 1, 2)}
\label{r012}
\begin{notation}
\end{notation}
$\bullet$ Type: $(r=0,1,2)$ \qquad $\bullet$ Abbreviation: $[$r012$]$
\begin{idealsolu}
\end{idealsolu}
$\bullet$ First known solutions, by Chen Shuwen in 2023:
\begin{align}
& [4, 5, 9 ]^r = [ 2, 3, 3, 10]^r\\
& [9,9,20 ]^r = [2,3,15,18]^r\\
\label{r012s30}
& [7,20,27 ]^r = [ 1,9,14,30]^r\\
& [5, 18, 28 ]^r = [ 1, 6, 14 , 30]^r\\
& [7,27,32 ]^r = [ 1, 8, 21 , 36]^r\\
& [17,28,63]^r = [2,6,49,51]^r
\end{align}
\begin{mirrortype}
\end{mirrortype}
$\bullet$ None.
\\
\subsubsection{\;\;(r$\;=\;$0, 1, 2, 3)}
\label{r0123}
\begin{notation}
\end{notation}
$\bullet$ Type: $(r=0,1,2,3)$ \qquad $\bullet$ Abbreviation: $[$r0123$]$
\begin{idealsolu}
\end{idealsolu}
$\bullet$ First known solution, by Chen Shuwen in 2023:
\begin{align}
\label{r0123s4300}
& [2401, 3216, 3690, 4300]^r = [1, 2460, 3010, 4020, 4116]^r
\end{align}
\begin{mirrortype}
\end{mirrortype}
$\bullet$ None.
\\
\subsection{FPTE with r$_{n}$=0 and all others r<0}
Ideal positive integer solutions of FPTE have been identified for 3 types with \( r_n = 0 \) and all others \( r < 0 \). 
\subsubsection{\;\;(r$\;=\;$-1, 0)}
\label{r0n1}
\begin{notation}
\end{notation}
$\bullet$ Type: $(r=-1,0)$ \qquad $\bullet$ Abbreviation: $[$r0n1$]$
\begin{idealsolu}
\end{idealsolu}
$\bullet$ First known solutions, by Chen Shuwen in 2023:
\begin{align}
\label{r0n1s30}
& [1,30]^r = [2,3,5]^r\\
& [2,105]^r = [5,6,7]^r\\
& [2,132]^r = [4,6,11]^r\\
& [2,180]^r = [4,5,18]^r\\
& [2,210]^r = [3,10,14]^r\\
& [3,252]^r = [7,9,12]^r
\end{align}
\begin{mirrortype}
\end{mirrortype}
$\bullet$ None.
\\
\subsubsection{\;\;(r$\;=\;$-2, 0)}
\label{r0n2}
\begin{notation}
\end{notation}
$\bullet$ Type: $(r=-2,0)$ \qquad $\bullet$ Abbreviation: $[$r0n2$]$
\begin{idealsolu}
\end{idealsolu}
$\bullet$ First known solutions, by Chen Shuwen in 2023:
\begin{align}
\label{r0n2s120}
& [4,120]^r = [5,8,12]^r\\
& [12,1008]^r = [16,21,36]^r\\
& [10,1320]^r = [11,30,40]^r\\
& [24,3960]^r = [33,40,72]^r\\
& [24,5280]^r = [30,44,96]^r\\
& [18,6840]^r = [19,72,90]^r
\end{align}
\begin{mirrortype}
\end{mirrortype}
$\bullet$ None.
\begin{relatedtype}
\end{relatedtype}
$\bullet$ \ref{s0n12} \quad $(s=-2,-1,0)$
\\
\subsubsection{\;\;(r$\;=\;$-2, -1, 0)}
\label{r0n12}
\begin{notation}
\end{notation}
$\bullet$ Type: $(r=-2,-1,0)$ \qquad $\bullet$ Abbreviation: $[$r0n12$]$
\begin{idealsolu}
\end{idealsolu}
$\bullet$ First known solutions, by Chen Shuwen in 2023:
\begin{align}
\label{r0n12s83538}
& [20,27,83538]^r = [17,52,189,270]^r\\
& [30,45,180180]^r = [26,105,180,495]^r\\
& [120,330,7969500]^r=[115,750,924,3960]^r\\
& [287,312,9133488]^r=[221,819,1968,2296]^r\\
& [150,150,18560568]^r=[114,312,775,15150]^r\\
& [420,448,26694720]^r=[322,1170,1984,6720]^r
\end{align}
\begin{mirrortype}
\end{mirrortype}
$\bullet$ None.
\\
\subsection{FPTE with r$_{1}$<0 and r$_{n}$>0}
Ideal positive integer solutions of FPTE have been identified for 2 types with \( r_1 < 0 \) and \( r_n > 0 \).
\subsubsection{\;\;(r$\;=\;$-1, 1)}
\label{r1n1}
\begin{notation}
\end{notation}
$\bullet$ Type: $(r=-1,1)$ \qquad $\bullet$ Abbreviation: $[$r1n1$]$
\begin{idealsolu}
\end{idealsolu}
$\bullet$ First known solutions, by Chen Shuwen in 2023:
\begin{align}
\label{r1n1s45}
& [5,45]^r = [8,18,24]^r\\
& [ 5, 56 ]^r = [ 7, 24, 30 ]^r\\
& [ 8, 72 ]^r = [ 15, 20, 45 ]^r\\
& [ 3, 40 ]^r = [ 4, 15, 24 ]^r = [ 5, 8, 30 ]^r \\
& [ 15, 168 ]^r = [ 21, 72, 90 ]^r = [ 28, 35, 120 ]^r
\end{align}
\begin{mirrortype}
\end{mirrortype}
$\bullet$ None.
\begin{relatedtype}
\end{relatedtype}
$\bullet$ \ref{s01n1} \quad $(s=-1,0,1)$
\\
\subsubsection{\;\;(r$\;=\;$-1, 0, 1)}
\label{r01n1}
\begin{notation}
\end{notation}
$\bullet$ Type: $(r=-1,0,1)$ \qquad $\bullet$ Abbreviation: $[$r01n1$]$
\begin{idealsolu}
\end{idealsolu}
$\bullet$ First known solution, based on computer search, by Chen Shuwen in 2023:
\begin{align}
\label{r01n1s227700}
& [33, 70200, 157872]^r =[ 54, 143, 208, 227700]^r
\end{align}
\begin{mirrortype}
\end{mirrortype}
$\bullet$ None.
\clearpage
\section{Ideal non-zero integer solution of FPTE}
So far, ideal non-zero integer solutions of FPTE have been discovered for 13 distinct types of \( (s = s_1, s_2, \dots, s_n) \). These 13 types can be categorized into five groups, containing 4, 4, 1, 2, and 2 types, respectively.
\subsection{FPTE with all s>0}
Ideal non-zero integer solutions of FPTE have been found for 4 types with all \( s > 0 \).
\subsubsection{\;\;(s$\;=\;$1, 2, 4)}
\label{s124}
\begin{notation}
\end{notation}
$\bullet$ Type: $(s=1,2,4)$ \qquad $\bullet$ Abbreviation: $[$s124$]$
\begin{idealsolu}
\end{idealsolu}
$\bullet$ First known solutions, based on \eqref{r24s8} and \eqref{r24s15}:
\begin{align}
\label{s124s7}
& [ -7, 7 ]^s = [ -8,3,5]^s\\
\label{s124s13}
& [ -13, 13 ]^s = [ -15,7, 8 ]^s
\end{align}
\begin{mirrortype}
\end{mirrortype}
$\bullet$ \ref{sn124}\quad $(s=-4,-2,-1)$
\begin{relatedtype}
\end{relatedtype}
$\bullet$ \ref{h124}\quad $(h=1,2,4)$ \vspace{1ex}\\
\indent
$\bullet$ \ref{r24}\quad $(r=2,4)$ 
\\
\subsubsection{\;\;(s$\;=\;$1, 2, 3, 5)}
\label{s1235}
\begin{notation}
\end{notation}
$\bullet$ Type: $(s=1,2,3,5)$ \qquad $\bullet$ Abbreviation: $[$s1235$]$
\begin{idealsolu}
\end{idealsolu}
$\bullet$ Smallest solution, based on \eqref{r135s51}:
\begin{align}
\label{s1235s51}
& [-51, 13, 38]^s = [-50, -7, 24, 33] ^s
\end{align}
\indent
$\bullet$ George Xeroudakes and Alfred Moessner obtained a two-parameter solution in 1958  \cite{Xeroudakes58}. Numerical examples:
\begin{align}
& [-134, 47, 87]^s = [-133, -8, 66, 75] ^s\\
& [-339, 23, 316]^s = [-300, -152, 221, 231] ^s\\
& [-545, 123, 422]^s = [-528, -105, 310, 323] ^s\\
& [-699, 191, 508]^s = [-689, -75, 308, 456] ^s\\
& [-1383, 538, 845]^s = [-1387, 42, 465, 880] ^s\\
& [-2931, 68, 2863]^s = [-2627, -1281, 1400, 2508] ^s\\
& [-6652, 2613, 4039]^s = [-6697, 600, 1708, 4389] ^s
\end{align}
\begin{mirrortype}
\end{mirrortype}
$\bullet$ \ref{sn1235}\quad\: $(s=-5,-3,-2,-1)$
\begin{relatedtype}
\end{relatedtype}
$\bullet$ \ref{h1235}\quad\: $(h=1,2,3,5)$ \vspace{1ex}\\
\indent
$\bullet$ \ref{r135}\quad $(r=1,3,5)$ 
\\
\subsubsection{\;\;(s$\;=\;$1, 2, 3, 4, 6)}
\label{s12346}
\begin{notation}
\end{notation}
$\bullet$ Type: $(s=1,2,3,4,6)$ \qquad $\bullet$ Abbreviation: $[$s12346$]$
\begin{idealsolu}
\end{idealsolu}
$\bullet$ First known solutions, by Chen Shuwen in 2023:
\begin{align}
\label{s12346s56}
& [ -47, -46, 37, 56]^s =[-54, -35, -7, 44, 52]^s\\
& [-179,-86,128,137]^s =[-171,-110,52,68,161]^s\\
& [-178,-116,101,193]^s =[-171,-130,17,88,196]^s\\
& [-203,-107,151,159]^s =[-192,-135,49,91,187]^s
\end{align}
\begin{mirrortype}
\end{mirrortype}
$\bullet$ \ref{sn12346}\quad\: $(s=-6,-4,-3,-2,-1)$
\begin{relatedtype}
\end{relatedtype}
$\bullet$ \ref{k1234}\quad $(k=1,2,3,4)$ \vspace{1ex}\\
\indent
$\bullet$ \ref{h12346}\quad\: $(h=1,2,3,4,6)$ 
\\
\subsubsection{\;\;(s$\;=\;$1, 2, 3, 5, 7)}
\label{s12357}
\begin{notation}
\end{notation}
$\bullet$ Type: $(s=1,2,3,5,7)$ \qquad $\bullet$ Abbreviation: $[$s12357$]$
\begin{idealsolu}
\end{idealsolu}
$\bullet$ First known solution, based on \eqref{r1357s99}:
\begin{align}
\label{s12357s98}
& [ -99, -13, 34, 98 ]^s = [ -82, -58, 16, 69, 75 ]^s
\end{align}
\begin{mirrortype}
\end{mirrortype}
$\bullet$ \ref{sn12357}\quad\: $(s=-7,-5,-3,-2,-1)$
\begin{relatedtype}
\end{relatedtype}
$\bullet$ \ref{h12357}\quad\: $(h=1,2,3,5,7)$ \vspace{1ex}\\
\indent
$\bullet$ \ref{r1357}\quad $(r=1,3,5,7)$ 
\\
\subsection{FPTE with all s<0}
So far, ideal non-zero integer solutions of FPTE have been found for 4 types with all \( s < 0 \).
\subsubsection{\;\;(s$=\;$-4, -2, -1)}
\label{sn124}
\begin{notation}
\end{notation}
$\bullet$ Type: $(s=-4,-2,-1)$ \qquad $\bullet$ Abbreviation: $[$sn124$]$
\begin{idealsolu}
\end{idealsolu}
$\bullet$ First known solutions, based on \eqref{s124s7} and \eqref{s124s13}:
\begin{align}
\label{sn124s280}
& [-120, 120 ]^s = [-105,168,280]^s\\
& [-840, 840 ]^s = [-728,1365,1560]^s
\end{align}
\begin{mirrortype}
\end{mirrortype}
$\bullet$ \ref{s124}\quad $(s=1,2,4)$
\\
\subsubsection{\;\;(s$=\;$-5, -3, -2, -1)}
\label{sn1235}
\begin{notation}
\end{notation}
$\bullet$ Type: $(s=-5,-3,-2,-1)$ \qquad $\bullet$ Abbreviation: $[$sn1235$]$
\begin{idealsolu}
\end{idealsolu}
$\bullet$ First known solution, based on \eqref{s1235s51}:
\begin{align}
\label{sn1235s14922600}
& [-3803800, 5105100, 14922600 ]^s \nonumber \\
& \quad = [-27713400, -3879876, 5878600, 8083075]^s
\end{align}
\begin{mirrortype}
\end{mirrortype}
$\bullet$ \ref{s1235}\quad $(s=1,2,3,5)$
\\
\subsubsection{\;\;(s$=\;$-6, -4, -3, -2, -1)}
\label{sn12346}
\begin{notation}
\end{notation}
$\bullet$ Type: $(s=-6,-4,-3,-2,-1)$ \qquad $\bullet$ Abbreviation: $[$sn12346$]$
\begin{idealsolu}
\end{idealsolu}
$\bullet$ First known solution, based on \eqref{s12346s56}:
\begin{align}
& [-1168647480, -772142085, 919999080, 939999060]^s \nonumber \\
& \quad = [-982726290, -831537630, 800739940, 1235427336, 6177136680]^s
\end{align}
\begin{mirrortype}
\end{mirrortype}
$\bullet$ \ref{s12346}\quad $(s=1,2,3,4,6)$
\\
\subsubsection{\;\;(s$=\;$-7, -5, -3, -2, -1)}
\label{sn12357}
\begin{notation}
\end{notation}
$\bullet$ Type: $(s=-7,-5,-3,-2,-1)$ \qquad $\bullet$ Abbreviation: $[$sn12357$]$
\begin{idealsolu}
\end{idealsolu}
$\bullet$ First known solution, based on \eqref{s12357s98}:
\begin{align}
& [-902090019600, -118456265200, 119665002600, 344916772200]^s \nonumber \\
& \quad = [-202192590600, -143014271400, 156362270064, \nonumber \\
& \qquad\qquad   169958989200, 732948140925]^s
\end{align}
\begin{mirrortype}
\end{mirrortype}
$\bullet$ \ref{s12357}\quad $(s=1,2,3,5,7)$
\\
\subsection{FPTE with s$_1$=0 and all other s>0}
So far, for the types where \( s_1 = 0 \) and all other \( s > 0 \), only one ideal non-zero integer solution has been found, and it corresponds to one particular type.
\subsubsection{\;\;(s$\;=\;$0, 1)}
\label{s01}
\begin{notation}
\end{notation}
$\bullet$ Type: $(s=0,1)$ \qquad $\bullet$ Abbreviation: $[$s01$]$
\begin{idealsolu}
\end{idealsolu}
$\bullet$ Only one ideal solution:
\begin{align}
\label{s01s4}
& [4]^s = [2,2]^s
\end{align}
\begin{mirrortype}
\end{mirrortype}
$\bullet$ None.
\begin{relatedtype}
\end{relatedtype}
$\bullet$ \ref{r1}\quad $(r=1)$ \vspace{1ex}\\
\indent
$\bullet$ \ref{r0}\quad $(r=0)$ 
\\
\subsection{FPTE with s$_n$=0 and all others s<0}
Ideal non-zero integer solutions of FPTE have been found for 2 types where \( s_n = 0 \) and all other \( s < 0 \).
\subsubsection{\;\;(s$=\;$-1, 0)}
\label{s0n1}
\begin{notation}
\end{notation}
$\bullet$ Type: $(s=-1,0)$ \qquad $\bullet$ Abbreviation: $[$s0n1$]$
\begin{idealsolu}
\end{idealsolu}
$\bullet$ There are infinite many solutions of this type, as $[p-p^2 ]^s = [ 1-p,p]^s$. Numerical example:
\begin{align}
\label{s0n1s3}
& [-6]^s = [-2,3]^s\\
& [-20]^s = [-4,5]^s
\end{align}
\begin{mirrortype}
\end{mirrortype}
$\bullet$ None.
\begin{relatedtype}
\end{relatedtype}
$\bullet$ \ref{rn1}\quad $(r=-1)$ \vspace{1ex}\\
\indent
$\bullet$ \ref{r0}\quad $(r=0)$ 
\\
\subsubsection{\;\;(s$=\;$-2, -1, 0)}
\label{s0n12}
\begin{notation}
\end{notation}
$\bullet$ Type: $(s=-2,-1,0)$ \qquad $\bullet$ Abbreviation: $[$s0n12$]$
\begin{idealsolu}
\end{idealsolu}
$\bullet$ First known solutions, by Chen Shuwen in 2023:
\begin{align}
\label{s0n12s8}
& [-120,4]^s = [-12,5,8]^s\\
& [-1008,12]^s = [-36,16,21]^s\\
& [-1320,10]^s = [-40,11,30]^s\\
& [-3960,24]^s = [-72,33,40]^s\\
& [-6840,18]^s = [-90,19,72]^s\\
& [-10920	,40	]^s = [-120,	56,	65]^s\\
& [-24360	,28	]^s = [-168,	29,	140]^s\\
& [-24480	,60	]^s = [-180,	85,	96]^s\\
& [-47880,84]^s = [-252,	120,133]^s
\end{align}
\begin{mirrortype}
\end{mirrortype}
$\bullet$ None.
\begin{relatedtype}
\end{relatedtype}
$\bullet$ \ref{r0n2}\quad $(r=-2,0)$
\\
\subsection{FPTE with s$_1$<0 and s$_n$>0}
Ideal non-zero integer solutions of FPTE have been found for 2 types with \( s_1 < 0 \) and \( s_n > 0 \). 
\subsubsection{\;\;(s$=\;$-1, 0, 1)}
\label{s01n1}
\begin{notation}
\end{notation}
$\bullet$ Type: $(s=-1,0,1)$ \qquad $\bullet$ Abbreviation: $[$s01n1$]$
\begin{idealsolu}
\end{idealsolu}
$\bullet$ First known solutions, by Chen Shuwen in 2023:
\begin{align}
\label{s01n1s112}
&[-60, 112]^s = [ -10, 14, 48]^s \\
&[-300, 400 ]^s = [-30, 50, 80]^s
\end{align}
\begin{mirrortype}
\end{mirrortype}
$\bullet$ None.
\begin{relatedtype}
\end{relatedtype}
$\bullet$ \ref{r1n1} \quad $(r=-1,1)$
\\
\subsubsection{\;\;(s$=\;$-2, -1, 0, 1)}
\label{s01n12}
\begin{notation}
\end{notation}
$\bullet$ Type: $(s=-2,-1,0,1)$ \qquad $\bullet$ Abbreviation: $[$s01n12$]$
\begin{idealsolu}
\end{idealsolu}
$\bullet$ First known solution, by Chen Shuwen in 2023:
\begin{align}
\label{s01n12s112}
&[-84,8,112]^s=[-16,14,14,24]^s 
\end{align}
\begin{mirrortype}
\end{mirrortype}
$\bullet$ None.
\\

\end{appendices}

\clearpage

\bibliographystyle{plain}

\begin{thebibliography}{999}
\addcontentsline{toc}{section}{References}

\bibitem{AT07}
A.~Alpers and R.~Tijdeman.
\newblock The two-dimensional {P}rouhet--{T}arry--{E}scott problem.
\newblock {\em J. Number Theory}, 123(2):403--412, 2007.
\newblock MR2301222

\bibitem{Barrodale1966}
I. Barrodale.
\newblock A note on equal sums of like powers.
\newblock {\em Math. Comp.}, 20(1966), 318--322.

\bibitem{Beiler1964}
Albert H. Beiler.
\newblock {\em Recreations in the Theory of Numbers}.
\newblock Dover, 1964, p.~163.

\bibitem{Bernstein01}
Daniel J. Bernstein.
\newblock Enumerating solutions to \( p(a) + q(b) = r(c) + s(d) \).
\newblock {\em Math. Comp.}, 70(234):389--394, 2001.

\bibitem{Borwein1994}
P.~Borwein and C.~Ingalls.
\newblock The {P}rouhet--{T}arry--{E}scott problem revisited.
\newblock {\em Enseign. Math. (2)}, 40(1--2):3--27, 1994.
\newblock MR1279058

\bibitem{Borwein2002}
P.~Borwein.
\newblock {\em Computational Excursions in Analysis and Number Theory}.
\newblock Vol.~10 of {\em CMS Books in Math.}, Springer, New York, 2002.
\newblock MR1912495

\bibitem{Borwein2003}
P.~Borwein, P.~Lison\v{e}k, and C.~Percival.
\newblock Computational investigations of the {P}rouhet--{T}arry--{E}scott problem.
\newblock {\em Math. Comp.}, 72(244):2063--2070, 2003.
\newblock MR1986822

\bibitem{Boyer08}
Christian Boyer.
\newblock New upper bounds for taxicab and cabtaxi numbers.
\newblock {\em J. Integer Seq.}, 11(8):Article 08.1.6, 2008.

\bibitem{Boyer11}
Christian Boyer.
\newblock New Upper Bounds for Taxicab and Cabtaxi Numbers.
\newblock \url{http://www.christianboyer.com/taxicab/}

\bibitem{Bremner1981}
Andrew Bremner.
\newblock A geometric approach to equal sums of sixth powers.
\newblock {\em Proc. London Math. Soc.}, 43(1981), 544--581.

\bibitem{Bremner1988}
Andrew Bremner and R.~K. Guy.
\newblock A dozen difficult {D}iophantine dilemmas.
\newblock {\em Amer. Math. Monthly}, 1988, 31--36.

\bibitem{Broadhurst2007}
D.~Broadhurst.
\newblock A new {P}rouhet--{T}arry--{E}scott solution.
\newblock {\em NMBRTHRY listserv}, 26~September 2007.

\bibitem{Broadhurst2007b}
D.~Broadhurst.
\newblock A Chinese {P}rouhet--{T}arry--{E}scott solution.\\
\newblock \url{http://physics.open.ac.uk/~dbroadhu/cpte.pdf}, 2007.

\bibitem{Brundo1969}
Simcha Brundo.
\newblock Some new results on equal sums of like powers.
\newblock {\em Math. Comp.}, 23(1969), 877--880.

\bibitem{Brundo1970}
Simcha Brundo.
\newblock On generating infinitely many solutions of the {D}iophantine equation \( A^6 + B^6 + C^6 = D^6 + E^6 + F^6 \).
\newblock {\em Math. Comp.}, 24(1970), 453--454.

\bibitem{Brundo1974}
Simcha Brundo and Irving Kaplansky.
\newblock Equal sums of sixth powers.
\newblock {\em J. Number Theory}, 6(1974), 401--403.

\bibitem{Brundo1976}
Simcha Brundo.
\newblock Triples of sixth powers with equal sums.
\newblock {\em Math. Comp.}, 30(1976), 646--648.

\bibitem{Burchnall1937}
J.~L. Burchnall and T.~W. Chaundy.
\newblock A type of ``{M}agic {S}quare'' in {T}arry's problem.
\newblock {\em Quart. J. Math.}, 8(1937), 119--130.

\bibitem{Caley12}
T.~Caley.
\newblock The {P}rouhet--{T}arry--{E}scott problem.
\newblock Ph.D. thesis, University of Waterloo, 2012.

\bibitem{Caley13}
T.~Caley.
\newblock The {P}rouhet--{T}arry--{E}scott problem for {G}aussian integers.
\newblock {\em Math. Comp.}, 82(282):1121--1137, 2013.
\newblock MR3008852

\bibitem{Rivera65}
Carlos Rivera.
\newblock {\em The Prime Puzzles \& Problem Connection}.
\newblock \url{http://www.primepuzzles.net/puzzles/puzz_065.htm}.

\bibitem{Chamberland09}
M.~Chamberland.
\newblock Ramanujan's 6--8--10 Equation and Beyond.
\newblock {\em Mathematics Magazine}, 82(2):135--140, 2009.
\newblock \url{https://doi.org/10.1080/0025570X.2009.11953609}

\bibitem{ChenKW2020}
Kwang-Wu Chen.
\newblock Extensions of an amazing identity of Ramanujan.
\newblock {\em Rocky Mountain Journal of Mathematics}, 50(4):1253--1275, 2020.

\bibitem{Chen01}
Chen Shuwen.
\newblock Equal sums of like powers.
\newblock \url{http://euler.free.fr/eslp/eslp.htm}, 2001.

\bibitem{Chen17}
Chen Shuwen.
\newblock Multigrade Chains.
\newblock \url{http://eslpower.org/chains.htm}, 2017.

\bibitem{Chen19}
Chen Shuwen.
\newblock Algebraic Identities.
\newblock \url{http://eslpower.org/Identity.htm}, 2019.

\bibitem{Chen21}
Chen Shuwen.
\newblock Chen Shuwen's Notebooks (Part I), 1st Edition, Ver.20210816
\newblock \url{http://eslpower.org/Notebook.htm}, 2021.

\bibitem{Chen2125}
Chen Shuwen.
\newblock Chen Shuwen's Notebooks (In Chinese)
\newblock \url{https://www.zhihu.com/column/c_1411069086210625537}, 2021-2025.

\bibitem{Chen22}
Chen Shuwen.
\newblock The {P}rouhet--{T}arry--{E}scott problem.
\newblock \url{http://eslpower.org/TarryPrb.htm}, 2022.

\bibitem{Chen23}
Chen Shuwen.
\newblock Equal sums of like powers.
\newblock \url{http://eslpower.org/eslp.htm}, 2023.

\bibitem{ChenkProducts23}
Chen Shuwen.
\newblock Equal Products and Equal Sums of Like Powers
\newblock \url{http://eslpower.org/product.htm}, 2023.

\bibitem{Chenkminus23}
Chen Shuwen.
\newblock Ideal positive integer solutions of \( k < 0 \) case.
\newblock \url{http://eslpower.org/kminus.htm}, 2023.

\bibitem{Chenhminus23}
Chen Shuwen.
\newblock Ideal integer solutions of \( h < 0 \) case.
\newblock \url{http://eslpower.org/hminus.htm}, 2023.

\bibitem{YingqiongChen2018}
Yingqiong Chen.
\newblock On the {P}rouhet--{T}arry--{E}scott problem.
\newblock Master's Thesis, Southwest University, 2018.

\bibitem{Chernick37}
J. Chernick.
\newblock Ideal solutions of the {T}arry--{E}scott problem.
\newblock {\em Amer. Math. Monthly}, 44(10):626--633, 1937.

\bibitem{Choudhry91}
Ajai Choudhry.
\newblock Symmetric Diophantine systems.
\newblock {\em Acta Arith.}, 59(3):291--307, 1991.

\bibitem{Choudhry1991b}
Ajai Choudhry.
\newblock The {D}iophantine system \( \sum x_i^r = \sum y_i^r \), \( r = 1, 3, 5 \).
\newblock {\em Bull. Calcutta Math. Soc.}, 83(1991), 85--86.

\bibitem{Choudhry1992}
Ajai Choudhry.
\newblock Multigrade chains with odd exponents.
\newblock {\em J. Number Theory}, 42(1992), 134--140.
\newblock MR94a:11044.

\bibitem{Choudhry98c}
Ajai Choudhry.
\newblock On triads of squares with equal sums and equal products.
\newblock {\em Ganita}, 49:101--106, 1998.

\bibitem{Choudhry00}
Ajai Choudhry.
\newblock On equal sums of sixth powers.
\newblock {\em Rocky Mountain J. Math.}, 30(3):115--129, 2000.

\bibitem{Choudhry00b}
Ajai Choudhry.
\newblock Equal sums of seventh powers.
\newblock {\em Rocky Mountain J. Math.}, 30(3):849--852, 2000.

\bibitem{Choudhry2000}
Ajai Choudhry.
\newblock Ideal solutions of the {T}arry--{E}scott problem of degree four and a related Diophantine system.
\newblock {\em Enseign. Math.}, 46(3):313--323, 2000.

\bibitem{Choudhry01}
Ajai Choudhry.
\newblock Equal sums of like powers.
\newblock {\em Rocky Mountain J. Math.}, 31(1):115--129, 2001.

\bibitem{Choudhry01B}
Ajai Choudhry.
\newblock Triads of biquadrates with equal sums and equal products.
\newblock {\em Math. Student}, 70:149--152, 2001.

\bibitem{Choudhry03}
Ajai Choudhry.
\newblock Triads of integers with equal sums of squares, cubes, and fourth powers.
\newblock {\em Bull. Lond. Math. Soc.}, 35(6):821--824, 2003.

\bibitem{Choudhry04}
Ajai Choudhry.
\newblock  A diophantine system related to the Tarry-Escott problem with no non-trivial solutions.
\newblock {\em Indian J. Pure Appl. Math.}, 35, No. 12, 1395-1398 (2004).

\bibitem{Choudhry10}
Ajai Choudhry.
\newblock Some Diophantine problems concerning equal sums of integers and their cubes.
\newblock {\em Hardy-Ramanujan J.}, 33:59--70, 2010.

\bibitem{Choudhry11}
Ajai Choudhry.
\newblock Equal sums of like powers, both positive and negative.
\newblock {\em Rocky Mountain J. Math.}, 41(3):737--763, 2011.

\bibitem{Choudhry13}
Ajai Choudhry.
\newblock Equal sums of like powers and equal products of integers.
\newblock {\em Rocky Mountain J. Math.}, 43(3):763--792, 2013.

\bibitem{Choudhry14}
Ajai Choudhry and Jarosław Wróblewski.
\newblock Three triads of integers with equal sums of squares and cubes.
\newblock {\em Rocky Mountain J. Math.}, 44(2):435--441, 2014.

\bibitem{Choudhry16}
Ajai Choudhry.
\newblock Quadratic Diophantine equations with applications to quartic equations.
\newblock {\em Rocky Mountain J. Math.}, 46(3):769--799, 2016.

\bibitem{Choudhry17}
Ajai Choudhry.
\newblock A new approach to the {T}arry--{E}scott problem.
\newblock {\em Int. J. Number Theory}, 13(2):393--417, 2017.
\newblock MR3660628

\bibitem{Choudhry17k02}
Ajai Choudhry and Jarosław Wróblewski.
\newblock Triads of integers with equal sums of squares and equal products and a related multigrade chain.
\newblock {\em Acta Arith.}, 2023 (to appear).

\bibitem{Choudhry17k04}
Ajai Choudhry.
\newblock A Diophantine problem on biquadrates revisited.
\newblock Preprint, 2017.

\bibitem{Choudhry22}
Ajai Choudhry.
\newblock Ideal solutions of the {T}arry--{E}scott problem of degree seven.
\newblock \url{https://arxiv.org/abs/2207.12726}, 2022.

\bibitem{CW2007}
Ajai Choudhry and Jarosław Wróblewski.
\newblock Quartic Diophantine chains.
\newblock {\em Acta Arith.}, 128(4):339--348, 2007.

\bibitem{CW2008}
Ajai Choudhry and Jarosław Wróblewski.
\newblock Ideal solutions of the {T}arry--{E}scott problem of degree eleven with applications to sums of thirteenth powers.
\newblock {\em Hardy-Ramanujan J.}, 31:1--13, 2008.
\newblock MR2467597

\bibitem{Coppersmith2024}
D.~Coppersmith, M.~Mossinghoff, D.~Scheinerman, and J.~VanderKam.
\newblock Ideal solutions in the Prouhet--Tarry--Escott problem.
\newblock {\em Math. Comp.}, 93(349):2473--2501, 2024.

\bibitem{Delorme1992}
J.~Delorme.
\newblock On the {D}iophantine equation \( x_1^6 + x_2^6 + x_3^6 = y_1^6 + y_2^6 + y_3^6 \).
\newblock {\em Math. Comp.}, 59(1992), 703--715.

\bibitem{Dickson52}
L.~E. Dickson.
\newblock {\em History of the Theory of Numbers}, Vol.~2.
\newblock Chelsea, New York, 1952.

\bibitem{Dickson57}
L.~E. Dickson.
\newblock {\em Introduction to the Theory of Numbers}.
\newblock Dover, New York, 1957 (reprint).

\bibitem{Dorwart37}
H.~L. Dorwart and O.~E. Brown.
\newblock The {T}arry--{E}scott problem.
\newblock {\em Amer. Math. Monthly}, 44(10):613--626, 1937.

\bibitem{Dorwart47}
H.~L. Dorwart.
\newblock Sequences of ideal solutions in the {T}arry--{E}scott problem.
\newblock {\em Bull. Amer. Math. Soc.}, 53(4):381--391, 1947.

\bibitem{Elkies1988}
Noam D. Elkies.
\newblock On \( A^4 = B^4 + C^4 + D^4 \).
\newblock {\em Math. Comp.}, 51(1988), 825--835.

\bibitem{Ekl1996}
Randy L. Ekl.
\newblock Equal sums of four seventh powers.
\newblock {\em Math. Comp.}, 65(1996), 1755--1756.

\bibitem{Ekl1998}
Randy L. Ekl.
\newblock New results in equal sums of like powers.
\newblock {\em Math. Comp.}, 67(1998), 1309--1315.

\bibitem{Escott1910}
E.~B. Escott.
\newblock Logarithmic series.
\newblock {\em Quarterly Journal of Math.}, 41(1910), 141--167.

\bibitem{Euler1988}
Leonhard Euler.
\newblock {\em Introduction to Analysis of the Infinite: Book I}.
\newblock Translated by J. D. Blanton.
\newblock Springer New York, 1988.

\bibitem{FM17}
M.~Filaseta and M.~Markovich.
\newblock Newton polygons and the {P}rouhet--{T}arry--{E}scott problem.
\newblock {\em J. Number Theory}, 174:384--400, 2017.
\newblock MR3597397

\bibitem{Garrett2010}
Paul Garrett.
\newblock Girard-Newton identities for symmetric functions.
\url{https://www-users.cse.umn.edu/~garrett/m/algebra/notes_2023-24/girard-newton.pdf}
\newblock January 19, 2010.

\bibitem{Gloden44}
A.~Gloden.
\newblock {\em Mehrgradige Gleichungen}.
\newblock P.~Noorhoff, Groningen, 2nd ed., 1944.
\newblock MR0019638

\bibitem{Gloden1948}
A.~Gloden.
\newblock Two theorems on multi-degree equalities.
\newblock {\em Amer. Math. Monthly}, 55(1948), 86--88.

\bibitem{Gloden1948b}
A.~Gloden.
\newblock Parametric solutions of two multi-degreed equalities.
\newblock {\em Amer. Math. Monthly}, 55(1948), 86--88.

\bibitem{Gloden1949}
A.~Gloden.
\newblock Uber mehrgradige Gleichungen.
\newblock {\em Arch. Math.}, 1(1949), 482--483.

\bibitem{Gloden49}
A.~Gloden.
\newblock Note on systems of Diophantine equations.
\newblock {\em Scripta Math.}, 15:163--164, 1949.

\bibitem{Gloden1949c}
A.~Gloden.
\newblock Zwei {P}arameterlosungen einer mehrgradigen Gleichung.
\newblock {\em Arch. Math.}, 1(1949), 480--482.

\bibitem{Guy04}
Richard K.~Guy.
\newblock {\em Unsolved Problems in Number Theory}, 3rd ed.
\newblock Springer, New York, 2004.

\bibitem{Gupta1967}
H.~Gupta.
\newblock A system of equations having no nontrivial solutions.\\
\newblock {\em J. of Research, National Bureau of Standards}, 71B(1967), 181--182.

\bibitem{Gupta1980}
H.~Gupta.
\newblock {\em Selected Topics in Number Theory}.
\newblock Abacus Press, Kent, 1980, 322--331.

\bibitem{Hahn1995}
L.~Hahn.
\newblock The {T}arry--{E}scott problem ({P}roblem 10284).
\newblock {\em Amer. Math. Monthly}, 1995, 843--844.

\bibitem{HanHirschhorn2006}
J. H. Han and M. D. Hirschhorn.
\newblock Another look at an amazing identity of Ramanujan.
\newblock {\em Math. Mag.}, 79(2006), 302--304.

\bibitem{Hardy38}
G.~H. Hardy and E.~M. Wright.
\newblock {\em An Introduction to the Theory of Numbers}, 1st ed.
\newblock Oxford Univ. Press, London, 1938.

\bibitem{Hirschhorn1995}
M. D. Hirschhorn.
\newblock An amazing identity of Ramanujan.
\newblock {\em Math. Mag.}, 68.3(1995), 199--201.

\bibitem{Hirschhorn1996}
M. D. Hirschhorn.
\newblock A proof in the spirit of Zeilberger of an amazing identity of Ramanujan.
\newblock {\em Math. Mag.}, 69.4(1996), 267--269.

\bibitem{Hua}
Loo-Keng Hua.
\newblock Waring's Problem and the Problem of Prouhet and Tarry.
\newblock In: {\em Introduction to Number Theory}. Springer, Berlin, Heidelberg, 1982, 494--513.

\bibitem{Hua38}
Loo-Keng Hua.
\newblock On Tarry's problem.
\newblock {\em The Quarterly Journal of Mathematics}, Oxford Ser. 9(1938), 315-320.

\bibitem{Hua1949}
Loo-Keng Hua.
\newblock Improvement of a result of Wright.
\newblock {\em Jour. London Math. Soc.}, 24(1949), 157--159.

\bibitem{Hua1952}
Loo-Keng Hua.
\newblock On the number of solutions of Tarry's problem.
\newblock {\em Acta Sci. Sinica}, 1(1952), 1--76.

\bibitem{JW955}
Jarosław Wróblewski.
\newblock {\em Equal Sums of Powers - Tables}.
\url{https://www.math.uni.wroc.pl/~jwr/eslp/tables.htm}

\bibitem{JW06}
Jarosław Wróblewski.
\newblock {\em Equal Sums and Differences of Fourth Powers}.
\newblock 2006. 
\url{https://www.math.uni.wroc.pl/~jwr/422/index.htm}

\bibitem{JW09}
Jarosław Wróblewski.
\newblock {\em A Collection of Numerical Solutions of Multigrade Equations Related to the {P}rouhet--{T}arry--{E}scott Problem}, Version 12.
\newblock 2009. 
\url{http://www.math.uni.wroc.pl/~jwr/eslp/}

\bibitem{Kelly1991}
J.~B. Kelly.
\newblock Two equal sums of three squares with equal products.
\newblock {\em Amer. Math. Monthly}, 98(6):527--529, 1991.

\bibitem{Kleiman}
H.~Kleiman.
\newblock A note on the {T}arry--{E}scott problem.
\newblock {\em J. Reine Angew. Math.}, 278:48--51, 1975.
\newblock MR389758

\bibitem{Kostrikin1996}
A.~I. Kostrikin.
\newblock Exercises in Algebra: A Collection of Exercises in Algebra, Linear Algebra and Geometry.
\newblock {\em Algebra, Logic and Applications series}, vol. 6. Gordan and Breach Publishers, 1996.

\bibitem{Lander66}
L.~J. Lander and T.~R. Parkin.
\newblock Equal sums of biquadrates.
\newblock {\em Math. Comp.}, 20(96):450--451, 1966.

\bibitem{Lander1967b}
L.~J. Lander and T.~R. Parkin.
\newblock A counterexample to {E}uler's conjecture on sums of like powers.
\newblock {\em Math. Comp.}, 21(1967), 101--103.

\bibitem{Lander67}
L.~J. Lander, T.~R. Parkin, and J.~L. Selfridge.
\newblock A survey of equal sums of like powers.
\newblock {\em Math. Comp.}, 21(100):446--459, 1967.

\bibitem{Lander1968}
L.~J. Lander.
\newblock Geometric aspects of {D}iophantine equations involving equal sums of like powers.
\newblock {\em Amer. Math. Monthly}, 75(1968), 1061--1073.

\bibitem{Lander1973}
L. J. Lander.
\newblock Three thirteens.
\newblock {\em Math. Comp.}, 27(1973), 397.

\bibitem{Leech1957}
J.~Leech.
\newblock Some solutions of {D}iophantine equations.
\newblock {\em Proc. Cambridge Philos. Soc.}, 53(1957), 778--780.

\bibitem{Leech1958}
J.~Leech.
\newblock On \( A^4 + B^4 + C^4 + D^4 = E^4 \).
\newblock {\em Proc. Cambridge Philos. Soc.}, 54(1958), 554--555.

\bibitem{Lehmer1947}
D. H. Lehmer.
\newblock The Tarry-Escott problem.
\newblock {\em Scripta Math.}, 1947, 37--41.

\bibitem{Letac42}
A.~Letac.
\newblock {\em Gazeta Matematică}, 48:68--69, October 1942.

\bibitem{McLaughlin2010}
James Mc Laughlin.
\newblock An identity motivated by an amazing identity of Ramanujan.
\newblock {\em The Fibonacci Quarterly}, 48(1):34--38, 2010. 

\bibitem{Moessner39}
Alfred Moessner.
\newblock Einige numerische Identitäten.
\newblock {\em Proc. Indian Acad. Sci. Sect. A}, 10(4):296--306, 1939.

\bibitem{Moessner1947}
Alfred Moessner.
\newblock On equal sums of powers.
\newblock {\em Math. Student}, 15(1947), 83--88.

\bibitem{Moessner1952}
Alfred Moessner.
\newblock On the multiple identity \( x_1^n + x_2^n + x_3^n + x_4^n + x_5^n = y_1^n + y_2^n + y_3^n + y_4^n + y_5^n \) for \( n = 1, 3, 5, 7 \).
\newblock {\em Scrip. Math.}, 18(1952), 90--91.

\bibitem{Moessner54}
Alfred Moessner and George Xeroudakes.
\newblock On some sets of integers with equal sums of like powers.
\newblock {\em Publ. Inst. Math.}, 6(12):125--136, 1954.

\bibitem{Moessner76}
Alfred Moessner and George Xeroudakes.
\newblock Two Diophantine systems.
\newblock {\em Manuscripta Math.}, 18(1):337--342, 1976.

\bibitem{JCM2000}
Jean-Charles Meyrignac.
\newblock Computing Minimal Equal Sums Of Like Powers.
\newblock \url{http://euler.free.fr/}

\bibitem{JCM}
Jean-Charles Meyrignac.
\newblock \url{http://euler.free.fr/dataprime.txt}

\bibitem{NewtonIdentities}
\url{https://en.wikipedia.org/wiki/Newton\%27s_identities}
\newblock Newton's identities.
\newblock Wikipedia, The Free Encyclopedia.
\newblock [Online; accessed 2025-05-06].

\bibitem{Palama1950}
G.~Palama.
\newblock Multigrade normali del 9 ordine inverso del teorema di Gloden.
\newblock {\em Uni.Roma 1st Naz.Alta Mat. Rend.Mat e Appl.}, 1950, Vol.~5, Part.~9, 228--236.

\bibitem{Palama1953}
G. Palama.
\newblock Diophantine systems of the type \( \sum(a_i^k) = \sum(b_i^k) \), \( (k = 1, 2, \dots, n, n+2, n+4, \dots, n+2r) \).
\newblock {\em Scripta Math.}, 19:132--134, 1953.

\bibitem{Piezas09}
Titus Piezas III.
\newblock {\em A Collection of Algebraic Identities}.
\newblock \url{https://sites.google.com/view/tpiezas}, 2009.

\bibitem{Qiu2016}
Min Qiu.
\newblock The {P}rouhet--{T}arry--{E}scott Problem.
\newblock Master's Thesis, Southwest University, 2016.

\bibitem{Ramanujan1988}
S. Ramanujan.
\newblock The Lost Notebook and other Unpublished Papers.
\newblock With an introduction by George E. Andrews. Springer-Verlag, Berlin; Narosa Publishing House, New Delhi, 1988.

\bibitem{Ramanujan1994}
B. C. Berndt.
\newblock {\em Ramanujan's Notebooks, Part IV}.
\newblock Springer, 1994.

\bibitem{Raghavendran2019}
Srikanth Raghavendran and Veena Narayanan.
\newblock The Prouhet Tarry Escott Problem: A Review.
\newblock {\em Mathematics}, 7(2019), 227.
\url{https://www.mdpi.com/2227-7390/7/3/227}

\bibitem{Rosenstiel1991}
E.~Rosenstiel, J.~A. Dardis, and C.~R. Rosenstiel.
\newblock The four least solutions in distinct positive integers of the {D}iophantine equation \( s = x^3 + y^3 = z^3 + w^3 = u^3 + v^3 = m^3 + n^3 \).
\newblock {\em Bull. Inst. Math. Appl.}, 27(1991), no.~7, 155--157.
\newblock MR92i:11134.

\bibitem{ReesSmyth}
E. Rees and C. Smyth.
\newblock On the constant in the {T}arry--{E}scott problem.
\newblock In {\em Cinquante Ans de Polyn\^omes (Paris, 1988)}, M. Langevin and M. Waldschmidt, eds.,
\newblock {\em Lect. Notes Math.}, vol. 1415, pp. 196--208, Springer, Berlin, 1990.
\newblock MR1044114

\bibitem{Sastry1934}
S.~Sastry.
\newblock On sums of powers.
\newblock {\em Jour. London Math. Soc.}, 9(1934), 170--171.

\bibitem{Sastry1947}
S.~Sastry.
\newblock On equal sums of like powers.
\newblock {\em Math. Student}, 15(1947), 29--32.

\bibitem{Sastry1948}
S.~Sastry and T.~Rai.
\newblock On equal sums of like powers.
\newblock {\em Math. Student}, 16(1948), 18--19.

\bibitem{Silverman1993}
J.~H. Silverman.
\newblock Taxicabs and sums of two cubes.
\newblock {\em Amer. Math. Monthly}, 1993, 331--340.

\bibitem{Singh1988}
R.~N. Singh.
\newblock Equal sums of like powers.
\newblock {\em Math. Student}, 56(1988), no.~1--4, 192--194.
\newblock MR90m:11044.

\bibitem{Sinha66}
T.~N. Sinha.
\newblock Some systems of Diophantine equations of the {T}arry--{E}scott type.
\newblock {\em J. Indian Math. Soc.}, 30:15--25, 1966.

\bibitem{Sinha1966}
T.~N. Sinha.
\newblock On the {T}arry--{E}scott problem.
\newblock {\em Amer. Math. Monthly}, 73(3):280--285, 1966.

\bibitem{Sinha1978}
T.~N. Sinha.
\newblock A note on equal sums of like powers.
\newblock {\em Math. Student}, 46(1978), no.~2--4, 121--123.
\newblock MR84e:10025.

\bibitem{Sinha84}
T.~N. Sinha.
\newblock {\em Diophantine Systems of the {T}arry--{E}scott Type}.
\newblock Anupam, India, 1984.

\bibitem{Smyth91}
C.~J. Smyth.
\newblock Ideal 9th-order multigrades and {L}etac's elliptic curve.
\newblock {\em Math. Comp.}, 57(196):817--823, 1991.
\newblock MR1094960

\bibitem{Subba34}
K. Subba Rao.
\newblock On sums of sixth powers.
\newblock {\em J. London Math. Soc.}, 9(3):172--173, 1934.

\bibitem{Tarry1912}
G.~Tarry.
\newblock L'intermediaire des mathematiciens, 19(1912), 200, 219--221; 20(1913), 68--70.

\bibitem{Tomita21}
Seiji Tomita.
\newblock Computational Number Theory.
\newblock \url{http://www.maroon.dti.ne.jp/fermat/eindex.html}, 2021.

\bibitem{Wright1948}
E. M. Wright.
\newblock The Prouhet-Lehmer problem.
\newblock {\em Jour. London Math. Soc.}, 23(1948), 279--285.

\bibitem{Wright1935}
E. M. Wright.
\newblock On Tarry's problem (I).
\newblock {\em Quart. J. of Math.}, 6(1935), 261--267.

\bibitem{Wright1936}
E. M. Wright.
\newblock On Tarry's problem (II).
\newblock {\em Quart. J. of Math.}, 7(1936), 43--45.

\bibitem{Wright1937}
E. M. Wright.
\newblock On Tarry's problem (III).
\newblock {\em Quart. J. of Math.}, 8(1937), 48--50.

\bibitem{Wright1948a}
E. M. Wright.
\newblock Equal sums of like powers.
\newblock {\em Bull. Amer. Math. Soc.}, 54(1948), 755--757.

\bibitem{Wright1949}
E. M. Wright.
\newblock Equal sums of like powers.
\newblock {\em Proc. Edin. Math. Soc.}, 8(1949), 138--142.

\bibitem{Wright1934}
E. M. Wright.
\newblock An ``easier'' Waring problem.
\newblock {\em Jour. London Math. Soc.}, 9(1934), 267--272.

\bibitem{Wright1979}
E. M. Wright.
\newblock The Tarry-Escott and the ``easier'' Waring problems.
\newblock {\em J. Reine Angew. Math.}, 311/312(1979), 170--173.

\bibitem{Wright1959}
E. M. Wright.
\newblock Prouhet's 1851 solution of the Tarry-Escott problem of 1910.
\newblock {\em Amer. Math. Monthly}, 66(1959), 199--201.

\bibitem{Xi2016}
Nanhua Xi.
\newblock {\em Basic Algebra}, Volume 1 (in Chinese).
\newblock Science Press, Beijing, September 2016.

\bibitem{Xeroudakes58}
George Xeroudakes and Alfred Moessner.
\newblock On equal sums of like powers.
\newblock {\em Proc. Indian Acad. Sci. Sect. A}, 48(4):245--255, 1958.

\bibitem{Zajta83}
Aurel J. Zajta.
\newblock Solutions of the Diophantine equation \( A^4 + B^4 = C^4 + D^4 \).
\newblock {\em Math. Comp.}, 41(164):635--659, 1983.
\newblock MR85d:11025

\bibitem{Zhang2023}
Qian Zhang.
\newblock On the {P}rouhet--{T}arry--{E}scott problem.
\newblock Master's Thesis, Southwest University, 2023.

\end{thebibliography}
\providecommand{\noopsort}[1]{}

\section*{Author Information}
\addcontentsline{toc}{section}{Author Information}

\textbf{Chen Shuwen}\footnote{Name in Chinese convention (surname first).} \\
\textit{Seekway Innovations Technology Co., Ltd.} \\
\textit{Room 1001, Tower 3, Qunhua Hi-Tech Park, No.15 Qunhua Road} \\
\textit{Jiangmen City, Guangdong Province, P.R. China} \\
Email: \texttt{Chen.Shuwen@seekway.com}\\

\textbf{Education:}
\begin{itemize}
    \setlength{\itemsep}{-0.2em} 
    \vspace{-0.5\baselineskip}
    \item B.Sc. in Physics, Sun Yat-sen University, 1991
    \item M.Eng. in Communication and Information Systems,\\ Sun Yat-sen University, 2003
    \item EMBA, Peking University, 2019
\end{itemize}

\end{document}